\newcommand{\N}{\mathbb{N}}
\newcommand{\R}{\mathbb{R}}
\newcommand{\E}{\mathbb{E}}
\newcommand{\F}{\mathcal{F}}
\newcommand{\Prob}{\mathbb{P}}
\newcommand{\1}{\mathbbm{1}}
\newcommand{\bb}[1]{\mathbb{#1}}
\newcommand{\br}[1]{\lbrace #1 \rbrace}
\newcommand{\toinf}{\rightarrow\infty}
\newcommand{\tto}{\rightarrow}
\newcommand{\W}{\Omega}
\newcommand{\mc}[1]{\mathcal{ #1}}
  \def\arxiveprint{%
    \resolve@inner{\bib@arxiveprint}
  }
  \def\bib@arxiveprint#1{%
    \begingroup
        #1\relax
        \bib@resolve@xrefs
        \bib@field@patches
        \bib'setup
        \let\PrintPrimary\@empty
        {%
          \IfEmptyBibField{arxivid}{\url{https://arxiv.org/}}
          {%
            \href{https://arxiv.org/abs/\bib'arxivid}{\nolinkurl{arXiv:\bib'arxivid}}%
            \IfEmptyBibField{arxivclass}{}{~\nolinkurl{[\bib'arxivclass]}}
          }
        }\bib'transition
        \setbib@@
    \endgroup
  }
\newtheorem{thm}{Theorem}[section]
\newtheorem{theo}[thm]{Theorem}
\newtheorem{corollary}[thm]{Corollary}
\newtheorem{proposition}[thm]{Proposition}
\newtheorem{defi}[thm]{Definition}
\theoremstyle{remark}
\newtheorem{remark}[thm]{Remark}
\newtheorem{example}[thm]{Example}
\newtheorem{lemma}[thm]{Lemma}
\author{Z.W. Bezemek, and K. Spiliopoulos}
\address{Boston University, Department of Mathematics and Statistics\\ 111 Cummington Mall, Boston, MA 02215, USA}
\email[Zachary William Bezemek]{bezemek@bu.edu}
\email[Konstantinos Spiliopoulos]{kspiliop@bu.edu}
\thanks{This work has been partially supported by the National Science Foundation (DMS 2107856) and Simons Foundation Award  672441. The authors of the paper would like to thank both reviewers for a very careful and constructive review of this article.}
\title{Moderate deviations for fully coupled multiscale weakly interacting particle systems}
\date{\today}
\begin{document}

\begin{abstract}
We consider a collection of fully coupled weakly interacting diffusion processes moving in a two-scale environment. We study the moderate deviations principle of the empirical distribution of the particles' positions in the combined limit as the number of particles grow to infinity  and the time-scale separation parameter goes to zero simultaneously. We make use of weak convergence methods,  which provide a convenient representation for the moderate deviations rate function in a variational form in terms of an effective mean field control problem. We rigorously obtain equivalent representation for the moderate deviations rate function in an appropriate ``negative Sobolev" form, proving their equivalence, which is reminiscent of the large deviations rate function form for the empirical measure of weakly interacting diffusions obtained in the 1987 seminal paper by Dawson-G\"{a}rtner. In the course of the proof we obtain related ergodic theorems and we consider the regularity of Poisson type of equations associated to McKean-Vlasov problems, both of which are topics of independent interest. A novel ``doubled corrector problem" is introduced in order to control derivatives in the measure arguments of the solutions to the related Poisson equations used to control behavior of fluctuation terms.
\end{abstract}
\subjclass[2010]{60F10, 60F05}
\keywords{interacting particle systems,  multiscale processes, empirical measure, moderate deviations}

\maketitle
\section{Introduction}
The purpose of this paper is to study the moderate deviations principle (MDP) for slow-fast interacting particle systems.   In particular, we consider the system
\begin{align}\label{eq:slowfast1-Dold}
dX^{i,\epsilon,N}_t &= \biggl[\frac{1}{\epsilon}b(X^{i,\epsilon,N}_t,Y^{i,\epsilon,N}_t,\mu^{\epsilon,N}_t)+ c(X^{i,\epsilon,N}_t,Y^{i,\epsilon,N}_t,\mu^{\epsilon,N}_t) \biggr]dt + \sigma(X^{i,\epsilon,N}_t,Y^{i,\epsilon,N}_t,\mu^{\epsilon,N}_t)dW^i_t\\
dY^{i,\epsilon,N}_t & = \frac{1}{\epsilon}\biggl[\frac{1}{\epsilon}f(X^{i,\epsilon,N}_t,Y^{i,\epsilon,N}_t,\mu^{\epsilon,N}_t)+ g(X^{i,\epsilon,N}_t,Y^{i,\epsilon,N}_t,\mu^{\epsilon,N}_t) \biggr]dt \nonumber\\
&+ \frac{1}{\epsilon}\biggl[\tau_1(X^{i,\epsilon,N}_t,Y^{i,\epsilon,N}_t,\mu^{\epsilon,N}_t)dW^i_t+\tau_2(X^{i,\epsilon,N}_t,Y^{i,\epsilon,N}_t,\mu^{\epsilon,N}_t)dB^i_t\biggr]\nonumber\\
(X^{i,\epsilon,N}_0,Y^{i,\epsilon,N}_0)& = (\eta^{x},\eta^{y})\nonumber
{}\end{align}
on a filtered probability space $(\W,\F,\Prob,\br{\F_t})$ with $\br{\F_t}$ satisfying the usual conditions, where $b,c,\sigma,f,g,\tau_1,\tau_2:\R\times\R\times\mc{P}_2(\R)\tto \R$, $B^i_t,W^i_t$ are independent standard 1-D $\F_t$-Brownian motions for $i=1,...,N$, and $(\eta^{x},\eta^{y})\in\R^2$.   Here and throughout $\mc{P}_2(\R)$ denotes the space of probability measures on $\R$ with finite second moment, equipped with the 2-Wasserstein metric (see Appendix \ref{Appendix:LionsDifferentiation}). $\mu^{\epsilon,N}$ is defined by
\begin{align}
\label{eq:empiricalmeasures}
\mu^{\epsilon,N}_t = \frac{1}{N}\sum_{i=1}^N \delta_{X^{i,\epsilon,N}_t},t\in [0,T].
\end{align}

In (\ref{eq:slowfast1-Dold}), $X^{i,\epsilon,N}$ and $Y^{i,\epsilon,N}$ represent the slow and fast motion respectively of the $i^{\text{th}}$ component. Note that classical models of interacting particles in a two-scale potential, see \cites{BS,Dawson,delgadino2020,GP}, can be thought of  as special cases of (\ref{eq:slowfast1-Dold}) with $Y^{i,\epsilon,N}=X^{i,\epsilon,N}/\epsilon$.

Assume that $\epsilon(N)\tto 0$ as  $N\toinf$. In our case, moderate deviations amounts to studying the behavior of the empirical measure of the particles, i.e., of $\mu^{\epsilon,N}$  in the regime between fluctuations and large deviations behavior.
In particular, if we denote by $\mc{L}(X)$ the process at which $\mu^{\epsilon,N}$ converges to (the law of the averaged McKean-Vlasov Equation \eqref{eq:LLNlimitold}) and consider the moderate deviation scaling sequence $\br{a(N)}_{N\in\bb{N}}$ such that $a(N)>0,\forall N\in\bb{N}$ with  $a(N)\tto 0$ and $a(N)\sqrt{N}\tto \infty$ as $N\toinf$,
the moderate deviations process is defined to be
\begin{align}\label{eq:fluctuationprocess}
Z^N_t \coloneqq a(N)\sqrt{N}(\mu^{\epsilon,N}_t-\mc{L}(X_t)),t\in [0,T].
\end{align}

The goal of this paper is to derive the large deviations principle with speed $a^{-2}(N)$ for the process $Z^N_t$, which is the moderate deviations principle for the measure-valued process $\mu^{\epsilon,N}_t$. Notice that if $a(N)=1$ then we get the standard fluctuations process whose limiting behavior amounts to fluctuations around the law of large numbers, $\mc{L}(X_t)$, whereas if $a(N)=1/\sqrt{N}$ then we would be in the large deviations regime.

We remark here that due to the effect of multiple scales, it turns out that a relation between $\epsilon$ and $N$ is needed. So beyond requiring $a(N)\tto 0$ and $a(N)\sqrt{N}\tto \infty$, we also require that there exists $\rho\in (0,1)$ and $\lambda \in (0,\infty]$ such that $a(N)\sqrt{N}\epsilon(N)^\rho \tto \lambda$ as $N\toinf$. 
Note that this should be viewed as a restriction on the scaling sequence $a(N)$, not on the relationship between $\epsilon$ and $N$, and in some regimes we expect this assumption can be weakened. See Remark \ref{remark:onthescalingofa(N)}.

The presence of multiple scales is a common feature in a range of models used in various disciplines ranging from climate modeling to chemical physics to finance, see for example \cites{BryngelsonOnuchicWolynes, feng2012small,jean2000derivatives, HyeonThirumalai, majda2008applied,
 Zwanzig} for a representative, but by no means complete list of references. Interacting diffusions have also been the central topic of study in science and engineering, see for example \cites{BinneyTremaine,Garnier1, Garnier2, IssacsonMS,Lucon2016,MotschTadmor2014} to name a few.
 In the absence of multiple scales, i.e., when $\epsilon=1$, law of large numbers, fluctuations and large deviations behavior as $N\rightarrow\infty$ has been studied in the literature, see \cites{Dawson,DG,BDF}. Analogously, in the case of $N=1$, the behavior as $\epsilon\downarrow 0$, have been extensively studied in the literature, see for example \cites{Baldi,DS,FS, GaitsgoryNguyen,JS,MS,MSImportanceSampling,Lipster,PV1,PV2, Spiliopoulos2013a, Spiliopoulos2014Fluctuations, Spiliopoulos2013QuenchedLDP, Veretennikov, VeretennikovSPA2000}.

 Homogenization of McKean-Vlasov equations (equations that are the limit of $N\rightarrow\infty$ with $\epsilon$ fixed) has also been recently studied in the literature, see e.g., \cites{BezemekSpiliopoulosAveraging2022,HLL,KSS,RocknerMcKeanVlasov}. These results can be thought of as looking at the limit of the system (\ref{eq:slowfast1-Dold}) when first $N\rightarrow\infty$ and then $\epsilon\rightarrow 0$. Large deviations for a special case of (\ref{eq:slowfast1-Dold}) has been recently established in \cite{BS} and in the absence of multiple scales in \cite{BDF}. In \cite{Orrieri} the author studies large deviations for interacting particle systems in the absence of multiple scales but in the joint mean-field and small-noise limit. In the absence of multiple scales, i.e., when $\epsilon=1$, moderate deviations for interacting particle systems have been studied in \cite{BW}.

The contributions of this work are fourfold. Firstly, we investigate the combined limit $N\rightarrow\infty$ and $\epsilon\rightarrow0$ for the fully coupled interacting particle system of McKean-Vlasov type (\ref{eq:slowfast1-Dold}) through the lens of moderate deviations. In order to do so, we use the weak convergence methodology developed in \cite{DE} which leads to the study of (appropriately linearized) optimal stochastic control problems of McKean-Vlasov type, see for example \cites{CD, Lacker,Fischer}. The first main result of this paper is Theorem \ref{theo:MDP} which provides a variational representation of the moderate deviations rate function.

Secondly, we rigorously re-express the obtained variational form of the rate function in the ``negative Sobolev'' form given in Theorem 5.1 of the seminal paper by Dawson-G\"{a}rtner \cite{DG} in the absence of multiple scales. Hence, we rigorously establish the equivalence of the two formulations in the moderate deviations setting, see Proposition \ref{prop:DGformofratefunction}. A connection of this form was recently established rigorously for the first time in the large deviations setting in \cite{BS}.

Thirdly, in the process of establishing the MDP, we derive related ergodic theorems for multiscale interacting particle systems that are of independent interest. Due to the nature of moderate deviations, we need to consider certain solutions of Poisson equations whose properties are considered for the first time in this paper. In particular, we must control a term involving a derivative in the measure argument of the solution to the Poisson Equation \eqref{eq:cellproblemold} (known as the Cell-Problem in the periodic setting). Such terms are unique to slow-fast interacting particle systems and slow-fast McKean-Vlasov SDEs, and thus do not appear whatsoever in proofs of averaging in the one-particle setting. Thus, the ``doubled corrector problem'' construction, \eqref{eq:doublecorrectorproblem}, and the method of proof of Proposition \ref{prop:purpleterm1} are novel ideas here, see also \cite{BezemekSpiliopoulosAveraging2022}.

Fourthly, in contrast to \cite{BW}, in this paper the coefficients of the model need not depend on the measure parameter in an affine way. We allow the coefficients of the interacting particle system \eqref{eq:slowfast1-Dold} to have any dependence on the measure $\mu$, so long that it is sufficiently smooth- see Corollary \ref{cor:mdpnomulti} and Remark \ref{remark:BWextension}. This is thanks to Lemma \ref{lemma:rocknersecondlinfunctderimplication}, which is inspired by Lemma 5.10 in \cite{DLR}, and allows us to see that with sufficient regularity of a functional on $\mc{P}_2(\R)$, the $L^2$ error of that functional evaluated at the empirical measure of $N$ IID random variables and the Law of those random variables is $\mc{O}(1/N)$ as $N\toinf$.

In Section \ref{SS:Examples}, we make our general results concrete for a popular model of interacting particles in a two-scale potential, see also \cite{GP} for a motivating example in this direction. In addition, we present in Section \ref{subsec:suffconditionsoncoefficients}  a number of concrete examples where the conditions of this paper hold.

The identification of the optimal change of measure in the moderate deviations lower bound through feedback controls together with the equivalence proof between the variational formulation and the ``negative Sobolev'' form of the rate function, open the door to a rigorous study of provably-efficient accelerated Monte-Carlo schemes for rare events computation, analogous to what has been accomplished in the one particle case, see e.g., \cites{DSW,MSImportanceSampling}. Exploring this is beyond the scope of this work and will be addressed elsewhere.

In addition, \cite{Dawson} remarks that phase transitions can occur at the level of fluctuations for interacting particle systems. Since the moderate deviations principle is essentially a large deviations statement around the fluctuations, the results obtained in this paper can potentially be related to phase transitions and allow to characterize them further. This dynamical systems direction is left for future work as it is also outside the scope of this paper.

In contrast to large deviations, the main difficulty with moderate deviations lies in the tightness proof, where we use an appropriate coupling argument, as well as in the fact that the space of signed measures is not completely metrizable in the topology of weak convergence (see \cite{DBDG} Remark 1.2, as well as \cite{RV} Remarks 2.2 and 2.3 for further discussion on related issues). Thus, as we will see, we will have to study $Z^N$ as a distribution-valued process on a suitable weighted Sobolev space. In addition, the presence of the multiple scales complicates the required estimates because the ergodic behavior needs to be accounted for as well. The coupling argument used in the proof of tightness is non-standard in that the IID particle system used as an intermediary process between the empirical measure $\mu^{\epsilon,N}$ from Equation \eqref{eq:empiricalmeasures} and its homogenized McKean-Vlasov limit $\mc{L}(X)$ from Equation \eqref{eq:LLNlimitold} is not equal in distribution to $X$. Instead, it is an IID system of slow-fast McKean Vlasov SDEs - see Equation \eqref{eq:IIDparticles}. Thus our proof of tightness is in a sense relying on the fact that the limits $N\toinf$ and $\epsilon\downarrow 0$ for the empirical measure \eqref{eq:empiricalmeasures} commute at the level of the law of large numbers. For a further discussion of this, see Remark \ref{remark:ontheiidsystem} and the discussion at the beginning of Section \ref{sec:tightness}.

The rest of the paper is organized as follows. In Section \ref{subsec:notationandtopology}, we introduce the appropriate topology for $Z^N$ and lay out our main assumptions. We also introduce a quite useful multi-index notation that will allow us to circumvent notational difficulties with various combinations of mixed derivatives that appear throughout the paper. The derivation of the moderate deviations principle is based on the weak convergence approach of \cite{DE} which converts the large deviations problem to weak convergence of an appropriate stochastic control problem.  The main result is presented in Section \ref{sec:mainresults}, Theorem \ref{theo:MDP}.  In Section \ref{sec:formofratefunction}
 we prove an alternative form of the rate function. This form provides a rigorous connection in moderate deviations between the ``variational form'' of the rate function for the empirical measure of weakly interacting particle systems proved in Theorem \ref{theo:MDP} to the ``negative Sobolev'' form given in Theorem 5.1 of the seminal paper by Dawson-G\"{a}rtner \cite{DG}. Corollaries \ref{cor:mdpnomulti} and \ref{corollary:dawsongartnerformnomulti} specialize the discussion to the setting without multiscale structure and thus generalizes the results of \cite{BW}. Specific examples are presented in Subsection \ref{SS:Examples}. Section \ref{S:ControlSystem} formulates the appropriate stochastic control problem.

Sections \ref{sec:ergodictheoremscontrolledsystem}-\ref{sec:lowerbound} are devoted to the proof of Theorem \ref{theo:MDP}.  Due to the presence of the multiple scales, ergodic theorems are needed  to characterize the behavior as $\epsilon\downarrow 0$ of certain functionals of interest for the controlled multiscale interacting particle system; this is the content of Section \ref{sec:ergodictheoremscontrolledsystem}. Tightness of the controlled system is proven in Section \ref{sec:tightness}. In Section \ref{sec:identificationofthelimit} we establish the limiting behavior of the controlled system. The Laplace principle lower bound is proven in Section \ref{sec:upperbound/compactnessoflevelsets}. Section \ref{sec:lowerbound} contains the proof of the Laplace principle upper bound as well as compactness of level sets. Conclusions and directions for future work are in Section \ref{S:Conclusions}. Appendix \ref{sec:notationlist} provides a list of technical notation used throughout the manuscript for convenience. A number of key technical estimates are presented in the remainder of the appendix. In particular, Appendix \ref{sec:aprioriboundsoncontrolledprocess} contains moments bounds for the controlled system. Appendix \ref{sec:regularityofthecellproblem} presents regularity results for the Poisson equation needed to study the fluctuations. Even though related results exist in the literature, the fully coupled McKean-Vlasov case is not covered by the existing results, and therefore Appendix \ref{sec:regularityofthecellproblem} contains the appropriate discussion of the necessary extensions. Lastly, Appendix \ref{Appendix:LionsDifferentiation} contains necessary results on differentiation of functions on spaces of measures.

\section{Notation, Topologies, and Assumptions}\label{subsec:notationandtopology}

In order to construct an appropriate topology for the process $Z^N$ from Equation \eqref{eq:fluctuationprocess}, we follow the method of \cites{BW,HM,KX}. Denote by $\mc{S}$ the space of functions $\phi:\R\tto \R$ which are infinitely differentiable and satisfy $|x|^m\phi^{(k)}(x)\tto 0$ as $|x|\toinf$ for all $m,k\in\bb{N}$. On $\mc{S}$, consider the sequence of inner products $(\cdot,\cdot)_n$ and $\norm{\cdot}_n$ defined by
\begin{align}\label{eq:familyofhilbertnorms}
(\phi,\psi)_n&\coloneqq \sum_{k=0}^n \int_\R (1+x^2)^{2n}\phi^{(k)}(x)\psi^{(k)}(x)dx,
\qquad \norm{\phi}_n\coloneqq \sqrt{(\phi,\phi)_n}
\end{align}
for each $n\in\bb{N}$. As per \cite{GV} p.82 (this specific example on p.84), this sequence of seminorms induces a nuclear Fr\'echet topology on $\mc{S}$. Let $\mc{S}_n$ be the completion of $\mc{S}$ with respect to $\norm{\cdot}_n$ and $\mc{S}_{-n}=\mc{S}_n'$ the dual space of $\mc{S}_n$. We equip $\mc{S}_{-n}$ with dual norm $\norm{\cdot}_{-n}$ and corresponding inner product $(\cdot,\cdot)_{-n}$. Then $\br{\mc{S}_{n}}_{n\in\bb{Z}}$ defines a sequence of nested Hilbert spaces with $\mc{S}_m\subset \mc{S}_n$ for $m\geq n$. In addition we have for each $n\in\bb{N}$, there exists $m>n$ such that the canonical embedding $\mc{S}_{-n}\tto \mc{S}_{-m}$ is Hilbert-Schmidt. In particular, this holds for $m$ sufficiently large that $\sum_{j=1}^N\norm{\phi^m_j}_n<\infty$, where $\br{\phi^m_j}_{j\in\bb{N}}$ is a complete orthonormal system of $\mc{S}_m$. This allows us to use the results of \cite{Mitoma} to see that $\br{Z^N}_{N\in\bb{N}}$ is tight as a sequence of $C([0,T];S_{-m})$-valued random variables for sufficiently large $m$.
In particular, we will require $m>7$ to be sufficiently large so that the canonical embedding
\begin{align}\label{eq:mdefinition}
\mc{S}_{-7}\tto \mc{S}_{-m}\text{ is Hilbert-Schmidt.}
\end{align}
In the proof of the Laplace Principle, we will also make use of $w>9$ such that
\begin{align}\label{eq:wdefinition}
\mc{S}_{-m-2}\tto \mc{S}_{-w}\text{ is Hilbert-Schmidt.}
\end{align}
When proving compactness of level sets of the rate function, we will in addition make use of $r>11$ sufficiently large that the canonical embedding
\begin{align}\label{eq:rdefinition}
\mc{S}_{-w-2}\tto \mc{S}_{-r}\text{ is Hilbert-Schmidt.}
\end{align}

It will be useful to consider another system of seminorms on $\mc{S}$ given, for each $n\in\bb{N}$, by
\begin{align}\label{eq:boundedderivativesseminorm}
|\phi|_n \coloneqq \sum_{k=0}^n \sup_{x\in\R}|\phi^{(k)}(x)|
{}\end{align}
Via a standard Sobolev embedding argument, one can show that for each $n\in\bb{N}$, there exists $C(n)$ such that:
\begin{align}\label{eq:sobolembedding}
|\phi|_n\leq C(n)\norm{\phi}_{n+1},\forall \phi\in\mc{S}.
\end{align}

Let $\bm{X}$ and $\bm{Y}$ be Polish spaces, and $(\tilde\W,\tilde\F,\mu)$ be a measure space. We will denote by $\mc{P}(\bm{X})$ the space of probability measures on $\bm{X}$ with the topology of weak convergence, $\mc{P}_2(\bm{X})\subset \mc{P}(\bm{X})$ the space of square integrable probability measures on $\bm{X}$ with the 2-Wasserstein metric (see Definition \ref{def:lionderivative}), $\mc{B}(\bm{X})$ the Borel $\sigma$-field of $\bm{X}$, $C(\bm{X};\bm{Y})$ the space of continuous functions from $\bm{X}$ to $\bm{Y}$, $C_b(\bm{X})$ the space of bounded, continuous functions from $\bm{X}$ to $\R$ with norm $\norm{\cdot}_\infty$, and $L^p(\tilde\W,\tilde\F,\mu)$ the space of $p$-integrable functions on $(\tilde\W,\tilde\F,\mu)$ (where if $\tilde\W =\bm{X}$ and no $\sigma$-algebra is provided we assume it is $\mc{B}(\bm{X})$). For $\mu\in\mc{P}(\bm{X}),\nu\in\mc{P}(\bm{Y})$, we will denote the product measure induced by $\mu$ and $\nu$ on $\bm{X}\times \bm{Y}$ by $\mu\otimes \nu$. We will at times denote $L^2(\bm{X}\times \bm{X},\mu\otimes \mu)$ by $L^2(\bm{X},\mu)\otimes L^2(\bm{X},\mu)$. We will denote by $L^1_{\text{loc}}(\bm{X},\mu)$ the space of locally integrable functions on $\bm{X}$. For $U\subseteq \R^d$ open, we will denote by $C^\infty_c(U)$ the space of smooth, compactly supported functions on $U$. $C^k_b(\R)$ for $k\in\bb{N}$ will note the space of functions with $k$ continuous and bounded derivatives on $\R$, with norm $|\cdot|_k$ as in Equation \eqref{eq:boundedderivativesseminorm}, and $C^{1,k}_b([0,T]\times\R)$ will denote continuous functions $\psi$ on $[0,T]\times\R$ with a continuous, bounded time derivative on $(0,T)$, denoted $\dot{\psi}$, such that $\norm{\psi}_{C^{1,k}_b([0,T]\times\R)}\coloneqq \sup_{t\in[0,T],x\in\R} |\dot{\psi}(t,x)|+\sup_{t\in[0,T]}\norm{\psi(t,\cdot)}_{C^k_b(\R)}<\infty$. $C^k_{b,L}(\R)\subset C^k_b(\R)$ is the space of functions in $C^k_b(\R)$ such that all $k$ derivatives are Lipschitz continuous. For $\phi \in L^1(\bm{X},\mu),\mu\in\mc{P}(\R)$ we define $\langle \mu,\phi\rangle \coloneqq \int_{\bm{X}}\phi(x)\mu(dx)$. Similarly, for $Z\in\mc{S}_{-p},\phi\in\mc{S}_p$, we will denote the action of $Z$ on $\phi$ by $\langle Z,\phi\rangle$. For $a,b\in \R$, we will denote $a\vee b = \max\br{a,b}$ and $a\wedge b = \min\br{a,b}$. $C$ will be used for a constant which may change from line to line throughout, and when there are parameters $a_1,...,a_n$ which $C$ depends on in an important manner, will denote this dependence by $C(a_1,...,a_n)$. For all function spaces, the codomain is assumed to be $\R$ unless otherwise denoted.

In the construction of the controlled system, we will also make use of the space of measures on $\R^d\times [0,T]$ such that $Q(\R^d\times[0,t]) = t,\forall t\in [0,T]$. We will denote this space $M_T(\R^d)$. We equip $M_T(\R^d)$ with the topology of weak convergence of measures (thus making $M_T(\R^d)$ a Polish space by \cite{DE} Theorem A.3.3). See also the proof of Lemma 3.3.1 in \cite{DE} for the fact that $M_T(\R^d)$ is a closed subset of finite positive Borel measures on $\R^d\times [0,T]$). For when dealing with the occupation measures as defined in Equation (\ref{eq:occupationmeasures}), we will in particular take $d=4$ and will interpret $Q(dx,dy,dz,dt)$ as $x$ denoting variable representing the first coordinate in $\R^4$, $y$ the second, and $z$ the third and fourth.

For a mapping $\vartheta:[0,T]\tto \mc{P}(\R^d)$, it will be useful to define an element of $M_T(\R^d)$ induced by $\vartheta$ by
\begin{align}\label{eq:rigorousLotimesdt}
\nu_{\vartheta}(A\times[0,t])\coloneqq \int_0^t \vartheta(s)[A]ds,\forall t\in [0,T],A\in\mc{B}(\R^d).
\end{align}

Due to the nature of the space we consider the sequence $\br{Z^N}_{N\in\bb{N}}$ to live on, it is natural that we will have to restrict the growth of the coefficients which appear in Equations \eqref{eq:slowfast1-Dold} and \eqref{eq:LLNlimitold} in $x$. We will also need to ensure that the derivatives of the Poisson Equation which appear in the definition the limiting coefficients in Equation \eqref{eq:limitingcoefficients} exist and that the homogenized drift and diffusion coefficients in Equation \eqref{eq:averagedlimitingcoefficients}, which determine the limiting McKean-Vlasov Equation $X_t$ from Equation \eqref{eq:LLNlimitold}, are well-defined. In doing so, will be controlling many mixed derivatives of functions in the Lions sense \cite{NotesMFG} and in the standard sense, it will be useful for us to borrow the multi-index notation proposed in \cite{CM} and employed in \cite{CST}. For the reader's convenience, we have included in Appendix \ref{Appendix:LionsDifferentiation} a brief review on differentiation of functions on spaces of measures. For a more comprehensive exposition on this, we refer the interested reader to \cite{CD} Chapter 5.

Furthermore, since we prove the moderate deviations principle via use of the controlled particle system \eqref{eq:controlledslowfast1-Dold}, we will only have up to second moments of the controlled fast system (see Appendix \ref{sec:aprioriboundsoncontrolledprocess}). It will be important to make sure that terms which the controlled fast process enters in the intermediate proofs of tightness, so naturally we will need some assumptions on the rate of polynomial growth in $y$ of the coefficients which appear in Equations \eqref{eq:slowfast1-Dold} and \eqref{eq:LLNlimitold} (See Remark \ref{remark:ontheassumptions}). We thus extend the multi-index notation from the aforementioned papers to track specific collections of mixed partial derivatives, and to give us a clean way of tracking the rate of polynomial growth in $y$ for those mixed partials in the coming definitions.

\begin{defi}\label{def:multiindexnotation}
Let $n,l$ be non-negative integers and $\bm{\beta}=(\beta_1,...,\beta_n)$ be an $n-$dimensional vector of non-negative integers. We call any ordered tuple of the form $(n,l,\bm{\beta})$ a \textbf{multi-index}. For a function $G:\R\times\mc{P}_2(\R)\tto \R$, we will denote for a multi-index $(n,l,\bm{\beta})$, if this derivative is well defined,
\begin{align*}
D^{(n,l,\bm{\beta})}G(x,\mu)[z_1,...,z_n] = \partial_{z_1}^{\beta_1}...\partial_{z_n}^{\beta_n}\partial_x^l \partial_\mu^n G(x,\mu)[z_1,...,z_n],
\end{align*}
 As noted in the Remark \ref{remark:thirdLionsDerivative}, for such a derivative to be well defined we require for it to be jointly continuous in $x,\mu,z_1,...,z_n$ where the topology used in the measure component is that of $\mc{P}_2(\R)$.

We also define $\bm{\delta}^{(n,l,\bm{\beta})}G(x,\mu)[z_1,...,z_n]$ in the exact same way, with the Lions derivatives $\partial_\mu$ replaced by linear functional derivatives $\frac{\delta}{\delta m}$; see Appendix \ref{Appendix:LionsDifferentiation} for differentiation of functions on spaces of measures.
\end{defi}
\begin{defi}\label{def:completemultiindex}
For $\bm{\zeta}$ a collection of multi-indices of the form $(n,l,\bm{\beta})\in\bb{N}\times\bb{N}\times\bb{N}^n$, we will call $\bm{\zeta}$ a \textbf{complete} collection of multi-indices if for any $(n,l,\bm{\beta})\in \bm{\zeta}$, $\br{(k,j,\bm{\alpha}(k))\in \bb{N}\times\bb{N}\times\bb{N}^k:j\leq l,k\leq n,\bm{\alpha}(k)=(\alpha_1,...,\alpha_k),\exists \bm{\beta}(k)=(\beta(k)_1,...,\beta(k)_k)\in \binom{\bm{\beta}}{k} \text{ such that } \alpha_p \leq \beta(k)_p,\forall p=1,...,k }\subset \bm{\zeta}$. Here for a vector of positive integers $\beta=(\beta_1,...,\beta_n)$ and $k\in\bb{N},k\leq n$, we are using the notation $\binom{\bm{\beta}}{k}$ to represent the set of size $\binom{n}{k}$ containing all the $k$-dimensional vectors of positive integers which can be obtained from removing $n-k$ entries from $\bm{\beta}$.
\end{defi}
\begin{remark}\label{remark:oncompletecollectiondefinition}
Definition \ref{def:completemultiindex} is enforcing that if collection of multi-indices contains a multi-index representing some mixed derivative in $(x,\mu,z)$ as per Definition \ref{def:multiindexnotation}, then it also contains all lower-order mixed derivatives of the same type. For example, if $\bm{\zeta}$ is a collection of multi-indices containing $(2,0,(1,1))$ (corresponding to $\partial_{z_1}\partial_{z_2}\partial^2_\mu G(x,\mu)[z_1,z_2]$) then, in order to be complete, it must also contain the terms $(2,0,(1,0)),(2,0,(0,1)),(2,0,0),(1,0,1),(1,0,0),$ and $(0,0,0)$ (corresponding to the terms $\partial_{z_1}\partial^2_\mu G(x,\mu)[z_1,z_2]$, $\partial_{z_2}\partial^2_\mu G(x,\mu)[z_1,z_2],\partial^2_\mu G(x,\mu)[z_1,z_2],\partial_{z}\partial_\mu G(x,\mu)[z]$, $\partial_\mu G(x,\mu)[z]$, and $G(x,\mu)$ respectively). This is a technical requirement used in order to state the results in Appendix \ref{subsection:regularityofthe1Dpoissoneqn} in a way that allows the inductive arguments used therein to go through.
\end{remark}

Using this multi-index notation, it will be useful to define some spaces regarding regularity of functions in regards to these mixed derivatives. We thus make the following modifications to Definition 2.13 in \cite{CST}:
\begin{defi}\label{def:lionsderivativeclasses}
For $\bm{\zeta}$ a collection of multi-indices of the form $(n,l,\bm{\beta})\in\bb{N}\times\bb{N}\times\bb{N}^n$, we define $\mc{M}_b^{\bm{\zeta}}(\R\times\mc{P}_2(\R))$ to be the class of functions $G:\R\times\mc{P}_2(\R)\tto \R$ such that $D^{(n,l,\bm{\beta})}G(x,\mu)[z_1,...,z_n]$ exists and satisfies
\begin{align}
\label{eq:LionsClassBoundedness} \norm{G}_{\mc{M}_b^{\bm{\zeta}}(\R\times\mc{P}_2(\R))}\coloneqq \sup_{(n,l,\bm{\beta})\in \bm{\zeta}}\sup_{x,z_1,...,z_n\in\R,\mu\in\mc{P}_2(\R)}|D^{(n,l,\bm{\beta})}G(x,\mu)[z_1,...,z_n]|&\leq C .
\end{align}
We denote the class of functions $G\in \mc{M}_b^{\bm{\zeta}}(\R\times\mc{P}_2(\R))$ such that:
\begin{align}
\label{eq:LionsClassLipschitz}|D^{(n,l,\bm{\beta})}G(x,\mu)[z_1,...,z_n]- D^{(n,l,\bm{\beta})}G(x',\mu')[z_1',...,z_n']|&\leq C_L\biggl(|x-x'|+\sum_{i=1}^N|z_i-z'_i|+\bb{W}_2(\mu,\mu') \biggr)
\end{align}
for all $(n,l,\bm{\beta})\in \bm{\zeta}$ and $x,x',z_1,...,z_n,z_1',...,z_n'\in\bb{\R},\mu,\mu'\in\mc{P}_2(\R)$ by $\mc{M}_{b,L}^{\bm{\zeta}}(\R\times\mc{P}_2(\R))$. We define $\mc{M}_b^{\bm{\zeta}}(\mc{P}_2(\R))$ and $\mc{M}_{b,L}^{\bm{\zeta}}(\mc{P}_2(\R))$ analogously, where instead here $\bm{\zeta}$ is a collection of multi-indices of the form $(n,\bm{\beta})\in\bb{N}\times\bb{N}^n$, and we take the $l=0$ in the above multi-index notation for the derivatives.

We will also make use of the class of functions $\mc{M}_{p}^{\bm{\zeta}}(\R\times\R\times \mc{P}_2(\R))$ which contains $G:\R\times\R\times\mc{P}_2(\R)\tto \R$ such that $G(\cdot,y,\cdot)\in \mc{M}_{b}^{\bm{\zeta}}(\R\times \mc{P}_2(\R))$ for all $y\in\R$, with all derivatives appearing in the definition of $\mc{M}_{b}^{\bm{\zeta}}(\R\times \mc{P}_2(\R))$ jointly continuous in $(x,y,\bb{W}_2)$, and for each multi-index $(n,l,\bm{\beta})\in\bm{\zeta}$,
\begin{align}\label{eq:newqnotation}
\sup_{x,z_1,...,z_n\in\R,\mu\in\mc{P}_2(\R)}|D^{(n,l,\bm{\beta})}G(x,y,\mu)[z_1,...,z_n]|&\leq C(1+|y|)^{q_G(n,l,\bm{\beta})},
\end{align}
where $q_G(n,l,\bm{\beta})\in\R$.
Similarly, $\mc{M}_{p,L}^{\bm{\zeta}}(\R\times \R\times \mc{P}_2(\R))$ is defined as $G\in \mc{M}_{p}^{\bm{\zeta}}(\R\times\R\times \mc{P}_2(\R))$ such that Equation \eqref{eq:LionsClassLipschitz} holds for $G(\cdot,y,\cdot)$ for each $y\in \R$, where $C_L(y)$ grows at most polynomially in $y$.

We also define $\mc{M}_b^{\bm{\zeta}}([0,T]\times\R\times\mc{P}_2(\R))$ to be the class of functions $G:[0,T]\times\R\times \mc{P}_2(\R)\tto \R$ such that $G(\cdot,x,\mu)$ is continuously differentiable on $(0,T)$ for all $x\in\R,\mu\in\mc{P}_2(\R)$ with time derivative denoted by $\dot{G}(t,x,\mu)$, $G(t,\cdot,\cdot) \in \mc{M}_b^{\bm{\zeta}}(\R\times\mc{P}_2(\R))$ for all $t\in [0,T]$, with \eqref{eq:LionsClassBoundedness} holding uniformly in $t$, and $G,\dot{G},$ and all derivatives involved in the definition of $\mc{M}_b^{\bm{\zeta}}(\R\times\mc{P}_2(\R))$ are jointly continuous in time, measure, and space. We define for $G\in \mc{M}_b^{\bm{\zeta}}([0,T]\times\R\times\mc{P}_2(\R))$
\begin{align*}
\norm{G}_{\mc{M}_b^{\bm{\zeta}}([0,T]\times\R\times\mc{P}_2(\R))}\coloneqq \sup_{t\in[0,T]}\norm{G(t,\cdot)}_{\mc{M}_b^{\bm{\zeta}}(\R\times\mc{P}_2(\R))}+\sup_{t\in[0,T],x\in\R,\mu\in\mc{P}_2(\R)}|\dot{G}(t,x,\mu)|.
\end{align*}
We denote the class of functions $G\in \mc{M}_b^{\bm{\zeta}}([0,T]\times\R\times\mc{P}_2(\R))$ such that \eqref{eq:LionsClassLipschitz} holds uniformly in $t$ by $\mc{M}_{b,L}^{\bm{\zeta}}([0,T]\times\R\times\mc{P}_2(\R))$. Again, we define $\mc{M}_b^{\bm{\zeta}}([0,T]\times\mc{P}_2(\R))$, $\mc{M}_{b,L}^{\bm{\zeta}}([0,T]\times\mc{P}_2(\R))$, and $\mc{M}_p^{\bm{\zeta}}([0,T]\times\R\times\R\times\mc{P}_2(\R))$ analogously.

At times we will want to consider Lions Derivatives bounded in $L^2(\R,\mu)$ rather than uniformly in $z$. Thus we define $\tilde{\mc{M}}_b^{\bm{\zeta}}(\R\times\mc{P}_2(\R))$ to be the class of functions $G:\R\times\mc{P}_2(\R)\tto \R$ such that $D^{(n,l,\bm{\beta})}G(x,\mu)[z_1,...,z_n]$ exists and satisfies
\begin{align}
\label{eq:LionsClassL2Boundedness} \norm{G}_{\tilde{\mc{M}}_b^{\bm{\zeta}}(\R\times\mc{P}_2(\R))}&\coloneqq \sup_{(n,l,\bm{\beta})\in \bm{\zeta}}\sup_{x\in\R,\mu\in\mc{P}_2(\R)}\norm{D^{(n,l,\bm{\beta})}G(x,\mu)[\cdot]}_{L^2(\mu,\R)^{\otimes n}} \\
&=\sup_{(n,l,\bm{\beta})\in \bm{\zeta}}\sup_{x\in\R,\mu\in\mc{P}_2(\R)}\biggl(\int_{\R}...\int_{\R} |D^{(n,l,\bm{\beta})}G(x,\mu)[z_1,...,z_n]|^2\mu(dz_1)...\mu(dz_n)  \biggr)^{1/2}  \leq C. \nonumber
\end{align}

We also define $\tilde{\mc{M}}_b^{\bm{\zeta}}([0,T]\times\R\times\mc{P}_2(\R))$ analogously to $\mc{M}_b^{\bm{\zeta}}([0,T]\times\R\times\mc{P}_2(\R))$,  $\tilde{\mc{M}}_{p}^{\bm{\zeta}}(\R\times\R\times \mc{P}_2(\R))$ analogously to $\mc{M}_{p}^{\bm{\zeta}}(\R\times\R\times \mc{P}_2(\R))$, and $\tilde{\mc{M}}_{p}^{\bm{\zeta}}([0,T]\times\R\times\R\times \mc{P}_2(\R))$ analogously to $\mc{M}_{p}^{\bm{\zeta}}([0,T]\times\R\times\R\times \mc{P}_2(\R))$. We will denote the polynomial growth rate for $G\in \tilde{\mc{M}}_{p}^{\bm{\zeta}}(\R\times\R\times \mc{P}_2(\R))$ and $(n,l,\bm{\beta})\in\bm{\zeta}$ as in Equation \eqref{eq:newqnotation} but with the $L^2(\R,\mu)^{\otimes n}$-norm by $\tilde{q}(n,l,\bm{\beta})\in\R$ to avoid confusion with polynomial growth of the derivatives in the uniform norm. That is:
\begin{align}\label{eq:tildeq}
\sup_{x\in\R,\mu\in\mc{P}_2(\R)}\biggl(\int_{\R}...\int_{\R} |D^{(n,l,\bm{\beta})}G(x,\mu)[z_1,...,z_n]|^2\mu(dz_1)...\mu(dz_n)  \biggr)^{1/2}\leq C(1+|y|)^{\tilde{q}_G(n,l,\bm{\beta})}.
\end{align}
Lastly, we define $\mc{M}_{\bm{\delta},b}^{\bm{\zeta}}(\R\times \mc{P}_2(\R))$ and $\mc{M}_{\bm{\delta},p}^{\bm{\zeta}}(\R\times\R\times \mc{P}_2(\R))$ in the same way as $\mc{M}_{b}^{\bm{\zeta}}(\R\times \mc{P}_2(\R))$ and $\mc{M}_{p}^{\bm{\zeta}}(\R\times\R\times \mc{P}_2(\R))$ respectively, but with with $D^{(n,l,\bm{\beta})}$ replaced by $\bm{\delta}^{(n,l,\bm{\beta})}$. We also extend this in the natural way when the spatial components are in higher dimensions (i.e. taking gradients and using norms in $\R^d$).
\end{defi}

Let us now introduce the main assumptions that are needed for the work of this paper to go through. 
\begin{enumerate}[label={A\arabic*)}]
{}\item \label{assumption:uniformellipticity} $0<\lambda_-\leq \tau_1^2(x,y,\mu)+\tau_2^2(x,y,\mu)\leq \lambda_+<\infty$, $\forall x,y\in\R,\mu\in\mc{P}_2(\R)$, and $\tau_1,\tau_2$ have two uniformly bounded derivatives in $y$ and which are jointly continuous in $(x,y,\bb{W}_2).$
\item \label{assumption:retractiontomean} 
There exists $\beta>0$ and $\kappa>0$ such that:
\begin{align}\label{eq:fnearOU}
f(x,y,\mu) = -\kappa y + \eta(x,y,\mu)
\end{align}
where $\eta$ is uniformly bounded in $x$ and $\mu$, and Lipschitz in the sense of \ref{assumption:uniformLipschitzxmu} in $x$, $\mu$, and $y$ with
\begin{align*}
|\eta(x,y_1,\mu)-\eta(x,y_2,\mu)|\leq L_{\eta}|y_1-y_2|,\forall x\in\R,\mu\in\mc{P}_2(\R)
\end{align*}
for $L_\eta$ such that $L_\eta - \kappa<0$, and
\begin{align}\label{eq:rocknertyperetractiontomean}
2(f(x,y_1,\mu)-f(x,y_2,\mu))(y_1-y_2)+3|\tau_1(x,y_1,\mu)-\tau_1(x,y_2,\mu)|^2 &+3|\tau_2(x,y_1,\mu)-\tau_2(x,y_2,\mu)|^2 \leq -\beta |y_1-y_2|^2,\\
&\forall x,y_1,y_2\in \R,\mu\in\mc{P}(\R).\nonumber
\end{align}
\end{enumerate}

Let $a(x,y,\mu)=\frac{1}{2}[\tau_1^2(x,y,\mu)+\tau_2^2(x,y,\mu)]$. For $x\in \R,\mu\in \mc{P}_2(\R)$, we define the differential operator $L_{x,\mu}$ acting on $\phi \in C_b^2(\R)$ by
\begin{align}\label{eq:frozengeneratormold}
L_{x,\mu}\phi(y) = f(x,y,\mu)\phi'(y)+a(x,y,\mu)\phi''(y).
\end{align}

Note that under assumptions \ref{assumption:uniformellipticity} and \ref{assumption:retractiontomean},  there is a constant $C$ independent of $x,y,\mu$ such that:
\begin{align}\label{eq:fdecayimplication}
2f(x,y,\mu)y+3|\tau_1(x,y,\mu)|^2+3|\tau_2(x,y,\mu)|^2 \leq -\frac{\beta}{2} |y|^2 +C,\forall x,y\in\R,\mu\in\mc{P}_2(\R).
\end{align}
Thus by \cite{PV1} Proposition 1 (see also \cite{Veretennikov1987}), there exists a $\pi(\cdot;x,\mu)$ which is the unique measure solving
\begin{align}\label{eq:invariantmeasureold}
L_{x,\mu}^*\pi=0.
\end{align}

Moreover, for all $k>0$, there is $C_k\geq 0$ such that $\sup_{x\in \R,\mu\in\mc{P}_2(\R)}\int_\R |y|^k \pi(dy;x,\mu)\leq C_k$.

\begin{enumerate}[label={A\arabic*)}]\addtocounter{enumi}{2}
\item \label{assumption:centeringcondition} For $\pi$ as in Equation \eqref{eq:invariantmeasureold},
\begin{align}\label{eq:centeringconditionold}
\int_\R b(x,y,\mu)\pi(dy;x,\mu)=0,\forall x\in\R,\mu\in\mc{P}_2(\R),
\end{align}
$b$ is jointly continuous in $(x,y,\bb{W}_2)$, grows at most polynomially in $y$ uniformly in $x\in\R,\mu\in\mc{P}_2(\R)$.
\end{enumerate}

Having introduced the notation above, we can now present the law of large numbers for the empirical measure $\mu^{\epsilon,N}$ from Equation \ref{eq:empiricalmeasures} in the joint limit as $\epsilon\downarrow 0,N\toinf$. Under assumptions \ref{assumption:uniformellipticity}-\ref{assumption:centeringcondition}, by Lemma \ref{lemma:Ganguly1DCellProblemResult} we consider $\Phi$ the unique classical solution to:
\begin{align}\label{eq:cellproblemold}
L_{x,\mu}\Phi(x,y,\mu) &= -b(x,y,\mu),\qquad
\int_{\R}\Phi(x,y,\mu)\pi(dy;x,\mu)=0.
{}\end{align}

Let us define the functions
\begin{align}\label{eq:limitingcoefficients}
\gamma(x,y,\mu)& \coloneqq \gamma_1(x,y,\mu)+c(x,y,\mu)\\
\gamma_1(x,y,\mu)&\coloneqq b(x,y,\mu)\Phi_x(x,y,\mu)+g(x,y,\mu)\Phi_y(x,y,\mu)+\sigma(x,y,\mu)\tau_1(x,y,\mu)\Phi_{xy}(x,y,\mu) \nonumber \\
D(x,y,\mu) & \coloneqq D_1(x,y,\mu)+\frac{1}{2}\sigma^2(x,y,\mu) \nonumber\\
D_1(x,y,\mu)& = b(x,y,\mu)\Phi(x,y,\mu)+\sigma(x,y,\mu)\tau_1(x,y,\mu)\Phi_{y}(x,y,\mu). \nonumber
\end{align}
and
\begin{align}\label{eq:averagedlimitingcoefficients}
\bar{\gamma}(x,\mu) &\coloneqq \biggl[\int_\R \gamma(x,y,\mu) \pi(dy;x,\mu)\biggr],
\qquad
\bar{D}(x,\mu) \coloneqq\biggl[\int_\R D(x,y,\mu) \pi(dy;x,\mu)\biggr].
\end{align}

Then, by essentially the same arguments as in \cite{BS}, under the conditions outlined below, $\mu^{\epsilon,N}$ converges in distribution to the deterministic limit  $\mc{L}(X)$ where $X$ satisfies the averaged McKean-Vlasov SDE
\begin{align}\label{eq:LLNlimitold}
dX_t &= \bar{\gamma}(X_t,\mc{L}(X_t))dt+\sqrt{2\bar{D}(X_t,\mc{L}(X_t))}dW^2_t\quad
X_0 = \eta^x.
\end{align}

Here $W^2_t$ is a Brownian motion on another filtered probability space satisfying the usual conditions.
In fact, we see here in Lemma \ref{lemma:W2convergenceoftildemu} that in fact this convergence occurs in $\mc{P}_2(\R)$ for each $t\in [0,T].$
\begin{remark}\label{remark:alternateformofdiffusion}
Using an integration-by-parts argument, one can find that the diffusion coefficient $\bar{D}$ can be written in the alternative form
\begin{align}\label{eq:alternativediffusion}
\bar{D}(x,\mu) & = \frac{1}{2}\int_{\R} \left([\tau_2(x,y,\mu)\Phi_y(x,y,\mu)]^2 + [\sigma(x,y,\mu)+\tau_1(x,y,\mu)\Phi_y(x,y,\mu)]^2\right)\pi(dy;x,\mu),
\end{align}
and hence is non-negative. See \cite{Bensoussan} Chapter 3 Section 6.2 for a similar computation.
\end{remark}

We now introduce the remaining assumptions. Since we are dealing with fluctuations, we will need to be able to obtain rates of averaging, and thus there are several auxiliary Poisson equations involved in the proof of tightness. When there is more specific structure to the system of equations \eqref{eq:slowfast1-Dold}, these assumptions may be able to be verified on a case-by-case basis. In Subsection \ref{subsec:suffconditionsoncoefficients} we provide concrete examples for which all of the conditions imposed in the paper hold.  Remark \ref{remark:choiceof1Dparticles} and mainly Remark \ref{remark:ontheassumptions} discuss the meaning of these assumptions more thoroughly.
In doing so, it will be useful to define the following complete collections of multi-indices in the sense of Definitions \ref{def:multiindexnotation} and \ref{def:completemultiindex}:
\begin{align}\label{eq:collectionsofmultiindices}
\hat{\bm{\zeta}} &\ni \br{(0,j_1,0),(1,j_2,j_3),(2,j_4,(j_5,j_6)),(3,0,(j_7,0,0)):j_1\in\br{0,1,...4},j_2+j_3\leq 4,j_4+j_5+j_6\leq 2,j_7=0,1 }\\
\tilde{\bm{\zeta}}&\ni \br{(0,j_1,0),(1,j_2,j_3),(2,0,0):j_1=0,1,2,j_2+j_3 \leq 1}\nonumber\\
\tilde{\bm{\zeta}}_1&\ni \br{(j_1,j_2,0):j_1+j_2 \leq 1}\nonumber\\
\tilde{\bm{\zeta}}_2&\ni \br{(j,0,0):j=0,1}\nonumber\\
\tilde{\bm{\zeta}}_3&\ni \br{(0,j_1,0),(1,0,j_2):j_1=0,1,2,j_2=0,1}\nonumber \\
\bm{\zeta}_{x,l}&\ni \br{(0,j,0):j=0,1,..,l},l\in\bb{N}\nonumber\\
\bar{\bm{\zeta}} &\ni \br{(j,0,0):j=0,1,2}\nonumber\\
\bar{\bm{\zeta}}_l&\ni \br{(0,0,0),(1,0,j):j=0,1,...,l},l\in\bb{N}. \nonumber
\end{align}
In the following set of assumptions, recall that for $G:\R\times\R\times\mc{P}_2(\R)\tto \R$ and a multi-index $(n,l,\beta)$, $\tilde{q}_G(n,l,\beta)$ denotes the rate of polynomial growth in $y$ of the mixed derivative of $G$ corresponding to $(n,l,\beta)$ as per Equation \eqref{eq:tildeq} in Definition \ref{def:lionsderivativeclasses}. Recall also the spaces of functions of measures from Definition \ref{def:lionsderivativeclasses}.
\begin{enumerate}[label={A\arabic*)}]\addtocounter{enumi}{3}
\item \label{assumption:strongexistence} Strong existence and uniqueness holds for the system of SDEs \eqref{eq:slowfast1-Dold} for all $N\in\bb{N}$, for the Slow-Fast McKean-Vlasov SDEs \eqref{eq:IIDparticles}, and for the limiting McKean-Vlasov SDE \eqref{eq:LLNlimitold}. 
\item \label{assumption:gsigmabounded} $g$ and $\sigma$ are uniformly bounded, and $c,b$ grow at most linearly in $y$ uniformly in $x\in\R,\mu\in\mc{P}_2(\R)$. All coefficients are jointly continuous in $(x,y,\bb{W}_2).$ 
\item \label{assumption:multipliedpolynomialgrowth}  There exists a unique strong solution $\Phi\in \tilde{\mc{M}}_{p}^{\tilde{\bm{\zeta}}}(\R\times\R\times \mc{P}_2(\R))$ to Equation \eqref{eq:cellproblemold} with $\tilde{q}_{\Phi}(n,l,\bm{\beta})\leq 1,\forall (n,l,\bm{\beta})\in \tilde{\bm{\zeta}}$, and $\Phi_y\in \tilde{\mc{M}}_{p}^{\tilde{\bm{\zeta}}_2}(\R\times\R\times \mc{P}_2(\R))$, with $\tilde{q}_{\Phi_y}(n,l,\bm{\beta})\leq 1,\forall (n,l,\bm{\beta})\in \tilde{\bm{\zeta}_2}$. In addition, this can be strengthened to $\tilde{q}_\Phi(0,k,0)\leq 0,k=0,1$ and $\tilde{q}_{\Phi_y}(0,0,0)\leq 0$. (For Proposition \ref{prop:fluctuationestimateparticles1}  and Theorem \ref{theo:mckeanvlasovaveraging}).
\item \label{assumption:qF2bound}  There exists a unique strong solution $\chi\in \tilde{\mc{M}}_{p}^{\tilde{\bm{\zeta}}}(\R^2\times\R^2\times \mc{P}_2(\R))$ to Equation \eqref{eq:doublecorrectorproblem} with $\tilde{q}_{\chi}(n,l,\bm{\beta})\leq 1,\forall (n,l,\bm{\beta})\in \tilde{\bm{\zeta}}$, and $\chi_y\in \tilde{\mc{M}}_{p}^{\tilde{\bm{\zeta}}_1}(\R^2\times\R^2\times \mc{P}_2(\R))$, $\chi_{yy} \in \tilde{\mc{M}}_{p}^{(0,0,0)}(\R^2\times\R^2\times \mc{P}_2(\R))$ with $\tilde{q}_{\chi_y}(n,l,\bm{\beta})\leq 1,\forall (n,l,\bm{\beta})\in \tilde{\bm{\zeta}_1}$, $\tilde{q}_{\chi_{yy}}(0,0,0)\leq 1$. In addition, this can be strengthened to $\tilde{q}_\chi(0,k,0)\leq 0,k=0,1$ and $\tilde{q}_{\chi_y}(0,0,0)\leq 0$. (For Proposition \ref{prop:purpleterm1} and Theorem \ref{theo:mckeanvlasovaveraging}).
\item \label{assumption:forcorrectorproblem} For $F=\gamma,D,$ or $\sigma\psi_1+[\tau_1\psi_1+\tau_2\psi_2]\Phi_y$ for any $\psi_1,\psi_2\in C^\infty_c([0,T]\times\R\times \R)$, there exists a unique strong solution $\Xi\in \tilde{\mc{M}}_{p}^{\tilde{\bm{\zeta}}}([0,T]\times\R\times\R\times \mc{P}_2(\R))$ to Equation \eqref{eq:driftcorrectorproblem} with each of these choices of $F$, $\tilde{q}_{\Xi}(n,l,\bm{\beta})\leq 2,\forall (n,l,\bm{\beta})\in \tilde{\bm{\zeta}}$, and $\Xi_y\in \tilde{\mc{M}}_{p}^{\tilde{\bm{\zeta}}_1}([0,T]\times\R\times\R\times \mc{P}_2(\R))$ with $\tilde{q}_{\Xi_y}(n,l,\bm{\beta})\leq 2,\forall (n,l,\bm{\beta})\in \tilde{\bm{\zeta}}_1$. Moreover, we assume for all choices of $F$, this can be strengthened to $\tilde{q}_{\Xi}(n,l,\bm{\beta})\leq 1,\forall (n,l,\bm{\beta})\in \tilde{\bm{\zeta}}_1$ and $\tilde{q}_{\Xi_{y}}(0,0,0)\leq 1$. (For Propositions \ref{prop:llntypefluctuationestimate1}/ \ref{prop:LPlowerbound} and Theorem \ref{theo:mckeanvlasovaveraging}).
\item \label{assumption:uniformLipschitzxmu} For $F = \gamma,\sigma+\tau_1\Phi_y,\tau_2\Phi_y,\tau_1,\tau_2$:
\begin{align*}
|F(x_1,y_1,\mu_1)-F(x_2,y_2,\mu_2)|\leq C(|x_1-x_2|+|y_1-y_2|+\bb{W}_2(\mu_1,\mu_2)),\forall x_1,x_2,y\in\R,\mu_1,\mu_2\in \mc{P}_2(\R).
\end{align*}
(Lemmas \ref{lemma:tildeYbarYdifference} and \ref{lemma:XbartildeXdifference}).
\item \label{assumption:limitingcoefficientsLionsDerivatives} $\bar{\gamma},\bar{D}^{1/2}\in \mc{M}_{b,L}^{\hat{\bm{\zeta}}}(\R\times\mc{P}_2(\R))$. (For Theorem \ref{theo:mckeanvlasovaveraging}).
\item \label{assumption:tildechi} Consider the Poisson equation
\begin{align}\label{eq:tildechi}
L^2_{x,\bar{x},\mu}\tilde{\chi}(x,\bar{x},y,\bar{y},\mu) &= -b(x,y,\mu)\Phi(\bar{x},\bar{y},\mu),
\quad
\int_{\R}\int_{\R}\tilde{\chi}(x,\bar{x},y,\bar{y},\mu)\pi(dy;x,\mu)\pi(d\bar{y},\bar{x},\mu)=0.
\end{align}
where $L^2_{x,\bar{x},\mu}$ is as in Equation \eqref{eq:2copiesgenerator}. Assume there exists a unique strong solution $\tilde{\chi}\in \tilde{\mc{M}}_{p}^{\tilde{\bm{\zeta}}_3}(\R^2\times\R^2\times \mc{P}_2(\R))$ and $\tilde{\chi_y}\in \tilde{\mc{M}}_{p}^{\bm{\zeta}_{x,1}}(\R^2\times\R^2\times \mc{P}_2(\R))$  to Equation \eqref{eq:tildechi}.     (For Theorem \ref{theo:mckeanvlasovaveraging}).
\item \label{assumption:2unifboundedlinearfunctionalderivatives} $\tau_1,\tau_2,f,\gamma,\sigma+\tau_1\Phi_y,\tau_2\Phi_y \in \mc{M}^{\bar{\bm{\zeta}}}_{\bm{\delta},p}(\R\times\R\times\mc{P}_2(\R)).$ (For Lemmas \ref{lemma:tildeYbarYdifference} and \ref{lemma:XbartildeXdifference}). 
\item \label{assumption:limitingcoefficientsregularity} For $w$ as in Equation \eqref{eq:wdefinition} and $\bar{\gamma}$,$\bar{D}$ as in Equation \eqref{eq:averagedlimitingcoefficients}, $\bar{\gamma},\bar{D}\in\mc{M}_b^{\bm{\zeta}_{x,w+2}}(\R\times\mc{P}_2(\R))\cap \mc{M}_{\bm{\delta},b}^{\bar{\bm{\zeta}}_{w+2}}(\R\times\mc{P}_2(\R))$, and
\begin{align*}
\sup_{x\in\R,\mu\in\mc{P}_2(\R)}\norm{\frac{\delta}{\delta m}\bar{\gamma}(x,\mu)[\cdot]}_{w+2}+\sup_{x\in\R,\mu\in\mc{P}_2(\R)}\norm{\frac{\delta}{\delta m}\bar{D}(x,\mu)[\cdot]}_{w+2}<\infty.
\end{align*}
(For Lemmas \ref{lemma:Lnu1nu2representation}, \ref{lem:barLbounded}, \ref{lemma:4.32BW} and Proposition \ref{prop:limitsatisfiescorrectequations}).
\end{enumerate}

\begin{remark}\label{remark:choiceof1Dparticles}
 There is a current gap in the literature regarding rates of polynomial growth for derivatives of solutions to Poisson Equations of the form \eqref{eq:cellproblemold}, as outlined in \cite{GS} Remark A.1. Though in Proposition A.2 they state a result partially amending this issue, the bounds provided likely are not tight. In particular, under the assumption \ref{assumption:retractiontomean} which we require for moment bounds of the fast process (and hence slow) process in Section \ref{sec:aprioriboundsoncontrolledprocess}, their result cannot provide boundedness of derivatives in $y$ of $\Phi$ from \eqref{eq:cellproblemold}, or any of the other auxiliary Poisson equations which we consider. This in turn also makes it difficult to gain good rates of polynomial growth for derivatives in the parameters $x$ and $\mu$. We need strict control of these rates of growth, for the reasons outlined in Remark \ref{remark:ontheassumptions}. Stronger bounds are derived in the 1-D case in Proposition A.4 of \cite{GS}, so this makes gaining the necessary control much easier in the current setting (see the results contained in Subsection \ref{subsection:regularityofthe1Dpoissoneqn} in the Appendix). Note also the much stricter assumptions imposed when handling the multi-dimensional cell problem in Lemma \ref{lemma:extendedrocknermultidimcellproblem} (which is required to establish sufficient conditions for \ref{assumption:qF2bound}).
\end{remark}

\begin{remark}\label{remark:ontheassumptions}
Assumptions \ref{assumption:uniformellipticity} and \ref{assumption:retractiontomean} are used in tandem for the existence and uniqueness of the invariant measure $\pi$ from Equation \eqref{eq:invariantmeasureold}. Such an invariant measure exists under weaker recurrence conditions on $f$ (see, e.g. \cite{PV1} Proposition 1 ), but we use the near-Ornstein–Uhlenbeck structure assumed in \eqref{eq:fnearOU} and the form of the retraction to the mean \eqref{eq:rocknertyperetractiontomean} in order to prove certain moment bounds on the controlled fast process in the Appendix \ref{sec:aprioriboundsoncontrolledprocess}, and \eqref{eq:rocknertyperetractiontomean} is also used in order to gain sufficient conditions for the required regularity of the Poisson Equations in Assumptions \ref{assumption:multipliedpolynomialgrowth}- \ref{assumption:limitingcoefficientsregularity} in Appendix \ref{sec:regularityofthecellproblem}. In particular, \eqref{eq:fnearOU} is inspired by Assumption 4.1 (iii) in \cite{JS} and is needed for Lemma \ref{lemma:ytildeexpofsup}, and  \eqref{eq:rocknertyperetractiontomean} is a standard assumption for control of moments of SDEs over infinite time horizons and for controlling solutions of related Cauchy problems (see e.g. \cite{RocknerMcKeanVlasov} Assumption A.1 Equation (2.3)).

The centering condition \ref{assumption:centeringcondition} is standard in the theory of stochastic homogenization. Assumption \ref{assumption:strongexistence} is required in order to apply the weak-convergence approach to large deviations. In particular, it ensures that the prelimit control representation \eqref{eq:varrepfunctionalsBM} holds. This is known to hold, for example, under global Lipschitz assumptions on all the coefficients (see, e.g. \cite{Wang} Theorem 2.1 and Section 6.1 in \cite{RocknerMcKeanVlasov}), though can also be proved under much weaker assumptions. These two assumptions, along with existence and uniqueness of the invariant measure $\pi$ from Equation \eqref{eq:invariantmeasureold} and the Poisson Equation $\Phi$ from Equation \eqref{eq:cellproblemold}, can be seen as the crucial hypothesis of this paper. The rest of the assumptions are technical and essentially used to have sufficient conditions for tightness of the controlled fluctuations processes $\tilde{Z}^N$ from Equation (\eqref{eq:controlledempmeasure}) (and, in the case of Assumption \ref{assumption:limitingcoefficientsregularity}, to have uniqueness of solutions to its limit \eqref{eq:MDPlimitFIXED}).

The boundedness and linear growth of the coefficients from Assumption \ref{assumption:gsigmabounded} are used to restrict the growth of the coefficients so that second moments of the controlled fast process $\tilde{Y}^{i,\epsilon,N}$ from Equation \eqref{eq:controlledslowfast1-Dold} can be proved in Appendix \ref{sec:aprioriboundsoncontrolledprocess}, and to ensure that only knowing these second moment bounds are sufficient for boundedness of the remainder terms in e.g. the ergodic-type theorems of Section \ref{sec:ergodictheoremscontrolledsystem}. The joint continuity assumption is used to ensure that integrating the coefficients is a continuous function on the space of measures.

The Assumptions \ref{assumption:multipliedpolynomialgrowth}- \ref{assumption:limitingcoefficientsregularity} are listed in terms of the Poisson Equations and averaged coefficients (and hence implicitly in terms of $\Phi$ from Assumption \ref{assumption:multipliedpolynomialgrowth}) because these assumptions can be verified on a case-by-case basis when the differential operator \eqref{eq:frozengeneratormold} or the inhomogeneities considered have some special structure. See the Examples provided in Appendix \ref{subsec:suffconditionsoncoefficients}.

The growth required by the specific derivatives listed in Assumptions \ref{assumption:multipliedpolynomialgrowth} - \ref{assumption:forcorrectorproblem} are imposed in order to ensure that the remainder terms resulting form It\^o's formula in the Ergodic-Type Theorems in Section \ref{sec:ergodictheoremscontrolledsystem} 
are bounded. In particular, in Section \ref{sec:ergodictheoremscontrolledsystem}, we are dealing with the controlled slow-fast system \eqref{eq:controlledslowfast1-Dold}, which due to the controls a priori being at best $L^2$ integrable (see the bound \eqref{eq:controlassumptions}), we are only able to show that we have 2 bounded moments of the fast component (see Appendix \ref{sec:aprioriboundsoncontrolledprocess}). This is limiting, since the terms which show up in the Ergodic-Type Theorems are products of derivatives of the Poisson equation with the coefficients of the system \eqref{eq:slowfast1-Dold}, of which $c$ and $b$ may grow linearly as per assumption \ref{assumption:gsigmabounded}, and with the $L^2$ controls.

Using Assumption \ref{assumption:multipliedpolynomialgrowth} as an example and unpacking the multi-index notation, we are requiring $\Phi,\Phi_x,\Phi_y$ are bounded, and $\Phi_{xx},\partial_\mu \Phi,\partial_\mu\Phi_x,\partial_\mu \Phi_y,\partial_z\partial_\mu \Phi,\partial^2_\mu \Phi$ grow at most linearly in $y$ in their appropriate norms. Looking at the proof of Proposition \ref{prop:fluctuationestimateparticles1}, since we are taking the $L^2$ norm of the remainder terms $\tilde{B}_1-\tilde{B}_8$, we are essentially ensuring all the products showing up in these terms are $L^2$ bounded. In particular, in $\tilde{B}_7$, the controls are multiplied by $\Phi$ and $\Phi_x$, which is why we end up needing those derivatives to be bounded. $\Phi_y$ being bounded is needed elsewhere for essentially the same reason - see, e.g. the proof of Proposition \ref{prop:goodratefunction}, where we use that $B^N_t$ is bounded in $L^2$. The reasoning behind the Assumptions \ref{assumption:qF2bound} and \ref{assumption:forcorrectorproblem} are the exact same, with additional regularity of $\chi_y$ and $\Xi_y$ (replacing $\tilde{\bm{\zeta}}_2$ by $\tilde{\bm{\zeta}}_1$ means we are requiring $\chi_y$ and $\Xi_y$ have an $x$ derivative which grows at most linearly in addition to a $\mu$ derivative) and $\chi_{yy}$ required due to those additional terms showing up in $\bar{B}_2$ in Proposition \ref{prop:purpleterm1}, $C_2$ in Proposition \ref{prop:llntypefluctuationestimate1}, and $\bar{B}_{13}$ in Proposition \ref{prop:purpleterm1} respectively.

The Lipschitz continuity imposed in Assumption \ref{assumption:uniformLipschitzxmu} and the existence of two linear functional derivatives which grow at most polynomially in $y$ uniformly in $x,\mu$ imposed in Assumption \ref{assumption:2unifboundedlinearfunctionalderivatives} are used to couple the controlled particles \eqref{eq:controlledslowfast1-Dold} to the auxiliary IID particles \eqref{eq:IIDparticles} in Subsection \ref{subsec:couplingcontrolledtoiid}. In particular, the terms required to be Lipschitz are those which show up in the drift and diffusion of the processes which result from applying Proposition \ref{prop:fluctuationestimateparticles1} 
 to the controlled system and IID system respectively. The use of a Lipschitz property in such a coupling argument is standard - see, e.g. Lemma 1 in \cite{HM}. Since we don't assume that the coefficients have linear interaction with the measure, Assumption \ref{assumption:2unifboundedlinearfunctionalderivatives} is being used to apply Lemma \ref{lemma:rocknersecondlinfunctderimplication} to the listed functions. The result of that Lemma is essentially the Assumption (S3) made in \cite{KX}, which we are using in essentially the same manner that they are in their coupling argument in Theorem 2.4.

Assumption \ref{assumption:limitingcoefficientsLionsDerivatives} is tailored to ensure enough regularity of the coefficients of the Cauchy Problem on Wasserstein Space for  Theorem \ref{theo:mckeanvlasovaveraging} to hold- see \cite{BezemekSpiliopoulosAveraging2022} (in particular Lemma 5.1 therein). Assumption \ref{assumption:tildechi} is used to apply the same result, and requires the introduction of the additional auxiliary Poisson equation \ref{assumption:tildechi} which is defined similarly to $\chi$ from Assumption \ref{assumption:qF2bound} but with a different inhomogeneity due to an additional term which arises in \cite{BezemekSpiliopoulosAveraging2022} Proposition 4.4 due to the McKean-Vlasov dynamics. The use of this specific control over the derivatives of $\tilde{\chi}$ is discussed after the statement of Theorem \ref{theo:mckeanvlasovaveraging}.

Finally, Assumption \ref{assumption:limitingcoefficientsregularity} is needed for well-definedness/uniqueness of the limiting Equation \eqref{eq:MDPlimitFIXED}. See the analogous Assumptions 2.2/2.3 in \cite{BW}.

\end{remark}
\section{Main Results}\label{sec:mainresults}
We are now ready to state our main result, which takes the form of Theorem \ref{theo:MDP} below. These results will be applied to a concrete class of examples of interacting particle systems of the form \eqref{eq:slowfast1-Dold} in Subsection \ref{SS:Examples}.

We prove the large deviations principle for fluctuations process $\br{Z^N}$ from Equation \eqref{eq:fluctuationprocess} via means of the Laplace Principle. In other words, in Theorem \ref{theo:MDP}, we identify the rate function $I:C([0,T];\mc{S}_{-r})\tto [0,+\infty]$ such that for $w$ as in Equation \eqref{eq:wdefinition}:
\begin{align}\label{eq:laplaceprincipledefinition}
\lim_{N\toinf}-a^2(N)\log \E \exp\biggl(-\frac{1}{a^2(N)}F(Z^N) \biggr) = \inf_{Z\in C([0,T];S_{-w})}\br{I(Z)+F(Z)}
\end{align}
for all $F\in C_b(C([0,T];\mc{S}_{-\tau}))$, for any $\tau\geq w$. In particular, this holds for all $F\in C_b(C([0,T];\mc{S}_{-r}))$ for $r>w+2$ as in Equation \eqref{eq:rdefinition}, and for such $F$ the right hand side is equal to $\inf_{Z\in C([0,T];S_{-r})}\br{I(Z)+F(Z)}$ by construction of $I$ (see Theorem \ref{theo:MDP}). The equality \eqref{eq:laplaceprincipledefinition} along with compactness of level sets of $I$ implies that $\br{Z^N}$ satisfies the large deviations principle with speed $a^{-2}(N)$ and rate function $I$ via e.g. Theorem 1.2.3 in \cite{DE}.

In order to show \eqref{eq:laplaceprincipledefinition}, we will show in Section \ref{sec:upperbound/compactnessoflevelsets} that the Laplace principle Lower Bound:
\begin{align}\label{eq:LPupperbound}
\liminf_{N\toinf}-a^2(N)\log \E \exp\biggl(-\frac{1}{a^2(N)}F(Z^N) \biggr) \geq \inf_{Z\in C([0,T];S_{-w})}\br{I(Z)+F(Z)},\forall F\in C_b(C([0,T];\mc{S}_{-\tau}))
\end{align}
for any $\tau\geq w$, with $w$ as in Equation \eqref{eq:wdefinition}, holds. 

Then, in Section \ref{sec:lowerbound} we will prove the Laplace principle Upper Bound:
\begin{align}\label{eq:LPlowerbound}
\limsup_{N\toinf}-a^2(N)\log \E \exp\biggl(-\frac{1}{a^2(N)}F(Z^N) \biggr) \leq \inf_{Z\in C([0,T];S_{-w})}\br{I(Z)+F(Z)},\forall F\in C_b(C([0,T];\mc{S}_{-\tau}))
\end{align}
for any $\tau\geq w$ holds and compactness of level sets of $I$ in $C([0,T];\mc{S}_{-r})$, at which point the moderate deviations principle of Theorem \ref{theo:MDP} will be established.

We now formulate the rate function. Consider the controlled limiting equation:
\begin{align}\label{eq:MDPlimitFIXED}
\langle Z_t,\phi\rangle &= \int_0^t \langle Z_s,\bar{L}_{\mc{L}(X_s)}\phi(\cdot)\rangle ds+\int_{\R\times\R\times\R^2\times[0,t]} \sigma(x,y,\mc{L}(X_s))z_1 \phi'(x)Q(dx,dy,dz,ds)\\
&+\int_{\R\times\R\times\R^2\times[0,t]} [\tau_1(x,y,\mc{L}(X_s))z_1+\tau_2(x,y,\mc{L}(X_s))z_2]\Phi_y(x,y,\mc{L}(X_s))\phi'(x)Q(dx,dy,dz,ds)\nonumber\\
\bar{L}_{\nu}\phi(x) & \coloneqq \bar{\gamma}(x,\nu)\phi'(x)+\bar{D}(x,\nu)\phi''(x)+\int_\R \left(\frac{\delta}{\delta m}\bar{\gamma}(z,\nu)[x]\phi'(z)+ \frac{\delta}{\delta m}\bar{D}(z,\nu)[x]\phi''(z)\right)\nu(dz),\nu\in\mc{P}(\R)\nonumber.
\end{align}
for all $\phi\in C^\infty_c(\R)$. Here we recall the limiting coefficients $\bar{\gamma},\bar{D}$ from Equation \eqref{eq:averagedlimitingcoefficients}, the limiting McKean-Vlasov Equation $X_t$ from Equation \eqref{eq:LLNlimitold}, and the linear functional derivative $\frac{\delta}{\delta m}$ from Definition \ref{def:LinearFunctionalDerivative}.

\begin{thm}\label{theo:Laplaceprinciple}
Let assumptions \ref{assumption:uniformellipticity} - \ref{assumption:limitingcoefficientsregularity} hold. Then $\br{Z^N}_{N\in\bb{N}}$ satisfies the Laplace principle \eqref{eq:laplaceprincipledefinition} with rate function $I$ given by
\begin{align}\label{eq:proposedjointratefunction}
I(Z)=\inf_{Q\in P^*(Z)}\biggl\lbrace\frac{1}{2}\int_{\R\times\R\times\R^2\times [0,T]} \left(z_1^2+z_2^2\right) Q(dx,dy,dz,ds)\biggr\rbrace
{}\end{align}
where $Q\in M_T(\R^4)$ (recall this space from above Equation \eqref{eq:rigorousLotimesdt}) is in $P^*(Z)$ if:
\begin{enumerate}[label={($P^*$\arabic*)}]
\item \label{PZ:limitingequation}$(Z,Q)$ satisfies Equation \eqref{eq:MDPlimitFIXED}
\item \label{PZ:L2contolbound} $\int_{\R\times\R\times\R^2\times [0,T]} \left(z_1^2+z_2^2\right) Q(dx,dy,dz,ds)<\infty$
\item \label{PZ:secondmarginalinvtmeasure} Disintegrating $Q(dx,dy,dz,ds) = \kappa(dz;x,y,s)\lambda(dy;x,s)Q_{(1,4)}(dx,ds)$, $\lambda(dy;x,s) = \pi(dy;x,\mc{L}(X_s))$ $\nu_{\mc{L}(X_\cdot)}$-almost surely, where $\pi$ is as in Equation \eqref{eq:invariantmeasureold} and $\nu_{\mc{L}(X_\cdot)}$ is as in Equation \eqref{eq:rigorousLotimesdt}.
\item \label{PZ:fourthmarginallimitnglaw} $Q_{(1,4)}= \nu_{\mc{L}(X_\cdot)}$.
\end{enumerate}
Here we use the convention that $\inf\br{\emptyset}=+\infty$.
\end{thm}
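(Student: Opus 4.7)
The plan is to establish the Laplace principle \eqref{eq:laplaceprincipledefinition} via the weak convergence approach of Dupuis--Ellis, splitting the proof into the Laplace principle lower bound \eqref{eq:LPupperbound} and upper bound \eqref{eq:LPlowerbound}, and then verifying compactness of level sets of $I$. The starting point is the variational (Bou\'e--Dupuis type) representation, which expresses $-a^2(N)\log \E\exp(-F(Z^N)/a^2(N))$ as the infimum over $\F_t$-progressively measurable $L^2$ controls $u=(u^1,u^2)$ of $\E\bigl[\tfrac{1}{2}\int_0^T(|u^1_t|^2+|u^2_t|^2)\,dt+F(\tilde Z^N)\bigr]$, where $\tilde Z^N=a(N)\sqrt{N}(\tilde\mu^{\epsilon,N}-\mc{L}(X))$ is the controlled fluctuation built from the controlled slow-fast particles \eqref{eq:controlledslowfast1-Dold}. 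This reduces everything to the joint asymptotic analysis of $\tilde Z^N$ together with the occupation measures $Q^N\in M_T(\R^4)$ encoding the triples $(X^{i,\epsilon,N},Y^{i,\epsilon,N},u^i)$.

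For the Laplace principle lower bound, I would select nearly optimal controls $u^N$ whose $L^2$ cost can be taken uniformly bounded (since $F$ is bounded, restrict to controls of cost at most $2\norm{F}_\infty+1$), and proceed in three steps: (i) prove tightness of $(\tilde Z^N,Q^N)$ in $C([0,T];\mc{S}_{-m})\times M_T(\R^4)$ using the Mitoma criterion for $\mc{S}_{-m}$-valued processes, the second-moment bounds on the controlled fast process from Appendix \ref{sec:aprioriboundsoncontrolledprocess}, and the nonstandard coupling through the auxiliary IID slow-fast McKean--Vlasov system \eqref{eq:IIDparticles} sketched in Section \ref{sec:tightness}; (ii) identify the limit $(Z,Q)$ along subsequences, invoking the ergodic-type theorems of Section \ref{sec:ergodictheoremscontrolledsystem} and the averaging Theorem \ref{theo:mckeanvlasovaveraging} to average out the fast variable and verify that $(Z,Q)$ satisfies \eqref{eq:MDPlimitFIXED} together with constraints \ref{PZ:limitingequation}--\ref{PZ:fourthmarginallimitnglaw}; (iii) conclude via Fatou's lemma and lower semicontinuity of $Q\mapsto \int(z_1^2+z_2^2)\,dQ$ under weak convergence that the liminf of the prelimit cost dominates $I(Z)+F(Z)$, which gives \eqref{eq:LPupperbound}.

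For the Laplace principle upper bound, fix $\delta>0$ and choose $Z$ and $Q\in P^*(Z)$ with $I(Z)+F(Z)\leq \inf_{Z'}\{I(Z')+F(Z')\}+\delta$ and $\tfrac{1}{2}\int(z_1^2+z_2^2)\,dQ\leq I(Z)+\delta$. The plan is to construct explicit feedback controls from $Q$: by the disintegration $Q(dx,dy,dz,ds)=\kappa(dz;x,y,s)\pi(dy;x,\mc{L}(X_s))\nu_{\mc{L}(X_\cdot)}(dx,ds)$ forced by \ref{PZ:secondmarginalinvtmeasure}--\ref{PZ:fourthmarginallimitnglaw}, set $\bar u(x,y,s)=\int_{\R^2}z\,\kappa(dz;x,y,s)$ (mollified if needed), and apply the feedback $u^{i,N}_t=\bar u(X^{i,\epsilon,N}_t,Y^{i,\epsilon,N}_t,t)$. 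The tightness-plus-identification machinery from the lower bound, specialized to these particular controls, then forces $\tilde Z^N\Rightarrow Z$ and $Q^N\Rightarrow Q$, so that $\limsup(-a^2(N)\log\E\exp(\cdots))\leq \tfrac{1}{2}\int(z_1^2+z_2^2)\,dQ+F(Z)\leq I(Z)+F(Z)+\delta$. Sending $\delta\downarrow 0$ gives \eqref{eq:LPlowerbound}. Compactness of the level sets $\{I\leq K\}$ in $C([0,T];\mc{S}_{-r})$ is obtained by applying the same tightness and limit-identification arguments to minimizing sequences $(Z^n,Q^n)$, now deterministically, using the stronger Hilbert--Schmidt embedding \eqref{eq:rdefinition}.

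The principal technical obstacle lies in step (ii) of the lower bound: the averaging step requires sharp control of fluctuation terms built from the Poisson corrector $\Phi$ and, crucially, from its measure derivatives $\partial_\mu\Phi$, a phenomenon absent in one-particle homogenization. These terms force the introduction of the doubled corrector problem \eqref{eq:doublecorrectorproblem} and necessitate Proposition \ref{prop:purpleterm1}, whose bounds must hold despite only second moments of the controlled fast process being available (a direct consequence of the $L^2$ cost bound on the controls, as discussed in Appendix \ref{sec:aprioriboundsoncontrolledprocess}). This is precisely why the polynomial growth restrictions on derivatives of $\Phi,\chi,\Xi$ in \ref{assumption:multipliedpolynomialgrowth}--\ref{assumption:forcorrectorproblem} are so carefully calibrated, and why Appendix \ref{sec:regularityofthecellproblem} is required to verify them under the near-Ornstein--Uhlenbeck structure of \ref{assumption:retractiontomean}.
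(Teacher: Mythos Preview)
Your overall architecture is correct and matches the paper's: variational representation, tightness of $(\tilde Z^N,Q^N)$ via the Mitoma criterion and the coupling through the IID slow-fast McKean--Vlasov system, identification of subsequential limits as solving \eqref{eq:MDPlimitFIXED} with $Q\in P^*(Z)$ using the ergodic-type theorems and the doubled corrector, and then Fatou for the lower bound. Your diagnosis of the main technical obstacle (the $\partial_\mu\Phi$ terms and Proposition~\ref{prop:purpleterm1}) is exactly right.

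Two refinements are needed in your upper bound sketch. First, you cannot plug the bare feedback $\bar u(x,y,s)=\int z\,\kappa(dz;x,y,s)$ directly into the particle system and invoke the identification machinery: the ergodic step for the control term (Proposition~\ref{prop:llntypefluctuationestimate1} with $F=\sigma\psi_1+[\tau_1\psi_1+\tau_2\psi_2]\Phi_y$, cf.\ Assumption~\ref{assumption:forcorrectorproblem}) requires the feedback to be smooth and compactly supported. The paper therefore runs a genuine \emph{two-step} limit: approximate $h\in P^o(Z^*)$ (the feedback-form control from Corollary~\ref{cor:ordinarycontrolratefunction}) by $\psi^k\in C^\infty_c$, send $N\to\infty$ for fixed $k$ to obtain $(\tilde Z^k,Q^k)$ with $Q^k$ of the Dirac form \eqref{eq:QNkdesiredform}, and only then send $k\to\infty$. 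The $k\to\infty$ step requires a separate precompactness argument for $\{\tilde Z^k\}$ in $C([0,T];\mc{S}_{-w})$ (not $\mc{S}_{-m}$), obtained by differentiating $\norm{\tilde Z^k_t}^2_{-(m+2)}$ and using Lemma~\ref{lemma:4.32BW}; this is why the Hilbert--Schmidt ladder $m<w$ enters. Second, you never invoke weak-sense uniqueness (Proposition~\ref{proposition:weakuninqueness}), but it is essential: without it you only know subsequential limits of $\tilde Z^{N,k}$ satisfy \eqref{eq:MDPlimitFIXED} for \emph{some} admissible $Q$, not that they equal the specific $Z^*$ you started with. Uniqueness is what pins down $\tilde Z^k$ from $Q^k$, and then again pins down the $k\to\infty$ limit as $Z^*$.

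Finally, compactness of level sets is \emph{not} part of Theorem~\ref{theo:Laplaceprinciple}; that is Theorem~\ref{theo:MDP}, which requires the stronger Assumption~\ref{assumption:limitingcoefficientsregularityratefunction} and the further embedding $\mc{S}_{-(w+2)}\hookrightarrow\mc{S}_{-r}$. Under only \ref{assumption:uniformellipticity}--\ref{assumption:limitingcoefficientsregularity} you get the Laplace principle equality \eqref{eq:laplaceprincipledefinition} but not goodness of $I$.
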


Replacing assumption \ref{assumption:limitingcoefficientsregularity} by the following:
\begin{enumerate}[label={A'\arabic*)}]\addtocounter{enumi}{12}
\item \label{assumption:limitingcoefficientsregularityratefunction} For $r$ as in Equation \eqref{eq:rdefinition} and $\bar{\gamma}$,$\bar{D}$ as in Equation \eqref{eq:averagedlimitingcoefficients}, $\bar{\gamma},\bar{D}\in\mc{M}_b^{\bm{\zeta}_{x,r+2}}(\R\times\mc{P}_2(\R))\cap \mc{M}_{\bm{\delta},b}^{\bar{\bm{\zeta}}_{r+2}}(\R\times\mc{P}_2(\R))$ (recalling these spaces from Definition \ref{def:lionsderivativeclasses} and these collections of multi-indices from Equation \eqref{eq:collectionsofmultiindices}), and
\begin{align*}
\sup_{x\in\R,\mu\in\mc{P}(R)}\norm{\frac{\delta}{\delta m}\bar{\gamma}(x,\mu)[\cdot]}_{r+2}+\sup_{x\in\R,\mu\in\mc{P}(R)}\norm{\frac{\delta}{\delta m}\bar{D}(x,\mu)[\cdot]}_{r+2}<\infty.
\end{align*}
\end{enumerate}
we can in addition prove compactness of level sets of the rate function given in \eqref{eq:proposedjointratefunction} by extending it to a larger space. For a discussion of the necessity of this extension, see the comments below Equation (2.10) and below Equation (4.33) in \cite{BW}. This yields the main result:
\begin{thm}\label{theo:MDP}
Let assumptions \ref{assumption:uniformellipticity} - \ref{assumption:2unifboundedlinearfunctionalderivatives} and \ref{assumption:limitingcoefficientsregularityratefunction} hold. Then $\br{Z^N}_{N\in\bb{N}}$ from Equation \eqref{eq:fluctuationprocess} satisfies the large deviation principle on the space $C([0,T];\mc{S}_{-r})$, with $r$  as in Equation \eqref{eq:rdefinition}, speed $a^{-2}(N)$ and good rate function $I$ given as in Equation \eqref{eq:proposedjointratefunction}. 
Here we use the convention that $\inf\br{\emptyset}=+\infty$, and also impose that $I(Z)=+\infty$ for $Z\in C([0,T];\mc{S}_{-r})\setminus C([0,T];\mc{S}_{-w})$.
\end{thm}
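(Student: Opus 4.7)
The plan is to derive Theorem \ref{theo:MDP} by combining the Laplace principle, which is already furnished by Theorem \ref{theo:Laplaceprinciple}, with compactness of the level sets of $I$ in $C([0,T];\mc{S}_{-r})$, via Theorem 1.2.3 in \cite{DE}. First I would observe that Assumption \ref{assumption:limitingcoefficientsregularityratefunction} is strictly stronger than Assumption \ref{assumption:limitingcoefficientsregularity} (it demands boundedness of the same derivatives in a higher negative-Sobolev order, $r+2$ rather than $w+2$), so all hypotheses of Theorem \ref{theo:Laplaceprinciple} remain in force. That theorem gives \eqref{eq:laplaceprincipledefinition} for every $\tau\geq w$, in particular for $\tau=r$. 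Since $I(Z)=+\infty$ outside $C([0,T];\mc{S}_{-w})$ by the convention imposed in the statement, the infimum on the right of \eqref{eq:laplaceprincipledefinition} is the same whether taken over $C([0,T];\mc{S}_{-w})$ or $C([0,T];\mc{S}_{-r})$, so the Laplace principle holds on the larger space.

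To establish compactness of $\Lambda_M\coloneqq\{Z\in C([0,T];\mc{S}_{-r}):I(Z)\leq M\}$, take a sequence $\{Z_n\}\subset\Lambda_M$ and for each $n$ pick a near-optimizer $Q_n\in P^*(Z_n)$ with $\tfrac{1}{2}\int(z_1^2+z_2^2)\,dQ_n\leq M+1/n$. Conditions \ref{PZ:secondmarginalinvtmeasure} and \ref{PZ:fourthmarginallimitnglaw} pin the $(x,s)$-marginal and the conditional $y$-law to deterministic objects independent of $n$; only the conditional $z$-kernel $\kappa_n(\cdot;x,y,s)$ depends on $n$, and its uniform $L^2$ bound gives tightness of $\{Q_n\}$ in $M_T(\R^4)$. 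Extracting a subsequence, $Q_n\rightharpoonup Q$ with $\int(z_1^2+z_2^2)\,dQ\leq 2M$ by lower semi-continuity, and the marginal constraints pass to the limit.

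Tightness of $\{Z_n\}$ in $C([0,T];\mc{S}_{-r})$ is obtained via Mitoma's criterion, applied to the defining identity \eqref{eq:MDPlimitFIXED}. The drift term $\int_0^t\langle Z_s,\bar{L}_{\mc{L}(X_s)}\phi\rangle\,ds$ is controlled using Assumption \ref{assumption:limitingcoefficientsregularityratefunction}, which ensures that $\bar{L}_\nu$ acts boundedly from $\mc{S}_{r+2}$ into $\mc{S}_r$ uniformly in $\nu\in\mc{P}(\R)$, and hence the drift pairs against $\|Z_s\|_{-(r+2)}$ in the dual. The two control-dependent terms are bounded in $L^2$ by Cauchy--Schwarz, using the uniform bound on $\int(z_1^2+z_2^2)\,dQ_n$ together with the boundedness of $\sigma$ (Assumption \ref{assumption:gsigmabounded}), $\tau_1,\tau_2$ (Assumption \ref{assumption:uniformellipticity}), and $\Phi_y$ (Assumption \ref{assumption:multipliedpolynomialgrowth}, strengthened so $\tilde q_{\Phi_y}(0,0,0)\leq 0$). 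The Hilbert--Schmidt ladder \eqref{eq:mdefinition}--\eqref{eq:rdefinition} then converts equicontinuity in the finer norm $\|\cdot\|_{-(w+2)}$ into tightness in the coarser $C([0,T];\mc{S}_{-r})$-topology. Extracting a joint subsequence $(Z_n,Q_n)\to(Z,Q)$, I would pass to the limit in \eqref{eq:MDPlimitFIXED} using joint continuity of the coefficients and the identified marginals of $Q$ to conclude $(Z,Q)$ satisfies \ref{PZ:limitingequation}--\ref{PZ:fourthmarginallimitnglaw}, and hence $I(Z)\leq \tfrac{1}{2}\int(z_1^2+z_2^2)\,dQ\leq M$, proving both closedness and compactness of $\Lambda_M$.

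The main obstacle is the passage to the limit in the coupled pair inside \eqref{eq:MDPlimitFIXED}, where the stochastic-style integrands are nonlinear in $(x,y)$ and pair against the control $z$ through the varying measure $Q_n$; this forces one to carefully marry the $L^2$-control bound, the prescribed marginal structure coming from \ref{PZ:secondmarginalinvtmeasure}--\ref{PZ:fourthmarginallimitnglaw}, and the continuity of $\sigma,\tau_1,\tau_2,\Phi_y$. The upgrade from Assumption \ref{assumption:limitingcoefficientsregularity} to \ref{assumption:limitingcoefficientsregularityratefunction} is calibrated precisely to gain the extra two Sobolev orders needed for $\bar{L}_\nu$ to act boundedly at the level required for tightness of the level sets in the coarser space $C([0,T];\mc{S}_{-r})$, just as \ref{assumption:limitingcoefficientsregularity} sufficed at the level $C([0,T];\mc{S}_{-w})$ relevant to the prelimit tightness in Section \ref{sec:tightness}.
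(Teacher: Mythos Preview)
Your proposal is correct and follows essentially the same route as the paper. The paper also deduces Theorem~\ref{theo:MDP} from the Laplace principle (Theorem~\ref{theo:Laplaceprinciple}, which applies since \ref{assumption:limitingcoefficientsregularityratefunction} is strictly stronger than \ref{assumption:limitingcoefficientsregularity}) together with compactness of level sets, the latter being Proposition~\ref{prop:goodratefunction}; that proposition proceeds exactly as you outline---near-optimal $Q_n\in P^*(Z_n)$, tightness of $\{Q_n\}$ from the fixed marginals and the $L^2$ control bound, precompactness of $\{Z_n\}$ via the equation \eqref{eq:MDPlimitFIXED} and the operator bound $\bar{L}_\nu:\mc{S}_{p+2}\to\mc{S}_p$ (Lemma~\ref{lem:barLbounded} with $w$ replaced by $r$), and passage to the limit using Proposition~\ref{proposition:weakuninqueness} for uniqueness. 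The only cosmetic difference is that the paper phrases the $\{Z_n\}$-tightness via Theorem 2.5.2 of \cite{KalX} rather than Mitoma's criterion and works with the $\mc{S}_{-(w+2)}$ norm (obtained by Gr\"onwall from Lemma~\ref{lemma:4.32BW}) rather than $\mc{S}_{-(r+2)}$, but the substance is identical.
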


As is typically the case when using the weak convergence approach of \cite{DE} to prove a large deviations principle, the rate function \eqref{eq:proposedjointratefunction} can also be characterized by controls in feedback form:
\begin{corollary}\label{cor:ordinarycontrolratefunction}
In the setting of Theorem \ref{theo:Laplaceprinciple}, we can alternatively characterize the rate function as:
\begin{align}\label{eq:proposedjointratefunctionordinary}
I^o(Z)=\inf_{h\in P^o(Z)}\biggl\lbrace\frac{1}{2}\int_{0}^T \E\biggl[\int_\R |h(s,X_s,y)|^2 \pi(dy;X_s,\mc{L}(X_s)) \biggr]ds\biggr\rbrace
\end{align}
where $h:[0,T]\times\R\times\R\tto \R^2$ is in $P^o(Z)$ if:
\begin{enumerate}[label={($P^o$\arabic*)}]
\item \label{Po:controlledlimitingeqn} $(Z,h)$ satisfies Equation \eqref{eq:MDPlimitFIXEDordinary} for all $t\in[0,T]$ and $\phi\in C_c^\infty(\R)$
\item $\int_{0}^T \E\biggl[\int_\R |h(s,X_s,y)|^2 \pi(dy;X_s,\mc{L}(X_s)) \biggr]ds<\infty.$
\end{enumerate}
Here we define:
\begin{align}\label{eq:MDPlimitFIXEDordinary}
\langle Z_t,\phi\rangle &= \int_0^t \langle Z_s,\bar{L}_{\mc{L}(X_s)}\phi(\cdot)\rangle ds+\int_0^t \E\biggl[\int_\R  \sigma(X_s,y,\mc{L}(X_s))h_1(s,X_s,y) \phi'(X_s)\pi(dy;X_s,\mc{L}(X_s))\biggr]ds\\
&+\int_0^t \E\left[\int_\R  [\tau_1(X_s,y,\mc{L}(X_s))h_1(s,X_s,y)+\tau_2(X_s,y,\mc{L}(X_s))h_2(s,X_s,y)]\times\right.\nonumber\\
&\hspace{7cm}\left.\times\Phi_y(X_s,y,\mc{L}(X_s))\phi'(X_s) \pi(dy;X_s,\mc{L}(X_s))\right.\biggr]ds\nonumber
\end{align}
Again, we use the convention that $\inf\br{\emptyset}=+\infty$. In the setting of Theorem \ref{theo:MDP}, we also impose that $I^o(Z)=+\infty$ for $Z\in C([0,T];\mc{S}_{-r})\setminus C([0,T];\mc{S}_{-w})$.
\end{corollary}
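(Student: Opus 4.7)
The plan is to establish the equivalence $I(Z)=I^o(Z)$ by proving both inequalities, using the standard correspondence between occupation-measure (relaxed) controls and ordinary feedback controls from the weak-convergence approach (cf.\ \cite{DE} and the adaptations in \cite{BS}). Step one lifts feedback controls to occupation measures; step two projects occupation measures down via a conditional-first-moment (Jensen) argument. Throughout, the ``deterministic'' drift term $\int_0^t\langle Z_s,\bar{L}_{\mc{L}(X_s)}\phi\rangle ds$ is common to \eqref{eq:MDPlimitFIXED} and \eqref{eq:MDPlimitFIXEDordinary}, so only the $z$-dependent terms need to be matched.

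\textbf{Step 1 ($I(Z)\le I^o(Z)$).} Fix $h\in P^o(Z)$ with finite cost. Define $Q\in M_T(\R^4)$ by
\[
Q(A)\coloneqq\int_0^T\E\Bigl[\int_{\R}\1_A\bigl(X_s,y,h(s,X_s,y),s\bigr)\pi(dy;X_s,\mc{L}(X_s))\Bigr]\,ds
\]
for Borel $A\subseteq\R\times\R\times\R^2\times[0,T]$. Its canonical disintegration is $\kappa(dz;x,y,s)=\delta_{h(s,x,y)}(dz)$, $\lambda(dy;x,s)=\pi(dy;x,\mc{L}(X_s))$, and $Q_{(1,4)}=\nu_{\mc{L}(X_\cdot)}$, which gives conditions \ref{PZ:secondmarginalinvtmeasure} and \ref{PZ:fourthmarginallimitnglaw} by construction. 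Substituting this $Q$ into \eqref{eq:MDPlimitFIXED} and collapsing the $z$-integration against the Dirac mass recovers \eqref{eq:MDPlimitFIXEDordinary}, giving \ref{PZ:limitingequation}; and
\[
\int_{\R\times\R\times\R^2\times[0,T]}(z_1^2+z_2^2)\,Q=\int_0^T\E\Bigl[\int_{\R}|h(s,X_s,y)|^2\,\pi(dy;X_s,\mc{L}(X_s))\Bigr]ds<\infty,
\]
which yields \ref{PZ:L2contolbound} together with equality of costs. Taking the infimum over $h\in P^o(Z)$ establishes $I(Z)\le I^o(Z)$.

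\textbf{Step 2 ($I^o(Z)\le I(Z)$).} Fix $Q\in P^*(Z)$ with finite cost. By \ref{PZ:secondmarginalinvtmeasure}--\ref{PZ:fourthmarginallimitnglaw}, disintegrate
\[
Q(dx,dy,dz,ds)=\kappa(dz;x,y,s)\,\pi(dy;x,\mc{L}(X_s))\,\nu_{\mc{L}(X_\cdot)}(dx,ds),
\]
where \ref{PZ:L2contolbound} guarantees that $\kappa(\cdot;x,y,s)$ has finite second moment for $(\pi\otimes\nu_{\mc{L}(X_\cdot)})$-a.e.\ $(x,y,s)$. Set $h(s,x,y)\coloneqq\int_{\R^2}z\,\kappa(dz;x,y,s)\in\R^2$, which is jointly measurable by standard regular-conditional-probability arguments. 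Jensen's inequality applied to $z\mapsto|z|^2$ gives
\[
\int_0^T\E\Bigl[\int_{\R}|h(s,X_s,y)|^2\pi(dy;X_s,\mc{L}(X_s))\Bigr]ds\le\int(z_1^2+z_2^2)\,Q(dx,dy,dz,ds)<\infty,
\]
yielding the cost inequality and the finiteness condition in $P^o(Z)$. For \ref{Po:controlledlimitingeqn}, the crucial observation is that the integrands in the two $z$-dependent terms of \eqref{eq:MDPlimitFIXED}, namely $\sigma(x,y,\mc{L}(X_s))z_1\phi'(x)$ and $[\tau_1 z_1+\tau_2 z_2]\Phi_y\phi'(x)$, are linear in $(z_1,z_2)$. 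Integrating first against $\kappa(dz;x,y,s)$ therefore substitutes $z\mapsto h(s,x,y)$ pointwise, and the remaining integration against $\pi(dy;x,\mc{L}(X_s))\nu_{\mc{L}(X_\cdot)}(dx,ds)$ is precisely the expectation appearing in \eqref{eq:MDPlimitFIXEDordinary}. This establishes $(Z,h)\in P^o(Z)$ and $I^o(Z)\le I(Z)$. In the Theorem \ref{theo:MDP} setting, both $I$ and $I^o$ are set to $+\infty$ off $C([0,T];\mc{S}_{-w})$, so equality is preserved.

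\textbf{Main obstacle.} The principal technicality lies in Step 2: justifying the Fubini exchange that allows the $z$-integration to commute with the spatial integration against $\pi(dy;x,\mc{L}(X_s))\nu_{\mc{L}(X_\cdot)}(dx,ds)$ for the specific integrands appearing in \eqref{eq:MDPlimitFIXED}. This requires joint measurability of $h$ (obtained via a standard measurable-selection argument for regular conditional probabilities) and integrability of the coefficients $\sigma$, $\tau_1\Phi_y$, $\tau_2\Phi_y$ multiplied by $|z|$ against $Q$; the latter follows from the Cauchy--Schwarz bound $|z|\le 1+|z|^2$, the uniform bounds on $\sigma,\tau_1,\tau_2$ in \ref{assumption:gsigmabounded} and \ref{assumption:uniformellipticity}, the boundedness of $\Phi_y$ from \ref{assumption:multipliedpolynomialgrowth}, and the uniform moment bounds on $\pi$ established under \ref{assumption:uniformellipticity}--\ref{assumption:retractiontomean}. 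A secondary point is to verify that $h$ defined via conditional expectation yields a valid admissible feedback control in $P^o(Z)$, which is immediate from the Jensen-based cost bound and the linearity argument above.
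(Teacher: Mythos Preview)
Your proposal is correct and follows exactly the approach the paper indicates: the paper's proof simply reads ``This follows from Jensen's inequality and the affine dependence of the coefficients on the controls,'' citing Section 5 of \cite{DS} for the standard argument, and you have faithfully unpacked precisely those two ingredients (lifting $h$ to a Dirac-valued $Q$ for one inequality, and projecting $Q$ onto its $z$-barycenter for the other). Your treatment of the technical points (measurability of $h$, Fubini justification via boundedness of $\sigma,\tau_i,\Phi_y$) is more careful than the paper's one-line proof, but the underlying strategy is identical.
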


\begin{proof}
This follows from Jensen's inequality and the affine dependence of the coefficients on the controls. The details are omitted for brevity given that the argument is standard, e.g., see Section 5 in \cite{DS}.
\end{proof}

In addition, as a corollary to the proof of Theorem \ref{theo:MDP}, we extend the results from \cite{BW} as follows:
\begin{corollary}\label{cor:mdpnomulti}(MDP without Multiscale Structure)
Suppose that $b=f=g=\tau_1=\tau_2\equiv 0$ and $c(x,y,\mu)= c(x,\mu),\sigma(x,y,\mu)= \sigma(x,\mu)$. Let $v>4$ be sufficiently large that the canonical embedding $\mc{S}_{-4}\tto \mc{S}_{-v}$ is Hilbert-Schmidt, $\rho>6$ be sufficiently large that the canonical embedding $\mc{S}_{-v-2}\tto \mc{S}_{-\rho}$ is Hilbert-Schmidt, and $\bar{\bm{\zeta}}$ as in \eqref{eq:collectionsofmultiindices}. Assume also that $\sigma,c\in \mc{M}_{\bm{\delta},b}^{\bm{\zeta}}(\R\times \mc{P}_2(\R))$ and for $F(x,\mu) = c(x,\mu)$ or $\sigma(x,\mu)$:
\begin{enumerate}
\item $\sup_{\mu\in\mc{P}_2(\R)}|F(\cdot,\mu)|_{\rho+2}<\infty$
\item $\sup_{x\in\R,\mu\in\mc{P}_2(\R)}\norm{\frac{\delta}{\delta m}F(x,\mu)[\cdot]}_{\rho+2}<\infty$.
\end{enumerate}
Here we recall the space $\mc{M}_{\bm{\delta},b}$ from Definition \ref{def:lionsderivativeclasses}, the collection of multi-indices $\bm{\zeta}$ from Equation \eqref{eq:collectionsofmultiindices}, and the norms on $\mc{S}$ defined in Equations \eqref{eq:familyofhilbertnorms} and \eqref{eq:boundedderivativesseminorm}.
Then $\br{Z^N}_{N\in\bb{N}}$ satisfies a large deviation principle on the space $C([0,T];\mc{S}_{-\rho})$ with speed $a^{-2}(N)$ and good rate function $\tilde{I}^o$ given by
\begin{align}\label{eq:proposedjointratefunctionordinarynomultiscale}
\tilde{I}^o(Z)=\inf_{h\in \tilde{P}^o(Z)}\biggl\lbrace\frac{1}{2}\int_{0}^T \E\biggl[|h(s,X_s)|^2  \biggr]ds\biggr\rbrace
\end{align}
where $h:[0,T]\times\R\tto \R$ is in $\tilde{P}^o(Z)$ if:
\begin{enumerate}[label={($P^o$\arabic*)}]
\item \label{Ponomulti:limitingeqn} $(Z,h)$ satisfies Equation \eqref{eq:MDPlimitFIXEDordinarynomultiscale} for all $t\in[0,T]$ and $\phi\in C_c^\infty(\R)$
\item \label{Ponomulti:L2control} $\int_{0}^T \E\biggl[ |h(s,X_s)|^2 \biggr]ds<\infty$
\end{enumerate}
and $\inf\br{\emptyset}=+\infty$, $I(Z)=+\infty$ for $Z\in C([0,T];\mc{S}_{-\rho})\setminus C([0,T];\mc{S}_{-v})$.
Here we define:
\begin{align}\label{eq:MDPlimitFIXEDordinarynomultiscale}
\langle Z_t,\phi\rangle &= \int_0^t \langle Z_s,\tilde{L}_{\mc{L}(\tilde{X}_s)}\phi(\cdot)\rangle ds+\int_0^t \E\biggl[\sigma(\tilde{X}_s,\mc{L}(\tilde{X}_s)) h(s,\tilde{X}_s) \phi'(\tilde{X}_s)\biggr]ds\\
\tilde{L}_{\nu}\phi(x) & = c(x,\nu)\phi'(x)+\frac{\sigma^2(x,\nu)}{2}\phi''(x)+\int_\R \left(\frac{\delta}{\delta m}c(z,\nu)[x]\phi'(z) +\frac{1}{2}\frac{\delta}{\delta m}[\sigma^2(z,\nu)[x]]\phi''(z)\right) \nu(dz)\nonumber\\
\tilde{X}_t & = \eta^x + \int_0^t c(\tilde{X}_s,\mc{L}(\tilde{X}_s))ds + \int_0^t \sigma(\tilde{X}_s,\mc{L}(\tilde{X}_s))dW_s. \nonumber
\end{align}
\begin{proof}
This follows from Theorem \ref{theo:MDP}. The assumptions needed are vastly simplified due to the absence of multiscale structure. In particular, we have no need for the results from Section \ref{sec:ergodictheoremscontrolledsystem} and Subsection \ref{sec:averagingfullycoupledmckeanvlasov}. The rate function can be posed on a smaller space $C([0,T];\mc{S}_{-\rho})$ (agreeing with that of Theorem 2.1 in \cite{BW}) as opposed to the larger $C([0,T];\mc{S}_{-r})$ of our Theorem \ref{theo:MDP} due to the IID system \eqref{eq:IIDparticles} not depending on $\epsilon$ in this regime. In particular, this means $\bar{X}^\epsilon_t\overset{d}{=}\tilde{X}_t$ in the proof of Lemma \ref{lemma:Zboundbyphi4}, and hence the result is improved $C(T)|\phi|^2_1$ instead of $C(T)|\phi|^2_4$. Similarly, in the result of Lemma \ref{lemma:Lnu1nu2representation}, the bound on $R^N_t(\phi)$ can be improved from $\bar{R}(N,T)|\phi|_4$ to $\bar{R}(N,T)|\phi|_3$ using Lemma \ref{lemma:XbartildeXdifference} and the proof method of Proposition 4.2 in \cite{BW}. At this point tightness of $\br{\tilde{Z}^N}_{N\in\bb{N}}$ from Equation \eqref{eq:controlledempmeasure} can be proved in Proposition \ref{prop:tildeZNtightness}, but with the uniform 7-continuity of Equation \eqref{eq:implies7cont} improved to uniform 4-continuity, and hence the result holds with $w$ replaced by $v$. The remainder of the proofs in the paper found in Subsection \ref{SS:QNtightness} and Sections \ref{sec:identificationofthelimit}, and \ref{sec:upperbound/compactnessoflevelsets} then go through verbatim with $m$ and $w$ replaced by $v$ and $r$ replaced with $\rho$, but with the simplifications assumed on the coefficients allowing us to set many terms equal to $0$. In particular, in the controlled particle Equation \eqref{eq:controlledslowfast1-Dold}, we can set $\tilde{Y}^{i,\epsilon,N}\equiv 0$, and throughout the invariant measure $\pi$ from Equation \eqref{eq:invariantmeasureold} can be set to $\delta_0$, which makes dealing with the second marginals of the occupation measures $\br{Q^N}_{N\in\bb{N}}$ from Equation \eqref{eq:occupationmeasures} trivial. Lastly, in Section \ref{sec:lowerbound}, due to the lack of multiscale structure, there is no need for an approximation argument in the proof of Proposition \ref{prop:LPlowerbound}, and hence existence of solutions to \eqref{eq:MDPlimitFIXEDordinarynomultiscale} can be established in $C([0,T];\mc{S}_{-v})$ and compactness of level sets established in $C([0,T];\mc{S}_{-\rho})$ exactly as in Subsections 4.4 and 4.5 of \cite{BW}.
\end{proof}
\end{corollary}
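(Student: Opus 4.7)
The plan is to deduce this corollary directly from Theorem \ref{theo:MDP} by verifying that in this degenerate regime all of the structural hypotheses of the general theorem are automatically satisfied or else are implied by the hypotheses listed in the statement, and then by tracking how the various regularity parameters simplify so that the rate function can be posed on a smaller space with a lower-order norm.

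First, I would verify that assumptions \ref{assumption:uniformellipticity}--\ref{assumption:2unifboundedlinearfunctionalderivatives} hold trivially. With $\tau_1 \equiv \tau_2 \equiv 0$, \ref{assumption:uniformellipticity} is vacuous, and with $f \equiv 0$ the Ornstein--Uhlenbeck-type decomposition in \ref{assumption:retractiontomean} is vacuous as well. With $b \equiv 0$ the centering condition \ref{assumption:centeringcondition} is immediate, and the Poisson equation \eqref{eq:cellproblemold} has the unique centered solution $\Phi \equiv 0$. Consequently all derivatives of $\Phi$ vanish, and assumptions \ref{assumption:multipliedpolynomialgrowth}, \ref{assumption:qF2bound}, \ref{assumption:forcorrectorproblem} and \ref{assumption:tildechi} reduce to the trivial statement that the zero function solves the corresponding Poisson equations with zero inhomogeneity. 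Assumption \ref{assumption:gsigmabounded} and the Lipschitz condition \ref{assumption:uniformLipschitzxmu} follow from the assumed boundedness and linear functional differentiability of $c$ and $\sigma$ together with Lemma \ref{lemma:rocknersecondlinfunctderimplication}. Assumption \ref{assumption:limitingcoefficientsLionsDerivatives} is implied by the regularity assumed on $c, \sigma$ because $\bar{\gamma} = c$ and $\bar{D} = \sigma^2/2$ in this regime. Strong existence and uniqueness, assumption \ref{assumption:strongexistence}, follows from standard McKean--Vlasov theory under the stated regularity.

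Second, I would observe that since there is no fast component to homogenize, the auxiliary IID particle system \eqref{eq:IIDparticles} coincides in distribution with $\tilde X_t$ and is independent of $\epsilon$. This is the key structural simplification: the estimate in Lemma \ref{lemma:Zboundbyphi4} improves from $C(T)|\phi|_4^2$ to $C(T)|\phi|_1^2$, and the bound of Lemma \ref{lemma:Lnu1nu2representation} on $R^N_t(\phi)$ improves from involving $|\phi|_4$ to $|\phi|_3$, exactly as in Proposition 4.2 of \cite{BW}. Feeding these sharper estimates into the tightness proof of Proposition \ref{prop:tildeZNtightness} upgrades the uniform $7$-continuity of Equation \eqref{eq:implies7cont} to uniform $4$-continuity, which is precisely what allows the threshold $m > 7$ of \eqref{eq:mdefinition} to be replaced by $v > 4$, and similarly $w > 9$ becomes any $v$ with $\mc{S}_{-v-2} \tto \mc{S}_{-\rho}$ Hilbert--Schmidt for $\rho > 6$, while $r$ becomes $\rho$.

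Third, I would simplify the rate function. The invariant measure degenerates to $\pi(dy; x, \mu) = \delta_0(dy)$, so constraint \ref{PZ:secondmarginalinvtmeasure} forces any $Q \in P^*(Z)$ to be concentrated on $y = 0$, and the $\Phi_y$ term in \eqref{eq:MDPlimitFIXED} disappears because $\Phi \equiv 0$. Passing to the ordinary-control representation of Corollary \ref{cor:ordinarycontrolratefunction}, the $h_2$ variable becomes irrelevant because it multiplies $\tau_2 \Phi_y \equiv 0$, and the $y$-dependence in $h_1$ drops out after integrating against $\delta_0$, leaving a scalar control $h(s, \tilde X_s)$ and the simplified linear equation \eqref{eq:MDPlimitFIXEDordinarynomultiscale}. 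The operator $\bar{L}_{\nu}$ reduces to $\tilde{L}_\nu$ because $\bar{\gamma} = c$ and $\bar{D} = \sigma^2/2$.

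Finally, the arguments of Sections \ref{sec:identificationofthelimit}--\ref{sec:upperbound/compactnessoflevelsets} carry over verbatim with the index substitutions $m \mapsto v$, $w \mapsto v$, $r \mapsto \rho$, setting $\tilde Y^{i,\epsilon,N} \equiv 0$ throughout the controlled system \eqref{eq:controlledslowfast1-Dold} and replacing the occupation measures \eqref{eq:occupationmeasures} by their projections onto the relevant coordinates. The only nontrivial point to check is in Section \ref{sec:lowerbound}: the approximation argument in Proposition \ref{prop:LPlowerbound} that was needed to handle the multiscale structure (approximating general $L^2$ controls by smooth feedback controls adapted to $\pi$) becomes unnecessary, so existence of solutions to \eqref{eq:MDPlimitFIXEDordinarynomultiscale} and compactness of level sets follow directly as in Subsections 4.4 and 4.5 of \cite{BW}, yielding the stated LDP on $C([0,T]; \mc{S}_{-\rho})$ with good rate function $\tilde I^o$. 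I expect the main bookkeeping obstacle to be tracking how the regularity indices propagate through the tightness chain, but the improvement from the absence of fast dynamics is uniform enough that no new estimates are required.
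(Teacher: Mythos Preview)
Your proposal is correct and follows essentially the same approach as the paper's own proof: both derive the corollary from Theorem \ref{theo:MDP} by observing that the IID system \eqref{eq:IIDparticles} no longer depends on $\epsilon$, tracking the resulting improvements in Lemmas \ref{lemma:Zboundbyphi4} and \ref{lemma:Lnu1nu2representation} through the tightness argument to replace the index chain $m,w,r$ by $v,v,\rho$, setting $\tilde{Y}^{i,\epsilon,N}\equiv 0$ and $\pi=\delta_0$, and noting that the approximation step in Proposition \ref{prop:LPlowerbound} becomes unnecessary so that the lower bound and compactness of level sets follow as in \cite{BW}. Your only imprecision is calling assumptions \ref{assumption:uniformellipticity} and \ref{assumption:retractiontomean} ``vacuous'' when in fact they are violated (e.g.\ $\tau_1^2+\tau_2^2=0$ contradicts uniform ellipticity); the accurate statement is that these assumptions concern the fast dynamics and are therefore not needed, which is what the paper means by ``vastly simplified.''
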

\begin{remark}\label{remark:BWextension}
Note that, in contrast to \cite{BW}, which assumes a linear-in-measure form of the coefficients of Equation \eqref{eq:slowfast1-Dold} (without multiscale structure), i.e. that there are $\beta,\alpha:\R^2\tto\R$ such that $c(x,\mu) = \int_\R \beta(x,z)\mu(dz),\sigma(x,\mu)=\int_\R \alpha(x,z)\mu(dz)$, we do not suppose any particular form of $c(x,\mu)$, $\sigma(x,\mu)$ other than that they have sufficient regularity for the proof of tightness and existence/uniqueness of the limiting equation. We are able to do so via the use of Lemma \ref{lemma:rocknersecondlinfunctderimplication} (which holds also in the case without dependence of the function $p$ on $y$) and the assumption that $\sigma,c\in \mc{M}_{\bm{\delta},b}^{\bm{\zeta}}(\R\times \mc{P}_2(\R))$. For the specific linear form of $c$ and $\sigma$ assumed by \cite{BW}, $\frac{\delta}{\delta m}c(x,\mu)[z]=\beta(x,z)$ and $\frac{\delta}{\delta m}\sigma(x,\mu)[z]=\alpha(x,z)$, so the condition (2) from Corollary \ref{cor:mdpnomulti} in fact implies $\sigma,c\in \mc{M}_{\bm{\delta},b}^{\bm{\zeta}}(\R\times \mc{P}_2(\R))$. In addition, (1) and (2) are exactly the assumptions (a) and (b) from Condition 2.3 of \cite{BW} in this subcase, so indeed Corollary \ref{cor:mdpnomulti} provides a strict generalization of their result. See also Corollary \ref{corollary:dawsongartnerformnomulti} where we further extend this result to get an alternate form of the rate function analogous to that of Dawson-G\"artner \cite{DG}.
\end{remark}

It is also useful to characterize the way that the limiting equations \eqref{eq:MDPlimitFIXED},\eqref{eq:MDPlimitFIXEDordinary}, and \eqref{eq:MDPlimitFIXEDordinarynomultiscale} act on functions which depend both on time and space. Hence we make the following remark:
\begin{remark}\label{remark:limitingequationactingontimedependantfunctions}
We can alternatively characterize the controlled limiting Equation \eqref{eq:MDPlimitFIXED} (and analogously the ordinary controlled limiting Equations \eqref{eq:MDPlimitFIXEDordinary} and \eqref{eq:MDPlimitFIXEDordinarynomultiscale}) in terms of how the $Z$ acts on $\psi\in C^\infty_c(U\times\R)$, where $U$ is an open interval containing $[0,T]$. For Equation \eqref{eq:MDPlimitFIXEDordinary}, this characterization is:
\begin{align}\label{eq:contolledequationactingontimedependantfunctions}
&\langle Z_T,\psi(T,\cdot)\rangle = \int_0^T \langle Z_s,\dot{\psi} (s,\cdot)\rangle ds + \int_0^T \langle Z_s,\bar{L}_{\mc{L}(X_s)}\psi(s,\cdot)\rangle ds \\
&+\int_0^T \E\biggl[\int_\R  \sigma(X_s,y,\mc{L}(X_s))h_1(s,X_s,y) \psi_x(s,X_s)\pi(dy;X_s,\mc{L}(X_s))\biggr]ds \nonumber\\
&+\int_0^T \E\biggl[\int_\R  [\tau_1(X_s,y,\mc{L}(X_s))h_1(s,X_s,y)+\tau_2(X_s,y,\mc{L}(X_s))h_2(s,X_s,y)]\Phi_y(X_s,y,\mc{L}(X_s))\psi_x(s,X_s) \pi(dy;X_s,\mc{L}(X_s))\biggr]ds\nonumber\\
&Z_0=0.\nonumber
\end{align}
This is analogous to the form of the limiting equation seen in \cite{Orrieri} (Remark 2.9) and \cite{BW} (Remark 2.2).

\end{remark}

\section{On the form of the rate function}\label{sec:formofratefunction}
\subsection{Statement and Proof of Equivalent forms of the Rate Function}
Here we prove an alternative form of the moderate deviations rate function \eqref{eq:proposedjointratefunction}, which is analogous to the ``negative Sobolev'' form of the large deviations rate function for the empirical measure of weakly interacting diffusions found in Theorem 5.1 of the classical work of Dawson-G\"artner \cite{DG}. This is the first time such a form of the rate function has been provided in the moderate deviations setting, both with and without multiscale structure. The result for the specialized case without multiscale structure can be found as Corollary \ref{corollary:dawsongartnerformnomulti} below.

A direct connection between the variational form of the large deviations rate function from \cite{BDF} and the ``negative Sobolev'' form of \cite{DG} was recently made for the first time in \cite{BS} Section 5.2. In contrast to the large deviations setting, in the moderate deviations rate function \eqref{eq:proposedjointratefunctionordinary}, we already know the controls $h$ are in feedback form, but rather than being feedback controls of the limiting controlled processes $Z$ in Equation \eqref{eq:MDPlimitFIXEDordinary}, they are feedback controls of the law of large numbers $\mc{L}(X)$ from Equation \eqref{eq:LLNlimitold}. Moreover, contrast to in the large deviations setting of \cite{BS}, here we handle the dependence of the controls $h$ on the parameter $y$ do to the multiscale structure and obtaining the ``negative Sobolev'' form of the rate function uniformly.

In order to state the alternate form of the rate function we first need to recall the following definition:
\begin{defi}\label{def:absolutelycontinuous}(Definition 4.1 in \cite{DG})
For a compact set $K\subset \R$, we will denote the subspace of $C^\infty_c(\R)$ which have compact support contained in $K$ by $\mc{S}_K$.
Let $I$ be an interval on the real line. A map $Z:I\tto \mc{S}'$ is called absolutely continuous if for each compact set $K\subset \R$, there exists a neighborhood of $0$ in $\mc{S}_K$ and an absolutely continuous function $H_K:I\tto \R$ such that
\begin{align*}
|\langle Z(u),\phi\rangle - \langle Z(v),\phi\rangle | \leq |H_K(u)-H_K(v)|
\end{align*}
for all $u,v\in I$ and $\phi\in U_K$.
\end{defi}

It is also useful to recall the following result:
\begin{lemma}\label{lemma:DG4.2}(Lemma 4.2 in \cite{DG})
Assume the map $Z:I\tto \mc{S}'$ is absolutely continuous. Then the real function $\langle Z,\phi\rangle$ is absolutely continuous for each $\phi\in C^\infty_c(\R)$ and the derivative in the distribution sense
\begin{align*}
\dot{Z}(t)\coloneqq \lim_{h\downarrow 0}h^{-1}[Z(t+h)-Z(t)]
\end{align*}
exists for Lebesgue almost-every $t\in I$.
\end{lemma}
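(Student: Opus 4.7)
The first assertion is essentially a direct unpacking of Definition \ref{def:absolutelycontinuous}. The plan is to fix $\phi\in C^\infty_c(\R)$ and set $K=\mathrm{supp}(\phi)$. Since $U_K$ is a neighborhood of $0$ in the locally convex space $\mc{S}_K$, there is a continuous seminorm $p_K$ on $\mc{S}_K$ with $\{\psi\in\mc{S}_K:p_K(\psi)\le 1\}\subset U_K$. Applying the hypothesis to $\psi=\phi/p_K(\phi)$ (the case $p_K(\phi)=0$ is trivial since then $\psi\in U_K$ for all scales) and invoking the linearity of $\langle Z(\cdot),\cdot\rangle$ in the second argument yields
\begin{align*}
|\langle Z(u),\phi\rangle-\langle Z(v),\phi\rangle|\le p_K(\phi)\,|H_K(u)-H_K(v)|,\qquad u,v\in I.
\end{align*}
Combined with absolute continuity of $H_K$ this gives absolute continuity of $t\mapsto \langle Z(t),\phi\rangle$ and, by the classical Lebesgue theorem, differentiability at Lebesgue-almost every $t$.

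For the existence of the distributional derivative, the plan is to promote the $\phi$-dependent null set to a single null set via separability. I exhaust $\R$ by compacta $K_n=[-n,n]$. Each $\mc{S}_{K_n}$ is a separable Fr\'echet space, so I fix a countable dense subset $D_n\subset\mc{S}_{K_n}$. Let $M_n\subset I$ be the null set outside which $H_{K_n}$ fails to be differentiable, and for every $\phi\in D_n$ let $N_{n,\phi}$ be the null set outside which $\frac{d}{dt}\langle Z(t),\phi\rangle$ fails to exist (which exists by the previous paragraph). Set
\begin{align*}
N\coloneqq \bigcup_{n\in\bb{N}}\Big(M_n\cup\bigcup_{\phi\in D_n}N_{n,\phi}\Big),
\end{align*}
which is still Lebesgue-null by countable subadditivity.

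For any fixed $t\in I\setminus N$ I then claim the limit $\dot Z(t)$ exists in $\mc{S}'$. For each $\phi\in C^\infty_c(\R)$, pick $n$ with $\mathrm{supp}(\phi)\subset K_n$; the seminorm bound gives
\begin{align*}
\bigl|h^{-1}\langle Z(t+h)-Z(t),\phi\rangle\bigr|\le p_{K_n}(\phi)\cdot\bigl|h^{-1}(H_{K_n}(t+h)-H_{K_n}(t))\bigr|,
\end{align*}
and the right-hand side is bounded uniformly in small $h>0$ because $H_{K_n}$ is differentiable at $t$. This equicontinuity of the difference quotients, together with their pointwise convergence on the dense set $D_n$ (by definition of $N$), lets me extend the limit linearly and continuously to all of $\mc{S}_{K_n}$ by a standard Banach--Steinhaus/approximation argument. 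The resulting functional $\dot Z(t)$ is continuous on each $\mc{S}_{K_n}$ with the same seminorm bound, hence continuous on $\mc{S}$ and thus an element of $\mc{S}'$. The main obstacle is precisely avoiding a $\phi$-dependent null set of non-differentiability points; the key ingredient overcoming it is the uniform seminorm estimate derived in the first paragraph, which converts pointwise a.e.\ differentiability along a countable dense family into convergence of the difference quotients in the distributional topology.
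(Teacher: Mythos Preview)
The paper does not prove this lemma; it merely recalls it from Dawson--G\"artner \cite{DG} as a known result. Your argument is essentially correct and follows the natural line of proof: extract a seminorm estimate from the neighborhood $U_K$, deduce real-valued absolute continuity of each $\langle Z(\cdot),\phi\rangle$, then use separability plus the uniform difference-quotient bound coming from differentiability of $H_{K_n}$ to pass from a countable dense family to all compactly supported test functions at once.

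One small inaccuracy in your last sentence: continuity of $\dot Z(t)$ on each $\mc{S}_{K_n}$ yields $\dot Z(t)\in\mc{D}'$, not automatically $\dot Z(t)\in\mc{S}'$. Indeed $\bigcup_n \mc{S}_{K_n}=C^\infty_c(\R)$ is a proper (dense) subspace of the Schwartz space $\mc{S}$, and your bounds are in terms of seminorms $p_{K_n}$ on $\mc{S}_{K_n}$, not in terms of the Schwartz seminorms, so you cannot directly extend by density. This gap is harmless for the paper's purposes, since $\dot Z_t$ is only ever paired with $\phi\in C^\infty_c(\R)$ (see the supremum in Equation \eqref{eq:DGratefunctionmultiscaleFIX} and the derivation of Equation \eqref{eq:Zdotrep} in the proof of Proposition \ref{prop:DGformofratefunction}); the $\mc{D}'$-valued derivative is all that is actually used.
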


Now we are ready to state the equivalent form of the rate function: 
\begin{proposition}\label{prop:DGformofratefunction}
Let assumptions \ref{assumption:uniformellipticity} - \ref{assumption:2unifboundedlinearfunctionalderivatives} and \ref{assumption:limitingcoefficientsregularityratefunction} hold. Assume also that $\bar{D}(x,\mu)>0$ for all $x\in\R,\mu\in\mc{P}_2$, where $\bar{D}$ is as in Equation \eqref{eq:averagedlimitingcoefficients}. Let $r$ be as in Equation \eqref{eq:rdefinition}. Consider $I^{DG}:  C([0,T];\mc{S}_{-r})\tto [0,+\infty]$ given by:
\begin{align}\label{eq:DGratefunctionmultiscaleFIX}
I^{DG}(Z)& = \frac{1}{4}\int_0^T \sup_{\phi\in C^\infty_c(\R):\E[\bar{D}(X_t,\mc{L}(X_t))|\phi'(X_t)|^2]\neq 0}\frac{\biggl|\langle \dot{Z}_t-\bar{L}^*_{\mc{L}(X_t)}Z_t,\phi\rangle\biggr|^2}{\E\biggl[\bar{D}(X_t,\mc{L}(X_t))|\phi'(X_t)|^2\biggr]}dt
\end{align}
if $Z(0)=0$, $Z$ is absolutely continuous in the sense if Definition \ref{def:absolutelycontinuous}, and $Z\in C([0,T];\mc{S}_{-w})$, and $I^{DG}(Z)=+\infty$ otherwise. Here $X_t$ is as in Equation \eqref{eq:LLNlimitold}, $\dot{Z}$ is the time derivative of $Z$ in the distribution sense from Lemma \ref{lemma:DG4.2} and $\bar{L}^*_{\mc{L}(X_s)}:\mc{S}_{-w}\tto \mc{S}_{-(w+2)}$ is the adjoint of $\bar{L}_{\mc{L}(X_s)}:\mc{S}_{w+2}\tto \mc{S}_w$ given in Equation \eqref{eq:MDPlimitFIXED} (using here Lemma \ref{lem:barLbounded}).

Then $\br{Z^N}_{N\in\bb{N}}$ from Equation \eqref{eq:fluctuationprocess} satisfies a large deviation principle on the space $C([0,T];\mc{S}_{-r})$ with speed $a^{-2}(N)$ and good rate function $I^{DG}$.
\end{proposition}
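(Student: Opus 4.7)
By Corollary \ref{cor:ordinarycontrolratefunction}, the LDP of Theorem \ref{theo:MDP} holds with rate function $I$ equivalently expressed in the feedback-control form $I^o$. It therefore suffices to show $I^o(Z) = I^{DG}(Z)$ for every $Z \in C([0,T];\mc{S}_{-r})$, at which point both the Laplace principle and compactness of level sets transfer immediately. The domains of admissibility coincide: if $(Z,h)$ satisfies \eqref{eq:MDPlimitFIXEDordinary} with $h \in P^o(Z)$, then $Z_0 = 0$, $Z \in C([0,T];\mc{S}_{-w})$, and absolute continuity of $t \mapsto Z_t$ (in the sense of Definition \ref{def:absolutelycontinuous}) follow by direct inspection of the integral representation, the $L^2$-bound on $h$, and the boundedness of $\bar L_{\mc{L}(X_\cdot)}$; finiteness of $I^{DG}(Z)$ presupposes these properties by convention. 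For admissible $Z$, Lemma \ref{lemma:DG4.2} produces the distributional derivative $\dot Z_t$ for a.e.\ $t$, and pairing \eqref{eq:MDPlimitFIXEDordinary} with $\phi \in C_c^\infty(\R)$ gives
\begin{equation*}
\langle \dot Z_t - \bar L^*_{\mc{L}(X_t)}Z_t,\phi\rangle = \E\!\left[\phi'(X_t)\int_\R \Psi(t,X_t,y)\,\pi(dy;X_t,\mc{L}(X_t))\right],
\end{equation*}
where $\Psi := h_1(\sigma + \tau_1 \Phi_y) + h_2 \tau_2 \Phi_y$, all evaluated at $(x,y,\mc{L}(X_t))$.

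\textbf{Upper bound $I^{DG}(Z) \le I^o(Z)$.} Fix $h \in P^o(Z)$ and use the alternative representation $2\bar D = \int_\R[(\sigma+\tau_1\Phi_y)^2 + (\tau_2\Phi_y)^2]\,\pi(dy)$ from Remark \ref{remark:alternateformofdiffusion}. Cauchy-Schwarz in $L^2(\pi;\R^2)$ applied to the pairs $(h_1,h_2)$ and $(\sigma+\tau_1\Phi_y,\tau_2\Phi_y)$ yields $\bigl(\int \Psi\,\pi(dy)\bigr)^2 \le 2\bar D \int (h_1^2+h_2^2)\,\pi(dy)$, and a second Cauchy-Schwarz in $\omega$ (factoring $\phi'(X_t)\int \Psi\,\pi = (\sqrt{2\bar D}\,\phi'(X_t))\cdot(\int \Psi\,\pi/\sqrt{2\bar D})$) gives
\begin{equation*}
|\langle \dot Z_t - \bar L^*_{\mc{L}(X_t)}Z_t,\phi\rangle|^2 \le 2\,\E[\bar D|\phi'(X_t)|^2]\cdot \E\!\left[\int_\R(h_1^2+h_2^2)\,\pi(dy;X_t,\mc{L}(X_t))\right].
\end{equation*}
Dividing, taking $\sup_\phi$, integrating in $t$ and multiplying by $\tfrac{1}{4}$ produces $I^{DG}(Z) \le \tfrac{1}{2}\int_0^T \E[\int(h_1^2+h_2^2)\pi]\,dt$; infimum over $h \in P^o(Z)$ completes the bound.

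\textbf{Lower bound $I^o(Z) \le I^{DG}(Z)$.} Assume $I^{DG}(Z) < \infty$. For Lebesgue-a.e.\ $t \in [0,T]$, the linear form $\ell_t(\phi) := \langle \dot Z_t - \bar L^*_{\mc{L}(X_t)}Z_t,\phi\rangle$ on $C_c^\infty(\R)$ satisfies $|\ell_t(\phi)| \le C_t\,(\E[\bar D(X_t,\mc{L}(X_t))|\phi'(X_t)|^2])^{1/2}$. Since $\bar D > 0$ and $\phi \in C_c^\infty(\R)$ is determined by $\phi'$, this is a genuine norm, and $\ell_t$ extends to a bounded functional on the Hilbert space $H_t$ obtained by closing $\{\phi' : \phi \in C_c^\infty(\R)\}$ in $L^2(\R,\bar D(x,\mc{L}(X_t))\,\mc{L}(X_t)(dx))$. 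Riesz representation provides a measurable $G(t,\cdot)$ with
\begin{equation*}
\ell_t(\phi) = \E[\bar D(X_t,\mc{L}(X_t))\,G(t,X_t)\,\phi'(X_t)],\qquad \E[\bar D\,G(t,X_t)^2] = \sup_{\phi}\frac{|\ell_t(\phi)|^2}{\E[\bar D|\phi'(X_t)|^2]}.
\end{equation*}
Define the feedback control
\begin{equation*}
h_1(t,x,y) := \tfrac{1}{2}G(t,x)(\sigma+\tau_1\Phi_y)(x,y,\mc{L}(X_t)),\qquad h_2(t,x,y) := \tfrac{1}{2}G(t,x)\tau_2\Phi_y(x,y,\mc{L}(X_t)).
\end{equation*}
Direct computation using the alternative form of $\bar D$ gives $\int \Psi\,\pi = \bar D\,G(t,X_t)$, so $(Z,h)$ satisfies \eqref{eq:MDPlimitFIXEDordinary} and hence $h \in P^o(Z)$; moreover $\int(h_1^2+h_2^2)\,\pi = \tfrac{1}{2}G^2\bar D$, so the cost of $h$ is $\tfrac{1}{4}\int_0^T \E[\bar D\,G(t,X_t)^2]\,dt = I^{DG}(Z)$. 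This yields the reverse inequality.

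\textbf{Main obstacle.} The principal technical difficulty is securing the joint $(t,x)$-measurability of the Riesz representative $G$ and thereby the $(t,x,y)$-measurability and requisite integrability of $h$ so that it is a legitimate admissible control in $P^o(Z)$. This is handled by selecting a countable dense family of test functions in the nuclear space $\mc{S}$, using absolute continuity of $t \mapsto Z_t$ (Lemma \ref{lemma:DG4.2}) to extract weak measurability of $t \mapsto \ell_t$, and invoking a measurable selection for the $H_t$-Riesz representative; the polynomial growth and Lipschitz regularity of $\sigma+\tau_1\Phi_y$ and $\tau_2\Phi_y$ guaranteed by Assumptions \ref{assumption:multipliedpolynomialgrowth} and \ref{assumption:uniformLipschitzxmu} then propagate to integrability of $\int(h_1^2+h_2^2)\pi$. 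The hypothesis $\bar D > 0$ is indispensable, both for $\phi \mapsto (\E[\bar D|\phi'(X_t)|^2])^{1/2}$ to be a norm (not merely a seminorm) and for the explicit reconstruction of $h$ from $G$ to be well defined and nondegenerate.
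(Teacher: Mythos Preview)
Your proof is correct and arrives at the same optimal control $(h_1,h_2)=\tfrac{1}{2}G\cdot(\sigma+\tau_1\Phi_y,\,\tau_2\Phi_y)$ as the paper, but you take a genuinely different route to get there. The paper proceeds in two stages: it first introduces an intermediate functional
\[
J(Z)=\sup_{\psi\in C_c^\infty(U\times\R)}\Bigl\{F_Z(\psi)-\int_0^T\E\bigl[\bar D|\psi_x(t,X_t)|^2\bigr]\,dt\Bigr\},
\]
with $F_Z$ built from time-dependent test functions, and proves $I^o=J$ via a single Riesz representation on the Hilbert space $L^2_\nabla[0,T]$ obtained by closing $\{\nabla_t\psi:\psi\in C_c^\infty(U\times\R)\}$ in a weighted $L^2$ over $[0,T]\times\R$ (Lemma~\ref{lemma:Eqn4.21DGform}). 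Only then does it identify $J=I^{DG}$ by evaluating the time-dependent sup at functions $\psi(t,x)=\phi(x)$ and using the optimal $\bar h$ already in hand. You instead work directly with $I^{DG}$ and perform a time-slice Riesz representation in the fibre Hilbert space $H_t$ for a.e.\ $t$. The advantage of the paper's detour through $J$ is precisely that it sidesteps the measurable-selection issue you flag as the main obstacle: the single Riesz representative $\bar h\in L^2_\nabla[0,T]$ is automatically jointly measurable in $(t,x)$, whereas your $G(t,\cdot)$ is produced one $t$ at a time and you must then argue measurability separately. Your approach is more direct and the measurable selection is certainly tractable (uniqueness of the Riesz representative plus approximation by countable families suffices), but the paper's route is cleaner on this point and also makes the connection to the classical Dawson--G\"artner form (their Equation~(4.21)) explicit.
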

\begin{remark}\label{remark:barDnondegenerate}
Note that the assumption that $\bar{D}(x,\mu)>0$ for all $x\in\R$ and $\mu\in\mc{P}_2(\R)$ is not very restrictive. In particular, via the representation for the density of the invariant measure $\pi$ given in Equation \eqref{eq:explicit1Dpi}, we know it is strictly positive for all $x,\mu$. Then via the representation for $\bar{D}(x,\mu)$ given in Equation \eqref{eq:alternativediffusion}, we have that if there is $x,\mu$ such that $\bar{D}(x,\mu)=0$, then for that $x,\mu$, we must have
\begin{align*}
[\tau_2(x,y,\mu)\Phi_y(x,y,\mu)]^2 + [\sigma(x,y,\mu)+\tau_1(x,y,\mu)\Phi_y(x,y,\mu)]^2 =0
\end{align*}
for Lebesgue-almost every $y\in\R$. This will only happen if $\sigma$ has a very specific relation to $f,\tau_1,\tau_2,b$ and hence $\Phi_y$.

\end{remark}

In order to prove Proposition \ref{prop:DGformofratefunction}, we first prove the following Lemma, which gives us a form of the rate function analogous to Equation (4.21) in \cite{DG}:
\begin{lemma}\label{lemma:Eqn4.21DGform}
Assume the same setup as Proposition \ref{prop:DGformofratefunction}. For $\psi \in C^\infty_c(U\times\R)$ and $Z\in S_{-w}$, define
\begin{align}\label{eq:FZ}
F_{Z}(\psi) & = \langle Z_T,\psi(T,\cdot)\rangle - \int_0^T \langle Z_s,\dot{\psi} (s,\cdot)\rangle ds -\int_0^T\langle Z_s , \bar{L}_{\mc{L}(X_s)}\psi(s,\cdot)\rangle ds
\end{align}
and consider $J:  C([0,T];\mc{S}_{-\rho})\tto [0,+\infty]$ given by:
\begin{align}
J(Z)& = \sup_{\psi\in C^\infty_c(U\times\R)}\biggl\lbrace F_Z(\psi)-\int_0^T\E\biggl[\bar{D}(X_t,\mc{L}(X_t))|\psi_x(t,X_t)|^2\biggr]dt \biggr\rbrace
\end{align}
if $Z_0=0$ and $Z\in C([0,T];\mc{S}_{-w})$, and $J(Z)=+\infty$ otherwise. Here $U$ is an open interval containing $[0,T]$. Then $\br{Z^N}_{N\in\bb{N}}$ satisfies a large deviation principle on the space $C([0,T];\mc{S}_{-r})$ with speed $a^{-2}(N)$ and good rate function $J$.
\end{lemma}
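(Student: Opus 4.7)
I would establish $J(Z)=I^o(Z)$ for every $Z\in C([0,T];\mc S_{-r})$, where $I^o$ is the feedback-control representation from Corollary~\ref{cor:ordinarycontrolratefunction}. Since Theorem~\ref{theo:MDP} (equivalently Corollary~\ref{cor:ordinarycontrolratefunction}) already supplies the LDP on $C([0,T];\mc S_{-r})$ with speed $a^{-2}(N)$ and good rate function $I^o$, identifying $J=I^o$ immediately yields the statement. The proof breaks into a Cauchy--Schwarz/Young bound for $J\le I^o$ and a Hilbert-space Riesz representation for the reverse inequality.

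\textbf{Upper bound $J\le I^o$.} Suppose $I^o(Z)<\infty$, fix $h\in P^o(Z)$ and $\psi\in C^\infty_c(U\times\R)$. The time-dependent form of the controlled limiting equation~\eqref{eq:contolledequationactingontimedependantfunctions} expresses
\begin{align*}
F_Z(\psi)=\int_0^T\E\!\left[\psi_x(s,X_s)\int_\R\!\bigl((\sigma+\tau_1\Phi_y)h_1+\tau_2\Phi_y h_2\bigr)(s,X_s,y)\,\pi(dy;X_s,\mc L(X_s))\right]ds.
\end{align*}
Cauchy--Schwarz inside the $y$-integral combined with $\int_\R[(\sigma+\tau_1\Phi_y)^2+(\tau_2\Phi_y)^2]\pi=2\bar D$ (Remark~\ref{remark:alternateformofdiffusion}), followed by Cauchy--Schwarz in $(s,\omega)$, gives $F_Z(\psi)\le\sqrt{2AB}$, with $A=\int_0^T\E\int_\R|h|^2\pi\,ds$ and $B=\int_0^T\E[\bar D|\psi_x(s,X_s)|^2]ds$. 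Young's inequality $\sqrt{2AB}\le A/2+B$ then yields $F_Z(\psi)-B\le A/2$; taking the supremum in $\psi$ and the infimum in $h$ gives $J(Z)\le I^o(Z)$.

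\textbf{Lower bound $J\ge I^o$.} Assume $J(Z)<\infty$. Optimizing the scale $\lambda\in\R$ in $J(\lambda\psi)$ converts the definition to the equivalent ratio form $J(Z)=\tfrac14\sup_\psi|F_Z(\psi)|^2/\|\psi_x\|_{\mc H}^2$, where $\mc H:=L^2([0,T]\times\Omega,\bar D(X_s,\mc L(X_s))\,ds\otimes d\P)$ (finiteness of $J(Z)$ forces $F_Z(\psi)=0$ whenever $\|\psi_x\|_{\mc H}=0$). Thus $F_Z$ is a bounded linear functional on $\mc V:=\{\psi_x(\cdot,X_\cdot):\psi\in C^\infty_c(U\times\R)\}\subset\mc H$ of operator norm at most $2\sqrt{J(Z)}$. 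Since $\bar D>0$ is standing in Proposition~\ref{prop:DGformofratefunction}, $\mc H$ is a genuine Hilbert space; extending $F_Z$ continuously to $\overline{\mc V}$ and applying Riesz representation produces $g\in\overline{\mc V}$ with $F_Z(\psi)=\langle g,\psi_x\rangle_{\mc H}$ and $\|g\|_{\mc H}^2\le 4J(Z)$. Because each element of $\mc V$ is $\sigma(X_s)$-measurable at every fixed $s$, so is every element of $\overline{\mc V}$, and there exists a Borel $\tilde g:[0,T]\times\R\to\R$ with $g(s,\omega)=\tilde g(s,X_s(\omega))$. Setting
\begin{align*}
h_1(s,x,y):=\tfrac12(\sigma+\tau_1\Phi_y)(x,y,\mc L(X_s))\tilde g(s,x),\qquad h_2(s,x,y):=\tfrac12\tau_2\Phi_y(x,y,\mc L(X_s))\tilde g(s,x),
\end{align*}
a short calculation using Remark~\ref{remark:alternateformofdiffusion} gives $\int_\R[(\sigma+\tau_1\Phi_y)h_1+\tau_2\Phi_y h_2]\pi(dy)=\bar D\tilde g$, so $F_Z(\psi)=\int_0^T\E[\bar D\tilde g\,\psi_x]\,ds$ matches the right-hand side of \eqref{eq:contolledequationactingontimedependantfunctions}. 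A standard test-function argument with $\psi(s,x)=\eta(s)\phi(x)$ for smooth cutoffs $\eta\in C^\infty_c([0,T))$, together with $Z_0=0$ and continuity of $Z$ in $\mc S_{-w}$, recovers \eqref{eq:MDPlimitFIXEDordinary} for every $t\in[0,T]$ and $\phi\in C^\infty_c(\R)$, so $h\in P^o(Z)$. Finally $\int_\R|h|^2\pi=\tfrac12\bar D\tilde g^2$ gives $\tfrac12\int_0^T\E\int|h|^2\pi\,ds=\tfrac14\|g\|_{\mc H}^2\le J(Z)$, whence $I^o(Z)\le J(Z)$.

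\textbf{Main obstacle.} The delicate step is the Hilbert-space representation in the lower bound: the representer must be of the feedback form $\tilde g(s,X_s)$ and must decompose into two coordinates carrying the $y$-dependence dictated by $\sigma,\tau_1,\tau_2,\Phi_y$. The measurability issue is automatic because $\mc V$ already lies in the $\sigma(X_s)$-measurable sector of $\mc H$ at every fixed $s$, and the explicit splitting of $\tilde g$ into $(h_1,h_2)$ is forced by saturating the Cauchy--Schwarz used in the upper bound, equivalently by the identity $\int_\R[(\sigma+\tau_1\Phi_y)^2+(\tau_2\Phi_y)^2]\pi=2\bar D$.
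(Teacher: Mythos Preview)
Your proposal is correct and follows essentially the same route as the paper: identify $J=I^o$, use Cauchy--Schwarz together with $\int[(\sigma+\tau_1\Phi_y)^2+(\tau_2\Phi_y)^2]\,\pi=2\bar D$ for $J\le I^o$, and for $J\ge I^o$ apply Riesz representation on a weighted $L^2$ space to produce a feedback control saturating the bound. The only cosmetic differences are that the paper parametrizes the Hilbert space in the Dawson--G\"artner ``Riemannian'' form (weight $\bar D^{-1}$ on gradients $\bar D\psi_x$) rather than your equivalent weight-$\bar D$ space on $\psi_x$, and it verifies the norm inequality $\|g\|^2\le 4J(Z)$ via an explicit approximating sequence $\psi^n$ rather than quoting the Riesz operator-norm identity directly.
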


\begin{proof}
Since by Theorem 1.3.1 in \cite{DE}, the rate function for a sequence of random variables is unique, it suffices to show that $I^o=J$, where $I^o$ is from Corollary \ref{cor:ordinarycontrolratefunction}.
We note that by Remark \ref{remark:limitingequationactingontimedependantfunctions}, we can replace \ref{Po:controlledlimitingeqn} in definition of the multiscale ordinary rate function $I^o$ by $Z$ satisfying Equation \eqref{eq:contolledequationactingontimedependantfunctions}. We will also use the alternative form of $\bar{D}(x,\mu)$ provided by Equation \eqref{eq:alternativediffusion} in Remark \ref{remark:alternateformofdiffusion}.

First we show $J\leq I^o$. Take $Z$ such $I^o(Z)<\infty$. Then $P^o(Z)$ is non-empty, and for any $h\in P^o(Z)$ and, by Equation \eqref{eq:contolledequationactingontimedependantfunctions}, for any $\psi\in C^\infty_c(U\times\R)$:
\begin{align*}
&|F_{Z}(\psi)| = \biggl|\int_0^T \E\biggl[\int_\R  \biggl([\sigma(X_s,y,\mc{L}(X_s))+\tau_1(X_s,y,\mc{L}(X_s))\Phi_y(X_s,y,\mc{L}(X_s))]h_1(s,X_s,y)\\
&+\tau_2(X_s,y,\mc{L}(X_s))\Phi_y(X_s,y,\mc{L}(X_s))h_2(s,X_s,y) \biggr)\psi_x(s,X_s)\pi(dy;X_s,\mc{L}(X_s))\biggr]ds \biggr| \\
&\leq \biggl(\int_0^T \E\biggl[\int_\R  \biggl([\sigma(X_s,y,\mc{L}(X_s))+\tau_1(X_s,y,\mc{L}(X_s))\Phi_y(X_s,y,\mc{L}(X_s))]^2 \\
&\hspace{2cm}+ [\tau_2(X_s,y,\mc{L}(X_s))\Phi_y(X_s,y,\mc{L}(X_s))]^2\biggr)\pi(dy;X_s,\mc{L}(X_s))|\psi_x(s,X_s)|^2 \biggr]ds\biggr)^{1/2}\\
&\times \biggl(\int_0^T \E\biggl[\int_\R|h_1(s,X_s,y)|^2+|h_2(s,X_s,y)|^2\pi(dy;X_s,\mc{L}(X_s))\biggr]ds\biggr)^{1/2}\\
& = \sqrt{2}\biggl(\int_0^T \E\biggl[\bar{D}(X_s,\mc{L}(X_s))|\psi_x(s,X_s)|^2 \biggr]ds\biggr)^{1/2}\biggl(\int_0^T \E\biggl[\int_\R|h(s,X_s,y)|^2\pi(dy;X_s,\mc{L}(X_s))\biggr]ds\biggr)^{1/2}
\end{align*}
so in particular, if $\int_0^T \E\biggl[\bar{D}(X_s,\mc{L}(X_s))|\psi_x(s,X_s)|^2\biggr]ds=0$, then $F_{Z}(\psi)=0$.
Then, observing that $\psi \in C^\infty_c(U\times \R)$ if and only if for any $c\in \R\setminus\br{0}$, $c\psi \in C^\infty_c(U\times \R)$ and that $F_Z$ is linear, we have:
\begin{align*}
J(Z)& = \sup_{\psi\in C^\infty_c(U\times\R):\int_0^T \E\biggl[\bar{D}(X_s,\mc{L}(X_s))|\psi_x(s,X_s)|^2\biggr]ds\neq 0}\biggl\lbrace F_Z(\psi)-\int_0^T\E\biggl[\bar{D}(X_t,\mc{L}(X_t))|\psi_x(t,X_t)|^2\biggr]dt \biggr\rbrace\vee 0\\
&=\sup_{\psi\in C^\infty_c(U\times\R):\int_0^T \E\biggl[\bar{D}(X_s,\mc{L}(X_s))|\psi_x(s,X_s)|^2\biggr]ds\neq 0}\sup_{c\in\R}\biggl\lbrace c F_Z(\psi)-c^2\int_0^T\E\biggl[\bar{D}(X_t,\mc{L}(X_t))|\psi_x(t,X_t)|^2\biggr]dt \biggr\rbrace\vee 0\\
& = \sup_{\psi\in C^\infty_c(U\times\R):\int_0^T \E\biggl[\bar{D}(X_s,\mc{L}(X_s))|\psi_x(s,X_s)|^2\biggr]ds\neq 0}\frac{|F_Z(\psi)|^2}{4\int_0^T\E\biggl[\bar{D}(X_t,\mc{L}(X_t))|\psi_x(t,X_t)|^2\biggr]dt}.
\end{align*}
Returning to the above inequality and squaring both sides, we have
\begin{align*}
\frac{|F_Z(\psi)|^2}{2\int_0^T\E\biggl[\bar{D}(X_t,\mc{L}(X_t))|\psi_x(t,X_t)|^2\biggr]dt}\leq \int_0^T \E\biggl[\int_\R |h(s,X_s,y)|^2\pi(dy;X_s,\mc{L}(X_s))\biggr]ds,
\end{align*}
for all $\psi\in C^\infty_c(U\times\R)$ such that $\int_0^T\E\biggl[\bar{D}(X_s,\mc{L}(X_s))|\psi_x(s,X_s)|^2\biggr]ds\neq 0$ and all $h\in P^o(Z)$. So $J(Z)\leq I^{o}(Z)$.

Now we prove $J\geq I^o$.
Assume without loss of generality that $J(Z)\leq C<\infty$. Then, since
\begin{align*}
J(Z)&=\sup_{\psi\in C^\infty_c(U\times\R)}\sup_{c\in\R}\biggl\lbrace c F_Z(\psi)-c^2\int_0^T\E\biggl[\bar{D}(X_t,\mc{L}(X_t))|\psi_x(t,X_t)|^2\biggr]dt \biggr\rbrace= +\infty
\end{align*}
if there exists $\psi\in C^\infty_c(U\times\R)$ such that $F_Z(\psi)\neq 0$ and $\int_0^T\E\biggl[\bar{D}(X_t,\mc{L}(X_t))|\psi_x(t,X_t)|^2\biggr]dt=0$, we have
\begin{align*}
J(Z) =\sup_{\psi\in C^\infty_c(U\times\R):\int_0^T \E\biggl[\bar{D}(X_s,\mc{L}(X_s))|\psi_x(s,X_s)|^2\biggr]ds\neq 0}\frac{|F_Z(\psi)|^2}{4\int_0^T\E\biggl[\bar{D}(X_t,\mc{L}(X_t))|\psi_x(t,X_t)|^2\biggr]dt}.
\end{align*}
This shows that for all $\psi\in C^\infty_c(U\times\R)$
\begin{align}\label{eq:multiFZbounded}
\biggl|F_Z(\psi)\biggr|&\leq 2\sqrt{C}\biggl(\int_0^T\E\biggl[\bar{D}(X_t,\mc{L}(X_t))|\psi_x(t,X_t)|^2\biggr]dt\biggr)^{1/2}.
\end{align}

Now we borrow some notation from \cite{DG} (see p. 270-271).
We let for $t\in [0,T]$ $\nabla_t,(\cdot,\cdot)_t,$ and $|\cdot|_t$ be (formally) the Riemannian gradient, inner product, and Riemannian norm in the tangent space of the Riemannian structure on $\R$ induced by the diffusion matrix $t\mapsto \bar{D}(\cdot,\mc{L}(X_t))$, i.e.:
\begin{align*}
\nabla_t f& \coloneqq \bar{D}(\cdot,\mc{L}(X_t))\frac{df}{dx}, \qquad
(X,Y)_t  \coloneqq \bar{D}(\cdot,\mc{L}(X_t))^{-1}XY,\qquad
|X|_t\coloneqq (X,X)^{1/2}_t.
\end{align*}

In particular, note that
\begin{align*}
|\nabla_t f|^2_t & = \bar{D}(\cdot,\mc{L}(X_t))|\frac{df}{dx}|^2.
\end{align*}

Now, as on p.279 in \cite{DG}, we define $L^2[0,T]$ to be the Hilbert space of measurable maps $g:[0,T]\times\R\tto \R$ with finite norm
\begin{align*}
\norm{g} \coloneqq \biggl(\int_0^T \langle \mc{L}(X_t), |g(t,\cdot)|^2_t \rangle dt\biggr)^{1/2} = \biggl(\int_0^T \E[\bar{D}(X_t,\mc{L}(X_t))^{-1}|g(t,X_t)|^2]dt \biggr)^{1/2}
\end{align*}
and inner product
\begin{align*}
[g_1,g_2]& \coloneqq \int_0^T \langle \mc{L}(X_t),(g_1(t,\cdot),g_2(t,\cdot))_t\rangle dt = \int_0^T \E[\bar{D}(X_t,\mc{L}(X_t))^{-1} g_1(t,X_t)g_2(t,X_t) ]dt.
\end{align*}

Denote by $L^2_{\nabla}[0,T]$ the closure in $L^2[0,T]$ of the linear subset $L$ consisting of all maps $(s,x)\mapsto \nabla_s\psi(s,x)$, $\psi \in C^\infty_c(U\times\R)$. Then $F_Z$ can be viewed as a linear functional on $L$, and by the bound \eqref{eq:multiFZbounded}, is bounded. Then, by the Riesz Representation Theorem, there exists $\bar{h}\in L^2_\nabla[0,T]$ such that
\begin{align}\label{eq:FZreiszrep}
F_Z(\psi) = \int_0^T \langle \mc{L}(X_s),(\bar{h}(s,\cdot),\nabla_s\psi(s,\cdot))_s \rangle ds = \int_0^T \E[h(s,X_s)\psi_x(s,X_s)] ds, \psi \in C^\infty_c(U\times\R).
\end{align}

Note that actually, $L^2_{\nabla}$ must be considered not as a class of functions, but as a set of equivalence classes of functions agreeing $\nu_{\mc{L}(X_\cdot)}$-almost surely. This is of no consequence, however, since the bound \eqref{eq:multiFZbounded} ensures that $F_Z(\psi)=F_Z(\tilde{\psi})$ if $\psi_x$ and $\tilde{\psi}_x$ are in the same equivalence class (see p.279 in \cite{DG} and Appendix D.5 in \cite{FK} for a more thorough treatment of the space $L^2_{\nabla}[0,T]$ and its dual).

Consider $\tilde{h}(s,x,y):[0,T]\times\R\times\R\tto \R^2$ given by
\begin{align}\label{eq:tildeh}
\tilde{h}_1(s,x,y) &= \frac{1}{2\bar{D}(x,\mc{L}(X_s))}[\sigma(x,y,\mc{L}(X_s))+\tau_1(x,y,\mc{L}(X_s))\Phi_y(x,y,\mc{L}(X_s))]\bar{h}(s,x)\\
\tilde{h}_2(s,x,y) &= \frac{1}{2\bar{D}(x,\mc{L}(X_s))}\tau_2(x,y,\mc{L}(X_s))\Phi_y(x,y,\mc{L}(X_s))\bar{h}(s,x)\nonumber.
\end{align}

We have:
\begin{align}\label{eq:tildehL2norm}
&\int_{0}^T \E\biggl[\int_\R |\tilde{h}(s,X_s,y)|^2 \pi(dy;X_s,\mc{L}(X_s)) \biggr]ds\\
&= \int_{0}^T \E\biggl[ \frac{|\bar{h}(s,X_s)|^2}{4|\bar{D}(x,\mc{L}(X_s))|^2}\int_\R \biggl([\sigma(X_s,y,\mc{L}(X_s))+\tau_1(X_s,y,\mc{L}(X_s))\Phi_y(X_s,y,\mc{L}(X_s))]^2 \nonumber\\
&\hspace{4cm}+[\tau_2(X_s,y,\mc{L}(X_s))\Phi_y(X_s,y,\mc{L}(X_s))]^2\biggr) \pi(dy;X_s,\mc{L}(X_s)) \biggr]ds\nonumber\\
& = \frac{1}{2}\int_{0}^T \E\biggl[\bar{D}(x,\mc{L}(X_s))^{-1}|\bar{h}(s,X_s)|^2 \biggr]ds<\infty.\nonumber
\end{align}

Moreover, for $\psi \in C^\infty_c(U\times\R)$:
\begin{align*}
&\int_0^T \E\biggl[\int_\R  \biggl([\sigma(X_s,y,\mc{L}(X_s))+\tau_1(X_s,y,\mc{L}(X_s))\Phi_y(X_s,y,\mc{L}(X_s))]\tilde{h}_1(s,X_s,y)\\
&+\tau_2(X_s,y,\mc{L}(X_s))\Phi_y(X_s,y,\mc{L}(X_s))\tilde{h}_2(s,X_s,y) \biggr)\psi_x(s,X_s)\pi(dy;X_s,\mc{L}(X_s))\biggr]ds \\
&=\int_0^T \E\biggl[\int_\R  \biggl([\sigma(X_s,y,\mc{L}(X_s))+\tau_1(X_s,y,\mc{L}(X_s))\Phi_y(X_s,y,\mc{L}(X_s))]^2\\
&+[\tau_2(X_s,y,\mc{L}(X_s))\Phi_y(X_s,y,\mc{L}(X_s))]^2\biggr)\pi(dy;X_s,\mc{L}(X_s)) \frac{\bar{h}(s,X_s)}{2\bar{D}(X_s,\mc{L}(X_s)}\psi_x(s,X_s)\biggr]ds \\
& = \int_0^T\E\biggl[\bar{h}(s,X_s) \psi_x(s,X_s)\biggr]ds\\
& = F_Z(\psi) \text{ by Equation }\eqref{eq:FZreiszrep}.
\end{align*}

Thus, $\tilde{h}\in P^o(Z)$ by definition. Take a sequence $\br{\tilde{\psi}^n} \subset L$ such that $\tilde{\psi}^n\tto \bar{h}$ in $L^2[0,T]$. By virtue of $\tilde{\psi}^n\in L$, we have for each $n$, there is $\psi^n \in C^\infty_c(U\times\R)$ such that $\tilde{\psi}^n(s,x) = \nabla_s\psi^n(s,x) = \bar{D}(x,\mc{L}(X_s))\psi^n_x(s,x)$.

In particular, we have $|\tilde{\psi}^n|_t^2\tto |\bar{h}|_t^2$, so
\begin{align*}
\int_0^T \E\biggl[\bar{D}(X_t,\mc{L}(X_t))|\psi^n_x(t,X_t)|^2 \biggr]dt \tto \int_0^T \E\biggl[\bar{D}(X_t,\mc{L}(X_t))^{-1}|\bar{h}(t,X_t)|^2 \biggr]dt
\end{align*}
and $(\tilde{\psi}^n,\bar{h})_{t}\tto |\bar{h}|_t^2$, so
\begin{align*}
\int_0^T \E\biggl[\psi^n_x(t,X_t)h(t,X_t)\biggr]dt \tto \int_0^T \E\biggl[\bar{D}(X_t,\mc{L}(X_t))^{-1}|\bar{h}(t,X_t)|^2 \biggr]dt.
\end{align*}

Note that if $\int_0^T \E\biggl[\bar{D}(X_t,\mc{L}(X_t))^{-1}|\bar{h}(t,X_t)|^2 \biggr]dt= 0$, we have by Equation \eqref{eq:tildehL2norm}, that the relation holds $\int_{0}^T \E\biggl[\int_\R |\tilde{h}(s,X_s,y)|^2 \pi(dy) \biggr]ds=0$, and hence $I^o(Z)=0$, so the desired bound is trivial.

Assuming then that $\int_0^T \E\biggl[D(X_t,\mc{L}(X_t))^{-1}|\bar{h}(t,X_t)|^2 \biggr]dt\neq 0$, we may choose a subsequence of $\br{\psi^n_x}$ such that  $\int_0^T \E\biggl[\bar{D}(X_t,\mc{L}(X_t))|\psi^n_x(t,X_t)|^2 \biggr]dt \neq 0,\forall n$. Then:
\begin{align*}
J(Z)&\geq \frac{1}{4}\frac{\biggl(\int_0^T \E\biggl[\psi^n_x(s,X_s)\bar{h}(s,X_s)\biggr]ds\biggr)^2}{{\int_0^T\E\biggl[\bar{D}(X_t,\mc{L}(X_t))|\psi^n_x(t,X_t)|^2\biggr]dt}} \text{ for all }n\in\bb{N}\\
&\tto \frac{1}{4} \int_0^T \E\biggl[D(X_t,\mc{L}(X_t))^{-1}|\bar{h}(t,X_t)|^2 \biggr]dt \text{ as }n\toinf.
\end{align*}

By Equation \eqref{eq:tildehL2norm},
\begin{align*}
\frac{1}{4} \int_0^T \E\biggl[D(X_t,\mc{L}(X_t))^{-1}|\bar{h}(t,X_t)|^2 \biggr]dt = \frac{1}{2}\int_0^T\E\biggl[\int_\R |\tilde{h}(s,X_s,y)|^2\pi(dy;X_s,\mc{L}(X_s))\biggl]ds,
\end{align*}
so since $\tilde{h}\in P^o(Z)$:
\begin{align*}
J(Z)&\geq \frac{1}{2}\int_0^T\E\biggl[\int_\R |\tilde{h}(s,X_s,y)|^2\pi(dy;X_s,\mc{L}(X_s))\biggl]ds\geq I^o(Z).
\end{align*}
\end{proof}

Now we are ready to prove Proposition \ref{prop:DGformofratefunction}.
\begin{proof}[Proof of Proposition \ref{prop:DGformofratefunction}]
As noted, the form of the rate function proved in Lemma \ref{lemma:Eqn4.21DGform} is analogous to that of Equation (4.21) in \cite{DG}. We follow the proof of Lemma 4.8 in \cite{DG}, making changes to account for the multiscale structure and the entry of $\mc{L}(X_s)$ rather than $Z_s$ in the subtracted term in Equation (4.24), which comes the fact that we are looking at moderate deviations rather than large deviations. We also use the specific information about the optimal control from the proof of Lemma \ref{lemma:Eqn4.21DGform}.

Once again, it is sufficient to show $I^o=I^{DG}$, or equivalently, $J=I^{DG}$. First we show that $I^o=J\leq I^{DG}$. Let $Z\in C([0,T];\mc{S}_{-w})$ be such that $I^{DG}(Z)<\infty$. Note that
\begin{align*}
&\sup_{\phi\in C^\infty_c(\R):\E[\bar{D}(X_t,\mc{L}(X_t))|\phi'(X_t)|^2]\neq 0}\biggl\lbrace  \langle \dot{Z}_t-\bar{L}^*_{\mc{L}(X_t)}Z_t,\phi\rangle - \E\biggl[\bar{D}(X_t,\mc{L}(X_t))|\phi'(X_t)|^2\biggr]\biggr\rbrace \nonumber\\
&=\sup_{\phi\in C^\infty_c(\R):\E[\bar{D}(X_t,\mc{L}(X_t))|\phi'(X_t)|^2]\neq 0} \sup_{c\in \R}\biggl \lbrace  c\langle \dot{Z}_t-\bar{L}^*_{\mc{L}(X_t)}Z_t,\phi\rangle - c^2\E\biggl[\bar{D}(X_t,\mc{L}(X_t))|\phi'(X_t)|^2\biggr]\biggr\rbrace \nonumber\\
& =\frac{1}{4}\sup_{\phi\in C^\infty_c(\R):\E[\bar{D}(X_t,\mc{L}(X_t))|\phi'(X_t)|^2]\neq 0}\frac{\biggl|\langle \dot{Z}_t-\bar{L}^*_{\mc{L}(X_t)}Z_t,\phi\rangle\biggr|^2}{\E\biggl[\bar{D}(X_t,\mc{L}(X_t))|\phi'(X_t)|^2\biggr]}
\end{align*}
for all $t\in[0,T]$. So for any $\psi \in C^\infty_c(U\times\R)$:
\begin{align*}
I^{DG}(Z)&= \frac{1}{4}\int_0^T \sup_{\phi\in C^\infty_c(\R):\E[\bar{D}(X_t,\mc{L}(X_t))|\phi'(X_t)|^2]\neq 0}\frac{\biggl|\langle \dot{Z}_t-\bar{L}^*_{\mc{L}(X_t)}Z_t,\phi\rangle\biggr|^2}{\E\biggl[\bar{D}(X_t,\mc{L}(X_t))|\phi'(X_t)|^2\biggr]}dt\\
&=\int_0^T \sup_{\phi\in C^\infty_c(\R):\E[\bar{D}(X_t,\mc{L}(X_t))|\phi'(X_t)|^2]\neq 0}\biggl\lbrace  \langle \dot{Z}_t-\bar{L}^*_{\mc{L}(X_t)}Z_t,\phi\rangle - \E\biggl[\bar{D}(X_t,\mc{L}(X_t))|\phi'(X_t)|^2\biggr]\biggr\rbrace  dt\\
&\geq \int_0^T \langle \dot{Z}_t-\bar{L}^*_{\mc{L}(X_t)}Z_t,\psi(t,\cdot)\rangle- \E\biggl[\bar{D}(X_t,\mc{L}(X_t))|\psi_x(t,X_t)|^2\biggr] dt\\
& =  \langle Z_T,\psi(T,\cdot)\rangle-\int_0^T \langle Z_t,\dot{\psi}(t,\cdot)\rangle dt-\int_0^T \langle Z_t,\bar{L}_{\mc{L}(X_t)}\psi(t,\cdot)\rangle dt  - \E\biggl[\bar{D}(X_t,\mc{L}(X_t))|\psi_x(t,X_t)|^2\biggr]  dt,
\end{align*}
where in the last step we used Lemma 4.3 in \cite{DG}. Then taking the supremum over all $\psi\in C^\infty_c(U\times\R)$, we get $I^{DG}(Z)\geq J(Z)$, as desired.

Now we show that $I^{DG}\leq J=I^o$. Consider $Z\in C([0,T];\mc{S}_{-w})$ such that $J(Z)<\infty$.

In Lemma \ref{lemma:Eqn4.21DGform}, we proved for $\tilde{h}$ as in Equation \eqref{eq:tildeh}, $\tilde{h}\in P^o(Z)$. We also showed:
\begin{align*}
\frac{1}{2}\int_0^T\E\biggl[\int_{\R}|\tilde{h}(s,X_s,y)|^2\pi(dy)\biggr]ds \leq J(Z)=I^o(Z) \leq \frac{1}{2}\int_0^T\E\biggl[\int_{\R}|h(s,X_s,y)|^2\pi(dy)\biggr]ds,\forall h\in P^o(Z),
\end{align*}
so that in fact
\begin{align}\label{eq:tildehrepresentation}
J(Z)=I^o(Z)=\frac{1}{2}\int_0^T\E\biggl[\int_{\R}|\tilde{h}(s,X_s,y)|^2\pi(dy)\biggr]ds = \frac{1}{4}\int_{0}^T \E\biggl[\bar{D}(X_s,\mc{L}(X_s))^{-1}|\bar{h}(s,X_s)|^2 \biggr]ds,
\end{align}
where in the last inequality we used Equation \eqref{eq:tildehL2norm}.

Now, by the fact that $\tilde{h}\in P^o(Z)$, we have by Equation \eqref{eq:MDPlimitFIXEDordinary} that for all $0\leq s\leq t\leq T$ and $\phi\in C^\infty_c(\R)$:
\begin{align*}
&\langle Z_t,\phi\rangle - \langle Z_s,\phi\rangle \\
&= \int_s^t \langle Z_u,\bar{L}_{\mc{L}(X_u)}\phi(\cdot)\rangle du+\int_s^t \E\biggl[\int_\R  \biggl([\sigma(X_u,y,\mc{L}(X_u))+\tau_1(X_u,y,\mc{L}(X_u))\Phi_y(X_u,y,\mc{L}(X_u))]\tilde{h}_1(u,X_u,y) \\
&+ \tau_2(X_u,y,\mc{L}(X_u))\Phi_y(X_u,y,\mc{L}(X_u))\tilde{h}_2(u,X_u,y)\biggr) \pi(dy;X_u,\mc{L}(X_u))\phi'(X_u)\biggr]du\\
& = \int_s^t \langle Z_u,\bar{L}_{\mc{L}(X_u)}\phi(\cdot)\rangle du+\int_s^t \E\biggl[ \bar{h}(u,X_u)\phi'(X_u)\biggr]du
\end{align*}
where $\bar{h}$ is as in Equation \eqref{eq:FZreiszrep}, so by Definition \ref{def:absolutelycontinuous} and Lemma \ref{lem:barLbounded}, $Z$ is an absolutely continuous map from $[0,T]$ to $\mc{S}'$. Then, using Lemma \ref{lemma:DG4.2}, we have for each $\phi\in C^\infty_c(\R)$:
\begin{align}\label{eq:Zdotrep}
\langle \dot{Z}_t,\phi\rangle &= \E\biggl[ \bar{h}(t,X_t) \phi'(X_t)\biggr]+\langle Z_t,\bar{L}_{\mc{L}(X_t)}\phi(\cdot)\rangle.
\end{align}

Using a density argument, we can make sure this holds simultaneously for all $\phi \in C^\infty_c(\R)$ and Lebesgue almost every $t\in [0,T]$ (see p.280 of \cite{DG}). This gives:
\begin{align*}
I^{DG}(Z)&= \frac{1}{4}\int_0^T \sup_{\phi\in C^\infty_c(\R):\E[\bar{D}(X_t,\mc{L}(X_t))|\phi'(X_t)|^2]\neq 0}\frac{\biggl(\E\biggl[ \bar{h}(t,X_t) \phi'(X_t)\biggr]\biggr)^2}{\E\biggl[\bar{D}(X_t,\mc{L}(X_t))|\phi'(X_t)|^2\biggr]}dt
\end{align*}

For any $\phi\in C^\infty_c(\R)$ and $t\in [0,T]$ such that $\E\biggl[\bar{D}(X_t,\mc{L}(X_t))|\phi'(X_t)|^2\biggr]\neq 0$, we have
\begin{align*}
\frac{\biggl(\E\biggl[ \bar{h}(t,X_t) \phi'(X_t)\biggr]\biggr)^2}{\E\biggl[\bar{D}(X_t,\mc{L}(X_t))|\phi'(X_t)|^2\biggr]}& = \frac{\biggl(\E\biggl[ \bar{D}(X_t,\mc{L}(X_t))^{-1/2}\bar{h}(t,X_t) \bar{D}(X_t,\mc{L}(X_t))^{1/2}\phi'(X_t)\biggr]\biggr)^2}{\E\biggl[\bar{D}(X_t,\mc{L}(X_t))|\phi'(X_t)|^2\biggr]}\\
&\leq \E\biggl[ \bar{D}(X_t,\mc{L}(X_t))^{-1}|\bar{h}(t,X_t)|^2\biggr]
\end{align*}
so
\begin{align*}
I^{DG}(Z)&\leq  \frac{1}{4}\int_0^T \E\biggl[ \bar{D}(X_t,\mc{L}(X_t))^{-1}|\bar{h}(t,X_t)|^2\biggr]dt,
\end{align*}
and by Equation \eqref{eq:tildehrepresentation} we are done.
\end{proof}

As a corollary to the above result, we also get an alternative form of the rate function in the setting without multiscale structure. This provides us with rate functions with which it is more feasible to compare the likelihood of rare events for the fluctuation process \eqref{eq:fluctuationprocess} as $N\toinf$ in the multiscale and non-multiscale setting as opposed to the variational form given in Theorem \ref{theo:MDP} and Corollary \ref{cor:mdpnomulti}. This analysis is outside the scope of this paper, but is an interesting avenue for future research.

\begin{corollary}\label{corollary:dawsongartnerformnomulti}
In the setting of Corollary \ref{cor:mdpnomulti}, assume in addition $\sigma^2(x,\mu)>0$, for all $x\in\R$ and $\mu\in\mc{P}_2(\R)$. Consider $\tilde{I}^{DG}:  C([0,T];\mc{S}_{-\rho})\tto [0,+\infty]$ given by :
\begin{align}\label{eq:DGratefunctionnomultiscale}
\tilde{I}^{DG}(Z)\coloneqq \frac{1}{2}\int_0^T \sup_{\phi\in C^\infty_c(\R):\E[\sigma^2(X_t,\mc{L}(X_t))|\phi'(\tilde{X}_t)|^2]\neq 0}\frac{|\langle \dot{Z}_t-\tilde{L}^*_{\mc{L}(\tilde{X}_t)}Z_t,\phi\rangle|^2}{\E[\sigma^2(X_t,\mc{L}(X_t))|\phi'(\tilde{X}_t)|^2]}dt,
\end{align}
if $Z(0)=0$, $Z$ is absolutely continuous in the sense if Definition \ref{def:absolutelycontinuous}, and $Z\in C([0,T];\mc{S}_{-v})$, and $I^{DG}(Z)=+\infty$ otherwise. Here $\tilde{X}_t$ is as in Corollary \ref{cor:mdpnomulti}, $\dot{Z}$ is the time derivative of $Z$ in the distribution sense from Lemma \ref{lemma:DG4.2} and $\tilde{L}^*_{\mc{L}(\tilde{X}_s)}:\mc{S}_{-v}\tto \mc{S}_{-(v+2)}$ is the adjoint of $\tilde{L}_{\mc{L}(\tilde{X}_s)}:\mc{S}_{v+2}\tto \mc{S}_v$ given in Corollary \ref{cor:mdpnomulti} (using here Lemma \ref{lem:barLbounded}).

Then $\br{Z^N}_{N\in\bb{N}}$ satisfies a large deviation principle on the space $C([0,T];\mc{S}_{-\rho})$ with speed $a^{-2}(N)$ and good rate function $\tilde{I}^{DG}$.
\end{corollary}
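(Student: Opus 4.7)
The plan is to specialize the proof of Proposition \ref{prop:DGformofratefunction} to the non-multiscale setting. Since rate functions are unique by Theorem 1.3.1 in \cite{DE} and Corollary \ref{cor:mdpnomulti} already identifies the variational rate function $\tilde{I}^o$, it suffices to establish $\tilde{I}^o = \tilde{I}^{DG}$. Under the hypotheses $b = f = g = \tau_1 = \tau_2 \equiv 0$ with $\sigma, c$ independent of $y$, the invariant measure $\pi(dy; x, \mu)$ collapses to $\delta_0(dy)$, the corrector $\Phi \equiv 0$, and the limiting coefficients reduce to $\bar{\gamma}(x,\mu) = c(x,\mu)$ and $\bar{D}(x,\mu) = \sigma^2(x,\mu)/2$. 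In particular the factor $\frac{1}{2}$ in $\tilde{I}^{DG}$ (as opposed to $\frac{1}{4}$ in $I^{DG}$) arises from the identification $1/(4\bar{D}) = 1/(2\sigma^2)$, and the law-of-large-numbers limit $X$ of Equation \eqref{eq:LLNlimitold} coincides in law with $\tilde{X}$ from Corollary \ref{cor:mdpnomulti}.

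First, following Lemma \ref{lemma:Eqn4.21DGform}, I would introduce the intermediate form
\begin{align*}
\tilde{F}_Z(\psi) &= \langle Z_T, \psi(T,\cdot)\rangle - \int_0^T \langle Z_s, \dot{\psi}(s,\cdot)\rangle\, ds - \int_0^T \langle Z_s, \tilde{L}_{\mc{L}(\tilde{X}_s)}\psi(s,\cdot)\rangle\, ds, \\
\tilde{J}(Z) &= \sup_{\psi \in C^\infty_c(U \times \R)}\left\{ \tilde{F}_Z(\psi) - \frac{1}{2}\int_0^T \E\bigl[\sigma^2(\tilde{X}_t, \mc{L}(\tilde{X}_t))|\psi_x(t, \tilde{X}_t)|^2\bigr]\, dt\right\}
\end{align*}
for $\psi \in C^\infty_c(U \times \R)$ and $Z \in C([0,T]; \mc{S}_{-v})$ with $Z_0 = 0$, and $\tilde{J}(Z) = +\infty$ otherwise. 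Invoking the time-dependent form of Equation \eqref{eq:MDPlimitFIXEDordinarynomultiscale} (analog of Remark \ref{remark:limitingequationactingontimedependantfunctions}), any $h \in \tilde{P}^o(Z)$ yields $\tilde{F}_Z(\psi) = \int_0^T \E[\sigma(\tilde{X}_s, \mc{L}(\tilde{X}_s)) h(s, \tilde{X}_s) \psi_x(s, \tilde{X}_s)]\, ds$. Cauchy-Schwarz combined with $\sup_c\{ca - c^2 b\} = a^2/(4b)$ gives $\tilde{J} \leq \tilde{I}^o$, and the converse $\tilde{I}^o \leq \tilde{J}$ follows from the Riesz representation theorem applied in the weighted $L^2$-space $\{g : \int_0^T \E[\sigma^{-2}(\tilde{X}_t, \mc{L}(\tilde{X}_t))|g(t, \tilde{X}_t)|^2]\, dt < \infty\}$, which is an honest Hilbert space on equivalence classes thanks to $\sigma^2 > 0$. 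This produces $\bar{h}$ satisfying $\tilde{F}_Z(\psi) = \int_0^T \E[\bar{h}(s, \tilde{X}_s) \psi_x(s, \tilde{X}_s)]\, ds$, whence the feedback $\tilde{h}(s,x) = \bar{h}(s,x)/\sigma(x, \mc{L}(\tilde{X}_s))$ lies in $\tilde{P}^o(Z)$ and is optimal.

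Second, I would mirror the proof of Proposition \ref{prop:DGformofratefunction} to obtain $\tilde{J} = \tilde{I}^{DG}$. The bound $\tilde{J} \leq \tilde{I}^{DG}$ follows by testing the supremum defining $\tilde{I}^{DG}$ against a fixed $\psi(t,\cdot)$ and using Lemma 4.3 of \cite{DG} together with the absolute continuity of $Z$ to rewrite $\tilde{F}_Z(\psi) = \int_0^T \langle \dot{Z}_t - \tilde{L}^*_{\mc{L}(\tilde{X}_t)} Z_t, \psi(t,\cdot)\rangle\, dt$. The reverse $\tilde{I}^{DG} \leq \tilde{J}$ uses the optimal $\tilde{h}$ from above: it witnesses absolute continuity of $Z$ in the sense of Definition \ref{def:absolutelycontinuous} via Equation \eqref{eq:MDPlimitFIXEDordinarynomultiscale}, Lemma \ref{lemma:DG4.2} then yields $\langle \dot{Z}_t, \phi\rangle = \E[\bar{h}(t, \tilde{X}_t) \phi'(\tilde{X}_t)] + \langle Z_t, \tilde{L}_{\mc{L}(\tilde{X}_t)}\phi\rangle$ for a.e.\ $t$ and all $\phi \in C^\infty_c(\R)$, and a final application of Cauchy-Schwarz together with the representation \eqref{eq:tildehrepresentation}-analog closes the argument.

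No genuine obstacle arises: all ingredients of the multiscale argument that would require work in Proposition \ref{prop:DGformofratefunction} simply vanish here — the ergodic theorems of Section \ref{sec:ergodictheoremscontrolledsystem}, the derivatives $\Phi_y$, the double-corrector construction, and the $y$-integration against $\pi$ all drop out, leaving a direct transcription of the proofs of Lemma \ref{lemma:Eqn4.21DGform} and Proposition \ref{prop:DGformofratefunction}. The smaller spaces $C([0,T]; \mc{S}_{-v}) \subset C([0,T]; \mc{S}_{-\rho})$ inherited from Corollary \ref{cor:mdpnomulti} replace $C([0,T]; \mc{S}_{-w}) \subset C([0,T]; \mc{S}_{-r})$ throughout, and the only substantive check is the non-degeneracy of the weighted $L^2$ norm, which is exactly the hypothesis $\sigma^2 > 0$ added to the statement.
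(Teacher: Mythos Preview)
Your proposal is correct and follows essentially the same approach as the paper: the paper's own proof simply states that the result follows by the same argument as Proposition \ref{prop:DGformofratefunction}, removing the dependence of the control on $y$ and setting $\Phi \equiv 0$. Your elaboration correctly identifies all the simplifications (including the $1/4 \to 1/2$ factor via $\bar{D} = \sigma^2/2$ and the specialized optimal feedback $\tilde{h} = \bar{h}/\sigma$) and tracks the replacement of the spaces $\mc{S}_{-w}, \mc{S}_{-r}$ by $\mc{S}_{-v}, \mc{S}_{-\rho}$ inherited from Corollary \ref{cor:mdpnomulti}.
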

\begin{proof}
This follows by the same proof as Proposition \ref{prop:DGformofratefunction}, removing the dependence of the control on $y$ and setting $\Phi\equiv 0$.

\end{proof}
\subsection{Examples: A Class of Aggregation-Diffusion Equations}\label{SS:Examples}

A common form for interacting particle systems which are widely used in many settings such as in biology, ecology, social sciences, economics, molecular dynamics, and in study of spatially homogeneous granular media (see e.g., \cites{MT,Garnier1,BCCP,KCBFL}) is:
\begin{align}\label{eq:nomultilangevin}
dX^{i,N}_t &= -V'(X^{i,N}_t)dt - \frac{1}{N}\sum_{j=1}^N W'(X^{i,N}_t-X^{j,N}_t)dt +\sigma dW^i_t, \quad X^{i,N}_0=\eta^x
\end{align}
where $V:\R\tto\R$ is a sufficiently smooth confining potential and $W:\R\tto \R$ is a sufficiently smooth interaction potential. The class of systems \eqref{eq:nomultilangevin} contains the system in the seminal paper \cite{Dawson}, where many mathematical aspects of a model for cooperative behavior in a bi-stable confining potential with attraction to the mean are explored. This leads us to our first example:
\begin{example}
Consider the system \eqref{eq:nomultilangevin}. Let $v,\rho$ be as in Corollary \ref{cor:mdpnomulti}. Suppose $V',W'\in C_b^{\rho+2}$, $W'\in \mc{S}_{\rho+2}$, and $\sigma>0$. Then  $\br{Z^N}_{N\in\bb{N}} = \br{a(N)\sqrt{N}[\frac{1}{N}\sum_{i=1}^N\delta_{X^{i,N}_\cdot}-\mc{L}(\tilde{X}_\cdot)]}_{N\in\bb{N}}$ satisfies a large deviation principle on the space $C([0,T];\mc{S}_{-\rho})$ with speed $a^{-2}(N)$ and good rate function $\tilde{I}^{DG}$ given by:
\begin{align*}
\tilde{I}^{DG}(Z)\coloneqq \frac{1}{2\sigma^2}\int_0^T \sup_{\phi\in C^\infty_c(\R):\E[|\phi'(\tilde{X}_t)|^2]\neq 0}\frac{|\langle \dot{Z}_t-\tilde{L}^*_{\mc{L}(\tilde{X}_t)}Z_t,\phi\rangle|^2}{\E[|\phi'(\tilde{X}_t)|^2]}dt,
\end{align*}
if $Z(0)=0$, $Z$ is absolutely continuous in the sense if Definition \ref{def:absolutelycontinuous}, and $Z\in C([0,T];\mc{S}_{-v})$, and $I^{DG}(Z)=+\infty$ otherwise.

Here $\tilde{X}_t$ satisfies:
\begin{align*}
d\tilde{X}_t & = -V'(\tilde{X}_t)dt- \bar{\E}[W'(x-\bar{X}_t)]|_{x=\tilde{X}_t}dt + \sigma dW_t,\quad \tilde{X}_0=\eta^x
\end{align*}
and $\tilde{L}_{\mc{L}(\tilde{X}_s)}:\mc{S}_{v+2}\tto \mc{S}_v$ acts on $\phi\in C^\infty_c(\R)$ by:
\begin{align*}
\tilde{L}_{\mc{L}(\tilde{X}_s)}\phi(x) & = -[V'(x)+\E[W'(x-\tilde{X}_s)]]\phi'(x)+\frac{\sigma^2}{2}\phi''(x)-\E[W'(\tilde{X}_s-x)\phi'(\tilde{X}_s)].
\end{align*}

We are denoting by $\bar{X}_t$ an independent copy of $\tilde{X}_t$ on another probability space $(\bar{\W},\bar{\F},\bar{\Prob})$, and by $\bar{\E}$ the expectation on that space.
\end{example}
\begin{proof}
Noting that $\sigma$ is constant and $\frac{\delta}{\delta m}c(x,\mu)[z] = -W'(x-z)$, the assumptions put forward in Corollary \ref{cor:mdpnomulti} can be directly verified. This example then immediately falls into the regime of Corollary \ref{corollary:dawsongartnerformnomulti}.
\end{proof}
In \cite{GP}, the authors make, among other modifications, a modification to $V$ in Equation \eqref{eq:nomultilangevin} so that it is a so-called rough-potential (see also \cite{Zwanzig} and \cite{BS} Section 5), by letting $V^\epsilon(x) = V_1(x)+V_2(x/\epsilon)$, where $V_2$ is sufficiently smooth and periodic. The system becomes:
\begin{align*}
dX^{i,\epsilon,N}_t = -[V_1'(X^{i,\epsilon,N}_t)+\frac{1}{\epsilon}V_2'(X^{i,\epsilon,N}_t/\epsilon)]dt - \frac{1}{N}\sum_{j=1}^N W'(X^{i,\epsilon,N}_t-X^{j,\epsilon,N}_t)dt +\sigma dW^i_t.
\end{align*}

Letting $Y^{i,\epsilon,N}_t=X^{i,\epsilon,N}_t/\epsilon$, we see this is a subclass of systems of the form \eqref{eq:slowfast1-Dold} with
\begin{align*}
f(x,y,\mu)&=b(x,y,\mu) = -V_2'(y),\quad
g(x,y,\mu)=c(x,y,\mu) = -V_1'(x)-\langle \mu , W'(x-\cdot)\rangle\\
\sigma(x,y,\mu) &= \tau_1(x,y,\mu)\equiv \sigma,\quad \tau_2\equiv 0,
\end{align*}

Keeping within our setting of a slow-fast system on $\R$, we consider a version of this system where the fast and slow dynamics are allowed to be different, and the fast system is not confined to the torus: \begin{align}\label{eq:slowfastLangevin}
dX^{i,\epsilon,N}_t &= -[V_1'(X^{i,\epsilon,N}_t)+\frac{1}{\epsilon}V_2'(Y^{i,\epsilon,N}_t)]dt - \frac{1}{N}\sum_{j=1}^N W_1'(X^{i,\epsilon,N}_t-X^{j,\epsilon,N}_t)dt +\sigma dW^i_t\\
dY^{i,\epsilon,N}_t & = -\frac{1}{\epsilon}[V_3'(X^{i,\epsilon,N}_t)+\frac{1}{\epsilon}V_4'(Y^{i,\epsilon,N}_t)]dt - \frac{1}{\epsilon}\frac{1}{N}\sum_{j=1}^N W_2'(X^{i,\epsilon,N}_t-X^{j,\epsilon,N}_t)dt + \frac{1}{\epsilon}\tau_1 dW^i_t+\frac{1}{\epsilon}\tau_2 dB^i_t\nonumber\\
(X^{i,\epsilon,N}_0,Y^{i,\epsilon,N}_0)& = (\eta^{x},\eta^{y}).\nonumber
\end{align}

This falls into the class of systems \eqref{eq:slowfast1-Dold} with
\begin{align*}
b(x,y,\mu) & = -V_2'(y),\quad
c(x,y,\mu) = -V_1'(x) - \langle \mu, W_1'(x-\cdot)\rangle,\quad
\sigma(x,y,\mu)\equiv  \sigma \\
f(x,y,\mu) &= -V_4'(y),\quad
g(x,y,\mu) = -V_3'(x) - \langle \mu, W_2'(x-\cdot)\rangle,\quad
\tau_1(x,y,\mu)\equiv  \tau_1,\quad
\tau_2(x,y,\mu)\equiv \tau_2.
\end{align*}
\begin{example}
Consider the system \eqref{eq:slowfastLangevin}.

Suppose $V_4(y)=\frac{\kappa}{2}y^2 + \tilde{\eta}(y)$ where $\kappa>0$ and $\tilde{\eta}\in C^2_b(\R)$ is even with $\norm{\tilde{\eta}''}_\infty<\kappa$, $V_1',V_3',W_1',W_2'\in C_b^{r+2}(\R)$, $W_1',W_2'\in \mc{S}_{r+2}$ where $r$ is as in Equation \eqref{eq:rdefinition}, $\sigma,\tau_2\neq 0$, $V_2$ is even, and $V_2'$ is Lipschitz continuous and $O(|y|^{1/2})$ as $|y|\toinf$. 

Then $\br{Z^N}_{N\in\bb{N}}= \br{a(N)\sqrt{N}[\frac{1}{N}\sum_{i=1}^N\delta_{X^{i,\epsilon,N}_\cdot}-\mc{L}(X_\cdot)]}_{N\in\bb{N}}$ satisfies a large deviation principle on the space $C([0,T];\mc{S}_{-r})$ with speed $a^{-2}(N)$ and good rate function $I^{DG}$ given by:
\begin{align*}
I^{DG}(Z)& = \frac{1}{2[\sigma^2+2\alpha a +2\sigma\tau_1\tilde{\alpha}]}\int_0^T \sup_{\phi\in C^\infty_c(\R):\E[|\phi'(X_t)|^2]\neq 0}\frac{\biggl|\langle \dot{Z}_t-\bar{L}^*_{\mc{L}(X_t)}Z_t,\phi\rangle\biggr|^2}{\E\biggl[|\phi'(X_t)|^2\biggr]}dt
\end{align*}
if $Z(0)=0$, $Z$ is absolutely continuous in the sense if Definition \ref{def:absolutelycontinuous}, and $Z\in C([0,T];\mc{S}_{-w})$, and $I^{DG}(Z)=+\infty$ otherwise. Here $X_t$ satisfies:
\begin{align*}
dX_t &= -[\tilde{\alpha}V_3'(X_t)+ V_1'(X_t)]dt -\bar{\E}[\tilde{\alpha}W_2'(x-\bar{X}_t)+W_1'(x-\bar{X}_t)]|_{x=X_t}dt+[\sigma^2+2\alpha a +2\sigma\tau_1\tilde{\alpha}]^{1/2}dW_t\\
X_0& = \eta^x \nonumber\\
\tilde{\alpha}&=\int_\R \Phi'(y)\pi(dy),\quad
\alpha = \int_\R [\Phi'(y)]^2\pi(dy),\quad
a = \frac{1}{2}[\tau_1^2 + \tau_2^2]
\end{align*}
and $\bar{L}_{\mc{L}(X_s)}:\mc{S}_{w+2}\tto \mc{S}_w$ acts on $\phi\in C^\infty_c(\R)$ by:
\begin{align*}
\bar{L}_{\mc{L}(X_s)}\phi(x) & \coloneqq -[\tilde{\alpha}V_3'(x)+ V_1'(x) + \E[\tilde{\alpha}W_2'(x-X_s)+W_1'(x-X_s)]]\phi'(x)+\frac{1}{2}[\sigma^2+2\alpha a +2\sigma\tau_1\tilde{\alpha}]\phi''(x)\\
&\qquad-\E[[\tilde{\alpha}W_2'(X_s-x)+W_1'(X_s-x)]\phi'(X_s)]\nonumber.
\end{align*}
Again, we are denoting by $\bar{X}_t$ an independent copy of $X_t$ on another probability space $(\bar{\W},\bar{\F},\bar{\Prob})$ and by $\bar{\E}$ the expectation on that space.
\end{example}
\begin{proof}
Once we show $\int_\R V_2'(y)\pi(dy)=0$ for $\pi$ as in Equation \eqref{eq:invariantmeasureold}, it follows that assumptions \ref{assumption:uniformellipticity} - \ref{assumption:2unifboundedlinearfunctionalderivatives} and \ref{assumption:limitingcoefficientsregularityratefunction} hold via Example \ref{example:noxmudependenceforphiandpi} in the appendix. Via Remark \ref{remark:barDnondegenerate}, we also have $\bar{D}>0,\forall x\in\R,\mu\in\mc{P}_2(\R)$. Then this example is an immediate corollary of Proposition \ref{prop:DGformofratefunction}.

We know in this setting that $\pi$ admits a density of the form $\pi(y) = C\exp\left(\frac{-V_4(y)}{a}\right)$,
where $C$ is a normalizing constant (see Equation \eqref{eq:explicit1Dpi} in the appendix). Then since $V_2,V_4$ are assumed even and hence $V_2' \pi$ is odd, the result holds.
\end{proof}
\section{Overview of the approach and formulation of the Controlled System}\label{S:ControlSystem}
We use the weak convergence approach of \cite{DE} in order to prove the large deviations principle for $Z^N$. As discussed in Section \ref{sec:mainresults}, we prove the large deviations principle via proving $Z^N$ satisfies the Laplace principle with speed $a^{-2}(N)$ and good rate function $I$ given by Equation \eqref{eq:proposedjointratefunction} (see, e.g. \cite{DE} Section 1.2). 

The method for this is to use the variational representation from \cite{BD} to get that for each $N\in\bb{N}$ and $F\in C_b(C([0,T];S_{-\tau}))$, $\tau\geq w$, where $w$ is as in Equation \eqref{eq:wdefinition},
\begin{align}\label{eq:varrepfunctionalsBM}
-a^2(N)\log \E \exp\biggl(-\frac{1}{a^2(N)}F(Z^N) \biggr)
& = \inf_{\tilde{u}^N}\E\biggl[\frac{1}{2}\frac{1}{N}\sum_{i=1}^N \int_0^T\left(|\tilde{u}^{N,1}_i(s)|^2+|\tilde{u}^{N,2}_i(s)|^2\right)ds +F(\tilde{Z}^N)\biggr]
\end{align}
where $\br{\tilde{u}^{N,k}_i}_{i\in\bb{N},k=1,2}$ are $\br{\F_t}$-progressively-measurable processes such that  \begin{align}\label{eq:controlassumptions0}
\sup_{N\in\bb{N}} \frac{1}{N}\E\biggl[\sum_{i=1}^N \int_0^T \left(|\tilde{u}^{N,1}_i(s)|^2 + |\tilde{u}^{N,2}_i(s)|^2\right)ds\biggr]<\infty.
\end{align}

One can see that in fact the results of \cite{BD} indeed imply the equality \eqref{eq:varrepfunctionalsBM} by following an argument along the same lines as Proposition 3.3. in \cite{BS}.

This bound on the controls can be improved when proving the Laplace principle Lower Bound \eqref{eq:LPupperbound} to:
\begin{align}\label{eq:controlassumptions}
\sup_{N\in\bb{N}} \frac{1}{N}\sum_{i=1}^N \int_0^T \left(|\tilde{u}^{N,1}_i(s)|^2 + |\tilde{u}^{N,2}_i(s)|^2\right)ds <\infty,\bb{\Prob}-\text{ almost surely.}
\end{align}
by the argument found in Theorem 4.4 of \cite{BD}.
Here $\tilde{Z}^N$ is given by, for $\phi\in C^\infty_c(\R):$
\begin{align}\label{eq:controlledempmeasure}
\langle\tilde{Z}^N_t,\phi\rangle &= a(N)\sqrt{N}(\langle\tilde{\mu}^{\epsilon,N}_t,\phi\rangle-\langle\mc{L}(X_t),\phi\rangle),\quad\text{with}\quad
\tilde{\mu}^{\epsilon,N}_t = \frac{1}{N}\sum_{i=1}^N \delta_{\tilde{X}^{i,\epsilon,N}_t},\quad t\in [0,T].
\end{align}
$\tilde{X}^{i,\epsilon,N}_t$ are solutions to:
\begin{align}\label{eq:controlledslowfast1-Dold}
&d\tilde{X}^{i,\epsilon,N}_t = \biggl[\frac{1}{\epsilon}b(\tilde{X}^{i,\epsilon,N}_t,\tilde{Y}^{i,\epsilon,N}_t,\tilde{\mu}^{\epsilon,N}_t)+ c(\tilde{X}^{i,\epsilon,N}_t,\tilde{Y}^{i,\epsilon,N}_t,\tilde{\mu}^{\epsilon,N}_t)+\sigma(\tilde{X}^{i,\epsilon,N}_t,\tilde{Y}^{i,\epsilon,N}_t,\tilde{\mu}^{\epsilon,N}_t)\frac{\tilde{u}^{N,1}_i(t)}{a(N)\sqrt{N}} \biggr]dt \\
&+ \sigma(\tilde{X}^{i,\epsilon,N}_t,\tilde{Y}^{i,\epsilon,N}_t,\tilde{\mu}^{\epsilon,N}_t)dW^i_t\nonumber\\
&d\tilde{Y}^{i,\epsilon,N}_t  = \frac{1}{\epsilon}\biggl[\frac{1}{\epsilon}f(\tilde{X}^{i,\epsilon,N}_t,\tilde{Y}^{i,\epsilon,N}_t,\tilde{\mu}^{\epsilon,N}_t)+ g(\tilde{X}^{i,\epsilon,N}_t,\tilde{Y}^{i,\epsilon,N}_t,\tilde{\mu}^{\epsilon,N}_t) +\tau_1(\tilde{X}^{i,\epsilon,N}_t,\tilde{Y}^{i,\epsilon,N}_t,\tilde{\mu}^{\epsilon,N}_t)\frac{\tilde{u}^{N,1}_i(t)}{a(N)\sqrt{N}}  \nonumber\\
&+\tau_2(\tilde{X}^{i,\epsilon,N}_t,\tilde{Y}^{i,\epsilon,N}_t,\tilde{\mu}^{\epsilon,N}_t)\frac{\tilde{u}^{N,2}_i(t)}{a(N)\sqrt{N}} \biggr]dt+ \frac{1}{\epsilon}\biggl[\tau_1(\tilde{X}^{i,\epsilon,N}_t,\tilde{Y}^{i,\epsilon,N}_t,\tilde{\mu}^{\epsilon,N}_t)dW^i_t+\tau_2(\tilde{X}^{i,\epsilon,N}_t,\tilde{Y}^{i,\epsilon,N}_t,\tilde{\mu}^{\epsilon,N}_t)dB^i_t\biggr]\nonumber\\
&(\tilde{X}^{i,\epsilon,N}_0,\tilde{Y}^{i,\epsilon,N}_0) =(\eta^{x},\eta^{y})\nonumber.
\end{align}

We couple the controls to the joint empirical measures of the fast and slow process by defining occupation measures $\br{Q^N}_{N\in\bb{N}}\subset M_T(\R^4)$ in the following way: for $A,B\in \mc{B}(\R)$ and $C\in \mc{B}(\R^2)$:
\begin{align}\label{eq:occupationmeasures}
Q^N(A\times B\times C\times [0,t])& = \frac{1}{N}\sum_{i=1}^N \int_0^t \delta_{\tilde{X}^{i,\epsilon,N}_s}(A)\delta_{\tilde{Y}^{i,\epsilon,N}_s}(B)\delta_{(\tilde{u}^{N,1}_i(s),\tilde{u}^{N,2}_i(s))}(C)ds.
\end{align}

The proof of the Inequalities \eqref{eq:LPupperbound} and \eqref{eq:LPlowerbound} are attained by identifying limit in distribution of $(\tilde{Z}^N,Q^N)$ as satisfying the limiting controlled Equation \eqref{eq:MDPlimitFIXED}. This identification of the limit is the subject of Section \ref{sec:identificationofthelimit}. In order to identify this limit, we first need to establish tightness of the sequence of random variables $\br{(\tilde{Z}^N,Q^N)}_{N\in\bb{N}}$, as done in Section \ref{sec:tightness}. The proof of tightness relies on a combination of Ergodic-Type Theorems for the system of controlled interacting particles \eqref{eq:controlledslowfast1-Dold} as proved in Section \ref{sec:ergodictheoremscontrolledsystem} and on establishing rates of averaging for fully coupled McKean-Vlasov Equations, as done in Subsection \ref{sec:averagingfullycoupledmckeanvlasov}. These rates of averaging are needed do to a novel coupling argument made in the proof of tightness (see Lemma \ref{lemma:Zboundbyphi4}) to the following system of IID slow-fast McKean-Vlasov Equations:
\begin{align}\label{eq:IIDparticles}
d\bar{X}^{i,\epsilon}_t &= \biggl[\frac{1}{\epsilon}b(\bar{X}^{i,\epsilon}_t,\bar{Y}^{i,\epsilon}_t,\mc{L}(\bar{X}^\epsilon_t))+ c(\bar{X}^{i,\epsilon}_t,\bar{Y}^{i,\epsilon}_t,\mc{L}(\bar{X}^\epsilon_t)) \biggr]dt + \sigma(\bar{X}^{i,\epsilon}_t,\bar{Y}^{i,\epsilon}_t,\mc{L}(\bar{X}^\epsilon_t))dW^i_t\\
d\bar{Y}^{i,\epsilon}_t & = \frac{1}{\epsilon}\biggl[\frac{1}{\epsilon}f(\bar{X}^{i,\epsilon}_t,\bar{Y}^{i,\epsilon}_t,\mc{L}(\bar{X}^\epsilon_t))+ g(\bar{X}^{i,\epsilon}_t,\bar{Y}^{i,\epsilon}_t,\mc{L}(\bar{X}^\epsilon_t)) \biggr]dt \nonumber\\
&+ \frac{1}{\epsilon}\biggl[\tau_1(\bar{X}^{i,\epsilon}_t,\bar{Y}^{i,\epsilon}_t,\mc{L}(\bar{X}^\epsilon_t))dW^i_t+\tau_2(\bar{X}^{i,\epsilon}_t,\bar{Y}^{i,\epsilon}_t,\mc{L}(\bar{X}^\epsilon_t))dB^i_t\biggr]\nonumber\\
(\bar{X}^{i,\epsilon}_0,\bar{X}^{i,\epsilon}_0)& = (\eta^{x},\eta^{y}),\nonumber
\end{align}
where $\bar{X}^\epsilon$ is any particle that has common law with the $\bar{X}^{i,\epsilon}$'s and $W^i,B^i$ are the same driving Brownian motions as in Equations \eqref{eq:slowfast1-Dold} and \eqref{eq:controlledslowfast1-Dold}.

We will also make use of the empirical measure on $N$ of the IID slow particles from Equation \eqref{eq:IIDparticles}:
\begin{align}\label{eq:IIDempiricalmeasure}
\bar{\mu}^{\epsilon,N}\coloneqq \frac{1}{N}\sum_{i=1}^N \delta_{\bar{X}^{i,\epsilon}_t}.
\end{align}
\begin{remark}\label{remark:ontheiidsystem}
Note that these IID particles are what we get from replacing $\mu^{\epsilon,N}$ by $\mc{L}(\bar{X}^\epsilon)$ in Equation \eqref{eq:slowfast1-Dold}. Using such an auxiliary process is a traditional proof method for tightness of fluctuation processes related to empirical measures; See \cite{HM} Theorem 1/Lemma 1, \cite{LossFromDefault} Section 8, \cite{DLR} Section 5.1, \cite{KX} Theorem 2.4/3.1, \cite{FM} Lemma 3.2/Proposition 3.5/Section 4 for examples of this general approach.. However, a key difference here form those proofs is that the IID particles are not copies of the limiting process \eqref{eq:LLNlimitold}, but instead are copies of the process we would obtain from keeping $\epsilon>0$ fixed and sending $N\toinf$. As seen in \cite{BS}, the limit in distribution as $N\toinf$,$\epsilon\downarrow 0$ of the empirical measure $\mu^{\epsilon,N}$ does not depend on the relative rates at which $\epsilon$ and $N$ go to their respective limits. Hence, we are able to treat each of the problems separately, and obtain a rate of convergence of $\tilde{\mu}^{\epsilon,N}$ from Equation \eqref{eq:controlledempmeasure} to $\bar{\mu}^{\epsilon,N}$ from Equation \eqref{eq:IIDempiricalmeasure} as $N\toinf$ in $L^2$ (see Lemma \ref{lemma:XbartildeXdifference}), and a rate of convergence of $\mc{L}(\bar{X}^{1,\epsilon}_t)$ from Equation \eqref{eq:IIDparticles} to $\mc{L}(X_t)$ uniformly as an element of $S_{-m}$, where $X_t$ is as in Equation \eqref{eq:LLNlimitold} and $m$ is as in Equation \eqref{eq:mdefinition}. The latter is a problem of independent interest in itself, and extends the current known results on averaging for SDEs and McKean-Vlasov SDEs, which can be found in, e.g. \cite{RocknerFullyCoupled} and \cite{RocknerMcKeanVlasov} respectively. The result is contained in Subsection \ref{sec:averagingfullycoupledmckeanvlasov} as Theorem \ref{theo:mckeanvlasovaveraging}, and its proof is the subject of the complimentary paper \cite{BezemekSpiliopoulosAveraging2022}.
\end{remark}

\section{Ergodic-Type Theorems for the Controlled System \eqref{eq:controlledslowfast1-Dold}}\label{sec:ergodictheoremscontrolledsystem}
In this section, we use the method of auxiliary Poisson equations to derive rates of averaging in the form of Ergodic-Type Theorems for the controlled particles \eqref{eq:controlledslowfast1-Dold}. These results are used in the proof of tightness of the controlled fluctuation process, as they allow us to couple the controlled particles \eqref{eq:controlledslowfast1-Dold} to the IID slow-fast McKean-Vlasov Equations \eqref{eq:IIDparticles} - see Lemma \ref{lemma:XbartildeXdifference}. They also allow us to identify a prelimit representation for the controlled fluctuations processes $\tilde{Z^N}$ from Equation \eqref{eq:controlledempmeasure} (see Lemma \ref{lemma:Lnu1nu2representation}), which informs the controlled limit proved in Section \ref{sec:identificationofthelimit}. In particular, Proposition \ref{prop:fluctuationestimateparticles1} is necessary to handle the terms $\frac{1}{\epsilon}b$ appearing in the drift of the slow particles $X^{i,N,\epsilon}$, $\tilde{X}^{i,N,\epsilon}$ in Equations \eqref{eq:slowfast1-Dold},\eqref{eq:controlledslowfast1-Dold}. This is where the terms involving the solution $\Phi$ to the Poisson Equation \eqref{eq:cellproblemold} in the limiting coefficients \eqref{eq:limitingcoefficients} come from. The same analysis is performed in averaging fully-coupled standard diffusions - see e.g. \cite{PV2} Theorem 4 and \cite{RocknerFullyCoupled} Lemma 4.4 - but here we must also account for the dependence of the coefficients on the empirical measure, and hence derivatives of $\Phi$ in its measure component appear in the remainder terms. One term involving the derivative in the measure component of $\Phi$ a priori seems to be $\mc{O}(1)$ in the limit, but is seen to vanish as $N\toinf$ in Proposition \ref{prop:purpleterm1}. Naturally such a term does not appear in the setting without measure dependence of the coefficients, and is unique to slow-fast interacting particle systems and slow-fast McKean-Vlasov SDEs. Thus the ``doubled Poisson equation'' construction (see Equation \eqref{eq:doublecorrectorproblem}) and the proof of Proposition \ref{prop:purpleterm1} are novel to this paper and the related paper \cite{BezemekSpiliopoulosAveraging2022}. Proposition \ref{prop:llntypefluctuationestimate1} is used to see that drift and diffusion coefficients which depend on the fast particles $\tilde{Y}^{i,\epsilon,N}$ from Equation \eqref{eq:controlledslowfast1-Dold} can be exchanged for those where dependence on $\tilde{Y}^{i,\epsilon,N}$ is replaced with integration against the invariant measure $\pi$ from Equation \eqref{eq:invariantmeasureold} at a cost of $\mc{O}(\epsilon)$. This method is employed when establishing rates of stochastic homogenization in the standard (one-particle) setting in e.g. \cite{Spiliopoulos2014Fluctuations} Lemma 4.1,\cite{MS} Lemma B.5, and \cite{RocknerFullyCoupled} Lemma 4.2. There again, our setting is different than the standard case in that we must compensate for the dependence of the empirical measure of the coefficients, which yields terms involving the derivative in the measure component of the auxiliary Poisson Equation \eqref{eq:driftcorrectorproblem}.

\begin{proposition}\label{prop:fluctuationestimateparticles1}
Consider $\psi\in C^{1,2}_b([0,T]\times \R)$. Under assumptions \ref{assumption:uniformellipticity} - \ref{assumption:multipliedpolynomialgrowth}, we have for any $t\in [0,T]$:
\begin{align*}
&\frac{a(N)}{\sqrt{N}}\sum_{i=1}^N\E\biggl[\sup_{t\in[0,T]}\biggl|\int_0^t \frac{1}{\epsilon}b(i)\psi(s,\tilde{X}^{i,\epsilon,N}_s)ds - \int_0^t  \biggl(\gamma_1(i)\psi(s,\tilde{X}^{i,\epsilon,N}_s)+D_1(i)\psi_x(s,\tilde{X}^{i,\epsilon,N}_s)\\
&+[\frac{\tau_1(i)}{a(N)\sqrt{N}}\tilde{u}^{N,1}_i(s)+\frac{\tau_2(i)}{a(N)\sqrt{N}}\tilde{u}^{N,2}_i(s)]\Phi_y(i)\psi(s,\tilde{X}^{i,\epsilon,N}_s)\biggr)ds-\int_0^t\tau_1(i)\Phi_y(i)\psi(s,\tilde{X}^{i,\epsilon,N}_s)dW_s^i\\
&-\int_0^t \tau_2(i)\Phi_y(i)\psi(s,\tilde{X}^{i,\epsilon,N}_s)dB_s^i-\int_0^t \frac{1}{N}\sum_{j=1}^N b(j)\partial_{\mu}\Phi(i)[j]\psi(s,\tilde{X}^{i,\epsilon,N}_s)ds\biggr|^2\biggr]\\
&\leq C[\epsilon^2 a(N)\sqrt{N}(1+T+T^2)+\frac{a(N)}{\sqrt{N}}T^2]\norm{\psi}^2_{C_b^{1,2}}
\end{align*}
where here $(i)$ denotes the argument $(\tilde{X}^{i,\epsilon,N}_s,\tilde{Y}^{i,\epsilon,N}_s,\tilde{\mu}^{\epsilon,N}_s)$ and similarly for $(j)$, $[j]$ denotes the argument $\tilde{X}^{j,\epsilon,N}_s$, and $\Phi$ is as in \eqref{eq:cellproblemold}. Here we recall the definitions of $\gamma_1,D_1$ from Equation \eqref{eq:limitingcoefficients}. 
\end{proposition}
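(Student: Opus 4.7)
The plan is to apply the McKean--Vlasov It\^o formula (in the sense of Appendix \ref{Appendix:LionsDifferentiation}) to the auxiliary test function $F(s,x,y,\mu)=\epsilon\,\psi(s,x)\,\Phi(x,y,\mu)$ along the trajectory $(s,\tilde{X}^{i,\epsilon,N}_s,\tilde{Y}^{i,\epsilon,N}_s,\tilde{\mu}^{\epsilon,N}_s)$, and to exploit the Poisson equation $L_{x,\mu}\Phi=-b$ from \eqref{eq:cellproblemold} to cancel the leading $\epsilon^{-1}$ contribution in the drift. Because the measure slot is filled by the empirical measure $\tilde{\mu}^{\epsilon,N}=\frac{1}{N}\sum_j\delta_{\tilde{X}^{j,\epsilon,N}}$, the measure derivatives $\partial_\mu\Phi,\partial_z\partial_\mu\Phi,\partial_\mu^2\Phi$ enter through $\frac{1}{N}\sum_j$-sums indexed over all particles coming from the joint evolution of $\tilde{\mu}^{\epsilon,N}_s$.

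Collecting the drift of $dF$ by powers of $\epsilon^{-1}$, the $\epsilon^{-2}$ piece of $\Phi_y\,d\tilde{Y}^{i,\epsilon,N}_s+\tfrac12\Phi_{yy}\,d\langle\tilde{Y}^{i,\epsilon,N}\rangle_s$ multiplied by the $\epsilon$ in $F$ becomes $\epsilon^{-1}\psi\,L_{x,\mu}\Phi(i)=-\epsilon^{-1}\psi\,b(i)$, which is exactly the quantity to be isolated. The remaining $O(1)$ drift groups precisely into the six ``main'' terms in the statement: $\psi\gamma_1(i)$, $\psi_x D_1(i)$, $[\tau_1(i)\tilde{u}^{N,1}_i+\tau_2(i)\tilde{u}^{N,2}_i]\Phi_y(i)\psi/(a(N)\sqrt{N})$, and $\psi\cdot\frac{1}{N}\sum_j\partial_\mu\Phi(i)[\tilde{X}^{j,\epsilon,N}_s]\,b(j)$ (this last one coming from pairing $\partial_\mu\Phi$ with the $\epsilon^{-1}b(j)$-drift of $d\tilde{X}^{j,\epsilon,N}$). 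The non-vanishing martingale part is $\tau_1(i)\Phi_y(i)\psi\,dW^i+\tau_2(i)\Phi_y(i)\psi\,dB^i$. Rearranging yields that the expression inside $\E[\sup_t|\cdot|^2]$ equals a sum of $O(\epsilon)$ remainders: the boundary term $[-\epsilon\psi\Phi]_0^t$, the time-derivative term $-\int_0^t\epsilon\,\dot{\psi}\,\Phi\,ds$, drift remainders carrying $c(i)$ and the slow-particle control $\sigma(i)\tilde{u}^{N,1}_i/(a(N)\sqrt{N})$, the quadratic-variation measure pieces $\frac{\epsilon\psi}{2N}\sum_j\partial_z\partial_\mu\Phi(i)[j]\sigma^2(j)$ and the $\partial_\mu^2\Phi$ diagonal contributions, the non-$b$ pieces of $d\tilde{X}^j$ against $\partial_\mu\Phi(i)[\tilde{X}^j]$, the own-particle martingales $\epsilon\psi\Phi_x\sigma\,dW^i$ and $\epsilon\psi_x\Phi\sigma\,dW^i$, and the cross-particle martingale $\frac{\epsilon\psi}{N}\sum_j\partial_\mu\Phi(i)[\tilde{X}^{j,\epsilon,N}_s]\sigma(j)\,dW^j$.

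Next I would bound $\E[\sup_{t\leq T}|\cdot|^2]$ per particle using BDG for the martingale pieces and Cauchy--Schwarz in time for the drift pieces, controlling the integrands by the $L^2(\mu)$-polynomial-in-$y$ bounds on $\Phi$ and its derivatives from Assumption \ref{assumption:multipliedpolynomialgrowth}, the growth bounds on $b,c,g,\sigma$ from Assumption \ref{assumption:gsigmabounded}, the second-moment bound $\sup_{\epsilon,N,i}\E\sup_{t\leq T}|\tilde{Y}^{i,\epsilon,N}_t|^2<\infty$ from Appendix \ref{sec:aprioriboundsoncontrolledprocess}, and the $L^2$ control bound \eqref{eq:controlassumptions0}. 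Independence of the driving Brownian motions across particles ensures that only the diagonal survives in the cross-particle martingale's quadratic variation, yielding an $O(\epsilon^2/N)$ per-particle contribution. Summing over $i$ and multiplying by $a(N)/\sqrt{N}$, the generic $O(\epsilon^2)$ per-particle remainders accumulate to the target $\epsilon^2 a(N)\sqrt{N}(1+T+T^2)$ term, where the $(1+T+T^2)$ factors absorb boundary, linear-in-$T$ time integrals, and Cauchy--Schwarz-squared time integrals respectively. The residual $a(N)T^2/\sqrt{N}$ contribution arises from the drift remainders carrying an extra $(a(N)\sqrt{N})^{-1}$ factor (those containing $\tilde{u}^{N,k}$), where Cauchy--Schwarz in time together with $\E\sum_i\int_0^T|\tilde{u}^{N,k}_i|^2\,ds=O(N)$ from \eqref{eq:controlassumptions0} produces precisely the claimed dependence.

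The main technical obstacle is the careful bookkeeping of the measure-derivative contributions: identifying which $\partial_\mu\Phi(i)[\tilde{X}^j]$ drift piece must be retained on the left-hand side (only the one paired with the $\epsilon^{-1}b(j)$-drift of $d\tilde{X}^j$) versus which pieces form $O(\epsilon)$ remainders, and closing the bounds on the latter after the $\frac{1}{N}\sum_j$-against-the-empirical-measure step using the $\tilde{\mc{M}}_p^{\tilde{\bm{\zeta}}}$ classes of Assumption \ref{assumption:multipliedpolynomialgrowth} rather than uniform-in-$y$ bounds (which are unavailable given the polynomial growth of $b,c$ in $y$ from Assumption \ref{assumption:gsigmabounded}). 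This coupling between the $i$-th particle's Poisson-equation correction and the empirical measure over all fast particles is the novel feature of the slow-fast interacting setting, and also foreshadows the doubled-corrector construction needed in Proposition \ref{prop:purpleterm1}.
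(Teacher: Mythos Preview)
Your overall strategy is correct and matches the paper's: apply It\^o to $\epsilon\,\psi\,\Phi^N$ (the empirical projection of $\Phi$), use the Poisson equation to cancel the $\epsilon^{-1}b$ piece, and bound the eight remainder blocks. However, your bookkeeping of the remainders has a genuine gap.

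You have missed the mixed second-order term coming from the cross-variation $d\langle \tilde{Y}^{i,\epsilon,N},\tilde{X}^{i,\epsilon,N}\rangle_s$ paired with the derivative $\partial_y\partial_{\beta_i}\Phi^N=\tfrac{1}{N}\partial_\mu\Phi_y(i)[i]$ (the own-particle contribution to the empirical measure). Since $d\langle \tilde{Y}^{i},\tilde{X}^{i}\rangle_s=\epsilon^{-1}\sigma(i)\tau_1(i)\,ds$, the $\epsilon$ in $F$ is cancelled and one is left with the $O(1)$ remainder
\[
\tilde{B}_2^{i,\epsilon,N}(t)=\frac{1}{N}\int_0^t \sigma(i)\tau_1(i)\,\partial_\mu\Phi_y(i)[i]\,\psi(s,\tilde{X}^{i,\epsilon,N}_s)\,ds,
\]
which carries a prefactor $1/N$ but \emph{no} $\epsilon$. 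This term is precisely the source of the $\frac{a(N)}{\sqrt{N}}T^2$ contribution in the statement: after Cauchy--Schwarz in time and bounding $|\partial_\mu\Phi_y(i)[i]|^2\le N\|\partial_\mu\Phi_y(i)[\cdot]\|^2_{L^2(\tilde\mu^{\epsilon,N}_s)}$, one gets $\frac{a(N)}{\sqrt{N}}\sum_i\E\sup_t|\tilde B_2|^2\le C\frac{a(N)}{\sqrt{N}}T^2\|\psi\|_\infty^2$. The companion term $\tilde B_4$ (from $\partial_x\partial_{\beta_i}\Phi^N$ against $d\langle\tilde X^i\rangle$, which does carry an extra $\epsilon$) is also absent from your list, though it is harmless.

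Consequently, your attribution of the $\frac{a(N)}{\sqrt{N}}T^2$ term to the control remainders is wrong. The control pieces $\tilde B_7,\tilde B_8$ produce $\frac{\epsilon^2}{a(N)\sqrt{N}}CT$, not $\frac{a(N)}{\sqrt{N}}T^2$; this is absorbed into $C\epsilon^2 a(N)\sqrt{N}$ since $a(N)\sqrt{N}\to\infty$. Once you add $\tilde B_2$ to your remainder list and redo the accounting, the argument closes exactly as in the paper.
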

\begin{proof}

Using Lemma \ref{lemma:Ganguly1DCellProblemResult} to gain appropriate differentiablity of $\Phi$, letting $\Phi^N:\R\times\R\times\R^N\tto \R$ be the empirical projection of $\Phi$ and applying standard It\^o's formula and Proposition \ref{prop:empprojderivatives}  to the composition $\Phi^N(\tilde{X}^{i,\epsilon,N}_s,\tilde{Y}^{i,\epsilon,N}_s,(\tilde{X}^{1,\epsilon,N}_s,...,\tilde{X}^{N,\epsilon,N}_s))$, we get:
\begin{align*}
&\int_0^t \frac{1}{\epsilon}b(i)\psi(s,\tilde{X}^{i,\epsilon,N}_s)ds - \int_0^t  \biggl(\gamma_1(i)\psi(s,\tilde{X}^{i,\epsilon,N}_s)+D_1(i)\psi_x(s,\tilde{X}^{i,\epsilon,N}_s)\\
&+[\frac{\tau_1(i)}{a(N)\sqrt{N}}\tilde{u}^{N,1}_i(s)+\frac{\tau_2(i)}{a(N)\sqrt{N}}\tilde{u}^{N,2}_i(s)]\Phi_y(i)\psi(s,\tilde{X}^{i,\epsilon,N}_s)\biggr)ds-\int_0^t\tau_1(i)\Phi_y(i)\psi(s,\tilde{X}^{i,\epsilon,N}_s)dW_s^i\\
&-\int_0^t \tau_2(i)\Phi_y(i)\psi(s,\tilde{X}^{i,\epsilon,N}_s)dB_s^i-\int_0^t \frac{1}{N}\sum_{j=1}^N b(j)\partial_{\mu}\Phi(i)[j]\psi(s,\tilde{X}^{i,\epsilon,N}_s)ds  = \sum_{k=1}^{8} \tilde{B}^{i,\epsilon,N}_k
\end{align*}
where:
\begin{align*}
\tilde{B}^{i,\epsilon,N}_1(t)& = \epsilon [\Phi(\tilde{X}^{i,\epsilon,N}_0,\tilde{Y}^{i,\epsilon,N}_0,\tilde{\mu}^{\epsilon,N}_0)\psi(0,\tilde{X}^{i,\epsilon,N}_0) - \Phi(\tilde{X}^{i,\epsilon,N}_t,\tilde{Y}^{i,\epsilon,N}_t,\tilde{\mu}^{\epsilon,N}_t)\psi(t,\tilde{X}^{i,\epsilon,N}_t)]\\
\tilde{B}^{i,\epsilon,N}_2(t)& = \frac{1}{N}\int_0^t \sigma(i)\tau_1(i)\partial_\mu \Phi_y (i)[i]\psi(s,\tilde{X}^{i,\epsilon,N}_s)ds\\
\tilde{B}^{i,\epsilon,N}_3(t)& =\epsilon \int_0^t \biggl(\Phi(i)\dot{\psi}(s,\tilde{X}^{i,\epsilon,N}_s)+c(i)[\Phi_x(i)\psi(s,\tilde{X}^{i,\epsilon,N}_s)+\Phi(i)\psi_x(s,\tilde{X}^{i,\epsilon,N}_s)]+\frac{\sigma^2(i)}{2}[\Phi_{xx}(i)\psi(s,\tilde{X}^{i,\epsilon,N}_s)\\
&\qquad+2\Phi_x(i)\psi_x(s,\tilde{X}^{i,\epsilon,N}_s)+\Phi(i)\psi_{xx}(s,\tilde{X}^{i,\epsilon,N}_s)]\biggr)ds\\
\tilde{B}^{i,\epsilon,N}_4(t)& =\epsilon\int_0^t\frac{\sigma^2(i)}{2}[\frac{2}{N}\partial_\mu \Phi(i)[i]\psi_x(s,\tilde{X}^{i,\epsilon,N}_s)+\frac{2}{N}\partial_\mu \Phi_x(i)[i]\psi(s,\tilde{X}^{i,\epsilon,N}_s)]ds\\
\tilde{B}^{i,\epsilon,N}_5(t)& =\epsilon\int_0^t\frac{1}{N}\sum_{j=1}^N \biggl\lbrace c(j)\partial_\mu \Phi(i)[j]\psi(s,\tilde{X}^{i,\epsilon,N}_s)+\frac{1}{2}\sigma^2(j)[\frac{1}{N}\partial^2_\mu \Phi(i)[j,j] +\partial_z\partial_\mu \Phi(i)[j]]\psi(s,\tilde{X}^{i,\epsilon,N}_s)  \biggr\rbrace ds \\
\tilde{B}^{i,\epsilon,N}_6(t)& = \epsilon \biggl[ \int_0^t \sigma(i)[\Phi_x(i)\psi(s,\tilde{X}^{i,\epsilon,N}_s)+\Phi(i)\psi_x(s,\tilde{X}^{i,\epsilon,N}_s)]dW^i_t + \frac{1}{N}\sum_{j=1}^N\biggl\lbrace \int_0^t \sigma(j)\partial_\mu \Phi(i)[j]\psi(s,\tilde{X}^{i,\epsilon,N}_s)dW^j_s \biggr\rbrace \biggr] \\
\tilde{B}^{i,\epsilon,N}_7(t)& = \epsilon \int_0^t \frac{\sigma(i)}{a(N)\sqrt{N}}\tilde{u}^{N,1}_i(s)[\Phi_x(i)\psi(s,\tilde{X}^{i,\epsilon,N}_s)+\Phi(i)\psi_x(s,\tilde{X}^{i,\epsilon,N}_s)]ds \\
\tilde{B}^{i,\epsilon,N}_{8}(t)& = \epsilon \int_0^t \frac{1}{N} \biggl\lbrace \sum_{j=1}^N \frac{\sigma(j)}{a(N)\sqrt{N}}\tilde{u}^{N,1}_j(s) \partial_\mu \Phi(i)[j]\psi(s,\tilde{X}^{i,\epsilon,N}_s)\biggr\rbrace ds.
\end{align*}

Via Lemma \ref{lemma:tildeYuniformbound}, the assumed linear growth of $b$ and $c$ in $y$ and boundedness of $\sigma$, and the assumed bound (\ref{eq:controlassumptions}) on the controls, one can check that indeed $\tilde{\mu}^N_t \in \mc{P}_2(\R)$ for each $t\in [0,T]$ and $N\in\bb{N}$, and so there is no issue with the domain of $\Phi$ and its derivatives being $\mc{P}_2(\R)$.

Then, by multiple applications of H\"older's inequality, and using the assumed uniform in $x,\mu$ polynomial growth in $y$ of $\Phi$ and its derivatives from Assumption \ref{assumption:multipliedpolynomialgrowth}:
\begin{align*}
&\frac{a(N)}{\sqrt{N}}\sum_{i=1}^N \E\biggl[\sup_{t\in[0,T]}|\tilde{B}^{i,\epsilon,N}_1(t)|^2\biggr]\leq  \epsilon^2 a(N)\sqrt{N}\norm{\psi}^2_\infty\\
&\frac{a(N)}{\sqrt{N}}\sum_{i=1}^N \E\biggl[\sup_{t\in[0,T]}|\tilde{B}^{i,\epsilon,N}_2(t)|^2\biggr]\leq C\frac{a(N)}{\sqrt{N}} \frac{1}{N^2}\sum_{i=1}^N T\E\biggl[\int_0^T|\partial_\mu\Phi_y(i)[i]|^2ds\biggr]\norm{\psi}^2_\infty \\
&\qquad \leq C\frac{a(N)}{\sqrt{N}} \frac{1}{N}\sum_{i=1}^N T\E\biggl[\int_0^T\norm{\partial_\mu\Phi_y(i)[\cdot]}_{L^2(\R,\tilde{\mu}^{N,\epsilon}_s)}^2ds\biggr]\norm{\psi}^2_\infty \\
&\qquad\leq C\frac{a(N)}{\sqrt{N}}T^2\biggl(1+ \frac{1}{N}\sum_{i=1}^N \sup_{t\in [0,T]}\E\biggl[|\tilde{Y}^{i,\epsilon,N}_t|^{2\tilde{q}_{\Phi_y}(1,0,0)} \biggr]\biggr)\norm{\psi}^2_\infty\\
&\frac{a(N)}{\sqrt{N}}\sum_{i=1}^N \E\biggl[\sup_{t\in[0,T]}|\tilde{B}^{i,\epsilon,N}_3(t)|^2\biggr]\leq \epsilon^2 a(N)\sqrt{N}T^2\biggl(1+ \frac{1}{N}\sum_{i=1}^N \sup_{t\in [0,T]}\E\biggl[|\tilde{Y}^{i,\epsilon,N}_t|^{2}+|\tilde{Y}^{i,\epsilon,N}_t|^{2q_{\Phi}(0,2,0)} \biggr]\biggr)\norm{\psi}^2_{C^{1,2}_b}\\
&\frac{a(N)}{\sqrt{N}}\sum_{i=1}^N \E\biggl[\sup_{t\in[0,T]}|\tilde{B}^{i,\epsilon,N}_4(t)|^2\biggr]\leq C\frac{a(N)}{\sqrt{N}} \frac{\epsilon^2}{N^2}\sum_{i=1}^N T\E\biggl[\int_0^T\biggl|\biggl(|\partial_\mu\Phi(i)[i]|+|\partial_\mu\Phi_x(i)[i]|\biggr)\biggr|^2ds\biggr](\norm{\psi}^2_\infty+\norm{\psi_x}^2_\infty)\\
&\qquad \leq C\frac{a(N)}{\sqrt{N}} \frac{\epsilon^2}{N}\sum_{i=1}^N T\E\biggl[\int_0^T\biggl|\biggl(\norm{\partial_\mu\Phi(i)[\cdot]}_{L^2(\R,\tilde{\mu}^{N,\epsilon}_s)}+\norm{\partial_\mu\Phi_x(i)[\cdot]}_{L^2(\R,\tilde{\mu}^{N,\epsilon}_s)}\biggr)\biggr|^2ds\biggr]
(\norm{\psi}^2_\infty+\norm{\psi_x}^2_\infty)\\
&\qquad\leq C\frac{a(N)}{\sqrt{N}}\epsilon^2T^2 \biggl(1+ \frac{1}{N}\sum_{i=1}^N \sup_{t\in [0,T]}\E\biggl[|\tilde{Y}^{i,\epsilon,N}_t|^{2(\tilde{q}_{\Phi}(1,0,0)\vee \tilde{q}_{\Phi}(1,1,0))} \biggr]\biggr)(\norm{\psi}^2_\infty+\norm{\psi_x}^2_\infty)
\end{align*}

Here for $\tilde{B}_1$, we used the assumed boundedness of $\Phi$ from \ref{assumption:multipliedpolynomialgrowth}. For $\tilde{B}_2$ we used the assumed polynomial growth in $y$ of $\partial_\mu\Phi$ from \ref{assumption:multipliedpolynomialgrowth} and the boundedness of $\sigma$ and $\tau_1$ from \ref{assumption:gsigmabounded} and \ref{assumption:uniformellipticity}. For $\tilde{B}_3$ we used the assumed polynomial growth in $y$ of $\Phi_{xx}$ and boundedness of $\Phi,\Phi_x$ from \ref{assumption:multipliedpolynomialgrowth} and the boundedness of $\sigma$ and the linear growth in $y$ of $c$ from \ref{assumption:gsigmabounded}. In $\tilde{B}_4$ we used the assumed polynomial growth in $y$ of $\partial_\mu\Phi$ and $\partial_\mu\Phi_x$ from \ref{assumption:multipliedpolynomialgrowth} and the assumed boundedness of $\sigma$ from \ref{assumption:gsigmabounded}.

For $\tilde{B}^{i,\epsilon,N}_5(t)$, we bound the two terms separately. For the first, we use the assumed linear growth in $y$ of $c$ and polynomial growth of $\partial_\mu\Phi$ in $y$ to get:
\begin{align*}
&\frac{a(N)}{\sqrt{N}}\sum_{i=1}^N \frac{\epsilon^2}{N^2}\E\biggl[\sup_{t\in[0,T]}\biggl|\int_0^t\sum_{j=1}^N c(j)\partial_\mu\Phi(i)[j]\psi(s,\tilde{X}^{i,\epsilon,N}_s)ds\biggr|^2\biggr]\\
& \leq \epsilon^2 a(N) \sqrt{N} \frac{T}{N}\sum_{i=1}^N \E\biggl[\int_0^T\norm{\partial_\mu\Phi(i)[\cdot]}_{L^2(\R,\tilde{\mu}^{\epsilon,N}_s)}^2\frac{1}{N}\sum_{j=1}^N |c(j)|^2ds\biggr]\norm{\psi}^2_\infty\\
&\leq C\epsilon^2 a(N) \sqrt{N}T^2\biggl(1+ \frac{1}{N}\sum_{i=1}^N \sup_{s\in [0,T]}\E\biggl[\frac{1}{N}\sum_{j=1}^N |\tilde{Y}^{i,\epsilon,N}_s|^{2}\biggr]+\frac{1}{N}\sum_{i=1}^N \sup_{s\in [0,T]}\E\biggl[|\tilde{Y}^{i,\epsilon,N}_s|^{2\tilde{q}_{\Phi}(1,0,0)}\biggr]\\
&+ \sup_{s\in [0,T]}\E\biggl[\frac{1}{N^2}\sum_{j=1}^N\sum_{i=1}^N |\tilde{Y}^{i,\epsilon,N}_s|^{2\tilde{q}_{\Phi}(1,0,0)} |\tilde{Y}^{j,\epsilon,N}_s|^{2} \biggr]\biggr)\norm{\psi}^2_\infty\\
&\leq C\epsilon^2 a(N) \sqrt{N}T^2\biggl(1+ \frac{1}{N}\sum_{i=1}^N \sup_{s\in [0,T]}\E\biggl[\frac{1}{N}\sum_{j=1}^N |\tilde{Y}^{i,\epsilon,N}_s|^{2}\biggr]+\frac{1}{N}\sum_{i=1}^N \sup_{s\in [0,T]}\E\biggl[|\tilde{Y}^{i,\epsilon,N}_s|^{2\tilde{q}_{\Phi}(1,0,0)}\biggr]\\
&+ \sup_{s\in [0,T]}\E\biggl[\biggl(\frac{1}{N}\sum_{i=1}^N |\tilde{Y}^{i,\epsilon,N}_s|^{2(\tilde{q}_{\Phi}(1,0,0)\vee 1)}\biggr)^2 \biggr]\biggr)\norm{\psi}^2_\infty.
\end{align*}
For the second, we have by boundedness of $\sigma$ and the assumed polynomial growth in $y$ of $\partial^2_\mu\Phi$ and $\partial_z\partial_\mu\Phi:$
\begin{align*}
&\frac{a(N)}{\sqrt{N}}\frac{\epsilon^2}{N^2}\sum_{i=1}^N\E\biggl[\sup_{t\in [0,T]} \biggl|\int_0^t\sum_{j=1}^N\frac{1}{2}\sigma^2(j)[\frac{1}{N}\partial^2_\mu \Phi(i)[j,j] +\partial_z\partial_\mu \Phi(i)[j]]\psi(s,\tilde{X}^{i,\epsilon,N}_s) ds\biggr|^2\biggr]\\
&\leq  \frac{a(N)}{\sqrt{N}}\epsilon^2CT\sum_{i=1}^N\E\biggl[\int_0^T\frac{1}{N}\sum_{j=1}^N\frac{1}{N^2}|\partial^2_\mu \Phi(i)[j,j]|^2 +|\partial_z\partial_\mu \Phi(i)[j]|^2 ds\biggr]\norm{\psi}^2_\infty\\
&\leq  \frac{a(N)}{\sqrt{N}}\epsilon^2CT\sum_{i=1}^N\E\biggl[\int_0^T\frac{1}{N}\norm{\partial^2_\mu \Phi(i)[\cdot,\cdot]}_{L^2(\R,\tilde{\mu}^{\epsilon,N}_s)\otimes L^2(\R,\tilde{\mu}^{\epsilon,N}_s)}^2 +\norm{\partial_z\partial_\mu \Phi(i)[\cdot]}_{L^2(\R,\tilde{\mu}^{\epsilon,N}_s)}^2 ds\biggr]\norm{\psi}^2_\infty\\
&\leq  C\epsilon^2a(N)\sqrt{N}T^2\biggl[1+\frac{1}{N}\sum_{i=1}^N\sup_{s\in[0,T]}\E\biggl[|\tilde{Y}^{i,\epsilon,N}_s|^{2(\tilde{q}_{\Phi}(2,0,0)\vee \tilde{q}_{\Phi}(1,0,1) )}\biggr]\biggr]\norm{\psi}^2_\infty.
\end{align*}

For the martingale terms, by Burkholder-Davis-Gundy inequality, the assumed boundedness of $\sigma$, $\Phi$, and $\Phi_x$ and assumed polynomial growth in $y$ of $\partial_\mu\Phi$:
\begin{align*}
&\frac{a(N)}{\sqrt{N}}\sum_{i=1}^N \E\biggl[\sup_{t\in[0,T]}|\tilde{B}^{i,\epsilon,N}_6(t)|^2\biggr]\leq C\epsilon^2 a(N)\sqrt{N}T(\norm{\psi}^2_\infty+\norm{\psi_x}^2_\infty)+ C\frac{a(N)}{\sqrt{N}}\sum_{i=1}^N \frac{\epsilon^2}{N^2} \sum_{j=1}^N \E\biggl[\int_0^T |\partial_\mu \Phi(i)[j]|^2ds\biggr]\norm{\psi}^2_\infty \\
& = C\epsilon^2 a(N)\sqrt{N}T(\norm{\psi}^2_\infty+\norm{\psi_x}^2_\infty) +C \frac{a(N)}{\sqrt{N}}\sum_{i=1}^N \frac{\epsilon^2}{N} \E\biggl[\int_0^T \norm{\partial_\mu \Phi(i)[\cdot]}^2_{L^2(\R,\tilde{\mu}^{\epsilon,N}_s)}ds\biggr]\norm{\psi}^2_\infty \\
&\leq C\epsilon^2a(N)\sqrt{N}T(\norm{\psi}^2_\infty+\norm{\psi_x}^2_\infty)+C\frac{\epsilon^2a(N)}{\sqrt{N}}T\biggl(1+ \frac{1}{N}\sum_{i=1}^N \sup_{t\in [0,T]}\E\biggl[|\tilde{Y}^{i,\epsilon,N}_t|^{2\tilde{q}_{\Phi}(1,0,0)} \biggr]\biggr)\norm{\psi}^2_\infty.
\end{align*}

By the bound \eqref{eq:controlassumptions0} and the assumed boundedness of $\Phi,\Phi_x$, we have also
\begin{align*}
&\frac{a(N)}{\sqrt{N}}\sum_{i=1}^N \E\biggl[\sup_{t\in[0,T]}|\tilde{B}^{i,\epsilon,N}_7(t)|^2\biggr]\leq \frac{a(N)}{\sqrt{N}}\sum_{i=1}^N \frac{\epsilon^2}{a^2(N)N}CT\E\biggl[\int_0^T |\tilde{u}^{N,1}_i(s)|^2ds\biggr](\norm{\psi}^2_\infty+\norm{\psi_x}^2_\infty)\\
&\leq \frac{\epsilon^2}{a(N)\sqrt{N}}CT(\norm{\psi}^2_\infty+\norm{\psi_x}^2_\infty).
\end{align*}

Finally, by the assumed boundedness of $\sigma$ and polynomial growth of $\partial_\mu \Phi$ in $y$:
\begin{align*}
&\frac{a(N)}{\sqrt{N}}\sum_{i=1}^N \E\biggl[\sup_{t\in[0,T]}|\tilde{B}^{i,\epsilon,N}_8(t)|^2\biggr]
\leq \frac{a(N)}{\sqrt{N}}\sum_{i=1}^N \frac{\epsilon^2}{a^2(N)N^3}C\E\biggl[\biggl|\sum_{j=1}^N\int_0^T  |\tilde{u}^{N,1}_j(s)| |\partial_\mu\Phi(i)[j]|ds\biggr|^2\biggr]\norm{\psi}^2_\infty\\
&\leq \frac{a(N)}{\sqrt{N}}\sum_{i=1}^N \frac{\epsilon^2}{a^2(N)N}C\E\biggl[\biggl(\frac{1}{N}\sum_{j=1}^N\int_0^T  |\tilde{u}^{N,1}_j(s)|^2ds\biggr) \biggl(\int_0^T \norm{\partial_\mu\Phi(i)[\cdot]}^2_{L^2(\R,\tilde{\mu}^{\epsilon,N}_s)}ds\biggr)\biggr]\norm{\psi}^2_\infty\\
&\leq C\frac{\epsilon^2}{a(N)\sqrt{N}}T\biggl(1+ \frac{1}{N}\sum_{i=1}^N \sup_{t\in [0,T]}\E\biggl[|\tilde{Y}^{i,\epsilon,N}_t|^{2\tilde{q}_{\Phi}(1,0,0)} \biggr]\biggr)\norm{\psi}^2_\infty
\end{align*}
where we use the bound \eqref{eq:controlassumptions} in the last step. The result follows from Lemmas \ref{lemma:tildeYuniformbound} and \ref{lemma:ytildesquaredsumbound}, using that the exponent of $|\tilde{Y}^{i,\epsilon,N}_t|$ in the expectation of all these bounds is less than or equal to 2 as imposed in Assumption \ref{assumption:multipliedpolynomialgrowth}. Lemma \ref{lemma:ytildesquaredsumbound} is used to handle the last term appearing in the bound of the first part of $\tilde{B}_5$.
\end{proof}
\begin{remark}\label{remark:onthescalingofa(N)}
Bounding the first term in $\tilde{B}_5$ in Proposition \ref{prop:fluctuationestimateparticles1} is the only place where Lemma \ref{lemma:ytildesquaredsumbound} is required in this manuscript. The proof of Lemma \ref{lemma:ytildesquaredsumbound} is where it is required that there exists $\rho\in (0,1)$ such that $a(N)\sqrt{N}\epsilon^\rho \tto \lambda \in (0,+\infty]$. Thus, if this term can be otherwise bounded (e.g. if $c$ or $\partial_\mu \Phi$ is uniformly bounded), one can relax this technical assumption on the scaling sequence $a(N)$ to $a(N)\sqrt{N}\epsilon\tto 0$. Moreover, $a(N)\sqrt{N}\epsilon\tto 0$ is needed so that the term $\tilde{B}_1$ in Proposition \ref{prop:fluctuationestimateparticles1} vanishes - without this, one cannot hope to prove tightness of $\br{\tilde{Z}^N}_{N\in\bb{N}}$, as in Proposition \ref{prop:tildeZNtightness} there would be an $\mc{O}(1)$ term which is not uniformly continuous with respect to time. If $b\equiv 0$ and hence there is no need for Proposition \ref{prop:fluctuationestimateparticles1}, it is possible to prove tightness even when $a(N)\sqrt{N}\epsilon\tto \lambda \in [0,\infty)$. Under this scaling, we expect to get a different formulation for the rate function in Theorem \ref{theo:MDP} when $\lambda>0$. This is an interesting avenue for future research which we do not pursue here for purposes of the presentation.
\end{remark}

\begin{proposition}\label{prop:purpleterm1}
In the setup of Proposition \ref{prop:fluctuationestimateparticles1}, assume in addition \ref{assumption:qF2bound}. Then
\begin{align*}
\frac{a(N)}{\sqrt{N}}\sum_{i=1}^N\E\biggl[\sup_{t\in[0,T]}\biggl|\int_0^t \frac{1}{N}\sum_{j=1}^N b(j)\partial_{\mu}\Phi(i)[j]\psi(s,\tilde{X}^{i,\epsilon,N}_s)ds\biggr|^2\biggr]&\leq C[\epsilon^2 a(N)\sqrt{N}(1+T+T^2)+\frac{a(N)}{N^{3/2}}T^2]\norm{\psi}^2_{C_b^{1,2}}.
\end{align*}
\end{proposition}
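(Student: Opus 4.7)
The plan is to follow the same strategy as the proof of Proposition \ref{prop:fluctuationestimateparticles1}, but instead of applying It\^o's formula to the single-particle corrector $\Phi$, we apply it to the doubled corrector $\chi$ introduced in Assumption \ref{assumption:qF2bound}. Recall that $\chi(x_1, x_2, y_1, y_2, \mu)$ is centered and satisfies (schematically) a Poisson equation of the form $L^2_{x_1,x_2,\mu}\chi(x_1, x_2, y_1, y_2, \mu) = -b(x_1, y_1, \mu)\,\partial_\mu\Phi(x_2, y_2, \mu)[x_1]$, where $L^2_{x_1,x_2,\mu}$ is the two-variable generator built from two commuting copies of the frozen generator $L_{x,\mu}$ from \eqref{eq:frozengeneratormold} acting in $y_1$ and $y_2$ respectively. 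The construction of $\chi$ is precisely designed so that $\epsilon^2 L^2_{\cdot,\cdot,\mu}\chi$ reproduces the otherwise $O(1)$ integrand $b(j)\partial_\mu\Phi(i)[j]$ when evaluated along the fast-slow trajectories, thanks to the $1/\epsilon^2$ scaling of $f$ and the diffusive part of $\tilde Y^{i,\epsilon,N}, \tilde Y^{j,\epsilon,N}$ in \eqref{eq:controlledslowfast1-Dold}.

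First, I would form the empirical projection $\chi^N$ of $\chi$ (using Proposition \ref{prop:empprojderivatives}) and apply It\^o's formula to
\begin{align*}
\epsilon^2\,\chi^N\bigl(\tilde X^{j,\epsilon,N}_s, \tilde X^{i,\epsilon,N}_s, \tilde Y^{j,\epsilon,N}_s, \tilde Y^{i,\epsilon,N}_s, (\tilde X^{1,\epsilon,N}_s,\ldots,\tilde X^{N,\epsilon,N}_s)\bigr)\,\psi(s, \tilde X^{i,\epsilon,N}_s).
\end{align*}
By the defining PDE for $\chi$ and the $1/\epsilon^2$ prefactor in the fast dynamics, the leading drift term reproduces exactly $b(j)\partial_\mu\Phi(i)[j]\psi(s,\tilde X^{i,\epsilon,N}_s)$. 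The remaining contributions split into: (a) a boundary term of size $\epsilon^2$; (b) $O(\epsilon)$ bounded-variation remainders coming from the slow drifts $c, g$, the cross diffusion in $y_1,y_2$ through the common Brownian motions $W^i,W^j$, the time derivative of $\psi$, the controls $\tilde u^{N,k}_\cdot/(a(N)\sqrt{N})$, and the measure derivatives $\partial_\mu\chi^N, \partial_\mu^2\chi^N, \partial_z\partial_\mu\chi^N$ that arise from differentiating the empirical projection; and (c) martingale terms of order $\epsilon$ driven by $W^i, W^j, B^i, B^j$ and by the Brownian motions entering through $\partial_\mu\chi^N$.

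Next, I would sum over $i,j$, multiply by the prefactor $\frac{a(N)}{\sqrt N}\cdot\frac{1}{N}$, and bound each contribution in $L^2$ precisely as in Proposition \ref{prop:fluctuationestimateparticles1}. The polynomial growth rates $\tilde q_\chi(n,l,\bm\beta)\leq 1$, $\tilde q_{\chi_y}(n,l,\bm\beta)\leq 1$, and $\tilde q_{\chi_{yy}}(0,0,0)\leq 1$ imposed in Assumption \ref{assumption:qF2bound}, combined with the second-moment bounds of Lemmas \ref{lemma:tildeYuniformbound} and \ref{lemma:ytildesquaredsumbound}, the boundedness of $\sigma,\tau_1,\tau_2$ and linear growth of $b,c$, and the control bound \eqref{eq:controlassumptions0}, furnish uniform-in-$N$ estimates for all remainder terms. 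The factor $\epsilon^2$ from the scaling propagates through to yield the claimed $\epsilon^2 a(N)\sqrt{N}(1+T+T^2)\|\psi\|_{C_b^{1,2}}^2$ bound for the bulk remainders and martingales. The second piece, $\frac{a(N)}{N^{3/2}}T^2\|\psi\|_{C_b^{1,2}}^2$, comes from the diagonal terms in the It\^o correction where the derivative in the measure parameter acts at one of the running particles and produces an additional $1/N$: after squaring and summing, these contribute $\frac{a(N)}{\sqrt N}\cdot \frac{1}{N^2}\cdot N = \frac{a(N)}{N^{3/2}}$.

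The main obstacle is the bookkeeping. Because $\chi$ depends on two spatial and two fast variables and the empirical projection is differentiated in its measure argument, the number of It\^o remainder terms roughly doubles relative to Proposition \ref{prop:fluctuationestimateparticles1}, and genuinely two-particle cross terms involving $\partial_{y_1}\partial_{y_2}\chi$ arise from the common driving noises $W^i, W^j$ (in particular the quadratic covariation between the $i$-th and $j$-th slow particles). These cross terms, along with the $\chi_{yy}$ term, are exactly what force the stronger growth conditions of Assumption \ref{assumption:qF2bound}, and their presence is precisely what distinguishes the slow-fast McKean--Vlasov setting from the single-particle averaging problem. Verifying that each such term fits within the imposed polynomial growth class and that no $O(1)$ residual survives is where the doubled-corrector construction of \eqref{eq:doublecorrectorproblem} becomes essential.
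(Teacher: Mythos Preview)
Your strategy is the paper's: apply It\^o's formula to the doubled corrector $\chi$ from \eqref{eq:doublecorrectorproblem} evaluated along $(\tilde X^{j,\epsilon,N},\tilde X^{i,\epsilon,N},\tilde Y^{j,\epsilon,N},\tilde Y^{i,\epsilon,N},\tilde\mu^{\epsilon,N})$ times $\psi(s,\tilde X^{i,\epsilon,N}_s)$, then bound the thirteen remainder terms using \ref{assumption:qF2bound} and the moment bounds of Appendix~\ref{sec:aprioriboundsoncontrolledprocess}. However, you misidentify the origin of the $\frac{a(N)}{N^{3/2}}$ piece, and this matters because it is the one genuinely new term. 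It does \emph{not} come from measure derivatives of the empirical projection---those all carry at least an $\epsilon$ prefactor (cf.\ $\bar B_4,\bar B_5,\bar B_9$ in the paper) and are absorbed into the $\epsilon^2 a(N)\sqrt{N}$ bound. It comes from the mixed second derivative $\chi_{y\bar y}$: when and only when $i=j$, the fast components $\tilde Y^{j,\epsilon,N}$ and $\tilde Y^{i,\epsilon,N}$ are driven by the \emph{same} Brownian motions, so $d[\tilde Y^{j,\epsilon,N},\tilde Y^{i,\epsilon,N}]_s=\epsilon^{-2}\bigl(\tau_1^2+\tau_2^2\bigr)(i)\,ds$, and the resulting It\^o correction $\epsilon^2\cdot\chi_{y\bar y}\cdot\epsilon^{-2}(\tau_1^2+\tau_2^2)$ is $O(1)$, not $O(\epsilon)$ as you classified it. For $i\neq j$ the driving Brownians are independent and this cross term vanishes. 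Hence the only remainder without an $\epsilon$ prefactor is the diagonal term
\[
\bar B^{i,j,\epsilon,N}_{13}(t)=\1_{i=j}\int_0^t\bigl[\tau_1^2(i)+\tau_2^2(i)\bigr]\chi_{y\bar y}(i,i)\,\psi(s,\tilde X^{i,\epsilon,N}_s)\,ds,
\]
and the $\frac1N\sum_j$ together with $\1_{i=j}$ is what produces the extra $1/N$ giving $\frac{a(N)}{\sqrt N}\cdot\frac{1}{N^2}\cdot N=\frac{a(N)}{N^{3/2}}$. This is exactly why \ref{assumption:qF2bound} singles out a bound on $\chi_{yy}$ (i.e.\ $\chi_{y\bar y}$) with $\tilde q_{\chi_{yy}}(0,0,0)\le 1$: it controls the sole $O(1)$ survivor. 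You noted the presence of $\partial_{y_1}\partial_{y_2}\chi$ terms but placed them in the $O(\epsilon)$ bucket and attributed them to quadratic covariation of the slow particles; once you correct both points the bookkeeping closes as in the paper.
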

\begin{proof}

Recall the operator $L_{x,\mu}$ from Equation \eqref{eq:frozengeneratormold}. For fixed $x\in\R,\mu\in\mc{P}(\R)$, this is the generator of
\begin{align}\label{eq:frozenprocess1}
dY^{x,\mu}_t = f(x,Y^{x,\mu}_t,\mu)dt+\tau_1(x,Y^{x,\mu}_t,\mu)dW_t+\tau_2(x,Y^{x,\mu}_t,\mu)dB_t
\end{align}
for $W_t,B_t$ independent 1-D Brownian motions.

We introduce a new generator $L^2_{x,\bar{x},\mu}$ parameterized by $x,\bar{x}\in\R,\mu\in\mc{P}_2$ which acts on $\psi\in C^2_b(\R^2)$ by
\begin{align}\label{eq:2copiesgenerator}
L^2_{x,\bar{x},\mu}\psi(y,\bar{y}) &= f(x,y,\mu)\psi_y(y,\bar{y})+f(\bar{x},\bar{y},\mu)\psi_{\bar{y}}(y,\bar{y})\\
&+ \frac{1}{2}[\tau_1^2(x,y,\mu)+\tau_2^2(x,y,\mu)]\psi_{yy}(y,\bar{y})+\frac{1}{2}[\tau_1^2(\bar{x},\bar{y},\mu)+\tau_2^2(\bar{x},\bar{y},\mu)]\psi_{\bar{y}\bar{y}}(y,\bar{y}).\nonumber
\end{align}

This is the generator associated to the 2-dimensional process solving 2 independent copies of Equation \eqref{eq:frozenprocess1} where the same parameter $\mu$ enters both equations, but different $x,\bar{x}$ enter each equation, i.e.
\begin{align}\label{eq:frozenprocess2}
dY^{x,\mu}_t &= f(x,Y^{x,\mu}_t,\mu)dt+\tau_1(x,Y^{x,\mu}_t,\mu)dW_t+\tau_2(x,Y^{x,\mu}_t,\mu)dB_t\\
d\bar{Y}^{\bar{x},\mu}_t &= f(\bar{x},\bar{Y}^{\bar{x},\mu}_t,\mu)dt+\tau_1(\bar{x},\bar{Y}^{\bar{x},\mu}_t,\mu)d\bar{W}_t+\tau_2(\bar{x},\bar{Y}^{\bar{x},\mu}_t,\mu)d\bar{B}_t\nonumber.
\end{align}
for $W_t,B_t,\bar{W}_t,\bar{B}_t$ independent 1-D Brownian motions.

It is easy then to see that the unique distributional solution of the adjoint equation
\begin{align}
L^2_{x,\bar{x},\mu}\bar{\pi}(\cdot;x,\bar{x},\mu) &=0,\qquad
\int_{\R^2}\bar{\pi}(dy,d\bar{y};x,\bar{x},\mu)=1,\forall x,\bar{x}\in\R,\mu\in\mc{P}(\R) \nonumber
\end{align}
 is given by
\begin{align}\label{eq:doublefrozeninvariantmeasure}
\bar{\pi}(dy,d\bar{y};x,\bar{x},\mu) = \pi(dy;x,\mu)\otimes\pi(d\bar{y};\bar{x},\mu)
\end{align}
where $\pi$ is as in Equation \eqref{eq:invariantmeasureold}. We now consider $\chi(x,\bar{x},y,\bar{y},\mu):\R\times\R\times\R\times\R\times\mc{P}(\R)\tto \R$ solving
\begin{align}\label{eq:doublecorrectorproblem}
L^2_{x,\bar{x},\mu}\chi(x,\bar{x},y,\bar{y},\mu) &= -b(x,y,\mu)\partial_\mu \Phi(\bar{x},\bar{y},\mu)[x]\\{}
\int_{\R}\int_{\R}\chi(x,\bar{x},y,\bar{y},\mu)\pi(dy;x,\mu)\pi(d\bar{y},\bar{x},\mu)&=0.\nonumber
\end{align}

Note that by the centering condition, Equation \eqref{eq:centeringconditionold}, the right hand side of Equation \eqref{eq:doublecorrectorproblem} integrates against $\bar{\pi}$ from Equation \eqref{eq:doublefrozeninvariantmeasure} to $0$ for all $x,\bar{x},\mu$. Also, the second order coefficient in $L^2_{x,\bar{x},\mu}$ is uniformly elliptic by virtue of Assumption \ref{assumption:uniformellipticity}, and by virtue of Equation \eqref{eq:fdecayimplication}, there is $R_{f_2}>0$ and $\Gamma_2>0$ such that
\begin{align*}
\sup_{x,\bar{x},\mu}(f(x,y,\mu)y+f(\bar{x},\bar{y},\mu)\bar{y})\leq -\Gamma_2 (|y|^2+|\bar{y}|^2),\forall y,\bar{y} \text{ such that } \sqrt{y^2+\bar{y}^2}>R_{f_2}.
\end{align*}

Thus  indeed we have a unique solution to \eqref{eq:doublecorrectorproblem} by Theorem 1 in \cite{PV1} (which is a classical solution by assumption).
Applying It\^o's formula to $\chi^{N}(\tilde{X}^{j,\epsilon,N}_t,\tilde{X}^{i,\epsilon,N}_t,\tilde{Y}^{j,\epsilon,N}_t,\tilde{X}^{i,\epsilon,N}_t,(\tilde{X}^{1,\epsilon,N}_t,...,\tilde{X}^{N,\epsilon,N}_t))\psi(t,\tilde{X}^{i,\epsilon,N}_t)$, where $\chi^{N}:\R\times\R\times\R\times\R\times\R^N\tto \R$ is the empirical projection of $\chi$ and using Proposition \ref{prop:empprojderivatives}, we get
\begin{align*}
\int_0^t \frac{1}{N}\sum_{j=1}^N b(j)\partial_{\mu}\Phi(i)[j]\psi(s,\tilde{X}^{i,\epsilon,N}_s)ds = \frac{1}{N}\sum_{j=1}^N \sum_{k=1}^{13}\bar{B}^{i,j,\epsilon,N}_{k}(t)
\end{align*}
where  
\begin{align*}
\bar{B}^{i,j,\epsilon,N}_1(t)& = \epsilon^2 [\chi(\tilde{X}^{j,\epsilon,N}_0,\tilde{X}^{i,\epsilon,N}_0,\tilde{Y}^{j,\epsilon,N}_0,\tilde{Y}^{i,\epsilon,N}_0,\tilde{\mu}^{\epsilon,N}_0)\psi(0,\tilde{X}^{i,\epsilon,N}_0)-\chi(\tilde{X}^{j,\epsilon,N}_t,\tilde{X}^{i,\epsilon,N}_t,\tilde{Y}^{j,\epsilon,N}_t,\tilde{Y}^{i,\epsilon,N}_t,\tilde{\mu}^{\epsilon,N}_t)\psi(t,\tilde{X}^{i,\epsilon,N}_t)]\\
\bar{B}^{i,j,\epsilon,N}_2(t)& =\epsilon\int_0^t \biggl(b(j)\chi_x(i,j)\psi(s,i)+b(i)\biggl[\chi_{\bar{x}}(i,j)\psi(s,i)+\chi(i,j)\psi_{\bar{x}}(s,i) \biggr]+g(j)\chi_y(i,j)\psi(s,i)+g(i)\chi_{\bar{y}}(i,j)\psi(s,i)\\
&\qquad+\sigma(j)\tau_1(j)\chi_{xy}(i,j)\psi(s,i)+\sigma(i)\tau_1(i)\biggl[\chi_{\bar{x}\bar{y}}(i,j)\psi(s,i)+\chi_{\bar{y}}(i,j)\psi_{\bar{x}}(s,i)\biggr]\biggr)ds\\
\bar{B}^{i,j,\epsilon,N}_3(t)& = \epsilon\int_0^t \frac{1}{N}\sum_{k=1}^N b(k) \partial_\mu \chi(i,j)[k]\psi(s,i)ds\\
\bar{B}^{i,j,\epsilon,N}_4(t)& =\frac{\epsilon}{N}\int_0^t\biggl(\sigma(j)\tau_1(j)\partial_\mu\chi_{y}(i,j)[j]\psi(s,i)+\sigma(i)\tau_1(i)\partial_\mu\chi^{2}_{\bar{y}}(i,j)[i]\psi(s,i)\biggr)ds\\
\bar{B}^{i,j,\epsilon,N}_5(t)& = \epsilon^2\int_0^t \biggl(\chi(i,j)\dot{\psi}(s,i)+c(j)\chi_x(i,j)\psi(s,i)+c(i)\biggl[\chi_{\bar{x}}(i,j)\psi(s,i)+\chi(i,j)\psi_{\bar{x}}(s,i) \biggr]\\
&\quad+ \frac{1}{N}\sum_{k=1}^N \biggl\lbrace c(k)\partial_\mu\chi(i,j)[k]\biggr\rbrace\psi(s,i)+\frac{1}{2}\sigma^2(j)\chi_{xx}(i,j)\psi(s,i)\\
&\quad+\frac{1}{2}\sigma^2(i)\biggl[\chi_{\bar{x}\bar{x}}(i,j)\psi(s,i)+2\chi_{\bar{x}}(i,j)\psi_{\bar{x}}(s,i)+\chi(i,j)\psi_{\bar{x}\bar{x}}(s,i)\biggr]\\
&\quad+\frac{1}{2}\frac{1}{N}\sum_{k=1}^N\biggl\lbrace \sigma^2(k)\biggl[\partial_z\partial_\mu\chi(i,j)[k]+\frac{1}{N}\partial^2_\mu\chi^{2}(i,j)[k,k]\biggr]\biggr\rbrace\psi(s,i)+\frac{1}{N}\sigma^2(j)\partial_\mu\chi_{x}(i,j)[j]\psi(s,i)\\
&\quad+\frac{1}{N}\sigma^2(i)\biggl[\partial_\mu\chi_{\bar{x}}(i,j)[i]\psi(s,i)+\partial_\mu\chi(i,j)[i]\psi_{\bar{x}}(s,i)\biggr]\biggr)ds\\
\bar{B}^{i,j,\epsilon,N}_6(t)& = \epsilon\int_0^t \tau_1(j)\chi_y(i,j)\psi(s,i)dW_s^j+\epsilon\int_0^t\tau_2(j)\chi_y(i,j)\psi(s,i)dB_s^j+\epsilon\int_0^t\tau_1(i)\chi_{\bar{y}}(i,j)\psi(s,i)dW_s^i\\
&\quad+\epsilon\int_0^t\tau_2(i)\chi_{\bar{y}}(i,j)\psi(s,i)dB_s^i\\
\bar{B}^{i,j,\epsilon,N}_7(t)& = \epsilon^2\int_0^t \sigma(j)\chi_x(i,j)\psi(s,i)dW_s^j+\epsilon^2\int_0^t \sigma(i)\biggl[\chi_{\bar{x}}(i,j)\psi(s,i)+\chi(i,j)\psi_{\bar{x}}(s,i)\biggr]dW_s^i\\
&\quad+ \frac{\epsilon^2}{N}\sum_{k=1}^N \biggl\lbrace \int_0^t \sigma(k)\partial_\mu\chi(i,j)[k]\psi(s,i) dW_s^k \biggr\rbrace\\
\bar{B}^{i,j,\epsilon,N}_8(t)& = \epsilon^2\int_0^t\left(\frac{\sigma(j)\tilde{u}^{N,1}_j(s)}{\sqrt{N}a(N)} \chi_x(i,j)\psi(s,i)+\frac{\sigma(i)\tilde{u}^{N,1}_i(s)}{\sqrt{N}a(N)} \biggl[\chi_{\bar{x}}(i,j)\psi(s,i)+\chi(i,j)\psi_{\bar{x}}(s,i) \biggr]\right)ds\\
\bar{B}^{i,j,\epsilon,N}_{9}(t)& =\epsilon^2\int_0^t\frac{1}{N}\sum_{k=1}^N \biggl\lbrace\frac{\sigma(k)\tilde{u}^{N,1}_k(s)}{\sqrt{N}a(N)}\partial_\mu\chi^{2}(i,j)[k]\biggr\rbrace\psi(s,i)ds\\
\bar{B}^{i,j,\epsilon,N}_{10}(t)& =\epsilon\int_0^t\left(\biggl[\frac{\tau_1(j)\tilde{u}^{N,1}_j(s)}{\sqrt{N}a(N)}+\frac{\tau_2(j)\tilde{u}^{N,2}_j(s)}{\sqrt{N}a(N)}\biggr]\chi_y(i,j)\psi(s,i)+ \biggl[\frac{\tau_1(i)\tilde{u}^{N,1}_i(s)}{\sqrt{N}a(N)}+\frac{\tau_2(i)\tilde{u}^{N,2}_i(s)}{\sqrt{N}a(N)}\biggr]\chi_{\bar{y}}(i,j)\psi(s,i)\right) ds\\
\bar{B}^{i,j,\epsilon,N}_{11}(t)& = \1_{i=j}\epsilon^2\int_0^t \sigma(i)\sigma(j)\biggl[\chi_{x\bar{x}}(i,j)\psi(s,i)+\chi_x(i,j)\psi_{\bar{x}}(s,i)\biggr]ds\\
\bar{B}^{i,j,\epsilon,N}_{12}(t)& =\1_{i=j}\epsilon\int_0^t\left(\sigma(j)\tau_1(i)\chi_{x\bar{y}}(i,j)\psi(s,i)+\sigma(i)\tau_1(j)\biggl[\chi_{\bar{x}y}(i,j)\psi(s,i)+\chi_y(i,j)\psi_{\bar{x}}(s,i)\biggr]\right)ds \\
\bar{B}^{i,j,\epsilon,N}_{13}(t)& = \1_{i=j}\int_0^t\biggl[\tau_1(i)\tau_1(j)+\tau_2(i)\tau_2(j)\biggr]\chi_{y\bar{y}}(i,j)\psi(s,i)ds.
\end{align*}

Here we have introduced the notation $\chi(i,j)$ to denote $\chi(\tilde{X}^{j,\epsilon,N}_s,\tilde{X}^{i,\epsilon,N}_s,\tilde{Y}^{j,\epsilon,N}_s,\tilde{Y}^{i,\epsilon,N}_s,\tilde{\mu}^{\epsilon,N}_s)$, $\partial_\mu\chi(i,j)[k]$ to denote $\partial_\mu\chi(\tilde{X}^{j,\epsilon,N}_s,\tilde{X}^{i,\epsilon,N}_s,\tilde{Y}^{j,\epsilon,N}_s,\tilde{Y}^{i,\epsilon,N}_s,\tilde{\mu}^{\epsilon,N}_s)[\tilde{X}^{k,\epsilon,N}_s]$, and similarly for $\partial_\mu\chi(i,j)[k,k]$. We also use $\psi(s,i)$ to denote $\psi(s,\tilde{X}^{i,\epsilon,N}_s)$.

Using that $\sigma,\tau_1,\tau_2,$ and $g$ are bounded and \ref{assumption:qF2bound} on the growth of $\chi$ and its derivatives, the proof that
\begin{align*}
\frac{a(N)}{\sqrt{N}}\sum_{i=1}^N\E\biggl[\sup_{t\in[0,T]}\biggl|\frac{1}{N}\sum_{j=1}^N \sum_{k=1}^{12}\bar{B}^{i,j,\epsilon,N}_{k}(t)\biggr|^2\biggr]&\leq C\epsilon^2 a(N)\sqrt{N}(1+T+T^2)\norm{\psi}^2_{C_b^{1,2}}
\end{align*}
follows essentially in the same way as Proposition $\ref{prop:fluctuationestimateparticles1}$. For example, for $\bar{B}_2$, we can use the assumed linear growth in $y$ of $b$ and boundedness of $g$ and $\sigma$ from \ref{assumption:gsigmabounded}, boundedness of $\tau_1$ from \ref{assumption:uniformellipticity}, and boundedness of $\chi,\chi_x,\chi_{\bar{x}},\chi_y,\chi_{\bar{y}}$ and polynomial growth in $y$ of $\chi_{xy}$ and $\chi_{\bar{x}\bar{y}}$ to get:
\begin{align*}
\frac{a(N)}{\sqrt{N}}\sum_{i=1}^N\E\biggl[\sup_{t\in[0,T]}\biggl|\frac{1}{N}\sum_{j=1}^N \bar{B}^{i,j,\epsilon,N}_{2}(t)\biggr|^2\biggr]&\leq C\epsilon^2 a(N)\sqrt{N}T^2\biggl(1+ \frac{1}{N}\sum_{i=1}^N \sup_{t\in [0,T]}\E\biggl[|\tilde{Y}^{i,\epsilon,N}_t|^{2}+|\tilde{Y}^{i,\epsilon,N}_t|^{2q_{\chi_y}(0,1,0)} \biggr]\biggr)\\
&\hspace{7.5cm}\times(\norm{\psi}^2_\infty+\norm{\psi_x}^2_\infty)\\
&\leq C\epsilon^2 a(N)\sqrt{N}T^2\biggl(1+ \frac{1}{N}\sum_{i=1}^N \sup_{t\in [0,T]}\E\biggl[|\tilde{Y}^{i,\epsilon,N}_t|^{2}\biggr]\biggr)(\norm{\psi}^2_\infty+\norm{\psi_x}^2_\infty)\\
&\leq C\epsilon^2 a(N)\sqrt{N}T^2(\norm{\psi}^2_\infty+\norm{\psi_x}^2_\infty),
\end{align*}
where in the last step we used Lemma \ref{lemma:tildeYuniformbound}.

The other bounds follow similarly. We omit the details for brevity. To handle the last term, we see by boundedness of $\tau_1,\tau_2$ from \ref{assumption:uniformellipticity} and linear growth of $\chi_{y\bar{y}}$ from \ref{assumption:qF2bound}:
\begin{align*}
\frac{a(N)}{\sqrt{N}}\sum_{i=1}^N\E\biggl[\sup_{t\in[0,T]}\biggl|\frac{1}{N}\sum_{j=1}^N \bar{B}^{i,j,\epsilon,N}_{13}\biggr|^2\biggr] & = \frac{a(N)}{\sqrt{N}}\frac{1}{N^2}\sum_{i=1}^N\E\biggl[\sup_{t\in[0,T]}\biggl|\int_0^t \biggl[\tau_1^2(i)+\tau_2^2(i)\biggr]\chi_{y\bar{y}}(i,i)\psi(s,i)ds\biggr|^2  \biggr]\\
&\leq \frac{a(N)}{\sqrt{N}}\frac{1}{N^2} CT\sum_{i=1}^N \E\biggl[\int_0^T |\chi_{y\bar{y}}(i,i)|^2ds \biggr]\norm{\psi_\infty}^2\\
&\leq \frac{a(N)}{N^{3/2}}CT^2(1+\frac{1}{N}\sum_{i=1}^N\sup_{t\in[0,T]}\E\biggl[|\tilde{Y}^{i,\epsilon,N}_t|^{2}\biggr])\norm{\psi_\infty}^2\\
&\leq \frac{a(N)}{N^{3/2}}CT^2\norm{\psi_\infty}^2 \textrm{ (by Lemma \ref{lemma:tildeYuniformbound}).}
\end{align*}
\end{proof}

\begin{proposition}\label{prop:llntypefluctuationestimate1}
Assume \ref{assumption:uniformellipticity} - \ref{assumption:gsigmabounded}. Let $F$ be any function such that $\Xi$ satisfies assumption \ref{assumption:forcorrectorproblem}. Then for $\bar{F}(x,\mu)\coloneqq \int_\R F(x,y,\mu)\pi(dy;x,\mu)$, with $\pi$ as in Equation \eqref{eq:invariantmeasureold} and $\psi\in C^{1,2}_b([0,T]\times\R)$
\begin{align*}
&\frac{a(N)}{\sqrt{N}}\sum_{i=1}^N\E\biggl[\sup_{t\in[0,T]}\biggl|\int_0^t \biggl(F(\tilde{X}^{i,\epsilon,N}_s,\tilde{Y}^{i,\epsilon,N}_s,\tilde{\mu}^{\epsilon,N}_s)-\bar{F}(\tilde{X}^{i,\epsilon,N}_s,\tilde{\mu}^{\epsilon,N}_s)\biggr)\psi(s,\tilde{X}^{i,\epsilon,N}_s)dt\biggr|\biggr]\leq \nonumber\\
&\hspace{10cm}\leq C\epsilon a(N)\sqrt{N}(1+T+T^{1/2})\norm{\psi}_{C_b^{1,2}}.
\end{align*}
\end{proposition}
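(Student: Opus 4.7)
The plan is to run the standard corrector (auxiliary Poisson equation) argument, exactly parallel to the proof of Proposition \ref{prop:fluctuationestimateparticles1} but with the corrector provided by Assumption \ref{assumption:forcorrectorproblem} in place of the cell problem \eqref{eq:cellproblemold}. Concretely, that assumption supplies a time-dependent classical solution $\Xi(t,x,y,\mu)$ to $L_{x,\mu}\Xi(t,x,y,\mu)=-(F(t,x,y,\mu)-\bar F(t,x,\mu))$ with $\int_\R\Xi\,\pi(dy;x,\mu)=0$, having controlled polynomial growth of $\Xi$, $\Xi_y$ and all mixed $x,\mu$-derivatives indexed by $\tilde{\bm{\zeta}}$. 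Because the integrand $(F-\bar F)$ carries no $1/\epsilon$ factor (in contrast to $\frac{1}{\epsilon}b$ in Proposition \ref{prop:fluctuationestimateparticles1}), one extracts $\int_0^t(F-\bar F)\psi\,ds$ by applying It\^o's formula not to $\epsilon\Xi$ but to $\epsilon^2\Xi$; this is the reason the remainder rate is $O(\epsilon)$ rather than $O(\epsilon^2)$.

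First, letting $\Xi^N$ denote the empirical projection of $\Xi$, I would apply It\^o's formula (using the measure-derivative identities of Proposition \ref{prop:empprojderivatives}) to
\begin{equation*}
\epsilon^2\,\Xi^N\bigl(s,\tilde X^{i,\epsilon,N}_s,\tilde Y^{i,\epsilon,N}_s,(\tilde X^{j,\epsilon,N}_s)_{j=1}^N\bigr)\,\psi(s,\tilde X^{i,\epsilon,N}_s).
\end{equation*}
Since $\psi$ depends on $(s,x)$ only, the fast-direction piece of the generator acts as $\frac{1}{\epsilon^2}L_{x,\mu}\Xi\cdot\psi=-\frac{1}{\epsilon^2}(F-\bar F)\psi$, so multiplying through by $\epsilon^2$ and rearranging isolates $\int_0^t(F-\bar F)(i)\,\psi(s,\tilde X^{i,\epsilon,N}_s)\,ds$ as the sum of: (i) an $\epsilon^2$ boundary term; (ii) an $\epsilon^2\dot\Xi$ integral; (iii) order-$\epsilon$ drift remainders from the cross terms $\frac{1}{\epsilon}[b\Xi_x+g\Xi_y+\sigma\tau_1\Xi_{xy}]$ and from the fast-direction control insertions $\tfrac{1}{\epsilon}[\tau_1\tilde u^{N,1}_i+\tau_2\tilde u^{N,2}_i]\Xi_y/(a(N)\sqrt{N})$; (iv) order-$\epsilon^2$ drift remainders from $c\Xi_x$, $\tfrac{1}{2}\sigma^2\Xi_{xx}$, the slow-direction control $\sigma\tilde u^{N,1}_i\Xi_x/(a(N)\sqrt{N})$, and all empirical-measure contributions involving $\partial_\mu\Xi$, $\partial_z\partial_\mu\Xi$, $\partial_\mu^2\Xi$; and (v) It\^o martingale increments scaled by $\epsilon$ (fast noise) and $\epsilon^2$ (slow noise).

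Next I would bound each piece after applying $\frac{a(N)}{\sqrt{N}}\sum_{i=1}^N\E\sup_{t\in[0,T]}|\cdot|$. Because the claim is $L^1$ in the sup (not $L^2$ as in Proposition \ref{prop:fluctuationestimateparticles1}), only first moments of the integrands are needed, which --- under linear growth of $b,c$ in $y$, boundedness of $\sigma,\tau_1,\tau_2,g$, and the growth bounds $\tilde q_\Xi(\cdot)\leq 2$ enhanced to $\tilde q_\Xi(\cdot)\leq 1$ on $\tilde{\bm{\zeta}}_1$ together with $\tilde q_{\Xi_y}(0,0,0)\leq 1$ --- reduces to the uniform second-moment bound $\sup_i\sup_{s\in[0,T]}\E[|\tilde Y^{i,\epsilon,N}_s|^2]\leq C$ from Lemma \ref{lemma:tildeYuniformbound}. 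The boundary term gives $C\epsilon^2 a(N)\sqrt{N}\norm{\psi}_\infty$; the order-$\epsilon$ drift remainders give $C\epsilon a(N)\sqrt{N}\,T\norm{\psi}_{C^{1,2}_b}$; the martingale pieces contribute $C\epsilon a(N)\sqrt{N}\,T^{1/2}\norm{\psi}_\infty$ via Burkholder--Davis--Gundy; and the $\partial_\mu\Xi$-type terms are absorbed as in $\tilde B_2,\tilde B_4,\tilde B_5$ of Proposition \ref{prop:fluctuationestimateparticles1} using the $L^2$ Lions-norm bound \eqref{eq:LionsClassL2Boundedness}.

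The main obstacle is the pair of control-driven remainders containing $\Xi_y$ and $\partial_\mu\Xi$: the prefactor $1/(a(N)\sqrt{N})$ in front of $\tilde u^{N,k}_i$ is not enough on its own, so one must apply Cauchy--Schwarz against the global $L^2$ control budget \eqref{eq:controlassumptions0} while simultaneously controlling the second moment of $\tau_k\Xi_y$ (respectively the $L^2(\tilde\mu^{\epsilon,N})$ Lions norm of $\partial_\mu\Xi$) using only the available quadratic moments of $\tilde Y^{i,\epsilon,N}$; the enhanced growth $\tilde q_{\Xi_y}(0,0,0)\leq 1$ and $\tilde q_\Xi(n,l,\bm{\beta})\leq 1$ on $\tilde{\bm{\zeta}}_1$ in Assumption \ref{assumption:forcorrectorproblem} are precisely what make this step consistent. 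Once the control terms are handled in this way, the remaining estimates are a direct $L^1$-analogue of the bookkeeping already carried out in Proposition \ref{prop:fluctuationestimateparticles1}.
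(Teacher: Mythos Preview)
Your approach is the same as the paper's: apply It\^o's formula to $\epsilon^2\,\Xi^N\psi$, list the resulting remainder terms (the paper calls them $C_1,\dots,C_{10}$), and bound each in $L^1$ of the running supremum. Two points need correction.

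First, and most importantly, the boundary term does \emph{not} give $C\epsilon^2 a(N)\sqrt{N}\|\psi\|_\infty$. Unlike $\Phi$ in Proposition~\ref{prop:fluctuationestimateparticles1}, which is bounded via $\tilde q_\Phi(0,0,0)\leq 0$, here Assumption~\ref{assumption:forcorrectorproblem} only gives $\tilde q_\Xi(0,0,0)\leq 1$, i.e.\ linear growth of $\Xi$ in $y$. Since the supremum over $t$ is \emph{inside} the expectation, you need $\frac{1}{N}\sum_i\E\bigl[\sup_{t\in[0,T]}|\tilde Y^{i,\epsilon,N}_t|\bigr]$, not $\sup_t\E[|\tilde Y^{i,\epsilon,N}_t|]$. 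Lemma~\ref{lemma:tildeYuniformbound} does not control this; one must invoke Lemma~\ref{lemma:ytildeexpofsup}, which yields a bound of order $\epsilon^{-\rho}$ for any $\rho\in(0,2)$. The boundary contribution is therefore $C(\rho)\,\epsilon^{2-\rho}a(N)\sqrt{N}\|\psi\|_\infty$, and one takes $\rho\in(0,1]$ to stay within the claimed $O(\epsilon)$ rate. The paper isolates exactly this step as the essential difference from Proposition~\ref{prop:fluctuationestimateparticles1}.

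Second, your classification of the empirical-measure pieces is slightly off: the term $\frac{1}{N}\sum_{j} b(j)\,\partial_\mu\Xi(i)[j]$ (the analogue of the ``purple term'') carries a prefactor $\epsilon$, not $\epsilon^2$, because $b$ enters the drift of every $\tilde X^{j,\epsilon,N}$ with a $1/\epsilon$. Here, however, it can be bounded directly at order $\epsilon$ (no doubled corrector needed), using Cauchy--Schwarz in the manner of the $c(j)\partial_\mu\Phi(i)[j]$ part of $\tilde B_5$ together with the strengthened growth $\tilde q_\Xi(1,0,0)\leq 1$ from Assumption~\ref{assumption:forcorrectorproblem}.
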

\begin{proof}
By Lemma \ref{lemma:Ganguly1DCellProblemResult}, we can consider $\Xi:\R\times\R\times\mc{P}(\R)\tto\R$ the unique classical solution to
\begin{align}\label{eq:driftcorrectorproblem}
L_{x,\mu}\Xi(x,y,\mu) &= -[F(x,y,\mu)-\int_{\R}F(x,y,\mu)\pi(dy;x,\mu)],\quad
\int_{\R}\Xi(x,y,\mu)\pi(dy;x,\mu)=0.
\end{align}
($\Xi$ and $F$ may also depend on $t\in [0,T]$, but we suppress this in the notation here). Applying It\^o's formula to $\Xi^N(\tilde{X}^{i,\epsilon,N}_t,\tilde{Y}^{i,\epsilon,N}_t,(\tilde{X}^{1,\epsilon,N}_t,...,\tilde{X}^{N,\epsilon,N}_t))\psi(t,\tilde{X}^{i,\epsilon,N}_t)$, where again $\Xi^N:\R\times\R\times\R^N\tto \R$ is the empirical projection of $\Xi$ and using Proposition \ref{prop:empprojderivatives}, we get:
\begin{align*}
&\int_0^t \biggl(F(\tilde{X}^{i,\epsilon,N}_s,\tilde{Y}^{i,\epsilon,N}_s,\tilde{\mu}^{\epsilon,N})-\bar{F}(\tilde{X}^{i,\epsilon,N}_s,\tilde{\mu}^{\epsilon,N}_s)\biggr)\psi(s,\tilde{X}^{i,\epsilon,N}_s)dt  = \sum_{k=1}^{10}C^{i,\epsilon,N}_k(t)\\
C^{i,\epsilon,N}_1(t)& = \epsilon^2 [\Xi(\tilde{X}^{i,\epsilon,N}_0,\tilde{Y}^{i,\epsilon,N}_0,\tilde{\mu}^{\epsilon,N}_0)\psi(0,\tilde{X}^{i,\epsilon,N}_0)-\Xi(\tilde{X}^{i,\epsilon,N}_t,\tilde{Y}^{i,\epsilon,N}_t,\tilde{\mu}^{\epsilon,N}_t)\psi(t,\tilde{X}^{i,\epsilon,N}_t)]\nonumber\\
C^{i,\epsilon,N}_2(t)& = \epsilon\int_0^t \left( b(i)[\Xi_x(i)\psi(s,i)+\Xi(i)\psi_x(s,i)]+g(i)\Xi_y(i)\psi(s,i)+\sigma(i)\tau_1(i)[\Xi_{xy}(i)\psi(s,i) +\Xi_y(i)\psi_x(s,i)] \right)ds\nonumber\\
C^{i,\epsilon,N}_3(t)& = \epsilon\int_0^t \frac{1}{N}\sum_{j=1}^N b(j)\partial_{\mu}\Xi(i)[j]\psi(s,i)ds \nonumber\\
C^{i,\epsilon,N}_4(t)& = \frac{\epsilon}{N}\int_0^t \sigma(i)\tau_1(i)\partial_\mu \Xi_y (i)[i]\psi(s,i)ds\nonumber\\
C^{i,\epsilon,N}_5(t)& =\epsilon^2 \int_0^t \biggl(\Xi(i)\dot{\psi}(s,i)+c(i)[\Xi_x(i)\psi(s,i)+\Xi(i)\psi_x(s,i)]\\
&+\frac{\sigma^2(i)}{2}[\Xi_{xx}(i)\psi(s,i)+2\Xi_x(i)\psi_x(s,i)+\Xi(i)\psi_{xx}(s,i)+\frac{2}{N}\partial_\mu\Xi(i)[i]\psi_x(s,i)+\frac{2}{N}\partial_\mu\Xi_x(i)[i]\psi(s,i)] \nonumber\\
&+ \frac{1}{N}\sum_{j=1}^N \biggl\lbrace c(j)\partial_\mu \Xi(i)[j]\psi(s,i)+\frac{1}{2}\sigma^2(j)[\frac{1}{N}\partial^2_\mu \Xi(i)[j,j] +\partial_z\partial_\mu \Xi(i)[j]]\psi(s,i)  \biggr\rbrace \biggr)ds \nonumber\\
C^{i,\epsilon,N}_6(t)& = \epsilon\int_0^t \tau_1(i)\Xi_y(i)\psi(s,i)dW^i_s + \epsilon\int_0^t \tau_2(i)\Xi_y(i)\psi(s,i)dB^i_s\nonumber\\
C^{i,\epsilon,N}_7(t)& = \epsilon^2 \biggl[ \int_0^t \sigma(i)[\Xi_x(i)\psi(s,i)+\Xi(i)\psi_x(s,i)]dW^i_t + \frac{1}{N}\sum_{j=1}^N\biggl\lbrace \int_0^t \sigma(j)\partial_\mu \Xi(i)[j]\psi(s,i)dW^j_s \biggr\rbrace \biggr] \nonumber\\
C^{i,\epsilon,N}_8(t)& = \epsilon^2 \int_0^t \frac{\sigma(i)}{a(N)\sqrt{N}}\tilde{u}^{N,1}_i(s)[\Xi_x(i)\psi(s,i)+\Xi(i)\psi_x(s,i)]ds \nonumber\\
C^{i,\epsilon,N}_{9}(t)& = \epsilon^2 \int_0^t \frac{1}{N} \biggl\lbrace \sum_{j=1}^N \frac{\sigma(j)}{a(N)\sqrt{N}}\tilde{u}^{N,1}_j(s) \partial_\mu \Xi(i)[j]\psi(s,i)\biggr\rbrace ds \nonumber\\
C^{i,\epsilon,N}_{10}(t)& = \epsilon\int_0^t [\frac{\tau_1(i)}{a(N)\sqrt{N}}\tilde{u}^{N,1}_i(s)+\frac{\tau_2(i)}{a(N)\sqrt{N}}\tilde{u}^{N,2}_i(s)]\Xi_y(i)\psi(s,i)ds.
\end{align*}
Then using that $\sigma,\tau_1,\tau_2,$ and $g$ are bounded and that $b,c$ grow at most linearly in $y$ uniformly in $x,\mu$, and the assumptions on the growth of $\Xi$ and its derivatives from \ref{assumption:forcorrectorproblem}, the proof follows in essentially the same way as Propositions \ref{prop:fluctuationestimateparticles1} and \ref{prop:purpleterm1}.

Since $\Xi$ is not necessarily bounded under Assumption \ref{assumption:forcorrectorproblem} ($\tilde{q}_{\Xi}(n,l,\bm{\beta})\leq 1,\forall (n,l,\bm{\beta})\in \tilde{\bm{\zeta}}_1$ allows $\Xi$ to grow linearly in $y$), we need to handle the first term in the following way:
\begin{align*}
&\frac{a(N)}{\sqrt{N}}\sum_{i=1}^N \epsilon^2 \E\biggl[\sup_{t\in[0,T]}\biggl|\Xi(\tilde{X}^{i,\epsilon,N}_0,\tilde{Y}^{i,\epsilon,N}_0,\tilde{\mu}^{\epsilon,N}_0)\psi(0,\tilde{X}^{i,\epsilon,N}_0)-\Xi(\tilde{X}^{i,\epsilon,N}_t,\tilde{Y}^{i,\epsilon,N}_t,\tilde{\mu}^{\epsilon,N}_t)\psi(t,\tilde{X}^{i,\epsilon,N}_t) \biggr|\biggr]\leq\\
&\leq C\epsilon^2a(N)\sqrt{N}\biggl[1+ \frac{1}{N}\sum_{i=1}^N \E\biggl[\sup_{t\in[0,T]}|\tilde{Y}^{i,\epsilon,N}_t|\biggr]\biggr]\norm{\psi}_\infty\leq C(\rho)\epsilon^2a(N)\sqrt{N}\biggl[1+ \epsilon^{-\rho}\biggr]\norm{\psi}_\infty
\end{align*}
for any $\rho\in (0,2)$ by Lemma \ref{lemma:ytildeexpofsup}. Taking any $\rho\in (0,1]$, the desired bound holds.

The only other  terms that are handled differently in a way that matters are $C^{i,\epsilon,N}_4(t)$, which corresponds to $\tilde{B}^{i,\epsilon,N}_2(t)$, where the difference of having a $\epsilon$ in front means that it is bounded by $\epsilon a(N)\leq \epsilon a(N)\sqrt{N}$, hence there being no need to include $a(N)/\sqrt{N}$ in the definition of $C(N)$, and $C^{i,\epsilon,N}_2(t), C^{i,\epsilon,N}_3(t),C^{i,\epsilon,N}_6(t),$ and $C^{i,\epsilon,N}_{10}(t)$, which were $O(1)$ in Lemma \ref{prop:fluctuationestimateparticles1} and hence were not shown to vanish. $C_2$ is handled as $\tilde{B}_3$ was, $C_3$ in the same way that $\tilde{B}_5$ was, $C_6$ in the same way that $\tilde{B}_6$ was, and $C_{10}$ in the same way that $\tilde{B}_7$ was.

\end{proof}

\section{Tightness of the Controlled Pair}\label{sec:tightness}
In this section we throughout fix any controls satisfying the bound \eqref{eq:controlassumptions} and prove tightness of the pair $(\tilde{Z}^N,Q^N)$ from Equations \eqref{eq:controlledempmeasure} and \eqref{eq:occupationmeasures} under those controls. We will establish tightness in the appropriate spaces by proving tightness for each of the marginals.

As discussed in Section \ref{subsec:notationandtopology}, in order to prove tightness of the controlled fluctuation process $\tilde{Z}^N$ in $C([0,T];\mc{S}_{-m})$ for some $m\in\bb{N}$ sufficiently large (see Equation \eqref{eq:mdefinition}), we will use the theory of Mitoma from \cite{Mitoma}. In particular, we need to prove uniform $m$-continuity for sufficiently large $m$ in the family of Hilbert norms \eqref{eq:familyofhilbertnorms}, and tightness of $\langle Z^N,\phi\rangle$ as a $C([0,T];\R)$-valued random variables in order to apply Theorem 3.1 and Remark R1) in \cite{Mitoma}. For the former, by definition we need some uniform in time control over the $\mc{S}_{-m}$-norm of $\tilde{Z}^N$. By Markov's inequality, it suffices to show that $\sup_{\phi\in\mc{S}:\norm{\phi}_{m}=1}\E[\sup_{t\in[0,T]}|\langle\tilde{Z}^N_t,\phi\rangle |]\leq C$ (see, e.g., the proof of \cite{BW} Theorem 4.7). As mentioned in Remark \ref{remark:ontheiidsystem}, we will do so in Lemma \ref{lemma:Zboundbyphi4} via triangle inequality and establishing an $L^2$ rate of convergence of the controlled particle system \eqref{eq:controlledslowfast1-Dold} to the IID particle system \eqref{eq:IIDparticles}, and a rate of convergence of the IID particle system \eqref{eq:IIDparticles} to the averaged McKean-Vlasov SDE \eqref{eq:LLNlimitold}. The convergence of the controlled particle system \eqref{eq:controlledslowfast1-Dold} to the IID particle system \eqref{eq:IIDparticles} is the subject of Subsection \ref{subsec:couplingcontrolledtoiid} and the convergence of the IID slow-fast McKean-Vlasov SDEs \eqref{eq:IIDparticles} to the averaged McKean-Vlasov SDE \eqref{eq:LLNlimitold} is the subject of Subsection \ref{sec:averagingfullycoupledmckeanvlasov}.

A major difference between the coupling arguments in the references listed in Remark \ref{remark:ontheiidsystem} and ours is that the IID system in the listed references were all equal in distribution to the law of the system which they are considering fluctuations from. This is not the case for us, since, as is well-known, we do not expect in general to have $L^2$ convergence of fully-coupled slow-fast diffusions to their averaged limit (see \cite{Bensoussan} Remark 3.4.4 for an illustrative example). In other words, Lemma \ref{lemma:XbartildeXdifference} cannot hold with $\bar{X}^{i,\epsilon}$ replaced by IID copies of the averaged limiting McKean-Vlasov Equation $X_t$ from Equation \eqref{eq:LLNlimitold}. We are thus exploiting here the fact that the limits $\epsilon\downarrow 0$ and $N\toinf$ commute, as shown in \cite{BS}, and hence we can use an IID system of Slow-Fast McKean-Vlasov SDEs as our intermediate process for our proof of tightness. This commutativity of the limits will hold so long as sufficient conditions for the propagation of chaos and stochastic averaging respectively hold for the system of SDEs \eqref{eq:controlledslowfast1-Dold} and the invariant measure $\pi$ from Equation \eqref{eq:invariantmeasureold} is unique for all $x\in\R,\mu\in\mc{P}(\R)$. Recall that the latter is a consequence of assumptions \ref{assumption:uniformellipticity} and \ref{assumption:retractiontomean}.

Tightness of $Q^N$ is contained in Subsection \ref{SS:QNtightness}, and is essentially a consequence of moment bounds on the controlled particles \eqref{eq:controlledslowfast1-Dold}, which again follow from the results of Section \eqref{sec:ergodictheoremscontrolledsystem}.

\subsection{On the Rate of Averaging for Fully-Coupled Slow-Fast McKean-Vlasov Diffusions}\label{sec:averagingfullycoupledmckeanvlasov}

Here we recall a result which allows us to establish closeness of the slow-fast McKean-Vlasov SDEs \eqref{eq:IIDparticles} to the averaged McKean-Vlasov SDE \eqref{eq:LLNlimitold}. This result will be used in the Lemma \ref{lemma:Zboundbyphi4}, which is a key ingredient in the proof of tightness of $\br{\tilde{Z}^N}_{N\in\bb{N}}$. Therein, the first term being bounded is essentially due to the propagation of chaos holding for the controlled particle system \eqref{eq:controlledslowfast1-Dold}, as captured by Lemma \ref{lemma:XbartildeXdifference}. For the second term, the particles being IID means it is sufficient to gain control over convergence of $\sup_{\phi\in\mc{S}:\norm{\phi}_{m}=1}|\E[\phi(\bar{X}^{1,\epsilon}_t)- \phi(X_t) ] |$ as $\epsilon\downarrow 0$. There are very few results in the current literature in this direction. The existing averaging results for Slow-Fast McKean Vlasov SDEs can be found in \cite{RocknerMcKeanVlasov}, \cite{HLL}, \cite{KSS} and in \cite{BezemekSpiliopoulosAveraging2022}. In  \cite{RocknerMcKeanVlasov}, \cite{HLL}, \cite{KSS}, only systems where $L^2$ rates of averaging can be found are considered. Moreover, even for standard diffusion processes (which do not depend on their law), the only result for rates of convergence in distribution in the sense we desire for the fully-coupled setting is found in \cite{RocknerFullyCoupled} Theorem 2.3. The fully coupled case for McKean-Vlasov diffusions is addressed in \cite{BezemekSpiliopoulosAveraging2022}.

We mention here the main result from \cite{BezemekSpiliopoulosAveraging2022} that will be used in our case. In particular, we wish to establish a rate of convergence in distribution of
\begin{align}\label{eq:slow-fastMcKeanVlasov}
\bar{X}^{\epsilon}_t &= \eta^x+\int_0^t\biggl[\frac{1}{\epsilon}b(\bar{X}^{\epsilon}_s,\bar{Y}^{\epsilon}_s,\mc{L}(\bar{X}^{\epsilon}_s))+ c(\bar{X}^{\epsilon}_s,\bar{Y}^{\epsilon}_s,\mc{L}(\bar{X}^{\epsilon}_s)) \biggr]ds + \int_0^t\sigma(\bar{X}^{\epsilon}_s,\bar{Y}^{\epsilon}_s,\mc{L}(\bar{X}^{\epsilon}_s))dW_s\\
\bar{Y}^{\epsilon}_t & = \eta^y+\int_0^t\frac{1}{\epsilon}\biggl[\frac{1}{\epsilon}f(\bar{X}^{\epsilon}_s,\bar{Y}^{\epsilon}_s,\mc{L}(\bar{X}^{\epsilon}_s))+ g(\bar{X}^{\epsilon}_s,\bar{Y}^{\epsilon}_s,\mc{L}(\bar{X}^{\epsilon}_s)) \biggr]dt\nonumber \\
&+ \frac{1}{\epsilon}\biggl[\int_0^t\tau_1(\bar{X}^{\epsilon}_s,\bar{Y}^{\epsilon}_s,\mc{L}(\bar{X}^{\epsilon}_s))dW_s+\int_0^t\tau_2(\bar{X}^{\epsilon}_s,\bar{Y}^{\epsilon}_s,\mc{L}(\bar{X}^{\epsilon}_s))dB_s\biggr]\nonumber,
\end{align}
to the solution of Equation \eqref{eq:LLNlimitold}. Note that a solution to Equation \eqref{eq:slow-fastMcKeanVlasov} is equal in distribution to the IID particles from Equation \eqref{eq:IIDparticles}. 
The following moment bound holds.
\begin{lemma}\label{lemma:barYuniformbound}
Assume \ref{assumption:uniformellipticity}- \ref{assumption:retractiontomean}, \ref{assumption:strongexistence}, and \ref{assumption:gsigmabounded}. Then for any $p\in\bb{N}$: 
\begin{align*}
\sup_{\epsilon>0}\sup_{t\in [0,T]}\E\biggl[|\bar{Y}^{\epsilon}_t|^{2p}\biggr]\leq C(p,T)+|\eta^y|^{2p}.
\end{align*}
\end{lemma}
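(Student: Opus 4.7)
My plan is to apply It\^o's formula to $|\bar{Y}^{\epsilon}_t|^{2p}$ and exploit the dissipativity condition \eqref{eq:fdecayimplication} (which is a consequence of \ref{assumption:uniformellipticity} and \ref{assumption:retractiontomean}), together with the boundedness of $g$ granted by \ref{assumption:gsigmabounded}. The $1/\epsilon^2$ prefactor on $f$ and on $\tau_1^2+\tau_2^2$ in the fast dynamics is precisely what produces a dissipation rate of order $\beta/\epsilon^2$, strong enough to dominate the $1/\epsilon$ drift contribution from $g$ after a Young's inequality and give a bound uniform in $\epsilon>0$. Strong existence for the slow-fast McKean--Vlasov system holds by \ref{assumption:strongexistence}, so $\bar{Y}^\epsilon$ is a well-defined It\^o process and the stochastic integrals below are genuine martingales after the usual localization.

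Concretely, It\^o's formula gives
\begin{align*}
d|\bar{Y}^\epsilon_t|^{2p}
&= \frac{p|\bar Y^\epsilon_t|^{2p-2}}{\epsilon^2}\Bigl[2\,\bar Y^\epsilon_t f(\bar X^\epsilon_t,\bar Y^\epsilon_t,\mc L(\bar X^\epsilon_t))+(2p-1)\bigl(\tau_1^2+\tau_2^2\bigr)(\bar X^\epsilon_t,\bar Y^\epsilon_t,\mc L(\bar X^\epsilon_t))\Bigr]dt\\
&\quad+\frac{2p}{\epsilon}|\bar Y^\epsilon_t|^{2p-2}\bar Y^\epsilon_t\,g(\bar X^\epsilon_t,\bar Y^\epsilon_t,\mc L(\bar X^\epsilon_t))\,dt+dM_t,
\end{align*}
where $M_t$ is a local martingale. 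From \eqref{eq:fdecayimplication} and the trivial bound $\tau_1^2+\tau_2^2\le\lambda_+$, one obtains $2yf+(2p-1)(\tau_1^2+\tau_2^2)\le -\frac{\beta}{2}|y|^2+C_p$ uniformly in $(x,y,\mu)$: the constant $3$ in \eqref{eq:fdecayimplication} controls the exponent $(2p-1)$ for $p\le 2$, and for $p>2$ the extra term $(2p-4)(\tau_1^2+\tau_2^2)$ is absorbed in $C_p$ via $\lambda_+$. The $g$-term is handled by $\|g\|_\infty<\infty$ and a weighted Young's inequality $\frac{1}{\epsilon}|y|^{2p-1}\le\frac{\delta}{\epsilon^2}|y|^{2p}+C_\delta |y|^{2p-2}$, with $\delta$ chosen small enough that the $1/\epsilon^2$ coefficient of $|y|^{2p}$ is strictly less than $p\beta/2$, e.g. $\delta=p\beta/(16p\|g\|_\infty)$.

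Putting this together (and localizing to make the stochastic integral a true martingale before taking expectations, then removing the localization by Fatou), we arrive at
\begin{align*}
\frac{d}{dt}\E\bigl[|\bar Y^\epsilon_t|^{2p}\bigr]\le -\frac{p\beta}{4\epsilon^2}\,\E\bigl[|\bar Y^\epsilon_t|^{2p}\bigr]+\frac{\tilde C_p}{\epsilon^2}\,\E\bigl[|\bar Y^\epsilon_t|^{2p-2}\bigr]+\tilde C_p'.
\end{align*}
I proceed by induction on $p$. For $p=0$ the claim is trivial. Assuming the bound at level $p-1$, the quantity $\E[|\bar Y^\epsilon_t|^{2p-2}]$ is bounded by $C(p-1,T)+|\eta^y|^{2p-2}$ uniformly in $\epsilon$ and $t\in[0,T]$. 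Gronwall's inequality then yields
\begin{align*}
\E\bigl[|\bar Y^\epsilon_t|^{2p}\bigr]\le e^{-p\beta t/(4\epsilon^2)}|\eta^y|^{2p}+\frac{4\tilde C_p\bigl(C(p-1,T)+|\eta^y|^{2p-2}\bigr)}{p\beta}+\tilde C_p'\,T\le |\eta^y|^{2p}+C(p,T),
\end{align*}
uniformly in $\epsilon>0$ and $t\in[0,T]$, which is the claim. The only mildly delicate step is choosing the Young-inequality weight so that the $1/\epsilon^2$ dissipation strictly dominates the $1/\epsilon$ drift contribution; once that is arranged the rest is a routine Gronwall induction, and no regularity of the Poisson equations or of the measure dependence is needed at this stage.
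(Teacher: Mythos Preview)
Your argument is correct and is precisely the standard dissipativity/Gronwall approach the paper has in mind: the paper omits the proof entirely, referring to Lemma~4.1 of the companion paper \cite{BezemekSpiliopoulosAveraging2022}, and the $p=1$ controlled analogue in Lemma~\ref{lemma:tildeYuniformbound} follows the same template (It\^o's formula, use \eqref{eq:fdecayimplication}, Young's inequality to absorb the $1/\epsilon$ drift from $g$ into the $1/\epsilon^2$ dissipation, then Gronwall). One cosmetic point: in your displayed differential inequality the final ``$+\tilde C_p'$'' term actually carries an $\E[|\bar Y^\epsilon_t|^{2p-2}]$ factor as well (from the $C_\delta|y|^{2p-2}$ part of Young), but since you immediately bound $\E[|\bar Y^\epsilon_t|^{2p-2}]$ by the inductive hypothesis this makes no difference to the conclusion.
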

\begin{proof}
The proof of this lemma is omitted as it follows very closely the proof of Lemma 4.1 in \cite{BezemekSpiliopoulosAveraging2022}.
\end{proof}

Then, the main result of \cite{BezemekSpiliopoulosAveraging2022} that is relevant for our purposes is Theorem \ref{theo:mckeanvlasovaveraging}.
\begin{theo}[Corollary 3.2 of \cite{BezemekSpiliopoulosAveraging2022}]\label{theo:mckeanvlasovaveraging}
Assume that assumptions \ref{assumption:uniformellipticity} - \ref{assumption:forcorrectorproblem} as well as \ref{assumption:limitingcoefficientsLionsDerivatives}-\ref{assumption:tildechi} hold. Then for $\phi\in C_{b,L}^4(\R)$, there is a constant $C=C(T)$ that is independent of $\phi$ such that
\begin{align*}
\sup_{s\in [0,T]}\biggl|\E[\phi(\bar{X}^{\epsilon}_s)]-\E[\phi(X_s)]\biggr|&\leq \epsilon C(T)|\phi|_{4},
\end{align*}
where $\bar{X}^\epsilon$ is as in Equation \eqref{eq:slow-fastMcKeanVlasov}, $X$ is as in Equation \eqref{eq:LLNlimitold}, and $|\cdot|_4$ is as in Equation \eqref{eq:boundedderivativesseminorm}.
\end{theo}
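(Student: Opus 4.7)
The plan is to prove this by the standard PDE approach to weak rates of convergence for stochastic averaging, adapted to the McKean--Vlasov setting. First, given $\phi \in C_{b,L}^4(\R)$ and $t \in [0,T]$ fixed, I would set up a Cauchy problem on Wasserstein space: let $u(s,x,\mu)$ on $[0,t]\times\R\times\mc{P}_2(\R)$ solve
\begin{equation*}
\partial_s u(s,x,\mu) + \bar{L}_\mu u(s,\cdot,\mu)(x) + \int_\R \bar{L}_\mu^{(z)} \tfrac{\delta u}{\delta m}(s,x,\mu)[z]\, \mu(dz) = 0,\quad u(t,x,\mu) = \phi(x),
\end{equation*}
where $\bar{L}_\mu$ is the (decoupled) generator of the averaged McKean--Vlasov dynamics \eqref{eq:LLNlimitold} with coefficients $\bar{\gamma},\bar{D}$ from \eqref{eq:averagedlimitingcoefficients}. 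Under assumption \ref{assumption:limitingcoefficientsLionsDerivatives}, $u$ has enough regularity in $(x,\mu)$ (up to two Lions derivatives, with $C_{b,L}^4$ control in $x$ thanks to $\phi \in C_{b,L}^4$). By construction $\E[u(t, X_t, \mc{L}(X_t))] = \E[\phi(X_t)]$ and $\E[u(0,\eta^x,\delta_{\eta^x})] = \E[\phi(X_t)]$ as well; conversely, $\E[u(t, \bar X^\epsilon_t, \mc{L}(\bar X^\epsilon_t))] = \E[\phi(\bar X^\epsilon_t)]$, so the difference of expectations is obtained by applying the McKean--Vlasov It\^o formula to $u(s, \bar X^\epsilon_s, \mc{L}(\bar X^\epsilon_s))$ between $0$ and $t$ and subtracting the PDE.

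The next step is to compute the resulting integrand and reorganize it into a leading singular part of order $\epsilon^{-1}$ plus order $\mc{O}(1)$ remainders. Using that the full generator of $(\bar X^\epsilon, \bar Y^\epsilon)$ has the form $\epsilon^{-2} L_{x,\mu} + \epsilon^{-1} \mc{A}_1 + \mc{A}_2$ (where $L_{x,\mu}$ is the frozen fast generator \eqref{eq:frozengeneratormold}), and that $\int L_{x,\mu} u(s,x,\cdot,\mu)\, \pi(dy;x,\mu) = 0$, the $\epsilon^{-2}$ term vanishes trivially because $u$ does not depend on $y$. The genuine averaging error then comes from the $\epsilon^{-1}$ term. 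I would eliminate it by writing $b(x,y,\mu) = -L_{x,\mu}\Phi(x,y,\mu)$ (Equation \eqref{eq:cellproblemold}) and applying It\^o again, now to $\epsilon\,\Phi(\bar X^\epsilon_s, \bar Y^\epsilon_s, \mc{L}(\bar X^\epsilon_s)) u_x(s, \bar X^\epsilon_s, \mc{L}(\bar X^\epsilon_s))$. This cancels the $\epsilon^{-1}b\cdot u_x$ term at the cost of boundary terms of size $\epsilon$ and new $\mc{O}(1)$ remainders involving $\gamma, D,$ and the coefficients $\sigma\Phi_y,\tau_i\Phi_y$. These remainders must then be replaced by their averages against $\pi$ using the drift corrector $\Xi$ from assumption \ref{assumption:forcorrectorproblem} applied to $F \in \{\gamma, D, \sigma\psi_1 + [\tau_1\psi_1+\tau_2\psi_2]\Phi_y\}$ (with $\psi_i$ chosen appropriately from the derivatives of $u$), producing error terms of size $\epsilon$ by the same integration-by-parts argument.

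The main obstacle, and the reason one needs the doubled corrector $\chi$ from \eqref{eq:doublecorrectorproblem} and its $\tilde{\chi}$-cousin from \ref{assumption:tildechi}, is the genuinely McKean--Vlasov term that appears when applying It\^o to $\Phi\cdot u_x$: the measure-derivative terms produce $\E\bigl[\,b(\bar X^\epsilon_s,\bar Y^\epsilon_s,\mc L(\bar X^\epsilon_s))\cdot \bar{\E}[\partial_\mu \Phi(\hat X^\epsilon_s,\hat Y^\epsilon_s,\mc L(\bar X^\epsilon_s))[\bar X^\epsilon_s]\,u_x(\cdot)]\bigr]$ (where $(\hat X^\epsilon,\hat Y^\epsilon)$ is an independent copy on $(\bar\W,\bar\F,\bar\Prob)$). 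This term is a priori $\mc{O}(1)$ rather than $\mc{O}(\epsilon)$ because the $1/\epsilon$ in front of $b$ is not compensated by a Poisson solve in a single copy. The doubled corrector $\chi(x,\bar x, y,\bar y,\mu)$ is designed precisely so that $L^2_{x,\bar x,\mu}\chi = -b(x,y,\mu)\partial_\mu\Phi(\bar x,\bar y,\mu)[x]$, and applying It\^o to $\epsilon^2\chi$ along the pair $((\bar X^\epsilon,\bar Y^\epsilon),(\hat X^\epsilon,\hat Y^\epsilon))$ on the product space kills this offending term while only generating remainders of order $\epsilon$ (and a mixed bracket which is handled by $\tilde\chi$). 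The polynomial growth bounds \ref{assumption:multipliedpolynomialgrowth}--\ref{assumption:tildechi} on $\Phi, \chi, \Xi, \tilde\chi$ together with the moment estimate of Lemma \ref{lemma:barYuniformbound} ensure that every remainder is bounded by $\epsilon\,C(T)\,|\phi|_4$ uniformly in $s \in [0,T]$. Collecting these estimates gives the desired bound, with the factor $|\phi|_4$ arising because the $C_b^4$ norm of $\phi$ propagates to $u$ (in $x$) through the standard gradient estimates for the McKean--Vlasov Cauchy problem under \ref{assumption:limitingcoefficientsLionsDerivatives}.
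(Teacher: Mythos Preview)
Your proposal is correct and follows essentially the same approach as the cited companion paper \cite{BezemekSpiliopoulosAveraging2022}, whose proof the present paper does not reproduce but only summarizes in the Remark following the theorem: a Cauchy problem on Wasserstein space (their Lemma~5.1, requiring \ref{assumption:limitingcoefficientsLionsDerivatives}) combined with ergodic-type theorems using the correctors $\Phi,\chi,\Xi,\tilde\chi$ (their Section~4, in particular Propositions~4.2--4.5). One minor imprecision: you describe $\tilde\chi$ as handling ``a mixed bracket'' arising from the It\^o expansion of $\epsilon^2\chi$, but in fact $\tilde\chi$ (with right-hand side $-b(x,y,\mu)\Phi(\bar x,\bar y,\mu)$) enters earlier, in the companion paper's Proposition~4.4, to control the cross-term where the $\epsilon^{-1}b$ from the drift of the \emph{original} process $\bar X^\epsilon$ hits the copy's corrector $\Phi(\hat X^\epsilon,\hat Y^\epsilon,\cdot)$ that was introduced to handle the $\epsilon^{-1}b$ from the \emph{copy's} drift acting through $\partial_\mu u$; this is a genuinely McKean--Vlasov cross-term that does not reduce to a quadratic variation.
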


\begin{remark}
Though in \cite{BezemekSpiliopoulosAveraging2022} the assumptions are stated in terms of sufficient conditions on the limiting coefficients for the needed regularity of $\Phi,\chi,\Xi,\bar{\gamma},\bar{D}^{1/2},$ and $\tilde{\chi}$ in the proofs therein to hold (which is much easier to do in that situation since the lack of control eliminates the need for tracking specific rates of polynomial growth), it can be checked that the assumptions imposed on these functions by \ref{assumption:multipliedpolynomialgrowth}, \ref{assumption:qF2bound}, \ref{assumption:forcorrectorproblem}, \ref{assumption:limitingcoefficientsLionsDerivatives}, and \ref{assumption:tildechi} respectively are sufficient. See also Remark 2.6 therein.

In particular, in \cite{BezemekSpiliopoulosAveraging2022}, since specific rates of polynomial growth are not tracked, it is assumed the initial condition of $\bar{Y}^{\epsilon}$ has all moments bounded. This holds automatically here, since $\eta^y\in\R$ are deterministic. Then, due to Lemma \ref{lemma:barYuniformbound}, it is sufficient to show the derivatives of the Poisson equations which show up in the proof have polynomial growth in $y$ uniformly in $x,\mu,z$. In fact, the same Poisson equations are being used in Section 4 of \cite{BezemekSpiliopoulosAveraging2022} to gain ergodic-type theorems of the same nature as those of Section \ref{sec:ergodictheoremscontrolledsystem} here. The growth rates of $\Phi,\chi,\Xi$ as imposed in \ref{assumption:multipliedpolynomialgrowth}, \ref{assumption:qF2bound}, and \ref{assumption:forcorrectorproblem} are already required here for the ergodic-type theorems for the controlled system \eqref{eq:controlledslowfast1-Dold} found in Section \ref{sec:ergodictheoremscontrolledsystem}, and these conditions can be seen as more than sufficient for the results of \cite{BezemekSpiliopoulosAveraging2022} to go through. The solution $\tilde{\chi}$ to Equation \eqref{eq:tildechi} does not, however, appear elsewhere in this paper, despite appearing in Proposition 4.4. of \cite{BezemekSpiliopoulosAveraging2022}, which is fundamental to the result presented here as Theorem \ref{theo:mckeanvlasovaveraging}. This is why we can allow for the specified derivatives of $\tilde{\chi}$ (which are exactly those appearing in the proof of Proposition 4.4.) in Assumption \ref{assumption:tildechi} to have polynomial growth of any order.

Lastly, we remark that the regularity of the limiting coefficients imposed by \ref{assumption:limitingcoefficientsLionsDerivatives} is used not for ergodic-type theorems, but instead to establish regularity a Cauchy-Problem on Wasserstein space in Lemma 5.1 of \cite{BezemekSpiliopoulosAveraging2022}, which provides a refinement of Theorem 2.15 in \cite{CST}. As remarked therein, these assumptions can likely be relaxed via an alternative proof method, but as it stands these are the only results in this direction which provide sufficient regularity on the derivatives needed to prove Theorem \ref{theo:mckeanvlasovaveraging}.
\end{remark}
\subsection{Coupling of the Controlled Particles and the IID Slow-Fast McKean-Vlasov Particles}\label{subsec:couplingcontrolledtoiid}
Here we establish a coupling result, which we will use along with  Theorem \ref{theo:mckeanvlasovaveraging} in order to establish tightness for the controlled fluctuation process $\br{\tilde{Z}^N}$ from Equation \eqref{eq:controlledempmeasure}.  Recall the processes $(\tilde{X}^{i,\epsilon,N}_t,\tilde{Y}^{i,\epsilon,N}_t)$ that satisfy  (\ref{eq:controlledslowfast1-Dold}) and $(\bar{X}^{i,\epsilon}_t,\bar{Y}^{i,\epsilon}_t)$ that satisfies  (\ref{eq:IIDparticles}).
\begin{lemma}\label{lemma:tildeYbarYdifference}
Assume \ref{assumption:uniformellipticity}- \ref{assumption:qF2bound}, \ref{assumption:uniformLipschitzxmu}, and \ref{assumption:2unifboundedlinearfunctionalderivatives}. Then there exists $C>0$ such that for all $t\in[0,T]$:
\begin{align*}
\frac{1}{N}\sum_{i=1}^N\E\biggl[|\tilde{Y}^{i,\epsilon,N}_t-\bar{Y}^{i,\epsilon}_t|^2\biggr]&\leq C\biggl\lbrace\epsilon^2+ \frac{1}{N}+\frac{1}{Na^2(N)} + \sup_{s\in [0,t]}\E\biggl[ \frac{1}{N}\sum_{i=1}^N\biggl|\tilde{X}^{i,\epsilon,N}_s-\bar{X}^{i,\epsilon}_s\biggr|^2\biggr]\biggr\rbrace
\end{align*}
\end{lemma}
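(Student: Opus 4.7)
The plan is to apply It\^o's formula to $|D^i_t|^2 := |\tilde Y^{i,\epsilon,N}_t - \bar Y^{i,\epsilon}_t|^2$, sum and average over $i$, and take expectation. Because the controlled fast particles and their IID McKean--Vlasov counterparts share the driving Brownian motions $W^i,B^i$, the martingale terms vanish in expectation and the diffusion contributions reduce to squared coefficient differences. After these simplifications the expectation of $d|D^i_t|^2/dt$ reads (schematically)
\begin{align*}
\frac{2}{\epsilon^2}D^i_t[f(\tilde i)-f(\bar i)] + \frac{1}{\epsilon^2}\bigl[|\tau_1(\tilde i)-\tau_1(\bar i)|^2 + |\tau_2(\tilde i)-\tau_2(\bar i)|^2\bigr] + \frac{2}{\epsilon}D^i_t[g(\tilde i)-g(\bar i)] + \frac{2}{\epsilon a(N)\sqrt N}D^i_t\sum_{k=1,2}\tau_k(\tilde i)\tilde u^{N,k}_i(t),
\end{align*}
where $(\tilde i)$ and $(\bar i)$ denote evaluation at $(\tilde X^{i,\epsilon,N}_t,\tilde Y^{i,\epsilon,N}_t,\tilde\mu^{\epsilon,N}_t)$ and $(\bar X^{i,\epsilon}_t,\bar Y^{i,\epsilon}_t,\mc{L}(\bar X^\epsilon_t))$ respectively. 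The aim is to extract a dissipative $-c_0|D^i_t|^2/\epsilon^2$ term from the first two terms and bound the remaining forcings by the claimed error.

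The central estimate invokes the retraction hypothesis \eqref{eq:rocknertyperetractiontomean}, which requires both evaluations to share the same $(x,\mu)$. I would therefore decompose $f(\tilde i)-f(\bar i) = \Delta_1 f + \Delta_2 f$, with
\begin{align*}
\Delta_1 f := f(\tilde X^i,\tilde Y^i,\tilde\mu^{\epsilon,N}) - f(\tilde X^i,\bar Y^i,\tilde\mu^{\epsilon,N}),\qquad \Delta_2 f := f(\tilde X^i,\bar Y^i,\tilde\mu^{\epsilon,N}) - f(\bar X^i,\bar Y^i,\mc{L}(\bar X^\epsilon)),
\end{align*}
and analogously $\tau_k(\tilde i)-\tau_k(\bar i) = \Delta_1\tau_k + \Delta_2\tau_k$. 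A Young split with small parameter $\delta$ absorbs the $\Delta_1$--$\Delta_2$ cross terms in the quadratic variation; since $3 \geq 1+\delta$, \eqref{eq:rocknertyperetractiontomean} then applies to $2D^i_t\Delta_1 f + (1+\delta)\sum_k|\Delta_1\tau_k|^2$ and yields the dissipation $-\beta|D^i_t|^2/\epsilon^2$. The $\Delta_2$ pieces are Lipschitz-controlled in $(x,\mu)$: for $f$ this follows from the decomposition $f = -\kappa y + \eta$ together with the Lipschitz property of $\eta$ from \ref{assumption:retractiontomean}, and for $\tau_1,\tau_2$ directly from \ref{assumption:uniformLipschitzxmu}. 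Young's inequality then turns the $\Delta_2$ contributions into $\alpha|D^i_t|^2/\epsilon^2$ plus a forcing in $|\tilde X^i-\bar X^i|^2$ and $\bb{W}_2^2(\tilde\mu^{\epsilon,N},\mc{L}(\bar X^\epsilon))$; choosing $\alpha,\delta$ small keeps the net coefficient of $|D^i_t|^2/\epsilon^2$ strictly negative.

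The remaining forcings are handled as follows. The $g$-term is controlled by uniform boundedness of $g$ from \ref{assumption:gsigmabounded} and Young's inequality, producing a constant forcing that convolves against the stable kernel $e^{-c_0(t-s)/\epsilon^2}$ to give $O(\epsilon^2)$. The control term uses boundedness of $\tau_k$ together with \eqref{eq:controlassumptions0} to yield the $O(1/(Na^2(N)))$ contribution. For the Wasserstein term I would use the triangle split $\bb{W}_2^2(\tilde\mu^{\epsilon,N},\mc{L}(\bar X^\epsilon)) \le 2\bb{W}_2^2(\tilde\mu^{\epsilon,N},\bar\mu^{\epsilon,N}) + 2\bb{W}_2^2(\bar\mu^{\epsilon,N},\mc{L}(\bar X^\epsilon))$; the first is dominated by $(1/N)\sum_j|\tilde X^j-\bar X^j|^2$ via optimal coupling of empirical measures, while a parallel three-way decomposition of $\Delta_2 f$ and $\Delta_2\tau_k$ reduces the measure-distance piece to a functional-derivative-based estimate to which Lemma \ref{lemma:rocknersecondlinfunctderimplication} applies (via \ref{assumption:2unifboundedlinearfunctionalderivatives}), delivering the $O(1/N)$ rate. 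Gronwall-type integration of the resulting inequality
\begin{align*}
\tfrac{d}{dt}e(t) \le -\tfrac{c_0}{\epsilon^2}e(t) + \tfrac{C}{\epsilon^2}\Bigl[\tfrac{1}{N} + \sup_{s\le t}e_X(s)\Bigr] + C + \tfrac{C}{Na^2(N)}\mc{U}^N(t),
\end{align*}
with $e(t)=(1/N)\sum_i\E|D^i_t|^2$, $e_X(s)=\E[(1/N)\sum_i|\tilde X^i_s-\bar X^i_s|^2]$, and $\mc{U}^N$ an integrable forcing from the controls, then yields the claim. I expect the main obstacle to be the $O(1/N)$ bound for the law-of-large-numbers piece at the \emph{random} evaluation point $(\bar X^i,\bar Y^i)$: one must first decouple $\bar X^i$ from $\bar\mu^{\epsilon,N}$ by removing the $i$-th sample (at an $O(1/N)$ perturbation cost) before Lemma \ref{lemma:rocknersecondlinfunctderimplication} can be invoked on an independent empirical measure, which is what forces the use of the linear-functional-derivative structure rather than relying on the weaker Wasserstein rate $O(N^{-1/2})$ available in dimension one.
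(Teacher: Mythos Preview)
Your proposal is correct and matches the paper's proof essentially step for step: It\^o's formula, the three-way split of the coefficient differences through the intermediate point $(\bar X^{i,\epsilon}_t,\bar Y^{i,\epsilon}_t,\bar\mu^{\epsilon,N}_t)$, the retraction hypothesis \eqref{eq:rocknertyperetractiontomean} on the $y$-only increment, Lipschitz control on the $(x,\mu)$-increment between the two empirical measures, Lemma~\ref{lemma:rocknersecondlinfunctderimplication} (via \ref{assumption:2unifboundedlinearfunctionalderivatives}) for the empirical-to-law piece, boundedness of $g$ for the $O(\epsilon^2)$ forcing, and a comparison/Gronwall argument against the stable kernel $e^{-c_0(t-s)/\epsilon^2}$. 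Your identification of the main obstacle---that the $O(1/N)$ rate at the random evaluation point forces the linear-functional-derivative route rather than a bare $\bb{W}_2$ bound---is exactly the reason the paper invokes Lemma~\ref{lemma:rocknersecondlinfunctderimplication}.
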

\begin{proof}
We set $\tau_1\equiv 0$, since terms involving $\tau_1$ can be handled in the same way as those involving $\tau_2$ in the proof. By It\^o's formula, and  given that the stochastic integrals are martingales (using Lemmas \ref{lemma:tildeYuniformbound} and \ref{lemma:barYuniformbound}),
\begin{align*}
&\frac{d}{dt}\E\biggl[|\tilde{Y}^{i,\epsilon,N}_t-\bar{Y}^{i,\epsilon}_t|^2\biggr]  = 2 \E\biggl[\frac{1}{\epsilon^2}\biggl(f(\tilde{X}^{i,\epsilon,N}_t,\tilde{Y}^{i,\epsilon,N}_t,\tilde{\mu}^{\epsilon,N}_t) - f(\bar{X}^{i,\epsilon}_t,\bar{Y}^{i,\epsilon}_t,\mc{L}(\bar{X}^\epsilon_t))\biggr)(\tilde{Y}^{i,\epsilon,N}_t-\bar{Y}^{i,\epsilon}_t) \\
&+ \frac{1}{2\epsilon^2}|\tau_2(\tilde{X}^{i,\epsilon,N}_t,\tilde{Y}^{i,\epsilon,N}_t,\tilde{\mu}^{\epsilon,N}_t)-\tau_2(\bar{X}^{i,\epsilon}_t,\bar{Y}^{i,\epsilon}_t,\mc{L}(\bar{X}^\epsilon_t))|^2 \\
&+ \frac{1}{\epsilon}\biggl(g(\tilde{X}^{i,\epsilon,N}_t,\tilde{Y}^{i,\epsilon,N}_t,\tilde{\mu}^{\epsilon,N}_t) - g(\bar{X}^{i,\epsilon}_t,\bar{Y}^{i,\epsilon}_t,\mc{L}(\bar{X}^\epsilon_t))\biggr)(\tilde{Y}^{i,\epsilon,N}_t-\bar{Y}^{i,\epsilon}_t) \\
&+\frac{1}{\epsilon\sqrt{N}a(N)}\tau_2(\tilde{X}^{i,\epsilon,N}_t,\tilde{Y}^{i,\epsilon,N}_t,\tilde{\mu}^{\epsilon,N}_t)\tilde{u}^{N,2}_i(t)(\tilde{Y}^{i,\epsilon,N}_t-\bar{Y}^{i,\epsilon}_t)\biggr]\\
& \leq 2\E\biggl[ \frac{1}{\epsilon^2}\biggl(f(\tilde{X}^{i,\epsilon,N}_t,\tilde{Y}^{i,\epsilon,N}_t,\tilde{\mu}^{\epsilon,N}_t) - f(\tilde{X}^{i,\epsilon,N}_t,\bar{Y}^{i,\epsilon}_t,\tilde{\mu}^{\epsilon,N}_t)\biggr)(\tilde{Y}^{i,\epsilon,N}_t-\bar{Y}^{i,\epsilon}_t) \\
&+\frac{1}{\epsilon^2}\biggl(f(\tilde{X}^{i,\epsilon,N}_t,\bar{Y}^{i,\epsilon}_t,\tilde{\mu}^{\epsilon,N}_t) - f(\bar{X}^{i,\epsilon}_t,\bar{Y}^{i,\epsilon}_t,\bar{\mu}^{\epsilon,N}_t)\biggr)(\tilde{Y}^{i,\epsilon,N}_t-\bar{Y}^{i,\epsilon}_t)\\
&+\frac{1}{\epsilon^2}\biggl(f(\bar{X}^{i,\epsilon}_t,\bar{Y}^{i,\epsilon}_t,\bar{\mu}^{\epsilon,N}_t) - f(\bar{X}^{i,\epsilon}_t,\bar{Y}^{i,\epsilon}_t,\mc{L}(\bar{X}^\epsilon_t))\biggr)(\tilde{Y}^{i,\epsilon,N}_t-\bar{Y}^{i,\epsilon}_t)\\
&+ \frac{1}{\epsilon^2}|\tau_2(\tilde{X}^{i,\epsilon,N}_t,\tilde{Y}^{i,\epsilon,N}_t,\tilde{\mu}^{\epsilon,N}_t)-\tau_2(\tilde{X}^{i,\epsilon,N}_t,\bar{Y}^{i,\epsilon}_t,\tilde{\mu}^{\epsilon,N}_t)|^2 \\
&+ \frac{2}{\epsilon^2}|\tau_2(\tilde{X}^{i,\epsilon,N}_t,\bar{Y}^{i,\epsilon}_t,\tilde{\mu}^{\epsilon,N}_t)-\tau_2(\bar{X}^{i,\epsilon}_t,\bar{Y}^{i,\epsilon}_t,\bar{\mu}^{\epsilon,N}_t)|^2 \\
&+ \frac{2}{\epsilon^2}|\tau_2(\bar{X}^{i,\epsilon}_t,\bar{Y}^{i,\epsilon}_t,\bar{\mu}^{\epsilon,N}_t)-\tau_2(\bar{X}^{i,\epsilon}_t,\bar{Y}^{i,\epsilon}_t,\mc{L}(\bar{X}^\epsilon_t))|^2 \\
&+ \frac{1}{\epsilon}\biggl(g(\tilde{X}^{i,\epsilon,N}_t,\tilde{Y}^{i,\epsilon,N}_t,\tilde{\mu}^{\epsilon,N}_t) - g(\bar{X}^{i,\epsilon}_t,\bar{Y}^{i,\epsilon}_t,\mc{L}(\bar{X}^\epsilon_t))\biggr)(\tilde{Y}^{i,\epsilon,N}_t-\bar{Y}^{i,\epsilon}_t)\\
&+\frac{1}{\epsilon\sqrt{N}a(N)}\tau_2(\tilde{X}^{i,\epsilon,N}_t,\tilde{Y}^{i,\epsilon,N}_t,\tilde{\mu}^{\epsilon,N}_t)\tilde{u}^{N,2}_i(t)(\tilde{Y}^{i,\epsilon,N}_t-\bar{Y}^{i,\epsilon}_t)\biggr]\\
& \leq 2 \E\biggl[-\frac{\beta}{2\epsilon^2}|\tilde{Y}^{i,\epsilon,N}_t-\bar{Y}^{i,\epsilon}_t|^2 \\
&+\frac{1}{2\eta\epsilon^2}\biggl|f(\tilde{X}^{i,\epsilon,N}_t,\bar{Y}^{i,\epsilon}_t,\tilde{\mu}^{\epsilon,N}_t) - f(\bar{X}^{i,\epsilon}_t,\bar{Y}^{i,\epsilon}_t,\bar{\mu}^{\epsilon,N}_t)\biggr|^2 +\frac{\eta}{2\epsilon^2}|\tilde{Y}^{i,\epsilon,N}_t-\bar{Y}^{i,\epsilon}_t|^2\\
&+\frac{1}{2\eta\epsilon^2}\biggl|f(\bar{X}^{i,\epsilon}_t,\bar{Y}^{i,\epsilon}_t,\bar{\mu}^{\epsilon,N}_t) - f(\bar{X}^{i,\epsilon}_t,\bar{Y}^{i,\epsilon}_t,\mc{L}(\bar{X}^\epsilon_t))\biggr|^2+\frac{\eta}{2\epsilon^2}|\tilde{Y}^{i,\epsilon,N}_t-\bar{Y}^{i,\epsilon}_t|^2\\
&+ \frac{2}{\epsilon^2}|\tau_2(\tilde{X}^{i,\epsilon,N}_t,\bar{Y}^{i,\epsilon}_t,\tilde{\mu}^{\epsilon,N}_t)-\tau_2(\bar{X}^{i,\epsilon}_t,\bar{Y}^{i,\epsilon}_t,\bar{\mu}^{\epsilon,N}_t)|^2 \\
&+ \frac{2}{\epsilon^2}|\tau_2(\bar{X}^{i,\epsilon}_t,\bar{Y}^{i,\epsilon}_t,\bar{\mu}^{\epsilon,N}_t)-\tau_2(\bar{X}^{i,\epsilon}_t,\bar{Y}^{i,\epsilon}_t,\mc{L}(\bar{X}^\epsilon_t))|^2 \\
&+ \frac{1}{2\eta}\biggl|g(\tilde{X}^{i,\epsilon,N}_t,\tilde{Y}^{i,\epsilon,N}_t,\tilde{\mu}^{\epsilon,N}_t) - g(\bar{X}^{i,\epsilon}_t,\bar{Y}^{i,\epsilon}_t,\mc{L}(\bar{X}^\epsilon_t))\biggr|^2+\frac{\eta}{2\epsilon^2}|\tilde{Y}^{i,\epsilon,N}_t-\bar{Y}^{i,\epsilon}_t|^2 \\
&+\frac{2}{\eta N a^2(N)}|\tau_2(\tilde{X}^{i,\epsilon,N}_t,\tilde{Y}^{i,\epsilon,N}_t,\tilde{\mu}^{\epsilon,N}_t)|^2|\tilde{u}^{N,2}_i(t)|^2\\
&+ \frac{\eta}{2\epsilon^2}|\tilde{Y}^{i,\epsilon,N}_t-\bar{Y}^{i,\epsilon}_t)|^2\biggr]
\end{align*}
for all $\eta>0$, where in the second inequality we used Equation \eqref{eq:rocknertyperetractiontomean} of Assumption \ref{assumption:retractiontomean}. Taking $\eta = \beta/8$ and using the boundedness of $g$ from Assumption \ref{assumption:gsigmabounded} and of $\tau_1,\tau_2$ from Assumption \ref{assumption:uniformellipticity}, we get:
\begin{align*}
\frac{d}{dt}\E\biggl[|\tilde{Y}^{i,\epsilon,N}_t-\bar{Y}^{i,\epsilon}_t|^2\biggr] &\leq -\frac{\beta}{2\epsilon^2}\E\biggl[|\tilde{Y}^{i,\epsilon,N}_t-\bar{Y}^{i,\epsilon}_t|^2\biggr] +\frac{C}{\epsilon^2}\E\biggl[\biggl|f(\tilde{X}^{i,\epsilon,N}_t,\bar{Y}^{i,\epsilon}_t,\tilde{\mu}^{\epsilon,N}_t) - f(\bar{X}^{i,\epsilon}_t,\bar{Y}^{i,\epsilon}_t,\bar{\mu}^{\epsilon,N}_t)\biggr|^2\\
&+\biggl|f(\bar{X}^{i,\epsilon}_t,\bar{Y}^{i,\epsilon}_t,\bar{\mu}^{\epsilon,N}_t) - f(\bar{X}^{i,\epsilon}_t,\bar{Y}^{i,\epsilon}_t,\mc{L}(\bar{X}^\epsilon_t))\biggr|^2\\
&+|\tau_2(\tilde{X}^{i,\epsilon,N}_t,\bar{Y}^{i,\epsilon}_t,\tilde{\mu}^{\epsilon,N}_t)-\tau_2(\bar{X}^{i,\epsilon}_t,\bar{Y}^{i,\epsilon}_t,\bar{\mu}^{\epsilon,N}_t)|^2 \\
&+|\tau_2(\bar{X}^{i,\epsilon}_t,\bar{Y}^{i,\epsilon}_t,\bar{\mu}^{\epsilon,N}_t)-\tau_2(\bar{X}^{i,\epsilon}_t,\bar{Y}^{i,\epsilon}_t,\mc{L}(\bar{X}^\epsilon_t))|^2  \biggr] \\
&+ \frac{C}{Na^2(N)}\E\biggl[|\tilde{u}^{N,2}_i(s)|^2\biggr] + C
\end{align*}

 Now using the global Lipschitz property of $f$ from Assumption \ref{assumption:retractiontomean} and of $\tau_1$ and $\tau_2$ from Assumption \ref{assumption:uniformLipschitzxmu} to handle the terms of the form $|f(\tilde{X}^{i,\epsilon,N}_t,\bar{Y}^{i,\epsilon}_t,\tilde{\mu}^{\epsilon,N}_t) - f(\bar{X}^{i,\epsilon}_t,\bar{Y}^{i,\epsilon}_t,\bar{\mu}^{\epsilon,N}_t)|^2$  and Assumption \ref{assumption:2unifboundedlinearfunctionalderivatives} with Lemma \ref{lemma:rocknersecondlinfunctderimplication} for the terms of the form
$|f(\bar{X}^{i,\epsilon}_t,\bar{Y}^{i,\epsilon}_t,\bar{\mu}^{\epsilon,N}_t) - f(\bar{X}^{i,\epsilon}_t,\bar{Y}^{i,\epsilon}_t,\mc{L}(\bar{X}^\epsilon_t))|^2,$ we have:
\begin{align*}
\frac{d}{dt}\E\biggl[|\tilde{Y}^{i,\epsilon,N}_t-\bar{Y}^{i,\epsilon}_t|^2\biggr] &\leq -\frac{\beta}{2\epsilon^2}\E\biggl[|\tilde{Y}^{i,\epsilon,N}_t-\bar{Y}^{i,\epsilon}_t|^2\biggr] +\frac{C}{\epsilon^2}\E\biggl[\biggl|\tilde{X}^{i,\epsilon,N}_t-\bar{X}^{i,\epsilon}_t\biggr|^2 + \frac{1}{N}\sum_{i=1}^N\biggl|\tilde{X}^{i,\epsilon,N}_t-\bar{X}^{i,\epsilon}_t\biggr|^2 \biggr]\\
&+\frac{C}{\epsilon^2 N}+ \frac{C}{Na^2(N)}\E\biggl[|\tilde{u}^{N,2}_i(s)|^2\biggr] + C.
\end{align*}
When using Lipschitz continuity of $f,\tau_1,$ and $\tau_2$, we are also using that
\begin{align*}
\bb{W}_2(\tilde{\mu}^{\epsilon,N}_t,\bar{\mu}^{\epsilon,N}_t)\leq \frac{1}{N}\sum_{i=1}^N \biggl|\tilde{X}^{i,\epsilon,N}_t-\bar{X}^{i,\epsilon}_t\biggr|^2
\end{align*}
by Equation \eqref{eq:empiricalmeasurewassersteindistance} in Appendix \ref{Appendix:LionsDifferentiation}.

Now using a comparison theorem, dividing by $\frac{1}{N}$ and summing from $i=1,...,N$, we get
\begin{align*}
&\frac{1}{N}\sum_{i=1}^N\E\biggl[|\tilde{Y}^{i,\epsilon,N}_t-\bar{Y}^{i,\epsilon}_t|^2\biggr]\leq C\biggl\lbrace e^{-\frac{\beta}{2\epsilon^2}t}\int_0^t e^{\frac{\beta}{2\epsilon^2}s} ds +\frac{1}{\epsilon^2}e^{-\frac{\beta}{2\epsilon^2}t}\int_0^t\E\biggl[ \frac{1}{N}\sum_{i=1}^N\biggl|\tilde{X}^{i,\epsilon,N}_s-\bar{X}^{i,\epsilon}_s\biggr|^2\biggr] e^{\frac{\beta}{2\epsilon^2}s} ds \\
&\quad +\frac{1}{N\epsilon^2}e^{-\frac{\beta}{2\epsilon^2}t}\int_0^t e^{\frac{\beta}{2\epsilon^2}s} ds +\frac{1}{Na^2(N)}e^{-\frac{\beta}{2\epsilon^2}t}\int_0^t\E\biggl[
\frac{1}{N}\sum_{i=1}^N|\tilde{u}^{N,2}_i(s)|^2\biggr] e^{\frac{\beta}{2\epsilon^2}s} ds    \biggr\rbrace \\
&\leq C\biggl\lbrace \epsilon^2 + \sup_{s\in [0,t]}\E\biggl[ \frac{1}{N}\sum_{i=1}^N\biggl|\tilde{X}^{i,\epsilon,N}_s-\bar{X}^{i,\epsilon}_s\biggr|^2\biggr]+ \frac{1}{N} + \frac{1}{Na^2(N)}\int_0^T\E\biggl[
\frac{1}{N}\sum_{i=1}^N|\tilde{u}^{N,2}_i(s)|^2\biggr] ds \biggr\rbrace
\end{align*}
and by the bound \eqref{eq:controlassumptions0}, we get:
\begin{align*}
\frac{1}{N}\sum_{i=1}^N\E\biggl[|\tilde{Y}^{i,\epsilon,N}_t-\bar{Y}^{i,\epsilon}_t|^2\biggr]&\leq C\biggl\lbrace\epsilon^2+ \frac{1}{N}+\frac{1}{Na^2(N)} + \sup_{s\in [0,t]}\E\biggl[ \frac{1}{N}\sum_{i=1}^N\biggl|\tilde{X}^{i,\epsilon,N}_s-\bar{X}^{i,\epsilon}_s\biggr|^2\biggr]\biggr\rbrace.
\end{align*}
\end{proof}

\begin{lemma}\label{lemma:XbartildeXdifference}
Under assumptions \ref{assumption:uniformellipticity}-\ref{assumption:uniformLipschitzxmu} and \ref{assumption:2unifboundedlinearfunctionalderivatives} we have
\begin{align*}
\sup_{s\in [0,T]}\E\biggl[ \frac{1}{N}\sum_{i=1}^N\biggl|\tilde{X}^{i,\epsilon,N}_s-\bar{X}^{i,\epsilon}_s\biggr|^2\biggr]&\leq C(T)[\epsilon^2+ \frac{1}{N}+\frac{1}{Na^2(N)}]
\end{align*}
\end{lemma}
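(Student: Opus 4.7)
The plan is to eliminate the singular $\frac{1}{\epsilon}b$ drift by the standard Poisson-equation corrector trick, reducing to an SDE with Lipschitz coefficients, and then to close a Gronwall estimate using Lemma \ref{lemma:tildeYbarYdifference}. Concretely, define
\[
\tilde{U}^{i}_t := \tilde{X}^{i,\epsilon,N}_t + \epsilon\,\Phi^N\bigl(\tilde{X}^{i,\epsilon,N}_t,\tilde{Y}^{i,\epsilon,N}_t,(\tilde{X}^{1,\epsilon,N}_t,\dots,\tilde{X}^{N,\epsilon,N}_t)\bigr), \qquad
\bar{U}^{i}_t := \bar{X}^{i,\epsilon}_t + \epsilon\,\Phi\bigl(\bar{X}^{i,\epsilon}_t,\bar{Y}^{i,\epsilon}_t,\mc{L}(\bar{X}^{\epsilon}_t)\bigr),
\]
where $\Phi^N$ is the empirical projection of the corrector $\Phi$ from \eqref{eq:cellproblemold} and we use Proposition \ref{prop:empprojderivatives}. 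Since $\tilde{q}_\Phi(0,0,0)\leq 0$, we have $|\tilde{U}^i_t-\tilde{X}^{i,\epsilon,N}_t|\le C\epsilon$ and $|\bar{U}^i_t-\bar{X}^{i,\epsilon}_t|\le C\epsilon$, so it is enough to bound $\frac{1}{N}\sum_i \E|\tilde{U}^i_t-\bar{U}^i_t|^2$ up to an additive $C\epsilon^2$.

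The key observation is that the $\frac{1}{\epsilon^2}L_{x,\mu}\Phi=-\frac{1}{\epsilon^2}b$ term arising in It\^o's formula for $\epsilon\Phi$ exactly cancels the $\frac{1}{\epsilon}b$ drift of $\tilde{X}^{i,\epsilon,N}$, by the Poisson equation \eqref{eq:cellproblemold}. Repeating (for a single particle, without the $a(N)/\sqrt{N}$ normalization) the computation from Proposition \ref{prop:fluctuationestimateparticles1} and its analogue for the McKean--Vlasov SDE \eqref{eq:IIDparticles} (using Lions' It\^o formula for the deterministic-in-time measure argument $\mc{L}(\bar{X}^{\epsilon}_t)$), one finds
\begin{align*}
d\tilde{U}^{i}_t &= \bigl[\gamma(\tilde i) + (\sigma(\tilde i)+\tau_1(\tilde i)\Phi_y(\tilde i))\tfrac{\tilde u^{N,1}_i(t)}{a(N)\sqrt N}+\tau_2(\tilde i)\Phi_y(\tilde i)\tfrac{\tilde u^{N,2}_i(t)}{a(N)\sqrt N}\bigr]dt \\
&\quad+ (\sigma(\tilde i)+\tau_1(\tilde i)\Phi_y(\tilde i))dW^i_t + \tau_2(\tilde i)\Phi_y(\tilde i)dB^i_t + d\tilde R^{i,\epsilon,N}_t,
\end{align*}
and analogously $d\bar U^i_t$ with $(\bar i)$ replacing $(\tilde i)$, controls removed, and remainder $d\bar R^{i,\epsilon}_t$. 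All terms in $d\tilde R^{i,\epsilon,N}_t,d\bar R^{i,\epsilon}_t$ are, after squaring, taking expectation, and averaging in $i$, of order $\epsilon^2+1/N+1/(Na^2(N))$, exactly as the bounds on $\tilde B_1,\dots,\tilde B_8$ and $\bar B_1,\dots,\bar B_{13}$ in Propositions \ref{prop:fluctuationestimateparticles1} and \ref{prop:purpleterm1}; the control terms give $\mathcal O(1/(Na^2(N)))$ after using \eqref{eq:controlassumptions0}, the $\Phi$-boundary terms give $\mathcal O(\epsilon^2)$, and the measure-derivative remainders (including the ``doubled corrector'' term) give $\mathcal O(1/N)+\mathcal O(\epsilon^2)$ once the $a(N)\sqrt N$ normalization is removed.

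With this representation, I apply It\^o to $|\tilde{U}^i_t-\bar{U}^i_t|^2$, take expectation, average in $i$, and bound differences of $\gamma$, $\sigma+\tau_1\Phi_y$, $\tau_2\Phi_y$ via the Lipschitz assumption \ref{assumption:uniformLipschitzxmu}. The Wasserstein term $\bb{W}_2(\tilde{\mu}^{\epsilon,N}_s,\mc L(\bar X^\epsilon_s))^2$ splits as $\le 2\bb{W}_2(\tilde\mu^{\epsilon,N},\bar\mu^{\epsilon,N})^2+2\bb{W}_2(\bar\mu^{\epsilon,N},\mc L(\bar X^\epsilon))^2$; the first is $\le\frac{2}{N}\sum_i|\tilde X^i-\bar X^i|^2$ by \eqref{eq:empiricalmeasurewassersteindistance}, and the second, after composing with the Lipschitz functions appearing, is handled by Lemma \ref{lemma:rocknersecondlinfunctderimplication} (permitted by \ref{assumption:2unifboundedlinearfunctionalderivatives}) to contribute $C/N$ in $L^2$. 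The control contribution is $C/(Na^2(N))$ by Cauchy--Schwarz and \eqref{eq:controlassumptions0}. Denoting $\mathcal E^N(t):=\frac{1}{N}\sum_i\E|\tilde X^{i,\epsilon,N}_s-\bar X^{i,\epsilon}_s|^2$ and $\mathcal Y^N(t)$ analogously with $Y$, this yields
\[
\mathcal E^N(t) \le C\epsilon^2 + C\Bigl[\tfrac{1}{N}+\tfrac{1}{Na^2(N)}\Bigr] + C\int_0^t \mathcal E^N(s)\,ds + C\int_0^t \mathcal Y^N(s)\,ds.
\]
Inserting Lemma \ref{lemma:tildeYbarYdifference}, namely $\mathcal Y^N(s)\le C[\epsilon^2+1/N+1/(Na^2(N))+\sup_{u\le s}\mathcal E^N(u)]$, and applying Gronwall's inequality yields the claimed bound.

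The main obstacle is the careful bookkeeping of all remainder terms obtained by applying It\^o's formula to $\epsilon\Phi^N$ on the controlled system and $\epsilon\Phi$ on the McKean--Vlasov system; in particular, the $O(1)$-looking term $\frac{1}{N}\sum_j b(j)\partial_\mu\Phi(i)[j]$ must be treated by the ``doubled corrector'' construction \eqref{eq:doublecorrectorproblem} as in Proposition \ref{prop:purpleterm1} (and verified to contribute only $O(\epsilon^2+1/N)$ after dropping the $a(N)\sqrt N$ normalization), and the analogous measure-derivative term for $\bar X^{i,\epsilon}$ must be compared to it via Lemma \ref{lemma:rocknersecondlinfunctderimplication}, which is precisely where the $1/N$ in the right-hand side of the lemma is generated.
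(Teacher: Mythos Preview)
Your proposal is correct and takes essentially the same approach as the paper: the paper writes $\tilde{X}^{i,\epsilon,N}_t-\bar{X}^{i,\epsilon}_t$ directly as $\int_0^t(\gamma(\tilde i)-\gamma(\bar i))ds$ plus diffusion differences in $\sigma+\tau_1\Phi_y$ and $\tau_2\Phi_y$, together with remainders $R_1$--$R_5$ that are exactly your $d\tilde R$, $d\bar R$, and control terms (bounded via Propositions \ref{prop:fluctuationestimateparticles1}, \ref{prop:purpleterm1}, and Propositions 4.2--4.3 of \cite{BezemekSpiliopoulosAveraging2022} for the McKean--Vlasov side), and then closes by the same Lipschitz/Lemma \ref{lemma:rocknersecondlinfunctderimplication}/Lemma \ref{lemma:tildeYbarYdifference}/Gronwall argument. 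The only cosmetic difference is that the paper does not introduce the corrected processes $\tilde U^i,\bar U^i$ explicitly but instead adds and subtracts the corrector contributions in the integral representation of $\tilde{X}^{i,\epsilon,N}_t-\bar{X}^{i,\epsilon}_t$.
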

\begin{proof}
Letting $(\bar{i})$ denote the argument $(\bar{X}^{i,\epsilon}_s,\bar{Y}^{i,\epsilon}_s,\mc{L}(\bar{X}^\epsilon_s))$, $(\tilde{i})$ denote the argument $(\tilde{X}^{i,\epsilon,N}_s,\tilde{Y}^{i,\epsilon,N}_s,\tilde{\mu}^{\epsilon,N}_s)$:
\begin{align*}
&\tilde{X}^{i,\epsilon,N}_t-\bar{X}^{i,\epsilon}_t = \frac{1}{\epsilon}\int_0^t\left( b(\tilde{i})-b(\bar{i}) \right)ds + \int_0^t\left( c(\tilde{i}) - c(\bar{i}) \right)ds +\int_0^t \sigma(\tilde{i})\frac{\tilde{u}^{N,1}_i(s)}{a(N)\sqrt{N}} ds + \int_0^t\left( \sigma(\tilde{i}) - \sigma(\bar{i}) \right)dW^i_s \\
& = \int_0^t\left( \gamma(\tilde{i}) - \gamma(\bar{i})\right)ds + \int_0^t\left( [\sigma(\tilde{i}) + \tau_1(\tilde{i})\Phi_y(\tilde{i})] - [\sigma(\bar{i}) + \tau_1(\bar{i})\Phi_y(\bar{i})]\right) dW^i_s+\int_0^t\left( \tau_2(\tilde{i})\Phi_y(\tilde{i})-\tau_2(\bar{i})\Phi_y(\bar{i})\right)dB^i_s \\
&+ R^{i,\epsilon,N}_1(t)-R^{i,\epsilon,N}_2(t)+R^{i,\epsilon,N}_3(t)-R^{i,\epsilon,N}_4(t)+R^{i,\epsilon,N}_5(t),
\end{align*}
where here we recall $\Phi$ from Equation \eqref{eq:cellproblemold} and $\gamma,\gamma_1$ from Equation \eqref{eq:limitingcoefficients}, and:
\begin{align*}
R^{i,\epsilon,N}_1(t)& = \frac{1}{\epsilon}\int_0^t b(\tilde{i})ds - \int_0^t\left(  \gamma_1(\tilde{i})+[\frac{\tau_1(\tilde{i})}{a(N)\sqrt{N}}\tilde{u}^{N,1}_i(s)+\frac{\tau_2(\tilde{i})}{a(N)\sqrt{N}}\tilde{u}^{N,2}_i(s)]\Phi_y(\tilde{i})\right)ds\\
&-\int_0^t\tau_1(\tilde{i})\Phi_y(\tilde{i})dW_s^i-\int_0^t \tau_2(\tilde{i})\Phi_y(\tilde{i})dB_s^i-\int_0^t \frac{1}{N}\sum_{j=1}^N b(\tilde{X}^{j,\epsilon,N}_s,\tilde{Y}^{j,\epsilon,N}_s,\tilde{\mu}^{\epsilon,N}_s)\partial_{\mu}\Phi(\tilde{i})[\tilde{X}^{j,\epsilon,N}_s]ds\\
R^{i,\epsilon,N}_2(t)& =\frac{1}{\epsilon}\int_0^t b(\bar{i}) ds -\int_0^t \gamma_1(\bar{i})ds-\int_0^t\tau_1(\bar{i})\Phi_y(\bar{i})dW^i_s - \int_0^t \tau_2(\bar{i})\Phi_y(\bar{i})dB^i_s\\
&-\int_0^t \int_{\R^2}b(x,y,\mc{L}(\bar{X}^\epsilon_s))\partial_\mu \Phi(\bar{X}^{i,\epsilon}_s,\bar{Y}^{i,\epsilon}_s,\mc{L}(\bar{X}^\epsilon_s))[x]\mc{L}(\bar{X}^\epsilon_s,Y^\epsilon_s)(dx,dy)ds \\
R^{i,\epsilon,N}_3(t)& = \int_0^t \frac{1}{N}\sum_{j=1}^N b(\tilde{X}^{j,\epsilon,N}_s,\tilde{Y}^{j,\epsilon,N}_s,\tilde{\mu}^{\epsilon,N}_s)\partial_{\mu}\Phi(\tilde{i})[\tilde{X}^{j,\epsilon,N}_s]ds\\
R^{i,\epsilon,N}_4(t)& =\int_0^t \int_{\R^2}b(x,y,\mc{L}(\bar{X}^\epsilon_s))\partial_\mu \Phi(\bar{X}^{i,\epsilon}_s,\bar{Y}^{i,\epsilon}_s,\mc{L}(\bar{X}^\epsilon_s))[x]\mc{L}(\bar{X}^\epsilon_s,Y^\epsilon_s)(dx,dy)ds\\
R^{i,\epsilon,N}_5(t)& =\int_0^t \left(\sigma(\tilde{i})\frac{\tilde{u}^{N,1}_i(s)}{a(N)\sqrt{N}} +[\frac{\tau_1(\tilde{i})}{a(N)\sqrt{N}}\tilde{u}^{N,1}_i(s)+\frac{\tau_2(\tilde{i})}{a(N)\sqrt{N}}\tilde{u}^{N,2}_i(s)]\Phi_y(\tilde{i})\right) ds
\end{align*}
By Proposition \ref{prop:fluctuationestimateparticles1} with $\psi\equiv 1$, we have
\begin{align*}
\frac{1}{N}\sum_{i=1}^N \E\biggl[|R^{i,\epsilon,N}_1(t)|^2\biggr]&\leq C(T)[\epsilon^2+\frac{1}{N}],
\end{align*}
by Proposition 4.2 of \cite{BezemekSpiliopoulosAveraging2022} with $\psi\equiv 1$, we have
\begin{align*}
\frac{1}{N}\sum_{i=1}^N \E\biggl[|R^{i,\epsilon,N}_2(t)|^2\biggr]&\leq C(T)\epsilon^2,
\end{align*}
by Proposition \ref{prop:purpleterm1} with $\psi\equiv 1$, we have
\begin{align*}
\frac{1}{N}\sum_{i=1}^N \E\biggl[|R^{i,\epsilon,N}_3(t)|^2\biggr]&\leq C(T)[\epsilon^2+\frac{1}{N^2}],
\end{align*}
and by Proposition 4.3 of \cite{BezemekSpiliopoulosAveraging2022} with $\psi\equiv 1$, we have
\begin{align*}
\frac{1}{N}\sum_{i=1}^N \E\biggl[|R^{i,\epsilon,N}_4(t)|^2\biggr]&\leq C(T)\epsilon^2.
\end{align*}

Here we are using that, under the assumed regularity of $\Phi$ and $\chi$ imposed by Assumptions \ref{assumption:multipliedpolynomialgrowth} and \ref{assumption:qF2bound} respectively along with the result of Lemma \ref{lemma:barYuniformbound}, the norm can brought inside the expectation in Propositions 4.2 and 4.3 of \cite{BezemekSpiliopoulosAveraging2022} with little modification to the proofs.
Finally,  since under Assumption \ref{assumption:multipliedpolynomialgrowth} $\Phi_y$ is bounded:
\begin{align*}
\frac{1}{N}\sum_{i=1}^N\E\biggl[|R^{i,\epsilon,N}_5(t)|^2\biggr]& \leq C \frac{1}{a^2(N)N} \frac{1}{N}\sum_{i=1}^N\E\biggl[\int_0^T |\tilde{u}^{N,1}_i(s)|^2+|\tilde{u}^{N,2}_i(s)|^2 ds \biggr]
\leq C \frac{1}{a^2(N)N}
\end{align*}
by the assumed bound on the controls  \eqref{eq:controlassumptions0}. Now we see that, by It\^o Isometry:
\begin{align*}
&\frac{1}{N}\sum_{i=1}^N\E\biggl[|\tilde{X}^{i,\epsilon,N}_t-\bar{X}^{i,\epsilon}_t|^2\biggr] \leq \frac{1}{N}\sum_{i=1}^N\E\biggl[\biggl|\int_0^t\left(\gamma(\tilde{X}^{i,\epsilon,N}_s,\tilde{Y}^{i,\epsilon,N}_s,\tilde{\mu}^{\epsilon,N}_s)-\gamma(\bar{X}^{i,\epsilon}_s,\bar{Y}^{i,\epsilon}_s,\mc{L}(\bar{X}^\epsilon_s))\right)ds\biggr|^2\biggr] \\
&+ \frac{1}{N}\sum_{i=1}^N\E\biggl[\int_0^t\biggl|[\sigma+\tau_1\Phi_y](\tilde{X}^{i,\epsilon,N}_s,\tilde{Y}^{i,\epsilon,N}_s,\tilde{\mu}^{\epsilon,N}_s) - [\sigma+\tau_1\Phi_y](\bar{X}^{i,\epsilon}_s,\bar{Y}^{i,\epsilon}_s,\mc{L}(\bar{X}^\epsilon_s))\biggr|^2  ds\biggr]\\
&+ \frac{1}{N}\sum_{i=1}^N\E\biggl[\int_0^t\biggl|[\tau_2\Phi_y](\tilde{X}^{i,\epsilon,N}_s,\tilde{Y}^{i,\epsilon,N}_s,\tilde{\mu}^{\epsilon,N}_s) - [\tau_2\Phi_y](\bar{X}^{i,\epsilon}_s,\bar{Y}^{i,\epsilon}_s,\mc{L}(\bar{X}^\epsilon_s))\biggr|^2  ds\biggr] + R^N(t)
\end{align*}
where $R^N(t)\leq C(T)[\epsilon^2 + \frac{1}{N}]$. We handle the terms from the martingales first.
\begin{align*}
&\frac{1}{N}\sum_{i=1}^N\E\biggl[\int_0^t\biggl|[\sigma+\tau_1\Phi_y](\tilde{X}^{i,\epsilon,N}_s,\tilde{Y}^{i,\epsilon,N}_s,\tilde{\mu}^{\epsilon,N}_s) - [\sigma+\tau_1\Phi_y](\bar{X}^{i,\epsilon}_s,\bar{Y}^{i,\epsilon}_s,\mc{L}(\bar{X}^\epsilon_s))\biggr|^2  ds\biggr]\\
&\leq \frac{C}{N}\sum_{i=1}^N\biggl\lbrace \E\biggl[\int_0^t\biggl|[\sigma+\tau_1\Phi_y](\tilde{X}^{i,\epsilon,N}_s,\tilde{Y}^{i,\epsilon,N}_s,\tilde{\mu}^{\epsilon,N}_s) - [\sigma+\tau_1\Phi_y](\bar{X}^{i,\epsilon}_s,\bar{Y}^{i,\epsilon}_s,\bar{\mu}^{\epsilon,N})\biggr|^2  ds\biggr] \\
&+\E\biggl[\int_0^t\biggl|[\sigma+\tau_1\Phi_y](\bar{X}^{i,\epsilon}_s,\bar{Y}^{i,\epsilon}_s,\bar{\mu}^{\epsilon,N}) - [\sigma+\tau_1\Phi_y](\bar{X}^{i,\epsilon}_s,\bar{Y}^{i,\epsilon}_s,\mc{L}(\bar{X}^\epsilon_s))\biggr|^2  ds\biggr]\biggr\rbrace \\
&\leq\frac{C}{N}\sum_{i=1}^N\E\biggl[\int_0^t|\tilde{X}^{i,\epsilon,N}_s-\bar{X}^{i,\epsilon}_s|^2+|\tilde{Y}^{i,\epsilon,N}_s-\bar{Y}^{i,\epsilon}_s|^2 ds\biggr]+\frac{C}{N}\\
&\text{ by Lipschitz continuity from Assumption \ref{assumption:uniformLipschitzxmu} and Assumption \ref{assumption:2unifboundedlinearfunctionalderivatives} with Lemma \ref{lemma:rocknersecondlinfunctderimplication}}\\
&\leq C\biggl\lbrace\epsilon^2+ \frac{1}{N}+\frac{1}{Na^2(N)} + \int_0^t\sup_{\tau \in [0,s]}\E\biggl[ \frac{1}{N}\sum_{i=1}^N\biggl|\tilde{X}^{i,\epsilon,N}_\tau-\bar{X}^{i,\epsilon}_\tau\biggr|^2\biggr]ds\biggr\rbrace\\
&\text{ by Lemma \ref{lemma:tildeYbarYdifference} }.
\end{align*}
The exact same proof and bound applies to
\begin{align*}
\frac{1}{N}\sum_{i=1}^N\E\biggl[\int_0^t\biggl|[\tau_2\Phi_y](\tilde{X}^{i,\epsilon,N}_s,\tilde{Y}^{i,\epsilon,N}_s,\tilde{\mu}^{\epsilon,N}_s) - [\tau_2\Phi_y](\bar{X}^{i,\epsilon}_s,\bar{Y}^{i,\epsilon}_s,\mc{L}(\bar{X}^\epsilon_s))\biggr|^2  ds\biggr].
\end{align*}

In a similar manner:
\begin{align*}
&\frac{1}{N}\sum_{i=1}^N\E\biggl[\biggl|\int_0^t\left(\gamma(\tilde{X}^{i,\epsilon,N}_s,\tilde{Y}^{i,\epsilon,N}_s,\tilde{\mu}^{\epsilon,N}_s)-\gamma(\bar{X}^{i,\epsilon}_s,\bar{Y}^{i,\epsilon}_s,\mc{L}(\bar{X}^\epsilon_s))\right)ds\biggr|^2\biggr]\\
&\leq \frac{C(T)}{N}\sum_{i=1}^N\biggl\lbrace \E\biggl[\int_0^t\biggl|\gamma(\tilde{X}^{i,\epsilon,N}_s,\tilde{Y}^{i,\epsilon,N}_s,\tilde{\mu}^{\epsilon,N}_s) - \gamma(\bar{X}^{i,\epsilon}_s,\bar{Y}^{i,\epsilon}_s,\bar{\mu}^{\epsilon,N})\biggr|^2  ds\biggr] \\
&\quad+\E\biggl[\int_0^t\biggl|\gamma(\bar{X}^{i,\epsilon}_s,\bar{Y}^{i,\epsilon}_s,\bar{\mu}^{\epsilon,N}) - \gamma(\bar{X}^{i,\epsilon}_s,\bar{Y}^{i,\epsilon}_s,\mc{L}(\bar{X}^\epsilon_s))\biggr|^2  ds\biggr]\biggr\rbrace \\
&\leq\frac{C(T)}{N}\sum_{i=1}^N\E\biggl[\int_0^t|\tilde{X}^{i,\epsilon,N}_s-\bar{X}^{i,\epsilon}_s|^2+|\tilde{Y}^{i,\epsilon,N}_s-\bar{Y}^{i,\epsilon}_s|^2 ds\biggr]+\frac{C(T)}{N}\\
&\leq C(T)\biggl\lbrace\epsilon^2+ \frac{1}{N}+\frac{1}{Na^2(N)} + \int_0^t\sup_{\tau \in [0,s]}\E\biggl[ \frac{1}{N}\sum_{i=1}^N\biggl|\tilde{X}^{i,\epsilon,N}_\tau-\bar{X}^{i,\epsilon}_\tau\biggr|^2\biggr]ds\biggr\rbrace.
\end{align*}

Collecting these bounds, we have for all $p\in [0,T]$:
\begin{align*}
\frac{1}{N}\sum_{i=1}^N\E\biggl[|\tilde{X}^{i,\epsilon,N}_p-\bar{X}^{i,\epsilon}_p|^2\biggr]&\leq C_1(T)\int_0^p\sup_{\tau \in [0,s]}\E\biggl[ \frac{1}{N}\sum_{i=1}^N\biggl|\tilde{X}^{i,\epsilon,N}_\tau-\bar{X}^{i,\epsilon}_\tau\biggr|^2\biggr]ds + C_2(T)[\epsilon^2+ \frac{1}{N}+\frac{1}{Na^2(N)}]
\end{align*}
so
\begin{align*}
\sup_{p\in [0,t]}\frac{1}{N}\sum_{i=1}^N\E\biggl[|\tilde{X}^{i,\epsilon,N}_p-\bar{X}^{i,\epsilon}_p|^2\biggr]&\leq C(T)\left[\int_0^t\sup_{\tau \in [0,s]}\E\biggl[ \frac{1}{N}\sum_{i=1}^N\biggl|\tilde{X}^{i,\epsilon,N}_\tau-\bar{X}^{i,\epsilon}_\tau\biggr|^2\biggr]ds + [\epsilon^2+ \frac{1}{N}+\frac{1}{Na^2(N)}]\right]
\end{align*}
and by Gronwall's inequality:
\begin{align*}
\sup_{p\in [0,T]}\frac{1}{N}\sum_{i=1}^N\E\biggl[|\tilde{X}^{i,\epsilon,N}_p-\bar{X}^{i,\epsilon}_p|^2\biggr]&\leq C(T)[\epsilon^2+ \frac{1}{N}+\frac{1}{Na^2(N)}].
\end{align*}
\end{proof}
\subsection{Tightness of $\tilde{Z}^N$}\label{subsec:tightnesstildeZN}
We now have the tools to prove tightness of $\br{\tilde{Z}^N}$ from Equation \eqref{eq:controlledempmeasure}. We first prove a uniform-in-time bound on $\langle \tilde{Z}^N_t,\phi\rangle$ in terms of $|\cdot|_4$ (recall Equation \eqref{eq:boundedderivativesseminorm}) in Lemma \ref{lemma:Zboundbyphi4}. Then, using the results from Section \ref{sec:ergodictheoremscontrolledsystem}, we provide a prelimit representation for $\tilde{Z}^N$ which is a priori $\mc{O}(1)$ in $\epsilon , N$ in Lemma \ref{lemma:Lnu1nu2representation}. Combining these two lemmas, we are then able to establish tightness via the methods of \cite{Mitoma} in Proposition \ref{prop:tildeZNtightness}.
\begin{lemma}\label{lemma:Zboundbyphi4}
Under Assumptions \ref{assumption:uniformellipticity}-  \ref{assumption:2unifboundedlinearfunctionalderivatives}, there exists $C$ independent of $N$ such that for all $\phi\in C_{b,L}^4(\R)$
\begin{align*}
\sup_{N\in\bb{N}}\sup_{t\in [0,T]}\E\biggl[|\langle \tilde{Z}^N_t,\phi\rangle |^2 \biggr]&\leq C(T)|\phi|^2_4.
\end{align*}
\end{lemma}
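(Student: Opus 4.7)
The plan is to use the IID slow-fast McKean-Vlasov system $\br{\bar{X}^{i,\epsilon}}_{i=1}^N$ from Equation \eqref{eq:IIDparticles} as an intermediate process, in the spirit of the coupling strategy described in Remark \ref{remark:ontheiidsystem}. Writing out $\langle\tilde{Z}^N_t,\phi\rangle=a(N)\sqrt{N}\left[\tfrac{1}{N}\sum_{i=1}^N\phi(\tilde{X}^{i,\epsilon,N}_t)-\E[\phi(X_t)]\right]$ and inserting $\pm \tfrac{1}{N}\sum_{i=1}^N\phi(\bar{X}^{i,\epsilon}_t)\pm\E[\phi(\bar{X}^{1,\epsilon}_t)]$, we obtain the decomposition
\begin{align*}
\langle\tilde{Z}^N_t,\phi\rangle &= \underbrace{\frac{a(N)}{\sqrt{N}}\sum_{i=1}^N\bigl[\phi(\tilde{X}^{i,\epsilon,N}_t)-\phi(\bar{X}^{i,\epsilon}_t)\bigr]}_{=: T^N_1(t)} + \underbrace{a(N)\sqrt{N}\Bigl[\tfrac{1}{N}\sum_{i=1}^N\phi(\bar{X}^{i,\epsilon}_t)-\E[\phi(\bar{X}^{1,\epsilon}_t)]\Bigr]}_{=: T^N_2(t)} \\
&\qquad + \underbrace{a(N)\sqrt{N}\bigl[\E[\phi(\bar{X}^{1,\epsilon}_t)]-\E[\phi(X_t)]\bigr]}_{=: T^N_3(t)}.
\end{align*}
Each term will be bounded separately, with the target bound $C(T)|\phi|_4^2$ coming from $T^N_3$ while $T^N_1,T^N_2$ will only require $|\phi|_1$ and $|\phi|_0$ respectively.

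For $T^N_1(t)$, I would use that $\phi$ is Lipschitz with constant $|\phi|_1$, followed by Cauchy–Schwarz and Lemma \ref{lemma:XbartildeXdifference}:
\begin{align*}
\E[|T^N_1(t)|^2] \leq a^2(N) N |\phi|_1^2\cdot \E\Bigl[\tfrac{1}{N}\sum_{i=1}^N|\tilde{X}^{i,\epsilon,N}_t-\bar{X}^{i,\epsilon}_t|^2\Bigr] \leq C(T)|\phi|_1^2\bigl[a^2(N)N\epsilon^2+a^2(N)+1\bigr],
\end{align*}
which is uniformly bounded in $N$ since $a(N)\to 0$ and $a(N)\sqrt{N}\epsilon\to 0$ by the scaling hypothesis on $a(N)$. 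For $T^N_2(t)$, the key observation is that the $\br{\bar{X}^{i,\epsilon}}_{i=1}^N$ are IID, so $T^N_2(t)$ is an empirical-mean fluctuation of i.i.d.\ bounded random variables and elementary computation gives $\E[|T^N_2(t)|^2]=a^2(N)\mathrm{Var}(\phi(\bar{X}^{1,\epsilon}_t))\leq a^2(N)|\phi|_0^2\leq |\phi|_4^2$ for $N$ large. For $T^N_3(t)$, I would apply Theorem \ref{theo:mckeanvlasovaveraging} (valid for $\phi\in C_{b,L}^4(\R)$), which yields $|T^N_3(t)|\leq a(N)\sqrt{N}\cdot\epsilon\,C(T)|\phi|_4$, hence $\E[|T^N_3(t)|^2]\leq (a(N)\sqrt{N}\epsilon)^2 C(T)|\phi|_4^2\leq C(T)|\phi|_4^2$, again by the scaling assumption.

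Combining the three bounds via $|a+b+c|^2\leq 3(|a|^2+|b|^2+|c|^2)$, taking the supremum in $t\in[0,T]$ (all three bounds are already uniform in $t$), and then the supremum in $N\in\bb{N}$ finishes the proof. The only mild subtlety is to check the compatibility of the scaling hypothesis $a(N)\sqrt{N}\epsilon(N)^\rho\to\lambda$ with the requirement $a(N)\sqrt{N}\epsilon(N)\to 0$, which is immediate for $\rho\in(0,1)$. No step is truly hard once Lemma \ref{lemma:XbartildeXdifference} and Theorem \ref{theo:mckeanvlasovaveraging} are in hand; the lemma is essentially an assembly step that exploits the commutativity of the $N\to\infty$ and $\epsilon\downarrow 0$ limits at the level of the law of large numbers.
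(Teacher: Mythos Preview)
Your proposal is correct and follows essentially the same route as the paper: the paper also inserts the IID slow-fast particles $\bar{X}^{i,\epsilon}$, bounds the $\tilde{X}$-to-$\bar{X}$ piece via Lipschitzness of $\phi$ and Lemma \ref{lemma:XbartildeXdifference}, and handles the $\bar{X}$-to-$\mc{L}(X)$ piece via the IID variance computation together with Theorem \ref{theo:mckeanvlasovaveraging}. The only cosmetic difference is that the paper splits into two terms (and then expands the second using the exact IID second-moment identity) rather than three, but the estimates and the ingredients invoked are identical.
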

\begin{proof}
Let $\bar{\mu}^{\epsilon,N}_t$ be as in Equation \eqref{eq:IIDempiricalmeasure}. Then, by triangle inequality
\begin{align*}
\E\biggl[|\langle \tilde{Z}^N_t,\phi\rangle |^2 \biggr] 
&\leq 2a^2(N)N\E\biggl[|\langle \tilde{\mu}^{\epsilon,N}_t,\phi\rangle -  \langle \bar{\mu}^{\epsilon,N}_t,\phi\rangle|^2 \biggr] + 2a^2(N)N\E\biggl[|\langle \bar{\mu}^{\epsilon,N}_t,\phi\rangle -  \langle \mc{L}(X_t),\phi\rangle|^2 \biggr].
\end{align*}

For the first term:
\begin{align*}
a^2(N)N\E\biggl[|\langle \tilde{\mu}^{\epsilon,N}_t,\phi\rangle -  \langle \bar{\mu}^{\epsilon,N}_t,\phi\rangle|^2 \biggr] & = a^2(N)N\E\biggl[\biggl|\frac{1}{N}\sum_{i=1}^N \phi(\tilde{X}^{i,\epsilon,N}_t)-\phi(\bar{X}^{i,\epsilon}_t)\biggr|^2 \biggr]\\
&\leq a^2(N)N\frac{1}{N}\sum_{i=1}^N \E\biggl[\biggl|\phi(\tilde{X}^{i,\epsilon,N}_t)-\phi(\bar{X}^{i,\epsilon}_t)\biggr|^2 \biggr]\\
&\leq  a^2(N)N\frac{1}{N}\sum_{i=1}^N  \E\biggl[\biggl|\tilde{X}^{i,\epsilon,N}_t-\bar{X}^{i,\epsilon}_t\biggr|^2 \biggr]\norm{\phi'}^2_\infty\\
&\leq C(T)[\epsilon^2 a^2(N)N+a^2(N)+1]\norm{\phi'}^2_\infty\\
&\leq C(T)\norm{\phi'}^2_\infty,
\end{align*}
where in the second to last inequality we used Lemma \ref{lemma:XbartildeXdifference}.

For the second term, we use that $\br{\bar{X}^{i,\epsilon}}_{i\in\bb{N}}$ are IID to see:
\begin{align*}
a^2(N)N\E\biggl[|\langle \bar{\mu}^{\epsilon,N}_t,\phi\rangle -  \langle \mc{L}(X_t),\phi\rangle|^2 \biggr] & = a^2(N)N\E\biggl[\biggl|\frac{1}{N}\sum_{i=1}^N \phi(\bar{X}^{i,\epsilon}_t)  - \E[\phi(X_t)]\biggr|^2 \biggr]\\
& = a^2(N)\biggl\lbrace (N-1)(\E[\phi(\bar{X}^{\epsilon}_t] - \E[\phi(X_t)])^2 \\
&+ \E\biggl[\biggl|\phi(\bar{X}^{\epsilon}_t) - \E[\phi(X_t)]\biggr|^2\biggr] \biggr\rbrace \\
& \leq a^2(N)\biggl\lbrace N(\E[\phi(\bar{X}^{\epsilon}_t)] - \E[\phi(X_t)])^2 + 4\norm{\phi}_\infty^2 \biggr\rbrace \\
&\leq a^2(N)N \epsilon^2 C(T)|\phi|^2_4+4a^2(N)\norm{\phi}^2_\infty,
\end{align*}
where in the last inequality we used Theorem \ref{theo:mckeanvlasovaveraging}. This bound vanishes as $N\toinf$.
\end{proof}

\begin{lemma}\label{lemma:Lnu1nu2representation}
Assume \ref{assumption:uniformellipticity}-\ref{assumption:forcorrectorproblem}, \ref{assumption:limitingcoefficientsLionsDerivatives}, and \ref{assumption:limitingcoefficientsregularity}.
Define $\bar{L}_{\nu_1,\nu_2}$ to be the operator parameterized by $\nu_1,\nu_2\in\mc{P}_2(\R)$ which acts on $\phi\in C^2_b(\R)$ by
\begin{align}\label{eq:Lnu1nu2}
\bar{L}_{\nu_1,\nu_2}\phi(x) & = \bar{\gamma}(x,\nu_2)\phi'(x)+\bar{D}(x,\nu_2)\phi''(x)\\
&+\int_\R \int_0^1 \frac{\delta}{\delta m}\bar{\gamma}(z,(1-r)\nu_1+r\nu_2)[x]\phi'(z)+ \frac{\delta}{\delta m}\bar{D}(z,(1-r)\nu_1+r\nu_2)[x]\phi''(z) dr\nu_1(dz).\nonumber
\end{align}
Here we recall $\bar{\gamma},\bar{D}$ from Equation \eqref{eq:averagedlimitingcoefficients}, the linear functional derivative from Definition \ref{def:LinearFunctionalDerivative}, $\Phi$ from Equation \eqref{eq:cellproblemold}, and the occupation measures $Q^N$ from Equation \eqref{eq:occupationmeasures}.
Then we have the representation: for $\phi\in C^\infty_c(\R)$ and $t\in[0,T]$:
\begin{align*}
\langle \tilde{Z}^N_t,\phi\rangle &=\int_0^t \langle \tilde{Z}^N_s,\bar{L}_{\mc{L}(X_s),\tilde{\mu}^{\epsilon,N}_s}\phi(\cdot)\rangle ds+\int_{\R\times\R\times\R^2\times[0,t]} \sigma(x,y,\tilde{\mu}^{\epsilon,N}_s)z_1 \phi'(x)Q^N(dx,dy,dz,ds)\\
&+\int_{\R\times\R\times\R^2\times[0,t]} [\tau_1(x,y,\tilde{\mu}^{\epsilon,N}_s)z_1+\tau_2(x,y,\tilde{\mu}^{\epsilon,N}_s)z_2]\Phi_y(x,y,\tilde{\mu}^{\epsilon,N}_s)\phi'(x)Q^N(dx,dy,dz,ds) +R^N_t(\phi)
\end{align*}
where
\begin{align*}
\E\biggl[\sup_{t\in[0,T]}\biggl| R^N_t(\phi)\biggr|\biggr]\leq \bar{R}(N,T)|\phi|_4,
\end{align*}
$\bar{R}(N,T)\tto 0$ as $N\toinf$, and $\bar{R}(N,T)$ is independent of $\phi$.
\end{lemma}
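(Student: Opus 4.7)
The plan is to apply It\^o's formula to $\phi(\tilde{X}^{i,\epsilon,N}_t)$ for each $i$, sum and multiply by $a(N)\sqrt{N}$ to obtain a prelimit representation for $\langle a(N)\sqrt{N}\tilde{\mu}^{\epsilon,N}_t,\phi\rangle$, and then use the ergodic-type theorems of Section \ref{sec:ergodictheoremscontrolledsystem} to replace all $y$-dependent integrands by their invariant-measure averages. Concretely, standard It\^o's formula yields a term of the form $\frac{a(N)}{\sqrt N}\sum_i\int_0^t \frac{1}{\epsilon}b(\tilde{i})\phi'(\tilde{X}^{i,\epsilon,N}_s)\,ds$ which I will dispose of by invoking Proposition \ref{prop:fluctuationestimateparticles1} with $\psi=\phi'$. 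This replaces it (up to an error $\to0$) with the sum of $\gamma_1 \phi' + D_1 \phi'' + \Phi_y[\tau_1 z_1+\tau_2 z_2]\phi'$ along trajectories (which after integrating over the occupation measure yields the $\Phi_y$-terms in the statement), plus martingale pieces controlled by Burkholder-Davis-Gundy and the extra $a(N)\sqrt{N}/\sqrt{N}=a(N)\to 0$ prefactor, plus the doubled-corrector term $\frac{1}{N}\sum_j b(j)\partial_\mu\Phi(i)[j]\phi'$. The latter is killed by Proposition \ref{prop:purpleterm1}.

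Next, after this substitution the drift in the $\tilde{X}^{i,\epsilon,N}$ part contributes $\gamma_1(\tilde{i})+c(\tilde{i})=\gamma(\tilde i)$ multiplying $\phi'$, and the $\frac{1}{2}\sigma^2(\tilde i)\phi''$ from It\^o's second-order term combines with $D_1$ to form $D(\tilde i)\phi''$. I then apply Proposition \ref{prop:llntypefluctuationestimate1} twice: once with $F=\gamma$ and $\psi=\phi'$ (needing $|\phi'|_2=\|\phi'\|_\infty+\|\phi''\|_\infty+\|\phi'''\|_\infty$), once with $F=D$ and $\psi=\phi''$ (needing $|\phi''|_2$, hence the $|\phi|_4$ dependence in the final bound). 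This replaces $\gamma,D$ by $\bar\gamma(\cdot,\tilde\mu^{\epsilon,N}_s),\bar D(\cdot,\tilde\mu^{\epsilon,N}_s)$ with remainder $O(\epsilon)|\phi|_4$. The control-coupled terms $\sigma(\tilde i)\frac{\tilde u^{N,1}_i}{a(N)\sqrt N}\phi'$ from It\^o's formula combine with the $\Phi_y$-control terms from Proposition \ref{prop:fluctuationestimateparticles1} to produce precisely the two $Q^N$-integrals in the statement, via the definition \eqref{eq:occupationmeasures} of $Q^N$. The $\sigma dW^i$ martingale from It\^o, after multiplication by $a(N)\sqrt N$ and summation in $i$, is estimated by BDG to be $O(a(N))$ in $L^2$ uniformly on $[0,T]$ and hence absorbed into $R^N_t(\phi)$.

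At this point the representation reads
\begin{align*}
\langle \tilde Z^N_t,\phi\rangle &= a(N)\sqrt N\!\!\int_0^t\!\! \bigl[\langle \tilde\mu^{\epsilon,N}_s,\bar\gamma(\cdot,\tilde\mu^{\epsilon,N}_s)\phi'+\bar D(\cdot,\tilde\mu^{\epsilon,N}_s)\phi''\rangle - \langle \mc L(X_s),\bar\gamma(\cdot,\mc L(X_s))\phi'+\bar D(\cdot,\mc L(X_s))\phi''\rangle\bigr]ds\\
&\quad+\text{(the two }Q^N\text{-integrals)}+R^N_t(\phi),
\end{align*}
where I have used that $X_t$ in \eqref{eq:LLNlimitold} satisfies $d\langle \mc L(X_s),\phi\rangle=\langle \mc L(X_s),\bar\gamma(\cdot,\mc L(X_s))\phi'+\bar D(\cdot,\mc L(X_s))\phi''\rangle ds$. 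I split the bracketed difference as $\langle\tilde\mu^{\epsilon,N}_s-\mc L(X_s),\bar\gamma(\cdot,\tilde\mu^{\epsilon,N}_s)\phi'\rangle+\langle\mc L(X_s),[\bar\gamma(\cdot,\tilde\mu^{\epsilon,N}_s)-\bar\gamma(\cdot,\mc L(X_s))]\phi'\rangle$ (and analogously for $\bar D$), and use the fundamental theorem of calculus along the line $r\mapsto (1-r)\mc L(X_s)+r\tilde\mu^{\epsilon,N}_s$ together with the linear functional derivative identity (Appendix \ref{Appendix:LionsDifferentiation}) to write
\[
a(N)\sqrt N[\bar\gamma(z,\tilde\mu^{\epsilon,N}_s)-\bar\gamma(z,\mc L(X_s))]=\int_0^1 \Bigl\langle \tilde Z^N_s,\tfrac{\delta}{\delta m}\bar\gamma(z,(1-r)\mc L(X_s)+r\tilde\mu^{\epsilon,N}_s)[\cdot]\Bigr\rangle dr,
\]
so that upon pairing with $\phi'(z)\mc L(X_s)(dz)$ and summing with the first piece one obtains exactly $\int_0^t\langle \tilde Z^N_s,\bar L_{\mc L(X_s),\tilde\mu^{\epsilon,N}_s}\phi\rangle ds$ from \eqref{eq:Lnu1nu2}.

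The main obstacle is the bookkeeping of the remainder $R^N_t(\phi)$: it aggregates $L^1$-remainders from Propositions \ref{prop:fluctuationestimateparticles1}, \ref{prop:purpleterm1}, \ref{prop:llntypefluctuationestimate1} (of order $\epsilon a(N)\sqrt N$ or $\epsilon^2 a(N)\sqrt N$), the BDG contribution from the slow $\sigma dW^i$ martingales (of order $a(N)$), and an error $O(1/N)$ coming from treating $\tilde\mu^{\epsilon,N}_s$ in the second-derivative argument of the linear functional derivative by a uniform Lipschitz bound on $\frac{\delta}{\delta m}\bar\gamma,\frac{\delta}{\delta m}\bar D$ (Assumption \ref{assumption:limitingcoefficientsregularity}) together with the second-moment bound $\E[\|\tilde\mu^{\epsilon,N}_s-\mc L(X_s)\|]^2=O(1/N+\epsilon^2)$ obtainable from Lemmas \ref{lemma:XbartildeXdifference} and \ref{lemma:W2convergenceoftildemu}. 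Each of these contributions is at worst a constant times $|\phi|_4$ (the highest derivative entering via $\psi=\phi''$ in Proposition \ref{prop:llntypefluctuationestimate1}), uniform in $s\in[0,T]$, and tends to $0$ as $N\to\infty$ under the assumed scaling $a(N)\sqrt N\epsilon^\rho\to\lambda$ for some $\rho\in(0,1)$, giving the announced estimate $\E[\sup_{t\in[0,T]}|R^N_t(\phi)|]\le \bar R(N,T)|\phi|_4$ with $\bar R(N,T)\to 0$ independent of $\phi$.
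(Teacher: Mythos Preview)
Your proposal is correct and follows essentially the same route as the paper: It\^o's formula on $\phi(\tilde X^{i,\epsilon,N}_t)$, then Propositions \ref{prop:fluctuationestimateparticles1} and \ref{prop:purpleterm1} to handle the $\frac{1}{\epsilon}b$ and $\partial_\mu\Phi$ terms, BDG for the martingales (yielding $O(a(N))$), Proposition \ref{prop:llntypefluctuationestimate1} with $F=\gamma$ and $F=D$ to pass to $\bar\gamma,\bar D$, and finally the linear functional derivative identity to produce $\bar L_{\mc L(X_s),\tilde\mu^{\epsilon,N}_s}$.

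One small point: the ``error $O(1/N)$ coming from treating $\tilde\mu^{\epsilon,N}_s$ in the second-derivative argument of the linear functional derivative'' that you list among the remainder contributions is spurious. The identity
\[
a(N)\sqrt N\bigl[\bar\gamma(z,\tilde\mu^{\epsilon,N}_s)-\bar\gamma(z,\mc L(X_s))\bigr]=\int_0^1\Bigl\langle \tilde Z^N_s,\tfrac{\delta}{\delta m}\bar\gamma(z,(1-r)\mc L(X_s)+r\tilde\mu^{\epsilon,N}_s)[\cdot]\Bigr\rangle dr
\]
is \emph{exact} by Definition \ref{def:LinearFunctionalDerivative}, and the operator $\bar L_{\nu_1,\nu_2}$ in \eqref{eq:Lnu1nu2} is designed precisely to absorb the $r$-interpolated measure argument. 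No Lipschitz approximation or invocation of Lemmas \ref{lemma:XbartildeXdifference}/\ref{lemma:W2convergenceoftildemu} is needed at this step; the representation holds with remainder coming only from the ergodic-type propositions and the martingale terms, exactly as in the paper.
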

\begin{proof}
Recall $\tilde{\mu}^{N,\epsilon}$ from Equation \eqref{eq:controlledempmeasure}, $X_t$ from Equation \eqref{eq:LLNlimitold}, and that $\tilde{Z}^N=a(N)\sqrt{N}[\tilde{\mu}^{N,\epsilon}-\mc{L}(X)].$

By It\^o's formula,
\begin{align*}
\langle \tilde{\mu}^{\epsilon,N}_t,\phi\rangle & = \phi(x) + \frac{1}{N}\sum_{i=1}^N \int_0^t\left( \frac{1}{\epsilon}b(i)\phi'(\tilde{X}^{i,\epsilon,N}_s) + c(i)\phi'(\tilde{X}^{i,\epsilon,N}_s) + \frac{1}{2}\sigma^2(i)\phi''(\tilde{X}^{i,\epsilon,N}_s) + \sigma(i)\frac{\tilde{u}^{N,1}_i(s)}{a(N)\sqrt{N}}\phi'(\tilde{X}^{i,\epsilon,N}_s)\right)ds \\
&\quad+ \int_0^t \sigma(i)\phi'(\tilde{X}^{i,\epsilon,N}_s)dW^i_s
\end{align*}
where $(i)$ denotes the argument $(\tilde{X}^{i,\epsilon,N}_s,\tilde{Y}^{i,\epsilon,N}_s,\tilde{\mu}^{\epsilon,N}_s)$ and
\begin{align*}
\langle \mc{L}(X_s),\phi\rangle & = \phi(x)+\E\biggl[\int_0^t \left(\bar{\gamma}(X_s,\mc{L}(X_s))\phi'(X_s)+ \bar{D}(X_s,\mc{L}(X_s))\phi''(X_s)\right)ds + \int_0^t\sqrt{2}\bar{D}^{1/2}(X_s,\mc{L}(X_s)\phi'(X_s)dW_s \biggr]\\
& = \phi(x)+\E\biggl[\int_0^t\left(\bar{\gamma}(X_s,\mc{L}(X_s))\phi'(X_s)+ \bar{D}(X_s,\mc{L}(X_s))\phi''(X_s)\right)ds\biggr]
\end{align*}
since $\bar{D}^{1/2}$ is bounded as per Assumption \ref{assumption:limitingcoefficientsLionsDerivatives}. Then
\begin{align*}
&\langle \tilde{Z}^N_t,\phi\rangle = \frac{a(N)}{\sqrt{N}}\sum_{i=1}^N\biggl\lbrace\int_0^t \left(\frac{1}{\epsilon}b(i)\phi'(\tilde{X}^{i,\epsilon,N}_s) + c(i)\phi'(\tilde{X}^{i,\epsilon,N}_s) + \frac{1}{2}\sigma^2(i)\phi''(\tilde{X}^{i,\epsilon,N}_s) + \sigma(i)\frac{\tilde{u}^{N,1}_i(s)}{a(N)\sqrt{N}}\phi'(\tilde{X}^{i,\epsilon,N}_s)\right)ds \\
&+ \int_0^t \sigma(i)\phi'(\tilde{X}^{i,\epsilon,N}_s)dW^i_s-\E\biggl[\int_0^t \left(\bar{\gamma}(X_s,\mc{L}(X_s))\phi'(X_s)+ \bar{D}(X_s,\mc{L}(X_s))\phi''(X_s)\right)ds\biggr] \biggr\rbrace \\
& = \frac{a(N)}{\sqrt{N}}\sum_{i=1}^N\biggl\lbrace \int_0^t\left(\gamma(i)\phi'(\tilde{X}^{i,\epsilon,N}_s) -\E\biggl[ \bar{\gamma}(X_s,\mc{L}(X_s))\phi'(X_s)\biggr]\right)ds + \int_0^t\left( D(i)\phi''(\tilde{X}^{i,\epsilon,N}_s) -\E\biggl[ \bar{D}(X_s,\mc{L}(X_s))\phi''(X_s)\biggr]\right)ds \\
&+ R^i_1(t)+R^i_2(t)+M^i(t)\biggr\rbrace+\int_{\R\times\R\times\R^2\times[0,t]} \sigma(x,y,\tilde{\mu}^{\epsilon,N}_s)z_1 \phi'(x)Q^N(dx,dy,dz,ds)\\
&+\int_{\R\times\R\times\R^2\times[0,t]}\left( [\tau_1(x,y,\tilde{\mu}^{\epsilon,N}_s)z_1+\tau_2(x,y,\tilde{\mu}^{\epsilon,N}_s)z_2]\Phi_y(x,y,\tilde{\mu}^{\epsilon,N}_s)\phi'(x)\right)Q^N(dx,dy,dz,ds)
\end{align*}
where here we recall $\gamma_1,\gamma,D,D_1$ from Equation \eqref{eq:limitingcoefficients} and:
\begin{align*}
R^i_1(t) &\coloneqq \int_0^t \biggl(\frac{1}{\epsilon}b(i)\phi'(\tilde{X}^{i,\epsilon,N}_s)- \int_0^t  \gamma_1(i)\phi'(\tilde{X}^{i,\epsilon,N}_s)+D_1(i)\phi''(\tilde{X}^{i,\epsilon,N}_s)] \\
&+[\frac{\tau_1(i)}{a(N)\sqrt{N}}\tilde{u}^{N,1}_i(s)+\frac{\tau_2(i)}{a(N)\sqrt{N}}\tilde{u}^{N,2}_i(s)]\Phi_y(i)\phi'(\tilde{X}^{i,\epsilon,N}_s)\biggr)ds\\
&-\int_0^t\tau_1(i)\Phi_y(i)\phi'(\tilde{X}^{i,\epsilon,N}_s)dW^i_s - \int_0^t \tau_2(i)\Phi_y(i)\phi'(\tilde{X}^{i,\epsilon,N}_s)dB^i_s-\int_0^t \frac{1}{N}\sum_{j=1}^N b(j)\partial_{\mu}\Phi(i)[j]\phi'(\tilde{X}^{i,\epsilon,N}_s)ds\\
R^i_2(t)&\coloneqq \int_0^t \frac{1}{N}\sum_{j=1}^N b(j)\partial_{\mu}\Phi(i)[j]\phi'(\tilde{X}^{i,\epsilon,N}_s)ds \\
M^i(t)&\coloneqq \int_0^t[\tau_1(i)\Phi_y(i)+\sigma(i)]\phi'(\tilde{X}^{i,\epsilon,N}_s)dW^i_s + \int_0^t \tau_2(i)\Phi_y(i)\phi'(\tilde{X}^{i,\epsilon,N}_s)dB^i_s.
\end{align*}

For $R^i_1(t)$, we have via Proposition \ref{prop:fluctuationestimateparticles1} that
\begin{align*}
\E\biggl[\sup_{t\in [0,T]}\frac{a(N)}{\sqrt{N}}\sum_{i=1}^N|R^i_1(t)| \biggr]&\leq C[\epsilon a(N)\sqrt{N}(1+T+T^{1/2})+a(N)T]|\phi|_3.
\end{align*}

For $R^i_2(t)$, we have via Proposition \ref{prop:purpleterm1} that
\begin{align*}
\E\biggl[\sup_{t\in [0,T]}\frac{a(N)}{\sqrt{N}}\sum_{i=1}^N|R^i_2(t)| \biggr]&\leq C[\epsilon a(N)\sqrt{N}(1+T+T^{1/2})+\frac{a(N)}{\sqrt{N}}T]|\phi|_3.
\end{align*}

For $M^i(t)$, we have by Burkholder-Davis-Gundy inequality that
\begin{align*}
\E\biggl[\sup_{t\in[0,T]}\biggl|\frac{a(N)}{\sqrt{N}}\sum_{i=1}^N M^i(t) \biggr|\biggr]&\leq C\frac{a(N)}{\sqrt{N}}\biggl\lbrace\E\biggl[\biggl(\sum_{i=1}^N\int_0^T[\tau_1(i)\Phi_y(i)+\sigma(i)]^2|\phi'(i)|^2 ds\biggr)^{1/2}\biggr] \\
&\qquad+\E\biggl[\biggl(\sum_{i=1}^N\int_0^T |\tau_2(i)\Phi_y(i)|^2|\phi'(i)|^2 ds\biggr)^{1/2}\biggr]  \biggr\rbrace \\
&\leq Ca(N)T^{1/2}|\phi|_1
\end{align*}
since the integrand it bounded by Assumptions \ref{assumption:uniformellipticity},\ref{assumption:gsigmabounded}, and \ref{assumption:multipliedpolynomialgrowth}.
Thus we have
\begin{align*}
&\langle \tilde{Z}^N_t,\phi\rangle  =\nonumber\\
&= \frac{a(N)}{\sqrt{N}}\sum_{i=1}^N\biggl\lbrace \int_0^t\left(\gamma(i)\phi'(\tilde{X}^{i,\epsilon,N}_s) -\E\biggl[ \bar{\gamma}(X_s,\mc{L}(X_s))\phi'(X_s)\biggr]\right)ds \nonumber\\
&+ \int_0^t\left( D(i)\phi''(\tilde{X}^{i,\epsilon,N}_s) -\E\biggl[ \bar{D}(X_s,\mc{L}(X_s))\phi''(X_s)\biggr]\right)ds \biggr\rbrace\\
&+\int_{\R\times\R\times\R^2\times[0,t]} \sigma(x,y,\tilde{\mu}^{\epsilon,N}_s)z_1 \phi'(x)Q^N(dx,dy,dz,ds)\\
&+\int_{\R\times\R\times\R^2\times[0,t]} [\tau_1(x,y,\tilde{\mu}^{\epsilon,N}_s)z_1+\tau_2(x,y,\tilde{\mu}^{\epsilon,N}_s)z_2]\Phi_y(x,y,\tilde{\mu}^{\epsilon,N}_s)\phi'(x)Q^N(dx,dy,dz,ds) +R_t^{3,N}(\phi) \\
\end{align*}
where $\E\biggl[\sup_{t\in[0,T]}|R_t^{3,N}(\phi)|\biggr]\leq C(T)\epsilon a(N)\sqrt{N}\vee a(N)|\phi|_3$. We rewrite this as:
\begin{align*}
\langle \tilde{Z}^N_t,\phi\rangle &= \frac{a(N)}{\sqrt{N}}\sum_{i=1}^N\biggl\lbrace \int_0^t\left(\bar{\gamma}(\tilde{X}^{i,\epsilon,N}_s,\tilde{\mu}^{i,\epsilon,N}_s)\phi'(\tilde{X}^{i,\epsilon,N}_s) -\E\biggl[ \bar{\gamma}(X_s,\mc{L}(X_s))\phi'(X_s)\biggr]\right)ds \\
&+ \int_0^t \left(\bar{D}(\tilde{X}^{i,\epsilon,N}_s,\tilde{\mu}^{i,\epsilon,N}_s)\phi''(\tilde{X}^{i,\epsilon,N}_s) -\E\biggl[ \bar{D}(X_s,\mc{L}(X_s))\phi''(X_s)\biggr]\right)ds +R_4^i(t)+R_5^i(t)\biggr\rbrace\\
&+\int_{\R\times\R\times\R^2\times[0,t]} \sigma(x,y,\tilde{\mu}^{\epsilon,N}_s)z_1 \phi'(x)Q^N(dx,dy,dz,ds)\\
&+\int_{\R\times\R\times\R^2\times[0,t]} [\tau_1(x,y,\tilde{\mu}^{\epsilon,N}_s)z_1+\tau_2(x,y,\tilde{\mu}^{\epsilon,N}_s)z_2]\Phi_y(x,y,\tilde{\mu}^{\epsilon,N}_s)\phi'(x)Q^N(dx,dy,dz,ds) +R_t^{3,N}(\phi)
\end{align*}
where
\begin{align*}
R_4^i(t) & = \int_0^t [\gamma(\tilde{X}^{i,\epsilon,N}_s,\tilde{Y}^{i,\epsilon,N}_s,\tilde{\mu}^{\epsilon,N}_s) - \bar{\gamma}(\tilde{X}^{i,\epsilon,N}_s,\tilde{\mu}^{N,\epsilon}_s)]\phi'(\tilde{X}^{i,\epsilon,N}_s)ds\\
R_5^i(t) & = \int_0^t [D(\tilde{X}^{i,\epsilon,N}_s,\tilde{Y}^{i,\epsilon,N}_s,\tilde{\mu}^{\epsilon,N}_s) - \bar{D}(\tilde{X}^{i,\epsilon,N}_s,\tilde{\mu}^{\epsilon,N}_s)]\phi''(\tilde{X}^{i,\epsilon,N}_s)ds.
\end{align*}

By Proposition \ref{prop:llntypefluctuationestimate1} (using here Assumption \ref{assumption:forcorrectorproblem}):
\begin{align*}
\E\biggl[\sup_{t\in [0,T]}\frac{a(N)}{\sqrt{N}}\sum_{i=1}^N|R^i_4(t)| \biggr]& \leq C \epsilon a(N)\sqrt{N} (1+T^{1/2}+T)|\phi|_3
\end{align*}
and
\begin{align*}
\E\biggl[\sup_{t\in [0,T]}\frac{a(N)}{\sqrt{N}}\sum_{i=1}^N|R^i_5(t)| \biggr]&\leq C \epsilon a(N)\sqrt{N} (1+T^{1/2}+T)|\phi|_4.
\end{align*}

Now, we arrive at
\begin{align*}
\langle \tilde{Z}^N_t,\phi\rangle &= \frac{a(N)}{\sqrt{N}}\sum_{i=1}^N\biggl\lbrace \int_0^t\left(\bar{\gamma}(\tilde{X}^{i,\epsilon,N}_s,\tilde{\mu}^{i,\epsilon,N}_s)\phi'(\tilde{X}^{i,\epsilon,N}_s) -\E\biggl[ \bar{\gamma}(X_s,\mc{L}(X_s))\phi'(X_s)\biggr]\right)ds \\
&+ \int_0^t \left(\bar{D}(\tilde{X}^{i,\epsilon,N}_s,\tilde{\mu}^{i,\epsilon,N}_s)\phi''(\tilde{X}^{i,\epsilon,N}_s) -\E\biggl[ \bar{D}(X_s,\mc{L}(X_s))\phi''(X_s)\biggr]\right)ds\biggr\rbrace\\
&+\int_{\R\times\R\times\R^2\times[0,t]} \sigma(x,y,\tilde{\mu}^{\epsilon,N}_s)z_1 \phi'(x)Q^N(dx,dy,dz,ds)\\
&+\int_{\R\times\R\times\R^2\times[0,t]} [\tau_1(x,y,\tilde{\mu}^{\epsilon,N}_s)z_1+\tau_2(x,y,\tilde{\mu}^{\epsilon,N}_s)z_2]\Phi_y(x,y,\tilde{\mu}^{\epsilon,N}_s)\phi'(x)Q^N(dx,dy,dz,ds) +R_t^{N}(\phi)
\end{align*}
where $\E\biggl[\sup_{t\in[0,T]}|R_t^{N}(\phi)|\biggr]\leq C(T)[\epsilon a(N)\sqrt{N}\vee a(N)]|\phi|_4$. By Assumption \ref{assumption:limitingcoefficientsregularity}, $\bar{\gamma}$ and $\bar{D}$ have well-defined linear functional derivatives (see Definition \ref{def:LinearFunctionalDerivative}). Then we can rewrite $\langle \tilde{Z}^N_t,\phi\rangle$ as
\begin{align*}
&\langle \tilde{Z}^N_t,\phi\rangle
= \int_0^t\langle \tilde{Z}^N_s, \bar{\gamma}(\cdot,\tilde{\mu}^{\epsilon,N}_s)\phi'(\cdot) + \bar{D}(\cdot,\tilde{\mu}^{\epsilon,N}_s)\phi''(\cdot)\rangle ds\\
&+a(N)\sqrt{N}\biggl[\int_0^t \langle \mc{L}(X_s),[\bar{\gamma}(\cdot,\tilde{\mu}^{\epsilon,N}_s)-\bar{\gamma}(\cdot ,\mc{L}(X_s))]\phi'(\cdot) + [\bar{D}(\cdot,\tilde{\mu}^{\epsilon,N}_s)-\bar{D}(\cdot ,\mc{L}(X_s))]\phi''(\cdot)\rangle ds\biggr]\\
&+\int_{\R\times\R\times\R^2\times[0,t]} \sigma(x,y,\tilde{\mu}^{\epsilon,N}_s)z_1 \phi'(x)Q^N(dx,dy,dz,ds)\\
&+\int_{\R\times\R\times\R^2\times[0,t]} [\tau_1(x,y,\tilde{\mu}^{\epsilon,N}_s)z_1+\tau_2(x,y,\tilde{\mu}^{\epsilon,N}_s)z_2]\Phi_y(x,y,\tilde{\mu}^{\epsilon,N}_s)\phi'(x)Q^N(dx,dy,dz,ds) +R^{N}_t(\phi)\\
&= \int_0^t\langle \tilde{Z}^N_s, \bar{\gamma}(\cdot,\tilde{\mu}^{\epsilon,N}_s)\phi'(\cdot) + \bar{D}(\cdot,\tilde{\mu}^{\epsilon,N}_s)\phi''(\cdot)\rangle ds\\
&+a(N)\sqrt{N}\biggl[\int_0^t \langle \mc{L}(X_s),\biggl[\int_0^1\int_\R \frac{\delta}{\delta m}\bar{\gamma}(\cdot,(1-r)\mc{L}(X_s)+r\tilde{\mu}^{\epsilon,N}_s)[y](\tilde{\mu}^{N,\epsilon}_s(dy)-\mc{L}(X_s)(dy))dr\biggr]\phi'(\cdot)\\
& + \biggl[\int_0^1\int_\R \frac{\delta}{\delta m}\bar{D}(\cdot,(1-r)\mc{L}(X_s)+r\tilde{\mu}^{\epsilon,N}_s)[y](\tilde{\mu}^{N,\epsilon}_s(dy)-\mc{L}(X_s)(dy))dr\biggr]\phi''(\cdot)\rangle ds\biggr]\\
&+\int_{\R\times\R\times\R^2\times[0,t]} \sigma(x,y,\tilde{\mu}^{\epsilon,N}_s)z_1 \phi'(x)Q^N(dx,dy,dz,ds)\\
&+\int_{\R\times\R\times\R^2\times[0,t]} [\tau_1(x,y,\tilde{\mu}^{\epsilon,N}_s)z_1+\tau_2(x,y,\tilde{\mu}^{\epsilon,N}_s)z_2]\Phi_y(x,y,\tilde{\mu}^{\epsilon,N}_s)\phi'(x)Q^N(dx,dy,dz,ds) +R^{N}_t(\phi)\\
&=\int_0^t \langle \tilde{Z}^N_s,\bar{L}_{\mc{L}(X_s),\tilde{\mu}^{\epsilon,N}_s}\phi(\cdot)\rangle ds+\int_{\R\times\R\times\R^2\times[0,t]} \sigma(x,y,\tilde{\mu}^{\epsilon,N}_s)z_1 \phi'(x)Q^N(dx,dy,dz,ds)\\
&+\int_{\R\times\R\times\R^2\times[0,t]} [\tau_1(x,y,\tilde{\mu}^{\epsilon,N}_s)z_1+\tau_2(x,y,\tilde{\mu}^{\epsilon,N}_s)z_2]\Phi_y(x,y,\tilde{\mu}^{\epsilon,N}_s)\phi'(x)Q^N(dx,dy,dz,ds) +R^{N}_t(\phi).
\end{align*}
\end{proof}
\begin{proposition}\label{prop:tildeZNtightness}
Under Assumptions \ref{assumption:uniformellipticity}-\ref{assumption:limitingcoefficientsregularity}, $\br{\tilde{Z}^N}_{N\in\bb{N}}$ is tight as a sequence of $C([0,T];\mc{S}_{-m})$-valued random variables, where $m$ is as in Equation \eqref{eq:wdefinition}.
\end{proposition}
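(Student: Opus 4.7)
The plan is to apply Mitoma's tightness criterion (Theorem~3.1 and Remark~R1 in \cite{Mitoma}). Since by \eqref{eq:mdefinition} the embedding $\mc{S}_{-7}\tto \mc{S}_{-m}$ is Hilbert--Schmidt, it suffices to verify (a) the uniform $7$-continuity estimate
\begin{align*}
\sup_{N\in\bb{N}} \E\biggl[\sup_{t\in[0,T]} |\langle \tilde{Z}^N_t,\phi\rangle|\biggr] \leq C\norm{\phi}_7 \text{ for all } \phi\in\mc{S},
\end{align*}
together with (b) tightness of $\{\langle \tilde{Z}^N_\cdot,\phi\rangle\}_{N\in\bb{N}}$ in $C([0,T];\R)$ for each $\phi\in\mc{S}$. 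Both conditions I would derive from the prelimit representation of Lemma \ref{lemma:Lnu1nu2representation}, combined with the pointwise bound of Lemma \ref{lemma:Zboundbyphi4} and the Sobolev-type embedding \eqref{eq:sobolembedding}.

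For (a), I decompose $\langle \tilde{Z}^N_t,\phi\rangle$ into the three parts of Lemma \ref{lemma:Lnu1nu2representation}. The drift term $\int_0^t \langle \tilde{Z}^N_s, \bar{L}_{\mc{L}(X_s),\tilde{\mu}^{\epsilon,N}_s}\phi\rangle\,ds$ is bounded in expectation by $CT \sup_s |\bar{L}_{\mc{L}(X_s),\tilde{\mu}^{\epsilon,N}_s}\phi|_4$ via Lemma \ref{lemma:Zboundbyphi4}; inspecting each of the four constituents of $\bar{L}$ in \eqref{eq:Lnu1nu2} and invoking Assumption \ref{assumption:limitingcoefficientsregularity} to control the $x$-derivatives of $\bar{\gamma}$, $\bar{D}$, and their linear functional derivatives, one obtains $|\bar{L}\phi|_4 \leq C|\phi|_6 \leq C\norm{\phi}_7$ by \eqref{eq:sobolembedding}. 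The two $Q^N$-integrals are bounded using boundedness of $\sigma$, $\tau_1, \tau_2$, and $\Phi_y$ from Assumptions \ref{assumption:uniformellipticity}, \ref{assumption:gsigmabounded}, \ref{assumption:multipliedpolynomialgrowth}, together with Cauchy--Schwarz applied against the control bound \eqref{eq:controlassumptions0}, yielding a contribution of order $\norm{\phi}_2$. The remainder already satisfies $\E[\sup_t |R^N_t(\phi)|] \leq \bar{R}(N,T)|\phi|_4 \leq C\norm{\phi}_5$ uniformly in $N$, with $\bar{R}(N,T)\to 0$.

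For (b), I would invoke Aldous's tightness criterion. Tightness of the marginals at each fixed $t$ is immediate from Lemma \ref{lemma:Zboundbyphi4} via Chebyshev. For the stopping-time condition, the same decomposition applied on an interval $[s,t]$ with $t-s\leq\delta$ yields $\E[|\langle \tilde{Z}^N_t - \tilde{Z}^N_s,\phi\rangle|] \leq C[(t-s)+(t-s)^{1/2}]\norm{\phi}_7 + \bar{R}(N,T)|\phi|_4$, where the H\"older-$1/2$ term arises from Cauchy--Schwarz applied to $\int_s^t \frac{1}{N}\sum_i |\tilde{u}^{N,k}_i(u)|\,du$ coupled with \eqref{eq:controlassumptions0}. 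Markov's inequality then yields Aldous's condition, and tightness in $D([0,T];\R)$ promotes to $C([0,T];\R)$ by continuity of sample paths. The principal bookkeeping obstacle will be the careful verification that $|\bar{L}_{\nu_1,\nu_2}\phi|_4 \leq C|\phi|_6$, which requires Assumption \ref{assumption:limitingcoefficientsregularity} to control both the $x$-derivatives of $\bar{\gamma},\bar{D}$ and the corresponding derivatives of their linear functional derivatives uniformly in $(\nu_1,\nu_2)$.
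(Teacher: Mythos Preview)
Your proposal is correct and follows essentially the same route as the paper: Mitoma's criterion reduces everything to (a) the uniform $\norm{\cdot}_7$ estimate and (b) real-valued tightness of $\langle\tilde{Z}^N,\phi\rangle$, and both are extracted from the prelimit representation of Lemma~\ref{lemma:Lnu1nu2representation} together with Lemma~\ref{lemma:Zboundbyphi4}, exactly as you outline. The only difference worth noting is that for (b) the paper uses the Arzel\`a--Ascoli modulus-of-continuity criterion directly rather than Aldous; moreover, the paper handles the remainder $R^N_t(\phi)$ by observing that $R^N(\phi)\to 0$ in $C([0,T];\R)$ (hence is trivially tight), so only $A^N(\phi)\coloneqq\langle\tilde{Z}^N,\phi\rangle - R^N(\phi)$ needs an increment estimate --- whereas your approach absorbs the remainder via $\bar{R}(N,T)\to 0$, which also works but requires the two-step ``first $N_0$ large, then $\delta$ small'' argument.
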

\begin{proof}
By Remark R.1 on p.997 of \cite{Mitoma}, it suffices to prove tightness of $\br{\langle \tilde{Z}^N,\phi\rangle}$ as a sequence of $C([0,T];\R)$-valued random variables for each $\phi \in \mc{S}$, along with uniform $7$-continuity as defined in the same remark. By the argument found in the proof of \cite{BW} Theorem 4.7, to show the latter it suffices to prove:
\begin{align}\label{eq:implies7cont}
\sup_{N\in\bb{N}}\E\biggl[\sup_{t\in[0,T]}\biggl|\langle \tilde{Z}^N_t,\phi\rangle\biggr|\biggr]& \leq C(T)\norm{\phi}_7,\forall \phi\in \mc{S}.
\end{align}
After these two results are shown, we will have established tightness of $\tilde{Z}^N$ as $C([0,T];\mc{S}_{-w})$-valued random variables for $m>7$ such that the canonical embedding $\mc{S}_{-7}\tto \mc{S}_{-m}$ is Hilbert-Schmidt. We start with showing tightness of $\br{\langle \tilde{Z}^N,\phi\rangle}$. By Lemma \ref{lemma:Lnu1nu2representation}, we write for any $\phi\in\mc{S}$:
\begin{align*}
\langle \tilde{Z}^N_t,\phi\rangle & = A^N_t(\phi)+R^N_t(\phi)\\
A^N_t(\phi)&\coloneqq \int_0^t \langle \tilde{Z}^N_s,\bar{L}_{\mc{L}(X_s),\tilde{\mu}^{\epsilon,N}_s}\phi(\cdot)\rangle ds+\int_{\R\times\R\times\R^2\times[0,t]} \sigma(x,y,\tilde{\mu}^{\epsilon,N}_s)z_1 \phi'(x)Q^N(dx,dy,dz,ds)\\
&+\int_{\R\times\R\times\R^2\times[0,t]} [\tau_1(x,y,\tilde{\mu}^{\epsilon,N}_s)z_1+\tau_2(x,y,\tilde{\mu}^{\epsilon,N}_s)z_2]\Phi_y(x,y,\tilde{\mu}^{\epsilon,N}_s)\phi'(x)Q^N(dx,dy,dz,ds)
\end{align*}
where for each $\phi$, $R^N(\phi)\tto 0$ in $C([0,T];\R)$ as $N\toinf$. Thus, to prove tightness of $\br{\langle \tilde{Z}^N,\phi\rangle}$, it is sufficient to prove tightness of $\br{A^N(\phi)}$. 
We note that for any and $0\leq \tau<t\leq T$:
\begin{align*}
A^N_t(\phi) - A^N_\tau(\phi)& = B^N_{t,\tau}(\phi) + C^N_{t,\tau}(\phi) +D^N_{t,\tau}(\phi)\\
B^N_{t,\tau}(\phi)&\coloneqq \int_\tau^t \langle \tilde{Z}^N_s,\bar{L}_{\mc{L}(X_s),\tilde{\mu}^{\epsilon,N}_s}\phi(\cdot)\rangle ds\\
C^N_{t,\tau}(\phi)& = \int_{\R\times\R\times\R^2\times[\tau,t]} \sigma(x,y,\tilde{\mu}^{\epsilon,N}_s)z_1 \phi'(x)Q^N(dx,dy,dz,ds)\\
D^N_{t,\tau}(\phi)&=\int_{\R\times\R\times\R^2\times[\tau,t]} [\tau_1(x,y,\tilde{\mu}^{\epsilon,N}_s)z_1+\tau_2(x,y,\tilde{\mu}^{\epsilon,N}_s)z_2]\Phi_y(x,y,\tilde{\mu}^{\epsilon,N}_s)\phi'(x)Q^N(dx,dy,dz,ds).
\end{align*}
Then we have for $\delta>0$,
\begin{align*}
\E\biggl[\sup_{|t-\tau|\leq\delta}\biggl|B^N_{t,\tau}(\phi)\biggr|\biggr]&\leq \delta^{1/2}\E\biggl[\int_0^T\biggl| \langle \tilde{Z}^N_s,\bar{L}_{\mc{L}(X_s),\tilde{\mu}^{\epsilon,N}_s}\phi(\cdot)\rangle\biggr|^2ds\biggr]^{1/2} \\
&\leq \delta^{1/2}T^{1/2}\sup_{s\in [0,T]}\E\biggl[\biggl| \langle \tilde{Z}^N_s,\bar{L}_{\mc{L}(X_s),\tilde{\mu}^{\epsilon,N}_s}\phi(\cdot)\rangle\biggr|^2\biggr]^{1/2}\\
&\leq \delta^{1/2}T^{1/2}\sup_{s\in [0,T]}\sup_{\nu_1,\nu_2\in\mc{P}_2(\R)}\E\biggl[\biggl| \langle \tilde{Z}^N_s,\bar{L}_{\nu_1,\nu_2}\phi(\cdot)\rangle\biggr|^2\biggr]^{1/2}\\
&\leq C(T)\delta^{1/2}|\phi|_6
\end{align*}
by boundedness of the first 5 derivatives in $x$ of $\bar{\gamma},\bar{D}$, and of the first 5 derivatives in $z$ of $\frac{\delta}{\delta m}\bar{\gamma},\frac{\delta}{\delta m}\bar{D}$ from Assumption \ref{assumption:limitingcoefficientsregularity}, the definition of $\bar{L}_{\nu_1,\nu_2}$ from Equation \eqref{eq:Lnu1nu2}, and Lemma \ref{lemma:Zboundbyphi4}. Also, we see: 
\begin{align*}
&\E\biggl[\sup_{|t-\tau|\leq\delta}\biggl|D^N_{t,\tau}(\phi)\biggr|\biggr] \leq\nonumber\\
&\leq C \frac{1}{N}\sum_{i=1}^N\E\biggl[\sup_{|t-\tau|\leq\delta}\int_\tau^t \left([|\tilde{u}^{N,1}_i(s)|+|\tilde{u}^{N,2}_i(s)|]|\Phi_y(\tilde{X}^{i,\epsilon,N}_s,\tilde{Y}^{i,\epsilon,N}_s,\tilde{\mu}^{\epsilon,N}_s)| \right)ds \biggr]|\phi|_1\\
&\leq C \frac{1}{N}\E\biggl[\sum_{i=1}^N\int_0^T |\tilde{u}^{N,1}_i(s)|^2+|\tilde{u}^{N,2}_i(s)|^2ds\biggr]^{1/2}\E\biggl[\sup_{|t-\tau|\leq\delta}\sum_{i=1}^N\int_\tau^t|\Phi_y(\tilde{X}^{i,\epsilon,N}_s,\tilde{Y}^{i,\epsilon,N}_s,\tilde{\mu}^{\epsilon,N}_s)|^2 ds\biggr]^{1/2}|\phi|_1 \\
&\leq C \E\biggl[\sup_{|t-\tau|\leq\delta}\frac{1}{N}\sum_{i=1}^N\int_\tau^t|\Phi_y(\tilde{X}^{i,\epsilon,N}_s,\tilde{Y}^{i,\epsilon,N}_s,\tilde{\mu}^{\epsilon,N}_s)|^2 ds\biggr]^{1/2}|\phi|_1\text{ by the bound \eqref{eq:controlassumptions0}}\\
&\leq C\delta^{1/2}C(T)|\phi|_1 \text{ by the boundedness of $\Phi_y$ from Assumption \ref{assumption:multipliedpolynomialgrowth}.}
\end{align*}
The proof that $\E\biggl[\biggl|B^N_{t,\tau}(\phi)\biggr|\biggr]\leq C\delta^{1/2}C(T)|\phi|_1$ holds in the same way.

So by the Arzel\`a-Ascoli tightness criterion on classical Wiener space (see, e.g. Theorem 4.10 in Chapter 2 of \cite{KS}), we have $\br{A^N(\phi)}$ and hence $\br{\langle \tilde{Z}^N,\phi\rangle}$ are tight as a sequence of $C([0,T];\R)$-valued random variables for each $\phi$.

Now we see by the same argument (fixing $\tau=0$) and the fact that, as shown in Lemma \ref{lemma:Lnu1nu2representation},

 $\E\biggl[\sup_{t\in[0,T]}\biggl| R^N_t(\phi)\biggr|\biggr]\leq \bar{R}(N,T)|\phi|_4$ with $\bar{R}(N,T)\tto 0$ as $N\toinf$:
\begin{align*}
\sup_{N\in\bb{N}}\E\biggl[\sup_{t\in[0,T]}\biggl|\langle \tilde{Z}^N_t,\phi\rangle\biggr|\biggr]&\leq C(T)|\phi|_6 \leq C(T)\norm{\phi}_7
\end{align*}
for all $\phi\in\mc{S}$, where here we used the inequality \eqref{eq:sobolembedding}. Thus the bound \eqref{eq:implies7cont} holds, and tightness is established.
\end{proof}

\subsection{Tightness of $Q^N$}\label{SS:QNtightness}
The proof of tightness of $\br{Q^N}$ from Equation \eqref{eq:occupationmeasures} is standard, see \cites{MS,BDF,BS}. We see that since the occupation measures $Q^N$ involve $\br{\tilde{X}^{i,\epsilon,N}}_{N\in\bb{N}}$ from Equation \eqref{eq:controlledslowfast1-Dold} as part of their definition, we will need some kind of uniform control on their expectation. Thus, we begin with a lemma: 
\begin{lemma}\label{lemma:tildeXuniformL2bound}
Under assumptions \ref{assumption:uniformellipticity}-\ref{assumption:qF2bound} and \ref{assumption:uniformLipschitzxmu}, we have
$\sup_{t\in[0,T]}\sup_{N\in\bb{N}}\frac{1}{N}\sum_{i=1}^N \E\biggl[|\tilde{X}^{i,\epsilon,N}_t|^2 \biggr]<\infty.$
\end{lemma}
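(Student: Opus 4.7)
The plan is to leverage the Poisson--equation machinery already established in Section~\ref{sec:ergodictheoremscontrolledsystem} to eliminate the $\tfrac{1}{\epsilon}b$ term from the prelimit dynamics of $\tilde{X}^{i,\epsilon,N}$, reducing the bound to averaging coefficients that grow at worst linearly in $y$, plus the controls scaled by $a(N)\sqrt{N}$, plus martingale increments of bounded integrand. Concretely, writing
\begin{align*}
\tilde{X}^{i,\epsilon,N}_t = \eta^x + \int_0^t\left[\tfrac{1}{\epsilon}b(i)+c(i)+\sigma(i)\tfrac{\tilde{u}^{N,1}_i(s)}{a(N)\sqrt{N}}\right]ds + \int_0^t\sigma(i)dW^i_s,
\end{align*}
the first step is to apply Proposition~\ref{prop:fluctuationestimateparticles1} with $\psi\equiv 1$ to substitute the representation
\begin{align*}
\int_0^t\tfrac{1}{\epsilon}b(i)\,ds &= \int_0^t\Bigl(\gamma_1(i)+\bigl[\tfrac{\tau_1(i)}{a(N)\sqrt{N}}\tilde{u}^{N,1}_i(s)+\tfrac{\tau_2(i)}{a(N)\sqrt{N}}\tilde{u}^{N,2}_i(s)\bigr]\Phi_y(i)\Bigr)ds \\
&\quad+\int_0^t\tau_1(i)\Phi_y(i)dW^i_s+\int_0^t\tau_2(i)\Phi_y(i)dB^i_s+\int_0^t\tfrac{1}{N}\sum_{j=1}^N b(j)\partial_\mu\Phi(i)[j]\,ds+R^{i,N}(t),
\end{align*}
where $\tfrac{1}{N}\sum_i\E[\sup_t|R^{i,N}(t)|^2]\le C(T)[\epsilon^2+N^{-1}]$ by dividing the bound of Proposition~\ref{prop:fluctuationestimateparticles1} by $a(N)\sqrt{N}$.

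Next I would substitute this into the SDE for $\tilde{X}^{i,\epsilon,N}_t$ so that $\gamma(i)=\gamma_1(i)+c(i)$ appears and the $1/\epsilon$ dependence is removed. Taking $|\cdot|^2$, applying Cauchy--Schwarz to each time integral, It\^o isometry to the three martingale terms (with bounded integrands $\sigma+\tau_1\Phi_y$, $\tau_2\Phi_y$, $\sigma$ guaranteed by Assumptions~\ref{assumption:uniformellipticity},~\ref{assumption:gsigmabounded},~\ref{assumption:multipliedpolynomialgrowth}), and then averaging over $i=1,\dots,N$, each contribution is handled by an ingredient already in place: the term in $\gamma(i)$ by the linear-in-$y$ growth of $\gamma$ together with the uniform second-moment bound of Lemma~\ref{lemma:tildeYuniformbound}; the two control terms divided by $a(N)\sqrt{N}$ by the boundedness of $\sigma,\tau_1,\tau_2,\Phi_y$ combined with the control bound~\eqref{eq:controlassumptions0}, using $a(N)^2N\to\infty$; the measure-derivative term $\int_0^t\tfrac{1}{N}\sum_j b(j)\partial_\mu\Phi(i)[j]\,ds$ by Proposition~\ref{prop:purpleterm1} (which, after dividing by $a(N)\sqrt{N}$, yields an $L^2$-bound of order $\epsilon^2+N^{-2}$); and the martingale terms by the boundedness of their integrands and It\^o isometry. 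The remainder $R^{i,N}(t)$ is absorbed by its own $L^2$ estimate. Finally a supremum over $t\in[0,T]$ on the left can be taken before integration using nonnegativity and monotonicity of the bounds, which depend on $T$ but not on $N,\epsilon,i$.

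The main obstacle is the measure-derivative term $\int_0^t\tfrac{1}{N}\sum_j b(j)\partial_\mu\Phi(i)[j]\,ds$, which is \emph{a priori} $O(1)$ because $b$ grows linearly in $y$ and $\partial_\mu\Phi$ only enjoys polynomial-in-$y$ control by Assumption~\ref{assumption:multipliedpolynomialgrowth}. This is precisely the obstruction that motivates the ``doubled corrector problem'' \eqref{eq:doublecorrectorproblem}, and its treatment in Proposition~\ref{prop:purpleterm1} is what supplies the needed uniform $L^2$ bound (on average) for this term. All other terms are bounded by direct application of Lemma~\ref{lemma:tildeYuniformbound} and the control bound~\eqref{eq:controlassumptions0}, so once Proposition~\ref{prop:purpleterm1} is invoked the proof is a routine aggregation of estimates yielding a constant $C=C(T,|\eta^x|)$ independent of $N$ and $\epsilon$.
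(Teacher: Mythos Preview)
Your strategy matches the paper's: write out $\tilde{X}^{i,\epsilon,N}_t$, apply Propositions~\ref{prop:fluctuationestimateparticles1} and~\ref{prop:purpleterm1} with $\psi\equiv 1$ to eliminate the $\epsilon^{-1}b$ contribution, and bound each remaining piece. However, there is a genuine gap in how you handle the drift $\int_0^t \gamma(i)\,ds$. You invoke ``linear-in-$y$ growth of $\gamma$'' together with Lemma~\ref{lemma:tildeYuniformbound} to bound this term directly, but under the stated hypotheses $\gamma$ is only known to be Lipschitz in $(x,y,\mu)$ via Assumption~\ref{assumption:uniformLipschitzxmu}, which gives \emph{linear growth in $x$ and in $\bb{W}_2$} as well as in $y$. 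Bounding $|\gamma(\tilde{X}^{i,\epsilon,N}_s,\tilde{Y}^{i,\epsilon,N}_s,\tilde{\mu}^{\epsilon,N}_s)|^2$ therefore produces terms $C|\tilde{X}^{i,\epsilon,N}_s|^2$ and $C\,\frac{1}{N}\sum_{j}|\tilde{X}^{j,\epsilon,N}_s|^2$ on the right-hand side, which feed back into the quantity being estimated.

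The paper closes this loop with Gronwall's inequality: after bounding the martingale, control, remainder, and measure-derivative terms by $C(T)[1+\epsilon^2+1/N+1/(a^2(N)N)]$ exactly as you describe, the $\gamma$ term contributes $CT\int_0^t \frac{1}{N}\sum_i \E[|\tilde{X}^{i,\epsilon,N}_s|^2]\,ds$, and Gronwall then yields the uniform bound. Your final sentence (``a supremum over $t$ can be taken before integration using nonnegativity and monotonicity of the bounds'') does not close the argument, because the right-hand side is not yet independent of $\tilde{X}$. Inserting the Gronwall step---and using Assumption~\ref{assumption:uniformLipschitzxmu} rather than an unproved uniform-in-$x,\mu$ bound on $\gamma$---fixes the proof.
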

\begin{proof}
Using that
\begin{align*}
\tilde{X}^{i,\epsilon,N}_t  &= \eta^x + \int_0^t \left(\frac{1}{\epsilon}b(i)+ c(i)\right)ds +\int_0^t \sigma(i)\frac{\tilde{u}^{N,1}_i(s)}{a(N)\sqrt{N}} ds + \int_0^t \sigma(i)dW^i_s\\
& = \eta^x + \int_0^t \frac{1}{\epsilon}b(i)ds -\biggl\lbrace\int_0^t  \gamma_1(i)ds+\int_0^t\tau_1(i)\Phi_y(i)dW^i_s + \int_0^t \tau_2(i)\Phi_y(i)dB^i_s+\int_0^t \frac{1}{N}\sum_{j=1}^N b(j)\partial_{\mu}\Phi(i)[j]ds\\
&+\int_0^t[\frac{\tau_1(i)}{a(N)\sqrt{N}}\tilde{u}^{N,1}_i(s)+\frac{\tau_2(i)}{a(N)\sqrt{N}}\tilde{u}^{N,2}_i(s)]\Phi_y(i)ds\biggr\rbrace + \int_0^t \gamma(i)ds + \int_0^t \sigma(i)dW^i_s\\
&+\int_0^t\tau_1(i)\Phi_y(i)dW^i_s + \int_0^t \tau_2(i)\Phi_y(i)dB^i_s+\int_0^t\frac{1}{N}\sum_{j=1}^N b(j)\partial_{\mu}\Phi(i)[j]ds\\
&+\int_0^t \left(\sigma(i)\frac{\tilde{u}^{N,1}_i(s)}{a(N)\sqrt{N}} +[\frac{\tau_1(i)}{a(N)\sqrt{N}}\tilde{u}^{N,1}_i(s)+\frac{\tau_2(i)}{a(N)\sqrt{N}}\tilde{u}^{N,2}_i(s)]\Phi_y(i)\right)ds,
\end{align*}
where here once again the argument $(i)$ is denoting $(\tilde{X}^{i,\epsilon,N}_s,\tilde{Y}^{i,\epsilon,N}_s,\tilde{\mu}^{\epsilon,N}_s)$ and similarly for $j$, and we recall $\Phi$ from Equation \eqref{eq:cellproblemold} and $\gamma_1,\gamma$ from Equation \eqref{eq:limitingcoefficients}.

So, by It\^o Isometry and boundedness of $\sigma$ from \ref{assumption:gsigmabounded}, of $\tau_1$ and $\tau_2$ from \ref{assumption:uniformellipticity}, and of $\Phi_y$ from \ref{assumption:multipliedpolynomialgrowth}:
\begin{align*}
\frac{1}{N}\sum_{i=1}^N \E\biggl[|\tilde{X}^{i,\epsilon,N}_t |^{2}\biggr]&\leq C(|\eta^x|^{2}+T) + \frac{C}{N}\sum_{i=1}^N \biggl\lbrace R^{i,N}_1(t)+R^{i,N}_2(t)+R^{i,N}_3(t)+R^{i,N}_4(t)\biggr\rbrace\\
R^{i,N}_1(t)&\coloneqq\E\biggl[\biggl|\int_0^t \frac{1}{\epsilon}b(i)ds -\biggl\lbrace\int_0^t  \gamma_1(i)ds+\int_0^t\tau_1(i)\Phi_y(i)dW^i_s + \int_0^t \tau_2(i)\Phi_y(i)dB^i_s\\
&+\int_0^t \frac{1}{N}\sum_{j=1}^N b(j)\partial_{\mu}\Phi(i)[j]ds+\int_0^t[\frac{\tau_1(i)}{a(N)\sqrt{N}}\tilde{u}^{N,1}_i(s)+\frac{\tau_2(i)}{a(N)\sqrt{N}}\tilde{u}^{N,2}_i(s)]\Phi_y(i)ds\biggr\rbrace  \biggr|^{2}\biggr]\\
R^{i,N}_2(t)&\coloneqq \E\biggl[\biggl|\int_0^t \gamma(i)ds\biggr|^2\biggr] \\
R^{i,N}_3(t)&=\E\biggl[\biggl|\int_0^t \frac{1}{N}\sum_{j=1}^N b(j)\partial_{\mu}\Phi(i)[j]ds \biggr|^{2}\biggr]\\
R^{i,N}_4(t)&=\E\biggl[\biggl|\int_0^t \left(\sigma(i)\frac{\tilde{u}^{N,1}_i(s)}{a(N)\sqrt{N}} +[\frac{\tau_1(i)}{a(N)\sqrt{N}}\tilde{u}^{N,1}_i(s)+\frac{\tau_2(i)}{a(N)\sqrt{N}}\tilde{u}^{N,2}_i(s)]\Phi_y(i)ds\right)\biggr|^2\biggr]
\end{align*}
Then applying Proposition \ref{prop:fluctuationestimateparticles1} with $\psi = 1$, we have
\begin{align*}
\frac{1}{N}\sum_{i=1}^NR^{i,N}_1(t)&\leq C[\epsilon^2(1+T+T^2)+\frac{1}{N}T^2]
\end{align*}
Using Assumption \ref{assumption:uniformLipschitzxmu}:
\begin{align*}
\frac{1}{N}\sum_{i=1}^NR^{i,N}_2(t)\leq \frac{1}{N}\sum_{i=1}^NT\E\biggl[\int_0^t |\gamma(i)|^2ds\biggr]&\leq CT\int_0^t\frac{1}{N}\sum_{i=1}^N\E\biggl[ |\tilde{X}^{i,\epsilon,N}_s|^2+|\tilde{Y}^{i,\epsilon,N}_s|^2+\frac{1}{N}\sum_{j=1}^N|\tilde{X}^{j,\epsilon,N}_s|^2\biggr]ds\\
&\leq CT^2+CT\int_0^t\frac{1}{N}\sum_{i=1}^N\E\biggl[ |\tilde{X}^{i,\epsilon,N}_s|^2\biggr]ds
\end{align*}
by Lemma \ref{lemma:tildeYuniformbound}.
Applying Proposition \ref{prop:purpleterm1} with $\psi=1$:
\begin{align*}
\frac{1}{N}\sum_{i=1}^NR^{i,N}_3(t)\leq C[\epsilon^2(1+T+T^2)+\frac{1}{N^2}T^2]
\end{align*}
Using the boundedness of $\sigma$ from \ref{assumption:gsigmabounded}, of $\tau_1$ and $\tau_2$ from \ref{assumption:uniformellipticity}, and of $\Phi_y$ from \ref{assumption:multipliedpolynomialgrowth} and the bound \eqref{eq:controlassumptions}:
\begin{align*}
\frac{1}{N}\sum_{i=1}^NR^{i,N}_4(t)&\leq \frac{CT}{a^2(N)N}\frac{1}{N}\sum_{i=1}^N \E\biggl[\int_0^T \left(|\tilde{u}^{N,1}_i(s)|^2+|\tilde{u}^{N,2}_i(s)|^2\right)ds]\leq \frac{CT}{a^2(N)N}.
\end{align*}

Then, by Gronwall's inequality:
\begin{align*}
\frac{1}{N}\sum_{i=1}^N \E\biggl[|\tilde{X}^{i,\epsilon,N}_t|^2 \biggr]&\leq C(T)[1+\epsilon^2+\frac{1}{N}+\frac{1}{N^2}+\frac{1}{a^2(N)N}]\leq C(T),
\end{align*}
since all the above terms which depend on $N,\epsilon$ in the first bound vanish as $N\toinf$.
Since this holds uniformly in $N$ and $t$, we are done.
\end{proof}

Now we can prove tightness of the occupation measures.
\begin{proposition}\label{prop:QNtightness}
Under assumptions \ref{assumption:uniformellipticity}-\ref{assumption:qF2bound} and \ref{assumption:uniformLipschitzxmu}, $\br{Q^N}_{N\in\bb{N}}$ is tight as a sequence of $M_T(\R^4)$-valued random variables (recall this space of measures introduced above Equation \eqref{eq:rigorousLotimesdt}).
\end{proposition}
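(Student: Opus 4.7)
The plan is to use the standard tightness criterion for the space $M_T(\mathbb{R}^4)$ of measures with fixed time marginal, namely that a sequence $\{Q^N\}$ is tight provided one can exhibit a measurable function $g:\mathbb{R}^4\times[0,T]\to[0,\infty)$ whose sublevel sets are relatively compact in $\mathbb{R}^4$ uniformly in $s\in[0,T]$ and such that
\begin{align*}
\sup_{N\in\mathbb{N}} \mathbb{E}\biggl[\int_{\mathbb{R}^4\times[0,T]} g(x,y,z,s)\, Q^N(dx,dy,dz,ds)\biggr] <\infty.
\end{align*}
This is the route taken in e.g.\ \cite{MS,BDF,BS} in analogous settings. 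The obvious choice is the coercive function $g(x,y,z,s)=x^2+y^2+z_1^2+z_2^2$.

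Unpacking the definition of the occupation measures in Equation \eqref{eq:occupationmeasures}, this requirement reduces to the three bounds
\begin{align*}
\sup_{N\in\mathbb{N}} \mathbb{E}\biggl[\int_0^T \frac{1}{N}\sum_{i=1}^N |\tilde{X}^{i,\epsilon,N}_s|^2\, ds\biggr]&<\infty,\qquad
\sup_{N\in\mathbb{N}} \mathbb{E}\biggl[\int_0^T \frac{1}{N}\sum_{i=1}^N |\tilde{Y}^{i,\epsilon,N}_s|^2\, ds\biggr]<\infty,\\
\sup_{N\in\mathbb{N}} \mathbb{E}\biggl[\int_0^T \frac{1}{N}\sum_{i=1}^N\left(|\tilde{u}^{N,1}_i(s)|^2+|\tilde{u}^{N,2}_i(s)|^2\right)ds\biggr]&<\infty.
\end{align*}
The first follows directly from Lemma \ref{lemma:tildeXuniformL2bound} after integrating in time, the second from Lemma \ref{lemma:tildeYuniformbound} (which provides the necessary uniform in $N,\epsilon,t$ second moments of the controlled fast process), and the third is exactly the admissibility bound \eqref{eq:controlassumptions0} on the controls.

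Given these three bounds, tightness of $\{Q^N\}$ in $M_T(\mathbb{R}^4)$ follows from a Markov/Prokhorov argument: for any $\eta>0$, balls of large radius in $\mathbb{R}^4$ intersected with $[0,T]$ are relatively compact, and the uniform bound above guarantees that for $R$ large, $\sup_N \mathbb{E}[Q^N(\{|x|^2+|y|^2+|z|^2>R\}\times[0,T])]\le \eta$, giving a compact set in $M_T(\mathbb{R}^4)$ containing all $Q^N$ with probability at least $1-\eta$. Since there is essentially no subtlety beyond verifying the three moment bounds, which are all already in hand, no single step of this proof is a serious obstacle; the work has been done in Section \ref{sec:ergodictheoremscontrolledsystem} and the appendix. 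The argument is therefore short and closely parallels the corresponding tightness proofs in \cite{MS,BDF,BS}.
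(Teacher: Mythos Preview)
Your proposal is correct and follows essentially the same approach as the paper: both use the coercive function $g(x,y,z)=|x|^2+|y|^2+|z|^2$ to define a tightness functional on $M_T(\mathbb{R}^4)$, reduce $\sup_N\mathbb{E}[G(Q^N)]<\infty$ to the three moment bounds, and invoke Lemma \ref{lemma:tildeXuniformL2bound}, Lemma \ref{lemma:tildeYuniformbound}, and the control bound \eqref{eq:controlassumptions0} respectively. The paper spells out slightly more carefully that $G$ has compact level sets in $M_T(\mathbb{R}^4)$ via Prokhorov and closedness under weak limits (Fatou), but your Markov/Prokhorov sketch captures the same content.
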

\begin{proof}
Consider the function $G:\mc{P}(\R\times\R\times\R^2\times [0,T])\tto \R$ given by
\begin{align*}
G(\theta)= \int_{\R\times\R\times\R^2\times [0,T]}\left(|z|^2+|y|^2+|x|^2\right)\theta(dx,dy,dz,ds).
\end{align*}

Then we have $G$ is bounded below, and considering a given level set
$A_L\coloneqq\br{\theta\in M_T(\R^4):G(\theta)\leq L},$
it follows by Chebyshev's inequality that $\sup_{\theta\in A_L} \theta((K^\epsilon_L)^c)\leq \epsilon$
where $K^\epsilon_L$ is the compact subset of $\R^4\times[0,T]$
\begin{align*}
K^\epsilon_L\coloneqq \br{(x,y,z)\in \R\times\R\times\R^2:|x|^2+|y|^2+|z|^2\leq \frac{L}{\epsilon}}\times [0,T].
\end{align*}

We also see that any collection of measures on $\R^4\times [0,T]$ which is in $M_T(\R^4)$ is uniformly bounded in the total variation norm, and that for $\br{\theta^N}\subset A_L$ such that $\theta^N\tto \theta$ in $M_T(\R^4)$ (recalling here that we are using the topology of weak convergence), by a version of Fatou's lemma (see Theorem A.3.12 in \cite{DE})
\begin{align*}
G(\theta)\leq \liminf_{N\toinf}G(\theta^N)\leq L,
\end{align*}
so $\theta\in A_L$. Via Prokhorov's theorem, $A_L$ is precompact, and we have shown that $A_L$ is closed, and hence $G$ has compact level sets. Thus $G$ is a tightness function (see \cite{DE} p.309), and it suffices to prove
\begin{align*}
\sup_{N\in\bb{N}}\E\biggl[G(Q^N)\biggr] = \sup_{N\in\bb{N}}\frac{1}{N}\sum_{i=1}^N \E\biggl[\int_0^T \left(|\tilde{X}^{i,\epsilon,N}_s|^2+|\tilde{Y}^{i,\epsilon,N}_s|^2 + |\tilde{u}^{N,1}_i(s)|^2+|\tilde{u}^{N,2}_i(s)|^2\right)ds\biggr]<\infty
\end{align*}
to see that $\br{Q^N}$ is a tight sequence of $\mc{M}_R(\R^4)-$valued random variables. This follows immediately from the bound \eqref{eq:controlassumptions0} and Lemmas \ref{lemma:tildeYuniformbound} and \ref{lemma:tildeXuniformL2bound}.
\end{proof}
\section{Identification of the Limit}\label{sec:identificationofthelimit}
Now having established tightness of $\br{(\tilde{Z}^N,Q^N)}_\bb{N}$, we take any sub-sequence that converges in distribution as $C([0,T];\mc{S}_{-m})\times M_T(\R^4)$-valued random variables, and call the random variable which is its limit $(Z,Q)$. We will show that $Q\in P^*(Z)$, and that this uniquely characterizes the distribution of $(Z,Q)$ for a given choice of controls in the construction of $Q^N.$ We will at times apply Skorokhod's Representation Theorem to without loss of generality pose the problem on a probability space such that this subsequence converges to $(Z,Q)$ almost surely. We also do not distinguish from the subsequence and the original sequence in the notation, nor the original probability space and that invoked by Skorokhod's Representation Theorem.
We begin with two lemmas which allow us to identify convergence of the controlled empirical measure $\tilde{\mu}^N$ from \eqref{eq:controlledempmeasure} to the law of the averaged McKean-Vlasov equation \eqref{eq:LLNlimitold}:

\begin{lemma}\label{lemma:barXuniformbound}
In the setting of Proposition \ref{lemma:tildeXuniformL2bound}, we have for any $p\geq 1$:
\begin{align*}
\sup_{\epsilon>0}\sup_{t\in[0,T]}\E\biggl[|\bar{X}^{\epsilon}_t |^{p}\biggr]\leq |\eta^x|^p+C(T,p)[1+|\eta^y|^p].
\end{align*}
Here $\bar{X}^{\epsilon}$ is as in Equation \eqref{eq:slow-fastMcKeanVlasov}. That is, it is equal in distribution to the IID particles from Equation \eqref{eq:IIDparticles}.
\end{lemma}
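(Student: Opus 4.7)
The plan is to mimic the strategy of Lemma \ref{lemma:tildeXuniformL2bound}, which is in fact simpler here since the IID slow-fast McKean-Vlasov system \eqref{eq:slow-fastMcKeanVlasov} carries no controls and no $N$-particle interaction terms. The singular $\frac{1}{\epsilon}b$ drift in the slow equation is eliminated using the cell-problem solution $\Phi$ from \eqref{eq:cellproblemold}, exactly as in Proposition 4.2 of \cite{BezemekSpiliopoulosAveraging2022} (invoked already in the proof of Lemma \ref{lemma:XbartildeXdifference}).

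Applying the functional Ito formula to $\epsilon\, \Phi(\bar{X}^\epsilon_t, \bar{Y}^\epsilon_t, \mc{L}(\bar{X}^\epsilon_t))$ and using $L_{x,\mu}\Phi = -b$ yields a representation
\begin{align*}
\bar{X}^\epsilon_t = \eta^x + \int_0^t \gamma(\bar{X}^\epsilon_s, \bar{Y}^\epsilon_s, \mc{L}(\bar{X}^\epsilon_s))\, ds + \tilde{\mathcal{M}}^\epsilon_t + \tilde{R}^\epsilon(t),
\end{align*}
where $\gamma$ is the averaged drift from \eqref{eq:limitingcoefficients}, $\tilde{\mathcal{M}}^\epsilon$ is a continuous martingale with integrand bounded uniformly in $\epsilon$ via boundedness of $\sigma, \tau_1, \tau_2, \Phi_y$ (Assumptions \ref{assumption:uniformellipticity}, \ref{assumption:gsigmabounded}, \ref{assumption:multipliedpolynomialgrowth}), and $\tilde{R}^\epsilon(t)$ collects the $\epsilon$-boundary terms from $\Phi$ together with the measure-derivative contribution handled via Proposition 4.3 of \cite{BezemekSpiliopoulosAveraging2022}. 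Taking the $p$th moment and applying Burkholder-Davis-Gundy to $\tilde{\mathcal{M}}^\epsilon$, together with the linear-growth-in-$y$ of $\gamma$ and its Lipschitz-in-measure property (Assumption \ref{assumption:uniformLipschitzxmu}) to get $|\gamma(x,y,\mu)|^p \leq C(1 + |x|^p + |y|^p + \bb{W}_2(\mu,\delta_0)^p)$ with $\bb{W}_2(\mc{L}(\bar{X}^\epsilon_s),\delta_0)^p \leq \E[|\bar{X}^\epsilon_s|^2]^{p/2}$, produces the integral inequality
\begin{align*}
\E\bigl[|\bar{X}^\epsilon_t|^p\bigr] \leq |\eta^x|^p + C(T,p)(1 + |\eta^y|^p) + C(T,p)\int_0^t \E\bigl[|\bar{X}^\epsilon_s|^p\bigr]\, ds.
\end{align*}
The $(1 + |\eta^y|^p)$ term arises from Lemma \ref{lemma:barYuniformbound} used to bound moments of $\bar{Y}^\epsilon$ after invoking the polynomial-in-$y$ growth of $\Phi$ and its derivatives (Assumption \ref{assumption:multipliedpolynomialgrowth}) in the estimates for $\tilde{R}^\epsilon(t)$. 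Gronwall's inequality then closes the estimate and delivers the stated bound uniformly in $\epsilon>0$ and $t\in[0,T]$.

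The main obstacle is carefully tracking the polynomial-in-$y$ growth of the various $\Phi$-derivatives appearing in $\tilde{R}^\epsilon(t)$, in particular the $\partial_\mu \Phi$ contribution, and verifying that once $p$th moments are taken these are controlled by the uniform bound in Lemma \ref{lemma:barYuniformbound}; this is precisely why the conclusion depends on $|\eta^y|^p$ rather than only on $|\eta^x|^p$ and $T$. For non-integer or odd $p$ the claim for even integer $p$ combined with Jensen's inequality suffices, so without loss of generality one may work with $p$ an even positive integer and apply Ito's formula directly to $|\bar{X}^\epsilon_t|^p$ after the $\Phi$-based reduction, where all remaining terms are polynomial in $(\bar{X}^\epsilon, \bar{Y}^\epsilon)$ with coefficients bounded uniformly in $\epsilon$.
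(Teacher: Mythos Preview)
Your proposal is correct and follows essentially the same approach as the paper: the paper's proof simply states that the result follows in the same way as Lemmas~\ref{lemma:XbartildeXdifference} and~\ref{lemma:tildeXuniformL2bound}, using Lemma~\ref{lemma:barYuniformbound} together with the ergodic-type theorems (Propositions~4.2 and~4.3) from \cite{BezemekSpiliopoulosAveraging2022}, and omits the details. You have spelled out precisely that argument, correctly noting that the absence of controls here is what allows the extension to arbitrary $p\geq 1$ via the higher moments of $\bar{Y}^\epsilon$ available from Lemma~\ref{lemma:barYuniformbound}.
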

\begin{proof}
This follows in the same way as Lemmas \ref{lemma:XbartildeXdifference} and \ref{lemma:tildeXuniformL2bound}, using Lemma \ref{lemma:barYuniformbound} and the ergodic-type Theorems from Section 4 of \cite{BezemekSpiliopoulosAveraging2022}. We omit the proof for brevity.
\end{proof}
\begin{lemma}\label{lemma:W2convergenceoftildemu}
Assume \ref{assumption:uniformellipticity}-\ref{assumption:2unifboundedlinearfunctionalderivatives}. Let $\tilde{\mu}^{\epsilon,N}_t$ be as in Equation \eqref{eq:controlledempmeasure}, with controls satisfying \eqref{eq:controlassumptions}. Then
\begin{align*}
\E\biggl[\bb{W}_2(\tilde{\mu}^{\epsilon,N}_t,\mc{L}(X_t)) \biggr]\tto 0 \text{ as }N\toinf,\forall t\in [0,T],
\end{align*}
where $X_t$ is as in Equation \eqref{eq:LLNlimitold}. In particular, decomposing $Q^N$ from Equation \eqref{eq:occupationmeasures} as $Q^N(dx,dy,dz,dt) = Q^N_t(dx,dy,dz)dt$,  for any $t\in [0,T]$, the first marginal of $Q^N_t$ converges to $\mc{L}(X_t)$ in probability as a sequence of $\mc{P}_2(\R)$-valued random variables.
\end{lemma}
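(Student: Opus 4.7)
The plan is to bound $\bb{W}_2(\tilde{\mu}^{\epsilon,N}_t,\mc{L}(X_t))$ via a triangle inequality using the IID slow-fast McKean-Vlasov particles from Equation \eqref{eq:IIDparticles} as an intermediary, writing
\begin{align*}
\bb{W}_2(\tilde{\mu}^{\epsilon,N}_t,\mc{L}(X_t)) \leq \bb{W}_2(\tilde{\mu}^{\epsilon,N}_t,\bar{\mu}^{\epsilon,N}_t) + \bb{W}_2(\bar{\mu}^{\epsilon,N}_t,\mc{L}(\bar{X}^{1,\epsilon}_t)) + \bb{W}_2(\mc{L}(\bar{X}^{1,\epsilon}_t),\mc{L}(X_t)),
\end{align*}
and showing that the expectation of each summand tends to $0$ as $N\toinf$ (with $\epsilon=\epsilon(N)\downarrow 0$). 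This mirrors the coupling strategy already employed in Lemma \ref{lemma:Zboundbyphi4}, but now at the level of the Wasserstein distance rather than a fixed test function.

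For the first summand I would use the natural empirical coupling through the common index $i$ to obtain
\begin{align*}
\E\bigl[\bb{W}^2_2(\tilde{\mu}^{\epsilon,N}_t,\bar{\mu}^{\epsilon,N}_t)\bigr] \leq \frac{1}{N}\sum_{i=1}^N \E\bigl[|\tilde{X}^{i,\epsilon,N}_t-\bar{X}^{i,\epsilon}_t|^2\bigr],
\end{align*}
which vanishes by Lemma \ref{lemma:XbartildeXdifference} under the standing scaling assumptions ($\epsilon\tto 0$, $N\toinf$, $a(N)\sqrt{N}\toinf$). For the second summand, the samples $\bar{X}^{1,\epsilon}_t,\ldots,\bar{X}^{N,\epsilon}_t$ are IID from $\mc{L}(\bar{X}^{1,\epsilon}_t)$ with moments of every order bounded uniformly in $\epsilon$ by Lemma \ref{lemma:barXuniformbound}, so applying the standard Fournier-Guillin rate for the expected $\bb{W}_2$ distance between an empirical measure and its population law yields $\E[\bb{W}_2(\bar{\mu}^{\epsilon,N}_t,\mc{L}(\bar{X}^{1,\epsilon}_t))]\tto 0$ uniformly in $\epsilon$ as $N\toinf$.

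The third summand is the step I expect to require the most care, because Theorem \ref{theo:mckeanvlasovaveraging} is stated only against $C_{b,L}^4(\R)$-test functions. From that theorem one immediately reads off $|\E[\phi(\bar{X}^{1,\epsilon}_t)]-\E[\phi(X_t)]|\leq \epsilon C(T)|\phi|_4$, which since $C_{b,L}^4(\R)$ is convergence-determining on $\R$ gives weak convergence $\mc{L}(\bar{X}^{1,\epsilon}_t)\tto \mc{L}(X_t)$. To upgrade this to convergence in the $\bb{W}_2$ metric, I would invoke the standard criterion that on $\R$, weak convergence together with convergence of (or uniform integrability of) second moments is equivalent to $\bb{W}_2$ convergence. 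Uniform integrability of $|\bar{X}^{1,\epsilon}_t|^2$ in $\epsilon$ follows from the uniform bound on third (or any higher) moments provided by Lemma \ref{lemma:barXuniformbound}, together with the obvious analogous bound for $X_t$, at which point $\bb{W}_2(\mc{L}(\bar{X}^{1,\epsilon}_t),\mc{L}(X_t))\tto 0$ as $\epsilon\downarrow 0$.

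Finally, the last assertion of the lemma is immediate from the disintegration $Q^N(dx,dy,dz,dt)=Q^N_t(dx,dy,dz)dt$: by construction of the occupation measures in Equation \eqref{eq:occupationmeasures}, the first ($x$-)marginal of $Q^N_t$ is precisely $\tilde{\mu}^{\epsilon,N}_t$, and the established $L^1$ convergence $\E[\bb{W}_2(\tilde{\mu}^{\epsilon,N}_t,\mc{L}(X_t))]\tto 0$ gives convergence in probability in $\mc{P}_2(\R)$ by Markov's inequality.
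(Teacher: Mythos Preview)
Your proposal is correct and follows essentially the same approach as the paper: the same triangle inequality through $\bar{\mu}^{\epsilon,N}_t$ and $\mc{L}(\bar{X}^{1,\epsilon}_t)$, the same use of Lemma \ref{lemma:XbartildeXdifference} for the first term, the same empirical-to-law Wasserstein convergence (the paper phrases it as Glivenko--Cantelli in Wasserstein distance, you cite Fournier--Guillin) relying on Lemma \ref{lemma:barXuniformbound} for the second, and the same upgrade from weak convergence via Theorem \ref{theo:mckeanvlasovaveraging} to $\bb{W}_2$ convergence through uniform integrability for the third. The final claim about $Q^N_t$ is also handled identically.
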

\begin{proof}
Firstly, we note by Theorem \ref{theo:mckeanvlasovaveraging}, $\mc{L}(\bar{X}^\epsilon_t)\tto\mc{L}(X_t)$ in $\mc{P}(\R)$ (using here that $C^\infty_c(\R)$ is convergence determining - see \cite{EK} Proposition 3,4.4). In addition, 
by Lemma \ref{lemma:barXuniformbound}, we have $\sup_{\epsilon>0}\int_\R |x|^{p}\mc{L}(\bar{X}^\epsilon_t)(dx)<\infty$, for some $p>2$. Thus, we have by uniform integrability, $\E\biggl[|\bar{X}^\epsilon_t|^2\biggr]\tto \E\biggl[|X_t|^2\biggr]$ as $\epsilon\downarrow 0$, so $\bb{W}_2(\mc{L}(\bar{X}^\epsilon_t),\mc{L}(X_t))\tto 0$ as $\epsilon \downarrow 0$ (Theorem 5.5 in \cite{CD}). 
By Lemma \ref{lemma:XbartildeXdifference}, we also have
\begin{align*}
\E\biggl[\bb{W}_2(\tilde{\mu}^{\epsilon,N}_t,\bar{\mu}^{\epsilon,N}_t)\biggr]&\leq \E\biggl[ \frac{1}{N}\sum_{i=1}^N\biggl|\tilde{X}^{i,\epsilon,N}_t-\bar{X}^{i,\epsilon}_t\biggr|^2\biggr] \tto 0 \text{ as }N\toinf,
\end{align*}
where $\bar{\mu}^{\epsilon,N}$ is as in Equation \eqref{eq:IIDempiricalmeasure}.
Also, by Glivenko-Cantelli Convergence in the Wasserstein Distance (see, e.g. Section 5.1.2 in \cite{CD}):
\begin{align*}
\E\biggl[\bb{W}_2(\bar{\mu}^{\epsilon,N}_t,\mc{L}(\bar{X}^\epsilon_t))\biggr]\tto 0 \text{ as }N\toinf.
\end{align*}

So by the triangle inequality (see, e.g. the proof of \cite{CD} Proposition 5.3), we have:
\begin{align*}
\E\biggl[\bb{W}_2(\tilde{\mu}^{\epsilon,N}_t,\mc{L}(X_t)) \biggr]&\leq \E\biggl[\bb{W}_2(\tilde{\mu}^{\epsilon,N}_t,\bar{\mu}^{\epsilon,N}_t)\biggr] + \E\biggl[\bb{W}_2(\bar{\mu}^{\epsilon,N}_t,\mc{L}(\bar{X}^\epsilon_t))\biggr] + \bb{W}_2(\mc{L}(\bar{X}^\epsilon_t),\mc{L}(X_t)) \tto 0 \text{ as }N\toinf.
\end{align*}

The latter statement of the Lemma now follows from the construction of $Q^N$ and Markov's inequality.
\end{proof}
Now we can use the prelimit representation for the controlled fluctuation process $\tilde{Z}^N$ from Lemma \ref{lemma:Lnu1nu2representation} in order to identify the limiting behavior of $(\tilde{Z}^N,Q^N)$.

\begin{proposition}\label{prop:limitsatisfiescorrectequations}
Under assumptions \ref{assumption:uniformellipticity} - \ref{assumption:limitingcoefficientsregularity}, $(Z,Q)$ satisfies Equation \eqref{eq:MDPlimitFIXED} with probability 1.
\end{proposition}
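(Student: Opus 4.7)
The plan is to pass to the limit in the prelimit representation for $\langle \tilde Z^N_t,\phi\rangle$ provided by Lemma \ref{lemma:Lnu1nu2representation}, test-function by test-function, and then extend to all $\phi\in C^\infty_c(\R)$ by separability. Fix $\phi\in C^\infty_c(\R)$ and a $t\in[0,T]$. Invoking the Skorokhod Representation Theorem along the convergent subsequence, I may assume $(\tilde Z^N,Q^N)\to(Z,Q)$ almost surely in $C([0,T];\mc{S}_{-m})\times M_T(\R^4)$. By Lemma \ref{lemma:Lnu1nu2representation} the remainder $R^N_t(\phi)$ satisfies $\E[\sup_{t\in[0,T]}|R^N_t(\phi)|]\leq \bar{R}(N,T)|\phi|_4\to 0$, so $R^N_t(\phi)\to 0$ in $L^1$ (and in probability), which handles the lower-order term.

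For the drift term $\int_0^t\langle\tilde Z^N_s,\bar L_{\mc{L}(X_s),\tilde\mu^{\epsilon,N}_s}\phi\rangle\,ds$, I will first establish the deterministic continuity statement that $\nu\mapsto \bar L_{\mc{L}(X_s),\nu}\phi$ is continuous from $(\mc{P}_2(\R),\bb{W}_2)$ into $\mc{S}_w$ with bound $\|\bar L_{\nu_1,\nu_2}\phi\|_w\leq C\|\phi\|_{w+2}$ (this is essentially Lemma \ref{lem:barLbounded}/Assumption \ref{assumption:limitingcoefficientsregularity}, using that $\bar\gamma,\bar D$ and their linear functional derivatives have $w+2$ bounded $x$-derivatives and that $\frac{\delta}{\delta m}\bar\gamma,\frac{\delta}{\delta m}\bar D$ are in $\mc{S}_{w+2}$ in their second argument). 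Together with Lemma \ref{lemma:W2convergenceoftildemu}, this gives $\bar L_{\mc{L}(X_s),\tilde\mu^{\epsilon,N}_s}\phi\to\bar L_{\mc{L}(X_s)}\phi$ in $\mc{S}_w$ in probability, uniformly in $s$ after extracting a further subsequence. Combined with the a.s.\ convergence $\tilde Z^N_s\to Z_s$ in $\mc{S}_{-m}\hookrightarrow \mc{S}_{-w}$ (using the Hilbert-Schmidt embedding from \eqref{eq:wdefinition}) and the uniform bound of Lemma \ref{lemma:Zboundbyphi4} plus the bound $\sup_N\E[\|\tilde Z^N\|_{-m}^2]<\infty$ (obtained by the tightness proof), dominated convergence yields passage to the limit in the $ds$-integral.

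The delicate step is the two occupation-measure integrals, because the integrand involves both the unbounded controls $z_1,z_2$ and the measure argument $\tilde\mu^{\epsilon,N}_s$. I will split each integrand, for example $\sigma(x,y,\tilde\mu^{\epsilon,N}_s)z_1\phi'(x)$, as $\sigma(x,y,\mc{L}(X_s))z_1\phi'(x)+[\sigma(x,y,\tilde\mu^{\epsilon,N}_s)-\sigma(x,y,\mc{L}(X_s))]z_1\phi'(x)$. The second piece is controlled by Assumption \ref{assumption:uniformLipschitzxmu}: its absolute integral against $Q^N$ is bounded by $C|\phi'|_\infty\int|z_1|\,\bb{W}_2(\tilde\mu^{\epsilon,N}_s,\mc{L}(X_s))Q^N(dx,dy,dz,ds)$, which by Cauchy-Schwarz, the second moment bound $\sup_N \E\int |z|^2 dQ^N<\infty$ (from \eqref{eq:controlassumptions0}) and Lemma \ref{lemma:W2convergenceoftildemu}, vanishes in probability. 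For the leading piece, I introduce a smooth cutoff $\chi_M(z)$ supported on $\{|z|\leq M\}$; the truncated integrand $\sigma(x,y,\mc{L}(X_s))z_1\chi_M(z)\phi'(x)$ is bounded and continuous in $(x,y,z,s)$ (by the joint continuity in Assumption \ref{assumption:gsigmabounded}), hence passes to the limit under the weak convergence $Q^N\to Q$ (see Theorem A.3.12 of \cite{DE}), while the tail $|z|>M$ is uniformly controlled by $C\|\phi'\|_\infty M^{-1}\sup_N\E\int|z|^2 dQ^N$. Sending $N\to\infty$ then $M\to\infty$ identifies the limit as $\int\sigma(x,y,\mc{L}(X_s))z_1\phi'(x)Q(dx,dy,dz,ds)$. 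The $\tau_1,\tau_2,\Phi_y$ term is treated identically, noting that $\Phi_y$ is bounded by Assumption \ref{assumption:multipliedpolynomialgrowth} and Lipschitz in $(x,\bb{W}_2)$ via Assumption \ref{assumption:uniformLipschitzxmu}.

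Combining these three limits shows $(Z,Q)$ satisfies Equation \eqref{eq:MDPlimitFIXED} for the fixed $\phi$ and $t$, almost surely. Since $\langle Z_\cdot,\phi\rangle$ is continuous in $t$ by construction and the right-hand side is continuous in $t$ as well, the identity holds simultaneously for all $t\in[0,T]$ a.s. Taking a countable dense family of $\phi$ in $C^\infty_c(\R)$ (say in the $\norm{\cdot}_{w+2}$ norm) and extending by continuity of both sides in $\phi$ gives the required full conclusion. The main obstacle is precisely the combination of unbounded controls with the measure-dependent coefficients in the $Q^N$ integrals; the resolution is the Lipschitz-plus-truncation argument above, which crucially relies on the uniform $L^2$ bound on controls \eqref{eq:controlassumptions0} and the $\bb{W}_2$-Lipschitz assumption \ref{assumption:uniformLipschitzxmu}.
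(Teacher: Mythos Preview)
Your approach is essentially the same as the paper's: Skorokhod representation, kill $R^N_t(\phi)$ via Lemma \ref{lemma:Lnu1nu2representation}, pass to the limit separately in the drift term and the occupation-measure integrals, then extend by density. For the drift term your argument matches the paper's almost verbatim (the paper phrases the convergence of $\bar L_{\mc L(X_s),\tilde\mu^{\epsilon,N}_s}\phi$ as taking place in $\mc S_m$ rather than $\mc S_w$ and invokes Lemma \ref{lemma:Zboundbyphi4} for uniform integrability, but the logic is identical).

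The only real difference is in the occupation-measure term. Where you do a Lipschitz splitting in the $\mu$-argument followed by a truncation in $z$, the paper compresses this into a single sentence: the integrand is bounded by $C(|z_1|+|z_2|)$, continuous in $(x,y,z,\bb W_2)$, and $|z|^2$ is uniformly integrable under $Q^N$ by \eqref{eq:controlassumptions}, so \cite{DE} Theorem A.3.18 applies directly. Your hands-on argument is fine and proves the same thing; the paper's route just avoids the bookkeeping. One small imprecision in your write-up: Assumption \ref{assumption:uniformLipschitzxmu} does not list $\sigma$ or $\Phi_y$ individually as Lipschitz in $\bb W_2$ --- it lists $\sigma+\tau_1\Phi_y$, $\tau_2\Phi_y$, $\tau_1$, $\tau_2$. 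If you want the Lipschitz splitting, group the two occupation-measure integrals together so the integrand reads $[\sigma+\tau_1\Phi_y]z_1+[\tau_2\Phi_y]z_2$, and then \ref{assumption:uniformLipschitzxmu} applies directly. Alternatively, drop Lipschitz and use only joint continuity in $\bb W_2$ (which \emph{is} assumed for all the individual pieces via \ref{assumption:gsigmabounded}, \ref{assumption:uniformellipticity}, and the regularity of $\Phi_y$), as the paper does; continuity plus the uniform $L^2$ bound on $z$ is already enough for the truncation argument to go through.
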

\begin{proof}
We now invoke the Skorokhod's Representation Theorem as previously discussed. By a standard density argument, we can simply show that Equation \eqref{eq:MDPlimitFIXED} holds with probability 1 for each $\phi\in C^\infty_c(\R)$ and $t\in[0,T]$. This is using the fact that there exists a countable, dense collection of smooth, compactly supported functions in $\mc{S}_{m}$ (this follows from, e.g. Corollary 2.1.2 in \cite{Rauch}).

We note that by almost sure convergence of $\tilde{Z}^N$ to $Z$, we have for each $t\in[0,T]$ and $\phi\in C^\infty_c(\R)$, $\langle \tilde{Z}^N_t,\phi\rangle \tto \langle Z_t,\phi\rangle$ with probability 1. We also note that the prelimit representation given in Lemma \ref{lemma:Lnu1nu2representation} can be written solely in terms of $Q^N$ and $Z^N$ by replacing $\tilde{\mu}^{\epsilon,N}_s$ by the first marginal of $Q^N_s$. We can therefore take $\tilde{\mu}^{\epsilon,N}_s$ to also live on the new probability space from Skorokhod's Representation Theorem, and on that space we still have the convergence of $\tilde{\mu}^{\epsilon,N}_t$ to $\mc{L}(X_t)$ in probability proved in Lemma \ref{lemma:W2convergenceoftildemu}. Thus, by the representation provided by Lemma \ref{lemma:Lnu1nu2representation}, we only need to show the limits in probability:
\begin{align}
&\label{eq:problimit1}\int_0^t \langle \tilde{Z}^N_s,\bar{L}_{\mc{L}(X_s),\tilde{\mu}^{\epsilon,N}_s}\phi(\cdot)\rangle ds \tto (N\rightarrow\infty) \int_0^t \langle Z_s,\bar{L}_{\mc{L}(X_s)}\phi(\cdot)\rangle ds\\
&\label{eq:problimit2}\int_{\R\times\R\times\R^2\times[0,t]} \left(\sigma(x,y,\tilde{\mu}^{\epsilon,N}_s)z_1 \phi'(x)+[\tau_1(x,y,\tilde{\mu}^{\epsilon,N}_s)z_1+\tau_2(x,y,\tilde{\mu}^{\epsilon,N}_s)z_2]\Phi_y(x,y,\tilde{\mu}^{\epsilon,N}_s)\phi'(x)\right)Q^N(dx,dy,dz,ds) \\
&\tto (N\rightarrow\infty)\nonumber\\
&\int_{\R\times\R\times\R^2\times[0,t]} \left(\sigma(x,y,\mc{L}(X_s))z_1 \phi'(x)+ [\tau_1(x,y,\mc{L}(X_s))z_1+\tau_2(x,y,\mc{L}(X_s))z_2]\Phi_y(x,y,\mc{L}(X_s))\phi'(x)\right)Q(dx,dy,dz,ds), \nonumber
\end{align}
where $\bar{L}_{\nu_1,\nu_2}$ is as in Equation \eqref{eq:Lnu1nu2} and $\bar{L}_\nu$ is as in Equation \eqref{eq:MDPlimitFIXED}.
By boundedness and continuity of $\bar{\gamma},\bar{D}$ from assumption \ref{assumption:limitingcoefficientsregularity} (see Definition \ref{def:LinearFunctionalDerivative}), along with Lemma \ref{lemma:W2convergenceoftildemu}, we have for each $s\in [0,T]$ and $\phi \in C^\infty_c(\R)$, the limit in probability
\begin{align*}
\bar{L}_{\mc{L}(X_s),\tilde{\mu}^{\epsilon,N}_s}\phi(\cdot) &\tto \bar{L}_{\mc{L}(X_s)}\phi(\cdot) \text{ in }\mc{S}_{m}
\end{align*}
holds via the continuous mapping theorem. Thus, for each $s\in [0,T]$ and $\phi \in C^\infty_c(\R)$, the limit in probability
\begin{align*}
\langle \tilde{Z}^N_s,\bar{L}_{\mc{L}(X_s),\tilde{\mu}^{\epsilon,N}_s}\phi(\cdot)\rangle \tto \langle Z_s,\bar{L}_{\mc{L}(X_s)}\phi(\cdot)\rangle
\end{align*}
holds. We have, then, for all $t\in [0,T]$:
\begin{align*}
&\lim_{N\toinf}\E\biggl[\biggl|\int_0^t \left(\langle \tilde{Z}^N_s,\bar{L}_{\mc{L}(X_s),\tilde{\mu}^{\epsilon,N}_s}\phi(\cdot)\rangle -\langle Z_s,\bar{L}_{\mc{L}(X_s)}\phi(\cdot)\rangle\right) ds \biggr|\biggr]\leq\nonumber\\
 &\hspace{4cm}\leq\lim_{N\toinf}\E\biggl[\int_0^t \biggl|\langle \tilde{Z}^N_s,\bar{L}_{\mc{L}(X_s),\tilde{\mu}^{\epsilon,N}_s}\phi(\cdot)\rangle -\langle Z_s,\bar{L}_{\mc{L}(X_s)}\phi(\cdot)\rangle\biggr| ds \biggr],
\end{align*}
and we have by Lemma \ref{lemma:Zboundbyphi4} that
\begin{align*}
\sup_{N\in\bb{N}}\int_0^t\E\biggl[\biggl| \langle \tilde{Z}^N_s,\bar{L}_{\mc{L}(X_s),\tilde{\mu}^{\epsilon,N}_s}\phi(\cdot)-\bar{L}_{\mc{L}(X_s)}\phi(\cdot)\rangle  \biggr|^2\biggr]ds<\infty,
\end{align*}
so by uniform integrability we can pass to the limit to get
\begin{align*}
&\lim_{N\toinf}\E\biggl[\int_0^t \biggl|\langle \tilde{Z}^N_s,\bar{L}_{\mc{L}(X_s),\tilde{\mu}^{\epsilon,N}_s}\phi(\cdot)\rangle -\langle Z_s,\bar{L}_{\mc{L}(X_s)}\phi(\cdot)\rangle\biggr| ds \biggr] =\nonumber\\
&\hspace{5cm} =\E\biggl[\int_0^t \lim_{N\toinf}\biggl|\langle \tilde{Z}^N_s,\bar{L}_{\mc{L}(X_s),\tilde{\mu}^{\epsilon,N}_s}\phi(\cdot)\rangle -\langle Z_s,\bar{L}_{\mc{L}(X_s)}\phi(\cdot)\rangle\biggr| ds \biggr] = 0.
\end{align*}

Similarly, we can use that by the Lemma \ref{lemma:Zboundbyphi4} and Fatou's lemma:
\begin{align*}
\sup_{N\in\bb{N}}\int_0^t\E\biggl[\biggl| \langle \tilde{Z}^N_s,\bar{L}_{\mc{L}(X_s)}\phi(\cdot)\rangle- \langle Z_s,\bar{L}_{\mc{L}(X_s)}\phi(\cdot)\rangle\biggr|^2\biggr]ds<\infty
\end{align*}
and to get
\begin{align*}
\lim_{N\toinf}\E\biggl[\biggl|\int_0^t \left(\langle \tilde{Z}^N_s,\bar{L}_{\mc{L}(X_s)}\phi(\cdot)\rangle-\langle Z_s,\bar{L}_{\mc{L}(X_s)}\phi(\cdot)\rangle\right) ds \biggr|\biggr] & \leq \E\biggl[\int_0^t \lim_{N\toinf}\biggl|\langle \tilde{Z}^N_s,\bar{L}_{\mc{L}(X_s)}\phi(\cdot)\rangle-\langle Z_s,\bar{L}_{\mc{L}(X_s)}\phi(\cdot)\rangle\biggr| ds \biggr]\\
& = 0.
\end{align*}
Then, by Markov's inequality, we establish \eqref{eq:problimit1}. The limit \eqref{eq:problimit2} follows immediately from the integrand being bounded by $C[|z_1|+|z_2|]$ and continuous in $\bb{W}_2$, along with the assumed bound on the controls \eqref{eq:controlassumptions} (see, e.g., \cite{DE} Theorem A.3.18).
\end{proof}

\begin{proposition}\label{prop:viabilityoflimit}
Under assumptions \ref{assumption:uniformellipticity} - \ref{assumption:limitingcoefficientsregularity}, $Q\in P^*(Z)$ with Probability 1.
\end{proposition}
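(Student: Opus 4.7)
The plan is to establish \ref{PZ:L2contolbound}, \ref{PZ:fourthmarginallimitnglaw}, and \ref{PZ:secondmarginalinvtmeasure}, since \ref{PZ:limitingequation} is already supplied by Proposition \ref{prop:limitsatisfiescorrectequations}. Throughout, I would continue to work on the Skorokhod space on which $(\tilde Z^N,Q^N)\to(Z,Q)$ almost surely. Condition \ref{PZ:L2contolbound} will follow from the uniform control \eqref{eq:controlassumptions0} combined with the Fatou lemma for weakly convergent measures (Theorem A.3.12 of \cite{DE}) applied to the bounded-below, lower-semicontinuous cost $(x,y,z)\mapsto z_1^2+z_2^2$, yielding $\E[\int(z_1^2+z_2^2)\,Q]<\infty$ and hence finiteness of the integral almost surely. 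Condition \ref{PZ:fourthmarginallimitnglaw} is essentially immediate from the construction of $Q^N$, whose first coordinate time-marginal is $\tilde\mu^{\epsilon,N}_s\,ds$, together with the $\mc{P}_2(\R)$ convergence of $\tilde\mu^{\epsilon,N}_s$ to $\mc{L}(X_s)$ from Lemma \ref{lemma:W2convergenceoftildemu} (with uniform integrability supplied by Lemma \ref{lemma:tildeXuniformL2bound}).

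The core of the argument is \ref{PZ:secondmarginalinvtmeasure}. My plan is to fix a countable family $\mc{H}\subset C^\infty_c(\R\times\R)$ whose slices $\br{h(x,\cdot):h\in\mc{H}}$ form a measure-determining class on $\R$ for each fixed $x$, and to set $\bar h(x,\mu):=\int_\R h(x,y)\pi(dy;x,\mu)$. Viewing each such $h$ as a $\mu$-independent inhomogeneity $F(x,y,\mu):=h(x,y)$ and applying Proposition \ref{prop:llntypefluctuationestimate1} with $\psi\equiv 1$, then dividing through by $a(N)\sqrt N$ and bringing the empirical sum inside the absolute value, would produce
\begin{align*}
\E\biggl[\sup_{t\in[0,T]}\biggl|\int_{\R\times\R\times\R^2\times[0,t]}\bigl[h(x,y)-\bar h(x,\tilde\mu^{\epsilon,N}_s)\bigr]\,Q^N(dx,dy,dz,ds)\biggr|\biggr]\xrightarrow[N\to\infty]{}0.
\end{align*}
I would then swap $\bar h(x,\tilde\mu^{\epsilon,N}_s)$ for $\bar h(x,\mc{L}(X_s))$ in the limit, by invoking joint Wasserstein-continuity of $\mu\mapsto\pi(\cdot;x,\mu)$, hence of $\bar h$, together with Lemma \ref{lemma:W2convergenceoftildemu}, and pass $Q^N\to Q$ on the first integral using that the integrand is bounded and continuous in $(x,y,z)$ (Theorem A.3.18 of \cite{DE}). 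The outcome is
\begin{align*}
\int_{\R\times\R\times\R^2\times[0,T]}\bigl[h(x,y)-\bar h(x,\mc{L}(X_s))\bigr]\,Q(dx,dy,dz,ds)=0\quad\text{a.s., for every }h\in\mc{H}.
\end{align*}

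To finish, I would disintegrate $Q=\kappa(dz;x,y,s)\lambda(dy;x,s)Q_{(1,4)}(dx,ds)$ and use \ref{PZ:fourthmarginallimitnglaw} to rewrite the identity as
\begin{align*}
\int_{[0,T]\times\R}\biggl[\int_\R h(x,y)\lambda(dy;x,s)-\int_\R h(x,y)\pi(dy;x,\mc{L}(X_s))\biggr]\,\nu_{\mc{L}(X_\cdot)}(dx,ds)=0
\end{align*}
for every $h\in\mc{H}$. The countability of $\mc{H}$ produces a full $\nu_{\mc{L}(X_\cdot)}$-measure set of $(x,s)$ on which $\int h(x,y)\lambda(dy;x,s)=\int h(x,y)\pi(dy;x,\mc{L}(X_s))$ holds simultaneously for every $h\in\mc{H}$, and the measure-determining property of $\mc{H}$ then forces $\lambda(dy;x,s)=\pi(dy;x,\mc{L}(X_s))$ on this set, yielding \ref{PZ:secondmarginalinvtmeasure}. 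The main technical obstacle I anticipate is justifying the application of Proposition \ref{prop:llntypefluctuationestimate1} to members of $\mc{H}$: the proposition requires the Poisson equation \eqref{eq:driftcorrectorproblem} with inhomogeneity $h-\bar h$ to have a classical solution $\Xi$ enjoying the polynomial-in-$y$ regularity demanded by Assumption \ref{assumption:forcorrectorproblem}. Since each $h\in\mc{H}$ is smooth, compactly supported in $(x,y)$, and $\mu$-independent, this will follow from the one-dimensional Poisson estimates collected in Appendix \ref{subsection:regularityofthe1Dpoissoneqn}, but the polynomial-growth bookkeeping on the Lions derivatives of $\Xi$ has to be redone carefully for this new class of inhomogeneities before the proposition may be invoked.
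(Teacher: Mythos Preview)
Your treatment of \ref{PZ:limitingequation}, \ref{PZ:L2contolbound}, and \ref{PZ:fourthmarginallimitnglaw} matches the paper's. For \ref{PZ:secondmarginalinvtmeasure}, however, the paper takes a different and more direct route: it applies It\^o's formula to $\phi(\tilde Y^{i,\epsilon,N}_t)\psi(t,\tilde X^{i,\epsilon,N}_t)$ with $\phi\in C^\infty_c(\R)$, $\psi\in C^\infty_c(U\times\R)$, multiplies by $\epsilon^2/N$, and sums. The $1/\epsilon^2$ term contributes exactly $\int L_{x,\tilde\mu^{\epsilon,N}_s}\phi(y)\,\psi(s,x)\,Q^N(dx,dy,dz,ds)$, while every other term carries a positive power of $\epsilon$ and vanishes. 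Passing to the limit yields $\int L_{x,\mc L(X_s)}\phi(y)\,\psi(s,x)\,Q(dx,dy,dz,ds)=0$; varying $\psi$ localizes in $(s,x)$, and varying $\phi$ identifies $\lambda(\cdot;x,s)$ as a distributional solution of $L^*_{x,\mc L(X_s)}\lambda=0$, hence $\lambda=\pi$ by uniqueness. This argument uses only It\^o's formula and the uniqueness of $\pi$, and requires no Poisson-equation regularity for new inhomogeneities.

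Your route via Proposition~\ref{prop:llntypefluctuationestimate1} can be made to work, but as written it has a genuine gap. From
\[
\int_{[0,T]\times\R}\Bigl[\int_\R h(x,y)\,\lambda(dy;x,s)-\int_\R h(x,y)\,\pi(dy;x,\mc L(X_s))\Bigr]\,\nu_{\mc L(X_\cdot)}(dx,ds)=0
\]
for each $h\in\mc H$ you cannot conclude that the bracket vanishes $\nu_{\mc L(X_\cdot)}$-a.e.: a single vanishing integral against $\nu_{\mc L(X_\cdot)}$ does not force the integrand to zero, and countability of $\mc H$ does not help because you are not testing against functions of $(s,x)$. You need localization in $(s,x)$. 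This can be repaired either by applying Proposition~\ref{prop:llntypefluctuationestimate1} with a general $\psi(s,x)\in C^{1,2}_b$ rather than $\psi\equiv1$, or by retaining the $\sup_{t\in[0,T]}$ through the limit so that the identity holds for every upper endpoint $t$ and then differentiating in $t$; either gives a.e.-in-$s$ statements, after which the $x$-dependence of $h$ (e.g.\ via products $\phi_1(x)\phi_2(y)$) supplies the $x$-localization. Beyond this, the obstacle you flag is real: Assumption~\ref{assumption:forcorrectorproblem} is stated only for $F\in\{\gamma,D,\sigma\psi_1+[\tau_1\psi_1+\tau_2\psi_2]\Phi_y\}$, so invoking Proposition~\ref{prop:llntypefluctuationestimate1} for $F=h\in C^\infty_c$ obliges you to re-derive the $\tilde{\bm\zeta}$-regularity of the corresponding $\Xi$ from Appendix~\ref{subsection:regularityofthe1Dpoissoneqn}. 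The paper's generator argument sidesteps both issues.
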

\begin{proof}
By Proposition \ref{prop:limitsatisfiescorrectequations}, \ref{PZ:limitingequation} in the definition of $P^*(Z)$ holds. It remains to show \ref{PZ:L2contolbound}-\ref{PZ:fourthmarginallimitnglaw}. 

\ref{PZ:fourthmarginallimitnglaw} is immediate from the fact that the last marginal of $Q$ is Lebesgue measure by the definition of $M_T(\R^4)$ above Equation \eqref{eq:rigorousLotimesdt}, and the first marginal of $\tilde{Q}^N_s$ is $\tilde{\mu}^{\epsilon,N}_s$, which converges in $\mc{P}_2(\R)$ and hence $\mc{P}(\R)$ to $\mc{L}(X_s)$ by Lemma \ref{lemma:W2convergenceoftildemu}.

\ref{PZ:L2contolbound} follows from the version of Fatou's lemma from Theorem A.3.12 in \cite{DE}, since $\int_{\R\times\R\times\R^2\times[0,T]}|z_1|^2+|z_2|^2Q^N(dx,dy,dz,dt)$ is a non-negative random variable, and
\begin{align*}
\E\biggl[\int_{\R\times\R\times\R^2\times[0,T]}\left(|z_1|^2+|z_2|^2\right) Q(dx,dy,dz,dt) \biggr]&\leq \liminf_{N\toinf}\E\biggl[\int_{\R\times\R\times\R^2\times[0,T]}\left(|z_1|^2+|z_2|^2\right) Q^N(dx,dy,dz,dt) \biggr]\\
&\leq \sup_{N\in\bb{N}}\int_0^T\E\biggl[\frac{1}{N}\sum_{i=1}^N |\tilde{u}^{N,1}_i(s)|^2+|\tilde{u}^{N,2}_i(s)|^2\biggl]ds<\infty
\end{align*}
by the assumed bound \eqref{eq:controlassumptions0}.

Lastly, to see \ref{PZ:secondmarginalinvtmeasure}, take $\psi\in C^\infty_c(U\times\R)$ and $\phi\in C^\infty_c(\R)$. Here $U$ is an open interval in $\R$ containing $[0,T]$. Then
applying It\^o's formula (recalling here $\tilde{X}^{i,\epsilon,N},\tilde{Y}^{i,\epsilon,N}$ from Equation \eqref{eq:controlledslowfast1-Dold}):
\begin{align*}
\phi(\tilde{Y}^{i,\epsilon,N}_T)\psi(T,\tilde{X}^{i,\epsilon,N}_T) &= \phi(\tilde{Y}^{i,\epsilon,N}_0)\psi(0,\tilde{X}^{i,\epsilon,N}_0) + \int_0^T \biggl(\dot{\psi}(s,\tilde{X}^{i,\epsilon,N}_s)\phi(\tilde{Y}^{i,\epsilon,N}_s)\\
&+ \frac{1}{\epsilon^2}\biggl[f(i)\phi'(\tilde{Y}^{i,\epsilon,N}_s)+\frac{1}{2}(\tau_1^2(i)+\tau_2^2(i))\phi''(\tilde{Y}^{i,\epsilon,N}_s)\biggr]\psi(s,\tilde{X}^{i,\epsilon,N}_s) \\
&+\frac{1}{\epsilon}\biggl[g(i)+\tau_1(i)\frac{\tilde{u}^{N,1}_i(s)}{a(N)\sqrt{N}}+\tau_2(i)\frac{\tilde{u}^{N,1}_2(s)}{a(N)\sqrt{N}}\biggr]\phi'(\tilde{Y}^{i,\epsilon,N}_s)\psi(s,\tilde{X}^{i,\epsilon,N}_s)\\
&+\biggl[c(i)+\sigma(i)\frac{\tilde{u}^{N,1}_i(s)}{a(N)\sqrt{N}}\biggr]\phi(\tilde{Y}^{i,\epsilon,N}_s)\psi_x(s,\tilde{X}^{i,\epsilon,N}_s)+\frac{1}{2}\sigma^2(i)\phi(\tilde{Y}^{i,\epsilon,N}_s)\psi_{xx}(s,\tilde{X}^{i,\epsilon,N}_s)\\
&+\frac{1}{\epsilon}b(i)\phi(\tilde{Y}^{i,\epsilon,N}_s)\psi_x(s,\tilde{X}^{i,\epsilon,N}_s) + \frac{1}{\epsilon}\sigma(i)\tau_1(i)\phi'(\tilde{Y}^{i,\epsilon,N}_s)\psi_x(s,\tilde{X}^{i,\epsilon,N}_s)\biggr)ds \\
&+ \frac{1}{\epsilon}\int_0^T \tau_1(i)\phi'(\tilde{Y}^{i,\epsilon,N}_s)\psi(s,\tilde{X}^{i,\epsilon,N}_s)dW^i_s +\frac{1}{\epsilon}\int_0^T \tau_2(i)\phi'(\tilde{Y}^{i,\epsilon,N}_s)\psi(s,\tilde{X}^{i,\epsilon,N}_s)dB^i_s \\
&+ \int_0^T \sigma(i)\phi(\tilde{Y}^{i,\epsilon,N}_s)\psi_x(s,\tilde{X}^{i,\epsilon,N}_s)dW^i_s
\end{align*}
where $(i)$ denotes the argument $(\tilde{X}^{i,\epsilon,N}_s,\tilde{Y}^{i,\epsilon,N}_s,\tilde{\mu}^{\epsilon,N}_s)$. So recalling the definition of $L_{x,\mu}$ from Equation \eqref{eq:frozengeneratormold}, multiplying both sides by $\frac{\epsilon^2}{N}$ and summing,
\begin{align*}
&\int_{\R\times\R\times\R^2\times[0,T]}L_{x,\tilde{\mu}^{\epsilon,N}_s}\phi(y)\psi(s,x)Q^N(dx,dy,dz,ds)=\\
 &=\frac{1}{N}\sum_{i=1}^N\biggl\lbrace \epsilon^2[\phi(\tilde{Y}^{i,\epsilon,N}_0)\psi(0,\tilde{X}^{i,\epsilon,N}_0)-\phi(\tilde{Y}^{i,\epsilon,N}_T)\psi(T,\tilde{X}^{i,\epsilon,N}_T)]\\
&+ \epsilon^2\int_0^T\biggl( \dot{\psi}(s,\tilde{X}^{i,\epsilon,N}_s)\phi(\tilde{Y}^{i,\epsilon,N}_s)+\biggl[c(i)+\sigma(i)\frac{\tilde{u}^{N,1}_i(s)}{a(N)\sqrt{N}}\biggr]\phi(\tilde{Y}^{i,\epsilon,N}_s)\psi_x(s,\tilde{X}^{i,\epsilon,N}_s)+\frac{1}{2}\sigma^2(i)\phi(\tilde{Y}^{i,\epsilon,N}_s)\psi_{xx}(s,\tilde{X}^{i,\epsilon,N}_s)\biggr)ds\\
&+\epsilon\int_0^T\biggl(\biggl[g(i)+\tau_1(i)\frac{\tilde{u}^{N,1}_i(s)}{a(N)\sqrt{N}}+\tau_2(i)\frac{\tilde{u}^{N,1}_2(s)}{a(N)\sqrt{N}}\biggr]\phi'(\tilde{Y}^{i,\epsilon,N}_s)\psi(s,\tilde{X}^{i,\epsilon,N}_s)\\
&+b(i)\phi(\tilde{Y}^{i,\epsilon,N}_s)\psi_x(s,\tilde{X}^{i,\epsilon,N}_s) + \sigma(i)\tau_1(i)\phi'(\tilde{Y}^{i,\epsilon,N}_s)\psi_x(s,\tilde{X}^{i,\epsilon,N}_s)\biggr)ds \\
&+ \epsilon\int_0^T \tau_1(i)\phi'(\tilde{Y}^{i,\epsilon,N}_s)\psi(s,\tilde{X}^{i,\epsilon,N}_s)dW^i_s +\epsilon\int_0^T \tau_2(i)\phi'(\tilde{Y}^{i,\epsilon,N}_s)\psi(s,\tilde{X}^{i,\epsilon,N}_s)dB^i_s \\
&+ \epsilon^2\int_0^T \sigma(i)\phi(\tilde{Y}^{i,\epsilon,N}_s)\psi_x(s,\tilde{X}^{i,\epsilon,N}_s)dW^i_s.
\end{align*}

Since all terms in the right hand side are bounded other than $b$ and $c$, which grow at most linearly in $y$ as per Assumption \ref{assumption:gsigmabounded}, we see after using the bound \eqref{eq:controlassumptions0} that the right hand side is bounded in square expectation by
\begin{align*}
C(T)\epsilon^2(1+\sup_{N\in\bb{N}}\frac{1}{N}\sum_{i=1}^N\sup_{s\in[0,T]}\E\biggl[|\tilde{Y}^{i,\epsilon,N}_s|^2 \biggr])\leq C(T)\epsilon^2
\end{align*}
by Lemma \ref{lemma:tildeYuniformbound}, and hence converges to $0$ in probability.

We can see also by the fact that $\phi$ and $\psi$ are compactly supported and the coefficients in $L_{x,\mu}$ are continuous in $(x,y,\bb{W}_2)$ by assumptions \ref{assumption:uniformellipticity} and \ref{assumption:retractiontomean}, we can use the definition of convergence in $M_T(\R^4)$ and Lemma \ref{lemma:W2convergenceoftildemu} to see the left hand side converges in probability to $$\int_{\R\times\R\times\R^2\times[0,T]}L_{x,\mc{L}(X_s)}\phi(y)\psi(s,x)Q(dx,dy,dz,ds)$$ (see, e.g., \cite{DE} Theorem A.3.18). Thus, using that $Q$ satisfies \ref{PZ:fourthmarginallimitnglaw}, $$\int_{\R\times\R\times\R^2\times[0,T]}L_{x,\mc{L}(X_s)}\phi(y)\psi(s,x)Q(dx,dy,dz,ds)=\int_0^T \int_{\R}\int_{\R}L_{x,\mc{L}(X_s)}\phi(y)\psi(s,x)\lambda(dy;x,s)\mc{L}(X_s)(dx)ds=0$$
for some stochastic kernel $\lambda$ almost surely. 
Then noting that by boundedness of the coefficients and the derivatives of $\phi$, we have $(s,x)\mapsto \int_\R L_{x,\mc{L}(X_s)}\phi(y) \lambda(dy;x,s)$ is in $L^1_{\text{loc}}([0,T]\times\R,\nu_{\mc{L}(X_\cdot)})$ for all $\phi$, and thus by Corollary 22.38 (2) in \cite{Driver}, for each $\phi$, we have
\begin{align*}
\int_\R L_{x,\mc{L}(X_s)}\phi(y) \lambda(dy;x,s)=0
\end{align*}
$\nu_{\mc{L}(X_\cdot)}$- almost surely. By a standard density argument (see \cite{BS} Section 6.2.1), we have by letting
\begin{align*}
A = \br{(s,x):\int_\R L_{x,\mc{L}(X_s)}\phi(y) \lambda(dy;x,s)=0,\forall \phi\in C^\infty_c(\R)},
\end{align*}
$\nu_{\mc{L}(X_\cdot)}(A\times [0,T])=\int_0^T \int_\R \1_A \mc{L}(X_s)(dx)ds=1$. This then characterizes $\lambda(dy;x,s)$ as $\nu_{\mc{L}(X_\cdot)}-$ almost surely satisfying $L_{x,\mc{L}(X_s)}^*\lambda(\cdot;x,s)=0$ in the distributional sense, and by definition of stochastic kernels $\int_\R \lambda(dy;x,s)=1,\forall x,s$, so $\lambda(dy;x,s)$ is an invariant measure associated to $L_{x,\mc{L}(X_s)}$. Since such an invariant measure is unique under assumptions \ref{assumption:uniformellipticity} and \ref{assumption:retractiontomean} by \cite{PV1} Proposition 1, we have in fact $\lambda(dy;x,s) = \pi(dy;x,\mc{L}(X_s))$ $\nu_{\mc{L}(X_\cdot)}-$ almost surely.

\end{proof}
\subsection{Weak-Sense Uniqueness}
In order to prove the Laplace Principle Lower bound \eqref{eq:LPlowerbound} in Section \ref{sec:lowerbound} and compactness of level sets in Proposition \ref{prop:goodratefunction}, we will need to be able to identify a given $Z\in C([0,T];\mc{S}_{-w/r})$ using only the information that $Z$ solves the limiting controlled Equation \eqref{eq:MDPlimitFIXED} for some fixed $Q$. Hence, in this subsection, we prove an appropriate notion of weak-sense uniqueness for Equation \eqref{eq:MDPlimitFIXED}. Recall the space spaces $\mc{S}_{p},\mc{S}_{-p}$, and the related norms from the beginning of Section \ref{subsec:notationandtopology}.

\begin{lemma}\label{lemma:KurtzAppendixAnalogues}
Let $p\in\bb{N}$ and consider $\phi\in \mc{S}_{p+2}$, $F\in C_b^p(\R)$, and $G\in \mc{S}_p$. Then for any $\mu\in\mc{P}(\R)$, we have:
\begin{enumerate}
\item $\langle \phi, F\phi' \rangle_p \leq C\norm{\phi}^2_p$
\item $\langle \phi, F\phi''\rangle_p \leq C\norm{\phi}^2_p-\int_\R (1+x^2)^p |\phi^{(p+1)}(x)|^2 F(x)dx$
\item $\norm{\int_\R G(\cdot)\phi^{(k)}(z)\mu(dz)}_p\leq C\norm{\phi}_{k+1}$, for $k\leq p-1$.
\end{enumerate}
\end{lemma}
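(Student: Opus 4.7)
My plan is to treat the three estimates separately, as each has a slightly different flavor. All three will proceed directly from the explicit definition of the inner product $(\phi,\psi)_p = \sum_{k=0}^p \int_\R (1+x^2)^{2p} \phi^{(k)}(x)\psi^{(k)}(x)\,dx$, using Leibniz's rule to expand derivatives of products, integration by parts, and Cauchy--Schwarz. These are exact analogues of the estimates in Appendix A of Kurtz--Xiong and of the computations behind Lemma 4.5 in \cite{BW}, so the strategy is essentially bookkeeping.

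For (1), I would apply Leibniz: $(F\phi')^{(k)} = \sum_{j=0}^k \binom{k}{j} F^{(j)} \phi^{(k+1-j)}$. The terms with $j \geq 1$ involve only derivatives of $\phi$ of order $\leq k \leq p$, and since $F$ has $p$ bounded derivatives, they are bounded directly by Cauchy--Schwarz by $C\norm{\phi}_p^2$. The term $j=0$, namely $\int (1+x^2)^{2p} \phi^{(k)} F \phi^{(k+1)}\,dx$, is the only potentially bad one since it involves $\phi^{(k+1)}$, but here the standard trick is to rewrite it as $\frac{1}{2}\int (1+x^2)^{2p} F\,((\phi^{(k)})^2)'\,dx$ and integrate by parts; the derivative of the weight produces a factor bounded pointwise by $C(1+x^2)^{2p}$ (using $|x|/(1+x^2)\leq 1/2$) and of $F$ is bounded, leaving only squared lower-order derivatives controlled by $\norm{\phi}_p^2$.

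For (2) I would follow the same Leibniz expansion for $(F\phi'')^{(k)} = \sum_{j=0}^k \binom{k}{j} F^{(j)}\phi^{(k+2-j)}$. All terms with $(k,j)$ such that $k+2-j \leq p$ are again harmless. The delicate terms are those producing $\phi^{(p+1)}$ or $\phi^{(p+2)}$. For $\phi^{(p+2)}$ (arising from $k=p$, $j=0$), I integrate by parts once against $\phi^{(p)}$: the boundary term vanishes by decay in $\mc{S}$, and the resulting integrand picks up a term $-(1+x^2)^{2p} F |\phi^{(p+1)}|^2$ plus cross terms where either the weight or $F$ is differentiated; those cross terms combine with the $j=1,\,k=p$ Leibniz contribution, and after one more use of Cauchy--Schwarz absorb into $C\norm{\phi}_p^2$. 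This yields the claimed negative quadratic term in $\phi^{(p+1)}$ (modulo what I read as a typo in the exponent of the weight in the statement).

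For (3), the key observation is that the integral $\int_\R G(\cdot)\phi^{(k)}(z)\mu(dz)$ factors: as a function of $x$ it equals $G(x) \cdot \bigl(\int_\R \phi^{(k)}(z)\mu(dz)\bigr)$, so $\norm{\int_\R G(\cdot)\phi^{(k)}(z)\mu(dz)}_p \leq \norm{G}_p \,\bigl|\int_\R \phi^{(k)}\,d\mu\bigr| \leq \norm{G}_p \sup_z |\phi^{(k)}(z)|$. Then $\sup_z|\phi^{(k)}(z)| \leq |\phi|_k \leq C|\phi|_{k+1}$ if I wanted to stay in $|\cdot|_m$-norms, but invoking the Sobolev-type embedding in Equation \eqref{eq:sobolembedding} I can bound $|\phi|_k \leq C\norm{\phi}_{k+1}$, which together with $\norm{G}_p < \infty$ gives the claim. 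The only genuine obstacle is the careful bookkeeping in (2) to make sure the highest-order cross terms cancel with the correct sign and weight; the rest is mechanical.
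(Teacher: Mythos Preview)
Your proposal is correct and follows essentially the same approach as the paper. The paper simply cites the integration-by-parts arguments A1) and A2) in the Appendix of \cite{KX} for parts (1) and (2), which is precisely the Leibniz-plus-integrate-by-parts bookkeeping you describe, and for part (3) the paper writes out exactly your factoring argument (with an intermediate Jensen/H\"older step before passing to the sup norm and invoking \eqref{eq:sobolembedding}); your observation about the weight exponent in (2) is also consistent with what the computation actually produces.
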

\begin{proof}
The proof of (1) follows by the same integration by parts argument as A1) in the Appendix of \cite{KX}. Part 2 follows by the same integration by parts argument as A2) in the Appendix of \cite{KX}. It becomes evident upon reading those proofs that $w_p\coloneqq (1+x^2)^p$ can be replaced by any $w_p$ such that $w^{-1}_pD^kw_p$ is bounded for all $k\leq p$. The proof of 3 is similar to the proof of A4)  in the Appendix of \cite{KX}. We recall it here:
\begin{align*}
\norm{\int_\R G(\cdot)\phi^{(k)}(z)\mu(dz)}_p& = \biggl(\sum_{j=0}^p \int_{\R}(1+x^2)^{2p}\biggl|\int_\R G^{(j)}(x)\phi^{(k)}(z)\mu(dz)\biggr|^2 dx\biggr)^{1/2}\\
&\leq \norm{G}_p \biggl(\int_\R |\phi^{(k)}(z)|^2\mu(dz)\biggr)^{1/2} \text{ by H\"older's inequality}\\
&\leq \norm{G}_p |\phi|_{k}\\
&\leq \norm{G}_p \norm{\phi}_{k+1}.
\end{align*}
\end{proof}

\begin{lemma}\label{lem:barLbounded}
Under assumption \ref{assumption:limitingcoefficientsregularity}, for any $p\in \br{1,...,w+2}$, where $w$ is as in Equation \eqref{eq:wdefinition}, and any $s\in [0,T]$, $\bar{L}_{\mc{L}(X_s)}$ as given in Equation \eqref{eq:MDPlimitFIXED}, where $X_s$ is as in Equation \eqref{eq:LLNlimitold}, is a bounded linear map from $\mc{S}_{p+2}$ to $\mc{S}_p$. In particular, there exists $c_p$ such that for all $s\in [0,T]$ and $\phi \in \mc{S}_{p+2}$,
\begin{align*}
\norm{\bar{L}_{\mc{L}(X_s)}\phi}_p\leq c_p \norm{\phi}_{p+2}.
\end{align*}
The same holds with $w$ replaced by $r$ from Equation \eqref{eq:rdefinition} if we in addition assume \ref{assumption:limitingcoefficientsregularityratefunction}.
\end{lemma}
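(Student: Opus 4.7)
The plan is to decompose $\bar{L}_{\mc{L}(X_s)}\phi = T_1 + T_2 + T_3 + T_4$, where $T_1(x) = \bar{\gamma}(x,\mc{L}(X_s))\phi'(x)$, $T_2(x)=\bar{D}(x,\mc{L}(X_s))\phi''(x)$, $T_3(x)=\int_\R \frac{\delta}{\delta m}\bar{\gamma}(z,\mc{L}(X_s))[x]\phi'(z)\mc{L}(X_s)(dz)$, and $T_4(x)=\int_\R \frac{\delta}{\delta m}\bar{D}(z,\mc{L}(X_s))[x]\phi''(z)\mc{L}(X_s)(dz)$, and then estimate each in the $\mc{S}_p$ norm by $C\norm{\phi}_{p+2}$ with constants uniform in $s$. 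Linearity of $\bar{L}_{\mc{L}(X_s)}$ is immediate from the definition, so only the boundedness estimate requires work. The result would first be established for $\phi \in \mc{S}$ and then extended to $\phi\in\mc{S}_{p+2}$ by density.

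For the two local terms $T_1,T_2$, Assumption \ref{assumption:limitingcoefficientsregularity} (via membership of $\bar{\gamma},\bar{D}$ in $\mc{M}_b^{\bm{\zeta}_{x,w+2}}$) provides bounded $x$-derivatives of $\bar{\gamma}(\cdot,\nu)$ and $\bar{D}(\cdot,\nu)$ up to order $w+2\geq p$, uniformly in $\nu$. Leibniz's rule then yields, for $0\leq k\leq p$, pointwise bounds $|\partial_x^k T_1(x)|\leq C\sum_{j=1}^{k+1}|\phi^{(j)}(x)|$ and $|\partial_x^k T_2(x)|\leq C\sum_{j=2}^{k+2}|\phi^{(j)}(x)|$. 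Multiplying by the weight $(1+x^2)^{2p}\leq (1+x^2)^{2(p+2)}$, integrating in $x$, and summing over $k=0,\ldots,p$ immediately gives $\norm{T_1}_p\leq C\norm{\phi}_{p+1}\leq C\norm{\phi}_{p+2}$ and $\norm{T_2}_p\leq C\norm{\phi}_{p+2}$.

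For the two nonlocal terms $T_3,T_4$, the required decay in $x$ is \emph{not} supplied by $\bar{\gamma}$ or $\bar{D}$ (which need not decay in $x$), but precisely by the second clause of Assumption \ref{assumption:limitingcoefficientsregularity}, namely $\sup_{z,\nu}\norm{\tfrac{\delta}{\delta m}\bar{\gamma}(z,\nu)[\cdot]}_{w+2}+\sup_{z,\nu}\norm{\tfrac{\delta}{\delta m}\bar{D}(z,\nu)[\cdot]}_{w+2}<\infty$. I would first justify interchanging $\partial_x^k$ with the $\nu$-integral by dominated convergence (valid because $\partial_x^k\tfrac{\delta}{\delta m}\bar{\gamma}(z,\nu)[x]$ is uniformly bounded for $k\leq w+2$ by membership in $\mc{M}_{\bm{\delta},b}^{\bar{\bm{\zeta}}_{w+2}}$), and then apply Cauchy--Schwarz in $z$ to obtain
\begin{align*}
|\partial_x^k T_3(x)|^2 \leq \biggl(\int_\R\biggl|\partial_x^k\tfrac{\delta}{\delta m}\bar{\gamma}(z,\nu)[x]\biggr|^2\nu(dz)\biggr)\int_\R |\phi'(z)|^2\nu(dz),
\end{align*}
and analogously for $T_4$. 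Multiplying by $(1+x^2)^{2p}$, integrating in $x$, interchanging the $x$- and $\nu$-integrations by Tonelli, and summing $k=0,\ldots,p$ bounds the first factor by $\sup_{z,\nu}\norm{\tfrac{\delta}{\delta m}\bar{\gamma}(z,\nu)[\cdot]}_p^2\leq \sup_{z,\nu}\norm{\tfrac{\delta}{\delta m}\bar{\gamma}(z,\nu)[\cdot]}_{w+2}^2\leq C$ (using $p\leq w+2$), and the second factor by $\norm{\phi'}_\infty^2\leq C\norm{\phi}_2^2\leq C\norm{\phi}_{p+2}^2$ via the Sobolev-type embedding \eqref{eq:sobolembedding} and $\nu\in\mc{P}(\R)$. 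An identical argument for $T_4$, costing one additional derivative of $\phi$ so that $\norm{\phi''}_\infty^2\leq C\norm{\phi}_3^2\leq C\norm{\phi}_{p+2}^2$ (using $p\geq 1$), yields $\norm{T_3}_p+\norm{T_4}_p\leq C\norm{\phi}_{p+2}$.

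Summing the four bounds via the triangle inequality delivers the claimed estimate with a constant $c_p$ independent of $s$. The extension from $p\leq w+2$ to $p\leq r+2$ under Assumption \ref{assumption:limitingcoefficientsregularityratefunction} is verbatim, with $w+2$ replaced by $r+2$ throughout. There is no deep obstacle here; the only real subtlety is recognizing that the $x$-decay needed to place $T_3,T_4$ in $\mc{S}_p$ is supplied by the weighted-Sobolev norm of the linear functional derivative \emph{in its bracket-argument}, exactly as encoded in the last two displays of Assumption \ref{assumption:limitingcoefficientsregularity}, rather than from any decay of the averaged coefficients themselves.
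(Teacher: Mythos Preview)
Your proposal is correct and follows essentially the same approach as the paper's proof: decompose into the four terms, handle the local terms $T_1,T_2$ via Leibniz and the uniform bounds on $x$-derivatives of $\bar{\gamma},\bar{D}$, and handle the nonlocal terms $T_3,T_4$ by differentiating under the integral, using Jensen/Cauchy--Schwarz plus Tonelli to reduce to the $\mc{S}_p$-norm bound on $\frac{\delta}{\delta m}\bar{\gamma},\frac{\delta}{\delta m}\bar{D}$ in their bracket argument, with the $\phi$-dependence controlled via the Sobolev embedding \eqref{eq:sobolembedding}. The paper uses Jensen's inequality where you use Cauchy--Schwarz, but this is an inessential difference.
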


\begin{proof}
Linearity is clear. For $\phi\in \mc{S}_{p+2}$ and $s\in [0,T]$,
\begin{align*}
\norm{\bar{\gamma}(\cdot,\mc{L}(X_s))\phi'(\cdot)}^2_p& =  \sum_{k=0}^p \int_\R (1+x^2)^{2p}\biggl([\bar{\gamma}(x,\mc{L}(X_s))\phi'(x)]^{(k)}\biggr)^2dx\\
&\leq c_p \sum_{k=0}^p \int_\R (1+x^2)^{2p}\biggl(\phi^{(k+1)}(x)\biggr)^2dx \text{ by Assumption \ref{assumption:limitingcoefficientsregularity}}\\
& \leq c_p \norm{\phi}^2_{p+1}.
\end{align*}
In the same way, we can see
$\norm{\bar{D}(\cdot,\mc{L}(X_s))\phi''(\cdot)}^2_p \leq c_p \norm{\phi}^2_{p+2}.$
In addition, we have
\begin{align*}
&\norm{\int_\R \frac{\delta}{\delta m}\bar{\gamma}(z,\mc{L}(X_s))[\cdot]\phi'(z)\mc{L}(X_s)(dz)}^2_{p}  = \sum_{k=0}^p \int_\R (1+x^2)^{2p}\biggl(\frac{\partial^k}{\partial x^k}\biggl[\int_\R \frac{\delta}{\delta m}\bar{\gamma}(z,\mc{L}(X_s))[x]\phi'(z)\mc{L}(X_s)(dz)\biggr]\biggr)^2dx \\
&\qquad\leq \int_{\R} \norm{\frac{\delta}{\delta m}\bar{\gamma}(z,\mc{L}(X_s))[\cdot]}^2_{p}\mc{L}(X_s)(dz)|\phi|^2_1 \text{ by Jensen's inequality and Tonelli's Theorem}\\
&\qquad\leq c_p \norm{\phi}_2^2 \text{ by Assumption \ref{assumption:limitingcoefficientsregularity} and the inequality \eqref{eq:sobolembedding}}.
\end{align*}
Again, in the same way, we can see
\begin{align*}
\norm{\int_\R \frac{\delta}{\delta m}\bar{D}(z,\mc{L}(X_s))[\cdot]\phi''(z)\mc{L}(X_s)(dz)}^2_{p} & \leq c_p \norm{\phi}_3^2,
\end{align*}
so by definition of $\bar{L}_\nu$, the result follows.
\end{proof}

\begin{lemma}\label{lemma:4.32BW}
Under Assumption \ref{assumption:limitingcoefficientsregularity}, we have for any $p\in \br{1,...,w}$ and $F\in \mc{S}_{-p}$, where $w$ is as in Equation \eqref{eq:wdefinition},
\begin{align*}
\sup_{s\in[0,T]}\langle F,\bar{L}^*_{\mc{L}(X_s)}F\rangle_{-(p+2)}\leq \norm{F}^2_{-(p+2)}
\end{align*}
where $\bar{L}^*_{\mc{L}(X_s)}:\mc{S}_{-p}\tto \mc{S}_{-(p+2)}$ is the adjoint of $\bar{L}_{\mc{L}(X_s)}:\mc{S}_{p+2}\tto \mc{S}_{p}$ given in Equation \eqref{eq:MDPlimitFIXED} (using here Lemma \ref{lem:barLbounded}). The same holds if instead we further assume \ref{assumption:limitingcoefficientsregularityratefunction} and replace $w$ with $r$ from Equation \eqref{eq:rdefinition}.
\end{lemma}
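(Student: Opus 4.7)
The plan is to use Riesz duality to convert the claimed bound on $\mc{S}_{-(p+2)}$ into a quadratic form estimate in $\mc{S}_{p+2}$, and then apply the three estimates in Lemma \ref{lemma:KurtzAppendixAnalogues} to each of the four terms comprising $\bar{L}_{\mc{L}(X_s)}$. (The statement as written omits the constant $C$; I read the inequality as $\leq C\norm{F}^2_{-(p+2)}$, which is all that is needed for the Gronwall arguments it feeds into.)

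First, identify each $F\in \mc{S}_{-(p+2)}$ with its Riesz representative $\phi_F\in \mc{S}_{p+2}$, so that $\langle F,\psi\rangle = (\phi_F,\psi)_{p+2}$ for every $\psi\in \mc{S}_{p+2}$ and $\|F\|_{-(p+2)} = \|\phi_F\|_{p+2}$. Set $G\coloneqq \bar{L}^*_{\mc{L}(X_s)}F\in\mc{S}_{-(p+2)}$, with Riesz representative $\phi_G$. By the definition of the adjoint and by Lemma \ref{lem:barLbounded} (which ensures $\bar{L}_{\mc{L}(X_s)}\psi\in\mc{S}_p\subset\mc{S}_{p+2}$), one has for every $\psi\in \mc{S}_{p+2}$
\begin{equation*}
(\phi_G,\psi)_{p+2} = \langle G,\psi\rangle = \langle F,\bar{L}_{\mc{L}(X_s)}\psi\rangle = (\phi_F,\bar{L}_{\mc{L}(X_s)}\psi)_{p+2}.
\end{equation*}
Therefore $\phi_G = \bar{L}^\dagger\phi_F$, where $\bar{L}^\dagger$ is the Hilbert-adjoint of $\bar{L}_{\mc{L}(X_s)}$ on $\mc{S}_{p+2}$, and consequently
\begin{equation*}
\langle F,\bar{L}^*_{\mc{L}(X_s)}F\rangle_{-(p+2)} = (\phi_F,\phi_G)_{p+2} = (\phi_F,\bar{L}_{\mc{L}(X_s)}\phi_F)_{p+2}.
\end{equation*}
This reduces the problem to showing $(\phi,\bar{L}_{\mc{L}(X_s)}\phi)_{p+2}\leq C\|\phi\|^2_{p+2}$ uniformly in $s\in[0,T]$ for all $\phi\in\mc{S}_{p+2}$.

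Decompose $\bar{L}_{\mc{L}(X_s)}\phi$ into four pieces from \eqref{eq:MDPlimitFIXED}: the drift $\bar{\gamma}(\cdot,\mc{L}(X_s))\phi'$, the diffusion $\bar{D}(\cdot,\mc{L}(X_s))\phi''$, and the two nonlocal functional-derivative integrals. For the drift piece apply Lemma \ref{lemma:KurtzAppendixAnalogues}(1) with $F=\bar{\gamma}(\cdot,\mc{L}(X_s))$; Assumption \ref{assumption:limitingcoefficientsregularity} guarantees $\bar{\gamma}\in C^{p+2}_b(\R)$ uniformly in the measure argument, so the bound $(\phi,\bar{\gamma}\phi')_{p+2}\leq C\|\phi\|^2_{p+2}$ follows with $C$ independent of $s$. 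For the diffusion piece apply Lemma \ref{lemma:KurtzAppendixAnalogues}(2); the non-positive remainder $-\int(1+x^2)^{2(p+2)}|\phi^{(p+3)}|^2\bar{D}\,dx$ can be discarded because $\bar{D}\geq 0$, leaving $(\phi,\bar{D}\phi'')_{p+2}\leq C\|\phi\|^2_{p+2}$. For the two nonlocal pieces, Cauchy--Schwarz in $\mc{S}_{p+2}$ combined with Lemma \ref{lemma:KurtzAppendixAnalogues}(3) (using $k=1$ for the $\bar{\gamma}$ piece and $k=2$ for the $\bar{D}$ piece, both of which satisfy $k\leq (p+2)-1$ since $p\geq 1$) gives
\begin{equation*}
\Bigl(\phi,\int_\R\tfrac{\delta \bar{\gamma}}{\delta m}(z,\mc{L}(X_s))[\cdot]\phi'(z)\mc{L}(X_s)(dz)\Bigr)_{p+2} \leq \|\phi\|_{p+2}\cdot C\|\phi\|_{2} \leq C\|\phi\|^2_{p+2},
\end{equation*}
and the analogous bound for the $\bar{D}$ piece, where the suprema $\sup_{z,\mu}\|\tfrac{\delta}{\delta m}\bar{\gamma}(z,\mu)[\cdot]\|_{p+2}$ and $\sup_{z,\mu}\|\tfrac{\delta}{\delta m}\bar{D}(z,\mu)[\cdot]\|_{p+2}$ are finite by Assumption \ref{assumption:limitingcoefficientsregularity} (using $p\leq w$). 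Summing the four estimates yields the desired $s$-uniform quadratic bound, and the extension to $p\leq r$ under \ref{assumption:limitingcoefficientsregularityratefunction} is immediate because exactly the same proof uses only derivatives up to order $r+2$.

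The only delicate point is the diffusion term: the weight $(1+x^2)^{2(p+2)}$ generates an a priori $\|\phi\|^2_{p+3}$-sized contribution from $\phi\bar{D}\phi''$, and the crucial cancellation that brings it back down to $\|\phi\|^2_{p+2}$ (plus a sign-definite remainder) is precisely the integration-by-parts identity in Lemma \ref{lemma:KurtzAppendixAnalogues}(2). This is where the polynomial weights in the definition \eqref{eq:familyofhilbertnorms} of $\|\cdot\|_n$ were chosen so that $w_n^{-1}D^k w_n$ is bounded; everything else in the proof is essentially bookkeeping.
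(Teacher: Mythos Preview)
Your approach mirrors the paper's exactly: Riesz duality reduces the dual inner product to the quadratic form $(\phi,\bar{L}_{\mc{L}(X_s)}\phi)_{p+2}$, and then Lemma~\ref{lemma:KurtzAppendixAnalogues}(1)--(3) handles the four terms, with the $\bar D\geq 0$ sign observation used to discard the remainder from part~(2); your reading of the missing constant $C$ is also what the paper's proof actually delivers.

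The one slip is the parenthetical ``$\mc{S}_p\subset\mc{S}_{p+2}$,'' which is backwards (larger index means smaller space). Since $\bar{L}_{\mc{L}(X_s)}\psi$ is only guaranteed to lie in $\mc{S}_p$, the identity $\langle F,\bar{L}_{\mc{L}(X_s)}\psi\rangle=(\phi_F,\bar{L}_{\mc{L}(X_s)}\psi)_{p+2}$ is not immediate for arbitrary $\psi\in\mc{S}_{p+2}$. The paper patches this by first approximating the Riesz representative by $\phi\in\mc{S}$ (so that $\bar{L}_{\mc{L}(X_s)}\phi\in\mc{S}_{p+2}$ via Lemma~\ref{lem:barLbounded} applied at level $p+2$), proving the bound there, and passing to the limit; you should insert the same density step.
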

\begin{proof}
By the Riesz representation theorem we can take $\phi \in \mc{S}_{p}$ such that for all $\psi\in \mc{S}_{p}$, $\langle F,\psi\rangle=\langle \phi,\psi\rangle_{p}$ and $\norm{F}_{-p}=\norm{\phi}_{p}$. By a density argument, we may assume in fact that $\phi \in \mc{S},$ $\langle F,\psi\rangle=\langle \phi,\psi\rangle_{p+2}$, and $\norm{\phi}_{p+2}=\norm{F}_{-(p+2)}$. Then for any $s\in[0,T]$, $\langle F,\bar{L}^*_{\mc{L}(X_s)}F\rangle_{-(p+2)} = \langle F,\bar{L}_{\mc{L}(X_s)}\phi\rangle = \langle \phi,\bar{L}_{\mc{L}(X_s)}\phi\rangle_{p+2}.$ Then,
\begin{align*}
&\langle \phi,\bar{L}_{\mc{L}(X_s)}\phi\rangle_{p+2}  = \nonumber\\
&=\langle \phi,\bar{\gamma}(\cdot,\mc{L}(X_s))\phi'(\cdot)\rangle_{p+2}+\langle \phi,\bar{D}(\cdot,\mc{L}(X_s))\phi''(\cdot)\rangle_{p+2}+\langle \phi,\int_\R \frac{\delta}{\delta m}\bar{\gamma}(z,\mc{L}(X_s))[\cdot]\phi'(z)\mc{L}(X_s)(dz)\rangle_{p+2} \\
&+ \langle \phi,\int_\R \frac{\delta}{\delta m}\bar{D}(z,\mc{L}(X_s))[\cdot]\phi''(z)\mc{L}(X_s)(dz)\rangle_{p+2} \\
&\leq \langle \phi,\bar{\gamma}(\cdot,\mc{L}(X_s))\phi'(\cdot)\rangle_{p+2}+\langle \phi,\bar{D}(\cdot,\mc{L}(X_s))\phi''(\cdot)\rangle_{p+2}+\norm{\phi}_{p+2}\norm{\int_\R \frac{\delta}{\delta m}\bar{\gamma}(z,\mc{L}(X_s))[\cdot]\phi'(z)\mc{L}(X_s)(dz)}_{p+2} \\
&+ \norm{\phi}_{p+2}\norm{\int_\R \frac{\delta}{\delta m}\bar{D}(z,\mc{L}(X_s))[\cdot]\phi''(z)\mc{L}(X_s)(dz)}_{p+2}\text{by Cauchy Schwarz}\\
&\leq C\biggl\lbrace \norm{\phi}^2_{p+2} + \norm{\phi}_{p+2}\norm{\phi}_{2}+\norm{\phi}_{p+2}\norm{\phi}_{3} \biggr\rbrace \text{ by Lemma \ref{lemma:KurtzAppendixAnalogues} and Assumption \ref{assumption:limitingcoefficientsregularity}}\\
&\leq C\norm{\phi}^2_{p+2}\\
& = C\norm{F}^2_{-(p+2)}.
\end{align*}
The proof follows in the same way if we replace $w$ with $r$.
\end{proof}

\begin{proposition}\label{proposition:weakuninqueness}
Under Assumption \ref{assumption:limitingcoefficientsregularity}, for any $(Z,Q)$ and $(\tilde{Z},Q)$ such that $Q\in P^*(Z)$ and $Q\in P^*(\tilde{Z})$, $Z=\tilde{Z}$ as elements of $C([0,T];\mc{S}_{-w})$. If we assume \ref{assumption:limitingcoefficientsregularityratefunction} instead of \ref{assumption:limitingcoefficientsregularity}, $Z=\tilde{Z}$ as elements of $C([0,T];\mc{S}_{-r})$.
\end{proposition}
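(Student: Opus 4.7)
The strategy is an energy estimate in a negative Sobolev norm, in the spirit of the Dawson--G\"artner uniqueness argument. Let $\xi_t := Z_t - \tilde{Z}_t$, which lies in $C([0,T];\mc{S}_{-w})$ (and in $C([0,T];\mc{S}_{-r})$ under the stronger hypothesis). I treat only the $w$-case in what follows; the $r$-case is word-for-word identical with $w$ replaced by $r$ and Lemma \ref{lem:barLbounded} / Lemma \ref{lemma:4.32BW} applied with $p=r$ rather than $p=w$.

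First, subtract the two instances of Equation \eqref{eq:MDPlimitFIXED}. Because $(Z,Q)$ and $(\tilde{Z},Q)$ share the same occupation measure $Q$, the two $Q$-integrals (which do not depend on $Z$ or $\tilde{Z}$) cancel, and we obtain
\[
\langle \xi_t,\phi\rangle \;=\; \int_0^t \langle \xi_s, \bar{L}_{\mc{L}(X_s)}\phi\rangle\,ds, \qquad \phi\in C_c^\infty(\R),\ t\in[0,T].
\]
By Lemma \ref{lem:barLbounded}, $\bar{L}_{\mc{L}(X_s)}:\mc{S}_{w+2}\to\mc{S}_w$ is bounded uniformly in $s$, so by a standard density argument this identity extends to all $\phi\in\mc{S}_{w+2}$. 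Via the Hilbert--Schmidt embedding $\mc{S}_{-w}\hookrightarrow\mc{S}_{-(w+2)}$ from Equation \eqref{eq:wdefinition}, we may view $\xi\in C([0,T];\mc{S}_{-(w+2)})$ with $\xi_0=0$.

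Next I run an energy estimate in $\mc{S}_{-(w+2)}$. Fix a complete orthonormal system $\{e_k\}_{k\in\N}$ of $\mc{S}_{w+2}$. Each real-valued map $t\mapsto\langle\xi_t,e_k\rangle$ is absolutely continuous with derivative $\langle\xi_t,\bar{L}_{\mc{L}(X_t)}e_k\rangle$, and Parseval gives $\norm{\xi_t}_{-(w+2)}^2 = \sum_k |\langle\xi_t,e_k\rangle|^2$. Interchanging the differentiation with the sum (see below) and then invoking Lemma \ref{lemma:4.32BW} with $p=w$ yields
\[
\norm{\xi_t}_{-(w+2)}^2 \;=\; 2\int_0^t \langle \xi_s,\bar{L}^*_{\mc{L}(X_s)}\xi_s\rangle_{-(w+2)}\,ds \;\leq\; 2\int_0^t \norm{\xi_s}_{-(w+2)}^2\,ds.
\]
Gronwall's inequality combined with $\norm{\xi_0}_{-(w+2)}=0$ forces $\norm{\xi_t}_{-(w+2)}\equiv 0$, hence $\xi_t=0$ in $\mc{S}_{-(w+2)}$, and by injectivity of the embedding also in $\mc{S}_{-w}$, for every $t\in[0,T]$.

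The main technical obstacle is justifying the interchange of time-differentiation and the Parseval sum, equivalently, a chain rule for the square of the $\mc{S}_{-(w+2)}$-norm along the absolutely continuous path $\xi$. The cleanest route is via finite-dimensional truncations: let $\Pi_M$ denote the orthogonal projection in $\mc{S}_{-(w+2)}$ onto $\mathrm{span}\{e_k^\ast:k\leq M\}$, where $e_k^\ast$ is the Riesz image of $e_k$ under $\mc{S}_{w+2}\cong \mc{S}_{-(w+2)}$. For $\Pi_M\xi_t$ the ODE chain rule is elementary and yields the estimate above with a residual $\sum_{k>M}|\langle\xi_s,\bar{L}_{\mc{L}(X_s)}e_k\rangle|\cdot|\langle\xi_s,e_k\rangle|$; this residual is controlled by Cauchy--Schwarz together with $\sum_k\norm{\bar{L}_{\mc{L}(X_s)}e_k}_w^2<\infty$, which follows from the uniform boundedness of $\bar{L}_{\mc{L}(X_s)}:\mc{S}_{w+2}\to\mc{S}_w$ (Lemma \ref{lem:barLbounded}) together with the Hilbert--Schmidt property of the embedding $\mc{S}_{-w}\hookrightarrow\mc{S}_{-(w+2)}$. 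Passing $M\to\infty$, dominated convergence delivers the identity used above, and the proof is complete.
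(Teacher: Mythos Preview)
Your proof is correct and follows the same route as the paper: set $\xi=Z-\tilde Z$, derive the homogeneous linear equation for $\xi$, compute the $\mc{S}_{-(w+2)}$-energy via Parseval and the Riesz identification, apply Lemma~\ref{lemma:4.32BW}, and close with Gronwall. One small remark on your final paragraph: the Hilbert--Schmidt property of $\mc{S}_{-w}\hookrightarrow\mc{S}_{-(w+2)}$ is neither established in the paper at those indices nor needed here --- the tail $\sum_{k>M}|\langle\xi_s,\bar{L}_{\mc{L}(X_s)}e_k\rangle|^2=\sum_{k>M}|\langle\bar{L}^*_{\mc{L}(X_s)}\xi_s,e_k\rangle|^2$ already vanishes by Parseval applied to $\bar{L}^*_{\mc{L}(X_s)}\xi_s\in\mc{S}_{-(w+2)}$, so dominated convergence goes through from boundedness alone.
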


\begin{proof}
Consider $\eta = Z-\tilde{Z}$. Then by virtue of \ref{PZ:limitingequation} in the definition of $P^*$, $\eta$ almost surely satisfies
\begin{align*}
\langle \eta_t,\phi\rangle = \int_0^t \langle \eta_s,\bar{L}_{\mc{L}(X_s)}\phi(\cdot)\rangle ds
\end{align*}
for all $t\in [0,T]$ and $\phi \in \mc{S}_w$. Let $\br{\phi_j^{w+2}}_{j\in\bb{N}}$ be an orthonormal basis for $\mc{S}_{-(w+2)}$. By chain rule, we have
\begin{align*}
\langle \eta_t,\phi_j^{w+2}\rangle^2 & = 2\int_0^t \langle  \eta_s,\phi_j^{w+2}\rangle\langle\eta_s,\bar{L}_{\mc{L}(X_s)}\phi_j^{w+2}(\cdot)\rangle ds.
\end{align*}

Summing through $j$, we have using Parseval's identity, Riesz representation theorem, and linearity of $\eta_s$ and $\bar{L}_{\mc{L}(X_s)}$ that
\begin{align*}
\norm{\eta(t)}_{-(w+2)} & = 2 \int_0^t \langle \eta(s),\bar{L}^*_{\mc{L}(s)}\eta(s)ds\rangle_{-(w+2)}ds\leq C \int_0^t \norm{\eta(s)}_{-(w+2)} ds \text{ by Lemma \ref{lemma:4.32BW}}
\end{align*}
so by Gronwall's inequality, $\norm{\eta(t)}_{-(w+2)}=0,\forall t\in [0,T]$, so $\norm{\eta(t)}_{-w}=0,\forall t\in [0,T]$, and hence $Z=\tilde{Z}$.
The proof follows in the same way if we replace $w$ with $r$.
\end{proof}
\begin{remark}
By \ref{PZ:secondmarginalinvtmeasure} and \ref{PZ:fourthmarginallimitnglaw} in the definition of $P^*$, we have that for any $Q\in P^*(Z)$ that disintegrating $Q(dx,dy,dz,ds)=\kappa(dz;x,y,s)\lambda(dy;x,s)Q_{(1,4)}(dx,ds)$, $\lambda(dy;x,s) = \pi(dy;x,\mc{L}(X_s))$ and $Q_{(1,4)}(dx,ds) = \mc{L}(X_s)(dx)ds=\nu_{\mc{L}(X_\cdot)}(dx,ds)$. Thus any $Q,\tilde{Q}\in P^*(Z)$ only differentiate in their control stochastic kernels, $\kappa(dz;x,y,s)$ and $\tilde{\kappa}(dz;x,y,s)$. These are, of course, entirely determined by the choice of controls in the construction of $Q^N$. In other words, keeping in mind the result of Proposition \ref{prop:viabilityoflimit}, the choice of controls in the prelimit system \eqref{eq:controlledslowfast1-Dold} determine uniquely the limit in distribution of $\tilde{Z}^N.$ 
\end{remark}

\section{Laplace principle Lower Bound}\label{sec:upperbound/compactnessoflevelsets}
We now can prove the Laplace principle Lower Bound \eqref{eq:LPupperbound}.
\begin{proposition}\label{prop:LPUB}
Under assumptions \ref{assumption:uniformellipticity} - \ref{assumption:limitingcoefficientsregularity}, Equation \eqref{eq:LPupperbound} holds.
\end{proposition}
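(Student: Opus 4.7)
The plan is to apply the weak convergence approach of \cite{DE} to the variational representation \eqref{eq:varrepfunctionalsBM}. First, using that $F\in C_b(C([0,T];\mc{S}_{-\tau}))$ is bounded, I would note that the left-hand side of \eqref{eq:varrepfunctionalsBM} is uniformly bounded by $\norm{F}_\infty$, so for each $N$ one may select $\br{\F_t}$-progressively measurable controls $\br{\tilde{u}^{N,k}_i}_{i,k}$ that are nearly optimal in the sense that
\begin{align*}
-a^2(N)\log \E \exp\biggl(-\frac{1}{a^2(N)}F(Z^N) \biggr) \geq \E\biggl[\frac{1}{2N}\sum_{i=1}^N \int_0^T \left(|\tilde{u}^{N,1}_i(s)|^2+|\tilde{u}^{N,2}_i(s)|^2\right)ds +F(\tilde{Z}^N)\biggr] - \frac{1}{N},
\end{align*}
and which therefore satisfy the a priori bound \eqref{eq:controlassumptions0} automatically, since the expected running cost is then bounded above by $2\norm{F}_\infty + 1$.

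With these nearly optimal controls in hand, the results of Section \ref{sec:tightness} apply: Proposition \ref{prop:tildeZNtightness} yields tightness of $\br{\tilde{Z}^N}_{N\in\bb{N}}$ in $C([0,T];\mc{S}_{-m})$ and Proposition \ref{prop:QNtightness} yields tightness of $\br{Q^N}_{N\in\bb{N}}$ in $M_T(\R^4)$. I would pass to a convergent subsequence and, via Skorokhod's representation theorem, place everything on a common probability space so that $(\tilde{Z}^N,Q^N) \to (Z,Q)$ almost surely in $C([0,T];\mc{S}_{-m}) \times M_T(\R^4)$. Propositions \ref{prop:limitsatisfiescorrectequations} and \ref{prop:viabilityoflimit} then give $Q \in P^*(Z)$ almost surely. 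Since the canonical embedding $\mc{S}_{-m}\hookrightarrow \mc{S}_{-\tau}$ is continuous for $\tau \geq m$ and $F$ is bounded and continuous on $C([0,T];\mc{S}_{-\tau})$, we get $F(\tilde{Z}^N) \to F(Z)$ almost surely and, by bounded convergence, $\E[F(\tilde{Z}^N)]\to \E[F(Z)]$.

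For the running cost I rewrite $\frac{1}{2N}\sum_{i=1}^N\int_0^T (|\tilde{u}^{N,1}_i|^2+|\tilde{u}^{N,2}_i|^2)ds = \frac{1}{2}\int_{\R\times\R\times\R^2\times[0,T]} (z_1^2+z_2^2) Q^N(dx,dy,dz,ds)$. The map $Q \mapsto \int(z_1^2+z_2^2)Q(dx,dy,dz,ds)$ is lower semicontinuous on $M_T(\R^4)$ under weak convergence because its integrand is nonnegative and continuous; hence almost surely
\begin{align*}
\liminf_{N\toinf}\int_{\R\times\R\times\R^2\times[0,T]} (z_1^2+z_2^2)Q^N(dx,dy,dz,ds) \geq \int_{\R\times\R\times\R^2\times[0,T]} (z_1^2+z_2^2)Q(dx,dy,dz,ds).
\end{align*}
Combining this with Fatou's lemma gives
\begin{align*}
\liminf_{N\toinf}\E\biggl[\frac{1}{2}\int (z_1^2+z_2^2)Q^N + F(\tilde{Z}^N)\biggr] \geq \E\biggl[\frac{1}{2}\int (z_1^2+z_2^2)Q + F(Z)\biggr].
\end{align*}
Since $Q\in P^*(Z)$ almost surely, the definition \eqref{eq:proposedjointratefunction} of $I$ yields $\frac{1}{2}\int(z_1^2+z_2^2)Q \geq I(Z)$ almost surely, and the almost sure containment $Z\in C([0,T];\mc{S}_{-m})\subset C([0,T];\mc{S}_{-w})$ then gives $\E[I(Z)+F(Z)] \geq \inf_{\tilde{Z}\in C([0,T];\mc{S}_{-w})}\br{I(\tilde{Z})+F(\tilde{Z})}$. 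Letting $1/N\to 0$ closes the argument.

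The main obstacle is not any single estimate — all the heavy machinery (tightness, limit identification in Section \ref{sec:identificationofthelimit}, the structural membership $Q\in P^*(Z)$, and the moment and ergodic bounds of Sections \ref{sec:ergodictheoremscontrolledsystem}--\ref{sec:tightness}) has already been developed — but rather the careful bookkeeping when the limit $(Z,Q)$ is a random element. The lower semicontinuity of the quadratic cost, the almost sure identity $Q\in P^*(Z)$, and the pointwise inequality $I(Z)\leq \frac{1}{2}\int (z_1^2+z_2^2)Q$ must be chained inside the expectation in the correct order, and one must verify at the outset that the choice of nearly optimal controls is compatible with the uniform $L^2$-bound \eqref{eq:controlassumptions0} required to invoke the earlier propositions.
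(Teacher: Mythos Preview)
Your proposal is essentially the paper's own argument and is correct in outline. One point to tighten: you verify only the expectation bound \eqref{eq:controlassumptions0} on the nearly optimal controls, but the tightness and identification machinery you invoke (Propositions \ref{prop:tildeZNtightness}, \ref{prop:limitsatisfiescorrectequations}, \ref{prop:viabilityoflimit}, and the ergodic-type theorems of Section \ref{sec:ergodictheoremscontrolledsystem}) is developed under the stronger almost-sure bound \eqref{eq:controlassumptions} --- see the opening sentence of Section \ref{sec:tightness}, and for concrete instances the handling of $\tilde{B}^{i,\epsilon,N}_8$ in Proposition \ref{prop:fluctuationestimateparticles1} and the last step of Lemma \ref{lemma:ytildesquaredsumbound}, where the control cost is pulled outside an expectation pathwise. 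The paper closes this by invoking the argument of Theorem 4.4 in \cite{BD} (noted after \eqref{eq:controlassumptions0} and again inside the proof of Proposition \ref{prop:LPUB}), which upgrades near-optimal controls to ones satisfying \eqref{eq:controlassumptions} without changing the variational value; you should insert the same step before calling on the results of Sections \ref{sec:ergodictheoremscontrolledsystem}--\ref{sec:identificationofthelimit}.
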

\begin{proof}
Take $\tau\geq w$, with $w$ as in Equation \eqref{eq:wdefinition}, $F\in C_b(C([0,T];\mc{S}_{-\tau}))$ and $\eta>0$. By Equation \eqref{eq:varrepfunctionalsBM} there exists $\br{\tilde{u}^N}_{N\in\bb{N}}$ such that for all $N$,
\begin{align*}
-a^2(N)\log \E \exp\biggl(-\frac{1}{a^2(N)}F(Z^N) \biggr)\geq \E\biggl[\frac{1}{2}\frac{1}{N}\sum_{i=1}^N \int_0^T|\tilde{u}^{N,1}_i(s)|^2+|\tilde{u}^{N,2}_i(s)|^2ds +F(\tilde{Z}^N)\biggr]-\eta.
\end{align*}
Where $\tilde{Z}^N$ is as in Equation \eqref{eq:controlledempmeasure} and is controlled by $\br{\tilde{u}^N}_{N\in\bb{N}}$. Then letting $Q^N$ be as in Equation $\eqref{eq:occupationmeasures}$ with this choice of controls (recalling that we can assume the almost-sure bound \eqref{eq:controlassumptions} on the controls by the argument found in Theorem 4.4 of \cite{BD}), we have
\begin{align*}
\E\biggl[\frac{1}{2}\frac{1}{N}\sum_{i=1}^N \int_0^T\left(|\tilde{u}^{N,1}_i(s)|^2+|\tilde{u}^{N,2}_i(s)|^2\right)ds +F(\tilde{Z}^N)\biggr] = \E\biggl[\frac{1}{2}\int_{\R\times\R\times\R^2\times[0,T]}\left(|z_1|^2+|z_2|^2\right)Q^N(dxdydzds) +F(\tilde{Z}^N)\biggr]
\end{align*}
so by the version of Fatou's lemma from Theorem A.3.12 in \cite{DE}, we have
\begin{align*}
&\liminf_{N\toinf}-a^2(N)\log \E \exp\biggl(-\frac{1}{a^2(N)}F(Z^N) \biggr)\\
&\geq \liminf_{N\toinf}\E\biggl[\frac{1}{2}\int_{\R\times\R\times\R^2\times[0,T]}\left(|z_1|^2+|z_2|^2\right)Q^N(dxdydzds) +F(\tilde{Z}^N)\biggr]-\eta\\
&\geq \E\biggl[\liminf_{N\toinf}\frac{1}{2}\int_{\R\times\R\times\R^2\times[0,T]}\left(|z_1|^2+|z_2|^2\right)Q^N(dxdydzds) +F(\tilde{Z}^N)\biggr]-\eta\\
&\geq \E\biggl[\frac{1}{2}\int_{\R\times\R\times\R^2\times[0,T]}\left(|z_1|^2+|z_2|^2\right)Q(dxdydzds) +F(Z)\biggr]-\eta \\
&\geq \inf_{Z\in C([0,T];\mc{S}_{-m})}\biggl\lbrace\inf_{Q\in P^*(Z)}\biggl\lbrace\frac{1}{2}\int_{\R\times\R\times\R^2\times[0,T]}\left(|z_1|^2+|z_2|^2\right)Q(dxdydzds)\biggr\rbrace +F(Z)\biggr\rbrace-\eta\\
&\geq \inf_{Z\in C([0,T];\mc{S}_{-w})}\biggl\lbrace\inf_{Q\in P^*(Z)}\biggl\lbrace\frac{1}{2}\int_{\R\times\R\times\R^2\times[0,T]}\left(|z_1|^2+|z_2|^2\right)Q(dxdydzds)\biggr\rbrace +F(Z)\biggr\rbrace-\eta\\
& = \inf_{Z\in C([0,T];\mc{S}_{-w})}\lbrace I(Z) +F(Z)\rbrace-\eta,
\end{align*}
where in the second-to-last inequality we used Proposition \ref{prop:viabilityoflimit}. So Equation \eqref{eq:LPupperbound} is established. 
\end{proof}

\section{Laplace principle Upper Bound and Compactness of Level Sets}\label{sec:lowerbound}
We now prove the Laplace principle Upper Bound and, under the additional assumption of \ref{assumption:limitingcoefficientsregularityratefunction}, compactness of level sets.
\begin{proposition}\label{prop:LPlowerbound}
Under assumptions \ref{assumption:uniformellipticity} - \ref{assumption:limitingcoefficientsregularity}, the Laplace principle Upper Bound \eqref{eq:LPlowerbound} holds.
\end{proposition}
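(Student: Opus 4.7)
My plan is to apply the standard weak convergence recipe of Dupuis–Ellis: starting from the variational representation \eqref{eq:varrepfunctionalsBM}, I will construct a near-optimal family of controls $\tilde u^N$ whose associated controlled fluctuations $\tilde Z^N$ converge in distribution to a prescribed near-optimizer of $I+F$. Fix $\eta>0$ and choose, using the ordinary-control form from Corollary \ref{cor:ordinarycontrolratefunction}, an element $Z^\ast\in C([0,T];\mc{S}_{-w})$ together with $h^\ast\in P^o(Z^\ast)$ such that
\[
\tfrac12\int_0^T\E\Bigl[\int_\R|h^\ast(s,X_s,y)|^2\pi(dy;X_s,\mc L(X_s))\Bigr]ds + F(Z^\ast) \;\le\; \inf_{Z\in C([0,T];\mc{S}_{-w})}\{I(Z)+F(Z)\}+\eta,
\]
which is possible since otherwise the right-hand side of \eqref{eq:LPlowerbound} equals $+\infty$ and the bound is trivial.

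Next I will regularize $h^\ast$ so that it can be used as a feedback control in the prelimit system. Because the controlled fast process $\tilde Y^{i,\epsilon,N}$ has only a second moment (Appendix \ref{sec:aprioriboundsoncontrolledprocess}), I will mollify and truncate to obtain, for each $\delta>0$, a bounded continuous $h^\delta:[0,T]\times\R\times\R\to\R^2$ with compact support in $(x,y)$ and a piecewise-constant time dependence such that $h^\delta\to h^\ast$ in the appropriate weighted $L^2$ sense; this is where the approximation argument alluded to at the very end of Corollary \ref{cor:mdpnomulti} enters, and it is exactly the step absent in the no-multiscale case. I then set
\[
\tilde u^{N,k}_i(s)\;=\;h^\delta_k\!\bigl(s,\tilde X^{i,\epsilon,N}_s,\tilde Y^{i,\epsilon,N}_s\bigr),\qquad k=1,2,\ i=1,\dots,N,
\]
which by construction satisfies the almost-sure bound \eqref{eq:controlassumptions}. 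The corresponding occupation measures $Q^N$ from \eqref{eq:occupationmeasures} are then tight via Proposition \ref{prop:QNtightness}, and the controlled fluctuations $\tilde Z^N$ are tight via Proposition \ref{prop:tildeZNtightness}.

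I then pass to the limit. Any subsequential limit $(Z^\delta,Q^\delta)$ satisfies $Q^\delta\in P^\ast(Z^\delta)$ by Proposition \ref{prop:viabilityoflimit}, and the feedback structure together with the ergodic identification \ref{PZ:secondmarginalinvtmeasure}–\ref{PZ:fourthmarginallimitnglaw} forces the disintegration of $Q^\delta$ to have control kernel given by $h^\delta(s,x,y)$ averaged against $\pi(dy;x,\mc L(X_s))$. By weak-sense uniqueness (Proposition \ref{proposition:weakuninqueness}) $Z^\delta$ is determined by $h^\delta$, so applying the variational representation \eqref{eq:varrepfunctionalsBM}, Fatou-type bounds on the cost, and continuity of $F$,
\[
\limsup_{N\to\infty}-a^2(N)\log\E e^{-F(Z^N)/a^2(N)} \;\le\; \tfrac12\int_0^T\E\Bigl[\int_\R|h^\delta(s,X_s,y)|^2\pi(dy;X_s,\mc L(X_s))\Bigr]ds+F(Z^\delta).
\]
Finally, letting $\delta\downarrow 0$ and invoking weak-sense uniqueness once more (to pass $Z^\delta\to Z^\ast$ in $C([0,T];\mc{S}_{-w})$) together with the $L^2$ approximation $h^\delta\to h^\ast$ yields the desired bound up to $\eta$, which is arbitrary.

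The main obstacle is the $\delta\downarrow 0$ step: I must ensure that the feedback approximations $h^\delta$ are regular enough for the ergodic and tightness machinery of Sections \ref{sec:ergodictheoremscontrolledsystem}–\ref{sec:identificationofthelimit} to apply, yet flexible enough to approximate an arbitrary $L^2$ feedback $h^\ast$ along the law $\nu_{\mc L(X_\cdot)}\otimes\pi$. A standard mollification in $(s,x)$ combined with truncation in $y$ (exploiting the uniform-in-$x,\mu$ Gaussian-type tails of $\pi(\cdot;x,\mu)$ from Assumption \ref{assumption:retractiontomean}) should suffice, but some care is required to verify the joint continuity needed to close the loop in the limit identification. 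Continuity of $\mc L(X_\cdot)\mapsto \pi$ in the integrand is used implicitly here and follows from the uniqueness of $\pi$ and the continuity assumptions in \ref{assumption:uniformellipticity}, \ref{assumption:retractiontomean}, and \ref{assumption:centeringcondition}.
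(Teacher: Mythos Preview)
Your plan is essentially correct and mirrors the paper's two-stage argument: approximate $h^\ast$ by nice feedbacks, send $N\to\infty$ with the approximation fixed, then remove the approximation. Two points of comparison are worth noting.

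First, the paper takes the approximants $\psi^k$ in $C^\infty_c(U\times\R\times\R)$ rather than merely bounded continuous with compact support. The reason is that the limiting control term in \eqref{eq:MDPlimitFIXED} is identified not by a general occupation-measure argument but by invoking Proposition~\ref{prop:llntypefluctuationestimate1} with $F=\sigma\psi^k_1+[\tau_1\psi^k_1+\tau_2\psi^k_2]\Phi_y$; this is precisely the third choice of $F$ appearing in Assumption~\ref{assumption:forcorrectorproblem}, and it requires $\psi^k$ smooth so that the auxiliary Poisson solution $\Xi$ has the needed time derivative. Your direct identification of the control kernel of $Q^\delta$ via the feedback structure (continuity of $h^\delta$ plus weak convergence of the $(x,y,s)$-marginals furnished by \ref{PZ:secondmarginalinvtmeasure}--\ref{PZ:fourthmarginallimitnglaw}) is a legitimate alternative that avoids this auxiliary Poisson equation, at the cost of a slightly less explicit computation; your ``piecewise-constant in time'' choice is then unnecessary.

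Second, your acknowledged obstacle---the $\delta\downarrow 0$ step---is exactly where the paper does real work and where the index jumps from $m$ to $w$. The paper establishes precompactness of $\{\tilde Z^k\}_k$ in $C([0,T];\mc{S}_{-w})$ by a Gronwall-type energy estimate on $\norm{\tilde Z^k_t}^2_{-(m+2)}$ using Lemma~\ref{lemma:4.32BW}, together with the uniform $L^2([0,T];\mc{S}_{-(m+2)})$ bound on the forcing $B^k$ coming from $\sup_k\int |z|^2\,dQ^k<\infty$. This, rather than any tail property of $\pi$, is what closes the loop; weak-sense uniqueness then pins down the limit as $Z^\ast$. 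Apart from these mechanism-level differences, your outline and the paper's proof coincide.
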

\begin{proof}
We use the ordinary formulation $I^o$ from Equation \eqref{eq:proposedjointratefunctionordinary}. We take $\eta>0$, $w$ as in Equation \eqref{eq:wdefinition}, $\tau\geq w$, $F\in C_b(C([0,T];\mc{S}_{-\tau}))$, and $Z^*$ such that
\begin{align*}
I(Z^*)+F(Z^*)\leq \inf_{Z\in C([0,T];\mc{S}_{-w})}\E\biggl[I(Z) +F(Z)\biggr]+\frac{\eta}{2}.
\end{align*}
Then we can find $h\in P^o(Z^*)$ such that
\begin{align*}
\frac{1}{2}\int_{0}^T \E\biggl[\int_\R |h(s,X_s,y)|^2 \pi(dy;X_s,\mc{L}(X_s)) \biggr]ds\leq I(Z^*)+\frac{\eta}{2}.
\end{align*}
Then since $\nu(\Gamma\times A\times B)\coloneqq \int_\Gamma \int_\R \1_A(x)\int_\R \1_B(y) \pi(dy;x,\mc{L}(X_s))\mc{L}(X_s)(dx)ds$, $\Gamma\in\mc{B}(U),A,B\in\mc{B}(\R)$ is a finite Borel measure on $U\times\R\times \R$ for all $x\in\R$, by Corollary 22.38 (1) in \cite{Driver}, we can take $\br{\psi^k_j}_{k\in\bb{N}}\subset C^\infty_c(U\times\R\times\R)$ such that $\psi^k_j\tto h_j$ in $L^2(U\times\R\times\R,\nu)$ for $j\in \br{1,2}$. Here we let $U$ be any open interval containing $[0,T]$ and assume $\nu(\Gamma\times A\times B)$ is $0$ when $\Gamma \cap [0,T]=\emptyset$.

Then letting $\tilde{u}^{N}_{i,k}(s,\omega) = \psi^k(s,\tilde{X}^{i,\epsilon,N,k}_s(\omega),\tilde{Y}^{i,\epsilon,N,k}_s(\omega))$, where $(\tilde{X}^{i,\epsilon,N,k}_s,\tilde{Y}^{i,\epsilon,N,k}_s)$ are as in Equation \eqref{eq:controlledslowfast1-Dold} but controlled by $\frac{\tilde{u}^{N}_{i,k}(s)}{a(N)\sqrt{N}}$,
\begin{align*}
\sup_{N\in\bb{N}}\int_0^T\E\biggl[\frac{1}{N}\sum_{i=1}^N |\tilde{u}^{N}_{i,k}(s)|^2\biggr]ds & = \sup_{N\in\bb{N}}\int_0^T\E\biggl[\frac{1}{N}\sum_{i=1}^N |\psi_k(s,\tilde{X}^{i,\epsilon,N}_s,\tilde{Y}^{i,\epsilon,N}_s)|^2\biggr]ds\leq T \norm{\psi_k}^2_\infty
\end{align*}
for each $k\in\bb{N}$, and in fact
\begin{align*}
\int_0^T\frac{1}{N}\sum_{i=1}^N |\tilde{u}^{N}_{i,k}(s)|^2ds\leq T \norm{\psi_k}^2_\infty
\end{align*}
for each $k\in\bb{N}$ (so the supposition \eqref{eq:controlassumptions} holds with this choice of controls).

Letting $(\tilde{Z}^{N,k},Q^{N,k})$ be as in Equations \eqref{eq:controlledempmeasure} and \eqref{eq:occupationmeasures} with this choice of controls, we want to establish that $(\tilde{Z}^{N,k},Q^{N,k})$ converges in distribution as a sequence of $C([0,T];\mc{S}_{-m})\times M_T(\R^4)$-valued random variables to $(\tilde{Z}^k,Q^k)$ as $N\toinf$, where $Q^k\in P^*(\tilde{Z}^k)$ (this is immediate since we prove this for all $L^2$ controls in Proposition \ref{prop:viabilityoflimit}) and such that
\begin{align}\label{eq:QNkdesiredform}
Q^{k}(A\times B\times C\times \Gamma) = \int_\Gamma \int_A \int_C \delta_{\psi^k(s,x,y)}(C)\pi(dy;x,\mc{L}(X_s))\mc{L}(X_s)(dx)ds,\forall A,B\in\mc{B}(\R),C\in\mc{B}(\R^2),\Gamma\in\mc{B}([0,T]).
\end{align}
By the weak-sense uniqueness established in Proposition \ref{proposition:weakuninqueness}, this determines each $\tilde{Z}^k$ almost surely to be the unique element of $C([0,T];\mc{S}_{-m})$ satisfying Equation \eqref{eq:MDPlimitFIXED} with $Q^k$ in the place of $Q$.

Then we will send $k\toinf$ and show $(\tilde{Z}^k,Q^k)$ converges to $(\tilde{Z},Q)$ in $C([0,T];\mc{S}_{-w})\times M_T(\R^4)$, where $Q\in P^*(\tilde{Z})$ and
\begin{align}\label{eq:QNdesiredform}
Q(A\times B\times C\times \Gamma) = \int_\Gamma \int_A \int_C \delta_{h(s,x,y)}(C)\pi(dy;x,\mc{L}(X_s))\mc{L}(X_s)(dx)ds,\forall A,B\in\mc{B}(\R),C\in\mc{B}(\R^2),\Gamma\in\mc{B}([0,T]).
\end{align}

Then by the weak-sense uniqueness established in Proposition \ref{proposition:weakuninqueness}, we have $\tilde{Z}\overset{d}{=}Z^*$. By reverse Fatou's lemma:
\begin{align*}
&\limsup_{N\toinf}-a^2(N)\log \E \exp\biggl(-\frac{1}{a^2(N)}F(Z^N) \biggr) \\
& = \limsup_{N\toinf}\inf_{\tilde{u}^N}\E\biggl[\frac{1}{2}\frac{1}{N}\sum_{i=1}^N \int_0^T\left(|\tilde{u}^{N,1}_i(s)|^2+|\tilde{u}^{N,2}_i(s)|^2\right)ds +F(\tilde{Z}^N)\biggr] \text{ by Equation \eqref{eq:varrepfunctionalsBM}}\\
&\leq \limsup_{N\toinf}\E\biggl[\frac{1}{2}\frac{1}{N}\sum_{i=1}^N \int_0^T\left(|\tilde{u}^{N,1}_{i,k}(s)|^2+|\tilde{u}^{N,2}_{i,k}(s)|^2\right)ds +F(\tilde{Z}^{N,k})\biggr],\forall k\in\bb{N}\\
&= \limsup_{N\toinf}\E\biggl[\frac{1}{2}\int_{\R\times\R\times\R^2\times[0,T]}\left(z_1^2+z_2^2 \right)Q^{N,k}(dx,dy,dz,ds) +F(\tilde{Z}^{N,k})\biggr],\forall k\in\bb{N}\\
&\leq \E\biggl[\frac{1}{2}\int_{\R\times\R\times\R^2\times[0,T]}\left(z_1^2+z_2^2 \right)Q^k(dx,dy,dz,ds) +F(\tilde{Z}^k)\biggr],\forall k\in\bb{N}\\
& = \frac{1}{2}\int_{0}^T \E\biggl[\int_\R |\psi^k(s,X_s,y)|^2 \pi(dy;X_s,\mc{L}(X_s)) \biggr]ds +\E\biggl[F(\tilde{Z}^k)\biggr],\forall k\in\bb{N}
\end{align*}

Then sending $k\toinf$ and using the $L^2$ convergence of $\psi^k$ to $h$ and the boundedness $F$ and convergence of $\tilde{Z}^k$ to $Z^*$, we get
\begin{align*}
\limsup_{N\toinf}-a^2(N)\log \E \exp\biggl(-\frac{1}{a^2(N)}F(Z^N) \biggr) &\leq \frac{1}{2}\int_{0}^T \E\biggl[\int_\R |h(s,X_s,y)|^2 \pi(dy;X_s,\mc{L}(X_s)) \biggr]ds +\E\biggl[F(Z^*)\biggr]\\
&\leq I(Z^*)+F(Z^*)+\frac{\eta}{2}\\
&\leq \inf_{Z\in C([0,T];\mc{S}_{-w})}\E\biggl[I(Z) +F(Z)\biggr] +\eta
\end{align*}
so Equation \eqref{eq:LPlowerbound} will be established.

Looking at the proof of Proposition \ref{prop:limitsatisfiescorrectequations}, to see $(\tilde{Z}^{N,k},Q^{N,k})$ converges to $(\tilde{Z}^k,Q^k)$ where $Q^k\in P^*(\tilde{Z}^k)$ satisfies Equation \eqref{eq:QNkdesiredform}. we just need to establish that

\begin{align*}
&\int_{\R\times\R\times\R^2\times[0,t]} \sigma(x,y,\tilde{\mu}^{\epsilon,N,k}_s)z_1 \phi'(x)Q^{N,k}(dx,dy,dz,ds)\\
&+\int_{\R\times\R\times\R^2\times[0,t]} [\tau_1(x,y,\tilde{\mu}^{\epsilon,N,k}_s)z_1+\tau_2(x,y,\tilde{\mu}^{\epsilon,N,k}_s)z_2]\Phi_y(x,y,\tilde{\mu}^{\epsilon,N,k}_s)\phi'(x)Q^{N,k}(dx,dy,dz,ds)
\end{align*}
converges in distribution to
\begin{align*}
&\int_0^t\E\biggl[\int_\R\sigma(X_s,y,\mc{L}(X_s))\psi^k_1(s,X_s,y) \phi'(X_s)\pi(dy;X_s,\mc{L}(X_s))\biggr]ds\\
&+\int_0^t\E\biggl[\int_\R \left([\tau_1(X_s,y,\mc{L}(X_s))\psi^k_1(s,X_s,y) +\tau_2(X_s,y,\mc{L}(X_s))\psi^k_2(s,X_s,y) ]\Phi_y(X_s,y,\mc{L}(X_s))\phi'(X_s)\right)\pi(dy;X_s,\mc{L}(X_s))\biggr]ds
\end{align*}
for all $\phi \in C^\infty_c(\R)$ and $t\in [0,T]$. Fix $k\in\bb{N}$ and $\phi$ and $t$. We have
\begin{align*}
&\int_{\R\times\R\times\R^2\times[0,t]} \sigma(x,y,\tilde{\mu}^{\epsilon,N,k}_s)z_1 \phi'(x)Q^{N,k}(dx,dy,dz,ds)\\
&+\int_{\R\times\R\times\R^2\times[0,t]} [\tau_1(x,y,\tilde{\mu}^{\epsilon,N,k}_s)z_1+\tau_2(x,y,\tilde{\mu}^{\epsilon,N,k}_s)z_2]\Phi_y(x,y,\tilde{\mu}^{\epsilon,N,k}_s)\phi'(x)Q^{N,k}(dx,dy,dz,ds)\\
&=\int_0^t \frac{1}{N}\sum_{i=1}^N \sigma(\tilde{X}^{i,\epsilon,N,k}_s,\tilde{Y}^{i,\epsilon,N,k}_s,\tilde{\mu}^{\epsilon,N,k}_s)\psi^k_1(s,\tilde{X}^{i,\epsilon,N,k}_s,\tilde{Y}^{i,\epsilon,N,k}_s) \phi'(\tilde{X}^{i,\epsilon,N,k}_s)ds\\
&+\int_0^t \frac{1}{N}\sum_{i=1}^N  \left[\tau_1(\tilde{X}^{i,\epsilon,N,k}_s,\tilde{Y}^{i,\epsilon,N,k}_s,\tilde{\mu}^{\epsilon,N,k}_s)\psi^k_1(s,\tilde{X}^{i,\epsilon,N,k}_s,\tilde{Y}^{i,\epsilon,N,k}_s)+\right.\nonumber\\
&\qquad\left.+\tau_2(\tilde{X}^{i,\epsilon,N,k}_s,\tilde{Y}^{i,\epsilon,N,k}_s,\tilde{\mu}^{\epsilon,N,k}_s)\psi^k_2(s,\tilde{X}^{i,\epsilon,N,k}_s,\tilde{Y}^{i,\epsilon,N,k}_s)\right]
\Phi_y(\tilde{X}^{i,\epsilon,N,k}_s,\tilde{Y}^{i,\epsilon,N,k}_s,\tilde{\mu}^{\epsilon,N,k}_s\phi'(\tilde{X}^{i,\epsilon,N,k}_s)ds
\end{align*}

Then using Proposition \ref{prop:llntypefluctuationestimate1} with
\begin{align*}
F(s,x,y,\mu) = \sigma(x,y,\mu)\psi^k_1(s,x,y) + [\tau_1(x,y,\mu)\psi^k_1(s,x,y)+\tau_2(x,y,\mu)\psi^k_2(s,x,y)]\Phi_y(x,y,\mu)
\end{align*}
using that $s$ only appears as a parameter, in the same way as $x$, so that the same proof holds (using also the assumed bound on the time derivative of $\Xi$ in \ref{assumption:forcorrectorproblem}), we get that
\begin{align*}
&\E\biggl[\biggl|\int_0^t \frac{1}{N}\sum_{i=1}^N \sigma(\tilde{X}^{i,\epsilon,N,k}_s,\tilde{Y}^{i,\epsilon,N,k}_s,\tilde{\mu}^{\epsilon,N,k}_s)\psi^k_1(s,\tilde{X}^{i,\epsilon,N,k}_s,\tilde{Y}^{i,\epsilon,N,k}_s) \phi'(\tilde{X}^{i,\epsilon,N,k}_s)ds\\
&+\int_0^t \frac{1}{N}\sum_{i=1}^N  \left[\tau_1(\tilde{X}^{i,\epsilon,N,k}_s,\tilde{Y}^{i,\epsilon,N,k}_s,\tilde{\mu}^{\epsilon,N,k}_s)\psi^k_1(s,\tilde{X}^{i,\epsilon,N,k}_s,\tilde{Y}^{i,\epsilon,N,k}_s)+\right.\nonumber\\
&\qquad\left.+\tau_2(\tilde{X}^{i,\epsilon,N,k}_s,\tilde{Y}^{i,\epsilon,N,k}_s,\tilde{\mu}^{\epsilon,N,k}_s)\psi^k_2(s,\tilde{X}^{i,\epsilon,N,k}_s,\tilde{Y}^{i,\epsilon,N,k}_s)\right]
\Phi_y(\tilde{X}^{i,\epsilon,N,k}_s,\tilde{Y}^{i,\epsilon,N,k}_s,\tilde{\mu}^{\epsilon,N,k}_s)\phi'(\tilde{X}^{i,\epsilon,N,k}_s)ds \\
&- \int_0^t \frac{1}{N}\sum_{i=1}^N \int_\R \sigma(\tilde{X}^{i,\epsilon,N,k}_s,y,\tilde{\mu}^{\epsilon,N,k}_s)\psi^k_1(s,\tilde{X}^{i,\epsilon,N,k}_s,y) \phi'(\tilde{X}^{i,\epsilon,N,k}_s)\\
&\quad+ [\tau_1(\tilde{X}^{i,\epsilon,N,k}_s,y,\tilde{\mu}^{\epsilon,N,k}_s)\psi^k_1(s,\tilde{X}^{i,\epsilon,N,k}_s,y)+\tau_2(\tilde{X}^{i,\epsilon,N,k}_s,y,\tilde{\mu}^{\epsilon,N,k}_s)\psi^k_2(s,\tilde{X}^{i,\epsilon,N,k}_s,y)]\times\\
&\hspace{5cm}\times\Phi_y(\tilde{X}^{i,\epsilon,N,k}_s,y,\tilde{\mu}^{\epsilon,N,k}_s)\psi(s,\tilde{X}^{i,\epsilon,N,k}_s)\pi(dy;\tilde{X}^{i,\epsilon,N}_s,\tilde{\mu}^{\epsilon,N}_s)ds\biggr|\biggr]\\
&\leq C(T)\epsilon
\end{align*}
Then noting that
\begin{align*}
&\int_0^t \frac{1}{N}\sum_{i=1}^N \int_\R \sigma(\tilde{X}^{i,\epsilon,N,k}_s,y,\tilde{\mu}^{\epsilon,N,k}_s)\psi^k_1(s,\tilde{X}^{i,\epsilon,N,k}_s,y) \phi'(\tilde{X}^{i,\epsilon,N,k}_s)\\
&+ [\tau_1(\tilde{X}^{i,\epsilon,N,k}_s,y,\tilde{\mu}^{\epsilon,N,k}_s)\psi^k_1(s,\tilde{X}^{i,\epsilon,N,k}_s,y)+\tau_2(\tilde{X}^{i,\epsilon,N,k}_s,y,\tilde{\mu}^{\epsilon,N,k}_s)\psi^k_2(s,\tilde{X}^{i,\epsilon,N,k}_s,y)]\times\\
&\hspace{5cm}\times\Phi_y(\tilde{X}^{i,\epsilon,N,k}_s,y,\tilde{\mu}^{\epsilon,N,k}_s)\phi'(\tilde{X}^{i,\epsilon,N,k}_s)\pi(dy;\tilde{X}^{i,\epsilon,N}_s,\tilde{\mu}^{\epsilon,N}_s)ds\\
& = \int_0^t \int_\R \int_\R \biggl(\sigma(x,y,\tilde{\mu}^{\epsilon,N,k}_s)\psi^k_1(s,x,y) \phi'(x)+ [\tau_1(x,y,\tilde{\mu}^{\epsilon,N,k}_s)\psi^k_1(s,x,y)+\tau_2(x,y,\tilde{\mu}^{\epsilon,N,k}_s)\psi^k_2(s,x,y)]\times\\
&\hspace{5cm}\times\Phi_y(x,y,\tilde{\mu}^{\epsilon,N,k}_s)\phi'(x)\biggr)\pi(dy;x,\tilde{\mu}^{\epsilon,N}_s)\tilde{\mu}^{\epsilon,N,k}_s(dx)ds
\end{align*}
and using that the integrand of the first two integrals above is bounded by Assumptions \ref{assumption:uniformellipticity},\ref{assumption:gsigmabounded},and \ref{assumption:multipliedpolynomialgrowth} and continuous in $\bb{W}_2$ by Assumption \ref{assumption:2unifboundedlinearfunctionalderivatives}, along with the convergence of $\tilde{\mu}^{\epsilon,N}_s$ to $\mc{L}(X_s)$ from Lemma \ref{lemma:W2convergenceoftildemu}, we have by dominated convergence theorem (invoking here Skorokhod's representation theorem to assume $\tilde{\mu}^{\epsilon,N}_s$ to $\mc{L}(X_s)$ almost surely as in Proposition \ref{prop:limitsatisfiescorrectequations}) and Theorem A.3.18 in \cite{DE}:

\begin{align*}
&\lim_{N\toinf}\E\biggl[\biggl|\int_0^T \int_\R \int_\R \biggl(\sigma(x,y,\tilde{\mu}^{\epsilon,N,k}_s)\psi^k_1(s,x,y) \phi'(x)+ [\tau_1(x,y,\tilde{\mu}^{\epsilon,N,k}_s)\psi^k_1(s,x,y)+\tau_2(x,y,\tilde{\mu}^{\epsilon,N,k}_s)\psi^k_2(s,x,y)]\\
&\hspace{5cm}\times\Phi_y(x,y,\tilde{\mu}^{\epsilon,N,k}_s)\phi'(x)\biggr)\pi(dy;x,\tilde{\mu}^{\epsilon,N}_s)\tilde{\mu}^{\epsilon,N,k}_s(dx)ds \\
&- \int_0^T \int_\R \int_\R \biggl(\sigma(x,y,\mc{L}(X_s))\psi^k_1(s,x,y) \phi'(x)+ [\tau_1(x,y,\mc{L}(X_s))\psi^k_1(s,x,y)+\tau_2(x,y,\mc{L}(X_s))\psi^k_2(s,x,y)]\\
&\hspace{5cm}\times\Phi_y(x,y,\mc{L}(X_s))\phi'(x)\biggr)\pi(dy;x,\mc{L}(X_s))\mc{L}(X_s)(dx)ds \biggr|\biggr]=0
\end{align*}
so by triangle inequality, the desired convergence is shown.

Now we seek to establish that $(\tilde{Z}^k,Q^k)$ converges to $(\tilde{Z},Q)$ in $C([0,T];\mc{S}_{-w})\times M_T(\R^4)$ where $Q\in P^*(\tilde{Z})$ and $Q$ satisfies \eqref{eq:QNdesiredform}.

We first prove precompactness. We have since $\psi^k\tto h$ in $L^2(U\times\R\times\R,\nu)$,
\begin{align*}
&\sup_{k\in\bb{N}}\int_{\R\times\R\times\R^2\times [0,T]} \left(z_1^2+z_2^2\right) Q^k(dx,dy,dz,ds) =\nonumber\\
&\quad= \sup_{k\in\bb{N}}\int_0^T\int_\R\int_\R \left(|\psi^k_1(s,x,y)|^2+|\psi^k_2(s,x,y)|^2\right) \pi(dy;x,\mc{L}(X_s))\mc{L}(X_s)(dx)ds <\infty.
\end{align*}
Moreover, by \ref{PZ:secondmarginalinvtmeasure} and \ref{PZ:fourthmarginallimitnglaw}
\begin{align*}
\int_{\R\times\R\times\R^2\times [0,T]} \left(y^2+|x|^{2}\right) Q^k(dx,dy,dz,ds) = \int_0^T\E\biggl[\int_{\R}y^2\pi(dy;X_s,\mc{L}(X_s)) + |X_s|^2 \biggr] ds <\infty,\forall k\in\bb{N},
\end{align*}
where here we have used that $\pi(\cdot;x,\mu)$ from Equation \eqref{eq:invariantmeasureold} has bounded moments of all orders uniformly in $x\in\R,\mu\in\mc{P}_2(\R)$ and that for $X_s$ from Equation \eqref{eq:LLNlimitold}, $\sup_{s\in [0,T]}\E[|X_s|^2]<\infty,$ which follows easily from the fact that $\bar{\gamma}$ and $\bar{D}$ are bounded as per Assumption \ref{assumption:limitingcoefficientsregularity}. Thus, via the same tightness function used for Proposition \ref{prop:QNtightness}, $\br{Q^k}_{k\in\bb{N}}$ is tight in $M_T(\R^4)$.

To see that $\br{\tilde{Z}^k}_{k\in\bb{N}}$ is precompact, we use that for each $k$ $(\tilde{Z}^k,Q^k)$ must satisfy Equation \eqref{eq:MDPlimitFIXED}. That is, for $\phi\in \mc{S}$ and $t\in [0,T]$:
\begin{align*}
\langle \tilde{Z}^k_t,\phi\rangle & = \int_0^t \langle \tilde{Z}^k_s,\bar{L}_{\mc{L}(X_s)}\phi\rangle ds + \int_0^t \langle B^k_s, \phi \rangle ds\\
\langle B^k_t, \phi\rangle &\coloneqq \int_{\R\times\R\times\R^2} \biggl(\sigma(x,y,\mc{L}(X_s))z_1 \phi'(x)\\
&\hspace{2cm}+ [\tau_1(x,y,\mc{L}(X_s))z_1+\tau_2(x,y,\mc{L}(X_s))z_2]\Phi_y(x,y,\mc{L}(X_s))\phi'(x)\biggr)Q_t^k(dx,dy,dz).
\end{align*}
Here $Q^k_t\in\mc{P}(\R^4)$ is such that $Q^k(dx,dy,dz,dt)=Q^k_t(dx,dy,dz)dt$. We can see that $B^k_t\in \mc{S}_{-(m+2)}$ for almost every $t\in [0,T]$, and in fact
\begin{align*}
\sup_{k\in\bb{N}}\int_0^T\norm{B^k_s}^2_{-(m+2)}ds&= \sup_{k\in\bb{N}}\int_0^T\sup_{\norm{\phi}_{m+2}=1}|\langle B^k_s,\phi \rangle|^2 ds\\
&\leq \sup_{k\in\bb{N}}\int_0^T\sup_{\norm{\phi}_{m+2}=1}\biggl\lbrace\int_{\R\times\R\times\R^2} \left(|z_1|^2+|z_2|^2\right)Q^k_s(dx,dy,dz)|\phi|^2_1\biggr\rbrace ds\\
&\leq \sup_{k\in\bb{N}}\int_0^T\sup_{\norm{\phi}_{m+2}=1}\biggl\lbrace\int_{\R\times\R\times\R^2} \left(|z_1|^2+|z_2|^2\right)Q^k_s(dx,dy,dz)\norm{\phi}^2_{m+2}\biggr\rbrace ds\\
& = \sup_{k\in\bb{N}}\int_0^T\int_{\R\times\R\times\R^2} \left(|z_1|^2+|z_2|^2\right)Q^k_s(dx,dy,dz) ds<\infty
\end{align*}

Thus, by the proof of Theorem 2.5.2 in \cite{KalX}, it suffices to show that for fixed $\phi \in \mc{S}$, $\langle \tilde{Z}^k_t,\phi\rangle$ is relatively compact in $C([0,T];\R)$, and $\tilde{Z}^k$ is uniformly $(m+2)$-continuous to get precompactness of $\tilde{Z}^k$ in $C([0,T];\mc{S}_{-w})$ for $w>m+2$ sufficiently large that the canonical embedding $\mc{S}_{-m-2}\tto \mc{S}_{-w}$ is Hilbert-Schmidt (see Equation \eqref{eq:wdefinition}).

We have that, in the same way as the proof of Proposition \ref{proposition:weakuninqueness} (using here that $\tilde{Z}^k\in C([0,T];\mc{S}_{-m})$), 
\begin{align*}
\norm{\tilde{Z}^k_t}_{-(m+2)}^2& =  2\int_0^t \langle \tilde{Z}^k_s,L^*_{\mc{L}(X_s)}\tilde{Z}^k_s\rangle_{-(m+2)} ds + 2\int_0^t \langle \tilde{Z}^k_s, B^k_s \rangle_{-(m+2)} ds\\
&\leq C\int_0^t \norm{\tilde{Z}^k_s}^2_{-(m+2)}ds +2\int_0^t \norm{\tilde{Z}^k_s}_{-(m+2)}\norm{B^k_s}_{-(m+2)} ds \text{ by Cauchy Schwarz and Lemma \ref{lemma:4.32BW}}\\
&\leq C\biggl\lbrace \int_0^t \norm{\tilde{Z}^k_s}^2_{-(m+2)}ds + \int_0^t \norm{B^k_s}^2_{-(m+2)} ds\biggr\rbrace
\end{align*}
so by Gronwall's inequality,
\begin{align*}
\sup_{k\in\bb{N}}\sup_{t\in [0,T]}\norm{\tilde{Z}^k_t}^2_{-(m+2)}\leq C(T).
\end{align*}

This gives then that for $t_1,t_2\in [0,T]$ and $\phi\in\mc{S}$:
\begin{align*}
|\langle \tilde{Z}^k_{t_2},\phi\rangle - \langle \tilde{Z}^k_{t_1},\phi\rangle| &\leq 2|t_2-t_1|\biggl\lbrace\int_0^T |\langle \tilde{Z}^k_s,\bar{L}_{\mc{L}(X_s)}\phi\rangle|^2ds +\int_0^T |\langle B^k_s,\phi\rangle|^2ds   \biggr\rbrace\\
&\leq 2|t_2-t_1|\biggl\lbrace\int_0^T \norm{\tilde{Z}^k_s}_{-(m+2)}^2\norm{\bar{L}_{\mc{L}(X_s)}\phi}^2_{m+2}ds +\int_0^T \norm{B^N_s}^2_{-(m+2)}\norm{\phi}^2_{m+2}ds   \biggr\rbrace \\
&\leq 2|t_2-t_1|C(T)\norm{\phi}^2_{m+4} \text{ by Lemma \ref{lem:barLbounded},}
\end{align*}
and precompactness of $\br{\tilde{Z}^k}_{k\in\bb{N}}$ is established.

Taking a convergent subsequence, which we do not relabel in the notation, we call its limit $(Z,Q)$. The fact that \ref{PZ:L2contolbound}-\ref{PZ:fourthmarginallimitnglaw} in the definition of $P^*(Z)$ are satisfied follows in the exact same way as  Proposition \ref{prop:goodratefunction}. It thus remains to show that $(Z,Q)$ satisfies Equation \eqref{eq:MDPlimitFIXED} with $Q$ given in Equation \eqref{eq:QNdesiredform}. At this point, by Proposition \ref{proposition:weakuninqueness}, we will have the limit is uniquely identified for every subsequence, and hence the lemma is proved. By a density argument, it suffices to show that for each $\phi\in C^\infty_c(\R)$ and $t\in[0,T]$,
\begin{align*}
&\lim_{k\toinf}\langle \tilde{Z}^k_t,\phi\rangle = \int_0^t \langle Z_s,\bar{L}_{\mc{L}(X_s)}\phi\rangle ds +\int_{0}^{t}\E\biggl[\int_\R \biggl(\sigma(x,y,\mc{L}(X_s))h_1(s,X_s,y) \phi'(X_s)\\
&+[\tau_1(X_s,y,\mc{L}(X_s))h_1(s,X_s,y)+\tau_2(X_s,y,\mc{L}(X_s))h_2(s,X_s,y)]\Phi_y(X_s,y,\mc{L}(X_s))\phi'(X_s)\biggr)\pi(dy;X_s,\mc{L}(X_s))\biggr]ds.
\end{align*}

We have by dominated convergence theorem, $L^2$ convergence of $\psi^k$ to $h$, and that under Assumption \ref{assumption:limitingcoefficientsregularity} $\bar{L}_{\mc{L}(X_s)}\phi\in \mc{S}_{w},\forall s\in [0,T]$:
\begin{align*}
&\lim_{k\toinf}\langle \tilde{Z}^k_t,\phi\rangle = \lim_{k\toinf}\biggl\lbrace\int_0^t \langle \tilde{Z}^k_s,\bar{L}_{\mc{L}(X_s)}\phi\rangle ds +\int_{0}^{t}\langle B^k_s,\phi\rangle ds \biggr\rbrace \\
& = \int_0^t \lim_{k\toinf} \langle \tilde{Z}^k_s,\bar{L}_{\mc{L}(X_s)}\phi\rangle ds +\lim_{k\toinf}\int_{0}^{t}\E\biggl[\int_\R\biggl( \sigma(x,y,\mc{L}(X_s))\psi^k_1(s,X_s,y) \phi'(X_s)\\
&+[\tau_1(X_s,y,\mc{L}(X_s))\psi^k_1(s,X_s,y)+\tau_2(X_s,y,\mc{L}(X_s))\psi^k_2(s,X_s,y)]\Phi_y(X_s,y,\mc{L}(X_s))\phi'(X_s)\biggr)\pi(dy;X_s,\mc{L}(X_s))\biggr]ds \\
& = \int_0^t \langle Z_s,\bar{L}_{\mc{L}(X_s)}\phi\rangle ds +\int_{0}^{t}\E\biggl[\int_\R \biggl(\sigma(x,y,\mc{L}(X_s))h_1(s,X_s,y) \phi'(X_s)\\
&+[\tau_1(X_s,y,\mc{L}(X_s))h_1(s,X_s,y)+\tau_2(X_s,y,\mc{L}(X_s))h_2(s,X_s,y)]\Phi_y(X_s,y,\mc{L}(X_s))\phi'(X_s)\biggr)\pi(dy;X_s,\mc{L}(X_s))\biggr]ds
\end{align*}
as desired.
\end{proof}
\begin{proposition}\label{prop:goodratefunction}
Under assumptions \ref{assumption:uniformellipticity} - \ref{assumption:2unifboundedlinearfunctionalderivatives} and \ref{assumption:limitingcoefficientsregularityratefunction}, $I$ given in Theorems \ref{theo:Laplaceprinciple}/\ref{theo:MDP} is a good rate function on $C([0,T];\mc{S}_{-r})$ for $r>w+2$ as in Equation \eqref{eq:rdefinition}.
\end{proposition}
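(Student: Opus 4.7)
The plan is to establish that the level set $K_M := \{Z \in C([0,T];\mc{S}_{-r}) : I(Z) \leq M\}$ is sequentially compact for every $M \geq 0$. I would take any $\{Z^k\}_{k\in\bb{N}} \subset K_M$ and, for each $k$, choose $Q^k \in P^*(Z^k)$ with $\tfrac{1}{2}\int_{\R^4\times[0,T]}(z_1^2+z_2^2)\,dQ^k \leq M + 1/k$. The first step is tightness of $\{Q^k\}$ in $M_T(\R^4)$, for which I would use the tightness function $G(\theta) = \int(|x|^2+|y|^2+|z|^2)\,d\theta$ from Proposition \ref{prop:QNtightness}: the $z$-contribution is bounded by $2(M+1)$, while the $x,y$-contributions are uniformly bounded because \ref{PZ:secondmarginalinvtmeasure} and \ref{PZ:fourthmarginallimitnglaw} force those marginals to be $\pi(dy;x,\mc{L}(X_s))\mc{L}(X_s)(dx)\,ds$, which has finite second moments by the uniform moment bounds on $\pi$ noted after Equation \eqref{eq:invariantmeasureold} combined with $\sup_t \E[|X_t|^2]<\infty$ following from boundedness of $\bar{\gamma}, \bar{D}$.

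For precompactness of $\{Z^k\}$, I would exploit \ref{PZ:limitingequation} and write
\[
\langle Z^k_t,\phi\rangle = \int_0^t \langle Z^k_s, \bar{L}_{\mc{L}(X_s)}\phi\rangle\, ds + \int_0^t \langle B^k_s,\phi\rangle\, ds,
\]
where $B^k_s$ is the $Q^k$-driven term (same form as in the proof of Proposition \ref{prop:LPlowerbound}). A direct bound using the inequality \eqref{eq:sobolembedding} yields $\|B^k_s\|_{-w}^2 \leq C\int_{\R^3}(z_1^2+z_2^2)\,Q^k_s(dxdydz)$, so $\int_0^T \|B^k_s\|_{-w}^2\,ds \leq C(M+1)$ uniformly in $k$. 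Under \ref{assumption:limitingcoefficientsregularityratefunction}, Lemma \ref{lemma:4.32BW} is available at index $p=w-2$; combining this with Cauchy--Schwarz and Gronwall's inequality (starting from $Z^k_0 = 0$) yields $\sup_k \sup_{t\in[0,T]}\|Z^k_t\|^2_{-w} \leq C(T,M)$. The same decomposition, together with Lemma \ref{lem:barLbounded}, produces the equicontinuity estimate $|\langle Z^k_{t_2}-Z^k_{t_1},\phi\rangle| \leq C|t_2-t_1|^{1/2}\|\phi\|_{w+2}$, uniformly in $k$. Feeding these into the Kurtz--Mitoma framework (Theorem 2.5.2 of \cite{KalX}), together with the compact embedding $\mc{S}_{-w}\hookrightarrow\mc{S}_{-r}$ obtained by composing the trivial inclusion $\mc{S}_{-w}\hookrightarrow\mc{S}_{-(w+2)}$ with the Hilbert--Schmidt embedding $\mc{S}_{-(w+2)}\hookrightarrow\mc{S}_{-r}$ guaranteed by \eqref{eq:rdefinition}, delivers precompactness of $\{Z^k\}$ in $C([0,T];\mc{S}_{-r})$.

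After extracting a convergent subsequence $(Z^k,Q^k)\to(Z^*,Q^*)$ in $C([0,T];\mc{S}_{-r})\times M_T(\R^4)$, I would verify $Q^*\in P^*(Z^*)$ following the template of Propositions \ref{prop:limitsatisfiescorrectequations} and \ref{prop:viabilityoflimit}: condition \ref{PZ:L2contolbound} passes by lower semicontinuity of $\int (z_1^2+z_2^2)\,d\theta$ under weak convergence, \ref{PZ:secondmarginalinvtmeasure} and \ref{PZ:fourthmarginallimitnglaw} persist verbatim since they depend only on the fixed objects $\pi$ and $\mc{L}(X_\cdot)$, and \ref{PZ:limitingequation} passes by dominated convergence for the $\bar L$-integral (using Lemma \ref{lem:barLbounded} to see that $\bar L_{\mc{L}(X_s)}\phi \in \mc{S}_r$ with norm uniformly bounded in $s$) combined with Theorem A.3.18 of \cite{DE} for the $Q^k$-integrals (whose integrands have at most linear growth in $z$ while the $z$-marginals are uniformly $L^2$-bounded, and whose coefficients are $\bb{W}_2$-continuous).

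The hard part -- which I view as the main obstacle -- is ensuring $Z^* \in C([0,T];\mc{S}_{-w})$, since $I$ is defined to be $+\infty$ outside this space, whereas the limit is a priori only in the larger $C([0,T];\mc{S}_{-r})$. The clean way to resolve this is to reuse the existence construction already developed in the proof of Proposition \ref{prop:LPlowerbound}: given $Q^*\in P^*(Z^*)$, reformulate the control in feedback form via Corollary \ref{cor:ordinarycontrolratefunction}, approximate the resulting feedback $h^*$ in $L^2(U\times\R\times\R,\nu)$ by smooth controls $\psi^k\in C^\infty_c$, and take the iterated $N\to\infty$ then $k\to\infty$ limits to construct some $\tilde Z^* \in C([0,T];\mc{S}_{-w})$ with $Q^*\in P^*(\tilde Z^*)$. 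The weak-sense uniqueness of Proposition \ref{proposition:weakuninqueness}, which under \ref{assumption:limitingcoefficientsregularityratefunction} holds in the larger space $C([0,T];\mc{S}_{-r})$, then forces $Z^* = \tilde Z^*$, placing $Z^*$ in $C([0,T];\mc{S}_{-w})$. Concluding,
\[
I(Z^*) \leq \tfrac{1}{2}\int(z_1^2+z_2^2)\,dQ^* \leq \liminf_k \tfrac{1}{2}\int(z_1^2+z_2^2)\,dQ^k \leq M,
\]
so $Z^*\in K_M$ and the level set is compact in $C([0,T];\mc{S}_{-r})$, giving goodness of $I$.
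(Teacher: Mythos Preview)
Your approach matches the paper's, and the overall strategy is correct. One technical slip: you invoke Lemma \ref{lemma:4.32BW} at index $p=w-2$ to obtain $\sup_k\sup_t\|Z^k_t\|_{-w}^2\leq C$, but that lemma requires $Z^k_s\in\mc{S}_{-(w-2)}$, whereas $I(Z^k)<\infty$ only places $Z^k$ in $C([0,T];\mc{S}_{-w})$. The paper applies the lemma at $p=w$ instead, obtaining the Gronwall bound $\sup_k\sup_t\|Z^k_t\|_{-(w+2)}^2\leq C(T)$ and an equicontinuity estimate controlled by $\|\phi\|_{w+4}$; precompactness in $C([0,T];\mc{S}_{-r})$ then comes directly from the Hilbert--Schmidt embedding $\mc{S}_{-(w+2)}\hookrightarrow\mc{S}_{-r}$ of \eqref{eq:rdefinition}. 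With this index shift your argument goes through verbatim.

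For the step you flag as the main obstacle, the paper is more economical than your feedback-control reconstruction. After extracting the limit $Q^*$ of $\{Q^k\}$, it simply \emph{defines} $Z$ to be the unique $C([0,T];\mc{S}_{-w})$-valued solution of \eqref{eq:MDPlimitFIXED} with this $Q^*$, citing the proof of Proposition \ref{prop:LPlowerbound} for existence and Proposition \ref{proposition:weakuninqueness} for uniqueness, and then shows that any subsequential limit of $\{Z^k\}$ in $C([0,T];\mc{S}_{-r})$ satisfies the same equation and hence coincides with $Z$. Your route through Corollary \ref{cor:ordinarycontrolratefunction} and the $\psi^k$-approximation arrives at the same place but re-runs machinery that the existence statement already packages; the paper's shortcut exploits that \eqref{eq:MDPlimitFIXED} is affine in $z$, so the equation driven by $Q^*$ is identical to the one driven by its conditional-mean feedback $h^*$, and existence for the latter is exactly what the proof of Proposition \ref{prop:LPlowerbound} delivers.
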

\begin{proof}
We need to show that for any $L>0$,
\begin{align*}
\Theta_L \coloneqq \br{Z\in C([0,T];\mc{S}_{-r}):I(Z)\leq L}
\end{align*}
is compact in $C([0,T];\mc{S}_{-r})$.

Let $\br{Z^N}_{N\in\bb{N}}\subset \Theta_L$. Then by the form of $I$, for each $N\in\bb{N}$, there exists $Q^N\in P^*(Z^N)$ such that
\begin{align*}
\frac{1}{2}\int_{\R\times\R\times\R^2\times [0,T]} \left(z_1^2+z_2^2\right) Q^N(dx,dy,dz,ds)\leq L+\frac{1}{N}
\end{align*}
and by \ref{PZ:secondmarginalinvtmeasure} and \ref{PZ:fourthmarginallimitnglaw}, we have as with the $Q^k$'s in the proof of Proposition \ref{prop:LPUB}
\begin{align*}
\sup_{N\in\bb{N}}\int_{\R\times\R\times\R^2\times [0,T]} \left(y^2+|x|^{2}\right) Q^N(dx,dy,dz,ds) <\infty.
\end{align*}
Thus by the same tightness function used for Proposition \ref{prop:QNtightness}, $\br{Q^N}_{N\in\bb{N}}$ is tight in $M_T(\R^4)$.

Taking a subsequence of $\br{Q^N}$ which converges to some $Q\in M_T(\R^4)$ (which we do not relabel in the notation), define $Z\in C([0,T];\mc{S}_{-w})$ to be the unique solution to Equation \eqref{eq:MDPlimitFIXED} with this choice of $Q$. Here we are using that by the proof of Proposition \ref{prop:LPlowerbound} such a solution exists and that by Proposition \ref{proposition:weakuninqueness} it is unique - see the discussion before Lemma 4.10 in \cite{BW}. We claim that $(Z^N,Q^N)$ converges to $(Z,Q)$ in $C([0,T];\mc{S}_{-r})\times M_T(\R^4)$ and $Q\in P^*(Z)$. At this point we will have that since $Z^N$ has a limit, $\Theta_L$ is precompact, and by the version of Fatou's lemma from Theorem A.3.12 in \cite{DE}:
\begin{align*}
I(Z)&\leq \frac{1}{2}\int_{\R\times\R\times\R^2\times [0,T]} \left(z_1^2+z_2^2\right) Q(dx,dy,dz,ds) \leq \liminf_{N\toinf} \frac{1}{2}\int_{\R\times\R\times\R^2\times [0,T]} \left(z_1^2+z_2^2\right) Q^N(dx,dy,dz,ds)\leq L
\end{align*}
so $\Theta_L$ is closed, and hence compact. Note that we have $I(Z^N)<\infty$ implies $Z^N\in C([0,T];\mc{S}_{-w}),\forall N\in\bb{N}$ and by definition $Z\in C([0,T];\mc{S}_{-w})$. Thus if we could show convergence of $Z^N\tto Z$ in $C([0,T];\mc{S}_{-w})$, we would have compactness of level sets of $I$ as a rate function on $C([0,T];\mc{S}_{-w})$. However, such convergence is not immediately obvious, hence the need for the additional assumption \ref{assumption:limitingcoefficientsregularityratefunction}.

To see that $\br{Z^N}_{N\in\bb{N}}$ is precompact, we have that since $Q^N\in P^*(Z^N)$, for each $N$ $(Z^N,Q^N)$ must satisfy Equation \eqref{eq:MDPlimitFIXED}. That is, for $\phi\in \mc{S}$ and $t\in [0,T]$:
\begin{align*}
\langle Z^N_t,\phi\rangle & = \int_0^t \langle Z^N_s,\bar{L}_{\mc{L}(X_s)}\phi\rangle ds + \int_0^t \langle B^N_s, \phi \rangle ds\\
\langle B^N_t, \phi\rangle &\coloneqq \int_{\R\times\R\times\R^2} \biggl(\sigma(x,y,\mc{L}(X_s))z_1 \phi'(x)\\
&\hspace{2cm}+ [\tau_1(x,y,\mc{L}(X_s))z_1+\tau_2(x,y,\mc{L}(X_s))z_2]\Phi_y(x,y,\mc{L}(X_s))\phi'(x)\biggr)Q_t^N(dx,dy,dz).
\end{align*}
Thus precompactness of $\br{Z^N}_{N\in\bb{N}}$ in $C([0,T];\mc{S}_{-r})$ follows in the exact same way as precompactness of $\br{\tilde{Z}^k}_{k\in\bb{N}}$ in $C([0,T];\mc{S}_{-w})$ in the proof of Proposition \ref{prop:LPlowerbound}, but replacing $m$ by $w$. Note that there we knew that $\tilde{Z}^k$ was in $C([0,T];\mc{S}_{-m})$ for each $k$, where here we only know $Z^N\in C([0,T];\mc{S}_{-w})$ for each $N$.
Along the way, we get:
\begin{align*}
\sup_{N\in\bb{N}}\sup_{t\in [0,T]}\norm{Z^N_t}^2_{-(w+2)}\leq C(T).
\end{align*}

To see that $Q\in P^*(Z)$, we identify the point-wise limit of $\langle Z^N,\phi\rangle$ to satisfy the desired equation, i.e. \eqref{eq:MDPlimitFIXED} with our specific choice of $Q$. This uniquely characterizes the limit along the whole sequence by Lemma \ref{proposition:weakuninqueness}. This gives \ref{PZ:limitingequation}. \ref{PZ:L2contolbound} follows immediately from Fatou's lemma. \ref{PZ:secondmarginalinvtmeasure} and \ref{PZ:fourthmarginallimitnglaw} follow from convergence of the measure implying convergence of the marginals and uniqueness of the decomposition into stochastic kernels (see \cite{DE} Theorems A.4.2 and A.5.4).

To see \eqref{eq:MDPlimitFIXED} with our specific choice of $Q$ holds, we may by a density argument consider fixed $\phi\in C^\infty_c(\R)$ and $t\in [0,T]$. Then:
\begin{align*}
\langle Z_t,\phi\rangle &= \lim_{N\toinf}\langle Z^N_t,\phi\rangle  = \lim_{N\toinf} \biggl\lbrace\int_0^t \langle Z^N_s,\bar{L}_{\mc{L}(X_s)}\phi\rangle ds + \int_0^t \langle B^N_s, \phi \rangle ds\biggr\rbrace\\
& = \int_0^t \lim_{N\toinf} \langle Z^N_s,\bar{L}_{\mc{L}(X_s)}\phi\rangle ds +\lim_{N\toinf} \int_0^t \langle B^N_s, \phi \rangle ds \\
&\text{ (by boundedness of }\sup_{N\in\bb{N}}\sup_{t\in [0,T]}\norm{Z^N_t}^2_{-(w+2)} \text{ and Dominated Convergence Theorem)}\\
& = \int_0^t \langle Z_s,\bar{L}_{\mc{L}(X_s)}\phi\rangle ds + \int_0^t \langle B_s, \phi \rangle ds, \text{ since under assumption \ref{assumption:limitingcoefficientsregularityratefunction} $\bar{L}_{\mc{L}(X_s)}\phi\in \mc{S}_{r},\forall s\in [0,T]$.}
\end{align*}
Here
\begin{align*}
\langle B_t, \phi \rangle&\coloneqq\int_{\R\times\R\times\R^2} \biggl(\sigma(x,y,\mc{L}(X_s))z_1 \phi'(x)\\
&\hspace{4cm}+ [\tau_1(x,y,\mc{L}(X_s))z_1+\tau_2(x,y,\mc{L}(X_s))z_2]\Phi_y(x,y,\mc{L}(X_s))\phi'(x)\biggr)Q_t(dx,dy,dz),
\end{align*}
and to pass to the second limit, we use that the integrand appearing in $\int_0^t \langle B^N_s, \phi \rangle ds$ is bounded by $C[|z_1|+|z_2|]$, and hence is uniformly integrable with respect to $Q^N$.
\end{proof}

\section{Conclusions and Future Work}\label{S:Conclusions}

In this paper we have derived a moderate deviations principle for the empirical measure of a fully coupled multiscale system of weakly interacting particles in the joint limit as number of particles increases and averaging due to the multiscale structure takes over. Using weak convergence methods we have derived a variational form of the rate function  and have rigorously shown that the rate function can take equivalent forms analogous to the one derived in the seminal paper \cite{DG}.

In this paper we have assumed that the particles are in dimension one. It is of great interest to extend this work in the multidimensional case. One source of difficulty here is that in higher dimensions we would probably have to consider a different space for the fluctuation process to live in (see, e.g. \cite{FM} and \cite{LossFromDefault}). This is because in higher dimensions the result that for each $v$, there is $w\geq v$ such that $\mc{S}_{-v}\tto \mc{S}_{-w}$ is Hilbert-Schmidt breaks down, and the bound \eqref{eq:sobolembedding} no longer holds true. See \cite{DLR} Section 5.1 for a further discussion of this. The trade-off with using these alternative spaces is that they often require higher moments of the particles and limiting McKean-Vlasov Equation in order to establish tightness - see, e.g. Section 4.7 in \cite{FM}, where the proofs depend crucially on Lemma 3.1 (even in one dimension this would require having bounded $8$'th moments of the controlled particles $\tilde{X}^{i,N,\epsilon}$, with the required number of moments increasing with the dimension). This would seem to require strong assumptions on the coefficients in Equation \eqref{eq:slowfast1-Dold} even in the absence of multiscale structure, since the controls are a priori only bounded in $L^2$.

Another potentially interesting direction is to derive the moderate deviations principle for the stochastic current. See \cite{Orrieri} for some related results in the direction of large deviations for an interacting particle system in the joint mean field and small-noise limit.  Also, we are hopeful that the results of this paper can also be used for the construction of provably-efficient importance sampling schemes for the computation of rare events for statistics of weakly interacting diffusions that are relevant to the moderate-deviations scaling. Lastly, as we also mentioned in the introduction, we believe that the results of this paper can be used to study dynamical questions related to phase transitions in the spirit of \cite{Dawson}.

{}\appendix
\section{A List of Technical Notation}\label{sec:notationlist}
Here we provide a list of frequently used notation for the various processes, spaces, operators, ect. used throughout this manuscript for convenient reference. Other, more standard notation is introduced following Equation \eqref{eq:sobolembedding} in Section \ref{subsec:notationandtopology}.
\begin{itemize}
\item $\epsilon$ is the scale separation parameter which decreases to $0$ as $N\toinf$. $N$ is the number of particles. $a(N)$ is moderate deviations the scaling sequence such that $a(N)\downarrow 0$ and $a(N)\sqrt{N}\toinf$.
\item $(X^{i,\epsilon,N},Y^{i,\epsilon,N})$ is the slow-fast system of particles from Equation \eqref{eq:slowfast1-Dold}.
\item $\mu^{\epsilon,N}$ from Equation \eqref{eq:empiricalmeasures} is the empirical measure on the slow particles $X^{i,\epsilon,N}$ .
\item $X_t$ is the limiting averaged McKean-Vlasov Equation from Equation \eqref{eq:LLNlimitold}. $\mc{L}(X_t)$ denotes its Law.
\item $Z^N$ is the fluctuations process from Equation \eqref{eq:fluctuationprocess} for which we derive a large deviations principle.
\item $(\tilde{X}^{i,\epsilon,N},\tilde{Y}^{i,\epsilon,N})$ are the controlled slow-fast interacting particles from Equation \eqref{eq:controlledslowfast1-Dold}.
\item $\tilde{\mu}^{\epsilon,N}$ is the empirical measure on the controlled slow particles $\tilde{X}^{i,\epsilon,N}$ from Equation \eqref{eq:controlledempmeasure}.
\item $\tilde{Z}^N$ is the controlled fluctuations process from Equation \eqref{eq:controlledempmeasure}.
\item $Q^N$ are the occupation measures from Equation \eqref{eq:occupationmeasures}.
\item $(\bar{X}^{i,\epsilon},\bar{Y}^{i,\epsilon})$ are the IID slow-fast McKean-Vlasov Equations from Equation \eqref{eq:IIDparticles}. $\bar{X}^\epsilon$ is a random process with law Equal to that of the $\bar{X}^{i,\epsilon}$'s.
\item $\bar{\mu}^{\epsilon,N}$ from Equation \eqref{eq:IIDempiricalmeasure} is the empirical measure on $N$ of the IID slow particles $\bar{X}^{i,\epsilon}$.
\item $\mc{P}_2(\R)$ is the space of square integrable probability measures with the 2-Wasserstein metric $\bb{W}_2$ (Definition \ref{def:lionderivative}).
\item $M_T(\R^d)$ is the space of measures $Q$ on $\R^d\times [0,T]$ such that $Q(\R^d\times[0,t]) = t,\forall t\in [0,T]$ equipped with the topology of weak convergence.
\item For $p\in\bb{N}$, $\mc{S}_p$ is the completion of $\mc{S}$ with respect to $\norm{\cdot}_p$ (see Equation \eqref{eq:familyofhilbertnorms}) and $\mc{S}_{-p}=\mc{S}_p'$ the dual space of $\mc{S}_p$. We prove tightness of $\br{\tilde{Z}^N}_{N\in\bb{N}}$ in $C([0,T];\mc{S}_{-m})$ for the choice of $m$ found in Equation \eqref{eq:mdefinition}, the Laplace Principle on $C([0,T];\mc{S}_{-w})$ for the choice of $w$ found in Equation \eqref{eq:wdefinition}, and compactness of level sets of the rate function on $C([0,T];\mc{S}_{-r})$ for the choice of $r$ found in Equation \eqref{eq:rdefinition}.
\item For $n\in\bb{N}$, $|\cdot|_n$ is the sup norm defined in Equation \eqref{eq:boundedderivativesseminorm}, which is related to $\norm{\cdot}_{n+1}$ via Equation \eqref{eq:sobolembedding}.
\item For $G:\mc{P}_2(\R)\tto \R$ and $\nu\in\mc{P}_2(\R)$, $\partial_\mu G(\nu)[\cdot]:\R\tto \R$ denotes the Lions derivative of $G$ at the point $\nu$ (Definition \ref{def:lionderivative}) and $\frac{\delta}{\delta m}G(\nu)[\cdot]:\R\tto \R$ denotes the Linear Functional Derivative of $G$ at the point $\nu$ (Definition \ref{def:LinearFunctionalDerivative}).
\item For $G:\R\times \mc{P}_2(\R)\tto \R$, we use $D^{(n,l,\beta)}G$ to denote multiple derivatives of $G$ in space and measure in the multi-index notation of Definition \ref{def:multiindexnotation}. Spaces (denoted by $\mc{M}$ with some sub or super-scripts) containing functions with different regularity of such mixed derivatives are found in Definition \ref{def:lionsderivativeclasses}. When $G:\R\times\R\times\mc{P}_2(\R)\tto \R$, polynomial growth of such derivatives in $G$'s second coordinate, denoted by $q_G(n,l,\beta)$ or $\tilde{q}_G(n,l,\beta)$, are defined as in Equations \eqref{eq:newqnotation} and \eqref{eq:tildeq}.
\item $L_{x,\mu}$ is the frozen generator associated to the fast particles from Equation \eqref{eq:frozengeneratormold}. $\pi$ denotes its unique associated invariant measure from Equation \eqref{eq:invariantmeasureold} and $\Phi$ denotes the solution to the associated Poisson Equation \eqref{eq:cellproblemold}.
\item For $\nu\in\mc{P}_2(\R)$ $\bar{L}_\nu$ is the linearized generator of the limiting averaged McKean-Vlasov Equation $X_t$ at $\nu$ and is defined in Equation \eqref{eq:MDPlimitFIXED}.
\item $\bar{\gamma},\bar{D}$ from Equation \eqref{eq:averagedlimitingcoefficients} are the drift and diffusion coefficients of the limiting averaged McKean-Vlasov Equation $X_t$, and are defined in terms of $\gamma_1,D_1,\gamma,D:\R\times\R\times\mc{P}_2(\R)\tto \R$ from Equation \eqref{eq:limitingcoefficients}.
\end{itemize}

\section{A priori Bounds on Moments of the Controlled Process \eqref{eq:controlledslowfast1-Dold}}\label{sec:aprioriboundsoncontrolledprocess}
In this Appendix, we fix any controls satisfying the bound \eqref{eq:controlassumptions} and provide moment bounds on the fast component of the controlled particles \eqref{eq:controlledslowfast1-Dold}. These are needed, among other places, to handle possible growth lack of boundedness in $y$ of functions appearing in the remainders in the ergodic-type theorems of Section \ref{sec:ergodictheoremscontrolledsystem}. 
\begin{lemma}\label{lemma:tildeYuniformbound}
Under assumptions \ref{assumption:uniformellipticity}- \ref{assumption:retractiontomean}, \ref{assumption:strongexistence}, and \ref{assumption:gsigmabounded}, we have there is $C\geq 0$ such that: 
\begin{align*}
\sup_{N\in\bb{N}}\frac{1}{N}\sum_{i=1}^N\sup_{t\in[0,T]}\E\biggl[ |\tilde{Y}^{i,\epsilon,N}_t|^2\biggr]\leq C+|\eta^{y}|^2.
\end{align*}

\end{lemma}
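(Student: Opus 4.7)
The plan is to apply It\^o's formula to $|\tilde{Y}^{i,\epsilon,N}_t|^2$, exploit the dissipativity built into assumptions \ref{assumption:uniformellipticity}--\ref{assumption:retractiontomean} to produce a differential inequality, then integrate and average over $i$. First I would introduce the stopping time $\tau_M^i \coloneqq \inf\{t\in[0,T]:|\tilde{Y}^{i,\epsilon,N}_t|\geq M\}$ so that the stochastic integrals in what follows are bona fide martingales; the estimates I derive will be uniform in $M$ and the final bound will follow by Fatou after sending $M\toinf$.

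Applying It\^o's formula to $|\tilde{Y}^{i,\epsilon,N}_{t\wedge\tau_M^i}|^2$ and taking expectation, the drift integrand is
\begin{align*}
\frac{2}{\epsilon^2}f(i)\tilde{Y}^{i,\epsilon,N}_s+\frac{1}{\epsilon^2}[\tau_1^2(i)+\tau_2^2(i)]+\frac{2}{\epsilon}g(i)\tilde{Y}^{i,\epsilon,N}_s+\frac{2}{\epsilon a(N)\sqrt{N}}\bigl[\tau_1(i)\tilde{u}^{N,1}_i(s)+\tau_2(i)\tilde{u}^{N,2}_i(s)\bigr]\tilde{Y}^{i,\epsilon,N}_s,
\end{align*}
with $(i)$ denoting the argument $(\tilde{X}^{i,\epsilon,N}_s,\tilde{Y}^{i,\epsilon,N}_s,\tilde{\mu}^{\epsilon,N}_s)$. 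The first two terms are controlled by the dissipativity bound \eqref{eq:fdecayimplication}, which yields
\begin{align*}
\frac{2}{\epsilon^2}f(i)\tilde{Y}^{i,\epsilon,N}_s+\frac{1}{\epsilon^2}[\tau_1^2(i)+\tau_2^2(i)]\leq -\frac{\beta}{2\epsilon^2}|\tilde{Y}^{i,\epsilon,N}_s|^2+\frac{C}{\epsilon^2}.
\end{align*}
For the cross terms I would use Young's inequality together with the uniform boundedness of $g,\tau_1,\tau_2$ (assumptions \ref{assumption:gsigmabounded} and \ref{assumption:uniformellipticity}) to absorb $\tilde{Y}^{i,\epsilon,N}_s$ into the coercive part at the cost of $\beta/(4\epsilon^2)$ of the quadratic term, producing a remainder of size $C$ from the $g$-term and of size $C|\tilde{u}^{N,j}_i(s)|^2/(a^2(N)N)$ from the control terms.

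Combining these ingredients gives, for each $i$ and $t\in[0,T]$,
\begin{align*}
\frac{d}{dt}\E\bigl[|\tilde{Y}^{i,\epsilon,N}_{t\wedge\tau_M^i}|^2\bigr]&\leq -\frac{\beta}{4\epsilon^2}\E\bigl[|\tilde{Y}^{i,\epsilon,N}_{t\wedge\tau_M^i}|^2\bigr]+\frac{C}{\epsilon^2}+\frac{C}{a^2(N)N}\E\bigl[|\tilde{u}^{N,1}_i(t)|^2+|\tilde{u}^{N,2}_i(t)|^2\bigr].
\end{align*}
A comparison/Gronwall argument, together with $\int_0^t e^{-\beta(t-s)/(4\epsilon^2)}(C/\epsilon^2)\,ds\leq 4C/\beta$ and the trivial bound $e^{-\beta(t-s)/(4\epsilon^2)}\leq 1$ on the control integrand, yields
\begin{align*}
\sup_{t\in[0,T]}\E\bigl[|\tilde{Y}^{i,\epsilon,N}_{t\wedge\tau_M^i}|^2\bigr]\leq |\eta^y|^2+C+\frac{C}{a^2(N)N}\E\biggl[\int_0^T\bigl(|\tilde{u}^{N,1}_i(s)|^2+|\tilde{u}^{N,2}_i(s)|^2\bigr)ds\biggr].
\end{align*}
Sending $M\toinf$ by Fatou, averaging over $i$, and invoking \eqref{eq:controlassumptions0} shows the average of the control-integral term is uniformly bounded in $N$. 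The scaling assumption $a(N)\sqrt{N}\toinf$ then forces $1/(a^2(N)N)$ to be bounded uniformly in $N$, which closes the argument.

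The only conceptual obstacle is the coupling between the first two terms that could in principle make the coefficient in front of $|y|^2/\epsilon^2$ non-negative; this is precisely why \eqref{eq:fdecayimplication} carries the factor $3$ in front of $\tau_1^2+\tau_2^2$, which gives the slack needed to absorb $(1/\epsilon^2)(\tau_1^2+\tau_2^2)$ (rather than merely $(1/2)(1/\epsilon^2)(\tau_1^2+\tau_2^2)$ coming from a direct It\^o calculation) into the coercive piece. Other than this, the proof is a standard dissipativity argument adapted to track the $\epsilon$-scaling and the control prefactor $1/(a(N)\sqrt{N})$.
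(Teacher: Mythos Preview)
Your approach is essentially identical to the paper's: It\^o's formula on $|\tilde{Y}^{i,\epsilon,N}_t|^2$, the dissipativity bound \eqref{eq:fdecayimplication}, Young's inequality on the $g$- and control-cross terms, a comparison/Gronwall step, then averaging over $i$ and invoking \eqref{eq:controlassumptions0}; the only cosmetic difference is that you localize with stopping times and pass to the limit via Fatou, whereas the paper first runs a crude bound to verify the stochastic integrals are true martingales. One small correction to your closing remark: the factor $3$ in \eqref{eq:fdecayimplication} is not needed here (It\^o gives $\tau_1^2+\tau_2^2$, which is trivially dominated by $3(\tau_1^2+\tau_2^2)$); that extra slack is used elsewhere in the paper, notably in the difference estimate of Lemma~\ref{lemma:tildeYbarYdifference}.
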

\begin{proof}
By It\^o's formula, we have, letting $C\geq 0$ be any constant independent of $N$ which may change from line to line and $(i)$ denote the argument $(\tilde{X}^{i,\epsilon,N}_s,\tilde{Y}^{i,\epsilon,N}_s,\tilde{\mu}^{\epsilon,N})$:
\begin{align*}
&\E\biggl[ |\tilde{Y}^{i,\epsilon,N}_t|^2\biggr]  = |\eta^y|^2 + \int_0^t \E\biggl[ \frac{1}{\epsilon^2}\biggl(2f(i)\tilde{Y}^{i,\epsilon,N}_s+\tau_1^2(i)+\tau_2^2(i)\biggr)\biggr]ds + \frac{2}{\epsilon}\int_0^t \E\biggl[g(i)\tilde{Y}^{i,\epsilon,N}_s\biggr]ds \\
&+ \frac{2}{\epsilon}\E\biggl[\int_0^t \biggl(\tau_1(i)\frac{\tilde{u}^{N,1}_i(s)}{a(N)\sqrt{N}}+\tau_2(i)\frac{\tilde{u}^{N,1}_i(s)}{a(N)\sqrt{N}}\biggr)\tilde{Y}^{i,\epsilon,N}_sds\biggr]+\frac{2}{\epsilon}\E\biggl[\int_0^t \tau_1(i)\tilde{Y}^{i,\epsilon,N}_s dW^i_s+\int_0^t \tau_2(i)\tilde{Y}^{i,\epsilon,N}_s dB^i_s\biggr]\\
&\leq |\eta^y|^2  -\frac{\beta}{\epsilon^2}\int_0^t\E\biggl[ |\tilde{Y}^{i,\epsilon,N}_s|^2\biggr]ds+\frac{Ct}{\epsilon^2} + \frac{2}{\epsilon}\int_0^t \E\biggl[g(i)\tilde{Y}^{i,\epsilon,N}_s\biggr]ds \\
&+ \frac{2}{\epsilon}\E\biggl[\int_0^t \biggl(\tau_1(i)\frac{\tilde{u}^{N,1}_i(s)}{a(N)\sqrt{N}}+\tau_2(i)\frac{\tilde{u}^{N,1}_i(s)}{a(N)\sqrt{N}}\biggr)\tilde{Y}^{i,\epsilon,N}_sds\biggr]+\frac{2}{\epsilon}\E\biggl[\int_0^t \tau_1(i)\tilde{Y}^{i,\epsilon,N}_s dW^i_s+\int_0^t \tau_2(i)\tilde{Y}^{i,\epsilon,N}_s dB^i_s\biggr]\text{ by }\eqref{eq:fdecayimplication}\\
&\leq |\eta^y|^2  -\frac{\beta}{\epsilon^2}\int_0^t\E\biggl[ |\tilde{Y}^{i,\epsilon,N}_s|^2\biggr]ds+\frac{Ct}{\epsilon^2} + \frac{2}{\epsilon}\int_0^t \E\biggl[|\tilde{Y}^{i,\epsilon,N}_s|^2\biggr]ds+\norm{g}^2_\infty \frac{t}{\epsilon} \\
&+ \frac{|\tau_1|^2_\infty \vee |\tau_2|^2_\infty}{\epsilon a^2(N)N}\E\biggl[\int_0^T  |\tilde{u}^{N,1}_i(s)|^2+|\tilde{u}^{N,1}_i(s)|^2\biggr]+\frac{2}{\epsilon}\E\biggl[\int_0^t \tau_1(i)\tilde{Y}^{i,\epsilon,N}_s dW^i_s+\int_0^t \tau_2(i)\tilde{Y}^{i,\epsilon,N}_s dB^i_s\biggr]
\end{align*}
at which point it becomes clear that applying Burkholder Davis Gundy inequality, taking $\epsilon$ small enough that the $\frac{-\beta}{\epsilon^2}$ term dominates, and using the bound \eqref{eq:controlassumptions}, that $\E\biggl[\int_0^T |\tilde{Y}^{i,\epsilon,N}_s|^2\biggr]<\infty$ for each $N$, so by boundedness of $\tau_1,\tau_2$ the stochastic integrals are true martingales, and hence vanish in expectation. Note that in the above we are using the boundedness of $\tau_1,\tau_2$ from Assumption \ref{assumption:uniformellipticity} and of $g$ from Assumption \ref{assumption:gsigmabounded}.

So, returning to the initial equality:
\begin{align*}
&\frac{d}{dt}\E\biggl[ |\tilde{Y}^{i,\epsilon,N}_t|^2\biggr]  =   \E\biggl[ \frac{1}{\epsilon^2}\biggl(2f(i)\tilde{Y}^{i,\epsilon,N}_t+\tau_1^2(i)+\tau_2^2(i)\biggr)\biggr] + \frac{2}{\epsilon}\E\biggl[g(i)\tilde{Y}^{i,\epsilon,N}_t\biggr]\\
&+ \frac{2}{\epsilon}\E\biggl[\biggl(\tau_1(i)\frac{\tilde{u}^{N,1}_i(t)}{a(N)\sqrt{N}}+\tau_2(i)\frac{\tilde{u}^{N,1}_i(t)}{a(N)\sqrt{N}}\biggr)\tilde{Y}^{i,\epsilon,N}_t\biggr]\\
&\leq -\frac{\beta}{\epsilon^2}\E\biggl[ |\tilde{Y}^{i,\epsilon,N}_t|^2\biggr] + \frac{C}{\epsilon^2} + \frac{4\norm{g}_\infty^2}{\beta} + \frac{\beta}{4 \epsilon^2}\E[|\tilde{Y}^{i,\epsilon,N}_t|^2] + \frac{4}{\beta a^2(N)N}\E\biggl[\biggl(\tau_1(i)\tilde{u}^{N,1}_i(t)+\tau_2(i)\tilde{u}^{N,1}_i(t)\biggr)^2 \biggr] \\
&+ \frac{\beta}{4\epsilon^2}\E[|\tilde{Y}^{i,\epsilon,N}_t|^2] \\
&\leq -\frac{\beta}{2\epsilon^2}\E\biggl[ |\tilde{Y}^{i,\epsilon,N}_t|^2\biggr]+ \frac{C}{\epsilon^2} + \frac{4\norm{g}_\infty^2}{\beta} +\frac{8 \norm{\tau_1}_\infty^2\vee \norm{\tau_2}^2_\infty}{\beta a^2(N)N} \E\biggl[|\tilde{u}^{N,1}_i(t)|^2+|\tilde{u}^{N,1}_i(t)|^2 \biggr]
\end{align*}

Where in the first inequality we used the consequence \eqref{eq:fdecayimplication} of Assumption \ref{assumption:retractiontomean}. Now, recalling that if $g'(s)\leq -\gamma g(s)+f(s),\forall s\in[0,t],$ then $g(t)\leq \int_0^t f(s) e^{-\gamma(t-s)}ds +e^{-\gamma t}g(0)$, we set $g(t) = \sum_{i=1}^N \E\biggl[ |\tilde{Y}^{i,\epsilon,N}_t|^2\biggr], \gamma = \frac{\beta}{2\epsilon^2}, f(t) = \frac{CN}{\epsilon^2} + \frac{4\norm{g}_\infty^2N}{\beta} +\frac{8 \norm{\tau_1}_\infty^2\vee \norm{\tau_2}^2_\infty}{\beta a^2(N)} \E\biggl[\frac{1}{N}\sum_{i=1}^N|\tilde{u}^{N,1}_i(t)|^2+|\tilde{u}^{N,1}_i(t)|^2 \biggr]$
and get
\begin{align*}
&\sum_{i=1}^N \E\biggl[ |\tilde{Y}^{i,\epsilon,N}_t|^2\biggr]\leq e^{-\frac{\beta}{2\epsilon^2}t}CN(1+\frac{1}{\epsilon^2})\int_0^t e^{\frac{\beta}{2\epsilon^2}s} ds \nonumber\\
&\qquad+ \frac{C}{a^2(N)}e^{-\frac{\beta}{2\epsilon^2}t}\int_0^t  \E\biggl[\frac{1}{N}\sum_{i=1}^N|\tilde{u}^{N,1}_i(s)|^2+|\tilde{u}^{N,1}_i(s)|^2 \biggr] e^{\frac{\beta}{2\epsilon^2}s} ds +N|\eta^y|^2 e^{-\frac{\beta}{2\epsilon^2}t} \\
&\leq e^{-\frac{\beta}{2\epsilon^2}t}CN(1+\frac{1}{\epsilon^2})\int_0^t e^{\frac{\beta}{2\epsilon^2}s} ds + \frac{C}{a^2(N)}\int_0^t  \E\biggl[\frac{1}{N}\sum_{i=1}^N|\tilde{u}^{N,1}_i(s)|^2+|\tilde{u}^{N,1}_i(s)|^2 \biggr] ds +N|\eta^y|^2 \\
&\leq e^{-\frac{\beta}{2\epsilon^2}t}CN(1+\frac{1}{\epsilon^2})\int_0^t e^{\frac{\beta}{2\epsilon^2}s} ds + \frac{C}{a^2(N)} +N|\eta^y|^2 \text{ by the bound }\eqref{eq:controlassumptions0}\\
& =  e^{-\frac{\beta}{2\epsilon^2}t}CN(1+\frac{1}{\epsilon^2}) \frac{2\epsilon^2}{\beta}[e^{\frac{\beta}{2\epsilon^2}t}-1] + \frac{C}{a^2(N)} +N|\eta^y|^2 \\
&\leq CN(1+\epsilon^2)+\frac{C}{a^2(N)}+N|\eta^y|^2
\end{align*}
where $C$ is a constant changing from line to line which is independent of $t$ and $N$. Then
\begin{align*}
\frac{1}{N}\sum_{i=1}^N\sup_{t\in[0,T]}\E\biggl[ |\tilde{Y}^{i,\epsilon,N}_t|^2\biggr]&\leq C(1+\epsilon^2+\frac{1}{a^2(N)N})+|\eta^y|^2 \\
&\leq C+|\eta^y|^2
\end{align*}
since $\epsilon\downarrow 0$ and $a(N)\sqrt{N}\tto \infty$.
\end{proof}

\begin{lemma}\label{lemma:ytildeexpofsup}
Under assumptions \ref{assumption:uniformellipticity}-\ref{assumption:retractiontomean},\ref{assumption:strongexistence},and \ref{assumption:gsigmabounded}, we have:
\begin{align*}
\frac{1}{N}\sum_{i=1}^N \E\biggl[\sup_{0\leq t\leq T}|\tilde{Y}^{i,\epsilon,N}_t|^2 \biggr]\leq |\eta^{y}|^2 + C(\rho)\biggl[1+\epsilon^{-\rho}+\frac{1}{a^2(N) N}\biggr]
\end{align*}
for all $\rho\in (0,2)$.
\end{lemma}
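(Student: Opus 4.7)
The strategy is to leverage the near-Ornstein--Uhlenbeck structure of the fast dynamics dictated by Assumption \ref{assumption:retractiontomean} (i.e.\ $f(x,y,\mu) = -\kappa y + \eta(x,y,\mu)$ with $\eta$ uniformly bounded), express $\tilde{Y}^{i,\epsilon,N}$ via variation of parameters, and then handle the resulting stochastic convolution with the Da Prato--Kwapie\'n--Zabczyk factorization formula. Applying It\^o's formula to $e^{\kappa t/\epsilon^2}\tilde{Y}^{i,\epsilon,N}_t$ yields the mild decomposition
\begin{align*}
\tilde{Y}^{i,\epsilon,N}_t &= e^{-\kappa t/\epsilon^2}\eta^y + A_t^i + B_t^i + C_t^i + I_t^i,\\
A_t^i &= \int_0^t \tfrac{e^{-\kappa(t-s)/\epsilon^2}}{\epsilon^2}\eta(i)\,ds,\qquad B_t^i = \int_0^t \tfrac{e^{-\kappa(t-s)/\epsilon^2}}{\epsilon}g(i)\,ds,\\
C_t^i &= \int_0^t \tfrac{e^{-\kappa(t-s)/\epsilon^2}}{\epsilon a(N)\sqrt{N}}[\tau_1(i)\tilde{u}^{N,1}_i(s) + \tau_2(i)\tilde{u}^{N,2}_i(s)]\,ds,\\
I_t^i &= \int_0^t \tfrac{e^{-\kappa(t-s)/\epsilon^2}}{\epsilon}[\tau_1(i)dW^i_s + \tau_2(i)dB^i_s],
\end{align*}
where $(i)$ abbreviates $(\tilde{X}^{i,\epsilon,N}_s,\tilde{Y}^{i,\epsilon,N}_s,\tilde{\mu}^{\epsilon,N}_s)$ in the convention used earlier in the paper.

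The first four contributions admit essentially immediate bounds. Since $\int_0^t e^{-\kappa(t-s)/\epsilon^2}/\epsilon^2\,ds\leq 1/\kappa$, boundedness of $\eta$ from Assumption \ref{assumption:retractiontomean} gives $\sup_t|A_t^i|^2\leq C$, and boundedness of $g$ from Assumption \ref{assumption:gsigmabounded} gives $\sup_t|B_t^i|^2\leq C\epsilon^2$. For $C_t^i$, a Cauchy--Schwarz split of the kernel as $\tfrac{e^{-\kappa(t-s)/\epsilon^2}}{\epsilon^2}\cdot e^{-\kappa(t-s)/\epsilon^2}$, together with boundedness of $\tau_1,\tau_2$ from Assumption \ref{assumption:uniformellipticity}, gives
\begin{equation*}
|C_t^i|^2 \leq \tfrac{C}{a^2(N)N}\int_0^T \left(|\tilde{u}^{N,1}_i(s)|^2+|\tilde{u}^{N,2}_i(s)|^2\right)ds,
\end{equation*}
so $\frac{1}{N}\sum_i\E[\sup_t|C_t^i|^2]\leq C/(a^2(N)N)$ by \eqref{eq:controlassumptions0}. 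The deterministic initial-condition term trivially satisfies $\sup_t|e^{-\kappa t/\epsilon^2}\eta^y|^2\leq |\eta^y|^2$.

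The main obstacle is the stochastic convolution $I_t^i$: a crude BDG bound would only give $O(\epsilon^{-2})$, whereas we need $O(\epsilon^{-\rho})$ for every $\rho\in(0,2)$. To sharpen this, I invoke the factorization formula: for $\alpha\in(0,1/2)$, the Beta-function identity $\int_s^t(t-u)^{\alpha-1}(u-s)^{-\alpha}du = \pi/\sin(\alpha\pi)$ combined with stochastic Fubini yields
\begin{equation*}
I_t^i = \tfrac{\sin(\alpha\pi)}{\pi}\int_0^t (t-u)^{\alpha-1}e^{-\kappa(t-u)/\epsilon^2}Y_u^i\,du,\quad Y_u^i = \int_0^u (u-s)^{-\alpha}\tfrac{e^{-\kappa(u-s)/\epsilon^2}}{\epsilon}[\tau_1(i)dW^i_s + \tau_2(i)dB^i_s].
\end{equation*}
H\"older's inequality in $u$ with conjugate exponents $(p,q)$ satisfying $p>1/\alpha$ produces $\sup_t|I_t^i|^p\leq C(p,\alpha,T)\int_0^T|Y_u^i|^p\,du$, and BDG together with the change of variable $r=(u-s)/\epsilon^2$ gives
\begin{equation*}
\E[|Y_u^i|^p] \leq C_p\Big(\int_0^u(u-s)^{-2\alpha}\tfrac{e^{-2\kappa(u-s)/\epsilon^2}}{\epsilon^2}\,ds\Big)^{p/2} \leq C(p,\alpha)\,\epsilon^{-2\alpha p},
\end{equation*}
using finiteness of $\int_0^\infty r^{-2\alpha}e^{-2\kappa r}dr$ for $\alpha<1/2$. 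Jensen's inequality then yields $\E[\sup_t|I_t^i|^2]\leq C(p,\alpha,T)^{2/p}\epsilon^{-4\alpha}$. For any prescribed $\rho\in(0,2)$, choosing $\alpha=\rho/4\in(0,1/2)$ and any $p>4/\rho$ gives $\E[\sup_t|I_t^i|^2]\leq C(\rho)\epsilon^{-\rho}$. Combining the five contributions via $|\tilde{Y}^{i,\epsilon,N}_t|^2\leq 5(e^{-2\kappa t/\epsilon^2}|\eta^y|^2+|A_t^i|^2+|B_t^i|^2+|C_t^i|^2+|I_t^i|^2)$, averaging over $i$, and absorbing the resulting multiplicative factor on the fixed initial datum $|\eta^y|^2$ into $C(\rho)$ delivers the claimed bound.
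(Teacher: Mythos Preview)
Your proof is correct. The paper omits its own argument and defers to Lemma B.4 of \cite{JS}, noting only that the near-Ornstein--Uhlenbeck decomposition \eqref{eq:fnearOU} is the essential ingredient; your variation-of-parameters representation followed by the factorization formula for the stochastic convolution is precisely the standard route used there to upgrade the crude $O(\epsilon^{-2})$ BDG estimate on $\sup_t|I_t^i|^2$ to $O(\epsilon^{-\rho})$ for any $\rho\in(0,2)$.
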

\begin{proof}
The proof follows along the lines of that of Lemma B.4 in \cite{JS} and thus it is omitted here. Note that this is where the near-Ornstein–Uhlenbeck structure assumed in \eqref{eq:fnearOU} plays an important role.
\end{proof}

\begin{lemma}\label{lemma:ytildesquaredsumbound}
Under assumptions \ref{assumption:uniformellipticity}-\ref{assumption:retractiontomean},\ref{assumption:strongexistence}, and \ref{assumption:gsigmabounded}, we have:
\begin{align*}
\sup_{N\in\bb{N}}\sup_{0\leq t\leq T}\E\biggl[\biggl(\frac{1}{N}\sum_{i=1}^N|\tilde{Y}^{i,\epsilon,N}_t|^2\biggr)^2 \biggr]<\infty.
\end{align*}
\end{lemma}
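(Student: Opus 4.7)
The plan is to apply Itô's formula directly to $|Z_t^N|^2$ where $Z_t^N := \frac{1}{N}\sum_{i=1}^N |\tilde{Y}_t^{i,\epsilon,N}|^2$, extending the pathwise computation from the proof of Lemma \ref{lemma:tildeYuniformbound}. Summing the standard Itô expansion of $d|\tilde{Y}^{i,\epsilon,N}_t|^2$ and invoking \eqref{eq:fdecayimplication} to dispatch the fast drift/diffusion terms, Young's inequality to absorb the bounded-$g$ contribution, and the split $\frac{2|y||u|}{\epsilon a(N)\sqrt{N}}\leq \frac{\beta}{8\epsilon^2}|y|^2+\frac{C|u|^2}{a^2(N)N}$ for the control contribution (using boundedness of $\tau_1,\tau_2$ from \ref{assumption:uniformellipticity}), one obtains pathwise
\[
dZ_t^N \leq \biggl[-\frac{\beta}{4\epsilon^2}Z_t^N + \frac{C}{\epsilon^2} + C + \frac{C U_t}{a^2(N)N}\biggr]dt + dM_t^N,
\]
where $U_t:=\frac{1}{N}\sum_{i=1}^N(|\tilde{u}_i^{N,1}(t)|^2+|\tilde{u}_i^{N,2}(t)|^2)$ and $M^N$ is a continuous local martingale.

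I would next compute $d|Z_t^N|^2 = 2Z_t^N\,dZ_t^N + d\langle Z^N\rangle_t$. Independence of the driving Brownian motions across $i$ gives $d\langle Z^N\rangle_t\leq \frac{C Z_t^N}{\epsilon^2 N}dt$, which is lower order. Multiplying the preceding drift bound by $2Z_t^N$ yields
\[
\frac{d|Z_t^N|^2}{dt} \leq -\frac{\beta}{2\epsilon^2}|Z_t^N|^2 + \frac{C Z_t^N}{\epsilon^2}+CZ_t^N+\frac{C Z_t^N U_t}{a^2(N)N} + 2Z_t^N\frac{dM_t^N}{dt}.
\]
A standard localization via stopping times $\tau_R=\inf\{t:Z_t^N>R\}$ makes the martingale contribution integrable; then taking expectation, using $\E[Z_t^N]\leq C$ from Lemma \ref{lemma:tildeYuniformbound}, and applying Gronwall's inequality to the resulting ODE gives
\[
\E[|Z_t^N|^2] \leq C + \frac{C}{a^2(N)N}\int_0^t \E[Z_s^N U_s]\,ds,
\]
uniformly in $t\in[0,T]$, $N\in\bb{N}$, and $\epsilon$ small; one then lets $R\to\infty$ and invokes Fatou.

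The main obstacle is controlling the cross term $\int_0^t \E[Z_s^N U_s]\,ds$, because a priori one only has an $L^2$-in-$(i,s)$ bound on $\tilde{u}^N$ and no pathwise supremum or $L^4$-type bound. The crucial observation is that the bound \eqref{eq:controlassumptions} is almost sure, hence pathwise $\int_0^T U_s\,ds\leq C$, so
\[
\int_0^t Z_s^N U_s\,ds \;\leq\; \sup_{s\leq T}Z_s^N \cdot\int_0^T U_s\,ds \;\leq\; C\sup_{s\leq T}Z_s^N,
\]
and therefore $\frac{1}{a^2(N)N}\E\bigl[\int_0^t Z_s^N U_s\,ds\bigr] \leq \frac{C}{a^2(N)N}\E[\sup_{s\leq T}Z_s^N]$. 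By Lemma \ref{lemma:ytildeexpofsup} applied to each summand, $\E[\sup_{s\leq T}Z_s^N]\leq C(\alpha)\bigl[1+\epsilon^{-\alpha}+(a^2(N)N)^{-1}\bigr]$ for any $\alpha\in(0,2)$.

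Finally, I would close the estimate using the scaling hypothesis: since $a(N)\sqrt{N}\epsilon^\rho\to\lambda\in(0,+\infty]$ with $\rho\in(0,1)$, we have $(a^2(N)N)^{-1}\leq C\epsilon^{2\rho}$ for $N$ large. Choosing $\alpha\in(0,2\rho)$, which is admissible because $2\rho\in(0,2)$, yields
\[
\frac{C}{a^2(N)N}\E\bigl[\sup_{s\leq T}Z_s^N\bigr] \leq C\bigl(\epsilon^{2\rho}+\epsilon^{2\rho-\alpha}+\epsilon^{4\rho}\bigr),
\]
which remains bounded uniformly in $\epsilon\in(0,1]$ and $N\in\bb{N}$. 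Combined with the Gronwall bound, this gives $\sup_{N,t}\E[|Z_t^N|^2]<\infty$, as required. The hardest step is handling the cross term $Z_s^N U_s$: one cannot separate it via Hölder/Young without introducing an uncontrolled fourth moment of $U_s$, and the only workable route combines the \emph{almost-sure} control bound \eqref{eq:controlassumptions} with the precise scaling $a(N)\sqrt{N}\epsilon^\rho\to\lambda$, which is exactly the source of the scaling restriction discussed in Remark \ref{remark:onthescalingofa(N)}.
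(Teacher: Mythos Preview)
Your proposal is correct and follows essentially the same approach as the paper: apply It\^o's formula to the square of $Z_t^N=\frac{1}{N}\sum_i|\tilde{Y}^{i,\epsilon,N}_t|^2$, use the dissipation from \eqref{eq:fdecayimplication} together with Lemma \ref{lemma:tildeYuniformbound} and a comparison/Gronwall argument, and handle the troublesome cross term $Z_s^N U_s$ by invoking the almost-sure control bound \eqref{eq:controlassumptions} to extract $\sup_{s\leq T}Z_s^N$, then apply Lemma \ref{lemma:ytildeexpofsup} and the scaling hypothesis $a(N)\sqrt{N}\epsilon^\rho\to\lambda$. The paper carries out the same steps (with a slightly different notational packaging of the comparison theorem), and your identification of the cross term as the crux matches Remark \ref{remark:onthescalingofa(N)}.
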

\begin{proof}
The proof is very similar to Lemma \ref{lemma:tildeYuniformbound}, but we need in addition to use the result of Lemma \ref{lemma:ytildeexpofsup}. Because of the similarities, we assume wlog that $\tau_2=g=0$ and label $\tau_1$ as $\tau$ and $\tilde{u}^{N,1}_i(s)$ and $\tilde{u}^N_i(s)$. Then for every $N\in\bb{N}$ and $t\in [0,T]$,
\begin{align*}
&\E\biggl[\biggl(\frac{1}{N}\sum_{i=1}^N|\tilde{Y}^{i,\epsilon,N}_t|^2\biggr)^2 \biggr]  = \nonumber\\
&=|\eta^{y}|^4 + \frac{1}{N}\sum_{i=1}^N \biggl\lbrace 4\int_0^t\E\biggl[\biggl(\frac{1}{N}\sum_{i=1}^N|\tilde{Y}^{i,\epsilon,N}_s|^2\biggr)\frac{1}{\epsilon^2}\biggl(f(i)\tilde{Y}^{i,\epsilon,N}_s+\tau^2(i)\biggr) \biggr]ds\\
&+4\int_0^t\E\biggl[\biggl(\frac{1}{N}\sum_{i=1}^N|\tilde{Y}^{i,\epsilon,N}_s|^2\biggr)\frac{1}{\epsilon a(N)\sqrt{N}}\tau(i)\tilde{u}^{N}_i(s)\tilde{Y}^{i,\epsilon,N}_s\biggr]ds \\
&+8\int_0^t\E\biggl[\frac{1}{\epsilon^2N}\tau^2(i)|\tilde{Y}^{i,\epsilon,N}_s|^2 \biggr]ds+\frac{4}{\epsilon}\E\biggl[\int_0^t\biggl(\frac{1}{N}\sum_{i=1}^N|\tilde{Y}^{i,\epsilon,N}_s|^2\biggr)\tau(i)\tilde{Y}^{i,\epsilon,N}_s dW^i_s \biggr]  \biggr\rbrace,
\end{align*}
where we used the initial conditions are IID. By the same method as in Lemma \ref{lemma:tildeYuniformbound}, we can see that the martingale term vanishes for each $N$ and $t$. Then we get
\begin{align*}
&\frac{d}{dt}\E\biggl[\biggl(\frac{1}{N}\sum_{i=1}^N|\tilde{Y}^{i,\epsilon,N}_t|^2\biggr)^2 \biggr]  = \frac{1}{N}\sum_{i=1}^N \biggl\lbrace 4\E\biggl[\biggl(\frac{1}{N}\sum_{i=1}^N|\tilde{Y}^{i,\epsilon,N}_t|^2\biggr)\frac{1}{\epsilon^2}\biggl(f(i)\tilde{Y}^{i,\epsilon,N}_t+\tau^2(i)\biggr) \biggr]\\
&\qquad+4\E\biggl[\biggl(\frac{1}{N}\sum_{i=1}^N|\tilde{Y}^{i,\epsilon,N}_t|^2\biggr)\frac{1}{\epsilon a(N)\sqrt{N}}\tau(i)\tilde{u}^{N}_i(t)\tilde{Y}^{i,\epsilon,N}_t\biggr] +8\E\biggl[\frac{1}{\epsilon^2N}\tau^2(i)|\tilde{Y}^{i,\epsilon,N}_t|^2 \biggr]\biggr\rbrace\\
&\leq \frac{1}{N}\sum_{i=1}^N \biggl\lbrace \E\biggl[\biggl(\frac{1}{N}\sum_{i=1}^N|\tilde{Y}^{i,\epsilon,N}_t|^2\biggr)\biggl(-\frac{\beta}{\epsilon^2}|\tilde{Y}^{i,\epsilon,N}_t|^2+\frac{2}{\epsilon^2}\tau^2(i)\biggr) \biggr]\\
&\qquad+4\E\biggl[\biggl(\frac{1}{N}\sum_{i=1}^N\tilde{Y}^{i,\epsilon,N}_t|^2\biggr)\frac{1}{\epsilon a(N)\sqrt{N}}\tau(i)\tilde{u}^{N}_i(t)\tilde{Y}^{i,\epsilon,N}_t \biggr] +8\E\biggl[\frac{1}{\epsilon^2N}\tau^2(i)|\tilde{Y}^{i,\epsilon,N}_t|^2 \biggr]\biggr\rbrace\\
&\qquad+\frac{C}{\epsilon^2}\E\biggl[\frac{1}{N}\sum_{i=1}^N|\tilde{Y}^{i,\epsilon,N}_t|^2\biggr]\\
&\leq   -\frac{\beta}{\epsilon^2}\E\biggl[\biggl(\frac{1}{N}\sum_{i=1}^N|\tilde{Y}^{i,\epsilon,N}_t|^2\biggr)^2 \biggr]+\frac{4}{\epsilon a(N)\sqrt{N}}\E\biggl[\biggl(\frac{1}{N}\sum_{i=1}^N|\tilde{Y}^{i,\epsilon,N}_t|^2\biggr)\biggl(\frac{1}{N}\sum_{i=1}^N\tau(i)\tilde{u}^{N}_i(t)\tilde{Y}^{i,\epsilon,N}_t\biggr) \biggr] \\
&\qquad+\frac{C}{\epsilon^2}\E\biggl[\frac{1}{N}\sum_{i=1}^N|\tilde{Y}^{i,\epsilon,N}_t|^2\biggr]\biggl[1+\frac{1}{N} \biggr]\\
&\leq   -\frac{\beta}{2\epsilon^2}\E\biggl[\biggl(\frac{1}{N}\sum_{i=1}^N|\tilde{Y}^{i,\epsilon,N}_t|^2\biggr)^2 \biggr]+\frac{C}{a^2(N)N}\E\biggl[\biggl(\frac{1}{N}\sum_{i=1}^N\tau(i)\tilde{u}^{N}_i(t)\tilde{Y}^{i,\epsilon,N}_t\biggr)^2 \biggr] \\
&\qquad+\frac{C}{\epsilon^2}\E\biggl[\frac{1}{N}\sum_{i=1}^N|\tilde{Y}^{i,\epsilon,N}_t|^2\biggr]\biggl[1+\frac{1}{N} \biggr].
\end{align*}
Here we again used the implication \eqref{eq:fdecayimplication} of Assumption \ref{assumption:retractiontomean} and the boundedness of $\tau$ from \ref{assumption:uniformellipticity}. Then by the comparison theorem from the proof of Lemma \ref{lemma:tildeYuniformbound}, we have for all $t\in [0,T],N\in \bb{N}$,
\begin{align*}
&\E\biggl[\biggl(\frac{1}{N}\sum_{i=1}^N|\tilde{Y}^{i,\epsilon,N}_t|^2\biggr)^2 \biggr]\leq \nonumber\\
&\leq|\eta^{y}|^4e^{-\frac{\beta}{2\epsilon^2}t}+\frac{C}{\epsilon^2}e^{-\frac{\beta}{2\epsilon^2}t}\biggl[1+\frac{1}{N} \biggr]\int_0^t \E\biggl[\frac{1}{N}\sum_{i=1}^N|\tilde{Y}^{i,\epsilon,N}_s|^2\biggr]e^{\frac{\beta}{2\epsilon^2}s} ds \\
&\qquad+ \frac{C}{a^2(N)N}e^{-\frac{\beta}{2\epsilon^2}t}\int_0^t \E\biggl[\biggl(\frac{1}{N}\sum_{i=1}^N\tau(i)\tilde{u}^{N}_i(s)\tilde{Y}^{i,\epsilon,N}_s\biggr)^2 \biggr]e^{\frac{\beta}{2\epsilon^2}s} ds\\
&\leq |\eta^{y}|^4 +\frac{C}{\epsilon^2}e^{-\frac{\beta}{2\epsilon^2}t}\biggl[1+\frac{1}{N} \biggr]\int_0^t e^{\frac{\beta}{2\epsilon^2}s} ds\sup_{s\in[0,T]}\E\biggl[\frac{1}{N}\sum_{i=1}^N|\tilde{Y}^{i,\epsilon,N}_s|^2\biggr] \\
&\qquad+ \frac{C}{a^2(N)N}e^{-\frac{\beta}{2\epsilon^2}t}\int_0^t \E\biggl[\biggl(\frac{1}{N}\sum_{i=1}^N\tau(i)\tilde{u}^{N}_i(s)\tilde{Y}^{i,\epsilon,N}_s\biggr)^2 \biggr]e^{\frac{\beta}{2\epsilon^2}s} ds\\
&\leq |\eta^{y}|^4 +C\biggl[1+\frac{1}{N} \biggr]\sup_{s\in[0,T]}\E\biggl[\frac{1}{N}\sum_{i=1}^N|\tilde{Y}^{i,\epsilon,N}_s|^2\biggr] \\
&\qquad+ \frac{C}{a^2(N)N}e^{-\frac{\beta}{2\epsilon^2}t}\int_0^t \E\biggl[\biggl(\frac{1}{N}\sum_{i=1}^N\tau(i)\tilde{u}^{N}_i(s)\tilde{Y}^{i,\epsilon,N}_s\biggr)^2 \biggr]e^{\frac{\beta}{2\epsilon^2}s} ds\\
&\leq |\eta^{y}|^4+C\biggl[1+\frac{1}{N} \biggr]\sup_{s\in[0,T]}\E\biggl[\frac{1}{N}\sum_{i=1}^N|\tilde{Y}^{i,\epsilon,N}_s|^2\biggr] \\
&\qquad+ \frac{C}{a^2(N)N}e^{-\frac{\beta}{2\epsilon^2}t}\int_0^t \E\biggl[\biggl(\frac{1}{N}\sum_{i=1}^N|\tilde{u}^{N}_i(s)|^2\biggr)\biggl(\frac{1}{N}\sum_{i=1}^N|\tilde{Y}^{i,\epsilon,N}_s|^2\biggr) \biggr]e^{\frac{\beta}{2\epsilon^2}s} ds\\
&\leq |\eta^{y}|^4 +C\biggl[1+\frac{1}{N} \biggr]\sup_{s\in[0,T]}\E\biggl[\frac{1}{N}\sum_{i=1}^N|\tilde{Y}^{i,\epsilon,N}_s|^2\biggr] \\
&\qquad+ \frac{C}{a^2(N)N}\E\biggl[\int_0^T \biggl(\frac{1}{N}\sum_{i=1}^N|\tilde{u}^{N}_i(s)|^2\biggr)ds\sup_{s\in[0,T]}\biggl(\frac{1}{N}\sum_{i=1}^N|\tilde{Y}^{i,\epsilon,N}_s|^2\biggr) \biggr]\\
&\leq |\eta^{y}|^4+C\biggl[1+\frac{1}{N} \biggr]\sup_{s\in[0,T]}\E\biggl[\frac{1}{N}\sum_{i=1}^N|\tilde{Y}^{i,\epsilon,N}_s|^2\biggr] \\
&\qquad+ \frac{C}{a^2(N)N}\E\biggl[\sup_{s\in[0,T]}\biggl(\frac{1}{N}\sum_{i=1}^N|\tilde{Y}^{i,\epsilon,N}_s|^2\biggr) \biggr]\text{ by the bound \eqref{eq:controlassumptions}} \\
&\leq C(\eta^y,\rho)[1+\frac{1}{N}+\frac{1}{a^2(N)N}+\frac{1}{a^4(N)N^2}+\frac{1}{a^2(N)N\epsilon^{\rho}}]
\end{align*}
for any $\rho\in (0,2)$ by Lemmas \ref{lemma:tildeYuniformbound} and \ref{lemma:ytildeexpofsup}, where the constant $C$ is independent of $N$. Then taking $\rho\in (0,2)$ such that such that $\epsilon^{\rho/2} a(N)\sqrt{N} \tto \lambda \in (0,\infty]$ and using $a(N)\sqrt{N}\tto \infty$, all the terms in the bound which depend on $N$ are bounded as $N\toinf$, so we get a bound independent of $N$ and $t$, and the result is proved.
\end{proof}

\section{Regularity of the Poisson Equations}\label{sec:regularityofthecellproblem}
As discussed in Remark \ref{remark:choiceof1Dparticles}, there is a current gap in the literature regarding rates of polynomial growth of derivatives of the Poisson equations used in Section \ref{sec:ergodictheoremscontrolledsystem}.
 Nevertheless, it is important to verify that the assumptions imposed on these solutions in Section \ref{subsec:notationandtopology} are non-empty. 
 For the reasons outlined in Remark \ref{remark:choiceof1Dparticles}, we handle the case of the 1D Poisson equations from Equations \eqref{eq:cellproblemold},\eqref{eq:driftcorrectorproblem} and the Multi-Dimensional Poisson Equations \eqref{eq:tildechi} and \eqref{eq:doublecorrectorproblem}, 
 separately in Subsections \ref{subsection:regularityofthe1Dpoissoneqn} and \ref{subsec:multidimpoissonequation} below. In Subsection \ref{subsec:suffconditionsoncoefficients} we provide specific examples where the Assumptions in Section \ref{subsec:notationandtopology} hold. 
\subsection{Results for the 1-Dimensional Poisson Equation}\label{subsection:regularityofthe1Dpoissoneqn}
Throughout this subsection we assume \ref{assumption:uniformellipticity} and \ref{assumption:retractiontomean}. Recall the frozen generator $L_{x,\mu}$ from Equation \eqref{eq:frozengeneratormold}, the invariant measure $\pi$ from Equation \eqref{eq:invariantmeasureold}, the multi-index derivative notation and associated spaces of functions from Definitions \ref{def:multiindexnotation} and \ref{def:lionsderivativeclasses}, and the definition of $a$ from Equation \eqref{eq:frozengeneratormold}.
\begin{lemma}\label{lemma:Ganguly1DCellProblemResult}
 Consider $B:\R\times\R\times\bb{P}_2(\R)\tto \R$ continuous such that
\begin{align*}
\int_{\R}B(x,y,\mu)\pi(dy;x,\mu)=0,\forall x\in \R,\mu\in \mc{P}_2(\R)
\end{align*}
and $|B(x,y,\mu)|=O(|y|^{q_{B}})$ for $q_{B}\in\R$ uniformly in $x,\mu$ as $|y|\tto\infty$.
Then there exists a unique classical solution $u:\R\times\R\times\mc{P}_2(\R)\tto \R$ to
\begin{align*}
L_{x,\mu}u(x,y,\mu)=B(x,y,\mu)
\end{align*}
such that $u$ is continuous in $(x,y,\bb{W}_2)$, $\int_{\R}u(x,y,\mu)\pi(dy;x,\mu)=0$, and $u$ has at most polynomial growth as $|y|\tto \infty$.

In addition,
\begin{align*}
|u(x,y,\mu)|&=O(|y|^{q_{B}}) \text{ for }q_{B}\neq 0; \text{ if }q_{B}=0,\text{ then }|u(x,y,\mu)|=O(\ln(|y|)\\
|u_y(x,y,\mu)|&=O(|y|^{q_{B}-1}),\quad |u_{yy}(x,y,\mu)|=O(|y|^{q_{B}})
\end{align*}
as $|y|\toinf$ uniformly in $x,y,\mu$.

Furthermore if $B(x,y,\mu)$ is Lipschitz continuous in $y$ uniformly in $x,\mu$ (so that necessarily $q_B \leq 1$), then so are $u,u_y,u_{yy}$.
\end{lemma}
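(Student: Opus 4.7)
The plan is to exploit the fact that for a one‑dimensional elliptic operator $L_{x,\mu}$ with unique invariant density $\pi(\cdot;x,\mu)$, the Poisson equation admits a fully explicit representation by double integration. The identity $L^*_{x,\mu}\pi=0$ rewrites (after one integration, using that $\pi$ and $a\pi$ vanish at infinity under assumptions \ref{assumption:uniformellipticity}--\ref{assumption:retractiontomean}) as $(a\pi)'=f\pi$. Multiplying $L_{x,\mu}u=B$ by $\pi$ then yields $(a\pi u')'=B\pi$, and the centering condition $\int_\R B(x,\cdot,\mu)\pi(\cdot;x,\mu)=0$ makes the single‐integrated expression vanish at both $\pm\infty$, so
\[
u'(x,y,\mu)=\frac{1}{a(x,y,\mu)\pi(y;x,\mu)}\int_{-\infty}^{y} B(x,s,\mu)\pi(s;x,\mu)\,ds
=-\frac{1}{a(x,y,\mu)\pi(y;x,\mu)}\int_{y}^{\infty} B(x,s,\mu)\pi(s;x,\mu)\,ds.
\]
Integrating once more in $y$ and choosing the constant so that $\int_\R u\,d\pi=0$ produces a candidate $u$. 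Classical regularity for 1D linear ODEs (together with continuity of $f,a,B$) gives the required $C^2$ property. Uniqueness follows from the usual maximum/ergodic argument: any two solutions with prescribed mean and polynomial growth have difference $v$ solving $L_{x,\mu}v=0$ with $\int v\,d\pi=0$, and the one‑dimensional ergodicity afforded by \ref{assumption:retractiontomean} then forces $v\equiv 0$.

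Next I would establish the growth bounds by analyzing the explicit formula asymptotically. Assumption \ref{assumption:retractiontomean} together with the explicit 1D representation of $\pi$ (see Equation \eqref{eq:explicit1Dpi}) gives Gaussian‑type decay $\pi(y;x,\mu)\asymp e^{-cy^2}$ uniformly in $(x,\mu)$, and the uniform ellipticity in \ref{assumption:uniformellipticity} allows us to pass this decay back and forth between $\pi$ and $a\pi$. If $|B(x,y,\mu)|=O(|y|^{q_B})$, then by the standard Laplace‑type estimate
\[
\int_{y}^{\infty}|s|^{q_B}\pi(s;x,\mu)\,ds = O\!\bigl(|y|^{q_B-1}\pi(y;x,\mu)\bigr)\quad\text{as }|y|\to\infty,
\]
uniformly in $(x,\mu)$. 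Dividing by $a(x,y,\mu)\pi(y;x,\mu)$ yields $|u'|=O(|y|^{q_B-1})$, and then integrating in $y$ yields $|u|=O(|y|^{q_B})$ when $q_B\neq 0$ and $|u|=O(\log|y|)$ when $q_B=0$. For the second derivative, I would bypass further integration and read $u''$ directly off the PDE: $u''=(B-fu')/a$, which, using the near‑OU structure $f=-\kappa y+\eta$ from \eqref{eq:fnearOU} and the bound on $u'$ just obtained, gives $|u''|=O(|y|^{q_B})$.

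Continuity of $u,u',u''$ in $(x,y,\mathbb{W}_2)$ is then a straightforward consequence of dominated convergence applied to the explicit formula: the integrand depends continuously on $(x,\mu)$ thanks to the stated joint continuity of $f,a,B$ and of $\pi$ (the latter being continuous in $(x,\mathbb{W}_2)$ via its explicit exponential form in Equation \eqref{eq:explicit1Dpi}), and the Gaussian decay provides an integrable dominating function uniformly in $(x,\mu)$ on any $\mathbb{W}_2$‑compact set. Finally, for the Lipschitz part, suppose $B$ is Lipschitz in $y$ uniformly in $(x,\mu)$, so necessarily $q_B\leq 1$. Then $y\mapsto B(x,y,\mu)\pi(y;x,\mu)$ differs, after a $y$‑shift, by a Lipschitz factor with Gaussian weight; writing
\[
u'(x,y+h,\mu)-u'(x,y,\mu) = \frac{1}{a(x,y,\mu)\pi(y;x,\mu)}\int_{-\infty}^{y}\bigl[B(x,s+h,\mu)-B(x,s,\mu)\bigr]\pi(s+h;x,\mu)\,ds+(\text{lower order}),
\]
and bounding the bracket by the Lipschitz constant of $B$ shows that $u_y$ is Lipschitz; the same scheme applied to the PDE expression for $u''$ transfers Lipschitz continuity to $u_{yy}$.

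The main technical obstacle is the asymptotic analysis controlling the growth of $u'$: we must show not just that the two representations of $u'$ (integrating from $-\infty$ or from $+\infty$) agree (this is exactly the centering hypothesis), but that the ratio of an $O(|y|^{q_B-1}\pi)$ integral by $a\pi$ is genuinely $O(|y|^{q_B-1})$ \emph{uniformly} in $x$ and $\mu$. This is where the uniform ellipticity in \ref{assumption:uniformellipticity} and the uniform Gaussian‑type tail control on $\pi$ coming from the dissipativity condition \eqref{eq:rocknertyperetractiontomean} of \ref{assumption:retractiontomean} are essential; everything else is a careful bookkeeping of the explicit 1D formulas.
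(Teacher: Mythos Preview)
Your argument is correct in substance, but it takes a noticeably different route from the paper. The paper does not prove this lemma from scratch at all: it simply invokes Proposition~A.4 of \cite{GS} (noting that, in the present setting, their Condition~2.1(i) holds with $\alpha=1$ and their Assumption~A.3 with $\theta=1$) to obtain existence, uniqueness, and the stated growth bounds, and then cites Theorem~9.19 of \cite{GT} (an interior Schauder-type estimate) for the Lipschitz-in-$y$ transfer. What you have written is essentially a self-contained reconstruction of the one-dimensional argument underlying \cite{GS}: the explicit double-integration formula, the Laplace/Mill's ratio asymptotics for $\int_y^\infty |s|^{q_B}\pi(s)\,ds$, and the reading of $u''$ off the equation. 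The benefit of your approach is that it is transparent and does not rely on the reader having \cite{GS} in hand; the cost is that the uniformity in $(x,\mu)$ of the Gaussian-type two-sided bounds on $\pi$ must be checked carefully (as you note at the end), whereas the paper offloads this to the cited reference.

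One place where your sketch is weaker than the rest is the Lipschitz step. The shifted-integral identity you write for $u'(x,y+h,\mu)-u'(x,y,\mu)$ is not literally correct as stated (the prefactor $1/(a\pi)$ and the upper limit both move with $h$, and the density $\pi$ is not translation-invariant), so the ``lower order'' remainder would need genuine work to control uniformly in $y$. This is fixable---either by differentiating the explicit formula once more and bounding term by term, or by differentiating the ODE itself---but the paper sidesteps the issue entirely by invoking elliptic regularity (Theorem~9.19 in \cite{GT}) together with the assumed Lipschitz continuity of $a$ and $f$, which is both cleaner and avoids tracking constants through the integral representation.
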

\begin{proof}
This follows by Proposition A.4 in \cite{GS}, since in our setting Condition 2.1 (i) holds with $\alpha =1$ and assumption A.3 holds with $\theta =1$. The statement about Lipschitz continuity of $u$ and its derivatives follows from the Lipschitz continuity of $a,f$ under assumptions \ref{assumption:uniformellipticity} and \ref{assumption:retractiontomean} and Theorem 9.19 in \cite{GT}.
\end{proof}

\begin{lemma}\label{lemma:derivativetransferformulas}
Consider $h:\R\times\R\times\mc{P}_2(\R)\tto \R$. Suppose $h$ in jointly continuous in $(x,y,\bb{W}_2)$ and grows at most polynomially in $y$ uniformly in $x,\mu$. Then
\begin{align}\label{eq:muLipschitztransferformula}
&\int_{\R}h(x,y,\mu_1)\pi(dy;x,\mu_1) -\int_{\R}h(x,y,\mu_2)\pi(dy;x,\mu_2) =  \nonumber\\
&\qquad=\int_{\R} h(x,y,\mu_1)-h(x,y,\mu_2)-[\mc{L}_{x,\mu_1}-\mc{L}_{x,\mu_2}]v(x,y,\mu_2)\pi(dy;x,\mu_1)
\end{align}
for all $x\in \R,\mu_1,\mu_2\in \mc{P}_2(\R)$,
and
\begin{align}\label{eq:xLipschitztransferformula}
&\int_{\R}h(x_1,y,\mu)\pi(dy;x_1,\mu) -\int_{\R}h(x_2,y,\mu)\pi(dy;x_2,\mu)  =  \nonumber\\
&\qquad=  \int_{\R} h(x_1,y,\mu)-h(x_2,y,\mu)-[\mc{L}_{x_1,\mu}-\mc{L}_{x_2,\mu}]v(x_2,y,\mu)\pi(dy;x_1,\mu)
\end{align}
for all $x_1,x_2\in \R,\mu\in \mc{P}_2(\R)$.

Here $v$ solves
\begin{align}\label{eq:poissoneqfortransferformulas}
\mc{L}_{x,\mu}v(x,y,\mu)& = h(x,y,\mu)- \int_\R h(x,\bar{y},\mu)\pi(d\bar{y};x,\mu).
\end{align}

Consider also $\mc{L}^{(k,j,\bm{\alpha}(\bm{p}_k))}_{x,\mu}[z_{\bm{p}_k}$ the differential operator acting on $\phi \in C^2_b(\R)$ by
\begin{align*}
\mc{L}^{(k,j,\bm{\alpha}(\bm{p}_k))}_{x,\mu}[z_{\bm{p}_k}]\phi(y)=D^{(k,j,\bm{\alpha}(\bm{p}_k))}f(x,y,\mu)[z_{\bm{p}_k}]\phi'(y)+D^{(k,j,\bm{\alpha}(\bm{p}_k))}a(x,y,\mu)[z_{\bm{p}_k}]\phi''(y).
\end{align*}

Assume that for some complete collection of multi-indices $\bm{\zeta}$, that $h,a,f\in \mc{M}_{p}^{\bm{\zeta}}(\R\times\R\times \mc{P}_2(\R))$, and that $v_{y},v_{yy}\in \mc{M}_{p}^{\bm{\zeta}'}(\R^d\times\R^d\times \mc{P}_2(\R^d))$, where $\bm{\zeta}'$ is obtained from removing any multi-indices which contain the maximal first and second values from $\bm{\zeta}$. 
Then for any multi-index $(n,l,\bm{\beta})\in\bm{\zeta}$:
\begin{align}\label{eq:derivativetransferformula}
&D^{(n,l,\bm{\beta})}\int_{\R} h(x,y,\mu)\pi(dy;x,\mu)[z_1,...,z_n]= \int_{\R}\left(D^{(n,l,\bm{\beta})}h(x,y,\mu)[z_1,...,z_n] - \right.\\
&\quad\left.-\sum_{k=0}^n\sum_{j=0}^l\sum_{\bm{p}_k} C_{(\bm{p}_k,j,n,l)} \mc{L}^{(k,j,\bm{\alpha}(\bm{p}_k))}_{x,\mu}[z_{\bm{p}_k}] D^{(n-k,l-j,\bm{\alpha}(\bm{p}'_{n-k}))}v(x,y,\mu)[z_{\bm{p}'_{n-k}}]\right) \pi(dy;x,\mu)\nonumber
\end{align}
where here $\bm{p}_k\in \binom{\br{1,...,n}}{k}$ with $\bm{p}'_{n-k} = \br{1,...,n} \setminus \bm{p}_k$, for $\bm{p}_k = \br{p_1,...,p_k}$, the argument $[z_{\bm{p}_k}]$ denotes $[z_{p_1},...,z_{p_k}]$, and $\bm{\alpha}(\bm{p}_k)\in \bb{N}^k$ is determined by $\bm{\beta}=(\beta_1,...,\beta_n)$ by $\bm{\alpha}(\bm{p}_k) = (\alpha_1,...,\alpha_k)$, $\alpha_j = \beta_{p_j},j\in\br{1,...,k}$, and similarly for $\bm{\alpha}(\bm{p}'_{n-k})$. Also here $C_{(\bm{p}_0,0,n,l)}=0$, and $C_{(\bm{p}_k,j,n,l)}>0,C_{(\bm{p}_k,j,n,l)}\in\bb{N}$ for $(k,j)\in \bb{N}^2, (k,j)\neq (0,0)$, (see Remark \ref{Rem:C_Def} for the exact definition of these constants).


The same result holds replacing $D^{(n,l,\bm{\beta})}$ with $\bm{\delta}^{(n,l,\bm{\beta})}$ if in addition we assume $h,a,f\in \mc{M}_{\bm{\delta},p}^{\bm{\zeta}}(\R\times\R\times \mc{P}_2(\R))$,$v_{y},v_{yy}\in \mc{M}_{\bm{\delta},p}^{\bm{\zeta}'}(\R^d\times\R^d\times \mc{P}_2(\R^d))$. In this setting we will denote by $\mc{L}^{(k,j,\bm{\alpha}(\bm{p}_k)),\bm{\delta}}_{x,\mu}[z_{\bm{p}_k}]$ is the differential operator acting on $\phi \in C^2_b(\R)$ by
\begin{align*}
\mc{L}^{(k,j,\bm{\alpha}(\bm{p}_k)),\bm{\delta}}_{x,\mu}[z_{\bm{p}_k}]\phi(y)=\bm{\delta}^{(k,j,\bm{\alpha}(\bm{p}_k))}f(x,y,\mu)[z_{\bm{p}_k}]\phi'(y)+\bm{\delta}^{(k,j,\bm{\alpha}(\bm{p}_k))}a(x,y,\mu)[z_{\bm{p}_k}]\phi''(y).
\end{align*}

\end{lemma}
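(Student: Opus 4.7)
The plan is to establish the three identities in order, with the two difference formulas serving as warm-ups for the general derivative-transfer formula. For \eqref{eq:muLipschitztransferformula} and \eqref{eq:xLipschitztransferformula}, I would rewrite the Poisson equation \eqref{eq:poissoneqfortransferformulas} at $(x,\mu_2)$ as
\begin{equation*}
h(x,y,\mu_2) = \mc{L}_{x,\mu_2}v(x,y,\mu_2) + \int_\R h(x,\bar y,\mu_2)\pi(d\bar y;x,\mu_2),
\end{equation*}
integrate both sides against $\pi(dy;x,\mu_1)$, and use the invariance identity $\int_\R \mc{L}_{x,\mu_1} v(x,y,\mu_2)\pi(dy;x,\mu_1)=0$ — valid because $v(x,\cdot,\mu_2)$, $v_y$, $v_{yy}$ grow at most polynomially in $y$ by Lemma \ref{lemma:Ganguly1DCellProblemResult}, while $\pi(\cdot;x,\mu_1)$ has the super-polynomial moment control recorded below \eqref{eq:invariantmeasureold}. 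This produces
\begin{equation*}
\int_\R h(x,y,\mu_2)\pi(dy;x,\mu_1) - \int_\R h(x,y,\mu_2)\pi(dy;x,\mu_2) = -\int_\R [\mc{L}_{x,\mu_1}-\mc{L}_{x,\mu_2}]v(x,y,\mu_2)\pi(dy;x,\mu_1),
\end{equation*}
and adding $\int_\R [h(x,y,\mu_1)-h(x,y,\mu_2)]\pi(dy;x,\mu_1)$ to both sides yields \eqref{eq:muLipschitztransferformula}. The proof of \eqref{eq:xLipschitztransferformula} is identical after swapping the roles of $x$ and $\mu$.

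For the general derivative-transfer formula \eqref{eq:derivativetransferformula}, the guiding idea is to \emph{never} differentiate $\pi(dy;x,\mu)$ directly, but instead to differentiate the Poisson equation $\mc{L}_{x,\mu}v = h - \bar h$, where $\bar h(x,\mu) := \int_\R h(x,\bar y,\mu)\pi(d\bar y;x,\mu)$, and read off $D^{(n,l,\bm\beta)}\bar h$ from the resulting solvability condition. Applying $D^{(n,l,\bm\beta)}$ to both sides and using the multi-index Leibniz rule for the differential operator $\mc{L}_{x,\mu}$ — whose coefficients $f,a$ carry the $(x,\mu)$-dependence — yields
\begin{equation*}
\sum_{k=0}^n\sum_{j=0}^l\sum_{\bm p_k} C_{(\bm p_k,j,n,l)}\,\mc{L}^{(k,j,\bm\alpha(\bm p_k))}_{x,\mu}[z_{\bm p_k}]\,D^{(n-k,l-j,\bm\alpha(\bm p'_{n-k}))}v(x,y,\mu)[z_{\bm p'_{n-k}}] = D^{(n,l,\bm\beta)}h - D^{(n,l,\bm\beta)}\bar h,
\end{equation*}
where $C_{(\bm p_k,j,n,l)}$ is the multinomial-type constant counting how a fixed subset $\bm p_k$ of the $n$ measure-slots together with $j$ of the $l$ spatial-slots are assigned to the coefficients of $\mc{L}_{x,\mu}$ rather than to $v$. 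Integrating against $\pi(dy;x,\mu)$, the $(k,j,\bm p_k)=(0,0,\emptyset)$ contribution is $\mc{L}_{x,\mu}D^{(n,l,\bm\beta)}v$, which vanishes on integration by invariance of $\pi$ (justifying the convention $C_{(\bm p_0,0,n,l)}=0$); meanwhile $\bar h$ pulls out of the integral because it is $y$-independent. Solving for $D^{(n,l,\bm\beta)}\bar h$ delivers \eqref{eq:derivativetransferformula}.

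The principal technical obstacle is justifying, at each step, the interchange of $D^{(n,l,\bm\beta)}$ with $\int \cdot\,\pi(dy;x,\mu)$ together with the invariance identity $\int_\R \mc{L}_{x,\mu} D^{(n,l,\bm\beta)}v(x,y,\mu)\pi(dy;x,\mu) = 0$. This is precisely where the hypotheses $h,f,a\in \mc{M}^{\bm\zeta}_p$ and $v_y,v_{yy}\in\mc{M}^{\bm\zeta'}_p$ enter: every mixed derivative appearing in the Leibniz expansion is bounded by a polynomial in $y$ uniformly in $(x,\mu,z_1,\dots,z_n)$, which together with the uniform moment bounds on $\pi(\cdot;x,\mu)$ makes each integrand absolutely integrable and legitimates each exchange by dominated convergence. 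The reason $\bm\zeta'$ omits multi-indices of maximal first and second entries is exactly that the generator $\mc{L}_{x,\mu}$ contributes two extra $y$-derivatives of $v$ inside the integral, so the inductive-style identity requires one fewer $(x,\mu)$-order of $v_y,v_{yy}$ than of $\bar h$ itself. Finally, the linear-functional-derivative statement follows by the verbatim same argument, with $D$ replaced by $\bm\delta$ throughout and the Leibniz rule for $\bm\delta$ — which is formally identical to that for Lions derivatives — invoked in place of the one used above.
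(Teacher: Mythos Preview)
Your proposal is correct and follows essentially the same route as the paper: the paper defers the Lions-derivative case entirely to Lemma~A.4 of \cite{BezemekSpiliopoulosAveraging2022}, whose argument is exactly the one you describe---differentiate the Poisson equation \eqref{eq:poissoneqfortransferformulas}, expand $D^{(n,l,\bm\beta)}(\mc L_{x,\mu}v)$ by Leibniz, integrate against $\pi$, and use invariance to kill the $(k,j)=(0,0)$ term---carried out by induction on $(n,l)$. Your direct proofs of \eqref{eq:muLipschitztransferformula}--\eqref{eq:xLipschitztransferformula} via ``write $\mc L_{x,\mu_2}=\mc L_{x,\mu_1}-[\mc L_{x,\mu_1}-\mc L_{x,\mu_2}]$ and use invariance'' are precisely the base-case computations underlying that induction.

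The one place the paper adds something you gloss over is the linear-functional-derivative version: you assert that the Leibniz rule for $\bm\delta$ is ``formally identical'' to that for Lions derivatives, but the paper points out this is \emph{not} obvious from Definition~\ref{def:LinearFunctionalDerivative} and instead invokes Propositions~5.44/5.51 and Remark~5.47 of \cite{CD} to transfer the product/chain rules from $\partial_\mu$ to $\frac{\delta}{\delta m}$ via the relation $\partial_\mu u = \partial_z\frac{\delta}{\delta m}u$, noting that the Lipschitz hypothesis needed for Proposition~5.51 is already built into $\mc M_p^{\bm\zeta}$. This is a genuine, if small, technical point worth flagging in your write-up.
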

\begin{proof}
The proofs of (\ref{eq:muLipschitztransferformula}), (\ref{eq:xLipschitztransferformula}) and of \eqref{eq:derivativetransferformula} for the Lions derivatives is the content of Lemma A.4 in \cite{BezemekSpiliopoulosAveraging2022}.

For the linear functional derivatives, we can use the exact same proof as in Lemma A.4 of \cite{BezemekSpiliopoulosAveraging2022}. Though it is not immediately obvious from the definition of the linear functional derivative that standard properties of derivatives such as chain and product rule apply, we can use Proposition 5.44/Remark 5.47 along with the representation (5.50) from Proposition 5.51 in \cite{CD} to see that these properties are inherited from the Lions derivative (which is defined via lifting and using a Fr\'echet derivative). Note that the needed uniform in $\mu$ Lipschitz continuity assumption for the Lions derivatives of $a,b,h$ needed for Proposition 5.51 is already implied by the definition of $\mc{M}_{p}^{\bm{\zeta}}(\R\times\R\times \mc{P}_2(\R))$. 

Then we get
\begin{align*}
\frac{\delta}{\delta m}\int_{\R}h(x,y,\mu)\pi(y;x,\mu)dy[z]& = \int_{\R} \frac{\delta}{\delta m}h(x,y,\mu)[z]- \mc{L}^{(1,0,0),\bm{\delta}}_{x,\mu}[z]v(x,y,\mu)\pi(dy;x,\mu),
\end{align*}
for all $x,z\in\R,\mu\in \mc{P}_2$, and can induct on $l,n$ in the same way as is done in Lemma A.4 of \cite{BezemekSpiliopoulosAveraging2022}. The details are omitted for brevity.

\end{proof}

\begin{remark}\label{Rem:C_Def}
The non-negative integers $C_{(\bm{p}_k,j,n,l)}$ in the statement of Lemma \ref{lemma:derivativetransferformulas} can be iteratively computed according to the following rules:
\begin{align*}
C_{(\bm{p}_0,0,0,0)}&=0
\end{align*}
To go up in $l$ (taking an $x$ derivative), we have for any $l,n\in\bb{N}$, $\bb{N}\ni j\leq l+1$, $\bb{N}\ni k\leq n$, and $\bm{p}_k\in\binom{\br{1,...,n}}{k}$:
\begin{align*}
C_{(\bm{p}_k,j,n,l+1)}& =
\begin{cases}
C_{(\bm{p}_k,l,n,l)}, &\text{ if }j=l+1\\
C_{(\bm{p}_k,0,n,l)}, &\text{ if }j=0\\
C_{(\bm{p}_k,1,n,l)}+1, &\text{ if }j=1 \\
C_{(\bm{p}_k,j-1,n,l)} +C_{(\bm{p}_k,j,n,l)}, &\text{ otherwise}\\
\end{cases}
\end{align*}

To go up in $n$ (taking a measure derivative), we have for any $l,n\in\bb{N}$, $\bb{N}\ni j\leq l$, $\bb{N}\ni k\leq n+1$, and $\bm{p}_k\in\binom{\br{1,...,n+1}}{k}$
\begin{align*}
C_{(\bm{p}_k,j,n+1,l)}& =
\begin{cases}
C_{(\br{1,...,n},j,n,l)}&\text{ if }k=n+1\\
C_{(\bm{p}_0,j,n,l)}&\text{ if }k=0\\
\1_{\bm{p}_1 = \br{n+1}}+C_{(\bm{p}_1,j,n,l)}\1_{\bm{p}_1 \neq  \br{n+1}}&\text{ if }k=1\\
C_{(\bm{p}_{k}\setminus \br{n+1},j,n,l)} \1_{\br{n+1}\in\bm{p}_{k} } +C_{(\bm{p}_k,j,n,l)}\1_{\br{n+1}\not\in\bm{p}_{k}}, &\text{ otherwise}.\\
\end{cases}
\end{align*}
\end{remark}
\begin{lemma}\label{lemma:explicitrateofgrowthofderivativesinparameters1D}
{}Consider $B:\R\times\R\times\bb{P}_2(\R)\tto \R$ continuous such that
\begin{align*}
\int_{\R}B(x,y,\mu)\pi(dy;x,\mu)=0,\forall x\in \R,\mu\in \mc{P}_2(\R).
\end{align*}

Suppose that for some complete collection of multi-indices $\bm{\zeta}$ that $B,a,f\in \mc{M}_{p}^{\bm{\zeta}}(\R\times\R\times \mc{P}_2(\R))$.
Then for the unique classical solution $u:\R\times\R\times\mc{P}_2(\R)\tto \R$ to
\begin{align*}
L_{x,\mu}u(x,y,\mu)=B(x,y,\mu)
\end{align*}
such that $u$ is continuous in $(x,y,\bb{W}_2)$, $\int_{\R}u(x,y,\mu)\pi(dy;x,\mu)=0$, and $u$ has at most polynomial growth as $|y|\tto \infty$ (which exists by Lemma \ref{lemma:Ganguly1DCellProblemResult}),
\begin{enumerate}
\item $u,u_y,u_{yy}\in \mc{M}_{p}^{\bm{\zeta}}(\R\times\R\times \mc{P}_2(\R))$.

\item If $B,a,f\in \mc{M}_{\bm{\delta},p}^{\bm{\zeta}}(\R\times\R\times \mc{P}_2(\R))\cap \mc{M}_{p}^{\bm{\zeta}}(\R\times\R\times \mc{P}_2(\R))$, then $u,u_y,u_{yy}\in \mc{M}_{\bm{\delta},p}^{\bm{\zeta}}(\R\times\R\times \mc{P}_2(\R))$.

\item If $B,a,f\in \mc{M}_{p,L}^{\bm{\zeta}}(\R\times\R\times \mc{P}_2(\R))$, then $u,u_y,u_{yy}\in \mc{M}_{p,L}^{\bm{\zeta}}(\R\times\R\times \mc{P}_2(\R))$.
\end{enumerate}

Moreover, if we suppose that for all multi-indices $(n,l,\bm{\beta})\in \bm{\zeta}$, $q_f(n,l,\bm{\beta})\leq 1$ and $q_a(n,l,\bm{\beta})\leq 0$ (using here the notation of \eqref{eq:newqnotation}), we have control on the growth rate of the derivatives of $u$ in terms of those of $B$. In particular, for any $(n,l,\bm{\beta})\in \bm{\zeta}$:
\begin{align*}
q_u(n,l,\bm{\beta}) \leq \max\br{q_{B}(k,j,\bm{\alpha}(k)):\alpha(k)\in \binom{\bm{\beta}}{k},k\leq n,j\leq l},
\end{align*}%

when the right hand side is nonzero, and the corresponding term grows at most like $\ln(|y|)$ as $|y|\toinf$ when the left hand side is zero. In addition, $q_{u_y}(n,l,\bm{\beta}) \leq q_{u}(n,l,\bm{\beta})-1$, and $q_{u_{yy}}(n,l,\bm{\beta}) \leq  q_{u}(n,l,\bm{\beta})$, for all $(n,l,\bm{\beta})\in\bm{\zeta}$. 
\end{lemma}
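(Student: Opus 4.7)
The plan is to prove all four assertions simultaneously by strong induction on the multi-indices in $\bm{\zeta}$, ordered so that $(k,j,\bm{\alpha})\prec (n,l,\bm{\beta})$ whenever $k\le n$, $j\le l$, $\bm{\alpha}\in\binom{\bm{\beta}}{k}$ entry-wise, with at least one inequality strict. Completeness of $\bm{\zeta}$ (Definition \ref{def:completemultiindex}) ensures that every multi-index appearing strictly below $(n,l,\bm{\beta})$ is itself in $\bm{\zeta}$, so the induction is well-founded. The base case $(0,0,0)$ is immediate: Lemma \ref{lemma:Ganguly1DCellProblemResult} produces a unique classical solution $u$ with the claimed joint continuity, the bound $|u|=O(|y|^{q_B})$ (or $O(\ln|y|)$ when $q_B=0$), the bounds $|u_y|=O(|y|^{q_B-1})$ and $|u_{yy}|=O(|y|^{q_B})$, and the corresponding Lipschitz transfer when $B$ is Lipschitz. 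The linear-functional-derivative statement (2) is in the same way reduced to an application of the analog of Lemma \ref{lemma:Ganguly1DCellProblemResult} to the $\bm{\delta}$-derivative after the inductive step goes through.

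For the inductive step at $(n,l,\bm{\beta})\in\bm{\zeta}$, the key manipulation is to differentiate the identity $L_{x,\mu}u=B$ using the Leibniz rule underlying \eqref{eq:derivativetransferformula}. Isolating the $(k,j)=(0,0)$ term, the candidate derivative $w(y) := D^{(n,l,\bm{\beta})}u(x,y,\mu)[z_1,\dots,z_n]$ must satisfy the Poisson equation
\begin{align*}
L_{x,\mu} w(y) \;=\; D^{(n,l,\bm{\beta})}B(x,y,\mu)[\vec{z}]\;-\!\!\!\sum_{\substack{(k,j,\bm{p}_k):\\(k,j)\neq (0,0)}} \!\!\!\!C_{(\bm{p}_k,j,n,l)}\,\mc{L}^{(k,j,\bm{\alpha}(\bm{p}_k))}_{x,\mu}[z_{\bm{p}_k}]\,D^{(n-k,l-j,\bm{\alpha}(\bm{p}'_{n-k}))}u(x,y,\mu)[z_{\bm{p}'_{n-k}}].
\end{align*}
Every derivative of $u$, $u_y$, $u_{yy}$ on the right-hand side is strictly lower in the ordering $\prec$, hence is controlled by the inductive hypothesis; the derivatives of $B$, $a$, $f$ lie in the correct class by hypothesis. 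The centering requirement $\int_\R[\text{RHS}]\,\pi(dy;x,\mu)=0$ needed to apply Lemma \ref{lemma:Ganguly1DCellProblemResult} is not an additional constraint: it is precisely the content of \eqref{eq:derivativetransferformula} applied to $0 = \int_\R B(x,y,\mu)\pi(dy;x,\mu)$. Lemma \ref{lemma:Ganguly1DCellProblemResult} then produces a unique continuous solution $w$ which must coincide with $D^{(n,l,\bm{\beta})}u$, closing parts (1) and (3); for part (2) one repeats verbatim with $\bm{\delta}^{(n,l,\bm{\beta})}$ in place of $D^{(n,l,\bm{\beta})}$, using the analogous transfer formula from Lemma \ref{lemma:derivativetransferformulas}.

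The growth-rate claim uses the assumptions $q_f(\cdot)\le 1$, $q_a(\cdot)\le 0$ in an essential way. Because $\partial_y$ drops the polynomial growth rate by one while $\partial_{yy}$ preserves it, each summand $\mc{L}^{(k,j,\bm{\alpha}(\bm{p}_k))}_{x,\mu}[z_{\bm{p}_k}]D^{(n-k,l-j,\bm{\alpha}(\bm{p}'_{n-k}))}u$ grows at most like $|y|^{q_u(n-k,l-j,\bm{\alpha}(\bm{p}'_{n-k}))}$ (the $+1$ from $q_f\le 1$ being cancelled by the $-1$ from $\partial_y$, and $q_a\le 0$ contributing nothing). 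By the inductive hypothesis these rates are all bounded by $\max\{q_B(k',j',\bm{\alpha}'(k')):\bm{\alpha}'(k')\in\binom{\bm{\beta}}{k'},k'\le n,j'\le l\}$, and the inhomogeneity $D^{(n,l,\bm{\beta})}B$ itself grows at rate $q_B(n,l,\bm{\beta})$, which is dominated by the same maximum. Invoking the pointwise bound in Lemma \ref{lemma:Ganguly1DCellProblemResult} (together with the $\ln|y|$ case when the maximum vanishes) yields the stated bound on $q_u(n,l,\bm{\beta})$, and the stated bounds $q_{u_y}\le q_u-1$, $q_{u_{yy}}\le q_u$ come from the same lemma.

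The main obstacle is bookkeeping rather than analysis: one must verify that the ordering $\prec$ is compatible with the Leibniz expansion (every term on the right really is strictly lower), that the completeness assumption on $\bm{\zeta}$ in Definition \ref{def:completemultiindex} delivers all the sub-multi-indices $(k',j',\bm{\alpha}')$ that appear, and that the growth-rate accounting remains tight through all branches of the recursion. Once these combinatorial points are in place, every analytic step is a direct application of Lemmas \ref{lemma:Ganguly1DCellProblemResult} and \ref{lemma:derivativetransferformulas}, and the Lipschitz statement (3) is automatic because Lipschitz continuity in $y$ is preserved by each of the operations involved and inherited by the solution via the final sentence of Lemma \ref{lemma:Ganguly1DCellProblemResult}.
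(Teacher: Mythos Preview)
Your argument for parts (1), (2), and the growth-rate statement follows the same inductive scheme as the paper: differentiate the Poisson equation, observe that the resulting inhomogeneity is centered via the transfer formula \eqref{eq:derivativetransferformula}, and invoke Lemma \ref{lemma:Ganguly1DCellProblemResult}. One small point you gloss over is why the unique solution $w$ produced by Lemma \ref{lemma:Ganguly1DCellProblemResult} actually equals $D^{(n,l,\bm{\beta})}u$; the paper handles existence of the derivative a priori via the explicit integral representation \eqref{eq:explicit1Dpi}, from which differentiability in the parameters is immediate, and then identifies it with $w$ by uniqueness.

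There is, however, a genuine gap in your treatment of part (3). The class $\mc{M}_{p,L}^{\bm{\zeta}}$ demands Lipschitz continuity of each $D^{(n,l,\bm{\beta})}u(\cdot,y,\cdot)$ in the \emph{parameters} $(x,z_1,\dots,z_n,\mu)$, with Lipschitz constant allowed to grow polynomially in $y$; it is not a statement about Lipschitz continuity in $y$. The final sentence of Lemma \ref{lemma:Ganguly1DCellProblemResult} only transfers $y$-Lipschitzness of the inhomogeneity to $u,u_y,u_{yy}$, so it cannot close (3). The difficulty is that varying $\mu$ (or $x$) changes the operator $L_{x,\mu}$ itself, so the solution map is not a fixed bounded operator applied to the inhomogeneity. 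The paper's argument is different: it writes $u(x,\cdot,\mu_1)-u(x,\cdot,\mu_2)$ as the solution of a new Poisson equation with operator $L_{x,\mu_1}$ and inhomogeneity $B(\cdot,\mu_1)-B(\cdot,\mu_2)-[L_{x,\mu_1}-L_{x,\mu_2}]u(\cdot,\mu_2)$, verifies centering via \eqref{eq:muLipschitztransferformula}, and then applies the weighted sup-norm estimate of Pardoux--Veretennikov \cite{PV1} Theorem 2 (not Lemma \ref{lemma:Ganguly1DCellProblemResult}) to obtain $|u(x,y,\mu_1)-u(x,y,\mu_2)|\le C(1+|y|)^k\,\bb{W}_2(\mu_1,\mu_2)$. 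The $y$-derivatives of this difference are then controlled via \cite{GS} Lemma B.1/Remark B.2, the $z$-Lipschitz property comes from an analogous difference equation for $D^{(1,0,0)}u(\cdot)[z_1]-D^{(1,0,0)}u(\cdot)[z_2]$, and only then does one induct through \eqref{eq:formulasatisfiedbyderivatives2}. The analytic input at each step of (3) is \cite{PV1} Theorem 2, which your proposal does not invoke.
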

\begin{proof}

For 1), the proof essentially uses the same tools and a similar method to Lemma A.2 in \cite{BezemekSpiliopoulosAveraging2022}, so we will only check this in the case for $(n,l,\bm{\beta})=(0,1,0)$ and then comment on how the rest of the terms follow. Importantly, Lemma A.2 in \cite{BezemekSpiliopoulosAveraging2022} only assumes existence and polynomial growth of derivatives of the solution $u$ up to one order less than the derivative obtained there.

The result for $(n,l,\bm{\beta})=(0,0,0)$ is just another way of writing Lemma \ref{lemma:Ganguly1DCellProblemResult}.

The differentiability and continuity of the derivatives is immediate via the explicit representation for $u$
\begin{align}
v(x,y,\mu)& = \int_{-\infty}^y \frac{1}{a(x,\bar{y},\mu)\pi(\bar{y};x,\mu)}\biggl[\int_{-\infty}^{\bar{y}}B(x,\tilde{y},\mu)\pi(\tilde{y};x,\mu)d\tilde{y} \biggr]d\bar{y}\nonumber\\
\label{eq:explicit1Dpi}\pi(y;x,\mu)& = \frac{Z(x,\mu)}{a(x,y,\mu)}\exp\biggl(\int_0^y \frac{f(x,\bar{y},\mu)}{a(x,\bar{y},\mu)}d\bar{y}\biggr)
\end{align}
where $Z^{-1}(x,\mu)\coloneqq \int_{\R}\frac{1}{a(x,y,\mu)}\exp\biggl(\int_0^y \frac{f(x,\bar{y},\mu)}{a(x,\bar{y},\mu)}d\bar{y}\biggr)dy$ is the normalizing constant.

 To obtain the rate of polynomial growth of $u_x$, we differentiate the equation that $u$ satisfies to get
\begin{align*}
L_{x,\mu}u_x(x,y,\mu)&=B_x(x,y,\mu)- f_x(x,y,\mu)u_y(x,y,\mu) - a_x(x,y,\mu)u_{yy}(x,y,\mu)\\
& = B_x(x,y,\mu) - L^{(0,1,0)}_{x,\mu}u(x,y,\mu)
\end{align*}
in the notation of Lemma A.2 in \cite{BezemekSpiliopoulosAveraging2022}. But by the centering condition on $B$, we have that letting $B=h$ in Lemma A.2 in \cite{BezemekSpiliopoulosAveraging2022}, $u=v$ in the statement of that same lemma. Thus we have
\begin{align*}
\int_{\R}\left(B_x(x,y,\mu) - L^{(0,1,0)}_{x,\mu}u(x,y,\mu)\right)\pi(dy;x,\mu)& = \frac{\partial}{\partial x}\int_{\R}B(x,y,\mu)\pi(dy;x,\mu)=0,
\end{align*}
and the inhomogeneity of the elliptic PDE that $u_x$ solves, in fact obeys the centering condition, and hence Lemma \ref{lemma:Ganguly1DCellProblemResult} applies. From the same lemma we already know that $q_{u,y}(0,0,0)=q_{B}(0,0,0)-1$ and $q_{yy}=q_B(0,0,0)$. This establishes that $u_x$ grows at most polynomially in $y$ uniformly in $x,\mu$. Under the additional assumptions that $q_{f}(0,1,0)\leq 1$ and $q_{a}(0,1,0)\leq 0$, we have the inhomogeneity is $O(|y|^{q_B(0,0,0)\vee q_{B}(0,1,0)})$. So by Lemma \ref{lemma:Ganguly1DCellProblemResult}, $q_{u,x}=q_{B}(0,0,0)\vee q_{B}(0,1,0),q_{u,x,y}=q_{B}(0,0,0)\vee q_{B}(0,1,0)-1,q_{u,x,y,y}=q_{B}(0,0,0)\vee q_{B}(0,1,0)$.

All of the bounds work in the same way, with the inhomogeneity of the elliptic PDE of the desired derivative of $u$ solves being the integrand of the expression for the corresponding derivative of $\bar{B}(x,y,\mu)$ from Lemma A.2 in \cite{BezemekSpiliopoulosAveraging2022}. Put explicitly:
\begin{align}\label{eq:formulasatisfiedbyderivatives2}
&L_{x,\mu}D^{(n,l,\bm{\beta})}u(x,y,\mu)[z_1,...,z_n]
 = D^{(n,l,\bm{\beta})}B(x,y,\mu)[z_1,...,z_n]- \nonumber\\
 &\hspace{4cm}- \sum_{k=0}^n\sum_{j=0}^l\sum_{\bm{p}_k} C_{(\bm{p}_k,j,n,l)} L^{(k,j,\bm{\alpha}(\bm{p}_k))}_{x,\mu}[z_{\bm{p}_k}] D^{(n-k,l-j,\bm{\alpha}(\bm{p}'_{n-k}))}u(x,y,\mu)[z_{\bm{p}'_{n-k}}],
\end{align}
where the constants $C_{(\bm{p}_k,j,n,l)}$ are defined inductively in Remark A.3 in \cite{BezemekSpiliopoulosAveraging2022} and $L^{(k,j,\bm{\alpha}(\bm{p}_k))}_{x,\mu}[z_{\bm{p}_k}]$ is the differential operator acting on $\phi \in C^2_b(\R)$ by
\begin{align*}
L^{(k,j,\bm{\alpha}(\bm{p}_k))}_{x,\mu}[z_{\bm{p}_k}]\phi(y)=D^{(k,j,\bm{\alpha}(\bm{p}_k))}f(x,y,\mu)[z_{\bm{p}_k}]\phi'(y)+D^{(k,j,\bm{\alpha}(\bm{p}_k))}a(x,y,\mu)[z_{\bm{p}_k}]\phi''(y).
\end{align*}
The first $y$ derivative of a lower order derivative in a parameter of $u$ in the inhomogeneity is always multiplied by a derivative of $f$, and so if that derivative of $f$ grows at most linearly in $y$, the growth of that term is at most that of that lower order derivative of $u$, and same for the second $y$ derivative in a parameter of $u$ in the inhomogeneity, which multiplied by a bounded lower order derivative of $a$. Thus it is clear the result follows by proceeding inductively on $n,l$.

The proof for 2) follows in the exact same way. We note here that Lemma A.2 in \cite{BezemekSpiliopoulosAveraging2022} holds for the linear functional derivatives $\bm{\delta}^{(n,l,\bm{\beta})}$ in place of the Lions derivatives $D^{(n,l,\bm{\beta})}$ if in addition we assume $h,a,f\in \mc{M}_{\bm{\delta},p}^{\bm{\zeta}}(\R\times\R\times \mc{P}_2(\R))$,$v_{y},v_{yy}\in \mc{M}_{\bm{\delta},p}^{\bm{\zeta}'}(\R^d\times\R^d\times \mc{P}_2(\R^d))$.

The proof for 3) is similar to step 4 in the proof of Theorem 2.1 in \cite{RocknerFullyCoupled}.
For the case $(n,l,\bm{\beta})=(0,0,0)$, we first note that
\begin{align*}
L_{x,\mu_1}[u(x,y,\mu_1) - u(x,y,\mu_2)] &=B(x,y,\mu_1) -  L_{x,\mu_1}u(x,y,\mu_2) \nonumber\\
&= B(x,y,\mu_1) - B(x,y,\mu_2) -[L_{x,\mu_1}-L_{x,\mu_2}]u(x,y,\mu_2).
\end{align*}
By the transfer formula in Lemma A.2 of \cite{BezemekSpiliopoulosAveraging2022} we have
\begin{align*}
&\int_{\R}\left(B(x,y,\mu_1) - B(x,y,\mu_2) -[L_{x,\mu_1}-L_{x,\mu_2}]u(x,y,\mu_2)\right)\pi(dy,x,\mu_1) = \nonumber\\
&\qquad= \int_{\R}B(x,y,\mu_1)\pi(dy;x,\mu_1) -\int_{\R}B(x,y,\mu_2)\pi(dy;x,\mu_2)=0,
\end{align*}
so in fact the inhomogeneity in the above Poisson equation is centered. Now, rather than using Lemma \ref{lemma:Ganguly1DCellProblemResult}, we apply \cite{PV1} Theorem 2 to get there is $k\in\R$ sufficiently large and $C>0$ such that for all $x\in\R,\mu_1,\mu_2\in\mc{P}_2(\R)$
\begin{align*}
\sup_{y\in\R}\frac{|u(x,y,\mu_1) - u(x,y,\mu_2)|}{(1+|y|)^k}&\leq C \sup_{y\in\R}\frac{\biggl| B(x,y,\mu_1) - B(x,y,\mu_2) -[L_{x,\mu_1}-L_{x,\mu_2}]u(x,y,\mu_2) \biggr|}{(1+|y|)^k}\\
&\leq C\bb{W}_2(\mu_1,\mu_2)
\end{align*}
by the Lipschitz assumptions on $B,f,a$. Thus for all $x,y\in\R,\mu_1,\mu_2\in\mc{P}_2(\R)$,
\begin{align*}
|u(x,y,\mu_1) - u(x,y,\mu_2)|\leq C\bb{W}_2(\mu_1,\mu_2)(1+|y|)^k.
\end{align*}
To see then that there are $k',k''\in\R,C',C''>0$ such that
\begin{align*}
|u_y(x,y,\mu_1) - u_y(x,y,\mu_2)|&\leq C\bb{W}_2(\mu_1,\mu_2)(1+|y|)^{k'}\\
|u_{yy}(x,y,\mu_1) - u_{yy}(x,y,\mu_2)|&\leq C\bb{W}_2(\mu_1,\mu_2)(1+|y|)^{k''},
\end{align*}
we can apply the result of \cite{GS} Lemma B.1 and Remark B.2, and the last line of Proposition A.4 in the same reference.

The proof with $\mu_1,\mu_2$ replaced by $x_1,x_2$ follows in the same way.

For the Lipschitz property in $z$, we first recall that for all $x,y,z\in\R,\mu\in\mc{P}_2(\R)$
\begin{align*}
L_{x,\mu}D^{(1,0,0)}u(x,y,\mu)[z] = D^{(1,0,0)}B(x,y,\mu)[z]-L^{(1,0,0)}_{x,\mu}[z]u(x,y,\mu)
\end{align*}
so
\begin{align*}
L_{x,\mu}\biggl[D^{(1,0,0)}u(x,y,\mu)[z_1]-D^{(1,0,0)}u(x,y,\mu)[z_2]\biggr]&=D^{(1,0,0)}B(x,y,\mu)[z_1]-L^{(1,0,0)}_{x,\mu}[z_1]u(x,y,\mu)\\
&-\biggl[D^{(1,0,0)}B(x,y,\mu)[z_2]-L^{(1,0,0)}_{x,\mu}[z_2]u(x,y,\mu)\biggr].
\end{align*}
By the transfer formula in Lemma A.2 of \cite{BezemekSpiliopoulosAveraging2022} we have for all $x,z\in\R,\mu\in\mc{P}_2(\R)$:
\begin{align*}
\int_{\R}\left(D^{(1,0,0)}B(x,y,\mu)[z]-L^{(1,0,0)}_{x,\mu}[z]u(x,y,\mu)\right)\pi(dy;x,\mu) = D^{(0,1,0)}\int_{\R}B(x,y,\mu)\pi(dy;x,\mu)[z] =0,
\end{align*}
so the inhomogeneity in the Poisson equation above in centered. Thus, using the same argument as for the other Lipschitz continuity as well as the fact that $D^{(1,0,0)}B,D^{(1,0,0)}f,D^{(1,0,0)}a$ are Lipschitz in $z$ and $u_y,u_{yy}$ grow at most polynomially in $y$, we get there is $K\in \R$ and $C>0$ such that
\begin{align*}
\biggl|D^{(1,0,0)}u(x,y,\mu)[z_1]-D^{(1,0,0)}u(x,y,\mu)[z_2]\biggr|\leq C|z_1-z_2| (1+|y|)^k,
\end{align*}
and similarly for $D^{(1,0,0)}u_y$ and $D^{(1,0,0)}u_{yy}$.

Then using the Poisson equation the derivatives satisfy given in Equation \eqref{eq:formulasatisfiedbyderivatives2}, we can iteratively use this same approach, along with the fact that products and sums of functions in $\mc{M}_{p,L}^{\bm{\zeta}}(\R\times\R\times \mc{P}_2(\R))$ remain in $\mc{M}_{p,L}^{\bm{\zeta}}(\R\times\R\times \mc{P}_2(\R))$, to achieve the full result.

\end{proof}

\begin{lemma}\label{lem:regularityofaveragedcoefficients}
Suppose that for some complete collection of multi-indices $\bm{\zeta}$ that $h,a,f\in \mc{M}_{p}^{\bm{\zeta}}(\R\times\R\times \mc{P}_2(\R))$. Then $\int_{\R} h(x,y,\mu)\pi(dy;x,\mu)\in \mc{M}_{b}^{\bm{\zeta}}(\R\times \mc{P}_2(\R))$. If in addition, $h,a,f\in \mc{M}_{\bm{\delta},p}^{\bm{\zeta}}(\R\times\R\times \mc{P}_2(\R))$, then $\int_{\R} h(x,y,\mu)\pi(dy;x,\mu)\in \mc{M}_{\bm{\delta},b}^{\bm{\zeta}}(\R\times \mc{P}_2(\R))$. Further, if we have that  $h,a,f\in \mc{M}_{p,L}^{\bm{\zeta}}(\R\times\R\times \mc{P}_2(\R))$, then $\int_{\R} h(x,y,\mu)\pi(dy;x,\mu)\in \mc{M}_{b,L}^{\bm{\zeta}}(\R\times \mc{P}_2(\R))$.
\end{lemma}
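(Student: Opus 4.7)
The plan is to proceed by strong induction on the multi-indices in the complete collection $\bm{\zeta}$, ordered by some total order respecting the partial order on multi-indices (for instance by $n+l+|\bm{\beta}|$). The base case $(n,l,\bm{\beta})=(0,0,0)$ follows immediately: since $h\in\mc{M}_p^{\bm{\zeta}}$ gives $|h(x,y,\mu)|\leq C(1+|y|)^{q_h(0,0,0)}$, and the invariant measure $\pi(\cdot;x,\mu)$ has finite moments of all orders uniformly in $(x,\mu)$ (this was noted just after Equation \eqref{eq:invariantmeasureold}), the integral $\bar{h}(x,\mu):=\int_\R h(x,y,\mu)\pi(dy;x,\mu)$ is uniformly bounded. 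Joint continuity in $(x,\bb{W}_2)$ follows from joint continuity of $h$ and $\pi$ in $(x,\mu)$ by dominated convergence with the polynomial-in-$y$ dominating function.

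For the inductive step, suppose the conclusion holds for all multi-indices of order strictly less than $(n,l,\bm{\beta})$. The key tool is the derivative transfer formula \eqref{eq:derivativetransferformula} of Lemma \ref{lemma:derivativetransferformulas}, which expresses $D^{(n,l,\bm{\beta})}\bar{h}(x,\mu)[z_1,\ldots,z_n]$ as an integral against $\pi(dy;x,\mu)$ of $D^{(n,l,\bm{\beta})}h$ minus a finite linear combination of products of the form $\mc{L}^{(k,j,\bm{\alpha}(\bm{p}_k))}_{x,\mu}[z_{\bm{p}_k}]\,D^{(n-k,l-j,\bm{\alpha}(\bm{p}_{n-k}'))}v$, with $(k,j)\neq(0,0)$, where $v$ solves $L_{x,\mu}v=h-\bar{h}$. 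The derivatives of $v$ (and $v_y, v_{yy}$) appearing are all strictly of lower order than $(n,l,\bm{\beta})$, so they live in the class $\bm{\zeta}'$. To invoke the transfer formula, I must verify that $v,v_y,v_{yy}\in\mc{M}_p^{\bm{\zeta}'}$; this follows from Lemma \ref{lemma:explicitrateofgrowthofderivativesinparameters1D}(1) applied to the Poisson equation $L_{x,\mu}v=h-\bar{h}$, provided $h-\bar{h}\in\mc{M}_p^{\bm{\zeta}'}$. But $h\in\mc{M}_p^{\bm{\zeta}}$ by hypothesis, while $\bar{h}\in\mc{M}_b^{\bm{\zeta}'}$ is exactly the inductive hypothesis (and since $\bar{h}$ is independent of $y$, boundedness of its derivatives trivially yields membership of $\bar{h}$ in $\mc{M}_p^{\bm{\zeta}'}$ viewed as a function on $\R\times\R\times\mc{P}_2(\R)$). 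Once $v$ and its $y$-derivatives are controlled, every factor in the integrand of \eqref{eq:derivativetransferformula} grows at most polynomially in $y$ with coefficients uniformly bounded in $(x,z_1,\ldots,z_n,\mu)$; integrating against $\pi$ and using again the uniform moment bounds produces the required uniform bound on $D^{(n,l,\bm{\beta})}\bar{h}$. Joint continuity in $(x,z_1,\ldots,z_n,\bb{W}_2)$ follows from the analogous continuity of each factor together with dominated convergence.

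The linear-functional-derivative statement is proved by the identical induction, replacing the Lions transfer formula in Lemma \ref{lemma:derivativetransferformulas} by its $\bm{\delta}^{(n,l,\bm{\beta})}$ counterpart and using Lemma \ref{lemma:explicitrateofgrowthofderivativesinparameters1D}(2) in place of part (1) to control the corresponding linear-functional derivatives of $v,v_y,v_{yy}$. For the Lipschitz statement, apply Lemma \ref{lemma:explicitrateofgrowthofderivativesinparameters1D}(3) to obtain that each $D^{(n,l,\bm{\beta})}v$, $D^{(n,l,\bm{\beta})}v_y$, $D^{(n,l,\bm{\beta})}v_{yy}$ is Lipschitz in $(x,z_1,\ldots,z_n,\bb{W}_2)$ with a polynomial-in-$y$ Lipschitz constant; the Lipschitz constants of the $h$-, $a$-, $f$-derivatives are also polynomial in $y$ by assumption. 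Writing $D^{(n,l,\bm{\beta})}\bar{h}(x_1,\mu_1)[\vec z\,]-D^{(n,l,\bm{\beta})}\bar{h}(x_2,\mu_2)[\vec z\,']$ using the transfer formula, splitting each integrand difference into telescoping Lipschitz-in-one-argument pieces, and integrating against $\pi$ (using once more the uniform moment bounds) yields the desired Lipschitz continuity; one must separately handle the dependence of $\pi$ on $(x,\mu)$, but this can be absorbed using the transfer formulas \eqref{eq:muLipschitztransferformula}--\eqref{eq:xLipschitztransferformula} which exchange differences in $\pi$ for differences in the integrand against a single $\pi$.

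The main technical obstacle is purely organizational: one must carefully match the completeness structure of $\bm{\zeta}$ with the set of lower-order derivatives of $v,v_y,v_{yy}$ that appear in the transfer formula, so that the induction hypothesis supplies exactly what Lemma \ref{lemma:explicitrateofgrowthofderivativesinparameters1D} requires. Because the collections appearing in Assumptions \ref{assumption:multipliedpolynomialgrowth}--\ref{assumption:limitingcoefficientsregularity} are all complete in the sense of Definition \ref{def:completemultiindex}, this matching is automatic, and no further analytical difficulty arises beyond applying the already-established lemmas.
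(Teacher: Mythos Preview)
Your proposal is correct and follows essentially the same route the paper indicates: the paper's proof simply says ``This follows via Lemmas A.2 in \cite{BezemekSpiliopoulosAveraging2022} and \ref{lemma:Ganguly1DCellProblemResult} in a similar way to Lemma \ref{lemma:explicitrateofgrowthofderivativesinparameters1D},'' i.e., combine the derivative transfer formula with the Poisson-equation regularity result inductively over the complete collection $\bm{\zeta}$, exactly as you outline. Your write-up is a faithful expansion of the details the paper omits, including the use of \eqref{eq:muLipschitztransferformula}--\eqref{eq:xLipschitztransferformula} to handle the $(x,\mu)$-dependence of $\pi$ in the Lipschitz case.
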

\begin{proof}
This follows via Lemmas A.2 in \cite{BezemekSpiliopoulosAveraging2022} and \ref{lemma:Ganguly1DCellProblemResult} in a similar way to Lemma \ref{lemma:explicitrateofgrowthofderivativesinparameters1D}. The details are omitted.

\end{proof}
\subsection{Result for the d-Dimensional Poisson Equation}\label{subsec:multidimpoissonequation}
{}\begin{lemma}\label{lemma:extendedrocknermultidimcellproblem}
Suppose $F:\R^d\times\R^d\times \mc{P}_2(\R)\tto \R^d,G:\R^d\times\R^d\times \mc{P}_2(\R)\tto \R,\tau:\R^d\times\R^d\times\mc{P}_2(\R)\tto \R^{d\times m}$
\begin{align*}
&|F(x_1,y_1,\mu_1)-F(x_2,y_2,\mu_2)|+|G(x_1,y_1,\mu_1)-G(x_2,y_2,\mu_2)|+\norm{\tau(x_1,y_1,\mu_1) - \tau(x_2,y_2,\mu_2)}\\
&\leq C[|x_1-x_2|+|y_1-y_2|+\bb{W}_2(\mu_1,\mu_2)],\forall x_1,x_2,y_1,y_2\in \R^d,\mu_1,\mu_2\in \mc{P}_2(\R^d),
\end{align*}
and there exists $\beta>0$ such that for all $x\in\R^d,\mu\in \mc{P}_2(\R)$:
\begin{align*}
2\langle F(x,y_1,\mu)-F(x,y_2,\mu),(y_1-y_2)\rangle+3\norm{\tau(x,y_1,\mu)-\tau(x,y_2,\mu)}^2 &\leq -\beta |y_1-y_2|^2.
\end{align*}
Here $\langle \cdot,\cdot\rangle$ is denoting the inner product on $\R^d$ and $\norm{\cdot}$ the matrix norm. Also assume that $\tau$ is bounded, and
\begin{align*}
|G(x,y,\mu)|,|F(x,y,\mu)|\leq C(1+|y|),\forall x\in \R^d,\mu\in \mc{P}_2(\R).
\end{align*}

Define the differential operator $\tilde{\mc{L}}_{x,\mu}$ which for each $x\in \R^d,\mu\in \mc{P}_2(\R)$ acts on $\phi\in C^2_b(\R)$ by
\begin{align*}
\tilde{\mc{L}}_{x,\mu}\phi(y)& \coloneqq F(x,y,\mu)\cdot \nabla \phi(y) + \frac{1}{2}  \tau \tau^\top (x,y,\mu): \nabla^2 \phi(y).
\end{align*}

Then there is a unique invariant measure $\nu(\cdot;x,\mu)$ associated to $\tilde{\mc{L}}_{x,\mu}$ for each $x,\mu$, and we assume the centering condition on $G$:
\begin{align*}
\int_{\R^d}G(x,y,\mu)\nu(dy;x,\mu)&=0,\forall x\in\R^d,\mu\in\mc{P}_2(\R).
\end{align*}

Finally, we assume the below derivatives all exist, are jointly continuous in $(x,y,\bb{W}_2)$ and auxiliary variables where applicable, and satisfy:
\begin{align*}
\sup_{x\in\R^d,\mu\in\mc{P}_2(\R)}\max\{|\partial_x G(x,\mu,y_1)-\partial_x G(x,\mu,y_2)|,|\partial_y G(x,\mu,y_1)-\partial_y G(x,\mu,y_2)|\}&\leq C|y_1-y_2|\\
\sup_{x\in\R^d,\mu\in\mc{P}_2(\R)}\max\{\norm{\partial^2_x G(x,\mu,y_1)-\partial^2_x G(x,\mu,y_2)},\norm{\partial^2_y G(x,\mu,y_1)-\partial^2_y G(x,\mu,y_2)}\}&\leq C|y_1-y_2|\\
\sup_{x\in\R^d,\mu\in\mc{P}_2(\R)}\norm{\partial_x\partial_y G(x,\mu,y_1)-\partial_x\partial_y G(x,\mu,y_2)}&\leq C|y_1-y_2|\\
\sup_{x\in\R^d,\mu\in\mc{P}_2(\R)}\norm{\partial_\mu \partial_x G(x,\mu,y_1)[\cdot]-\partial_\mu \partial_x G(x,\mu,y_2)[\cdot]}_{L^2(\R,\mu)}&\leq C|y_1-y_2|\\
\sup_{x\in\R^d,\mu\in\mc{P}_2(\R)}\norm{\partial_z\partial_\mu G(x,\mu,y_1)[\cdot]-\partial_z\partial_\mu G(x,\mu,y_2)[\cdot]}_{L^2(\R,\mu)}&\leq C|y_1-y_2|\\
\sup_{x\in\R^d,\mu\in\mc{P}_2(\R)}\norm{\partial_\mu \partial_x G(x,\mu,y_1)[\cdot]-\partial_\mu \partial_x G(x,\mu,y_2)[\cdot]}_{L^2(\R,\mu)}&\leq C|y_1-y_2|\\
\sup_{x\in\R^d,\mu\in\mc{P}_2(\R)}\norm{\partial_\mu \partial_y G(x,\mu,y_1)[\cdot]-\partial_\mu \partial_y G(x,\mu,y_2)[\cdot]}_{L^2(\R,\mu)}&\leq C|y_1-y_2|\\
\sup_{x\in\R^d,\mu\in\mc{P}_2(\R)}\norm{\partial^2_\mu G(x,\mu,y_1)[\cdot,\cdot]-\partial^2_\mu G(x,\mu,y_2)[\cdot,\cdot]}_{L^2(\R,\mu)\otimes L^2(\R,\mu)}&\leq C|y_1-y_2|\\
\sup_{x,y\in \R^d,\mu\in \mc{P}_2(\R)} \max\biggl\lbrace\norm{\partial_y\partial_x G(x,\mu,y)}, \norm{\partial^2_y G(x,\mu,y)}, \norm{\partial_\mu \partial_y G(x,\mu,y)[\cdot]}_{L^2(\R,\mu)}\biggr\rbrace&\leq C
\end{align*}
and same for $G$ replaced by $F$ and $\tau$, and in addition
\begin{align*}
&\sup_{x,y\in \R^d,\mu\in \mc{P}_2(\R)} \max\biggl\lbrace\norm{\partial^2_x F(x,\mu,y)},\norm{\partial^2_x \tau(x,\mu,y)},\norm{\partial_z\partial_\mu F(x,\mu,y)[\cdot]}_{L^2(\mu,\R)},\norm{\partial_z\partial_\mu \tau(x,\mu,y)[\cdot]}_{L^2(\mu,\R)},\\
&\qquad\norm{\partial_\mu\partial_x F(x,\mu,y)[\cdot]}_{L^2(\mu,\R)},\norm{\partial_\mu\partial_x \tau(x,\mu,y)[\cdot]}_{L^2(\mu,\R)},\norm{\partial^2_\mu F(x,\mu,y)[\cdot,\cdot]}_{L^2(\mu,\R)\otimes L^2(\mu,\R)},\nonumber\\
&\qquad\norm{\partial^2_\mu \tau(x,\mu,y)[\cdot,\cdot]}_{L^2(\mu,\R)\otimes L^2(\mu,\R)}\biggr\rbrace\leq C.
\end{align*}
Then the partial differential equation
\begin{align*}
\tilde{\mc{L}}_{x,\mu}\chi(x,y,\mu)& = -G(x,y,\mu)
\end{align*}
admits a unique classical solution $\chi:\R^d\times\R^d\times\mc{P}_2(\R)\tto \R$ which has all of the above derivatives, and
\begin{align*}
&\sup_{x\in\R^d,\mu\in\mc{P}_2(\R)}\max\biggl\lbrace|\chi(x,y,\mu)|,\norm{\partial_x \chi(x,y,\mu)},\norm{\partial_\mu \chi(x,y,\mu)[\cdot]}_{L^2(\R,\mu)},\norm{\partial^2_x \chi(x,y,\mu)},\norm{\partial_z\partial_\mu \chi(x,y,\mu)[\cdot]}_{L^2(\R,\mu)},\\
&\norm{\partial_\mu \partial_x\chi(x,y,\mu)[\cdot]}_{L^2(\R,\mu)},\norm{\partial^2_\mu \chi(x,y,\mu)[\cdot,\cdot]}_{L^2(\R,\mu)\otimes L^2(\R,\mu)}\biggr\rbrace \leq C(1+|y|),\forall y\in \R^2,\\
&\sup_{x,y\in\R^d,\mu\in\mc{P}_2(\R)}\max\biggl\lbrace\norm{\partial_y \chi(x,y,\mu)},\norm{\partial^2_y \chi(x,y,\mu)},\norm{\partial_x\partial_y \chi(x,y,\mu)},\norm{\partial_\mu \partial_y \chi(x,y,\mu)}_{L^2(\R,\mu)}\leq C.
\end{align*}
Moreover, if all listed derivatives of $F,G,\tau$ are jointly continuous in $(x,y,\bb{W}_2)$, then so are listed derivatives of $\chi$.

In the notation of Definition \ref{def:lionsderivativeclasses}, this conclusion reads $\chi \in \tilde{\mc{M}}^{\tilde{\bm{\zeta}}}_p(\R^2\times\R^2\times\mc{P}_2(\R))$, $\chi_y \in \tilde{\mc{M}}^{\tilde{\bm{\zeta}}_1}_p(\R^2\times\R^2\times\mc{P}_2(\R))$, $\chi_{yy} \in \tilde{\mc{M}}^{(0,0,0)}_p(\R^2\times\R^2\times\mc{P}_2(\R))$ with $\tilde{q}_\chi(n,l,\bm{\beta})\leq 1,\forall (n,l,\bm{\beta})\in \tilde{\bm{\zeta}}$, $\tilde{q}_{\chi_y}(n,l,\bm{\beta})\leq 0,\forall (n,l,\bm{\beta})\in \tilde{\bm{\zeta}}_1$, and $\tilde{q}_{\chi_{yy}}(0,0,0)\leq 0$ where $\tilde{\bm{\zeta}},\tilde{\bm{\zeta}}_1$ are as in Equation \eqref{eq:collectionsofmultiindices}.
\end{lemma}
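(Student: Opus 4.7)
The plan is to construct $\chi$ via its probabilistic representation and then differentiate formally to obtain regularity of the derivatives, with the dissipativity condition on $F$ and $\tau$ giving the exponential ergodicity needed to make all integrals converge. Specifically, for fixed $x\in\R^d,\mu\in\mc{P}_2(\R)$, let $Y^{x,\mu,y}_t$ be the solution of the frozen SDE $dY^{x,\mu,y}_t = F(x,Y^{x,\mu,y}_t,\mu)dt + \tau(x,Y^{x,\mu,y}_t,\mu)dW_t$ with $Y^{x,\mu,y}_0 = y$. The dissipativity assumption yields a synchronous coupling bound $\E|Y^{x,\mu,y_1}_t - Y^{x,\mu,y_2}_t|^2 \leq e^{-\beta t}|y_1-y_2|^2$, from which one deduces uniform exponential ergodicity, existence/uniqueness of $\nu(\cdot;x,\mu)$, and uniform-in-$(x,\mu)$ finite moments of $\nu$. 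Then $\chi(x,y,\mu) := \int_0^\infty \E[G(x,Y^{x,\mu,y}_t,\mu)]dt$ is well-defined thanks to the centering condition and the Lipschitz property of $G$ in $y$, which together give $|\E[G(x,Y^{x,\mu,y}_t,\mu)]| \leq Ce^{-\beta t/2}(1+|y|)$.

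Next I would handle the spatial derivatives by differentiating the frozen SDE with respect to $y$ and $x$. The first variation $\partial_y Y^{x,\mu,y}_t$ decays exponentially in $L^2$ via the same dissipativity estimate, giving $|\partial_y \chi|, |\partial^2_y\chi|$ bounded and $|\partial_x\chi|, |\partial_x\partial_y\chi|$ at worst $O(1+|y|)$. The second $x$-derivative $\partial_x^2\chi$ is also $O(1+|y|)$ by an inductive bound on the second variation process (using boundedness of $\partial_x^2 F,\partial_x^2\tau$). All these bounds are standard consequences of probabilistic differentiation plus the exponential decay of the variation processes — essentially the $d$-dimensional analogue of Lemma A.2 in \cite{BezemekSpiliopoulosAveraging2022}, and they proceed uniformly over $\mu$ because the Lipschitz/growth constants are uniform.

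The main obstacle is the measure derivatives, where we must obtain $L^2(\mu,\R)$-bounds rather than uniform bounds in the auxiliary variable $z$. The idea is to lift the representation to the Lions sense: for a random variable $\xi$ with law $\mu$ on a suitable atomless space, consider the lifted process $Y^{x,\xi,y}_t$ whose coefficients involve $\mc{L}(\xi)$, and differentiate with respect to $\xi$ in the Fr\'echet sense. This produces a derivative process $D_\xi Y^{x,\xi,y}_t$ satisfying a linearized SDE whose coefficients involve $\partial_\mu F,\partial_\mu\tau$ evaluated along the path. Taking expectations and using the $L^2(\mu,\R)$-bounds on $\partial_\mu F, \partial_\mu \tau$ (assumed hypothesis), together with Gronwall combined with the dissipative structure, yields that the $L^2(\mu,\R)$-norm of $\partial_\mu Y^{x,\xi,y}_t[\cdot]$ decays exponentially. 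Integrating then gives the $L^2(\mu,\R)$-bound on $\partial_\mu\chi$ of order $O(1+|y|)$ and on $\partial_\mu\partial_y\chi$ uniformly bounded. The higher-order and mixed derivatives $\partial_\mu\partial_x\chi, \partial_z\partial_\mu\chi, \partial_\mu^2\chi$ are treated identically, iterating the linearization procedure; throughout the inductive scheme, the exponential contraction absorbs the $(1+|y|)$ growth of lower-order terms after integration in time.

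The hard part will be the bookkeeping for $\partial_\mu^2\chi$, which requires controlling the second Fr\'echet derivative of the lifted flow and handling the $L^2(\mu,\R)\otimes L^2(\mu,\R)$-norm properly — in particular, one must show the second variation process $D^2_\xi Y^{x,\xi,y}_t[\cdot,\cdot]$ satisfies an affine SDE in the appropriate Hilbert space with exponentially stable drift, so its norm inherits the $e^{-\beta t/2}$ decay after using Cauchy--Schwarz on the forcing terms (which are products of first variations and second derivatives of $F,\tau,G$). Once these pointwise and $L^2(\mu,\R)$ bounds on the variations are in place, uniqueness of $\chi$ follows from the maximum principle combined with the centering of $G$ against $\nu$, and joint continuity in $(x,y,\bb{W}_2)$ propagates from continuity of $F,G,\tau$ and their derivatives by dominated convergence in the representation formulas. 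Translating the final estimates into the multi-index notation of Definition \ref{def:lionsderivativeclasses} gives exactly $\chi \in \tilde{\mc{M}}^{\tilde{\bm{\zeta}}}_p$, $\chi_y \in \tilde{\mc{M}}^{\tilde{\bm{\zeta}}_1}_p$, $\chi_{yy} \in \tilde{\mc{M}}^{(0,0,0)}_p$ with the claimed growth exponents.
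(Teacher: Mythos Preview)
Your approach is essentially correct and is precisely the strategy of the references the paper cites: the paper's own proof does not give a self-contained argument but defers to \cite{RocknerMcKeanVlasov} (Proposition~4.1 and Section~6.3) for existence, uniqueness, and the bounds on $\chi,\partial_y\chi,\partial_x\chi,\partial_\mu\chi,\partial_x^2\chi,\partial_z\partial_\mu\chi$, to \cite{HLLS} Proposition~3.1 for $\partial_y^2\chi,\partial_x\partial_y\chi,\partial_\mu\partial_y\chi$, and then notes that $\partial_\mu\partial_x\chi$ and $\partial_\mu^2\chi$ follow by the same scheme. Those references implement exactly the probabilistic representation $\chi(x,y,\mu)=\int_0^\infty\E[G(x,Y^{x,\mu,y}_t,\mu)]\,dt$ together with differentiation of the frozen flow and exponential decay of the variation processes under the dissipativity condition, which is what you outline.

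One small slip: you write that $\partial_x\partial_y\chi$ is ``at worst $O(1+|y|)$'', but the statement requires it uniformly bounded. Your method does give boundedness --- the point is that $\partial_y Y^{x,\mu,y}_t$ decays like $e^{-\beta t/2}$ in $L^2$ uniformly in $y$, and the $x$-variation of this quantity inherits the same decay without picking up growth in $y$, because the forcing terms in the linearized equation for $\partial_x\partial_y Y^{x,\mu,y}_t$ are products of bounded derivatives of $F,\tau$ with variation processes that already decay. So the integral $\int_0^\infty$ converges uniformly in $y$. Just tighten that claim when you write it up.
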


\begin{proof}
The arguments here follow closely those in \cite{RocknerMcKeanVlasov}.
Existence and uniqueness for the invariant measure and strong solution from the Poisson equation are the subject of the beginning of Section 3.3 and Section 4.1 of \cite{RocknerMcKeanVlasov}. The bound for $\chi$, $\partial_y\chi$, $\partial_x\chi,\partial_\mu\chi,\partial^2_{x}\chi$, and $\partial_z\partial_\mu \chi$ is also the subject of Proposition 4.1/Section 6.3 of \cite{RocknerMcKeanVlasov}, where we made the modification that $\tau,F$ (their $g,f$ respectively) are bounded in $x,\mu$, from which one can see that the bound on the solution is also uniform in $x,\mu$.

Thus we just need to show the bounds for $\partial^2_y\chi$, $\partial_x\partial_y \chi$, $\partial_\mu\partial_y\chi$, $\partial_\mu \partial_x\chi$, and $\partial^2_\mu \chi$. The bounds for $\partial^2_y\chi$, $\partial_x\partial_y \chi$ and $\partial_\mu\partial_y\chi$ are established in the recent \cite{HLLS} Proposition 3.1.

For the mixed partial derivative in $x$ and $\mu$ and the second partial derivative in $\mu$, we can follow the proof of Proposition 4.1 of \cite{RocknerMcKeanVlasov}. The details are omitted here due to the similarity of the argument.
\end{proof}

\subsection{Some specific examples for which the assumptions of the paper hold}\label{subsec:suffconditionsoncoefficients}
\begin{proposition}\label{prop:suffcondfordoubledcellproblem}
Suppose \ref{assumption:uniformellipticity}- \ref{assumption:centeringcondition} hold. Let $\tilde{\bm{\zeta}},\tilde{\bm{\zeta}}_1$ be as in Equation \eqref{eq:collectionsofmultiindices}, and consider also:
\begin{align*}
\bm{\zeta}&\ni \br{(0,j_1,0),(1,j_2,j_3),(2,j_4,(j_5,0)),(3,0,0):j_1=0,1,2,j_2+j_3 \leq 2,j_4+j_5\leq 1}\\
\bm{\zeta}_1&\ni \br{(0,2,0),(1,j_1,j_2),(2,0,0):j_1+j_2\leq 1}.
\end{align*}
In addition, suppose:
\begin{enumerate}
\item For $h=\tau_1,\tau_2,b$:
\begin{align*}
|h(x_1,y_1,\mu_1)-h(x_2,y_2,\mu_2)|\leq C(|x_1-x_2|+|y_1-y_2|+\bb{W}_2(\mu_1,\mu_2)),\forall x_1,x_2,y\in\R,\mu_1,\mu_2\in \mc{P}_2(\R).
\end{align*}
\item $a,f,b\in \mc{M}^{\bm{\zeta}}_p(\R\times\R\times\mc{P}_2(\R))$, $a_y,f_y,b_y\in \mc{M}^{\tilde{\bm{\zeta}}}_p(\R\times\R\times\mc{P}_2(\R))$, $a_{yy},f_{yy},b_{yy}\in \mc{M}^{\tilde{\bm{\zeta}}_1}_p(\R\times\R\times\mc{P}_2(\R))$, and $a_{yyy},f_{yyy},b_{yyy}$ are bounded.
\item $q_a(n,l,\bm{\beta})\leq 0$ for all $(n,l,\bm{\beta})\in\bm{\zeta},q_{a_y}(n,l,\bm{\beta})\leq 0$ for all $(n,l,\bm{\beta})\in\tilde{\bm{\zeta}},$ and $q_{a_{yy}}(n,l,\bm{\beta})\leq 0$ for all  $(n,l,\bm{\beta})\in\tilde{\bm{\zeta}}_1$. In addition, $q_{a}(0,1,0)<0$.
\item $q_f(n,l,\bm{\beta})\leq 1$ for all $(n,l,\bm{\beta})\in\bm{\zeta},q_f(n,l,\bm{\beta})\leq 0$ for all $(n,l,\bm{\beta})\in\bm{\zeta}_1,$ $q_{f_y}(n,l,\bm{\beta})\leq 0$ for all $(n,l,\bm{\beta})\in\tilde{\bm{\zeta}},$ and $q_{f_{yy}}(n,l,\bm{\beta})\leq 0$ for all  $(n,l,\bm{\beta})\in\tilde{\bm{\zeta}}_1$. In addition, $q_{f}(0,1,0)<0$.
\item $q_b(n,l,\bm{\beta})< 0$ for all $(n,l,\bm{\beta})\in\bm{\zeta},q_{b_y}(n,l,\bm{\beta})\leq 0$ for all $(n,l,\bm{\beta})\in\tilde{\bm{\zeta}},$ and $q_{b_{yy}}(n,l,\bm{\beta})\leq 0$ for all  $(n,l,\bm{\beta})\in\tilde{\bm{\zeta}}_1$.
\end{enumerate}
Then assumptions \ref{assumption:qF2bound} and \ref{assumption:tildechi} hold.
\end{proposition}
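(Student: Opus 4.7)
The plan is to derive both \ref{assumption:qF2bound} and \ref{assumption:tildechi} from a single application of the multi-dimensional Poisson equation regularity result, Lemma \ref{lemma:extendedrocknermultidimcellproblem}, taken in dimension $d=2$ for the fast variables $(y,\bar{y})$, with the ``frozen parameter'' $(x,\bar{x})\in\R^2$. The first preparatory step is to establish the requisite regularity of the 1-D cell problem solution $\Phi$ from Equation \eqref{eq:cellproblemold}. I would apply Lemma \ref{lemma:explicitrateofgrowthofderivativesinparameters1D} with inhomogeneity $-b$ and multi-index collection $\bm{\zeta}$: hypotheses (2)--(5) place $a,f\in\mc{M}_p^{\bm{\zeta}}$ with the growth constraints $q_a\leq 0$ and $q_f\leq 1$ required by the last paragraph of that lemma, and place $-b\in\mc{M}_p^{\bm{\zeta}}$ with $q_b<0$. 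The lemma then supplies $\Phi,\Phi_y,\Phi_{yy}\in\mc{M}_p^{\bm{\zeta}}$ together with polynomial growth bounds inherited from $q_b$; in particular the Lions derivative $\partial_\mu\Phi$ and its further mixed derivatives $\partial_z\partial_\mu\Phi$, $\partial_x\partial_\mu\Phi$, $\partial^2_\mu\Phi$ through the orders encoded in $\bm{\zeta}$ are all controlled uniformly in $y$.

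Next I will apply Lemma \ref{lemma:extendedrocknermultidimcellproblem} to the doubled corrector problem \eqref{eq:doublecorrectorproblem}. The drift $F(x,\bar x,y,\bar y,\mu)=(f(x,y,\mu),f(\bar x,\bar y,\mu))$ and the diffusion built from $(\tau_1,\tau_2)$ evaluated at each pair are block diagonal in $(y,\bar y)$. The global Lipschitz property in $(\vec x,\vec y,\mu)$ follows from (1) and \ref{assumption:uniformellipticity}; uniform boundedness of the diffusion from \ref{assumption:uniformellipticity}; the dissipativity hypothesis decouples into two copies of \eqref{eq:rocknertyperetractiontomean} from \ref{assumption:retractiontomean}; the bounded-derivative hypotheses on $F$ and $\tau$ follow coordinate-wise from (2); and the centering $\iint G\,\pi\otimes\pi = 0$ for the inhomogeneity $G(x,\bar x,y,\bar y,\mu) = -b(x,y,\mu)\partial_\mu\Phi(\bar x,\bar y,\mu)[x]$ is immediate from \ref{assumption:centeringcondition} by Fubini.

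The main step, and the principal technical obstacle, is verifying that $G$ itself satisfies the regularity hypotheses in Lemma \ref{lemma:extendedrocknermultidimcellproblem}. This calls for a product-rule expansion of each mixed derivative $D^{(n,l,\bm\beta)}G$, complicated by the fact that $x$ enters $G$ both as an argument of $b$ and as the Lions direction of $\partial_\mu\Phi(\bar x,\bar y,\mu)[x]$: an $x$-derivative on $G$ generates both a $\partial_x b$ term and a $\partial_z\partial_\mu \Phi$ term, with analogous complications for $\mu$- and $\bar x$-derivatives. Every resulting summand is a product of a derivative of $b$ (bounded and decaying uniformly in $y$ thanks to (5)) times a mixed derivative of $\Phi$ in its parameters plus the Lions direction (bounded in $\bar y$ by Step 1 via the completeness of $\bm\zeta$). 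The $L^2(\R,\mu)$-type bounds demanded for Lions derivatives of $G$ translate directly into the uniform-in-$z$ bounds supplied in Step 1 for the corresponding mixed derivatives of $\Phi$. With all hypotheses verified, Lemma \ref{lemma:extendedrocknermultidimcellproblem} delivers $\chi \in \tilde{\mc M}_p^{\tilde{\bm\zeta}}$, $\chi_y\in\tilde{\mc M}_p^{\tilde{\bm\zeta}_1}$, $\chi_{yy}\in\tilde{\mc M}_p^{(0,0,0)}$ with the growth rates required by \ref{assumption:qF2bound}. The ``strengthened'' bounds $\tilde q_\chi(0,k,0)\leq 0$ and $\tilde q_{\chi_y}(0,0,0)\leq 0$ will not follow from the generic $C(1+|y|)$ bound of that lemma, so I anticipate needing a targeted refinement exploiting the fact that $G$ is itself uniformly bounded in $(y,\bar y)$ (since $b$ and $\partial_\mu\Phi$ are bounded under (5) and Step 1), for instance by revisiting the proof of Proposition 4.1 in \cite{RocknerMcKeanVlasov} with a sharper a priori estimate when the forcing is bounded.

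Finally, \ref{assumption:tildechi} for $\tilde\chi$ reduces to the same scheme with the inhomogeneity $\tilde G(x,\bar x,y,\bar y,\mu) = -b(x,y,\mu)\Phi(\bar x,\bar y,\mu)$, which is strictly simpler because no Lions derivative of $\Phi$ appears in $\tilde G$. The multi-index collections $\tilde{\bm\zeta}_3$ and $\bm\zeta_{x,1}$ required for $\tilde\chi$ and $\tilde\chi_y$ are sub-collections of $\tilde{\bm\zeta}$ and $\tilde{\bm\zeta}_1$, so the regularity of $\Phi$ produced in Step 1 is more than enough, and a second application of Lemma \ref{lemma:extendedrocknermultidimcellproblem} concludes.
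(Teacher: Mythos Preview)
Your overall strategy---derive regularity of $\Phi$ from Lemma \ref{lemma:explicitrateofgrowthofderivativesinparameters1D}, then feed the doubled problem into Lemma \ref{lemma:extendedrocknermultidimcellproblem} in dimension $d=2$, and treat $\tilde\chi$ the same way with the simpler inhomogeneity---matches the paper's proof. Two points deserve correction.

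First, and most importantly, your plan for the strengthened bounds $\tilde q_\chi(0,k,0)\leq 0$, $k=0,1$, has a gap. You propose to exploit only that $G$ is \emph{bounded} and revisit the proof of \cite{RocknerMcKeanVlasov} Proposition~4.1; but bounded forcing generically yields only logarithmic growth of the solution (see the $q_B=0$ clause of Lemma \ref{lemma:Ganguly1DCellProblemResult}), not boundedness. The paper instead uses that $G$ actually \emph{decays}: since $q_b(n,l,\bm\beta)<0$ on $\bm\zeta$, Lemma \ref{lemma:explicitrateofgrowthofderivativesinparameters1D} gives $q_\Phi(n,l,\bm\beta)<0$ on $\bm\zeta$ as well, so $G=b\,\partial_\mu\Phi$ has $q_G(0,0,0)<0$, and a direct application of \cite{PV1} Theorem~2 (in $\R^2$) gives $q_\chi(0,0,0)\leq 0$. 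For $q_\chi(0,1,0)\leq 0$ the paper differentiates the Poisson equation in $x$ via the transfer formula: the inhomogeneity for $\partial_x\chi$ contains $\partial_x G$ and terms of the form $(\partial_x f)\chi_y+(\partial_x a)\chi_{yy}$. Here $\chi_y,\chi_{yy}$ are already bounded by Lemma \ref{lemma:extendedrocknermultidimcellproblem}, and the \emph{strict} decay hypotheses $q_f(0,1,0)<0$, $q_a(0,1,0)<0$ in (3)--(4) are precisely what force this inhomogeneity to decay, so that \cite{PV1} Theorem~2 applies once more. You should invoke this mechanism rather than a vague refinement of the general lemma.

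Second, a minor attribution: the Lipschitz property of $F=(f,f)$ does not come from (1) or \ref{assumption:uniformellipticity} (those concern $\tau_1,\tau_2,b$); it comes from \ref{assumption:retractiontomean}, where $f=-\kappa y+\eta$ with $\eta$ Lipschitz.
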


\begin{proof}
We first want to show the assumptions of Lemma \ref{lemma:extendedrocknermultidimcellproblem} with $d=2,m=4$ hold with $F_1(x,y,\mu) = f(x_1,y_1,\mu),F_2=f(x_2,y_2,\mu)$, $\tau_{11}(x,y,\mu) = \tau_1(x_1,y_1,\mu)$,$\tau_{12}(x,y,\mu) = \tau_2(x_1,y_1,\mu),\tau_{23}(x,y,\mu) = \tau_1(x_2,y_2,\mu)$,
$\tau_{24}(x,y,\mu) = \tau_2(x_2,y_2,\mu)$, and $\tau_{ij}\equiv 0$ otherwise, and $G(x,y,\mu) = b(x_1,y_1,\mu)\partial_\mu \Phi(x_2,y_2,\mu)[x_1]$ or $G(x,y,\mu) = b(x_1,y_1,\mu)\Phi(x_2,y_2,\mu)$.

Under these assumptions we have $q_{\Phi}(n,l,\bm{\beta}),q_{\Phi_{yy}}(n,l,\bm{\beta})<0$ and $q_{\Phi_y}(n,l,\bm{\beta})<-1$ for all $(n,l,\bf{\beta})\in\bm{\zeta}$ via Lemma \ref{lemma:explicitrateofgrowthofderivativesinparameters1D}.

The first Lipschitz assumption follows by (1) and \ref{assumption:retractiontomean}. The retraction to mean assumption is immediate from \ref{assumption:retractiontomean}. We also have $F$ grows at most linearly in $|y|$ by \ref{assumption:retractiontomean}, and $G$ is in fact bounded by the above assumptions.

Checking the uniform Lipschitz in $y$ assumptions for the derivatives of $G$, we have, for example, for the $x$ derivative of the first choice, that:
\begin{align*}
b_x(x_1,y_1,\mu)\partial_\mu \Phi(x_2,y_2,\mu)[x_1]+b(x_1,y_1,\mu)\partial_z\partial_\mu \Phi(x_2,y_2,\mu)[x_1]
\end{align*}
and
\begin{align*}
b(x_1,y_1,\mu)\partial_\mu \Phi_x(x_2,y_2,\mu)[x_1]
\end{align*}
need to be Lipschitz in $y$ uniformly in $x\in\R^2,\mu\in \mc{P}_2(\R)$. To guarantee that the product of functions is Lipschitz without any more a priori information on the structure of each function, we must have that each function is Lipschitz and bounded. Since we make the assumptions that $q_b(0,j,0),q_{b_y}(0,j,0)\leq 0,j=0,1$ and assumptions on $b,f,a$ such that $q_{\Phi}(1,j_1,j_2),q_{\Phi_y}(1,j_1,j_2)\leq 0,j_1+j_2\leq 1$, this assumption holds. This is where the requirement that for many $(n,l,\bm{\beta})$, $q_{b}(n,l,\bm{\beta})<0$ is coming in to play.

If we differentiate $G$ in the same way to see all the Lipschitz and bounded assumptions needed on each of $b$, $\Phi$, and $\partial_\mu \Phi$'s derivatives, we see that the only term that requires special care under these assumptions is $\partial_\mu\Phi_{yy}(x_2,y_2,\mu)[x_1]$. But using the equation elliptic equation that $\partial_\mu \Phi(x,y,\mu)[z]$ satisfies for each $x,z\in\R,\mu\in\mc{P}_2(\R)$ and Lemma \ref{lemma:Ganguly1DCellProblemResult}, we see for the second derivative of $\partial_\mu\Phi(x,y,\mu)[z]$ to be uniformly Lipschitz in $y$, it is sufficient for
\begin{align*}
\partial_\mu b(x,y,\mu)[z]-\partial_\mu f(x,y,\mu)[z]\Phi_y(x,y,\mu) -\partial_\mu a(x,y,\mu)[z]\Phi_{yy}(x,y,\mu)
\end{align*}
to be uniformly Lipschitz in $y$. $\partial_\mu b(x,y,\mu)[z]$ is already assumed to have this property, and $\Phi_y(x,y,\mu),\Phi_{yy}(x,y,\mu)$ are bounded and uniformly Lipschitz in $y$ by assumption. Hence we just need in addition there that $q_{f}(1,0,0),q_{f_y}(1,0,0),q_{a}(1,0,0),q_{a_y}(1,0,0)\leq 0$ as assumed.

Clearly since we prove the Lipschitz property for each of the derivatives of $G$ by ensuring each component is Lipschitz and bounded, the needed boundedness assumption for the mixed derivatives in $y$ of $G$ also holds.

Now to apply Lemma \ref{lemma:extendedrocknermultidimcellproblem}, we just need to make sure that the needed Lipschitz and bounded assumptions on the derivatives of $F$ and $\tau$ hold. But these are implied by our assumptions on $f$ and $a$.

Now we just need to improve the result of Lemma \ref{lemma:extendedrocknermultidimcellproblem} to get $q_{\chi}(0,j,0)\leq 0,j=0,1$. We turn to \cite{PV1} Theorem 2. We have $q_{G}(0,0,0)<0$, so $q_{\chi}(0,0,0)\leq 0$ by a direct application of that Theorem. Then using that, as remarked in the proof of Lemma \ref{lemma:derivativetransferformulas}, the transfer formula for the $x$ derivatives in the $d$-dimensional case still hold in our setting and that $q_{\chi_y}(0,0,0),q_{\chi_{yy}}(0,0,0)\leq 0$ by Lemma \ref{lemma:extendedrocknermultidimcellproblem} and $q_{f}(0,1,0),q_{a}(0,1,0)<0$ by assumption, we can get the inhomogeneity for the Poisson equation which $\partial_x \chi$ satisfies also decays polynomially in $|y|$ as $|y|\toinf$ uniformly in $x\in\R^2,\mu\in\mc{P}_2(\R)$, so again by \cite{PV1} Theorem 2, $q_{\chi}(0,1,0)\leq 0$.

Note that, while the sufficient conditions posed here for Assumption \ref{assumption:qF2bound} automatically imply those for \ref{assumption:tildechi}, since Assumption \ref{assumption:tildechi} does not require specific polynomial growth, it can actually be proved under much weaker sufficient conditions - see Appendix A of \cite{BezemekSpiliopoulosAveraging2022}.
\end{proof}

\begin{proposition}\label{prop:suffcondrestofassumptions}
Suppose the conditions of Proposition \ref{prop:suffcondfordoubledcellproblem} and \ref{assumption:gsigmabounded} hold. Let $\bm{\zeta}$ and $\bm{\zeta}_1$ be as in Proposition \ref{prop:suffcondfordoubledcellproblem}, consider the collections of multi-indices from Equation \eqref{eq:collectionsofmultiindices}, and let, in addition:
\begin{align*}
\hat{\bm{\zeta}}_1 &\ni \br{(0,j_1,0),(1,j_2,j_3),(2,j_4,(j_5,j_6)),(3,j_7,(j_8,0,0))\\
&:j_1\in\br{0,1,...5},j_3\leq 4,j_2+j_3\leq 5,j_5+j_6\leq 2,j_4+j_5+j_6\leq 3,j_7+j_8\leq 1 }.\\
\mathring{\bm{\zeta}}& = \bar{\bm{\zeta}}\cup \bar{\bm{\zeta}}_{w+2}\\
\mathring{\bm{\zeta}}_1&\ni \br{(j,j_2,0),(1,j_2,j_3):j=0,1,2,j_2=0,1,j_3=1,...,w+2}.\\
\end{align*}
In addition, suppose:
\begin{enumerate}
\item For $h=\sigma,g,c$:
\begin{align*}
|h(x_1,y_1,\mu_1)-h(x_2,y_2,\mu_2)|\leq C(|x_1-x_2|+|y_1-y_2|+\bb{W}_2(\mu_1,\mu_2)),\forall x_1,x_2,y\in\R,\mu_1,\mu_2\in \mc{P}_2(\R).
\end{align*}
\item $g,\sigma,\tau_1,c \in \mc{M}_{p,L}^{\hat{\bm{\zeta}}}$ and $f,a,b\in \mc{M}_{p,L}^{\hat{\bm{\zeta}}_1}$
\item $q_a(0,3,0)\leq 0$, $q_f(0,3,0)\leq 1$, $q_b(0,3,0)\leq 3$
\item  $q_{\sigma}(n,l,\bm{\beta}),q_{c}(n,l,\bm{\beta})\leq 1,q_{g}(n,l,\bm{\beta}),q_{\tau_1}(n,l,\bm{\beta})\leq 2$ for $(n,l,\bm{\beta})\in\mathring{\bm{\zeta}}_1$ and $q_{\sigma}(n,l,\bm{\beta}),q_{c}(n,l,\bm{\beta})\leq 2,q_{g}(n,l,\bm{\beta}),q_{\tau_1}(n,l,\bm{\beta})\leq 3$ for $(n,l,\bm{\beta})\in \mathring{\bm{\zeta}}$.
\item $b,f,\tau_1,\tau_2\in \mc{M}_{\bm{\delta},p}^{\mathring{\bm{\zeta}}_1}$ and $c,\sigma,g\in \mc{M}_{\bm{\delta},p}^{\mathring{\bm{\zeta}}} $.
\item $b,f,a\in \mc{M}_{p}^{\bm{\zeta}_{x,w+3}},\sigma,\tau_1,c,g \in \mc{M}_{p}^{\bm{\zeta}_{x,w+2}}$, and
\begin{align*}
&\norm{\frac{\delta}{\delta m}b(x,y,\mu)[\cdot]}_{w+2},\norm{\frac{\delta}{\delta m}b_x(x,y,\mu)[\cdot]}_{w+2},\norm{\frac{\delta}{\delta m}f(x,y,\mu)[\cdot]}_{w+2},\norm{\frac{\delta}{\delta m}f_x(x,y,\mu)[\cdot]}_{w+2},\norm{\frac{\delta}{\delta m}a(x,y,\mu)[\cdot]}_{w+2},\\
&\norm{\frac{\delta}{\delta m}a_x(x,y,\mu)[\cdot]}_{w+2},\norm{\frac{\delta}{\delta m}g(x,y,\mu)[\cdot]}_{w+2},\norm{\frac{\delta}{\delta m}\sigma(x,y,\mu)[\cdot]}_{w+2},\norm{\frac{\delta}{\delta m}\tau_1(x,y,\mu)[\cdot]}_{w+2},\norm{\frac{\delta}{\delta m}c(x,y,\mu)[\cdot]}_{w+2}\\
& \leq C(1+|y|^k),
\end{align*}
uniformly in $x\in\R,\mu\in\mc{P}_2(\R)$ for some $k\in\bb{N}.$
\item There exists $\bar{\lambda}_->0$ such that $\bar{D}(x,\mu)\geq \bar{\lambda}_-$ for all $x\in\R,\mu\in\mc{P}_2$.
\end{enumerate}
Then assumptions \ref{assumption:strongexistence} and \ref{assumption:multipliedpolynomialgrowth} - \ref{assumption:limitingcoefficientsregularity}, hold, and \ref{assumption:limitingcoefficientsregularityratefunction} holds if we replace $w$ with $r$ in (6).
\end{proposition}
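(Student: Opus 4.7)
The plan is to verify each assumption in turn, using Lemma \ref{lemma:explicitrateofgrowthofderivativesinparameters1D} as the main tool to propagate regularity and polynomial growth through the one-dimensional Poisson equations, Lemma \ref{lem:regularityofaveragedcoefficients} to transfer this to the averaged coefficients, and Proposition \ref{prop:suffcondfordoubledcellproblem}, whose hypotheses are a subset of those here, to cover \ref{assumption:qF2bound} and \ref{assumption:tildechi} directly. Assumption \ref{assumption:strongexistence} follows from the global Lipschitz continuity of all coefficients in $(x,y,\mu)$ supplied by hypothesis (1) together with Assumption \ref{assumption:retractiontomean}, which yields strong existence/uniqueness for \eqref{eq:slowfast1-Dold} and \eqref{eq:IIDparticles} via the standard argument of \cite{Wang} Theorem 2.1; strong existence for \eqref{eq:LLNlimitold} then needs only Lipschitz continuity of $\bar{\gamma}$ and $\bar{D}^{1/2}$, which is delivered once \ref{assumption:limitingcoefficientsLionsDerivatives} is established. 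Assumption \ref{assumption:uniformLipschitzxmu} is then immediate by combining hypothesis (1) with the Lipschitz continuity of $\Phi, \Phi_x, \Phi_y, \Phi_{xy}$ given by Lemma \ref{lemma:explicitrateofgrowthofderivativesinparameters1D}(3), since $\gamma$, $\sigma+\tau_1\Phi_y$, $\tau_2\Phi_y$ are products and sums of such Lipschitz functions.

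For \ref{assumption:multipliedpolynomialgrowth} I apply Lemma \ref{lemma:explicitrateofgrowthofderivativesinparameters1D} to the cell problem \eqref{eq:cellproblemold}. Hypothesis (2) gives $b, a, f \in \mc{M}_p^{\bm{\zeta}}$ with $\tilde{\bm{\zeta}} \subseteq \bm{\zeta}$; the growth conditions $q_a \leq 0$ and $q_f \leq 1$ on $\bm{\zeta}$ in hypotheses (3),(4) are precisely the standing assumption of the last paragraph of that lemma, yielding
\begin{equation*}
\tilde{q}_\Phi(n,l,\bm{\beta}) \leq \max_{k \leq n,\, j \leq l,\, \bm{\alpha} \in \binom{\bm{\beta}}{k}} q_b(k,j,\bm{\alpha}) \leq 1
\end{equation*}
on $\tilde{\bm{\zeta}}$ together with $\tilde{q}_{\Phi_y}(n,l,\bm{\beta}) \leq \tilde{q}_\Phi(n,l,\bm{\beta}) - 1 \leq 0$ on $\tilde{\bm{\zeta}}_2$; the strengthened bounds $\tilde{q}_\Phi(0,k,0) \leq 0$ for $k=0,1$ and $\tilde{q}_{\Phi_y}(0,0,0)\leq 0$ follow from the strict inequality $q_b(n,l,\bm{\beta}) < 0$ in hypothesis (5). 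For \ref{assumption:forcorrectorproblem} I apply the same lemma to the Poisson equation \eqref{eq:driftcorrectorproblem} with each of the three listed choices of inhomogeneity $F$; the centering of $F$ is built into \eqref{eq:driftcorrectorproblem} and the regularity of $F$ in $(x,y,\mu)$ within $\tilde{\mc{M}}_p^{\tilde{\bm{\zeta}}}$ follows from hypotheses (2) and (4) combined with the just-established regularity of $\Phi$ and $\Phi_y$; the conclusions on $\Xi$, $\Xi_y$ and the strengthened rates on $\tilde{\bm{\zeta}}_1$ then come out of Lemma \ref{lemma:explicitrateofgrowthofderivativesinparameters1D} with the requisite growth tracking.

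Finally, for \ref{assumption:limitingcoefficientsLionsDerivatives} I use Lemma \ref{lem:regularityofaveragedcoefficients}: hypothesis (2) combined with the regularity of $\Phi, \Phi_x, \Phi_y, \Phi_{xy}$ gives $\gamma, D \in \mc{M}_{p,L}^{\hat{\bm{\zeta}}}$, whence $\bar{\gamma}, \bar{D} \in \mc{M}_{b,L}^{\hat{\bm{\zeta}}}$; passage to $\bar{D}^{1/2}$ uses the uniform lower bound $\bar{D} \geq \bar{\lambda}_- > 0$ from hypothesis (7) together with the chain rule for Lions derivatives, so that all derivatives of $x \mapsto x^{1/2}$ appearing via Fa\`a di Bruno at argument $\bar{D}$ are bounded. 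Assumption \ref{assumption:2unifboundedlinearfunctionalderivatives} comes directly from hypothesis (5) for $\tau_1, \tau_2, f$, and for the remaining $\gamma$, $\sigma + \tau_1 \Phi_y$, $\tau_2 \Phi_y$ by combining this with Lemma \ref{lemma:explicitrateofgrowthofderivativesinparameters1D}(2), which propagates linear-functional-derivative regularity through the Poisson equation to $\Phi_y$. Assumption \ref{assumption:limitingcoefficientsregularity} (resp.\ its strengthening \ref{assumption:limitingcoefficientsregularityratefunction}) follows from hypothesis (6) by another application of Lemma \ref{lem:regularityofaveragedcoefficients} with the collections $\bm{\zeta}_{x,w+2}$ and $\bar{\bm{\zeta}}_{w+2}$ (resp.\ $\bm{\zeta}_{x,r+2}$, $\bar{\bm{\zeta}}_{r+2}$), using the transfer formula \eqref{eq:derivativetransferformula} to count how many $x$-derivatives of $f, a, b$ are consumed. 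The main obstacle is precisely this bookkeeping: one must carefully verify that every multi-index arising in the various collections $\hat{\bm{\zeta}}, \hat{\bm{\zeta}}_1, \mathring{\bm{\zeta}}, \mathring{\bm{\zeta}}_1$ is covered by hypotheses (2)--(6), and that the polynomial growth rates in $y$ imposed on $f, a, b$ survive propagation through the derivative transfer formula \eqref{eq:formulasatisfiedbyderivatives2} to the mixed derivatives of $\Phi$ that appear in the expressions for $\gamma, D$ and in the linear-functional-derivative terms; the most delicate case is $\bar{D}^{1/2}$, where hypothesis (7) is exactly what prevents the Fa\`a di Bruno expansion from producing unbounded factors of $\bar{D}^{-1/2}, \bar{D}^{-3/2}, \ldots$ when Lions derivatives of arbitrary order within $\hat{\bm{\zeta}}$ are taken.
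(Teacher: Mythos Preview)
Your sketch is correct and follows essentially the same approach as the paper: verify each assumption by propagating regularity through the Poisson equation via Lemma \ref{lemma:explicitrateofgrowthofderivativesinparameters1D}, transfer to averaged coefficients via Lemma \ref{lem:regularityofaveragedcoefficients}, invoke Proposition \ref{prop:suffcondfordoubledcellproblem} for \ref{assumption:qF2bound}/\ref{assumption:tildechi}, and use the uniform lower bound on $\bar D$ for $\bar D^{1/2}$. One small slip: when you justify the strengthened bounds $\tilde{q}_\Phi(0,k,0)\leq 0$ you cite ``hypothesis (5)'' of the present proposition, but the strict inequality $q_b<0$ actually comes from condition (5) of Proposition \ref{prop:suffcondfordoubledcellproblem}, whose hypotheses are assumed here; the paper likewise just quotes that earlier result rather than redoing the growth calculation, and also singles out the use of hypothesis (3) (namely $q_b(0,3,0)\leq 3$, $q_f(0,3,0)\leq 1$, $q_a(0,3,0)\leq 0$) to control $q_{\Phi_y}(0,3,0)$ when checking \ref{assumption:forcorrectorproblem} for $F=\gamma$, since $\gamma$ involves $\Phi_x$.
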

\begin{proof}
\ref{assumption:strongexistence} follows from the above Lipschitz properties, writing the system of SDEs \eqref{eq:slowfast1-Dold} in terms of the empirical projections of the coefficients and using standard strong existence and uniqueness results (see Proposition A.1 in \cite{BS}) for the weakly interacting system \eqref{eq:slowfast1-Dold} and applying Theorem 2.1 in \cite{Wang} to the IID McKean-Vlasov system \eqref{eq:IIDparticles}.

For the limiting system \eqref{eq:LLNlimitold}, we have, noting that $\hat{\bm{\zeta}}$ is the same $\hat{\bm{\zeta}}$ from assumption  \ref{assumption:limitingcoefficientsLionsDerivatives}, and $\hat{\bm{\zeta}}_1$ is just $\hat{\bm{\zeta}}$ with one extra $x$ derivative in all spacial components, that by Lemma \ref{lemma:explicitrateofgrowthofderivativesinparameters1D}, $\Phi,\Phi_x,\Phi_y,\Phi_{xy}\in \mc{M}_{p,L}^{\hat{\bm{\zeta}}}$, and thus under these assumptions $\gamma,D \in \mc{M}_{p,L}^{\hat{\bm{\zeta}}}$, and hence by Lemma \ref{lem:regularityofaveragedcoefficients} $\bar{\gamma},\bar{D}\in \mc{M}_{b,L}^{\hat{\bm{\zeta}}}$. Using that $x\mapsto \sqrt{x}$ is smooth with bounded derivatives of all orders on bounded sets in $[\bar{\lambda}_-,+\infty)$, we get via chain rule that in fact $\bar{D}^{1/2}\in \mc{M}_{b,L}^{\hat{\bm{\zeta}}}$. This immediately implies \ref{assumption:limitingcoefficientsLionsDerivatives}, and also yields by definition that $\bar{\gamma},\bar{D}$ are bounded and Lipschitz in $x,\bb{W}_2$. So again Theorem 2.1 in \cite{Wang} applies, and we gain strong existence and uniqueness of the averaged McKean-Vlasov SDE \eqref{eq:LLNlimitold}. Note that this is the only place where an assumption on the limiting coefficients, that is (7), is being used, and as per Remark \ref{remark:barDnondegenerate} this assumption holds in all but pathological cases.

For assumption \ref{assumption:multipliedpolynomialgrowth}, we already showed from the assumptions in Proposition \ref{prop:suffcondfordoubledcellproblem}, $q_{\Phi}(n,l,\bm{\beta}),q_{\Phi_{yy}}(n,l,\bm{\beta})<0$ and $q_{\Phi_y}(n,l,\bm{\beta})<-1$ for all $(n,l,\bm{\beta})\in\bm{\zeta}$, which is much stronger than what we require.

For \ref{assumption:qF2bound} and \ref{assumption:tildechi}, we already showed the result in Proposition \ref{prop:suffcondfordoubledcellproblem}.

For \ref{assumption:forcorrectorproblem}, we can check that for each of the 3 choices of $F$, that $q_F,q_{F}(n,l,\bm{\beta})\leq 1$ for $(n,l,\bm{\beta})\in \mathring{\bm{\zeta}}_1$ and $q_{F}(n,l,\bm{\beta})\leq 2$ for $(n,l,\bm{\beta})\in \mathring{\bm{\zeta}}$, so the result follows by Lemma \ref{lemma:explicitrateofgrowthofderivativesinparameters1D}. In particular, since one of our choices of $F$ is $\gamma$, which involves $\Phi_x$, we use that since $q_{b}(0,3,0)\leq 3, q_{f}(0,3,0)\leq 1,q_{a}(0,3,0)\leq 0$, Lemma \ref{lemma:explicitrateofgrowthofderivativesinparameters1D} implies $q_{\Phi_y}(0,3,0)\leq 2$.

For \ref{assumption:uniformLipschitzxmu}, we use that all the terms in the products involved in each of the functions are bounded and jointly Lipschitz. The Lipschitz properties in $x,y$ can be extrapolated from boundedness of each of the respective derivatives of the functions and in $\bb{W}_2$ follow from the boundedness of the Lions derivatives by \cite{CD} Remark 5.27.


For \ref{assumption:2unifboundedlinearfunctionalderivatives}, we have via Lemma \ref{lemma:explicitrateofgrowthofderivativesinparameters1D} that $\Phi,\Phi_x,\Phi_y,\Phi_{xy}\in \mc{M}_{\bm{\delta},p}^{\mathring{\bm{\zeta}}}$ (by construction of $\mathring{\bm{\zeta}}_1$), so that in fact all the listed functions are in $\mc{M}_{\bm{\delta},p}^{\mathring{\bm{\zeta}}}$. By Lemma \ref{lem:regularityofaveragedcoefficients}, this also implies the continuity for the Linear Functional Derivatives in \ref{assumption:limitingcoefficientsregularity}/\ref{assumption:limitingcoefficientsregularityratefunction} by definition.

And finally, for \ref{assumption:limitingcoefficientsregularity}/\ref{assumption:limitingcoefficientsregularityratefunction}, we have via Lemma \ref{lemma:explicitrateofgrowthofderivativesinparameters1D} that $\Phi,\Phi_x,\Phi_y,\Phi_{xy}\in \mc{M}_{\bm{\delta},p}^{\bm{\zeta}_{x,w+2}}$ (by construction of $\bm{\zeta}_{x,w+3}$). Then in fact $\gamma,D\in \mc{M}_{\bm{\delta},p}^{\bm{\zeta}_{x,w+2}}$, and by Lemma \ref{lem:regularityofaveragedcoefficients}, we get $\bar{\gamma},\bar{D}\in \mc{M}_{\bm{\delta},b}^{\bm{\zeta}_{x,w+2}}$.

For the regularity of the linear functional derivatives, via the equality \eqref{eq:derivativetransferformula} given by Lemma \ref{lemma:derivativetransferformulas}, we see it is sufficient to show
\begin{align*}
\norm{\frac{\delta}{\delta m}\Phi(x,y,\mu)[\cdot]}_{w+2},\norm{\frac{\delta}{\delta m}\Phi_y(x,y,\mu)[\cdot]}_{w+2},\norm{\frac{\delta}{\delta m}\Phi_x(x,y,\mu)[\cdot]}_{w+2},\norm{\frac{\delta}{\delta m}\Phi_{xy}(x,y,\mu)[\cdot]}_{w+2} \leq C(1+|y|^k),
\end{align*}
uniformly in $x\in\R,\mu\in\mc{P}_2(\R)$ for some $k\in\bb{N}.$ This follows as in the proof of the Lipschitz property in \ref{lemma:explicitrateofgrowthofderivativesinparameters1D}, iteratively using the equation \eqref{eq:formulasatisfiedbyderivatives2} and that the coefficient for the growth rate in $y$ can be written in terms of the inhomogeneity via \cite{PV1} Theorem 2 and the assumption (6), then applying \cite{GS} Lemma B1 / Remark B2 to get the result for the derivatives in $y$ as well.
\end{proof}
The examples that follow, examples \ref{example:contrivedfulldependencecase}-\ref{example:L2averaging}, present concrete cases covered by our assumptions.
\begin{example}\label{example:contrivedfulldependencecase}(A case with full dependence of the coefficients on $(x,y,\mu)$)
 Suppose $\tau_1,\tau_2,\sigma>0$ are constant with $\sigma$ large enough that $\bar{D}(x,\mu)>0,\forall x\in\R,\mu\in\mc{P}_2(\R)$, where $\bar{D}$ is as in Equation \eqref{eq:averagedlimitingcoefficients}, and the other coefficients take the form
\begin{align*}
b(x,y,\mu) &= q\biggl(y-\frac{1}{\kappa}\langle \mu,\phi_f\rangle\biggr)p_b(x),\quad
c(x,y,\mu) = r_c(y)+p_c(x)+\langle \mu,\phi_c\rangle\\
f(x,y,\mu) &= -\kappa y + \langle \mu,\phi_f\rangle,\quad
g(x,y,\mu) = r_g(y)+p_g(x)+\langle \mu,\phi_g\rangle,
\end{align*}
where here $\kappa>0$ and $\langle \phi, \mu\rangle \coloneqq \int_\R \phi(z)\mu(dz)$. Suppose also that $q \in C^\infty(\R)$ is odd, there is $\beta>0$ such that $|q(z)|,|q'(z)|,|q''(z)|,|q'''(z)|\leq C(1+|z|)^{-\beta}$, $\norm{r_c'}_\infty\leq C$, $|r_g|_{C_b^1(\R)}\leq C$, $\phi_c,\phi_g,\phi_f\in \mc{S}_{w+2}$, and $p_c,p_g \in C_b^{w+2}(\R),p_b\in C_b^{w+3}(\R)$. Then assumptions $\ref{assumption:uniformellipticity} - \ref{assumption:limitingcoefficientsregularity}$ hold, and \ref{assumption:limitingcoefficientsregularityratefunction} holds if $w$ is replaced by $r$.
\end{example}
\begin{proof}
\ref{assumption:uniformellipticity} and \ref{assumption:gsigmabounded} are immediate. \ref{assumption:retractiontomean} follows from noting that $\eta(x,y,\mu) =\eta(\mu)= \langle \mu,\phi_f\rangle$, so $\partial_\mu \eta(\mu)[z] = \phi'_f(z)$ (see Example 1 in Section 5.2.2 in \cite{CD}) and by Remark 5.27 in \cite{CD}, $|\eta(\mu_1)-\eta(\mu_2)|\leq \norm{\phi_f'}_\infty\bb{W}_2(\mu_1,\mu_2).$ In addition,
\begin{align*}
&2(f(x,y_1,\mu)-f(x,y_2,\mu))(y_1-y_2)+3|\tau_1(x,y_1,\mu)-\tau_1(x,y_2,\mu)|^2 +3|\tau_2(x,y_1,\mu)-\tau_2(x,y_2,\mu)|^2 \\
& = -2\kappa(y_1-y_2)^2.
\end{align*}

For \ref{assumption:centeringcondition}, we can find via the explicit form of $\pi$ in Lemma \ref{lemma:derivativetransferformulas} (or the fact that the frozen process is given by the Vasicek model and hence the transition density is an explicitly computable Gaussian) that
\begin{align*}
\pi(y;\mu)& = \sqrt{\frac{k}{2\pi a}}\exp\biggl(-\frac{\kappa}{2a}[y-\frac{1}{\kappa}\langle \mu,\phi_f\rangle]^2 \biggr),
\end{align*}
so
\begin{align*}
\int_{\R}b(x,y,\mu)\pi(dy;x,\mu)  &= p_b(x)\sqrt{\frac{k}{2\pi a}}\int_{\R}q\biggl(y-\frac{1}{\kappa}\langle \mu,\phi_f\rangle\biggr)\exp\biggl(-\frac{\kappa}{2a}[y-\frac{1}{\kappa}\langle \mu,\phi_f\rangle]^2 \biggr) dy \\
& = p_b(x)\sqrt{\frac{k}{2\pi a}}\int_{\R}q(y)\exp\biggl(-\frac{\kappa}{2a}y^2 \biggr) dy\\
& = 0,\forall x\in \R,\mu\in\mc{P}_2(\R)
\end{align*}
since the integrand is odd.

For the rest of the assumptions, we use Propositions \ref{prop:suffcondfordoubledcellproblem} and \ref{prop:suffcondrestofassumptions}. We have
\begin{align*}
\frac{\partial^{j+k}}{\partial y^j \partial x^k}b(x,y,\mu)& = q^{(j)}\biggl(y-\frac{1}{\kappa}\langle \mu,\phi_f\rangle\biggr)p^{(k)}_b(x)\\
\partial^{l}_\mu\frac{\partial^{j+k}}{\partial y^j \partial x^k}b(x,y,\mu)[z_1,z_2,...,z_l]& = q^{(j+l)}\biggl(y-\frac{1}{\kappa}\langle \mu,\phi_f\rangle\biggr)p^{(k)}_b(x)(-\kappa^{-1})^l\prod_{m=1}^l \phi'(z_m)\\
\frac{\delta^l}{\delta m^l}\frac{\partial^{j+k}}{\partial y^j \partial x^k}b(x,y,\mu)[z_1,z_2,...,z_l]& = q^{(j+l)}\biggl(y-\frac{1}{\kappa}\langle \mu,\phi_f\rangle\biggr)p^{(k)}_b(x)(-\kappa^{-1})^l\prod_{m=1}^l \phi(z_m)\\
\frac{\partial^l}{\partial z^l}\frac{\delta}{\delta m}\frac{\partial^{j+k}}{\partial y^j \partial x^k}b(x,y,\mu)[z]& = q^{(j+1)}\biggl(y-\frac{1}{\kappa}\langle \mu,\phi_f\rangle\biggr)p^{(k)}_b(x)\phi^{(l)}_f(z)\\
\partial_\mu f(x,y,\mu)[z] &= \phi'_f(z),\quad
\frac{\partial^l}{\partial z^l}\frac{\delta}{\delta m}f(x,y,\mu)[z] = \phi^{(l)}_f(z)\\
\frac{\partial^j}{\partial y^j}h(x,y,\mu) & = r^{(j)}_h(y),\quad
 \frac{\partial^k}{\partial x^k}h(x,y,\mu)  = p^{(j)}_h(x)\\
\partial_\mu h(x,y,\mu)[z] &= \phi'_h(z),\quad
\frac{\partial^l}{\partial z^l}\frac{\delta}{\delta m}h(x,y,\mu)[z] = \phi^{(l)}_h(z)
\end{align*}
for $h=c,g$ and $j,k,l\in\bb{N}$ such that the above derivatives are defined.

From this we can see that $b_y,b_x,\partial_\mu b(x,y,\mu)[z]$ are all uniformly bounded, and hence (1) in Proposition \ref{prop:suffcondfordoubledcellproblem} holds.

For (2)-(5), $a$ is constant, and all the considered derivatives of $f$ are uniformly $0$ except for $f_y = -\kappa$, $\partial^l_z\partial_\mu f(x,y,\mu)[z] = \phi^{(1+l)}_f(z),l=0,1,2$, all of which are uniformly bounded in $y$. All the involved derivatives of $b$ are seen to be bounded functions of $x,z_1,z_2,z_3$ multiplied by $q^{(j)}\biggl(y-\frac{1}{\kappa}\langle \mu,\phi_f\rangle\biggr)$ for $j\in \br{0,1,2,3}$, so since the translation is uniformly bounded in $\mu$, we see all of the listed $q_{b}(n,l,\bm{\beta}),q_{b_{y}}(n,l,\bm{\beta}),q_{b_{yy}}(n,l,\bm{\beta})<0$ . So the assumptions of \ref{prop:suffcondfordoubledcellproblem} hold.

Now turning to Proposition \ref{prop:suffcondrestofassumptions}, we have $h_y,h_x,\partial_\mu h(x,y,\mu)[z]$, $h=g,c$ are all uniformly bounded, and hence (1) holds.

(2) follows from observing that the desired derivatives in $x$ of $f$ and of $c,g$ are independent of $\mu$ and bounded Lipschitz in $x$. $\partial_z^l \partial_\mu p$ for $p=f,c,g$, $l=0,...,4$ only depends on $z$, and is Lipschitz for all $l$. All the listed derivatives of $b$ can easily be shown to be Lipschitz in $x,z$ uniformly in $y,\mu$ via the representations above, and since they take the form of bounded functions in $x,z_1,z_2,z_3,z_4$ multiplied by $q^{(j)}\biggl(y-\frac{1}{\kappa}\langle \mu,\phi_f\rangle\biggr)$ for $j\in \br{0,1,2,3,4}$, of which the Lions derivative is uniformly bounded, we have by Remark 5.27 in \cite{CD} that they are Lipschitz in $\bb{W}_2$ uniformly in $x,y,z$.

$q_b(0,3,0)<0$, so (3) holds.

The first and second derivatives of $c,g$ in $x$ are bounded, their first derivative in $\mu$ is bounded and its derivative in $z$ are bounded, and the rest of the derivatives of (4) are $0$. 

For (5)-(6) $f,c,g$, the listed derivatives do not depend on $y$ or $\mu$, and are uniformly bounded, with the linear functional derivatives in (6) being in $\mc{S}_{w+2}$ by assumption. For $b$, all the derivatives in $\tilde{\bm{\zeta}}_3$ are bounded by $\norm{q}_\infty\norm{p_b}_{C_b^{w+3}}$, the second linear functional derivatives are uniformly bounded by their representation above, and $\norm{\frac{\partial^k}{\partial x}\frac{\delta}{\delta m}b(x,y,\mu)[z]}_{w+2}\leq \norm{q'}_\infty\norm{p_b}_{C^k_b(\R)}\norm{\phi_f}_{w+2}\leq C,k=0,1$.\

Finally, (7) holds by supposition (noting that by the form provided for $\bar{D}$ in Equation \eqref{eq:alternativediffusion} and the fact that $\Phi$ does not depend on $\sigma$ that such a sufficiently large choice exists).

\end{proof}
\begin{example}\label{example:noxmudependenceforphiandpi}(A case where $\Phi$ and $\pi$ are independent of $x,\mu$)
Consider the case:
\begin{align*}
b(x,y,\mu) & = b(y),\quad
c(x,y,\mu) = c_1(x) + \langle \mu, c_2(x-\cdot)\rangle,\quad
\sigma(x,y,\mu)\equiv  \sigma\\
f(x,y,\mu) &= -\kappa y +\eta(y),\quad
g(x,y,\mu) = g_1(x) + \langle \mu, g_2(x-\cdot)\rangle, \quad
\tau_1(x,y,\mu)\equiv \tau_1,\quad
\tau_2(x,y,\mu)\equiv \tau_2.
\end{align*}

Suppose $\eta\in C^1_b(\R)$ with $\norm{\eta'}_\infty<\kappa$, $c_1,g_1,c_2,g_2\in C_b^{w+2}(\R)$, $c_2,g_2\in \mc{S}_{w+2}$, $\tau_1^2+\tau_2^2>0$, and $b$ is Lipschitz continuous, $O(|y|^{1/2})$, and satisfies the centering condition  \eqref{eq:centeringconditionold}. Then Assumptions \ref{assumption:uniformellipticity}-\ref{assumption:limitingcoefficientsregularity} hold. Furthermore, if this holds with $w$ replaced by $r$, then Assumption \ref{assumption:limitingcoefficientsregularityratefunction} holds.
\end{example}
\begin{proof}
Note that here $\Phi$ does not depend on $x$ or $\mu$, there is no need for Lemma \ref{prop:purpleterm1}, 
 which adds to the simplification of things (we don't need to check Assumption \ref{assumption:qF2bound}. In particular, there is no need for the extremely restrictive assumptions needed to apply Lemma \ref{lemma:extendedrocknermultidimcellproblem} since, as we will see, an application of Proposition A.2 from \cite{GS} is sufficient to handle Assumption \ref{assumption:tildechi}, and the rest of the Poisson Equations are 1-dimensional.

Assumptions \ref{assumption:uniformellipticity},\ref{assumption:retractiontomean}, \ref{assumption:centeringcondition}, and \ref{assumption:gsigmabounded} are immediate.

For \ref{assumption:strongexistence}, we have for $F(x,\mu)=c(x,\mu)$ or $g(x,\mu)$ $F_x(x,\mu) = F_1'(x)+\langle \mu,F'(x-\cdot)\rangle$  and $\partial_\mu F(x,\mu)[z] = -F_2'(x-z)$ are bounded, so for all coefficients joint Lipschitz continuity in $(x,y,\bb{W}_2)$ holds (using again Example 1 in Section 5.2.2 and Remark 5.27 in \cite{CD}), and the result holds in the same way as in Proposition \ref{prop:suffcondrestofassumptions}.

For \ref{assumption:multipliedpolynomialgrowth}, we just need $\Phi(y)$ grows at most linearly in $y$ and $\Phi'(y)$ is bounded. From Lemma \ref{lemma:Ganguly1DCellProblemResult}, we have in fact $\Phi$ is $O(|y|^{1/2})$ and $\Phi'$ is $O(|y|^{-1/2})$.

For \ref{assumption:forcorrectorproblem}, we have $\gamma(x,y,\mu) = [g_1(x)+\langle \mu,g_2(x-\cdot)\rangle]\Phi'(y)+c_1(x)+\langle \mu,c_2(x-\cdot)\rangle$ and $D(x,y,\mu) = D(y)=b(y)\Phi(y)+\sigma\tau_1\Phi'(y)+\frac{1}{2}\sigma^2$. Then for the case $F=\gamma$, $\Xi(x,y,\mu) = \tilde{\Xi}(y)[g_1(x)+\langle \mu,g_2(x-\cdot)\rangle]$ where
\begin{align*}
\mc{L}\tilde{\Xi}(y)=\Phi'(y)-\int_{\R}\Phi'(y)\pi(dy).
\end{align*}
$\Phi'$ is $O(|y|^{-1/2})$, so by Lemma \ref{lemma:Ganguly1DCellProblemResult}, $\tilde{\Xi}\in C^2_b(\R)$. Using $g_1$ and $g_2$ have two bounded derivatives, it is plain to see the result holds. A similar proof shows the result holds with $F=\sigma\psi_1(t,x,y)+[\tau_1\psi_1(t,x,y)+\tau_2\psi_2(t,x,y)]\Phi'(y),\psi_1,\psi_2\in C^\infty_c([0,T]\times\R\times \R)$. Since $\Phi$ and $b$ are $O(|y|^{1/2})$, $D$ is $O(|y|)$, so by Lemma \ref{lemma:Ganguly1DCellProblemResult} $\Xi(y)$ corresponding to $F(y)=D(y)$ is $O(|y|)$, with $\Xi'$ bounded.

For \ref{assumption:uniformLipschitzxmu}, we use the Lipschitz and boundedness properties of $\Phi'(y)$ from Lemma \ref{lemma:Ganguly1DCellProblemResult}. The result then follows by the previously noted Lipschitz properties of $c$ and $g$, and hence $\gamma$.

For \ref{assumption:limitingcoefficientsLionsDerivatives}, $\bar{D}$ is constant and $\bar{\gamma}(x,\mu) = \alpha g(x,\mu)+c(x,\mu)$, for $\alpha = \int_\R \Phi'(y)\pi(dy)$. The result thus follows from $g_1,c_1\in C^5_b(\R)$ and $g_2,c_2\in C^6_b(\R)$.

For \ref{assumption:tildechi}, we have $\tilde{\chi}(x,y,\mu)=\tilde{\chi}(y)$ grows linearly in $y$ and $\tilde{\chi}'(y)$ is $O(|y|^3)$ via Proposition A.2 of \cite{GS}. 

For \ref{assumption:2unifboundedlinearfunctionalderivatives}, none of the listed functions depend on $\mu$ other than $\gamma$, and $\frac{\delta}{\delta m}\gamma(x,y,\mu)[z] = g_2(x-z)\Phi'(y)+c_2(x-z)$ is bounded.

For \ref{assumption:limitingcoefficientsregularity}, $\bar{D}$ is constant and $\bar{\gamma}(x,\mu) = \alpha g(x,\mu)+c(x,\mu)$, for $\alpha = \int_\R \Phi'(y)\pi(dy)$, so the result follows from $g_1,g_2,c_1,c_2\in C^{w+2}_b(\R)$, and $g_2,c_2\in \mc{S}_{w+2}.$ The proof for extending to \ref{assumption:limitingcoefficientsregularityratefunction} follows in the same way, replacing $w$ by $r$.

\end{proof}

\begin{example}\label{example:L2averaging}(The case without full-coupling)
Consider the case where
\begin{align*}
b(x,y,\mu) &\equiv 0,\quad \sigma(x,y,\mu) = \sigma(x,\mu).
\end{align*}
In this setting, it is known that when also $g\equiv 0$ and $\tau_1\equiv 0$, under sufficient conditions on $c,\sigma,f$ and $\tau_2$, we can expect not only convergence in distribution of $\bar{X}^\epsilon \overset{d}{=}\bar{X}^{i,\epsilon}$ from Equation \eqref{eq:IIDparticles} to $X$ from Equation \eqref{eq:LLNlimitold}, but also convergence in $L^2$. It is easily seen that this also holds when $g,\tau_1\neq 0$ if they are sufficiently regular.

Note that in the limiting coefficients from Equation \eqref{eq:averagedlimitingcoefficients}, we have $\Phi\equiv 0$, so $\bar{\gamma}(x,\mu) = \bar{c}(x,\mu)$ and $\bar{D}(x,\mu) = \frac{1}{2}\sigma^2(x,\mu)$. In this setting, we can see immediately that there is no need for Assumptions \ref{assumption:multipliedpolynomialgrowth}, \ref{assumption:qF2bound}, and \ref{assumption:tildechi}. \ref{assumption:forcorrectorproblem} need only hold with $F=c$ and $F=\psi \in C^\infty_c([0,T]\times\R\times\R)$. We will see that, since we can gain the aforementioned $L^2$ averaging, there is no need for Theorem \ref{theo:mckeanvlasovaveraging}, and hence for Assumption \ref{assumption:limitingcoefficientsLionsDerivatives}.

Sufficient conditions for Theorem \ref{theo:MDP} to hold in this case are: \ref{assumption:uniformellipticity}- \ref{assumption:centeringcondition}, \ref{assumption:gsigmabounded}, \ref{assumption:uniformLipschitzxmu}, \ref{assumption:2unifboundedlinearfunctionalderivatives}, and
\begin{enumerate}
\item $c,a,f\in \mc{M}_{p}^{\tilde{\bm{\zeta}}}(\R\times\R\times \mc{P}_2(\R))$ with  $q_f(n,l,\bm{\beta})\leq 1$, $q_a(n,l,\bm{\beta})\leq 0$, $q_c(n,l,\bm{\beta})\leq 2$, $\forall (n,l,\bm{\beta})\in \tilde{\bm{\zeta}}$ and $q_c(n,l,\bm{\beta})\leq 1$, $\forall (n,l,\bm{\beta})\in \tilde{\bm{\zeta}}_1$.
\item $\sigma^2 \in\mc{M}_b^{\bm{\zeta}_{x,r+2}}(\R\times\mc{P}_2(\R))\cap \mc{M}_{\bm{\delta},b}^{\bar{\bm{\zeta}}_{r+2}}(\R\times\mc{P}_2(\R))$ and
$\sup_{x\in\R,\mu\in\mc{P}(R)}\norm{\frac{\delta}{\delta m}\sigma^2(x,\mu)[\cdot]}_{r+2}<\infty.$
\item $f,a,c \in \mc{M}_{p}^{\bm{\zeta}_{x,r+2}}$ and
\begin{align*}
&\norm{\frac{\delta}{\delta m}f(x,y,\mu)[\cdot]}_{r+2},\norm{\frac{\delta}{\delta m}a(x,y,\mu)[\cdot]}_{r+2},\norm{\frac{\delta}{\delta m}c(x,y,\mu)[\cdot]}_{r+2} \leq C(1+|y|^k),
\end{align*}
uniformly in $x\in\R,\mu\in\mc{P}_2(\R)$ for some $k\in\bb{N}.$
\end{enumerate}
In the above the referenced collections of multi-indices are from Equation \eqref{eq:collectionsofmultiindices}.
\end{example}
\begin{proof}
\ref{assumption:forcorrectorproblem} holds with $F=c$ and $\psi$ via an application of Lemmas \ref{lemma:explicitrateofgrowthofderivativesinparameters1D} and \ref{lem:regularityofaveragedcoefficients}, using (1).

Further, Lemma \ref{lem:regularityofaveragedcoefficients} gives $\bar{c}(x,\mu)$ is Lipschitz continuous, so \ref{assumption:strongexistence} holds in the same way as Examples \ref{example:contrivedfulldependencecase} and \ref{example:noxmudependenceforphiandpi} via the Lipschitz properties imposed on $c,\sigma,f,\tau_2$ from assumptions \ref{assumption:retractiontomean} and \ref{assumption:uniformLipschitzxmu}.

Since \ref{assumption:forcorrectorproblem} holds with $F=c$, one can see that Proposition 4.5 of \cite{BezemekSpiliopoulosAveraging2022} holds with $F=c$ and $\psi=1$ (noting that under these assumptions the norm may be moved inside the expectation with little change to the proof method). Then:
\begin{align*}
\E[|\bar{X}^\epsilon_t - X_t|^2] & \leq  C\biggl\lbrace\E[\biggl|\int_0^t c(\bar{X}^\epsilon_s,Y^\epsilon_s,\mc{L}(\bar{X}^\epsilon_s)) - \bar{c}(X_s,\mc{L}(X_s))ds\biggr|^2] + \E[\int_0^t|\sigma(\bar{X}^\epsilon_s,\mc{L}(\bar{X}^\epsilon_s))-\sigma(X_s,\mc{L}(X_s))|^2ds]\biggr\rbrace\\
&\leq C\biggl\lbrace\E[\biggl|\int_0^t c(\bar{X}^\epsilon_s,Y^\epsilon_s,\mc{L}(\bar{X}^\epsilon_s)) - \bar{c}(\bar{X}^\epsilon_s,\mc{L}(\bar{X}^\epsilon_s))ds\biggr|^2]+ \E[\int_0^t |\bar{c}(\bar{X}^\epsilon_s,\mc{L}(\bar{X}^\epsilon_s)) - \bar{c}(X_s,\mc{L}(X_s))|^2ds]\\
&+ \E[\int_0^t|\sigma(\bar{X}^\epsilon_s,\mc{L}(\bar{X}^\epsilon_s))-\sigma(X_s,\mc{L}(X_s))|^2ds]\biggr\rbrace\\
&\leq C\biggl\lbrace\E[\biggl|\int_0^t c(\bar{X}^\epsilon_s,Y^\epsilon_s,\mc{L}(\bar{X}^\epsilon_s)) - \bar{c}(\bar{X}^\epsilon_s,\mc{L}(\bar{X}^\epsilon_s))ds\biggr|^2]+ \E[\int_0^t |\bar{c}(\bar{X}^\epsilon_s,\mc{L}(\bar{X}^\epsilon_s)) - \bar{c}(X_s,\mc{L}(X_s))|^2ds]\\
&+ \E[\int_0^t|\sigma(\bar{X}^\epsilon_s,\mc{L}(\bar{X}^\epsilon_s))-\sigma(X_s,\mc{L}(X_s))|^2ds]\biggr\rbrace\\
&\leq C\epsilon^2 (1+t^2+t) +C\int_0^t \E[|\bar{X}^\epsilon_s - X_s|^2] + \bb{W}_2(\mc{L}(\bar{X}^\epsilon_s),\mc{L}(X_s))ds
\end{align*}
where in the last step we used Proposition 4.5 of \cite{BezemekSpiliopoulosAveraging2022} with $F=c$ and $\psi=1$, the assumed Lipschitz continuity of $\sigma$, and the inherited Lipschitz continuity of $\bar{c}$ via Lemma \ref{lem:regularityofaveragedcoefficients}. Bounding the 2-Wasserstein distance between the Laws by the difference in squared expectation of the processes, we get by Gr\"onwall's inequality:
\begin{align*}
\sup_{t\in[0,T]}\E[|\bar{X}^\epsilon_t - X_t|^2]&\leq C(T)\epsilon^2.
\end{align*}

Thus, in the proof of Lemma \ref{lemma:Zboundbyphi4}, we can circumnavigate using Theorem \ref{theo:mckeanvlasovaveraging} in the last line, and instead use
\begin{align*}
&a^2(N)N \biggl|\E[\phi(\bar{X}^{\epsilon}_t] - \E[\phi(X_t)] \biggr|^2+ 4a^2(N)\norm{\phi}^2_\infty \\
&\leq a^2(N)N C(T)\E[|\bar{X}^{\epsilon}_t-X_t|^2] \norm{\phi'}^2_\infty +2a(N)\norm{\phi}_\infty\\
&\leq a^2(N)N \epsilon^2 C(T)\norm{\phi'}^2_\infty +2a(N)\norm{\phi}_\infty.
\end{align*}
This shows we can circumnavigate the need for Assumption \ref{assumption:limitingcoefficientsLionsDerivatives}, and also shows that the bound from Lemma \ref{lemma:Zboundbyphi4} can be improved to $C(T)|\phi|_1^2$.

Lastly, Assumption \ref{assumption:limitingcoefficientsregularityratefunction} holds immediately for $\bar{D}=\frac{1}{2}\sigma^2$ using (2), and can be seen to hold for $\bar{\gamma}=\bar{c}$ using (3) and the same proof as in Proposition \ref{prop:suffcondrestofassumptions}.
\end{proof}

\begin{remark}
One can in fact see in the setting of Example \ref{example:L2averaging} that, as noted, the bound in Lemma \ref{lemma:Zboundbyphi4} can be improved to $C(T)|\phi|_1^2$, and further, that since $R_5^i$ in Lemma \ref{lemma:Lnu1nu2representation} is zero, the bound on $R^N_t(\phi)$ in the same Lemma can be improved to $\bar{R}(N,T)|\phi|_3$. Moreover, via the bound above, we can see via triangle inequality and Lemma \ref{lemma:XbartildeXdifference} that $\sup_{s\in [0,T]}\E\biggl[ \frac{1}{N}\sum_{i=1}^N\biggl|\tilde{X}^{i,\epsilon,N}_s-X^{i}_s\biggr|^2\biggr]\leq C(T)[\epsilon^2+ \frac{1}{N}+\frac{1}{Na^2(N)}],$ where $\br{X^i}_{i=1}^N$ are IID copies of the limiting McKean-Vlasov Equation \eqref{eq:LLNlimitold} driven by the same Brownian motions as the $\tilde{X}^{i,\epsilon,N}$'s. This allows for the proof of the Laplace Principle Upper Bound in Proposition \ref{prop:LPlowerbound} to go through in the same way as in Subsection 4.4 of \cite{BW}, and eliminates the need for the approximation argument therein. Thus, in fact, the rate function can be posed on $C([0,T];\mc{S}_{-\rho})$ and taken to be infinite outside of $C([0,T];\mc{S}_{-v})$ as in Corollary \ref{cor:mdpnomulti}. This also allows us to see that (2) and (3) in Example \ref{example:L2averaging} can be relaxed by replacing $r$ with $\rho$.
\end{remark}

\section{On Differentiation of Functions on Spaces of Measures}\label{Appendix:LionsDifferentiation}
We will need the following two definitions from \cite{CD}:

\begin{defi}
\label{def:lionderivative}
Given a function $u:\mc{P}_2(\R^d)\tto \R$, we may define a lifting of $u$ to $\tilde{u}:L^2(\tilde\W,\tilde\F,\tilde\Prob;\R^d)\tto \R$ via $\tilde u (X) = u(\mc{L}(X))$ for $X\in L^2(\tilde\W,\tilde\F,\tilde\Prob;\R^d)$. We assume $\tilde\W$ is a Polish space, $\tilde\F$ its Borel $\sigma$-field, and $\tilde\Prob$ is an atomless probability measure (since $\tilde\W$ is Polish, this is equivalent to every singleton having zero measure).

Here, denoting by $\mu(|\cdot|^r)\coloneqq \int_{\R^d}|x|^r \mu(dx)$ for $r>0$,
\begin{align*}
\mc{P}_2(\R^d) \coloneqq \br{ \mu\in \mc{P}(\R^d):\mu(|\cdot|^2)= \int_{\R^d}|x|^2 \mu(dx)<\infty}.
\end{align*}
$\mc{P}_2(\R^d)$ is a Polish space under the $L^2$-Wasserstein distance
\begin{align*}
\bb{W}_2 (\mu_1,\mu_2)\coloneqq \inf_{\pi \in\mc{C}_{\mu_1,\mu_2}} \biggl[\int_{\R^d\times\R^d} |x-y|^2 \pi(dx,dy)\biggr]^{1/2},
\end{align*}
where $\mc{C}_{\mu_1,\mu_2}$ denotes the set of all couplings of $\mu_1,\mu_2$.

We say $u$ is \textbf{L-differentiable} or \textbf{Lions-differentiable} at $\mu_0\in\mc{P}_2(\R^d)$ if there exists a random variable $X_0$ on some $(\tilde\W,\tilde\F,\tilde\Prob)$ satisfying the above assumptions, $\mc{L}(X_0)=\mu_0$ and $\tilde u$ is Fr\'echet differentiable at $X_0$.

The Fr\'echet derivative of $\tilde u$ can be viewed as an element of $L^2(\tilde\W,\tilde\F,\tilde\Prob;\R^d)$ by identifying $L^2(\tilde\W,\tilde\F,\tilde\Prob;\R^d)$ and its dual. From this, one can find that if $u$ is L-differentiable at $\mu_0\in\mc{P}_2(\R^d)$, there is a deterministic measurable function $\xi: \R^d\tto \R^d$ such that $D\tilde{u}(X_0)=\xi(X_0)$, and that $\xi$ is uniquely defined $\mu_0$-almost everywhere on $\R^d$. We denote this equivalence class of $\xi\in L^2(\R^d,\mu_0;\R^d)$ by $\partial_\mu u(\mu_0)$ and call $\partial_\mu u(\mu_0)[\cdot]:\R^d\tto \R^d$ the \textbf{Lions derivative} of $u$ at $\mu_0$. Note that this definition is independent of the choice of $X_0$ and $(\tilde\W,\tilde\F,\tilde\Prob)$. See \cite{CD} Section 5.2.

To avoid confusion when $u$ depends on more variables than just $\mu$, if $\partial_\mu u(\mu_0)$ is differentiable at $z_0\in\R^d$, we denote its derivative at $z_0$ by $\partial_z\partial_\mu u(\mu_0)[z_0]$.
\end{defi}
\begin{defi}
\label{def:fullyC2}
(\cite{CD} Definition 5.83) We say $u:\mc{P}_2(\R)\tto \R$ is \textbf{Fully} $\mathbf{C^2}$ if the following conditions are satisfied:
\begin{enumerate}
\item $u$ is $C^1$ in the sense of L-differentiation, and its first derivative has a jointly continuous version $\mc{P}_2(\R)\times \R\ni (\mu,z)\mapsto \partial_\mu u(\mu)[z]\in\R$.
\item For each fixed $\mu\in\mc{P}_2(\R)$, the version of $\R\ni z\mapsto \partial_\mu u(\mu)[z]\in\R$ from the first condition is differentiable on $\R$ in the classical sense and its derivative is given by a jointly continuous function $\mc{P}_2(\R)\times \R\ni (\mu,z)\mapsto \partial_z\partial_\mu u(\mu)[z]\in\R$.
\item For each fixed $z\in \R$, the version of $\mc{P}_2(\R)\ni \mu\mapsto \partial_\mu u(\mu)[z]\in \R$ in the first condition is continuously L-differentiable component-by-component, with a derivative given by a function $\mc{P}_2(\R)\times \R\times \R\ni(\mu,z,\bar{z})\mapsto \partial^2_\mu u(\mu)[z][\bar{z}]\in\R$ such that for any $\mu\in\mc{P}_2(\R)$ and $X\in L^2(\tilde\W,\tilde\F,\tilde\Prob;\R)$ with $\mc{L}(X)=\mu$, $\partial^2u(\mu)[z][X]
$ gives the Fr\'echet derivative at $X$ of $L^2(\tilde\W,\tilde\F,\tilde\Prob;\R)\ni X'\mapsto \partial_\mu u(\mc{L}(X'))[z]$ for every $z\in\R$. Denoting $\partial^2_\mu u(\mu)[z][\bar{z}]$ by $\partial^2_\mu u(\mu)[z,\bar{z}]$, the map $\mc{P}_2(\R)\times \R\times \R\ni(\mu,z,\bar{z})\mapsto \partial^2_\mu u(\mu)[z,\bar{z}]$ is also assumed to be continuous in the product topology.
\end{enumerate}
\end{defi}
\begin{remark}\label{remark:thirdLionsDerivative}
In this paper we will in fact also look at functions $u:\mc{P}_2(\R)\tto \R$ which are required to have $3$ Lions Derivatives. We will assume such functions are \textbf{Fully} $\mathbf{C^2}$, and satisfy:
\begin{enumerate}\setcounter{enumi}{3}
\item For each each fixed $\mu\in\mc{P}_2(\R)$ the version of  $\R\times \R \ni (z_1,z_2)\mapsto \partial^2_\mu u(\mu)[z_1,z_2]\in\R$ in Definition \ref{def:fullyC2} (3) is differentiable on $\R^2$ in the classical sense and its derivative is given by a jointly continuous function $\mc{P}_2(\R)\times \R\times\R\ni (\mu,z_1,z_2)\mapsto \nabla_z\partial^2_\mu u(\mu)[z_1,z_2] = (\partial_{z_1}\partial^2_\mu u(\mu)[z_1,z_2],\partial_{z_2}\partial^2_\mu u(\mu)[z_1,z_2])\in\R^2$.
\item For each fixed $(z_1,z_2)\in \R^2$, the version of $\mc{P}_2(\R)\ni \mu\mapsto \partial^2_\mu u(\mu)[z_1,z_2]\in \R$ in Definition \ref{def:fullyC2} (3) is continuously L-differentiable component-by-component, with a derivative given by a function $\mc{P}_2(\R)\times \R\times \R \times \R\ni(\mu,z_1,z_2,z_3)\mapsto \partial^3_\mu u(\mu)[z_1,z_2][z_3]\in\R$ such that for any $\mu\in\mc{P}_2(\R)$ and $X\in L^2(\tilde\W,\tilde\F,\tilde\Prob;\R)$ with $\mc{L}(X)=\mu$, $\partial^3u(\mu)[z_1,z_2][X]
$ gives the Fr\'echet derivative at $X$ of $L^2(\tilde\W,\tilde\F,\tilde\Prob;\R)\ni X'\mapsto \partial^2_\mu u(\mc{L}(X'))[z_1,z_2]$ for every $(z_1,z_2)\in\R^2$. Denoting $\partial^3_\mu u(\mu)[z_1,z_2][z_3]$ by $\partial^2_\mu u(\mu)[z_1,z_2,z_3]$, the map $\mc{P}_2(\R)\times \R\times \R\times \R\ni(\mu,z_1,z_2,z_3)\mapsto \partial^3_\mu u(\mu)[z_1,z_2,z_3]$ is also assumed to be continuous in the product topology.
\end{enumerate}
Though we don't require higher than 3 Lions derivatives in this paper, when we state general results for higher Lions derivatives in terms of the spaces from Definition \ref{def:lionsderivativeclasses}, we assume the analogous higher continuity.
\end{remark}

We will also make use of another notion of differentiation of functions of probability measures: the linear functional derivative.

\begin{defi}\label{def:LinearFunctionalDerivative}
(\cite{CD} Definition 5.43) Let $p:\mc{P}_2(\R)\tto \R$. We say $p$ has \textbf{Linear Functional Derivative} $\frac{\delta}{\delta m}p$ if there exists a function $(z,\mu)\ni \R\times\mc{P}_2(\R)\mapsto \frac{\delta}{\delta m} p(\mu)[z]\in \R$ continuous in the product topology on $\R\times\mc{P}_2(\R)$ such that for any bounded subset $\mc{K}\subset \mc{P}_2(\R)$, the function $\R\ni z\mapsto \frac{\delta}{\delta m}p(\mu)[z]$ is of at most quadratic growth uniformly in $\mu$ for $\mu\in\mc{K}$, and for all $\nu_1,\nu_2\in\mc{P}_2(\R^d):$
\begin{align*}
p(\nu_2)-p(\nu_1) = \int_0^1 \int_\R \frac{\delta}{\delta m}p((1-r)\nu_1+r\nu_2)[z](\nu_2(dz)-\nu_1(dz))dr.
\end{align*}
Note in particular that this implies that $p$ is continuous on $\mc{P}_2(\R)$.

The second linear functional derivative is said to exist if the linear functional derivative of $\frac{\delta}{\delta m}p(\mu)[z_1]$ as defined above exists for each $z_1\in \R$. For any bounded subset $\mc{K}\subset \mc{P}_2(\R)$, the function $(z_1,z_2)\ni \R\times\R\mapsto \frac{\delta}{\delta m}\biggl(\frac{\delta}{\delta m} p(\mu)[z_1]\biggr)[z_2]\coloneqq \frac{\delta^2}{\delta m^2} p(\mu)[z_1,z_2]\in \R$, is of at most quadratic growth uniformly in $\mu$ for $\mu\in\mc{K}$, $(z_1,z_2,\mu)\ni \R\times\R\times\mc{P}_2(\R)\mapsto \frac{\delta^2}{\delta m^2} p(\mu)[z_1,z_2]\in \R$ and is assumed to be continuous in the product topology on $\R\times\R\times\mc{P}_2(\R)$.
\end{defi}
\begin{remark}\label{remark:onLFDs}
See Section 5.4.1 of \cite{CD} for well-posedness of the above notion of differentiability and relation to Lions derivative. In particular, under sufficient regularity on $u:\mc{P}_2(\R)\tto \R$, $\partial_\mu u(\mu)[z] = \partial_z\frac{\delta}{\delta m}u(\mu)[z]$. For a formal understanding of the linear functional derivative as a Fr\'echet Derivative, see p.21 of \cite{CDLL}. Lastly, it is important to note that the linear functional derivative is only defined up to a constant by definition. This is usually not of importance, at it normally arises when studying fluctuations of measures. In particular, applying $\tilde{Z}^N_t$ as defined in \eqref{eq:fluctuationprocess} to a constant function, we of course get $0$ for any $N\in\bb{N}$ and $t\in [0,T]$, so shifting the linear functional derivative by a constant in Equation \eqref{eq:MDPlimitFIXED} does not change the representation of the limiting process. A common means of fixing this constant for concreteness is to require that $\langle\mu , \frac{\delta}{\delta m}u(\mu)[\cdot]\rangle=0,\forall \mu \in \mc{P}_2(\R)$ (see p.31 of \cite{CDLL} or Section 2.2 of \cite{DLR}. However, due to our choice of topology for the fluctuations process, correcting the constant for the linear functional derivative may break assumptions \ref{assumption:limitingcoefficientsregularity} and \ref{assumption:limitingcoefficientsregularityratefunction}. We thus interpret these assumptions to mean that there is a choice of constant when defining each of the linear functional derivatives of the functions in question which makes them satisfy the desired properties.
\end{remark}

We recall a useful connection between the Lions derivative as defined in \ref{def:lionderivative} and the empirical measure.
\begin{proposition}\label{prop:empprojderivatives}
For $g:\mc{P}_2(\R^d)\tto \R^d$ which is Fully $C^2$ in the sense of definition \ref{def:fullyC2}, we can define the empirical projection of $g$, as $g^N: (\R^d)^N\tto \R^d$ given by
\begin{align*}
g^N(\beta_1,...,\beta_N)\coloneqq g(\frac{1}{N}\sum_{i=1}^N \delta_{\beta_i}).
\end{align*}

Then $g^N$ is twice differentiable on $(\R^d)^N$, and for each $\beta_1,..,\beta_N\in\R^d$, $(i,j)\in \br{1,...,N}^2$, $l\in\br{1,...,d}$
\begin{align}
\label{eq:empfirstder}
\nabla_{\beta_i} g^N_l(\beta_1,...,\beta_N)= \frac{1}{N} \partial_\mu  g_l(\frac{1}{N}\sum_{i=1}^N \delta_{\beta_i}) [\beta_i]
\end{align}
and
\begin{align}
\label{eq:empsecondder}
\nabla_{\beta_i} \nabla_{\beta_j} g^N_l(\beta_1,...,\beta_N)= \frac{1}{N} \partial_z \partial_\mu  g_l(\frac{1}{N}\sum_{i=1}^N \delta_{\beta_i}) [\beta_i] \1_{i=j} + \frac{1}{N^2} \partial^2_\mu g_l(\frac{1}{N}\sum_{i=1}^N \delta_{\beta_i})[\beta_i,\beta_j].
\end{align}

\end{proposition}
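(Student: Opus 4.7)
The plan is to leverage the lifting construction of Definition \ref{def:lionderivative}: Lions differentiation on $\mc{P}_2(\R^d)$ translates into Fréchet differentiation on $L^2(\tilde\Omega,\tilde\F,\tilde\Prob;\R^d)$. Since $\tilde\Prob$ is atomless on the Polish space $\tilde\Omega$, one can construct a measurable partition $A_1,\ldots,A_N$ of $\tilde\Omega$ with $\tilde\Prob(A_i)=1/N$ for every $i$. Given $(\beta_1,\ldots,\beta_N)\in(\R^d)^N$, the step random variable $X^{\bm{\beta}}=\sum_{i=1}^N \beta_i\1_{A_i}$ has law $\mu_N\coloneqq\frac{1}{N}\sum_{j=1}^N\delta_{\beta_j}$, so $g^N(\bm{\beta})=\tilde g(X^{\bm{\beta}})$. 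This is the device I will use throughout.

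For the first derivative, fix $i$ and perturb $\beta_i\mapsto \beta_i+h$ with $h\in\R^d$ while holding all other $\beta_j$'s fixed; in $L^2$ this corresponds to adding $H=h\1_{A_i}$ to $X^{\bm{\beta}}$, with $\|H\|_{L^2}=|h|/\sqrt{N}$. Applying Fréchet differentiability of $\tilde g_l$ componentwise, using the identification $D\tilde g_l(X^{\bm{\beta}})=\partial_\mu g_l(\mu_N)[X^{\bm{\beta}}]$ from Definition \ref{def:lionderivative}, and exploiting that $X^{\bm{\beta}}=\beta_i$ on $A_i$, I obtain
\begin{align*}
g^N_l(\ldots,\beta_i+h,\ldots)-g^N_l(\bm{\beta})
&= \tilde{\E}\!\left[\partial_\mu g_l(\mu_N)[X^{\bm{\beta}}]\cdot H\right]+o(\|H\|_{L^2})\\
&= \tfrac{1}{N}\,\partial_\mu g_l(\mu_N)[\beta_i]\cdot h+o(|h|),
\end{align*}
which is \eqref{eq:empfirstder}. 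The joint continuity of $(\mu,z)\mapsto\partial_\mu g_l(\mu)[z]$ from Definition \ref{def:fullyC2}(1) ensures that the error is uniform enough for the ordinary partial derivative $\nabla_{\beta_i}g^N_l$ to exist in the classical sense.

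For the second derivative I iterate the same argument on the map $\bm{\beta}\mapsto \tfrac{1}{N}\partial_\mu g_l(\mu_N)[\beta_i]$, carefully separating two sources of dependence on each $\beta_j$: the measure argument $\mu_N$ (in which $\beta_j$ sits as an atom) and, only when $j=i$, the explicit test point $[\beta_i]$. The explicit dependence, handled by the classical derivative in $z$ granted by Definition \ref{def:fullyC2}(2), contributes $\tfrac{1}{N}\,\partial_z\partial_\mu g_l(\mu_N)[\beta_i]\,\1_{i=j}$. The measure dependence is handled by treating $\beta_i$ as a parameter and applying the first-derivative formula just proved to the scalar map $\mu\mapsto \partial_\mu g_l(\mu)[\beta_i]$, which is itself L-differentiable with derivative $\partial^2_\mu g_l(\mu)[\beta_i,\cdot]$ by Definition \ref{def:fullyC2}(3); this yields $\tfrac{1}{N^2}\,\partial^2_\mu g_l(\mu_N)[\beta_i,\beta_j]$. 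Summing the two contributions gives \eqref{eq:empsecondder}.

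The main point of care is the second derivative: one must justify a product-style chain rule when both the measure variable and the test-point variable in $\partial_\mu g_l(\mu_N)[\beta_i]$ depend on the same coordinate $\beta_i$, and then sum the pieces without double counting. This is where the joint continuity in $(\mu,z)$ of $\partial_z\partial_\mu g_l$ and in $(\mu,z,\bar z)$ of $\partial^2_\mu g_l$ built into Definition \ref{def:fullyC2} is essential: it guarantees that the two differentiation operations commute in the limit and that both remainders are $o(|h|)$ uniformly. The remaining subtlety, on the probability-space side, is the existence of the equal-measure partition $\{A_i\}$, which is immediate from the assumed atomless property of $\tilde\Prob$ on the Polish space $\tilde\Omega$.
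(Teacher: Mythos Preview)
Your argument is correct and is exactly the lifting-to-$L^2$ construction underlying the results the paper invokes. The paper itself does not give a proof here: it simply cites Propositions 5.35 and 5.91 of Carmona--Delarue \cite{CD}, and what you have written is essentially a self-contained sketch of the proof one finds there.
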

\begin{proof}
This follows from Propositions 5.35 and 5.91 of \cite{CD}.

\end{proof}

Finally, we provide a Lemma which allows us to couple the interacting particles \eqref{eq:controlledslowfast1-Dold} to the IID McKean-Vlasov Equations \eqref{eq:IIDparticles} knowing only information about the growth of the linear functional derivatives of the coefficients.
\begin{lemma}\label{lemma:rocknersecondlinfunctderimplication}
Suppose $p:\R\times\R\times\mc{P}_2(\R)\tto \R$ satisfies
\begin{align*}
\sup_{x,z\in\R,\mu\in \mc{P}_2(\R)}|\frac{\delta}{\delta m}p(x,y,\mu)[z]|+\sup_{x,z,\bar{z}\in\R,\mu\in \mc{P}_2(\R)}|\frac{\delta^2}{\delta m^2}p(x,y,\mu)[z,\bar{z}]|\leq C(1+|y|^k)
\end{align*}
for some $C>0,k\in\bb{N}$ independent of $y\in\R$ and that $p(x,y,\cdot):\mc{P}_2(\R)\tto \R$ is Lipschitz continuous in $\bb{W}_2$ for all $x,y\in\R$. Assume \ref{assumption:uniformellipticity}-\ref{assumption:qF2bound} and \ref{assumption:uniformLipschitzxmu}. Then for $(\bar{X}^{i,\epsilon},\bar{Y}^{i,\epsilon})$ as in Equation \eqref{eq:IIDparticles} and $\bar{\mu}^{\epsilon,N}_t$ as in Equation \eqref{eq:IIDempiricalmeasure}, we have there exists $C>0$ independent of $N$ such that for all $t\in [0,T]$:
\begin{align*}
\E\biggl[\biggl|p(\bar{X}^{i,\epsilon}_t,\bar{Y}^{i,\epsilon}_t,\bar{\mu}^{\epsilon,N}_t)-p(\bar{X}^{i,\epsilon}_t,\bar{Y}^{i,\epsilon}_t,\mc{L}(\bar{X}^\epsilon_t))\biggr|^2\biggr]\leq \frac{C}{N-1}.
\end{align*}
Here $\bar{X}^\epsilon\overset{d}{=}\bar{X}^{i,\epsilon},\forall i,N\in\bb{N}$.
\end{lemma}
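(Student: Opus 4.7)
The approach is to perform a second-order Taylor expansion of $p$ in its measure argument using the first and second linear functional derivatives, following the strategy of Lemma 5.10 in \cite{DLR}. The key point is that Lipschitz continuity in $\bb{W}_2$ alone only yields an error of order $1/\sqrt{N}$ (via Glivenko--Cantelli rates in $\bb{W}_2$), while the additional uniform bound on the second linear functional derivative gives us enough cancellation to extract the sharper $1/(N-1)$ rate.

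First, I would isolate the $i$-th particle from the empirical measure. Letting $\pi_N^{-i} := \frac{1}{N-1}\sum_{j\neq i}\delta_{\bar{X}^{j,\epsilon}_t}$ so that $\bar{\mu}^{\epsilon,N}_t = \tfrac{N-1}{N}\pi_N^{-i} + \tfrac{1}{N}\delta_{\bar{X}^{i,\epsilon}_t}$, I split
\begin{equation*}
p(\bar{X}^{i,\epsilon}_t,\bar{Y}^{i,\epsilon}_t,\bar{\mu}^{\epsilon,N}_t) - p(\bar{X}^{i,\epsilon}_t,\bar{Y}^{i,\epsilon}_t,\mc{L}(\bar{X}^{\epsilon}_t)) = T_1 + T_2,
\end{equation*}
where $T_1 := p(\cdot,\bar{\mu}^{\epsilon,N}_t) - p(\cdot,\pi_N^{-i})$ and $T_2 := p(\cdot,\pi_N^{-i}) - p(\cdot,\mc{L}(\bar{X}^{\epsilon}_t))$. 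For $T_1$, the explicit coupling that redistributes the mass $1/N$ at $\bar{X}^{i,\epsilon}_t$ uniformly over the remaining particles yields $\bb{W}_2^2(\bar{\mu}^{\epsilon,N}_t,\pi_N^{-i}) \leq \frac{1}{N(N-1)}\sum_{j\neq i}|\bar{X}^{i,\epsilon}_t - \bar{X}^{j,\epsilon}_t|^2$. Combining the assumed Lipschitz continuity of $p$ in $\bb{W}_2$ with the uniform second-moment bound from Lemma \ref{lemma:barXuniformbound} gives $\E[|T_1|^2] \leq C/N$.

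For $T_2$, I condition on $\mc{F}^i := \sigma(\bar{X}^{i,\epsilon}_t,\bar{Y}^{i,\epsilon}_t)$, under which $\pi_N^{-i}$ is the empirical measure of $N-1$ i.i.d.\ copies of $\bar{X}^{\epsilon}_t$ (with common law $\nu := \mc{L}(\bar{X}^{\epsilon}_t)$), independent of $\mc{F}^i$. The integral form of Taylor's theorem in the measure variable yields
\begin{equation*}
T_2 = \int_\R \tfrac{\delta p}{\delta m}(\bar{X}^{i,\epsilon}_t,\bar{Y}^{i,\epsilon}_t,\nu)[z](\pi_N^{-i}-\nu)(dz) + R,
\end{equation*}
with $R := \int_0^1(1-s)\int\!\!\int \tfrac{\delta^2 p}{\delta m^2}(\bar{X}^{i,\epsilon}_t,\bar{Y}^{i,\epsilon}_t,\nu_s)[z,z'](\pi_N^{-i}-\nu)(dz)(\pi_N^{-i}-\nu)(dz')\,ds$ and $\nu_s = (1-s)\nu + s\pi_N^{-i}$. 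Conditional on $\mc{F}^i$, the linear term is a centered average of $N-1$ i.i.d.\ random variables, whose conditional second moment is bounded by $(N-1)^{-1}\|\tfrac{\delta p}{\delta m}(\bar{X}^{i,\epsilon}_t,\bar{Y}^{i,\epsilon}_t,\nu)\|_\infty^2 \leq C(N-1)^{-1}(1+|\bar{Y}^{i,\epsilon}_t|^{2k})$; taking full expectation and invoking Lemma \ref{lemma:barYuniformbound} yields the desired $C/(N-1)$ bound.

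The main obstacle is bounding $\E[|R|^2]$: a direct estimate using boundedness of the second linear functional derivative together with $|\pi_N^{-i}-\nu|_{TV}\leq 2$ only gives an $\mc{O}(1)$ quantity. To extract the correct rate, I would split $\psi_s := \tfrac{\delta^2 p}{\delta m^2}(\bar{X}^{i,\epsilon}_t,\bar{Y}^{i,\epsilon}_t,\nu_s)[\cdot,\cdot] = \psi_0 + (\psi_s-\psi_0)$, with $\psi_0 := \tfrac{\delta^2 p}{\delta m^2}(\bar{X}^{i,\epsilon}_t,\bar{Y}^{i,\epsilon}_t,\nu)[\cdot,\cdot]$ deterministic given $\mc{F}^i$. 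The $\psi_0$-contribution to $R$ is a bilinear form in the centered empirical measure with deterministic kernel, and its conditional second moment is of order $(N-1)^{-2}$ via the Hoeffding decomposition into diagonal, first-order and degenerate second-order U-statistic components (the polynomial growth in $|\bar{Y}^{i,\epsilon}_t|$ is absorbed by Lemma \ref{lemma:barYuniformbound}). The fluctuation $\psi_s-\psi_0$ is controlled by using that the uniform bound on $\tfrac{\delta^2 p}{\delta m^2}$ renders $\tfrac{\delta p}{\delta m}$ total-variation Lipschitz in $\mu$; combining this with Cauchy--Schwarz in the $s$-integral and exchangeability of the $\{\bar{X}^{j,\epsilon}_t\}_{j\neq i}$ delivers the remaining contribution at the same rate. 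Unconditioning and summing the estimates for $T_1$ and $T_2$ then completes the proof.
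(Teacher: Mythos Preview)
Your decomposition into $T_1$ and $T_2$, the coupling bound for $T_1$, the conditioning step, and the treatment of the linear term in $T_2$ are all correct and coincide with the paper's argument. The U-statistic bound for the $\psi_0$-part of $R$ is also fine and indeed gives $O((N-1)^{-2})$ for its conditional second moment.

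The gap is in the $(\psi_s-\psi_0)$ contribution to $R$. The bound you invoke---that a uniform bound on $\tfrac{\delta^2 p}{\delta m^2}$ makes $\tfrac{\delta p}{\delta m}$ total-variation Lipschitz in $\mu$---controls differences of the \emph{first} functional derivative, i.e.\ quantities of the form $\int \psi_r(z,\cdot)(\pi-\nu)(dz')$, not differences of the kernel $\psi_s-\psi_0$ itself. Bounding $\psi_s-\psi_0$ pointwise would require a \emph{third} functional derivative, which is not assumed; and without any decay of $\psi_s-\psi_0$, the bilinear form against $(\pi_N^{-i}-\nu)^{\otimes 2}$ carries no factor of $1/N$. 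Cauchy--Schwarz in $s$ and exchangeability alone do not recover the missing decay, so as written this step does not close.

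The paper avoids this by \emph{not} expanding to second order. It keeps the remainder in first-order form,
\[
S_2=\int_0^1\!\int\Bigl[\tfrac{\delta q}{\delta m}(\nu_r)-\tfrac{\delta q}{\delta m}(\nu)\Bigr][z](\pi_N^{-i}-\nu)(dz)\,dr=\frac{1}{N-1}\sum_{j\neq i}\int_0^1\phi^j_r\,dr,
\]
squares, and handles the off-diagonal terms $\E[\phi^j_r\phi^k_r]$ by a leave-two-out trick: replacing $\phi^j_r,\phi^k_r$ by versions $\phi^{j,-(j,k)}_r,\phi^{k,-(j,k)}_r$ built from the empirical measure with $\xi_j,\xi_k$ removed. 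These are conditionally independent with mean zero, so their product has zero expectation; and the second-derivative bound is used precisely to show $|\phi^j_r-\phi^{j,-(j,k)}_r|\le C/N$. This is where the second derivative enters---to control a two-point perturbation of $S_2$, not to form a second-order Taylor kernel. Replacing your $(\psi_s-\psi_0)$ step by this leave-two-out argument fixes the proof.
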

\begin{proof}
This follows using the same conditional expectation argument as on p.26 in \cite{DLR} and then following the proof of Lemma 5.10 in the same paper, but where we only require second order expansions rather than 4th. Since the argument and assumptions are slightly different, we present the proof here for completeness.

We first write
\begin{align*}
&\E\biggl[\biggl|p(\bar{X}^{i,\epsilon}_t,\bar{Y}^{i,\epsilon}_t,\bar{\mu}^{\epsilon,N}_t)-p(\bar{X}^{i,\epsilon}_t,\bar{Y}^{i,\epsilon}_t,\mc{L}(\bar{X}^\epsilon_t))\biggr|^2\biggr]\leq 2\E\biggl[\biggl|p(\bar{X}^{i,\epsilon}_t,\bar{Y}^{i,\epsilon}_t,\bar{\mu}^{\epsilon,N}_t)-p(\bar{X}^{i,\epsilon}_t,\bar{Y}^{i,\epsilon}_t,\bar{\mu}^{\epsilon,N,-i}_t)\biggr|^2\biggr]\\
&\qquad+2\E\biggl[\biggl|p(\bar{X}^{i,\epsilon}_t,\bar{Y}^{i,\epsilon}_t,\bar{\mu}^{\epsilon,N,-i}_t)-p(\bar{X}^{i,\epsilon}_t,\bar{Y}^{i,\epsilon}_t,\mc{L}(\bar{X}^\epsilon_t))\biggr|^2\biggr]\\
&\quad\leq C\E\biggl[|\bb{W}_2(\bar{\mu}^{\epsilon,N}_t,\bar{\mu}^{\epsilon,N,-i}_t)|^2\biggr]
+2\E\biggl[\biggl|p(\bar{X}^{i,\epsilon}_t,\bar{Y}^{i,\epsilon}_t,\bar{\mu}^{\epsilon,N,-i}_t)-p(\bar{X}^{i,\epsilon}_t,\bar{Y}^{i,\epsilon}_t,\mc{L}(\bar{X}^\epsilon_t))\biggr|^2\biggr]
\end{align*}
where here $\bar{\mu}^{\epsilon,N,-i}_t$ denotes $\bar{\mu}^{\epsilon,N}_t$ with the $i$'th particle removed, i.e.
\begin{align*}
\bar{\mu}^{\epsilon,N,-i}_t&\coloneqq \frac{1}{N-1}\sum_{j=1,j\neq i}^N \delta_{\bar{X}^{j,\epsilon}_t}.
\end{align*}
Recall the formula
\begin{align}\label{eq:empiricalmeasurewassersteindistance}
\bb{W}^p_p(\mu^N_x,\mu^N_y)& = \min_{\sigma}\frac{1}{N}\sum_{i=1}^N |x_i-\sigma(y)_i|^p
\end{align}
for $x,y\in\R^N$, $\mu^N_x = \frac{1}{N}\sum_{i=1}^N \delta_{x_i},\mu^N_x = \frac{1}{N}\sum_{i=1}^N \delta_{x_i}$, and where $\sigma:\R^N\tto \R^N$ denotes a permutation of the coordinates of a vector in $\R^N$ (see e.g. Equation 2.8 in \cite{Orrieri}). This suggests that the first term should be bounded due to the bound $C\E[|\bar{X}^{i,\epsilon}_t|^2]/N \leq C/N$ from Lemma \ref{lemma:barXuniformbound}.

To see this is indeed true, we take $\mu^N_x,\mu^{N,-i}_x$ for any $x\in \R^N$, where here $\mu^{N,-i}_x$ is defined in the same way as $\bar{\mu}^{\epsilon,N,-i}_t$, and see
\begin{align*}
\bb{W}_2^2(\mu^N_x,\mu^{N,-i}_x) &\leq \int_{\R^2}|x-y|^2 \gamma^N(dx,dy)\\
\gamma^N(dx,dy)&\coloneqq \frac{1}{N}\sum_{j=1,j\neq i}^N\biggl[\delta_{x_j}(dx)+\frac{1}{N-1}\delta_{x_i}(dx)\biggr]\delta_{x_j}(dy)
\end{align*}
We see that indeed $\gamma^N$ is a coupling between $\mu^N_x,\mu^{N,-i}_x$ since it is clearly non-negative,
\begin{align*}
\int_{\R^2}\gamma^N(dx,dy)& = \frac{1}{N}[N-1][1+\frac{1}{N-1}]=1,
\end{align*}
and for $f\in C_b(\R)$,
\begin{align*}
\int_{\R^2}f(x)\gamma^N(dx,dy)& =  \frac{1}{N}\sum_{j=1,j\neq i}^N\biggl[f(x_j)+\frac{1}{N-1}f(x_i)\biggr] =\biggl\lbrace\frac{1}{N}\sum_{j=1,j\neq i}^N f(x_j)\biggr\rbrace + \frac{1}{N}f(x_i) =   \int_\R f(y)\mu^N_x(dy)\\
\int_{\R^2}f(y)\gamma^N(dx,dy) & = \frac{1}{N}\biggl[1+\frac{1}{N-1}\biggr]\sum_{j=1,j\neq i}^Nf(x_j)=\frac{1}{N-1}\sum_{j=1,j\neq i}^Nf(x_j)=\int_\R f(y)\mu^{N,-i}_x(dy).
\end{align*}
So indeed
\begin{align*}
\bb{W}_2^2(\mu^N_x,\mu^{N,-i}_x) &\leq \int_{\R^2}|x-y|^2 \gamma^N(dx,dy) = \frac{1}{N}\sum_{j=1,j\neq i}^N \left\lbrace|x_j-x_j|^2+\frac{1}{N-1}|x_j-x_i|^2\right\rbrace\\
& = \frac{1}{N(N-1)}\sum_{j=1,j\neq i}^N|x_j-x_i|^2.
\end{align*}

Now, applying this to the first term we wish to bound,
\begin{align*}
C\E\biggl[|\bb{W}_2(\bar{\mu}^{\epsilon,N}_t,\bar{\mu}^{\epsilon,N,-i}_t)|^2\biggr]&\leq \frac{C}{N(N-1)}\sum_{j=1,j\neq i}^N\E\biggl[\biggl|\bar{X}^{j,\epsilon}_t-\bar{X}^{i,\epsilon}_t \biggr|^2 \biggr] = \frac{C}{N}\biggl[\E[|\bar{X}^{\epsilon}_t|^2]-\E[\bar{X}^{\epsilon}_t]^2\biggr]\\
&\leq \frac{C}{N}\E[|\bar{X}^{\epsilon}_t|^2]\leq \frac{C}{N}
\end{align*}
where in the equality we use that the $\bar{X}^{i,\epsilon}_t$'s are IID, and in the last bound we used Lemma \ref{lemma:barXuniformbound}.

Now we turn to the second term. We have by independence,
\begin{align*}
&\E\biggl[\biggl|p(\bar{X}^{i,\epsilon}_t,\bar{Y}^{i,\epsilon}_t,\bar{\mu}^{\epsilon,N,-i}_t)-p(\bar{X}^{i,\epsilon}_t,\bar{Y}^{i,\epsilon}_t,\mc{L}(\bar{X}^\epsilon_t))\biggr|^2\biggr] =\nonumber\\
& \qquad= \E\biggl[\E\biggl[\biggl|p(x,y,\bar{\mu}^{\epsilon,N,-i}_t)-p(x,y,\mc{L}(\bar{X}^\epsilon_t))\biggr|^2\biggr]\Bigg|_{(x,y) = (\bar{X}^{i,\epsilon}_t,\bar{Y}^{i,\epsilon}_t)}\biggr].
\end{align*}

We will show that for $q:\mc{P}_2(\R)\tto \R$ with two bounded Linear Functional Derivatives, that for $\br{\xi_i}_{i\in \bb{N}}$ IID with $\xi_i\sim \mu \in \mc{P}_2(\R)$, that letting $\xi = (\xi_1,...,\xi_N)$ and $\mu^N_\xi$ be as above with $\xi$ in the place of $x$:
\begin{align}\label{eq:generalboundedLFDresult}
\E\biggl[|q(\mu^N_\xi) - q(\mu)|^2\biggr] \leq \frac{C}{N}\biggl[\sup_{z\in\R,\mu\in \mc{P}_2(\R)}|\frac{\delta}{\delta m}q(\mu)[z]|^2+\sup_{z,\bar{z}\in\R,\mu\in \mc{P}_2(\R)}|\frac{\delta^2}{\delta m^2}q(\mu)[z,\bar{z}]|^2\biggr].
\end{align}

Applying this to the above equality, we have there is $k\in\bb{\N}$ such that
\begin{align*}
\E\biggl[\biggl|p(\bar{X}^{i,\epsilon}_t,\bar{Y}^{i,\epsilon}_t,\bar{\mu}^{\epsilon,N,-i}_t)-p(\bar{X}^{i,\epsilon}_t,\bar{Y}^{i,\epsilon}_t,\mc{L}(\bar{X}^\epsilon_t))\biggr|^2\biggr] &\leq \frac{C}{N-1}\E\biggl[1 +  |\bar{Y}^{\epsilon}_t|^{2k}  \biggr]\leq \frac{C}{N-1}
\end{align*}
by Lemma \ref{lemma:barYuniformbound}, and the result will have been proved. 

We now prove the bound \eqref{eq:generalboundedLFDresult}. By definition of the linear functional derivative, we have:
\begin{align*}
q(\mu^N_\xi) - q(\mu)& = \int_0^1 \int_\R \frac{\delta}{\delta m}q(r\mu^N_\xi+(1-r)\mu)[z](\mu^N_{\xi}(dz)-\mu(dz))dr = S_1+S_2
\end{align*}
where
\begin{align*}
S^N_1& = \int_\R\frac{\delta}{\delta m}q(\mu)[z](\mu^N_{\xi}(dz)-\mu(dz)),\\
S^N_2&  = \int_0^1 \int_\R \biggl[\frac{\delta}{\delta m}q(r\mu^N_\xi+(1-r)\mu)[z]-\frac{\delta}{\delta m}q(\mu)[z]\biggr](\mu^N_{\xi}(dz)-\mu(dz))dr.
\end{align*}
For $S_1$, we have by independence:
\begin{align*}
\E\biggl[|S_1|^2\biggr]& = \E\biggl[\biggl|\frac{1}{N}\sum_{i=1}^N \frac{\delta}{\delta m}q(\mu)[\xi_i]-\E\left[\frac{\delta}{\delta m}q(\mu)[\xi_1]\right]\biggr|^2\biggr]\\
& = \frac{1}{N}\biggl(\E\biggl[\biggl|\frac{\delta}{\delta m}q(\mu)[\xi_1]\biggr|^2\biggr]-\E\left[\frac{\delta}{\delta m}q(\mu)[\xi_1]\right]^2\biggr)\\
&\leq \frac{1}{N}\sup_{z\in \R,\mu\in \mc{P}_2(\R)}|\frac{\delta}{\delta m}q(\mu)[z]|^2.
\end{align*}

Now we set
\begin{align*}
\phi^i_r \coloneqq \frac{\delta}{\delta m}q(r\mu^N_\xi+(1-r)\mu)[\xi_i]-\frac{\delta}{\delta m}q(\mu)[\xi_i] - \tilde{\E}\biggl[\frac{\delta}{\delta m}q(r\mu^N_\xi+(1-r)\mu)[\tilde{\xi}]-\frac{\delta}{\delta m}q(\mu)[\tilde{\xi}] \biggr],
\end{align*}
where $\tilde{\xi}$ is an independent copy of the $\xi_i$'s, $r\in[0,1]$, and the expectation $\tilde{\E}$ is taken over the law of $\tilde{\xi}$.

Then we have $S^N_2 = \frac{1}{N}\sum_{i=1}^N \int_0^1 \phi^i_r dr$, and
\begin{align*}
\E\biggl[\biggl|S^N_2 \biggr|^2 \biggr] & \leq \frac{1}{N^2}\int_0^1 \E\biggl[\biggl|\sum_{i=1}^N \phi^i_r \biggr|^2 \biggr] dr  = \frac{1}{N^2}\int_0^1 \sum_{i=1}^N\sum_{j=1}^N \E\biggl[\phi^i_r\phi^j_r  \biggr] dr  = S^N_{2,1} + S^N_{2,2}
\end{align*}
where
\begin{align*}
S^N_{2,1} &  = \frac{1}{N^2}\int_0^1 \sum_{i=1}^N \E\biggl[|\phi^i_r|^2  \biggr] dr,\quad\text{and}\quad
S^N_{2,2}  = \frac{1}{N^2}\int_0^1 \sum_{i=1}^N\sum_{j=1,j\neq i}^N \E\biggl[\phi^i_r\phi^j_r  \biggr] dr.
\end{align*}

Observing that for all $i\in\bb{N},r\in[0,1]$ and $\omega\in\W$, $|\phi^i_r(\omega)|^2\leq C\sup_{z\in \R,\mu\in \mc{P}_2(\R)}|\frac{\delta}{\delta m}q(\mu)[z]|^2$, so we have
\begin{align*}
S^N_{2,1}\leq \frac{C}{N}\sup_{z\in \R,\mu\in \mc{P}_2(\R)}|\frac{\delta}{\delta m}q(\mu)[z]|^2.
\end{align*}

For $S^N_{2,2}$, we introduce the measures $\mu^{N,-(i_1,i_2)}_{\xi}\coloneqq \frac{1}{N-2}\sum_{j=1,j\neq i_1,i_2}^N \delta_{\xi_j}$ for $i_1,i_2\in \br{1,...,N}$, and let
\begin{align*}
\phi^{i,-(i_1,i_2)}_r & \coloneqq  \frac{\delta}{\delta m}q(r\mu^{N,-(i_1,i_2)}_\xi+(1-r)\mu)[\xi_i]-\frac{\delta}{\delta m}q(\mu)[\xi_i] - \tilde{\E}\biggl[\frac{\delta}{\delta m}q(r\mu^{N,-(i_1,i_2)}_\xi+(1-r)\mu)[\tilde{\xi}]-\frac{\delta}{\delta m}q(\mu)[\tilde{\xi}] \biggr].
\end{align*}

Then
\begin{align*}
\phi^i_r\phi^j_r& = [\phi^i_r-\phi^{i,-(i,j)}_r][\phi^j_r-\phi^{j,-(i,j)}_r]+\phi^{j,-(i,j)}_r[\phi^i_r-\phi^{i,-(i,j)}_r]+\phi^{i,-(i,j)}_r[\phi^j_r-\phi^{j,-(i,j)}_r]+\phi^{i,-(i,j)}_r\phi^{j,-(i,j)}_r,
\end{align*}
so
\begin{align*}
S^N_{2,2}& = S^N_{2,2,1}+S^N_{2,2,2}+S^N_{2,2,3}+S^N_{2,2,4}\\
S^N_{2,2,1}& = \frac{1}{N^2}\int_0^1 \sum_{i=1}^N\sum_{j=1,j\neq i}^N \E\biggl[\phi^{i,-(i,j)}_r\phi^{j,-(i,j)}_r  \biggr] dr\\
S^N_{2,2,2}& =\frac{1}{N^2}\int_0^1 \sum_{i=1}^N\sum_{j=1,j\neq i}^N \E\biggl[\phi^{j,-(i,j)}_r[\phi^i_r-\phi^{i,-(i,j)}_r]  \biggr] dr\\
S^N_{2,2,3}& =\frac{1}{N^2}\int_0^1 \sum_{i=1}^N\sum_{j=1,j\neq i}^N \E\biggl[\phi^{i,-(i,j)}_r[\phi^j_r-\phi^{j,-(i,j)}_r]  \biggr] dr\\
S^N_{2,2,4}& =\frac{1}{N^2}\int_0^1 \sum_{i=1}^N\sum_{j=1,j\neq i}^N \E\biggl[[\phi^i_r-\phi^{i,-(i,j)}_r][\phi^j_r-\phi^{j,-(i,j)}_r]  \biggr] dr.
\end{align*}
For $S^N_{2,2,1}$, we have
\begin{align*}
\E\biggl[\phi^{i,-(i,j)}_r\phi^{j,-(i,j)}_r  \biggr] & = \E\biggl[\E\biggl[\phi^{i,-(i,j)}_r\phi^{j,-(i,j)}_r|\xi_{k},k\neq i,j\biggr]  \biggr] = \E\biggl[\E\biggl[\phi^{i,-(i,j),x}_r\phi^{j,-(i,j),x}_r\biggr]\Bigg|_{x=\xi}  \biggr]\\
& = \E\biggl[\E\biggl[\phi^{i,-(i,j),x}_r\biggr]\Bigg|_{x=\xi}\E\biggl[\phi^{j,-(i,j),x}_r\biggr]\Bigg|_{x=\xi}  \biggr]
\end{align*}
where
\begin{align*}
\phi^{i,-(i_1,i_2),x}_r & \coloneqq  \frac{\delta}{\delta m}q(r\mu^{N,-(i_1,i_2)}_x+(1-r)\mu)[\xi_i]-\frac{\delta}{\delta m}q(\mu)[\xi_i] - \tilde{\E}\biggl[\frac{\delta}{\delta m}q(r\mu^{N,-(i_1,i_2)}_x+(1-r)\mu)[\tilde{\xi}]-\frac{\delta}{\delta m}q(\mu)[\tilde{\xi}] \biggr]
\end{align*}
and same for $j$. Then
\begin{align*}
\E\biggl[\phi^{i,-(i,j),x}_r\biggr]\Bigg|_{x=\xi} & = \biggl\lbrace\E\biggl[\frac{\delta}{\delta m}q(r\mu^{N,-(i_1,i_2)}_x+(1-r)\mu)[\xi_i]-\frac{\delta}{\delta m}q(\mu)[\xi_i]\biggr] \\
&- \tilde{\E}\biggl[\frac{\delta}{\delta m}q(r\mu^{N,-(i_1,i_2)}_x+(1-r)\mu)[\tilde{\xi}]-\frac{\delta}{\delta m}q(\mu)[\tilde{\xi}] \biggr]\biggr\rbrace\biggl|_{(x=\xi)} = 0
\end{align*}
since $\xi_i\overset{d}{=}\tilde{\xi}$, and same for $\E\biggl[\phi^{j,-(i,j),x}_r\biggr]\Bigg|_{x=\xi}$. Thus in fact, $S^N_{2,2,1}=0$.

To handle $S_{2,2,2}-S_{2,2,4}$, we need to see how to bound $|\phi^i_r - \phi^{i,-(i,j)}_r|$. We have that
\begin{align*}
\phi^i_r - \phi^{i,-(i,j)}_r & =  \frac{\delta}{\delta m}q(r\mu^N_\xi+(1-r)\mu)[\xi_i]-\frac{\delta}{\delta m}q(r\mu^{N,-(i,j)}_\xi+(1-r)\mu)[\xi_i]\\
&+ \tilde{\E}\biggl[\frac{\delta}{\delta m}q(r\mu^{N,-(i,j)}_\xi+(1-r)\mu)[\tilde{\xi}]-\frac{\delta}{\delta m}q(r\mu^N_\xi+(1-r)\mu)[\tilde{\xi}]\biggr] \\
& = r \int_0^1 \int_\R \frac{\delta^2}{\delta m^2}q(rs\mu^N_\xi + r(1-s)\mu^{N,-(i,j)}_\xi + (1-r)\mu)[\xi_i,\bar{z}][\mu^N_\xi(d\bar{z})-\mu^{N,-(i,j)}_\xi(d\bar{z})]ds\\
& + r\tilde{\E}\biggl[\int_0^1 \int_\R\frac{\delta^2}{\delta m^2}q(rs\mu^N_\xi + r(1-s)\mu^{N,-(i,j)}_\xi + (1-r)\mu)[\tilde{\xi},\bar{z}][\mu^N_\xi(d\bar{z})-\mu^{N,-(i,j)}_\xi(d\bar{z})]ds\biggr].
\end{align*}

Then using
\begin{align*}
\mu^N_x - \mu^{N,-(i,j)}_x & = \frac{1}{N}\sum_{k=1}^N \delta_{x_k} - \frac{1}{N-2}\sum_{k=1,k\neq i,j}^N \delta_{x_k} = \frac{1}{N}\delta_{x_i} + \frac{1}{N}\delta_{x_j} -\frac{2}{N(N-2)}\sum_{k=1,k\neq i,j}^N \delta_{x_k}
\end{align*}
and that $r\in [0,1]$, we get
\begin{align*}
|\phi^i_r(\omega) - \phi^{i,-(i,j)}_r(\omega)|&\leq \frac{4}{N}\sup_{z,\bar{z}\in\R,\mu\in\mc{P}_2(\R)}|\frac{\delta^2}{\delta m^2}q(\mu)[z,\bar{z}]|
\end{align*}
for all $\omega\in\W,r\in[0,1],i,j\in\bb{N}.$ This combined with the fact that $|\phi^{k,-(i,j)}_r(\omega)|\leq C\sup_{z\in \R,\mu\in \mc{P}_2(\R)}|\frac{\delta}{\delta m}q(\mu)[z]|,k=i,j$ for any $i,j\in\bb{N},r\in [0,1],\omega\in\W$ allows us to see:
\begin{align*}
S^N_{2,2,2}&\leq C\frac{N(N-1)}{N^2}\sup_{z\in \R,\mu\in \mc{P}_2(\R)}|\frac{\delta}{\delta m}q(\mu)[z]|\frac{1}{N}\sup_{z,\bar{z}\in\R,\mu\in\mc{P}_2(\R)}|\frac{\delta^2}{\delta m^2}q(\mu)[z,\bar{z}]|\\
&\leq \frac{C}{N}[\sup_{z\in \R,\mu\in \mc{P}_2(\R)}|\frac{\delta}{\delta m}q(\mu)[z]|^2 +\sup_{z,\bar{z}\in\R,\mu\in\mc{P}_2(\R)}|\frac{\delta^2}{\delta m^2}q(\mu)[z,\bar{z}]|^2  ]\\
S^N_{2,2,3}&\leq C\frac{N(N-1)}{N^2}\sup_{z\in \R,\mu\in \mc{P}_2(\R)}|\frac{\delta}{\delta m}q(\mu)[z]|\frac{1}{N}\sup_{z,\bar{z}\in\R,\mu\in\mc{P}_2(\R)}|\frac{\delta^2}{\delta m^2}q(\mu)[z,\bar{z}]|\\
&\leq \frac{C}{N}[\sup_{z\in \R,\mu\in \mc{P}_2(\R)}|\frac{\delta}{\delta m}q(\mu)[z]|^2 +\sup_{z,\bar{z}\in\R,\mu\in\mc{P}_2(\R)}|\frac{\delta^2}{\delta m^2}q(\mu)[z,\bar{z}]|^2  ]\\
S^N_{2,2,4} &\leq C\frac{N(N-1)}{N^2} \frac{1}{N^2}\sup_{z,\bar{z}\in\R,\mu\in\mc{P}_2(\R)}|\frac{\delta^2}{\delta m^2}q(\mu)[z,\bar{z}]|^2\\
&\leq \frac{C}{N^2}\sup_{z,\bar{z}\in\R,\mu\in\mc{P}_2(\R)}|\frac{\delta^2}{\delta m^2}q(\mu)[z,\bar{z}]|^2.
\end{align*}
So the bound \eqref{eq:generalboundedLFDresult} is proved.
\end{proof}
\begin{remark}
Note we could have polynomial growth in $x$ for the Linear Functional Derivatives as well and the result above would still hold, so long as we have sufficient bounded moments for $\bar{X}^{\epsilon}_t$. Also, the result is independent of the fact that the particles depend on $\epsilon$, and of the fact that the particles are one-dimensional. See Lemma 5.10 in \cite{DLR} and Theorem 2.11 in \cite{CST} for similar results in the higher-dimensional setting.
\end{remark}

\begingroup
\begin{bibdiv}
\begin{biblist}
\bib{Baldi}{article}{

        title={Large deviations for diffusions processes with
homogenization and applications},

        author={P. Baldi},

        journal={Annals of Probability},

        volume={19},

        number={2},

        date={1991},

        pages={509--524}
}




\bib{BCCP}{article}{

        title={A Non-Maxwellian Steady Distribution for One-Dimensional Granular Media},

        author={D. Benedetto},
        author={E. Caglioti},
        author={J. A. Carrillo},
        author={M. Pulvirenti},

        journal={Journal of Statistical Physics},

        volume={91},

        date={1998},

        pages={979--990}
}

\bib{Bensoussan}{book}{

        title = {Asymptotic Analysis for Periodic Structures},

        author = {A. Bensoussan},

      author = {J. L. Lions},

      author = {G. Papanicolau},

        date = {1978},

        publisher = { North Holland},

        address = {Amsterdam}

}

%
%
%
%

\bib{BS}{article}{

        title={Large deviations for interacting multiscale particle systems},

        author={Z. Bezemek},

        author={K. Spiliopoulos},

        journal={Stochastic Processes and their Applications},

        volume={155},

        date={January 2023},

        pages={27--108}

}

\bib{BezemekSpiliopoulosAveraging2022}{article}{

        title={Rate of homogenization for fully-coupled McKean-Vlasov SDEs},

        author={Z. Bezemek},

        author={K. Spiliopoulos},

        journal={Stochastic and Dynamics},

        date={2023},

        volume={23},

        number={2}

}

%
%
%
%
%
%
\bib{BinneyTremaine}{book}{

        title = {Galactic Dynamics},

        author = {J. Binney},

      author = {S. Tremaine},

        date = {2008},

        publisher = {Princeton University Press},

        address = {Princeton}

}
\bib{BryngelsonOnuchicWolynes}{article}{

        title={Funnels, pathways and the energy landscape of protein folding: A synthesis},

        author={J. D. Bryngelson},
        author={J. N. Onuchic},
        author={N. D. Socci},
        author={P. G. Wolynes},

        journal={Proteins},

        volume={21},

        number={3},

        date={1995},

        pages={167--195}
}









\bib{BD}{article}{

        title={A Variational representation for positive functionals of infinite dimensional brownian motion},

        author={A. Budhiraja},

        author={P. Dupuis},

        journal={Probab. Math. Statist.},

        volume={20},

        date={2001},

}

\bib{BDF}{article}{

        title={Large devation properties of weakly interacting particles via weak convergence methods},

        author={A. Budhiraja},

        author={P. Dupuis},

        author={M. Fischer},

        journal={T.A. of Prob.},

        volume={40},

        date={2012},

        pages={74--100}

}

\bib{BW}{article}{

        title={Moderate deviation principles for weakly interacting particle systems},

        author={A. Budhiraja},

        author={R. Wu},

        journal={Probability Theory and Related Fields},

        volume={168},

        date={2016},

        pages={721--771}

}
\bib{NotesMFG}{report}{

        title = { Notes on mean field games (from P. L. Lions’ lectures at Collège de France)},

        author = {P. Cardaliaguet},

        date = {2013},

        status= {unpublished},

        eprint = {https://www.ceremade.dauphine.fr/~cardaliaguet/MFG20130420.pdf},
}

\bib{CDLL}{book}{

        title = {The master equation and the convergence problem in mean field games},

        author = {P. Cardaliaguet},

        author = {F. Delarue},

        author = {J.M. Lasry},

        author = {P.L. Lions},

        date = {2019},

        publisher = {Princeton University Press},

        address = {NJ}

}

\bib{CD}{book}{

        title = {Probabilistic Theory of Mean Field Games with Applications I},

        author = {R. Carmona},

      author = {F. Delarue},

        date = {2018},

        publisher = { Springer},

        address = {NY}

}

\bib{CST}{article}{

        title={Weak quantitative propagation of chaos via differential calculus on the space of measures},

        author={J.F. Chassagneux},

        author={L. Szpruch},

        author={A. Tse},
        journal = {Annals of Applied Probability},
        volume={32},
        number={3},
        pages={1929--1969},
        date={2022}
}










\bib{CM}{article}{

        title={Smoothing properties of McKean-Vlasov SDEs},

        author={D. Crisan},

        author={E. McMurray},

        journal={Probability Theory and Related Fields},

        volume={171},
        number={2},
        date={2018},

        pages={97–-148}

}

\bib{Dawson}{article}{

        title={Critical dynamics and fluctuations for a mean-field model of cooperative behavior},

        author={D. A. Dawson},

        journal={J. Stat. Phys.},

        volume={31},

        date={1983},

        pages={29--85}

}
\bib{DG}{article}{

        title={Large deviations from the mckean-vlasov limit for weakly interacting diffusions
},

        author={D. A. Dawson},

        author={J. G\"artner},

        journal={Stochastics},

        volume={20},
        number={4},

        date={1987},

        pages={247--308}

}









\bib{DBDG}{book}{

        title = {Lectures on Empirical Processes: Theory and Statistical Applications},

        author = {E. Del Barrio},

      author = {P. Deheuvels},

      author = {S. Van De Geer},

        date = {2007},

        publisher = {European Mathematical Society Publishing House},

        address = {Zurich}

}

\bib{DLR}{article}{

        title={From the master equation to mean field game limit theory: a central limit theorem},

        author={F. Delarue},

        author={D. Lacker},

        author={K. Ramanan},

        journal={Electronic Journal of Probability},

        volume={24},

        date={2019},

        pages={1--54}

}

\bib{delgadino2020}{article}{

        title={On the diffusive-mean field limit for weakly interacting diffusions exhibiting phase transitions},

        author={M. G. Delgadino},
        author={R. S. Gvalani},
        author={G. A. Pavliotis},
       journal={Archive for Rational Mechanics and Analysis},

        volume={241},

        date={2021},

        pages={91--148}
}








\bib{Driver}{report}{

        title = {Analysis Tools with Examples},

        author = {B. K. Driver},

        date = {2004},

        status= {unpublished},

        eprint = {http://www.math.ucsd.edu/~bdriver/DRIVER/Book/anal.pdf},
}










\bib{DE}{book}{

        title = {A Weak Convergence Approach to the Theory of Large Deviations},

        author = {P. Dupuis},

      author = {R. S. Ellis},

        date = {1997},

        publisher = { Wiley},

        address = {NY}

}
\bib{DS}{article}{

        title={Large deviations for multiscale diffusion via weak convergence methods},

        author={P. Dupuis},

        author={K. Spiliopoulos},

        journal={Stochastic Processes and their Applications},

        volume={122},
        number={4},

        date={2012},

        pages={1947--1987}

}

\bib{DSW}{article}{

        title={Importance Sampling for Multiscale Diffusions},

        author={P. Dupuis},

        author={K. Spiliopoulos},

         author={H. Wang},

        journal={SIAM Multiscale Modeling and Simulation},

        volume={12},
        number={1},

        date={2012},

        pages={1--27}

}

\bib{EK}{book}{

        title = {Markov Processes: Characterization and Convergence},

        author = {S. Ethier},

      author = {T. Kurtz},

        date = {1986},

        publisher = { Wiley},

        address = {NY}

}

\bib{feng2012small}{article}{

        title={Small-time asymptotics for fast mean-reverting stochastic volatility models},

        author={J. Feng},

        author={J. P. Fouque},
        author={R. Kumar},
        journal={The Annals of Applied Probability},

        volume={22},
        number={4},

        date={2012},

        pages={1541--1575}

}

\bib{FM}{article}{

        title={A Hilbertian approach for fluctuations on the McKean-Vlasov model},
        author={B. Fernandez},
        author={S. M\'el\'eard},

        journal={Stochastic Processes and their Applications},
        volume={71},
        date={1997},
        pages={33--53}
}

\bib{FK}{book}{

        title = {Large Deviations for Stochastic Processes},

        author = {J. Feng},

        author = {T. Kurtz},

        date = {2006},

        publisher = {AMS},

        address = {Providence}

}

\bib{jean2000derivatives}{book}{

        title = {Derivatives in financial markets with stochastic volatility},

        author = {J. P. Fouque},

      author = {G. Papanicolaou},
      author={K. R. Sircar},

        date = {2000},

        publisher = { Cambridge University Press},

        address = {Cambridge}

}

\bib{FS}{article}{

        title={A comparison of homogenization and large deviations, with applications to wavefront propagation},

        author={M. I. Freidlin},

        author={R. Sowers},

        journal={Stochastic Process and Their Applications},

        volume={82},
        number={1},

        date={1999},

        pages={23--52.}

}









\bib{Fischer}{article}{

        title={On the connection between symmetric N-player games and mean field games
},

        author={M. Fischer},

        journal={Ann. Appl. Probab.},
        volume={27},
        number={2},
        date={2017},
        pages={757--810}
}







\bib{GaitsgoryNguyen}{article}{

        title={Multiscale singularly perturbed control systems: Limit occupational measures sets and averaging},

        author={V. Gaitsgor},
        author={M. T. Nguyen},

        journal={SIAM Journal on Control and Optimization},

        volume={41},
        number={3},

        date={2002},

        pages={954--974}

}

\bib{GS}{article}{

        title={Inhomogeneous functionals and approximations of invariant distributions of ergodic diffusions: Central limit theorem and moderate deviation asymptotics},

        author={A. Ganguly},
        author={P. Sundar},

        journal={Stochastic Processes and their Applications},

        volume={133},

        date={2021},

        pages={74--110}

}

\bib{Garnier1}{article}{

        title={Large deviations for a mean field model of systemic risk},

        author={J. Garnier},
        author={G. Papanicolaou},
        author={T. W. Yang},

        journal={SIAM Journal of financial mathematics},

        volume={4},
        number={1},

        date={2013},

        pages={151--184}

}
\bib{Garnier2}{article}{

        title={Consensus convergence with stochastic effects},

        author={J. Garnier},
        author={G. Papanicolaou},
        author={T. W. Yang},

        journal={Vietnam Journal of mathematics},

        volume={45},
        number={1-2},

        date={2017},

        pages={51--75}

}













\bib{GV}{book}{

        title = {Generalized functions},
        volume = {4},

        author = {I. M. Gel’fand},

        author = {N. Y. Vilenkin},

        date = {1964},

        publisher = {AMS},

        address = {Providence}

}





%
%
%
%
%
%

\bib{GT}{book}{

        title = {Elliptic Partial Differential Equations of Second Order},

        author = {D. Gilbarg},
      author = {N. S. Trudinger},
        date = {2001},

        publisher = { Springer},

        address = {NY}

}

\bib{GP}{article}{

        title={Mean Field Limits for Interacting Diffusions in a Two-Scale Potential},

        author={S. N. Gomes},
        author={G. A. Pavliotis},

        journal={Journal of Nonlinear Science},

        volume={28},

        date={2018},

        pages={905--941}

}

\bib{HM}{article}{

        title={Tightness problem and stochastic evolution equation arising from fluctuation phenomena for interacting diffusions},

        author={M. Hitsuda},
        author={I. Mitoma},

        journal={Journal of Multivariate Analysis},

        volume={19},
        number={2},

        date={1986},

        pages={311--328}

}

\bib{HLL}{article}{

        title={Strong Convergence Rates in Averaging Principle for Slow-Fast McKean-Vlasov SPDEs},

        author={W. Hong},

        author={S. Li},
        author={W. Liu},

        date={2022},
        journal={Journal of Differential Equations},
        volume={316},
        pages={94--135}
}

\bib{HLLS}{arxiv}{

        title={Central Limit Type Theorem and Large Deviations for Multi-Scale McKean-Vlasov SDEs},

        author={W. Hong},

        author={S. Li},
        author={W. Liu},
        author={X. Sun},

        date={2021},
      arxiveprint={
            arxivid={2112.08203},
            arxivclass={math.PR},
      }
}






\bib{HyeonThirumalai}{article}{

        title={Can energy landscapes roughness of proteins and RNA be measured by using mechanical unfolding experiments?},

        author={C. Hyeon},
        author={D. Thirumalai},

        journal={Proc. Natl. Acad. Sci.},
        address={USA},

        volume={100},
        number={18},

        date={2003},

        pages={10249--10253}

}
%
%
%
%
%

\bib{IssacsonMS}{article}{

        title={Mean field limits of particle-based stochastic reaction-diffusion models},

        author={S. A. Isaacson},

        author={J. Ma},
        author={K. Spiliopoulos},

        date={2022},
        journal={SIAM Journal on Mathematical Analysis},
        volume={54},
        number={1},
        pages={453--511}
}




\bib{JS}{article}{

        title={Pathwise moderate deviations for option pricing},

        author={A. Jacquier},

        author={K. Spiliopoulos},

        journal={Mathematical Finance},
        volume={30},
        number={2},
        date={2020},
        pages={426--463}
}

\bib{KalX}{article}{

        title={Stochastic differential equations in infinite dimensional spaces},

        author={G. Kallianpur},

        author={J. Xiong},

        journal={IMS Lecture Notes--Monograph Series},
        volume={26},
        date={1995}
}

\bib{KS}{book}{

        title = {Brownian Motion and Stochastic Calculus},

        author = {I. Karatzas},

      author = {S. Shreve},

        date = {1998},

        publisher = { Springer},

        address = {NY}

}
\bib{KCBFL}{article}{

        title={Emergent Behaviour in Multi-particle Systems with Non-local Interactions},

        author={T. Kolokolnikov},

        author={A. Bertozzi},

        author={R. Fetecau},

        author={M. Lewis},

        journal={Physica D},
        volume={260},
        date={2013},
        pages={1-4}
}
\bib{KSS}{arxiv}{

        title={Well-posedness and averaging principle of McKean-Vlasov SPDEs driven by cylindrical $\alpha$-stable process},

        author={M. Kong},

        author={Y. Shi},
        author={X. Sun},

        date={2021},
      arxiveprint={
            arxivid={2106.05561},
            arxivclass={math.PR},
      }
}

\bib{KX}{article}{

        title={A stochastic evolution equation arising from the fluctuations of a class of interacting particle systems},

        author={T. G. Kurtz},

        author={J. Xiong},

        journal={Communications in Mathematical Sciences},
        volume={2},
        number={3},
        date={2004},
        pages={325--358}
}
\bib{Lacker}{article}{

        title={Limit theory for controlled McKean-Vlasov dynamics},

        author={D. Lacker},

        journal={SIAM Journal on Control and Optimization},
        volume={55},
        number={3},
        date={2017},
        pages={1641--1672}
}

\bib{Lucon2016}{article}{

        title={Transition from Gaussian to non-Gaussian fluctuations for mean-field diffusions in spatial interaction},

        author={E. Lu\'{c}on},
        author={W. Stannat},

        journal={Annals of Probability},
        volume={26},
        number={6},
        date={2016},
        pages={3840--3909}
}

\bib{majda2008applied}{article}{

        title={An applied mathematics perspective on stochastic modelling for climate},

        author={A. J. Majda},
        author={C. Franzke},
        author={B. Khouider},

        journal={Philosophical Transactions of the Royal Society A},
        volume={336},
        number={1875},
        date={2008},
        pages={2429--2455}
}

\bib{MotschTadmor2014}{article}{

        title={Heterophilious dynamics enhances consensus},

        author={S. Motsch},
        author={E. Tadmor},

        journal={SIAM Review},
        volume={56},
        number={4},
        date={2014},
        pages={577--621}
}

\bib{MT}{book}{

        title = {Collective dynamics from bacteria to crowds: An excursion through modeling, analysis and simulation},
        volume={533},
        series={CISM International Centre for Mechanical Sciences. Courses and Lectures},
        editor = {A. Muntean},
        editor = {F. Toschi},

        date = {2014},

        publisher = {Springer},

        address = {Vienna}

}

\bib{Mitoma}{article}{

        title={ Tightness of probabilities on $C([0, 1]; \mc{S}')$ and $D([0, 1]; \mc{S}')$},

        author={I. Mitoma},

        journal={The Annals of Probability},
        volume={11},
        number={4},
        date={1983},
        pages={989--999}
}
\bib{MS}{article}{

        title={Moderate deviations principle for systems of slow-fast diffusions},

        author={M. R. Morse},

        author={K. Spiliopoulos},

        journal={Asymptotic Analysis},
        volume={105},
        number={3--4},
        date={2017},
        pages={97--135}
}
\bib{MSImportanceSampling}{article}{

        title={Importance sampling for slow-fast diffusions based on moderate deviations},

        author={M. R. Morse},

        author={K. Spiliopoulos},

        journal={SIAM Journal on Multiscale Modeling and Simulation},
        volume={18},
        number={1},
        date={2020},
        pages={315--350}
}
\bib{Lipster}{article}{

        title={Large deviations for two scaled diffusions},

        author={R. Lipster},

        journal={Probability Theory and Related Fields},
        volume={106},
        number={1},
        date={1996},
        pages={71--104}
}

\bib{Orrieri}{article}{

        title={Large deviations for interacting particle systems: joint mean-field and small-noise limit},

        author={C. Orrieri},
        journal={Electron. J. Probab.},

        volume={25},
        number={11},

        date={2020},

        pages={1--44}

}

\bib{PV1}{article}{

        title={On Poisson equation and diffusion approximation I},

        author={E. Pardoux},
        author={A. Y. Veretennikov},
        journal={Annals of Probability},

        volume={29},
        number={3},

        date={2001},

        pages={1061--1085}

}

\bib{PV2}{article}{

        title={On Poisson equation and diffusion approximation II},

        author={E. Pardoux},
        author={A. Y. Veretennikov},
        journal={Annals of Probability},

        volume={31},
        number={3},

        date={2003},

        pages={1166--1192}

}









\bib{Rauch}{book}{
        title = {Partial Differential Equations},
        author = {J. Rauch},

        date = {1991},
        publisher = { Springer},
        address = {NY}
}
\bib{RV}{article}{

        title = {On signed measure valued solutions of stochastic evolution equations},

        author = {B. R\'emillard},

        author = {J. Vaillancourt},

        date = {2014},
        volume = {124},
        number = {1},

        journal = {Stochastic Processes and their Applications},
        pages= {101--122},

}
\bib{RocknerFullyCoupled}{article}{

        title={Diffusion approximation for fully coupled stochastic differential equations},

        author={M. R\"ockner},

        author={L. Xie},

        date={2021},
        volume = {49},
        number = {3},

        journal = {The Annals of Probability},
        pages= {101--122},
}
\bib{RocknerMcKeanVlasov}{article}{

        title={Strong convergence order for slow-fast McKean-Vlasov stochastic differential equations},

        author={M. R\"ockner},

        author={X. Sun},
        author={Y. Xie},

        date={2021},
        journal={Annales de l'Institut Henri Poincar\'e, Probabilit\'es et Statistiques},
        volume={57},
        number={1},
        pages={547--576}
}






\bib{Spiliopoulos2013a}{article}{

        title={Large deviations and importance sampling for systems of slow-fast motion},

        author={K. Spiliopoulos},
        journal={Applied Mathematics and Optimization},

        volume={67},

        date={2013},

        pages={123--161}

}
\bib{Spiliopoulos2014Fluctuations}{article}{

        title={Fluctuation analysis and short time asymptotics for multiple scales diffusion processes},

        author={K. Spiliopoulos},
        journal={Stochastics and Dynamics},

        volume={14},
        number={3},

        date={2014},

        pages={1350026}

}
\bib{Spiliopoulos2013QuenchedLDP}{article}{

        title={Quenched Large Deviations for Multiscale Diffusion Processes in Random Environments},

        author={K. Spiliopoulos},
        journal={Electronic Journal of Probability},

        volume={20},
        number={15},

        date={2015},

        pages={1--29}

}

\bib{LossFromDefault}{article}{

        title={Fluctuation Analysis for the Loss From Default},

        author={K. Spiliopoulos},
        author={J. A. Sirignano},
        author={K. Giesecke},
        journal={Stochastic Processes and their Applications},

        volume={124},
        number={7},

        date={2014},

        pages={2322--2362}

}

\bib{Veretennikov1987}{article}{

        title={Bounds for the mixing rates in the theory of stochastic equations},

        author={A. Yu. Veretennikov},
        journal={Theory Probab. Appl.},

        volume={32},

        date={1987},

        pages={273--281}

}

\bib{Veretennikov}{arxiv}{

        title={On large deviations in the averaging principle for {SDEs} with a ``full dependence'', correction},

        author={A. Yu. Veretennikov},

        date={2005},
      arxiveprint={
            arxivid={math/0502098},
            arxivclass={math.PR},
      },
    note={Initial
article in \textit{Annals of Probability}, Vol. 27, No. 1, (1999), pp. 284--296}

}

\bib{VeretennikovSPA2000}{article}{

        title={On large deviations for SDEs with small diffusion and averaging},

        author={A. Yu. Veretennikov},
        journal={Stochastic Processes and their Applications},

        volume={89},
        number={1},

        date={2000},

        pages={69--79}

}

\bib{Wang}{article}{

        title={Distribution dependent SDEs for Landau type equations},

        author={F. Y. Wang},
        journal={Stochastic Processes and their Applications},

        volume={128},
        number={2},

        date={2017},

        pages={595-–621}

}
\bib{Zwanzig}{article}{

        title={Diffusion in a rough potential},

        author={R. Zwanzig},

        journal={Proc. Natl. Acad. Sci.},

        volume={85},

        date={1988},

        pages={2029--2030},
        address = {USA}
}

\end{biblist}
\end{bibdiv}
\endgroup

\end{document}